\pdfoutput=1

\documentclass[10pt]{article}

\usepackage{url}
\usepackage{mathtools}
\usepackage{amssymb}
\usepackage{amsthm}
\usepackage{latexsym}
\usepackage[shortlabels]{enumitem}
\usepackage{dsfont}
\usepackage{appendix}
\usepackage{color} 
\usepackage[utf8]{inputenc}
\usepackage[T1]{fontenc}
\usepackage{geometry}
\usepackage{todonotes}
\usepackage{lmodern}
\usepackage{anyfontsize}
\usepackage{stmaryrd}
\usepackage{comment}
\usepackage{bm}
\usepackage{mathrsfs}
\usepackage{mdframed}
\usepackage{natbib}
\usepackage[english]{babel}
\usepackage{cases}
\usepackage{braket}

\usepackage{hyperref}
\hypersetup{colorlinks, linkcolor={red}, citecolor={green}, urlcolor={blue}}
\numberwithin{equation}{section}

\newtheorem{theorem}{Theorem}[section]
\newtheorem{assumption}[theorem]{Assumption}
\newtheorem{standing_assumption}[theorem]{Standing assumption}
\newtheorem{corollary}[theorem]{Corollary}
\newtheorem{example}[theorem]{Example}

\newtheorem{lemma}[theorem]{Lemma}
\newtheorem{proposition}[theorem]{Proposition}

\newtheorem{definition}[theorem]{Definition}
\newtheorem{remark}[theorem]{Remark}

\usepackage[capitalize,nameinlink,noabbrev]{cleveref}

\crefname{assumption}{assumption}{assumptions}
\Crefname{assumption}{Assumption}{Assumptions}
\crefname{assumptio}{assumption}{assumptions}
\Crefname{assumptio}{Assumption}{Assumptions}
\crefname{assumptionalt}{assumption}{assumptions}
\Crefname{assumptionalt}{Assumption}{Assumptions}
\crefname{lemma}{lemma}{lemmas}
\Crefname{lemma}{Lemma}{Lemmas}
\crefname{proposition}{proposition}{propositions}
\Crefname{proposition}{Proposition}{Propositions}
\crefname{definition}{definition}{definitions}
\Crefname{definition}{Definition}{Definitions}
\crefname{remark}{remark}{remarks}
\Crefname{remark}{Remark}{Remarks}
\crefname{appendix}{appendix}{appendices}
\Crefname{appendix}{Appendix}{Appendices}
\crefname{corollary}{corollary}{corollaries}
\Crefname{corollary}{Corollary}{Corollaries}
\crefname{example}{example}{examples}
\Crefname{example}{Example}{Examples}

\AddToHook{env/assumption/begin}{\crefalias{theorem}{assumption}}
\AddToHook{env/lemma/begin}{\crefalias{theorem}{lemma}}
\AddToHook{env/proposition/begin}{\crefalias{theorem}{proposition}}
\AddToHook{env/definition/begin}{\crefalias{theorem}{definition}}
\AddToHook{env/remark/begin}{\crefalias{theorem}{remark}}
\AddToHook{env/corollary/begin}{\crefalias{theorem}{corollary}}
\AddToHook{env/example/begin}{\crefalias{theorem}{example}}

\mdfsetup{middlelinecolor=blue,middlelinewidth=2pt,linewidth=0pt,backgroundcolor=blue!15,,roundcorner=10pt}

\allowdisplaybreaks

\setcitestyle{numbers,open={[},close={]}}

\definecolor{red}{rgb}{0.7,0.15,0.15}
\definecolor{green}{rgb}{0,0.5,0}
\definecolor{blue}{rgb}{0,0,0.7}

\newcommand\cA{\mathcal A}
\newcommand\cB{\mathcal B}

\newcommand\cE{\mathcal E}
\newcommand\cF{\mathcal F}
\newcommand\cG{\mathcal G}
\newcommand\cH{\mathcal H}

\newcommand\cL{\mathcal L}
\newcommand\cM{\mathcal M}
\newcommand\cN{\mathcal N}
\newcommand\cO{\mathcal O}
\newcommand\cP{\mathcal P}

\newcommand\cS{\mathcal S}
\newcommand\cT{\mathcal T}
\newcommand\cU{\mathcal U}
\newcommand\cV{\mathcal V}
\newcommand\cW{\mathcal W}

\newcommand\cY{\mathcal Y}
\newcommand\cZ{\mathcal Z}

\newcommand\sN{\mathscr N}

\newcommand\sT{\mathscr T}
\newcommand\sU{\mathscr U}

\newcommand\sY{\mathscr Y}
\newcommand\sZ{\mathscr Z}

\newcommand\sfB{\mathsf B}
\newcommand\sfC{\mathsf C}

\newcommand\frM{\mathfrak M}

\newcommand\fH{\mathfrak H}
\newcommand\fP{\mathfrak{P}}

\newcommand\ff{\mathfrak f}
\newcommand\fg{\mathfrak g}

\newcommand{\smallertext}[1]{\text{\fontsize{6}{6}\selectfont$#1$}}
\newcommand{\smalltext}[1]{\text{\fontsize{4}{4}\selectfont$#1$}}
\newcommand{\tinytext}[1]{\text{\fontsize{3}{3}\selectfont$#1$}}

\newcommand{\vertiii}[1]{{\left\vert\kern-0.25ex\left\vert\kern-0.25ex\left\vert #1 \right\vert\kern-0.25ex\right\vert\kern-0.25ex\right\vert}}

\def \D{\mathbb{D}}
\def \E{\mathbb{E}}
\def \F{\mathbb{F}}
\def \G{\mathbb{G}}
\def \H{\mathbb{H}}

\def \L{\mathbb{L}}

\def \N{\mathbb{N}}
\def \P{\mathbb{P}}
\def \Q{\mathbb{Q}}
\def \R{\mathbb{R}}
\def \S{\mathbb{S}}

\def \Y{\mathbb{Y}}
\def \Z{\mathbb{Z}}

\newcommand{\bcdot}{\boldsymbol{\cdot}}

\newcommand{\1}{\mathbf{1}}

\def\d{\mathrm{d}}

\DeclareMathOperator*{\esssup}{ess\,sup}

\newcommand\sgn{\text{sgn}}

\begin{document}
	
	\title{Aggregation of value processes for semi-martingale BSDEs with jumps}

	\author{Dylan {\sc Possama\"{i}}\footnote{ETH Z\"{u}rich, Department of Mathematics, Switzerland, dylan.possamai@math.ethz.ch. This author gratefully acknowledges partial support by the SNF project MINT 205121-219818.}\and Marco {\sc Rodrigues}\footnote{TU Berlin and Weierstrass Institute, Germany, marco.rodrigues@tu-berlin.de and marco.rodrigues@wias-berlin.de. This author gratefully acknowledges support from the Swiss National Science Foundation project MINT 205121-219818 and the Deutsche Forschungsgemeinschaft CRC/TRR 388 Project ID 516748464.} \and Alexandros {\sc Saplaouras}\footnote{University of the Aegean, Department of Statistics and Actuarial--Financial Mathematics, Greece, alsapl@aegean.gr.  This author gratefully acknowledges the financial support from the Hellenic foundation for research and innovation grant 235 `Stability and numerics for BSDEs under model uncertainty and applications' (2nd call for H.F.R.I. research projects to support post-doctoral researchers).}}
	
	\date{September 20, 2026}
	
	\maketitle
		
	\begin{abstract}
		We construct a measurable aggregator for the value processes associated with stochastic control problems, where the criterion to optimise is given by solutions to semi-martingale backward stochastic differential equations (BSDEs) with jumps. The results can be applied to control problems where the triplet of semi-martingale characteristics is controlled in a possibly non-dominated case or where uncertainty about the characteristics is present in the optimisation. The construction also provides a time-consistent system of fully nonlinear conditional expectations on the Skorokhod space. We also construct an appropriate path-regularisation of the value function and prove a corresponding dynamic programming principle. The generality we seek allows for the treatment of controlled diffusions, pure-jump processes, and discrete-time processes in a unified setting.
	\end{abstract}


\section{Introduction}
	
Many stochastic control problems in weak formulation, and many optimisation problems under model uncertainty, can be written on the canonical space of c\`adl\`ag paths, with canonical process $X$ and canonical filtration $\F = (\cF_{t})_{t \in [0,\infty)}$, as optimisation problems over a family $\fP_0$ of probability measures, each of which is a possible law of $X$. When the criterion is itself defined through a backward stochastic differential equation (BSDE), as is the case for recursive utilities or when the controls act on the drift of $X$ and on the compensator of its jump measure, the value of the problem at time $t$, seen under a given $\P \in \fP_0$, takes the form
\begin{equation*}
V^\P_t = \underset{\bar{\P} \in \fP_\smalltext{0}(\cF_{\smalltext{t}\tinytext{+}},\P)}{{\esssup}^\P} \cY^{\bar{\P}}_t(T,\xi),\;\textnormal{$\P$--a.s.}, \; \textnormal{$\P \in \fP_0$.}
\end{equation*}
Here $\fP_0(\cF_{t\smallertext{+}},\P) \coloneqq \{\overline{\P} \in \fP_0 : \textnormal{$\overline{\P} = \P$ on $\cF_{t\smallertext{+}}$}\}$, and $\cY^{\bar{\P}}(T,\xi)$ denotes the first component of the solution $(\cY,\cZ,\cU,\cN)$ to the $\overline{\P}$--BSDE
\begin{align}\label{eq_BSDEs_intro}
	\cY_t &= \xi + \int_t^{T} f^{\bar{\P}}_r\big(\cY_r,\cY_{r\smallertext{-}},\cZ_r,\cU_r(\cdot)\big)\d C_r - \int_t^{T} \cZ_r \d X^{c,\bar{\P}}_r  - \int_t^{T}\int_{\R^\smalltext{d}} \cU_r(x)\tilde\mu^{X,\bar{\P}}(\d r, \d x) - \int_t^{T}\d \cN_r, \; t \in [0,\infty],
\end{align}
where $C$ is a fixed non-decreasing process, $X^{c,\bar{\P}}$ denotes the $\overline{\P}$--continuous local martingale part of $X$, $\tilde{\mu}^{X,\bar{\P}}$ denotes the $\overline{\P}$--compensated jump measure of $X$, and $\cN$ is a $\overline{\P}$--martingale orthogonal to $X^{c,\bar{\P}}$ and $\tilde\mu^{X,\bar{\P}}$. We refer to \citeauthor*{nutz2012quasi} \cite{nutz2012quasi}, \citeauthor*{cvitanic2018dynamic} \cite{cvitanic2018dynamic}, and \citeauthor*{soner2013dual} \cite{soner2013dual} for problems of this type.

\medskip
Each $V^\P$ is only defined up to a $\P$--null set. When the measures in $\fP_0$ are mutually singular, which is the typical situation as soon as the volatility of $X$ is uncertain or controlled, there is no reference measure under which the processes $(V^\P)_{\P\in\fP_\smalltext{0}}$ can be compared, let alone glued together. The question we address is whether there is a single process $\widehat{\cY}^\smallertext{+}$, defined on the canonical space without reference to any particular measure, which coincides with $V^\P$ under every $\P\in\fP_0$, and whether this process has the regularity in time and the dynamic programming properties one expects from a value process. Under Lipschitz-type conditions on the generator, we answer both questions positively when $X$ is a semi-martingale with jumps whose characteristics are absolutely continuous with respect to $C$. This setting contains controlled diffusions with jumps, pure-jump processes, and, by letting $C$ be piecewise constant, discrete-time processes.

\medskip
To see where equations of the form \eqref{eq_BSDEs_intro} come from, and why mutually singular measures cannot be avoided, consider a control problem in weak formulation. We regard $X$ as the state process and suppose that its $\P$--semi-martingale characteristics are of the form $(\mathsf{b}_t\d C_t, \mathsf{c}_t\d C_t, \mathsf{K}_t(\d x)\d C_t)$ for a fixed predictable process $C$ with right-continuous and non-decreasing paths. The controller wants to
\begin{equation}\label{eq::control_problem}
	\textnormal{maximise} \; \E^{\P^\smalltext{\alpha}}\bigg[\frac{\xi}{D_T} + \int_0^T \frac{1}{D_{r\smallertext{-}}}g_r(X_{\cdot \land r},\alpha_r)\d C_r\bigg], \; \textnormal{over $\alpha \in \cA$.}
\end{equation}
Here $D \coloneqq \cE(\int_0^\cdot d_r \d C_r)$ is a discount factor, $\xi$ is the terminal reward, $g$ is the running reward, $\cA$ is the set of controls, and $\P^\alpha$ is the law of $X$ induced by the control $\alpha$. These laws are usually obtained from $\P$ through stochastic exponentials, so that $\P^\alpha$ is absolutely continuous with respect to $\P$, and the control only changes the drift of $X$ and the compensator of its jump measure; see \cite[Theorem III.3.24]{jacod2003limit}. Writing the conditional expected reward under $\P^\alpha$ as the solution to a linear BSDE under $\P$, one finds that the expectation in \eqref{eq::control_problem} equals $\E^\P[Y^\alpha_0]$, where $Y^\alpha$ is the first component of the solution $(Y^\alpha,Z^\alpha,U^\alpha,N^\alpha)$ to
\[
	Y^\alpha_t = \xi + \int_t^T f^\alpha_s\big(Y^\alpha_s,Y^\alpha_{s\smallertext{-}},Z^\alpha_s,U^\alpha_s(\cdot)\big) \d C_s - \int_t^T Z^\alpha_s\d X^{c,\P}_s - \int_t^T\int_{\R^\smalltext{d}} U^\alpha_s(x) \tilde{\mu}^{X,\P}(\d s,\d x)- \int_t^T \d N^\alpha_s,\; t\in[0,T],
\]
and where, at least at an intuitive level, the generator is affine in $(y,z,u(\cdot))$ and given by
\begin{equation*}
	f^\alpha_t\big(\omega,y,\mathrm{y},z,u(\cdot)\big) = g_t(X_{\cdot\land t}(\omega),\alpha_t(\omega)) - \frac{d_t(\omega)}{1+d_t(\omega)\Delta C_t(\omega)}y +  z^\top \mathsf{c}_t(\omega) \beta^\alpha_t(\omega) + \int_{\R^\smalltext{d}} u(x) \big( \gamma^\alpha_t(\omega,x) - 1\big) \mathsf{K}_{\omega,t}(\d x).
\end{equation*}
The processes $\beta^\alpha$ and $\gamma^\alpha$ describe how the control changes the drift of $X$ and the compensator of its jump measure. If the generator $f \coloneqq \sup_{\alpha \in \cA} f^\alpha$ is well defined, and if some control $\alpha^\star$ attains this supremum along the solution $(Y,Z,U,N)$ of the BSDE with generator $f$, that is,
\[
	f_s\big(Y_s,Y_{s\smallertext{-}},Z_s,U_s(\cdot)\big) = f^{\alpha^\smalltext{\star}}_s\big(Y_s,Y_{s\smallertext{-}},Z_s,U_s(\cdot)\big), \; \textnormal{$\P\otimes\d C_s$--a.e.},
\]
then the comparison principle for BSDEs shows that $\alpha^\star$ is optimal for \eqref{eq::control_problem}. Two features of the general problem are already visible in this simple case. First, $C$ carries the time scale of the problem, and the characteristics of $X$ must be absolutely continuous with respect to $C$ for BSDE methods to apply; allowing $C$ to jump is what brings discrete-time problems into the same framework. Second, as soon as the control acts on the second characteristic of $X$, or changes the compensator of its jump measure in a way that is not absolutely continuous, the laws $\P^\alpha$ become mutually singular. The argument above, which takes place under a single measure, then breaks down, the natural value process is of the form $V^\P$, and one is led back to the aggregation question.

\medskip
When the generator vanishes, $\cY^{\bar{\P}}_t(T,\xi) = \E^{\bar{\P}}[\xi|\cF_{t\smallertext{+}}]$, and the question becomes that of constructing a conditional sublinear expectation. This question has a long history. \citeauthor*{denis2006theoretical} \cite{denis2006theoretical} formulated the pricing of contingent claims under volatility uncertainty in terms of capacities, while \citeauthor*{peng2007g} \cite{peng2007g,peng2008multi,peng2019nonlinear} built the $G$-expectation from the solutions of a fully nonlinear heat equation; \citeauthor*{denis2011function} \cite{denis2011function} later showed that the $G$-expectation is the supremum of expectations over a family of mutually singular laws. \citeauthor*{soner2011quasi} \cite{soner2011quasi} proved that families of processes indexed by such laws can be aggregated under a separability condition on the family, and used this to obtain a martingale representation under the $G$-expectation in \cite{soner2011martingale}. Separability was then replaced by measurable selection arguments. \citeauthor*{nutz2012superhedging} \cite{nutz2012superhedging} obtained a c\`adl\`ag version of the corresponding nonlinear martingale by path regularisation and related it to superhedging, \citeauthor*{nutz2013random} \cite{nutz2013random} allowed the volatility bounds to depend on the path, \citeauthor*{nutz2013constructing} \cite{nutz2013constructing} gave conditions on the family of measures, formulated with analytic sets, under which conditional sublinear expectations of Borel-measurable random variables are time-consistent, and \citeauthor*{elkaroui2013capacities} \cite{elkaroui2013capacities,elkaroui2013capacities2} developed a general framework for measurable selection and dynamic programming. Related constructions in general spaces and for conditional nonlinear expectations were given by \citeauthor*{cohen2012quasi} \cite{cohen2012quasi} and \citeauthor*{bartl2020conditional} \cite{bartl2020conditional}, and the superhedging of measurable claims under volatility uncertainty was studied by \citeauthor*{neufeld2013superreplication} \cite{neufeld2013superreplication} and \citeauthor*{possamai2013robust} \cite{possamai2013robust}.

\medskip
Jumps raise new difficulties already at this level. The approach through partial differential equations extends to the $G$--L\'evy processes of \citeauthor*{hu2021g} \cite{hu2021g}, but only for jumps of finite variation, and the corresponding martingale representation was investigated by \citeauthor*{paczka2014gmartingale} \cite{paczka2014gmartingale,paczka2014ito}. A probabilistic approach was proposed by \citeauthor*{neufeld2014measurability} \cite{neufeld2014measurability}, who showed that the semi-martingale characteristics of the canonical process can be chosen measurably with respect to its law, and used this in \cite{neufeld2016nonlinear} to construct nonlinear L\'evy processes. We also refer to \citeauthor*{nutz2015robust} \cite{nutz2015robust} for robust superhedging with jumps, and to \citeauthor*{criens2023nonlinear} \cite{criens2023nonlinear,criens2023robust,criens2025nonlinear} for nonlinear continuous semi-martingales, robust utility maximisation in that setting, and nonlinear semi-martingales and Markov processes with jumps. All these works concern sublinear expectations, that is, the case of a vanishing generator.

\medskip
With a non-zero generator, conditional expectations are replaced by solutions to BSDEs. Under a single measure, this leads to nonlinear conditional expectations whose theory is close to that of classical martingales. We refer to \citeauthor*{elkaroui1997backward} \cite{elkaroui1997backward} for their early use in finance. \citeauthor*{peng1999monotonic} \cite{peng1999monotonic} proved a nonlinear Doob--Meyer decomposition for the associated super-martingales, and \citeauthor*{chen2000general} \cite{chen2000general,chen2001continuous} obtained down-crossing inequalities and c\`adl\`ag modifications. These results were extended to BSDEs with jumps by \citeauthor*{royer2006backward} \cite{royer2006backward} and \citeauthor*{lin2003nonlinear} \cite{lin2003nonlinear}, to uniformly continuous generators by \citeauthor*{shi2014nonlinear} \cite{shi2014nonlinear}, to systems of nonlinear super-martingales by \citeauthor*{bouchard2016general} \cite{bouchard2016general}, and to optimal stopping with irregular rewards by \citeauthor*{grigorova2020optimal} \cite{grigorova2020optimal}. Criteria given by BSDEs also appear in the stochastic differential utility of \citeauthor*{duffie1992stochastic} \cite{duffie1992stochastic}, and their optimisation over controls leads to the generalised dynamic programming principle of \citeauthor*{peng1992generalized} \cite{peng1992generalized}; see also \citeauthor*{buckdahn2010probabilistic} \cite{buckdahn2010probabilistic} for coupled systems of Hamilton--Jacobi--Bellman equations, and \citeauthor*{hu2017dynamic} \cite{hu2017dynamic} for the same questions under the $G$-expectation.

\medskip
For non-dominated families of measures and a non-zero generator, the first general results came from the theory of second-order BSDEs. Following \citeauthor*{cheridito2007second} \cite{cheridito2007second}, \citeauthor*{soner2012wellposedness} \cite{soner2012wellposedness,soner2013dual} established the well-posedness of second-order BSDEs, whose first component is a value process of this type, under strong regularity conditions on the terminal condition and the generator, and \citeauthor*{possamai2018stochastic} \cite{possamai2018stochastic} removed these conditions by measurable selection, for continuous semi-martingales with absolutely continuous characteristics. This last work is the closest to ours. For processes with jumps, \citeauthor*{kazi2015second} \cite{kazi2015second,kazi2015second2} followed the approach of \cite{soner2012wellposedness} in a Brownian--Poisson setting, and \citeauthor*{denis2024second} \cite{denis2024second} adapted the arguments of \cite{possamai2018stochastic} to it.

\medskip
The BSDEs \eqref{eq_BSDEs_intro} are driven by the continuous martingale part of $X$ and by its compensated jump measure, with an integrator $C$ that may have jumps. BSDEs in general spaces were studied by \citeauthor*{cohen2012existence} \cite{cohen2012existence}, BSDEs driven by marked point processes, with applications to the control of their compensators, by \citeauthor*{confortola2013backward} \cite{confortola2013backward} and \citeauthor*{confortola2014backward} \cite{confortola2014backward}, and BSDEs driven by general c\`adl\`ag martingales and integer-valued random measures, with an arbitrary non-decreasing integrator, by \citeauthor*{papapantoleon2016existence} \cite{papapantoleon2016existence,papapantoleon2021stability} and \citeauthor*{possamai2024reflections} \cite{possamai2024reflections}. We rely on the well-posedness results of \cite{possamai2024reflections} throughout. When $C$ jumps, the generator may depend on both $\cY_r$ and $\cY_{r\smallertext{-}}$, and well-posedness requires the jumps of $C$, weighted by the Lipschitz constants of the generator, to be small enough.

\medskip
We now describe our results. The starting point is the pathwise value function
\[
	\widehat\cY_s(T,\xi)(\omega) \coloneqq \sup_{\P \in \fP(s,\omega)} \E^\P\big[\cY^{s,\omega,\P}_0((T-s\land T)^{s,\omega},\xi^{s,\omega})\big],\; (\omega,s)\in\Omega\times[0,\infty),
\]
where $\fP(s,\omega)$ is the set of laws that remain admissible once the path $\omega$ has been observed up to time $s$, and $\cY^{s,\omega,\P}$ solves the BSDE shifted by $(s,\omega)$. In \Cref{thm::measurability2}, we show that $\widehat\cY_s(T,\xi)$ is upper semi-analytic, and therefore universally measurable, and that under every $\P\in\fP_0$ it coincides with the essential supremum of the conditional expectations of $\cY^{\bar\P}_s(T,\xi)$ over the measures $\overline\P\in\fP_0$ that agree with $\P$ on $\cF_s$. In particular, $\widehat\cY$ defines a time-consistent family of fully nonlinear conditional expectations on the Skorokhod space. The difficulty lies in the measurability of $(\omega,\P)\longmapsto\E^\P[\cY^{s,\omega,\P}_0((T-s\land T)^{s,\omega},\xi^{s,\omega})]$. Solutions to BSDEs are obtained by Picard iteration under each measure separately, and the dependence on $\P$ has to be followed through every step. The results of \cite{neufeld2014measurability} provide characteristics that are measurable in $\P$, but the iteration also involves the integrands $\cZ$ and $\cU$ of martingale representations, which must be chosen measurably in $\P$ as well. For $\cU$, even the space in which the integrand lives depends on $\P$, since it is defined through the compensator of the jump measure of $X$ under $\P$. We prove that such measurable choices exist. Since the measures in $\fP_0$ are not dominated, we cannot work with completed filtrations, and the results on stochastic calculus without the usual conditions that we need are collected in \Cref{app::raw_filtration_stochastic_calculus}.

\medskip
Our second main result, \Cref{thm::down-crossing}, concerns regularity in time. Even for a vanishing generator, the pathwise value function need not admit a c\`adl\`ag modification; see \cite[Example 4.6]{nutz2012superhedging}. Under an integrability condition on the data, and under conditions on the generator that are close to those ensuring a comparison principle, we show that, outside a $\fP_0$--polar set, its right limits along the dyadic numbers exist and define a c\`adl\`ag process $\widehat\cY^\smallertext{+}(T,\xi)$, adapted to the right-continuous version of the universally completed canonical filtration augmented by the $\fP_0$--polar sets. Under every $\P\in\fP_0$, this process coincides with the essential supremum of $\cY^{\bar\P}_t(T,\xi)$ over the measures $\overline\P\in\fP_0$ that agree with $\P$ up to time $t$, and it satisfies a square-integrability estimate that is uniform over $\fP_0$. If these conditions on the generator hold after every shift by $(s,\omega)$, the process $\widehat\cY^\smallertext{+}(T,\xi)$ is moreover a nonlinear super-martingale under every $\P\in\fP_0$ with respect to the BSDE with generator $f^\P$, between any two stopping times; this is the dynamic programming principle in our setting. The existence of right limits rests on crossing inequalities for nonlinear super-martingales, which do not follow from the existing literature in our generality. A result of this kind was used in \cite[Lemma 3.2]{possamai2018stochastic} for continuous processes, through the argument in the proof of \cite[Lemma A.1]{bouchard2016general}. In that argument, Doob's down-crossing inequality is applied to a process that need not be a super-martingale. Our proof does not have this problem, and the statements of \cite{possamai2018stochastic} remain valid; see \Cref{rem::gap_regularisation}. \Cref{cor::optimisation} then relates the two value functions: $\widehat\cY_0(T,\xi)$ is the supremum over $\P\in\fP_0$ of $\E^\P[\widehat\cY^\smallertext{+}_0(T,\xi)]$, so that optimal laws for the original problem can be found by working with the c\`adl\`ag process $\widehat\cY^\smallertext{+}(T,\xi)$ first, and then optimising conditionally on the information at time $0$.

\medskip
Along the way, we prove in \Cref{sec_stability} stability and comparison results for BSDEs in the generality of \cite{possamai2024reflections}. The stability estimates do not rely on It\^o's formula, and the comparison principle holds under conditions that are weaker than those of \cite[Assumption 7.1]{possamai2024reflections}. We believe these results are of interest beyond the present paper.

\medskip
The process $\widehat\cY^\smallertext{+}(T,\xi)$ is the first component of the solution to a second-order BSDE with jumps. Its semi-martingale decomposition under every $\P\in\fP_0$, the construction of a single integrand for the continuous martingale part of $X$, and the well-posedness of the corresponding second-order BSDE are the subject of our companion paper \cite{possamai2025mind}. There, we also explain why, in contrast to the continuous case, the integrands of the compensated jump measures cannot in general be chosen independently of the measure. The construction in the present paper does not rely on second-order BSDEs, and can be used on its own, for instance to define nonlinear conditional expectations generated by BSDEs with jumps, or to study robust control problems in which the characteristics of the state process are controlled.

\medskip
The paper is organised as follows. \Cref{sec::preliminaries} introduces the canonical space, semi-martingale laws and their conditioning, and the data of our BSDEs. \Cref{sec::main_results} contains our main results, whose proofs are given in \Cref{sec::proofs_main_results}. \Cref{app::raw_filtration_stochastic_calculus} collects the results on stochastic calculus without the usual conditions used throughout. \Cref{sec::proofs_preliminaries} contains the proofs of the results of \Cref{sec::preliminaries}, \Cref{sec::lemmas_main_results} those of the auxiliary lemmata of \Cref{sec::proofs_main_results}, and \Cref{sec_stability} the stability and comparison results for BSDEs.

\medskip
{\small\textbf{\small Notation and terminology:} we fix a positive integer $d$. Let $\N$, $\Q$ and $\R$ denote the nonnegative integers, rational numbers, and real numbers, respectively. We write $\Q_\smallertext{+} \coloneqq \Q \cap [0,\infty)$. For $n \in \N$, we write $\D^n_\smallertext{+} = \{k2^{-n}: k\in \N\}$, and then denote by $\D_\smallertext{+} = \cup_{n\in \N} \D^n_\smallertext{+}$ the collection of nonnegative dyadic numbers, and by $\D$ the collection of all real dyadic numbers. We write $\S^d_\smallertext{+}$ for the set of positive semi-definite (that is, nonnegative), symmetric and real $d \times d$ matrices. Points in $\R^d$ are understood to be column vectors.
		
\smallskip
For a matrix $A$, we denote its transpose by $A^\top$, its Moore–Penrose (pseudo-)inverse by $A^\oplus$, and, in the case that $A$ is a square matrix, we denote its trace by $\textnormal{Tr}[A]$. For a set $\Omega$ and $A \subseteq \Omega$, we denote by $\mathbf{1}_A$ its indicator function defined on $\Omega$. We will abuse notation and also denote by $\1_{\{P\}}$ the Iverson bracket of a mathematical statement $P$, that is, $\1_{\{P\}}$ equals $1$ if and only if $P$ is true; otherwise, it takes the value $0$. For a measurable space $(\Omega,\cF)$, we denote the Dirac measure at $x \in \Omega$ by $\boldsymbol{\delta}_x$. For $(a, b) \in [-\infty,\infty]^2$, we write $a \lor b \coloneqq \max\{a,b\}$, $a \land b \coloneqq \min\{a,b\}$, $a^+ \coloneqq a \lor 0$ and $a^- \coloneqq (-a)^+$. For two measurable spaces $(\Omega,\cF)$ and $(\Omega^\prime,\cF^\prime)$, we denote by $\cF \otimes \cF^\prime$ the product $\sigma$-algebra of $\cF$ and $\cF^\prime$ on the product space $\Omega \times \Omega^\prime$. 
		
\smallskip
For a c\`adl\`ag function $X$ defined on some interval $I\subseteq [0,\infty]$ with $0 \in I$, we let $\Delta X_t \coloneqq X_t - X_{t\smallertext{-}}$ if $t \in I \setminus\{0\}$ and $\Delta X_0 \coloneqq 0$. For $t \in (0,\infty]$, a limit of the form $s \uparrow\uparrow t$ (resp. $s\uparrow t$) means that $s \longrightarrow t$ along $s < t$ (resp. along $s\leq t$). We define $s \downarrow\downarrow t$ (resp. $s\downarrow t$) analogously. We use throughout the conventions $\sup\varnothing = - \infty$ and $0 / 0 \coloneqq 0$. 
		
\smallskip
For a probability measure $\P$ on a measurable space $(\Omega,\cF)$, and a subset $A \subseteq \Omega$, we refer to $A$ as an $(\cF,\P)$--null set, if there exists $B \in \cF$ such that $A \subseteq B$ and $\P[B] = 0$. We always use the convention $\infty - \infty \coloneqq -\infty$. In particular, if $\xi : \Omega \longrightarrow [-\infty,\infty]$ is $\cF$-measurable, then $\E^\P[\xi] \coloneqq \E^\P[\xi\lor 0] - \E^\P[(-\xi) \lor 0] = -\infty$ in case $\E^\P[(-\xi) \lor 0] = \infty$. For two measures $\mu$ and $\nu$ defined on the same underlying measurable space, we write $\mu \ll \nu$ if $\mu$ is absolutely continuous with respect to $\nu$. For a family of measures $\mathfrak{M}$ on the same underlying measurable space $(\Omega,\cF)$, a property $P$ holds $\mathfrak{M}$--quasi-surely (abbreviated as $\mathfrak{M}$--q.s.) if the subset of $\Omega$ on which $P$ does not hold is an  $(\cF,\P)$--null set for each measure in $\mathfrak{M}$. Then $\xi : \Omega \longrightarrow [-\infty,\infty]$ is $\mathfrak{M}$--essentially bounded if there exists $\mathfrak{C} \in (0,\infty)$ such that $|\xi| \leq \mathfrak{C}$ holds $\mathfrak{M}$--quasi-surely.
		
\smallskip
For two measurable spaces $(\Omega,\cF)$ and $(\Omega^\prime,\cF^\prime)$, a kernel $K$ on $(\Omega^\prime,\cF^\prime)$ given $(\Omega,\cF)$ is a map $K : \Omega \times \cF^\prime \longrightarrow [0,\infty]$ such that $\omega \longmapsto K_\omega(A)$ is $\cF$-measurable for every $A \in \cF^\prime$ and $K_\omega(\cdot)$ is a measure on $(\Omega^\prime,\cF^\prime)$ for each $\omega \in \Omega$; if $K_\omega(\Omega^\prime) = 1$ for each $\omega \in \Omega$, then $K$ is referred to as a stochastic kernel.
		
\smallskip
Lastly, an integral over an interval $I \subseteq [0,\infty]$ never includes the points $0$ or $\infty$ in the domain of integration; that is, $\int_I$ is the same as $\int_{I\setminus\{0,\infty\}}$. Moreover, $\int_a^b$ is always understood as $\int_{(a,b]}$, and $\int_{a\smallertext{-}}^b$ as $\int_{[a,b]}$.}

\section{Preliminaries}\label{sec::preliminaries}

Due to the presence of possibly non-dominated families of probability measures,
we work without imposing the usual conditions of stochastic calculus. The
stochastic-calculus facts needed in this raw-filtration setting are collected in
\Cref{app::raw_filtration_stochastic_calculus}. This section is devoted to the
canonical space of c\`adl\`ag paths, where we fix notation and discuss
conditioning, concatenation, and shifted semimartingale laws. A key property in
this work is the measurability, on the canonical space, of semimartingale
characteristics with respect to the underlying probability measure, established
by \citeauthor*{neufeld2014measurability}
\cite{neufeld2014measurability}. We adopt their conventions throughout. Proofs of the results in this preliminary section are collected in \Cref{sec::proofs_preliminaries}.

\subsection{Setup and semi-martingale laws on the canonical space}\label{sec::semimartingale_laws}

We denote by $\Omega$ the collection of all c\`adl\`ag paths $\omega : [0,\infty) \longrightarrow \R^d$ with $\omega_0 = 0$, by $\F = (\cF_t)_{t \in [0,\infty)}$ the canonical (raw) filtration generated by the canonical process $X = (X_t)_{t \in [0,\infty)}$, and we set $\cF \coloneqq \cF_{\infty\smallertext{+}} \coloneqq \cF_\infty \coloneqq \cF_{\infty\smallertext{-}} \coloneqq \sigma(\cup_{t\in[0,\infty)}\cF_t)$. We endow $\Omega$ with the Skorokhod topology, under which $\Omega$ is Polish and $\cB(\Omega) = \cF$ (see \cite[Theorem VI.1.14]{jacod2003limit}). We denote by $\F_\smallertext{+}$ and $\F_\smallertext{-}$ the right-continuous and left-continuous modifications of $\F$, respectively, and use the conventions $\cF_{0\smallertext{-}}=\cF_{(0\smallertext{+})\smallertext{-}} = \{\varnothing,\Omega\}$. The predictable $\sigma$-algebra $\cP(\F)$ on $\Omega\times[0,\infty)$, generated by all real-valued, $\F_\smallertext{-}$-adapted processes that are left-continuous on $(0,\infty)$, coincides with the predictable $\sigma$-algebra of $\F_\smallertext{+}$. For a probability measure $\P$ on $(\Omega,\cF)$, let $\sN^\P$ be the collection of all subsets of $(\cF,\P)$--null sets. We denote by $\F^\P = (\cF^\P_t)_{t\in[0,\infty)}$ and $\F^\P_\smallertext{+} = (\cF^\P_{t\smallertext{+}})_{t\in[0,\infty)}$ the $\P$-augmentations of $\F$ and $\F_\smallertext{+}$, respectively, that is, $\cF^\P_{t} \coloneqq \sigma(\cF_t,\sN^\P)$ and $\cF^\P_{t\smallertext{+}} \coloneqq \sigma(\cF_{t\smallertext{+}},\sN^\P)$ with the convention $\cF^{\P}_{0\smallertext{-}} = \cF^{\P}_{(0\smallertext{+})\smallertext{-}} = \sigma(\sN^\P)$. 
Then $\cP(\F^\P) = \cP(\F^\P_{\smallertext{+}})$, and we write $\cP \coloneqq \cP(\F)$, $\cP^\P \coloneqq \cP(\F^\P_\smallertext{+})$, $\widetilde{\cP} \coloneqq \cP(\F)\otimes\cB(\R^d)$ and $\widetilde{\cP}^\P \coloneqq \cP(\F^\P_\smallertext{+})\otimes\cB(\R^d)$; the latter two are the corresponding predictable $\sigma$-algebras on $\widetilde{\Omega}\coloneqq \Omega \times [0,\infty)\times \R^d$. 
Instead of stating that a process is $\cP$-measurable (resp. $\cP^\P$-measurable), we also use the terminology $\F$-predictable (resp. $\F^\P_\smallertext{+}$-predictable). If a property holds outside an $(\cF,\P)$--null set, we will say that it holds outside a $\P$--null set, or that it holds $\P$--a.s., or for $\P$--a.e. $\omega \in \Omega$.

\medskip
We denote by $\fP(\Omega)$ the collection of all probability measures on $(\Omega,\cF)$ and endow it with the topology of convergence in distribution, that is, the coarsest topology for which $\P \longmapsto \E^\P[f]$ is continuous for every bounded and continuous function $f : \Omega \longrightarrow \R$. Since $\Omega$ is Polish, so is $\fP(\Omega)$ (see \citeauthor*{aliprantis2006infinite} \cite[Theorem 15.15]{aliprantis2006infinite} or \citeauthor*{bertsekas1978stochastic} \cite[Proposition 7.23]{bertsekas1978stochastic}), and the Borel $\sigma$-algebra $\cB(\fP(\Omega))$ is generated by the mappings $\fP(\Omega) \ni \P \longmapsto \P[A] \in [0,1]$, $A \in \cB(\Omega)$ (see \cite[Proposition 7.25]{bertsekas1978stochastic}). Moreover, the subset
	\begin{equation}\label{eq_semimartingale_laws}
		\fP_\textnormal{sem} \coloneqq \{\P\in\fP(\Omega) : \textnormal{$X$ is an $(\F,\P)$-semi-martingale}\} \subseteq \fP(\Omega),
	\end{equation}
	is Borel-measurable (see \cite[Theorem 2.5]{neufeld2014measurability}). In this work, we use the following convention. For an arbitrary filtration $\G$ on $\Omega$ and $\P\in\fP(\Omega)$, an $\R^d$-valued, $\G$-adapted process $S=(S_t)_{t\in[0,\infty)}$ with c\`adl\`ag paths is referred to as a $(\G,\P)$--semi-martingale if there exist $\R^d$-valued, right-continuous, $\G$-adapted processes $M=(M_t)_{t\in[0,\infty)}$ and $A=(A_t)_{t\in[0,\infty)}$ such that $M_0=A_0=0$, $M$ is a $(\G,\P)$--local martingale, the paths of $A$ are $\P$--a.s. of locally finite variation, and
\[
    S = S_0 + M + A, \; \textnormal{$\P$--a.s.},
\]
holds. The precise choice of filtration in \eqref{eq_semimartingale_laws} is not important, namely, $S$ is an $(\F,\P)$--semi-martingale if and only if it is an $(\F_\smallertext{+},\P)$--semi-martingale, which is also equivalent to $S$ being an $(\F^\P_\smallertext{+},\P)$--semi-martingale; see \cite[Proposition~2.2]{neufeld2014measurability}.
	
	\medskip
	We denote by $\omega\otimes_\tau\tilde\omega \in \Omega$ the concatenation of two paths $(\omega,\tilde\omega) \in \Omega \times \Omega$ at an $\F$--stopping time $\tau$ defined by
	\begin{equation*}
		(\omega \otimes_\tau \tilde\omega)_s \coloneqq \omega_s \1_{\{0 \leq s < \tau(\omega)\}} + (\omega_{\tau(\omega)} + \tilde\omega_{s-\tau(\omega)})\1_{\{\tau(\omega) \leq s\}}, \; s \in [0,\infty),
	\end{equation*}
	and for any map $\xi$ on $\Omega$, we write $\xi^{\tau,\omega}$ for the function on $\Omega$ defined by
	\[
		\xi^{\tau,\omega}(\tilde{\omega}) \coloneqq \xi(\omega\otimes_\tau\tilde{\omega}), \; \omega^\prime \in \Omega.
	\]
	For $\P \in \fP(\Omega)$ and an $\F$--stopping time $\tau$, there exists a $\P$--a.s. unique stochastic kernel $(\P^\tau_\omega)_{\omega \in \Omega}$ on $(\Omega,\cF)$ given $(\Omega,\cF_\tau)$ such that, for all bounded and Borel-measurable functions $\xi$, the random variable $\E^{\P^\smalltext{\tau}_\smalltext{\cdot}}[\xi]$ is a $\P$-version of $\E^\P[\xi|\cF_\tau]$; we refer to \cite[Theorem 8.5]{kallenberg2021foundations} for its existence.
	Exactly as in \citeauthor*{stroock1997multidimensional} \cite[page 34]{stroock1997multidimensional}, we choose $(\P^\tau_\omega)_{\omega \in \Omega}$ in such a way that additionally
	\begin{equation*}
		\P^\tau_\omega\big[\tau = \tau(\omega), \; X_{\cdot \land \tau(\omega)} = \omega_{\cdot\land\tau(\omega)}\big] = 1, \; \omega \in \Omega,
	\end{equation*}
	holds. We then refer to $(\P^\tau_\omega)_{\omega \in \Omega}$ as the regular conditional probability distribution (r.c.p.d.) of $\P$ given $\cF_\tau$. For background and further details, see also \citeauthor*{elkaroui2013capacities} \cite[Section 4.1]{elkaroui2013capacities} and \citeauthor*{dellacherie1978probabilities} \cite[pages 145--152]{dellacherie1978probabilities}. For a finite $\F$--stopping time $\tau$ and every $\omega \in \Omega$, we then define $\P^{\tau,\omega} \in \fP(\Omega)$ by
	\[
		\P^{\tau,\omega}[A] \coloneqq \P^{\tau}_{\omega}[\omega\otimes_\tau A], \; A \in \cF,
	\]
	where $\omega\otimes_\tau A \coloneqq \{\omega\otimes_\tau\tilde\omega : \tilde\omega \in A\}$. Then $\E^{\P^{\tau,\omega}}[\xi^{\tau,\omega}] = \E^{\P^\smalltext{\tau}_\smalltext{\omega}}[\xi]$, $\omega \in \Omega$, which means that 
	\begin{equation}\label{eq_conditional_expectation_shifted_measure}
		\textnormal{$\E^{\P^{\tau,\cdot}}[\xi^{\tau,\cdot}]$ is a $\P$-version of $\E^\P[\xi|\cF_\tau]$.}
	\end{equation}
	Lastly, for two finite $\F$--stopping times $\tau$ and $\sigma$, a process $Z$ with index set $[0,\infty)$ or $[0,\infty]$, and $\omega \in \Omega$, we write $Z^{\tau,\omega}_{\sigma\smallertext{+}\smallertext{\cdot}} \coloneqq (Z_{\sigma\smallertext{+}\smallertext{\cdot}})^{\tau,\omega}$ for the process given by 
	\[
	(Z^{\tau,\omega}_{\sigma\smallertext{+}\smallertext{\cdot}})_r(\tilde\omega) = Z_{\sigma(\omega\otimes_\smalltext{\tau}\tilde\omega)\smallertext{+}r}(\omega\otimes_\tau\tilde\omega), \; (\tilde\omega,r) \in \Omega \times [0,\infty).
	\]
For $\sigma = \tau$, we have $\tau(\omega\otimes_\tau\tilde\omega) = \tau(\omega)$ by Galmarino's test (see \cite[Theorem IV.100]{dellacherie1978probabilities}), and thus $(Z^{\tau,\omega}_{\tau\smallertext{+}\smallertext{\cdot}})_r(\tilde\omega) = Z_{\tau(\omega)\smallertext{+}r}(\omega\otimes_\tau\tilde\omega)$.

	\begin{remark}\label{rem::conditional_law_future_increments}
		For a finite $\F$--stopping time $\tau$, it can be readily verified from \eqref{eq_conditional_expectation_shifted_measure} that
		\[
		\P^{\tau,\omega}[A] = \P\big[X_{\tau\smallertext{+}\smallertext{\cdot}}-X_\tau \in A\big|\cF_\tau\big](\omega), \; \textnormal{$\P$--a.e. $\omega \in \Omega$, $A \in \cF$,}
		\]
		that is, $(\mathbb{P}^{\tau, \omega})_{\omega \in \Omega}$ is the conditional distribution on $\Omega$ of the shifted process $X_{\tau\smallertext{+}\smallertext{\cdot}} - X_\tau = (X_{\tau\smallertext{+}t} - X_\tau)_{t \in [0, \infty)}$ given $\mathcal{F}_\tau$. 
	\end{remark}
	
	The next result concerns the conditioning and shifting of martingales. The proof of \cite[Lemma 3.3]{neufeld2016nonlinear} considers the raw filtration $\F$. Since handling $\F_\smallertext{+}$ requires additional care, we provide a proof in \Cref{sec::proofs_preliminaries} for completeness.
	\begin{lemma}\label{lem::conditioning_martingale2}
		Let $\P\in\fP(\Omega)$, and let $\tau$ be a finite $\F$--stopping time. Suppose that $M = (M_t)_{t \in [0,\infty)}$ is a real-valued, right-continuous, $\F_\smallertext{+}$-adapted, $(\F_\smallertext{+},\P)$--square-integrable martingale. Then $M^{\tau,\omega}_{\tau\smallertext{+}\smallertext{\cdot}}$ is a real-valued, right-continuous, $\F_\smallertext{+}$-adapted, $(\F_\smallertext{+},\P^{\tau,\omega})$--square-integrable martingale for \textnormal{$\P$--a.e.} $\omega \in \Omega$. The same assertion holds when $\F_\smallertext{+}$ is replaced by $\F$.
	\end{lemma}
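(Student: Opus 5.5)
We prove first the statement for $\F$, essentially following \cite[Lemma 3.3]{neufeld2016nonlinear}, and then deduce the $\F_\smallertext{+}$ case by approximating from the right.

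\emph{Measurability and integrability of the shifted process.} For every $\omega$, the map $\tilde\omega\longmapsto\omega\otimes_\tau\tilde\omega$ is $\cF_r$--$\cF_{\tau(\omega)+r}$-measurable. Hence $M^{\tau,\omega}_{\tau+r}$ is $\cF_{r+}$-measurable, or $\cF_r$-measurable in the raw case, for every $\omega$. Right-continuity of the paths is preserved pathwise. For square-integrability, \eqref{eq_conditional_expectation_shifted_measure} gives $\E^\P\big[\E^{\P^{\tau,\cdot}}[|M^{\tau,\cdot}_{\tau+r}|^2]\big]=\E^\P[|M_{\tau+r}|^2]<\infty$. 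To justify this, we first reduce to a bounded stopping time by localisation, using $\tau\land n$ and $\bigcup_n\{\tau\le n\}=\Omega$, so that optional sampling and Doob's inequality apply. Then $\E^{\P^{\tau,\omega}}[|M^{\tau,\omega}_{\tau+r}|^2]<\infty$ for $\P$--a.e. $\omega$, simultaneously for all rational $r$, and hence for all $r$ by monotonicity of $r\longmapsto \E[|M_{\tau+r}|^2]$ and right-continuity.

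\emph{Martingale property.} Fix rationals $0\le s<t$ and a set $A\in\cF_s$ taken from a countable $\pi$-system generating $\cF_s$, for instance cylinder sets with rational times and rational boxes. Let $B\in\cF_\tau$ be arbitrary. The set $\tilde B\coloneqq\{\omega\otimes_\tau\tilde\omega:\omega\in B,\tilde\omega\in A\}$ lies in $\cF_{\tau+s}$. Optional sampling for $M$ at $\tau+s\le\tau+t$ gives $\E^\P[(M_{\tau+t}-M_{\tau+s})\1_{\tilde B}]=0$. Disintegrating with \eqref{eq_conditional_expectation_shifted_measure}, this reads $\E^\P\big[\1_B\,\E^{\P^{\tau,\cdot}}[(M^{\tau,\cdot}_{\tau+t}-M^{\tau,\cdot}_{\tau+s})\1_A]\big]=0$. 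Since $B\in\cF_\tau$ is arbitrary, the inner expectation vanishes outside a $\P$--null set $N_{s,t,A}$. Taking the countable union of these null sets and applying a monotone class argument in $A$, we obtain the $(\F,\P^{\tau,\omega})$-martingale property at rational times for $\P$--a.e. $\omega$. We extend it to all times using right-continuity together with uniform integrability, which follows from the $L^2$ bound.

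\emph{The $\F_\smallertext{+}$ case.} This is the main obstacle. The set $A$ now lies in $\cF_{s+}$, which is not countably generated in a usable way, and $M_{\tau+s}$ is only $\cF_{(\tau+s)+}$-measurable. I would avoid handling $\cF_{s+}$ directly. For $A\in\cF_{s+}\subseteq\cF_{s+1/n}$, the raw-filtration argument above, with $s$ replaced by rationals $s_n\downarrow\downarrow s$, gives $\E^{\P^{\tau,\omega}}[(M^{\tau,\omega}_{\tau+t}-M^{\tau,\omega}_{\tau+s_n})\1_A]=0$ for all $A\in\cF_{s_n}$. The exceptional null set depends only on countably many rational pairs $(s_n,t)$ and on the countable generating families of $\cF_{s_n}$, not on $A$. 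Since $\cF_{s+}\subseteq\cF_{s_n}$ for every $n$, letting $n\to\infty$ and using right-continuity of $M^{\tau,\omega}$ together with $L^2$-bounded convergence on $[0,t]$ yields the martingale identity with respect to $\cF_{s+}$. One subtle point must be checked: the raw step requires $M$ to be $\F$-adapted, whereas here it is only $\F_\smallertext{+}$-adapted. I would handle this by noting that only $\cF_{\tau+t}$-measurability of the increment is needed at the later time, and that $M_{\tau+s_n}$ is $\cF_{\tau+s_{n-1}}$-measurable. The computation therefore runs with the test sets $\tilde B$ built at level $\tau+s_n$, using optional sampling for the right-continuous $(\F_\smallertext{+},\P)$-martingale.
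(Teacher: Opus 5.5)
Your martingale argument follows the paper's proof. You test the increment against sets (the paper uses bounded functions $g$) of the shifted path lying in $\cF_{s+\varepsilon}$, and lift them to $\cF_{\tau+s+\varepsilon}$-measurable objects. You then apply optional sampling for the right-continuous $(\F_{+},\P)$-martingale at $\tau+s+\varepsilon\le\tau+t$ and disintegrate with the r.c.p.d. Countable generation of $\cF_{s+\varepsilon}$ plus a monotone class argument removes the dependence of the null set on the test set, and you finally let $\varepsilon\downarrow\downarrow0$ using right-continuity. Your test sets lie in $\cF_{\tau+s_n}\subseteq\cF_{(\tau+s_n)+}$, and $\E^\P[M_{\tau+t}\,|\,\cF_{(\tau+s_n)+}]=M_{\tau+s_n}$. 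So the $\F_{+}$- versus $\F$-adaptedness worry disappears, and the remark that $M_{\tau+s_n}$ is $\cF_{\tau+s_{n-1}}$-measurable is not needed. The paper treats $\F_{+}$ first and deduces the $\F$ case, whereas you go the other way; the order does not matter.

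The step that does not work as written is the integrability. You only get $\E^{\P^{\tau,\omega}}[|M^{\tau,\omega}_{\tau+r}|^2]<\infty$ for each fixed $r$. The appeal to monotonicity of $r\mapsto\E[|M_{\tau+r}|^2]$ either concerns $\P$, which says nothing about a fixed $\P^{\tau,\omega}$, or concerns $\P^{\tau,\omega}$, which presupposes the martingale property still to be proved. Per-time finiteness does not give the lemma's conclusion: in this paper, square-integrable means that $\sup_t|M_t|$ is square-integrable. Nor does it give the uniform integrability of $(M^{\tau,\omega}_{\tau+s_n})_{n}$ that your passage $s_n\downarrow\downarrow s$ requires. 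In the $\F_{+}$ case, $M^{\tau,\omega}_{\tau+s_n}$ is only $\cF_{s_n+}$-measurable. Hence the identity $\E^{\P^{\tau,\omega}}[(M^{\tau,\omega}_{\tau+t}-M^{\tau,\omega}_{\tau+s_n})\1_A]=0$, $A\in\cF_{s_n}$, does not exhibit it as a conditional expectation of $M^{\tau,\omega}_{\tau+t}$, and uniform integrability does not come for free.

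The fix is the paper's one-liner. The variable $\sup_t|M_{\tau+t}|$ is measurable (a supremum over rational $t$, by right-continuity) and dominated by $\sup_t|M_t|\in L^2(\P)$. Disintegration then gives $\E^{\P^{\tau,\omega}}[\sup_t|M^{\tau,\omega}_{\tau+t}|^2]=\E^\P[\sup_t|M_{\tau+t}|^2\,|\,\cF_\tau](\omega)<\infty$ for $\P$-a.e. $\omega$. This also makes the localisation through $\tau\land n$ and the appeal to Doob's inequality unnecessary.
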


	Fix a bounded and Borel-measurable function $h: \R^d \longrightarrow \R^d$ satisfying $h(x) = x$ in an open neighbourhood of the origin; such a function is called a truncation function in the theory of semi-martingales. For $\P \in \fP_\textnormal{sem}$, we denote the $(\F,\P)$--semi-martingale characteristics of $X$ relative to $h$ by $(\mathsf{B}^{\P},\mathsf{C}^{\P},\nu^{\P})$ consisting of $\R^d$- and $\S^d_\smallertext{+}$-valued processes (maps on $\Omega\times[0,\infty)$) $\mathsf{B}^{\P}$ and $\mathsf{C}^{\P}$, respectively, and $\nu^\P = \{\nu^\P(\omega;\d t, \d x): \omega \in \Omega\}$ is a collection of (nonnegative) measures on $[0,\infty)\times\R^d$ with $\nu^\P(\{0\}\times\R^d)\equiv 0$ satisfying the following: outside some common $\P$--null set, $\mathsf{B}^{\P} = (\mathsf{B}^\P_t)_{t \in [0,\infty)}$ coincides with the $\F$-predictable finite variation part in the canonical decomposition of the special $(\F,\P)$--semi-martingale $X - \sum_{s \in (0,\cdot]}(\Delta X_s - h(\Delta X_s))$ (see \cite[Corollary 7.2.8]{weizsaecker1990stochastic}), $\mathsf{C}^\P = (\mathsf{C}^\P_t)_{t \in [0,\infty)}$ coincides with the $(\F,\P)$--quadratic covariation of the $(\F,\P)$--continuous local martingale part of $X$ (see \Cref{lem::equivalence_loc_square_integrable_martingale}, \Cref{lem::martingale_decomposition}, and \cite[Example 4.2.2.(b) and Corollary 6.6.3]{weizsaecker1990stochastic}), and $\nu^\P$ is equal to the $(\F,\P)$--predictable compensator of $\mu^X$, the jump measure of $X$ on $[0,\infty) \times \R^d$ (see \Cref{lem::existence_predictable_compensator_mu}). We note that the characteristics relative to $\F$, $\F_\smallertext{+}$, and $\F^\P_\smallertext{+}$ coincide outside some $\P$--null set; see \cite[Proposition~2.2]{neufeld2014measurability}.

	\medskip
	Unless stated otherwise, we take the following assumption as given throughout this work, without explicitly mentioning it in the statements of our results.
	\begin{standing_assumption}
		$C = (C_t)_{t \in [0,\infty)}$ is a real-valued, right-continuous and non-decreasing, $\F$-predictable process starting at zero.
	\end{standing_assumption}

	For a finite $\F$--stopping time $\tau$, we set\footnote{Recall that $\mu \ll \nu$ means that the measure $\mu$ is absolutely continuous with respect to the measure $\nu$.}
	\begin{equation}\label{eq::semi_martingale_laws}
		\widehat{\Omega}^C_\tau \coloneqq \big\{(\omega,\P) \in \Omega \times \fP_\textnormal{sem} : (\mathsf{B}^{\P},\mathsf{C}^{\P},\nu^{\P}) \ll (C^{\tau,\omega}_{\tau\smallertext{+}\smallertext{\cdot}}-C_{\tau(\omega)}(\omega)), \; \text{$\P$--a.s.} \big\}.
	\end{equation}
	For the third characteristic, this means that $(|x|^2 \land 1)\ast\nu^\P \ll (C^{\tau,\omega}_{\tau\smallertext{+}\smallertext{\cdot}}-C_{\tau(\omega)}(\omega))$ up to a $\P$--null set. By \Cref{lem::absolute_continuity_nu}, this is equivalent to $\nu^{\P}(\d t, \d x)= \mathsf{K}^{\tau,\omega,\P}_{\cdot,t}(\d x)\d (C^{\tau,\omega}_{\tau\smallertext{+}\smallertext{\cdot}}-C_{\tau(\omega)}(\omega))_t$, $\P$--a.s., where $\mathsf{K}^{\tau,\omega,\P}$ is a kernel on $(\R^d,\cB(\R^d))$ given $(\Omega\times[0,\infty),\cP)$. For every $(\omega,\P)\in\widehat{\Omega}^C_\tau$, the corresponding derivatives $(\mathsf{b}^{\tau,\omega,\P},\mathsf{a}^{\tau,\omega,\P},\mathsf{K}^{\tau,\omega,\P})$ with respect to $C^{\tau,\omega}_{\tau\smallertext{+}\smallertext{\cdot}}-C_{\tau(\omega)}(\omega)$ are $\P \otimes \d (C^{\tau,\omega}_{\tau\smallertext{+}\smallertext{\cdot}}-C_{\tau(\omega)}(\omega))$--a.e. uniquely defined and referred to as the $(\F,\P)$--differential characteristics of $X$ relative to $C^{\tau,\omega}_{\tau\smallertext{+}\smallertext{\cdot}}-C_{\tau(\omega)}(\omega)$. 
	
	\medskip
	Our next result is reminiscent of \cite[Theorem 2.6]{neufeld2014measurability}, but adapted to the differential characteristics considered here. Let $\frM(\R^d)$ denote the collection of all (nonnegative) measures on $(\R^d,\cB(\R^d))$. The subset consisting of all L\'evy measures is
	\begin{equation*}
		\cL\coloneqq\bigg\{\kappa \in \frM(\R^d) : \int_{\R^\smalltext{d}}(|x|^2 \land 1) \kappa(\d x) < \infty \; \text{and} \; \kappa(\{0\}) = 0\bigg\}.
	\end{equation*}
	We endow $\cL$ with a metric $d_{\cL}$ and the corresponding Borel $\sigma$-algebra $\cB(\cL)$, such that, for any measurable space $(Y,\cY)$, a map $\kappa : Y \longrightarrow \cL$ is $(\cY,\cB(\cL))$-measurable if and only if for all bounded and Borel-measurable functions $f : \R^d \longrightarrow \R$, the map
	\begin{equation*}
		Y \ni y \longmapsto \int_{\R^\smalltext{d}} (|x|^2 \land 1)f(x)\kappa(y;\d x) \in \R,
	\end{equation*}
	is $(\cY,\cB(\R))$-measurable.
	We refer to \cite[Section 2.1]{neufeld2014measurability} for details and further references.

\medskip
We will repeatedly, and without further mention, use the following fact: if $S$ is a topological space, $T$ is a second-countable topological space (as is the case when $T$ is Polish), and $A\subseteq S$ and $B\subseteq T$ are endowed with the respective subspace topologies, then, with $A\times B$ endowed with the product topology, $\cB(A\times B)=\cB(A)\otimes\cB(B)$. This follows from the proof of \cite[Lemma 6.4.2.(i)]{bogachev2007measure}, since second countability is inherited by subspaces.\footnote{The cited result additionally assumes that $A$ is Hausdorff, but an inspection of its proof reveals that this assumption is not needed.}

\medskip
We provide the proof of the following result in \Cref{sec::proofs_preliminaries}. 

\begin{lemma}\label{lem::measurability_characteristics}
	Let $\tau$ be a finite $\F$--stopping time. The set $\widehat\Omega^C_\tau \subseteq \Omega \times \fP_\textnormal{sem}$ defined in \eqref{eq::semi_martingale_laws} is Borel-measurable. Moreover, there exists a Borel-measurable map
	\begin{equation*}
		\Omega \times \fP_\textnormal{sem} \times \Omega\times [0,\infty) \ni (\omega,\P,\tilde\omega,t) \longmapsto \big(\mathsf{b}^{\tau,\omega,\P}_t(\tilde\omega),\mathsf{a}^{\tau,\omega}_t(\tilde\omega),\mathsf{K}^{\tau,\omega,\P}_{\tilde\omega,t}\big) \in \R^d \times \S^{d}_\smallertext{+} \times \cL,
	\end{equation*}
	such that
	\begin{enumerate}
		\item[$(i)$] $\mathsf{b}^{\tau,\omega,\P}$ is $\F_\smallertext{+}$-progressive and $\F^\P_\smallertext{+}$-predictable{\rm;}
		\item[$(ii)$] $\mathsf{a}^{\tau,\omega}$ is $\F$-predictable{\rm;}
		\item[$(iii)$] $(\omega,\P,\tilde\omega,t,B) \longmapsto \mathsf{K}^{\tau,\omega,\P}_{\tilde\omega,t}(B)$ is a kernel on $(\R^d,\cB(\R^d))$ given $(\Omega\times\fP_\textnormal{sem} \times \Omega \times [0,\infty),\cF\otimes\cB(\fP_\textnormal{sem})\otimes \cP)$$;$
		\item[$(iv)$] for every $(\omega,\P) \in \widehat{\Omega}^C_\tau$, the collection $(\mathsf{b}^{\tau,\omega,\P},\mathsf{a}^{\tau,\omega},\mathsf{K}^{\tau,\omega,\P})$ constitutes the $(\F,\P)$--differential characteristic triplet of $X$ relative to $C^{\tau,\omega}_{\tau\smallertext{+}\smallertext{\cdot}}-C_{\tau(\omega)}(\omega)$, that is, outside a $\P$--null set,
			\begin{equation*}
			\mathsf{B}^{\P} = \int_0^\cdot \mathsf{b}^{\tau,\omega,\P}_t \d (C^{\tau,\omega}_{\tau\smallertext{+}\smallertext{\cdot}}-C_{\tau(\omega)}(\omega))_t, \; \mathsf{C}^{\P} = \int_0^\cdot \mathsf{a}^{\tau,\omega}_t \d (C^{\tau,\omega}_{\tau\smallertext{+}\smallertext{\cdot}}-C_{\tau(\omega)}(\omega))_t, \; \nu^{\P}(\d t,\d x) = \mathsf{K}^{\tau,\omega,\P}_t(\d x)\d (C^{\tau,\omega}_{\tau\smallertext{+}\smallertext{\cdot}}-C_{\tau(\omega)}(\omega))_t.
			\end{equation*}
	\end{enumerate}
\end{lemma}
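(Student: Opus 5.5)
The plan is to reduce to the measurable characteristics of \cite[Theorem 2.4 and Theorem 2.6]{neufeld2014measurability}. They give Borel maps $(\P,\tilde\omega,t)\longmapsto(\mathsf{B}^\P_t(\tilde\omega),\mathsf{C}_t(\tilde\omega),\nu^\P(\tilde\omega;\d t,\d x))$ on $\fP_\textnormal{sem}\times\Omega\times[0,\infty)$. These maps have three properties. First, $\mathsf{B}^\P$ is $\F_\smallertext{+}$-adapted, right-continuous and $\F^\P_\smallertext{+}$-predictable. Second, $\mathsf{C}$ is the pathwise quadratic-variation-based second characteristic: it is $\F$-predictable and does not depend on $\P$. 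Third, $\nu^\P$ is a kernel measurable in $(\P,\tilde\omega,t)$ with the predictable-type measurability in $(\tilde\omega,t)$.

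We then take derivatives with respect to the shifted integrator $A^{\omega}_t(\tilde\omega)\coloneqq C_{\tau(\omega)+t}(\omega\otimes_\tau\tilde\omega)-C_{\tau(\omega)}(\omega)$. This is jointly Borel in $(\omega,\tilde\omega,t)$, non-decreasing and right-continuous in $t$, and $\F$-predictable in $(\tilde\omega,t)$ for fixed $\omega$, by Galmarino's test and the predictability of $C$. The derivatives are defined through a measurable Radon--Nikodym construction along dyadic partitions, as in \cite[Proposition I.3.13]{jacod2003limit}, but done pathwise and with explicit limits so that joint measurability is preserved.

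For the first two characteristics, set
\[
\mathsf{b}^{\tau,\omega,\P}_t\coloneqq\limsup_{n\to\infty}\frac{\mathsf{B}^\P_t-\mathsf{B}^\P_{(t-2^{-n})\vee 0}}{A^\omega_t-A^\omega_{(t-2^{-n})\vee0}}
\]
on the set where the lim sup is finite, and $0$ elsewhere; when $\Delta A^\omega_t>0$ this is the ratio of jumps. The same formula, with $\mathsf{C}$ and projected onto $\S^d_\smallertext{+}$, defines $\mathsf{a}^{\tau,\omega}$. Left-sided increments keep these processes predictable, or at least $\F_\smallertext{+}$-progressive; they are $\F^\P_\smallertext{+}$-predictable once we restrict to the indicator of the predictable set where the limit exists. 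Joint Borel measurability is immediate, since these are countable limsups of Borel functions. On $\widehat\Omega^C_\tau$, the martingale/differentiation theorem for Lebesgue--Stieltjes measures gives $\P$-a.s. and $\d A^\omega$-a.e. that the limit equals the Radon--Nikodym density. This yields the first two identities in $(iv)$.

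For the kernel, we will apply the same construction to $\nu^\P$ tested against a countable measure-determining family $f_k$ weighted by $|x|^2\wedge1$. We then need to assemble a genuine $\cL$-valued kernel. This is the main obstacle: we must choose densities jointly measurable in $(\omega,\P,\tilde\omega,t)$ that are simultaneously measures for every argument. I would try a disintegration approach rather than taking densities one by one. Form the measure $\nu^\P(\tilde\omega;\d t,\d x)+ \boldsymbol\delta_0(\d x)\,\d A^\omega_t$ on $[0,\infty)\times\R^d$. Then apply a measurable disintegration theorem with parameters, such as \cite[Theorem 8.5]{kallenberg2021foundations} applied to kernels with respect to the $t$-marginal and using the $\sigma$-algebra $\cF\otimes\cB(\fP_\textnormal{sem})\otimes\cP$ as the parameter space. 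The next step is to take the Radon--Nikodym derivative of the $t$-marginal of the $\nu$-part with respect to $\d A^\omega$, using the limsup recipe above. Multiplying, and setting the result to $0$ where the derivative is infinite or the total mass of $(|x|^2\wedge1)$ is infinite, gives $\mathsf{K}^{\tau,\omega,\P}_{\tilde\omega,t}$. This is $\cL$-valued, and it is measurable with respect to $\cB(\cL)$ by the characterisation of $\cB(\cL)$ recalled above. Under absolute continuity it reproduces $\nu^\P$.

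Finally, measurability of $\widehat\Omega^C_\tau$ will be obtained as follows. Absolute continuity holds $\P$-a.s. if and only if, $\P$-a.s. and for all $t$ in a countable dense set,
\[
\mathrm{Var}(\mathsf{B}^\P)_t+\mathrm{Tr}[\mathsf{C}_t]+(|x|^2\wedge1)\ast\nu^\P_t=\int_0^t\big(|\mathsf{b}|+\mathrm{Tr}[\mathsf{a}]+\textstyle\int(|x|^2\wedge1)\mathsf{K}(\d x)\big)\d A^\omega.
\]
By right-continuity, rational times suffice. The event in question is jointly Borel in $(\omega,\P,\tilde\omega)$, so the map $(\omega,\P)\longmapsto\P[\text{event}]$ is Borel by \cite[Proposition 7.29]{bertsekas1978stochastic}. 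Therefore $\widehat\Omega^C_\tau=\{\P[\text{event}]=1\}\cap(\Omega\times\fP_\textnormal{sem})$ is Borel.
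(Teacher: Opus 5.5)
The parts for $\mathsf{b}^{\tau,\omega,\P}$ and $\mathsf{a}^{\tau,\omega}$ are fine; they use left-sided limsup quotients against the shifted integrator, exactly as in the paper. The kernel step, however, has a genuine gap. As written, you disintegrate $\nu^\P(\tilde\omega;\d t,\d x)+\boldsymbol\delta_0(\d x)\,\d A^\omega_t$ over its $t$-marginal and multiply by the derivative of the $t$-marginal of the $\nu$-part.

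For infinite-activity jumps that marginal is not $\sigma$-finite. Take a one-dimensional symmetric $\alpha$-stable law with $C_t=t$: then $\nu^\P(\d t,\d x)=c\,|x|^{-1-\alpha}\d x\,\d t$, so $\nu^\P(I\times\R)=\infty$ for every nondegenerate interval $I$. No disintegration over this marginal exists. Moreover, the limsup quotient of the $\nu$-marginal is $+\infty$ everywhere, so your rule ``set to $0$ where the derivative is infinite'' gives $\mathsf{K}\equiv0$. That contradicts $(iv)$ for a measure that plainly lies in $\widehat\Omega^C_\tau$. Even after weighting by $|x|^2\wedge1$, the correct multiplier is the derivative of the \emph{whole} marginal (i.e.\ $1+$ your quantity), with the atom at $0$ removed.

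There is a second gap, concerning $(iii)$. A disintegration in $t$ performed path by path, with parameters $(\omega,\P,\tilde\omega)$, yields a jointly Borel kernel, but nothing makes it $\cF\otimes\cB(\fP_\textnormal{sem})\otimes\cP$-measurable. The $\sigma$-algebra $\cP$ lives on $\Omega\times[0,\infty)$, so it cannot serve as the ``parameter space'' of a disintegration in $t$. Predictability would require either disintegrating a Dol\'eans-type measure on $\widetilde\cP$, as in \cite[Theorem II.1.8]{jacod2003limit}, or the one-by-one route you discarded. The latter actually works for the finite measures $(|x|^2\wedge1)\nu^\P$: take predictable left-sided quotients indexed by the rational orthants $(-\infty,q]$, $q\in\Q^d$, and impose the countably many conditions for a valid distribution function.

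The paper avoids a new disintegration altogether. It starts from the Neufeld--Nutz predictable disintegration $\nu^\P(\d t,\d x)=\mathsf{K}^\P_t(\d x)\,\d\mathsf{A}^\P_t$, where $\mathsf{K}^\P$ is already a kernel given $\cB(\fP_\textnormal{sem})\otimes\cP$ and $\mathsf{A}^\P$ is made $\cB(\fP_\textnormal{sem})\otimes\cP$-measurable by a measurable predictable projection (\Cref{lem::measurable_disintegration_compensator}). It then differentiates only the scalar $\mathsf{A}^\P$ against $C^{\tau,\omega}_{\tau+\cdot}-C_{\tau(\omega)}(\omega)$, with the same limsup recipe, to obtain $\alpha^{\tau,\omega,\P}$. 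Finally it sets $\mathsf{K}^{\tau,\omega,\P}\coloneqq\1_D\,\alpha^{\tau,\omega,\P}\mathsf{K}^\P$, where $D$ is the predictable set on which this is a L\'evy measure. The identification on $\widehat\Omega^C_\tau$ is then \Cref{lem::absolute_continuity_nu}.$(iii)$. Because $\omega$ enters only through a scalar density, joint measurability and predictability are immediate.

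Your criterion for $\widehat\Omega^C_\tau$ has the right structure, but it inherits the kernel defect: with $\mathsf{K}\equiv0$ the identity fails in the stable example. The paper decouples this step from the kernel by differentiating the single scalar process $R^\P=\sum_i\textnormal{Var}(\mathsf{B}^{\P,i})+\sum_{i,j}\textnormal{Var}(\mathsf{C}^{i,j})+(|x|^2\wedge1)\ast\nu^\P$ directly. It then tests $R^\P_t=\int_0^t\varphi^{\omega,\P}_s\,\d(C^{\tau,\omega}_{\tau+\cdot}-C_{\tau(\omega)}(\omega))_s$ at rational times $t$.
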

	
For $\tau \equiv 0$, we omit the dependence on $\tau$ and $\omega$ in the notation of the differential characteristics.
	
\medskip
We recall the following result concerning the conditioning and products of semimartingale laws from \cite[Theorem 3.1 and Proposition 4.1]{neufeld2016nonlinear}. To formulate $(i)$, we use the fact that the compensator $\nu^\P$ admits a decomposition
\[
    \nu^\P(\omega;\d t,\d x)
    = \mathsf{K}_{\omega,t}(\d x)\d\mathsf{A}_t(\omega),
    \; \textnormal{$\P$--a.e. $\omega \in \Omega$,}
\] 
where $(\omega,t,B)\longmapsto\mathsf{K}_{\omega,t}(B)$ is a kernel on $(\R^d,\cB(\R^d))$ given $(\Omega\times[0,\infty),\cP)$, and $\mathsf{A}=(\mathsf{A}_t)_{t\in[0,\infty)}$ is a real-valued, right-continuous, $\P$--a.s. non-decreasing, $\F$-predictable process starting at zero; see \cite[Theorem II.1.8]{jacod2003limit} and \cite[Lemma 6.5.10]{weizsaecker1990stochastic}. The second assertion in $(i)$ follows immediately from \Cref{lem::measurability_characteristics}.$(iv)$.

	\begin{proposition}\label{prop::conditioning_characteristics2}
		Let $\P \in \fP_\textnormal{sem}$, and let $\tau$ be a finite $\F$--stopping time.
		\begin{enumerate}
			\item[$(i)$] Suppose that $(\mathsf{B},\mathsf{C},\mathsf{K}(\d x)\d\mathsf{A})$ are $(\F,\P)$--semi-martingale characteristics of $X$. Then $\P^{\tau,\omega} \in  \fP_\textnormal{sem}$ and 
			\begin{equation*}
				\big(\mathsf{B}^{\tau,\omega}_{\tau\smallertext{+}\smallertext{\cdot}}-\mathsf{B}_{\tau(\omega)}(\omega),\mathsf{C^{\tau,\omega}_{\tau\smallertext{+}\smallertext{\cdot}}}-\mathsf{C}_{\tau(\omega)}(\omega),\mathsf{K}_{\omega\otimes_\tau\cdot,\tau\smallertext{+}\smallertext{\cdot}}(\d x)\d(\mathsf{A}^{\tau,\omega}_{\tau\smallertext{+}\smallertext{\cdot}}-\mathsf{A}_{\tau(\omega)}(\omega))\big),
			\end{equation*}
			are $(\F,\P^{\tau,\omega})$--semi-martingale characteristics of $X$ for \textnormal{$\P$--a.e.} $\omega \in \Omega$. In particular, if $(\tilde\omega,\P) \in \Omega^C_s$ for some $s \in [0,\infty)$ and $\tau \geq s$, then $(\tilde\omega\otimes_s\omega,\P^{\tau,\omega}) \in \Omega^C_{\tau\smallertext{+}s}$ and
			\[
				\big(\mathsf{b}^{s,\omega,\P}_{\tau\smallertext{+}\smallertext{\cdot}}(\omega\otimes_\tau\cdot), \mathsf{a}^{s,\omega}_{\tau\smallertext{+}\smallertext{\cdot}}(\omega\otimes_\tau\cdot), \mathsf{K}^{s,\omega,\P}_{\omega\otimes_\tau\cdot,\tau\smallertext{+}\smallertext{\cdot}}(\d x)\big),
			\]
			are $(\F,\P^{\tau,\omega})$--differential characteristics of $X$ relative to $C^{\tau,\tilde\omega\otimes_s\omega}_{\tau\smallertext{+}s\smallertext{+}\smallertext{\cdot}}-C_{(\tau\smallertext{+}s)(\tilde\omega\otimes_s\omega)}(\tilde\omega\otimes_s\omega)$ for \textnormal{$\P$--a.e. $\omega \in \Omega$}.
			\item[$(ii)$] If $\Q$ is a stochastic kernel on $(\Omega,\cF)$ given $(\Omega,\cF_\tau)$ with $\Q(\omega) \in \fP_\textnormal{sem}$ for \textnormal{$\P$--a.e.} $\omega \in \Omega$, then
			\begin{align*}
				\overline{\P}[A] \coloneqq & \iint \big(\1_A\big)^{\tau,\omega}(\omega^\prime)\Q(\omega;\d\omega^\prime)\P(\d\omega), \; A \in \cF,
			\end{align*}
			belongs to $\fP_\textnormal{sem}$.
		\end{enumerate}
	\end{proposition}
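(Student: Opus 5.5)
The proof of both parts reduces to the results of \cite[Theorem 3.1 and Proposition 4.1]{neufeld2016nonlinear}, which are stated for the raw filtration $\F$ and for characteristics relative to $h$. For $(i)$, the first assertion is exactly \cite[Theorem 3.1]{neufeld2016nonlinear}. I would nevertheless sketch its structure, since the second assertion uses it. Write $X=X_0+M+\mathsf{B}+\sum(\Delta X-h(\Delta X))$ with $M$ a local martingale, and localise by $\F$--stopping times $\sigma_n$ so that the stopped $M$ is a square-integrable martingale. Then apply \Cref{lem::conditioning_martingale2} to $M^{\sigma_n}$, to $(M^{\sigma_n})^2-\mathsf{C}^{\sigma_n}-\ldots$, and to the martingales $(g\1_{[0,\sigma_n]})\ast(\mu^X-\nu)$ for $g$ ranging over a countable determining class. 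Only countably many martingales are involved, so one $\P$--null set suffices. This shows that under $\P^{\tau,\omega}$ the shifted process $X$ has the shifted decomposition. One must also check that the shifted localising times $\sigma_n^{\tau,\omega}$ still increase to infinity $\P^{\tau,\omega}$--a.s. for $\P$--a.e. $\omega$; this follows from \eqref{eq_conditional_expectation_shifted_measure} applied to $\1_{\{\sigma_n\le N\}}$. Predictability of the shifted $\mathsf{B},\mathsf{C},\mathsf{K}\,\d\mathsf{A}$ under $\F$ follows from Galmarino-type arguments: $\omega\otimes_\tau\cdot$ maps $\cP$ into $\cP$ after the time shift by $\tau(\omega)$.

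For the second assertion of $(i)$, take $(\tilde\omega,\P)\in\widehat\Omega^C_s$ and use \Cref{lem::measurability_characteristics}.$(iv)$ to write $\mathsf{B}^\P=\int\mathsf{b}^{s,\tilde\omega,\P}\,\d(C^{s,\tilde\omega}_{s+\cdot}-C_s(\tilde\omega))$, and similarly for $\mathsf{C}^\P$ and $\nu^\P$, with $\mathsf{A}\coloneqq C^{s,\tilde\omega}_{s+\cdot}-C_s(\tilde\omega)$. Plug this into the first assertion. The shifted integrator becomes
\[
\mathsf{A}^{\tau,\omega}_{\tau+\cdot}-\mathsf{A}_{\tau(\omega)}(\omega)=C_{s+\tau(\omega)+\cdot}\big(\tilde\omega\otimes_s(\omega\otimes_\tau\cdot)\big)-C_{s+\tau(\omega)}(\tilde\omega\otimes_s\omega).
\]
The identity $\tilde\omega\otimes_s(\omega\otimes_\tau\omega')=(\tilde\omega\otimes_s\omega)\otimes_{\tau+s}\omega'$ is a direct computation from the definition of concatenation, with the stopping time $\tau+s$ evaluated via Galmarino's test. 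It identifies this integrator with the one relative to $(\tilde\omega\otimes_s\omega,\tau+s)$. The shifted integrands are therefore densities, which gives both membership in $\widehat\Omega^C_{\tau+s}$ and the stated differential characteristics. Here I read the superscript $(s,\omega)$ in the statement as $(s,\tilde\omega)$.

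Part $(ii)$ is \cite[Proposition 4.1]{neufeld2016nonlinear}. Its idea is as follows. Under $\overline{\P}$, the stopped process $X_{\cdot\wedge\tau}$ has law $\P$ restricted to $\cF_\tau$, and is hence a semi-martingale. The increment after $\tau$ is, conditionally on $\cF_\tau$, distributed as $\Q(\omega)$, which is a semi-martingale law. One needs measurable selections of the semi-martingale characteristics of $\Q(\omega)$ jointly in $\omega$, available from \cite{neufeld2014measurability}. With these, one glues localised decompositions using the kernel structure and verifies the local martingale property by conditioning on $\cF_\tau$.

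I expect the main obstacle to be the null-set bookkeeping in $(i)$. The key requirement is that one $\P$--null set works simultaneously for all the countably many martingales and localising sequences, and that this set is compatible with the $\P$--a.e. identification of characteristics in \Cref{lem::measurability_characteristics}.
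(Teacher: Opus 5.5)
Your proposal is correct and follows the paper's argument. The paper likewise takes the first assertion of $(i)$ and part $(ii)$ directly from \cite[Theorem 3.1 and Proposition 4.1]{neufeld2016nonlinear}, and obtains the second assertion of $(i)$ by inserting the densities of \Cref{lem::measurability_characteristics}.$(iv)$, taken relative to the integrator $C^{s,\tilde\omega}_{s+\cdot}-C_s(\tilde\omega)$, into the first assertion. Your concatenation identity $\tilde\omega\otimes_s(\omega\otimes_\tau\omega')=(\tilde\omega\otimes_s\omega)\otimes_{s+\tau(\omega)}\omega'$ and your reading of the superscript $(s,\omega)$ as $(s,\tilde\omega)$ spell out steps the paper leaves implicit.
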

	
	\begin{remark}
		For $s = 0$, the above result thus says that  $\mathsf{a}^{\tau,\omega} = \mathsf{a}_{\tau\smallertext{+}\smallertext{\cdot}}(\omega\otimes_\tau\cdot)$, \textnormal{$\d(C^{\tau,\omega}_{\tau\smallertext{+}\smallertext{\cdot}}-C_{\tau(\omega)}(\omega))$--a.e.}, \textnormal{$\P^{\tau,\omega}$--a.s.}, whenever the characteristics of $X$ relative to $(\F,\P)$ are absolutely continuous with respect to $C$.
	\end{remark}

	We denote by $\langle X^c \rangle$ the $\S^d_{\smallertext{+}}$-valued and $\F$-predictable process, which, for every $\P\in\fP_\textnormal{sem}$, is $\P$--a.s. non-decreasing and continuous, and coincides $\P$--a.s. with $\langle X^{c,\P} \rangle^{(\P)}$, the $(\F,\P)$--predictable quadratic covariation of the $(\F,\P)$--continuous local martingale part of $X$ (see \cite[Proposition 2.5(iii)]{neufeld2014measurability}). Here $\P$--a.s. non-decreasing means that $\langle X^c \rangle_t - \langle X^c \rangle_s$ is positive semi-definite for all $0 \leq s \leq t < \infty$ outside a $\P$--null set. 
	\begin{proposition}\label{cor::shif_quadratic_variation_continuous_martingale_part}
		Let $\tau$ be a finite $\F$--stopping time and $\P\in\fP_\textnormal{sem}$. If $X$ is an $(\F,\P)$--semi-martingale, then $(X^{c,\P})^{\tau,\omega}_{\tau\smallertext{+}\smallertext{\cdot}} - X^{c,\P}_{\tau(\omega)}(\omega)$ is a version of the $(\F,\P^{\tau,\omega})$--continuous local martingale part $X^{c,\P^{\tau,\omega}}$ of $X$, for \textnormal{$\P$--a.e.} $\omega \in \Omega$. Furthermore, $\langle X^c \rangle^{\tau,\omega}_{\tau\smallertext{+}\smallertext{\cdot}} - \langle X^c \rangle_{\tau(\omega)}(\omega) = \langle X^c\rangle$, \textnormal{$\P^{\tau,\omega}$--a.s.}, for \textnormal{$\P$--a.e.} $\omega \in \Omega$.
	\end{proposition}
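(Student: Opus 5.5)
The plan is to decompose $X$ under $\P$ and push the decomposition through the shift. Write $X = X^{c,\P} + M^{d} + A$ $\P$--a.s., where
\begin{itemize}
\item[] $X^{c,\P}$ is the continuous local martingale part,
\item[] $M^d = (x-h(x))\ast\mu^X + h(x)\ast(\mu^X-\nu^\P)$ collects the purely discontinuous martingale and large-jump parts,
\item[] $A = \mathsf{B}^\P$.
\end{itemize}
By \Cref{prop::conditioning_characteristics2}.$(i)$, for $\P$--a.e. $\omega$ the shifted triplet is a characteristic triplet of $X$ under $\P^{\tau,\omega}$. In particular $\nu^{\P}$ shifted by $(\tau,\omega)$ is the $(\F,\P^{\tau,\omega})$--compensator of $\mu^X$. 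Since $(\mu^X)^{\tau,\omega}$ restricted to $(\tau(\omega),\infty)$ is the jump measure of $X$ shifted by $\tau(\omega)$, the shifted purely discontinuous part is, $\P^{\tau,\omega}$--a.s., exactly the purely discontinuous part of $X$ under $\P^{\tau,\omega}$. The shifted drift $\mathsf{B}^{\P,\tau,\omega}_{\tau+\cdot}-\mathsf{B}^\P_{\tau(\omega)}(\omega)$ is the first characteristic under $\P^{\tau,\omega}$. Subtracting these from $X = X^{\tau,\omega}_{\tau+\cdot}-\omega_{\tau(\omega)}$, the process $(X^{c,\P})^{\tau,\omega}_{\tau+\cdot}-X^{c,\P}_{\tau(\omega)}(\omega)$ equals $X^{c,\P^{\tau,\omega}}$ up to a $\P^{\tau,\omega}$--null set, provided it is a continuous local martingale under $\P^{\tau,\omega}$.

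The key step is therefore to show that $N \coloneqq X^{c,\P}$, after shifting, is an $(\F_+,\P^{\tau,\omega})$--continuous local martingale for $\P$--a.e. $\omega$. I would localise with $\sigma_n \coloneqq \inf\{t : |N_t|\ge n\}$, so that each $N^{\sigma_n}$ is a bounded, hence square-integrable, $(\F_+,\P)$--martingale; a right-continuous, $\F_+$-adapted version can be chosen by \Cref{app::raw_filtration_stochastic_calculus}. Then \Cref{lem::conditioning_martingale2} gives, for each $n$ and $\P$--a.e. $\omega$, that $(N^{\sigma_n})^{\tau,\omega}_{\tau+\cdot}$ is an $(\F_+,\P^{\tau,\omega})$--martingale. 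Subtracting the constant $N^{\sigma_n}_{\tau(\omega)}(\omega)$ preserves this. Taking the countable intersection of the good sets over $n$, the shifted stopping times $(\sigma_n)^{\tau,\omega}$ give a localising sequence after subtracting $\tau(\omega)$. Continuity of paths transfers from $\P$ to $\P^{\tau,\omega}$ for $\P$--a.e. $\omega$, because the set of discontinuous paths is $\P$--null and hence $\P^{\tau,\omega}$--null by \eqref{eq_conditional_expectation_shifted_measure}. The same argument shows that the purely discontinuous part stays a local martingale orthogonal to continuous ones. Alternatively, that part can be identified purely through the compensator, which is already supplied by \Cref{prop::conditioning_characteristics2}.$(i)$. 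Uniqueness of the canonical decomposition of the special semimartingale $X-\sum(\Delta X-h(\Delta X))$ then yields the first claim.

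For the second claim, note that $\langle X^c\rangle$ coincides $\P$--a.s. with the quadratic variation $[X^{c,\P}]$, which is the pathwise limit of sums of squared increments along dyadic partitions on a $\P$--full set. I would use instead $\langle X^c\rangle = \mathsf{C}^\P$ $\P$--a.s. By \Cref{prop::conditioning_characteristics2}.$(i)$, the shift $\mathsf{C}^{\tau,\omega}_{\tau+\cdot}-\mathsf{C}_{\tau(\omega)}(\omega)$ is the second characteristic under $\P^{\tau,\omega}$, which equals $\langle X^c\rangle$ $\P^{\tau,\omega}$--a.s. The equality $\langle X^c\rangle=\mathsf{C}^\P$ holds off a $\P$--null set $E$. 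The shifted version holds off $E^{\tau,\omega}$, and $\P^{\tau,\omega}[E^{\tau,\omega}]=0$ for $\P$--a.e. $\omega$ by \eqref{eq_conditional_expectation_shifted_measure}. Combining the two equalities gives the claim.

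The main obstacle is the raw-filtration bookkeeping. This means ensuring that the versions of $X^{c,\P}$ and of the stopping times are $\F_+$-adapted with right-continuous paths, so that \Cref{lem::conditioning_martingale2} applies, and tracking countably many exceptional $\P$--null sets uniformly in $n$. I would handle this by fixing at the outset an $\F_+$-adapted, everywhere-defined version of $X^{c,\P}$ and only ever taking countable unions of null sets.
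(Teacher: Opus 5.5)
Your localisation argument showing that $Y \coloneqq (X^{c,\P})^{\tau,\omega}_{\tau+\cdot}-X^{c,\P}_{\tau(\omega)}(\omega)$ is a continuous $(\F,\P^{\tau,\omega})$--local martingale is fine; the paper obtains the same fact from \cite[Lemma 3.4]{neufeld2016nonlinear}. Your treatment of the second claim via \Cref{prop::conditioning_characteristics2}.$(i)$ is also what the paper does. The gap lies in the step you treat as routine. You assert that the shift $D$ of $h\ast(\mu^X-\nu^{\P})$ is, $\P^{\tau,\omega}$--a.s., the purely discontinuous part $h\ast(\mu^X-\nu^{\P^{\tau,\omega}})$. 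The compensated integral is not a pathwise functional of $(\mu^X,\nu^\P)$: in general $|h|\ast\mu^X$ and $|h|\ast\nu^\P$ are infinite. Knowing that the shifted compensator is the $\P^{\tau,\omega}$-compensator therefore only tells you that $D$ and $h\ast(\mu^X-\nu^{\P^{\tau,\omega}})$ are local martingales with the same jumps, so they may differ by a continuous local martingale. \Cref{lem::conditioning_martingale2} does not close this either. It transfers martingale properties of shifted $\P$-martingales, so at best it shows that $DY$ is a $\P^{\tau,\omega}$-local martingale. Pure discontinuity requires orthogonality to every continuous $\P^{\tau,\omega}$-local martingale, in particular to $X^{c,\P^{\tau,\omega}}$, whose relation to $Y$ is exactly what you are trying to prove. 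Finally, uniqueness of the canonical decomposition of $X-\sum(\Delta X-h(\Delta X))$ only identifies the sum $Y+D$ as the martingale part. Splitting off its continuous part needs the missing fact that the continuous part of $D$, call it $D^c$, vanishes.

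The paper closes this with a pathwise characterisation. Quadratic variation can be computed as an a.s.\ limit of sums of squared increments along a subsequence of dyadic partitions (\cite[Theorem I.4.47]{jacod2003limit}), so it commutes with the shift. Hence $[X-Y]^{(\F,\P^{\tau,\omega})}$ is the shift of $[X-X^{c,\P}]^{(\F,\P)}=\sum_{s\leq\cdot}(\Delta X_s)^2$, and so equals $\sum_{s\leq\cdot}(\Delta X_s)^2$, $\P^{\tau,\omega}$--a.s. By \cite[Theorem I.4.52]{jacod2003limit}, the continuous martingale part $X^{c,\P^{\tau,\omega}}-Y$ of $X-Y$ then has zero bracket and vanishes. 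This avoids the characteristics and the jump decomposition entirely.

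If you prefer to stay within your framework, you can repair it as follows. Write $X^{c,\P^{\tau,\omega}}=Y+D^c$. Shifting the $\P$-local martingale $h\ast(\mu^X-\nu^\P)\,X^{c,\P}$ gives $\langle Y,D^c\rangle=0$. Shifting $(X^{c,\P})^2-\langle X^c\rangle$ identifies $\langle Y\rangle$ with the shifted $\langle X^c\rangle$. By \Cref{prop::conditioning_characteristics2}.$(i)$, this equals $\langle X^{c,\P^{\tau,\omega}}\rangle=\langle Y\rangle+\langle D^c\rangle$, which forces $\langle D^c\rangle=0$. One of these two arguments has to be supplied.
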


	\subsection{Data of the BSDEs and corresponding weighted spaces}\label{sec::data}
	
	For each $(\omega,s) \in \Omega \times [0,\infty)$, we suppose that we are given a subset
	\[
		\fP(s,\omega) \subseteq \big\{\P\in \fP_\textnormal{sem} : (\mathsf{B}^{\P},\mathsf{C}^{\P},\nu^{\P}) \ll (C^{s,\omega}_{s\smallertext{+}\smallertext{\cdot}}-C_{s}(\omega)), \; \text{$\P$--a.s.} \big\},
	\]
	which is adapted in the sense that
	\begin{equation}\label{eq::adaptedness_probabilities}
		\fP(s,\omega) = \fP(s,\omega_{\cdot\land s}), \; s \in [0,\infty),\; \omega\in\Omega.
	\end{equation}
	Since each $\omega \in \Omega$ starts at zero, the collection $\fP(0,\omega)$ does not depend on $\omega$ and we therefore denote it by $\fP_0$. We suppose throughout that $\fP_0 \neq \varnothing$. 
	
	\medskip
	We recall that a subset of a Polish space is analytic if it is the image of a continuous map defined on some Polish space. Equivalently, it is the image of a Borel subset of a Polish space under a Borel-measurable map (see \cite[Propositions 8.2.3 and 8.2.6]{cohn2013measure}). For a $\sigma$-algebra $\cE$ on a set $E$, we denote by $\cE^\ast$ the universal completion of $\cE$, that is, $\cE^\ast \coloneqq \cap_\P \cE(\P)$ where the intersection is taken over all probability measures on $(E,\cE)$, and $\cE(\P)$ denotes the $\P$-completion of the $\sigma$-algebra $\cE$. Sets in $\cE^\ast$ are then said to be $\cE$--universally measurable, and a map defined on $E$ is then said to be $\cE$--universally measurable if it is measurable with respect to $\cE^\ast$. We note that every analytic subset of a Polish space $E$ is $\cB(E)$--universally measurable (see \cite[Corollary 8.4.3]{cohn2013measure}). Moreover, a measurable map between measurable spaces $(E,\cE)$ and $(H,\cH)$ is also ($\cE^\ast$,$\cH^\ast$)-measurable (see \cite[Lemma 8.4.6]{cohn2013measure}); we also use the terminology $(\cE, \cH)$-universally measurable at times. Lastly, a map $g : D \longrightarrow [-\infty,\infty]$, where $D\subseteq E$ and $E$ is Polish, is upper semi-analytic if $D$ and $\{x \in D : g(x) > c\}$ are analytic subsets of $E$ for every $c \in \R$. 
	
	\medskip
	The following conditions on the family $(\fP(s,\omega))_{(\omega,s)\in\Omega\times[0,\infty)}$ originate from \cite[Assumption 2.1]{nutz2013constructing} (see also \cite[Condition (A)]{neufeld2016nonlinear}); however, the weaker conditions in \cite[Condition (A)]{nutz2015robust} are sufficient for our purpose.
	\begin{assumption}\label{ass::probabilities2}
		For all $(s,t) \in [0,\infty)^2$ with $s \leq t$, $\bar\omega \in \Omega$, and $\P \in \fP(s,\bar{\omega})$,
		\begin{enumerate}
			\item[$(i)$] $\{(\omega,\P^\prime) :\omega \in \Omega,  \; \P^\prime \in \fP(s,\omega)\} \subseteq \Omega \times \fP(\Omega)$ is analytic$;$
			\item[$(ii)$] $\P^{t-s,\omega} \in \fP(t,\bar{\omega}\otimes_s \omega)$, for {\rm$\P$--a.e.} $\omega\in\Omega;$
			\item[$(iii)$] if $\Q$ is a stochastic kernel on $(\Omega,\cF)$ given $(\Omega,\cF_{t\smallertext{-}s})$ such that $\Q(\omega) \in \fP(t,\bar{\omega} \otimes_s\omega)$ for {\rm$\P$--a.e.} $\omega \in \Omega$, then the probability measure
			\begin{align*}
				\overline{\P}[A] \coloneqq & \iint \big(\1_A\big)^{t\smallertext{-}s,\omega}(\omega^\prime)\Q(\omega;\d\omega^\prime)\P(\d\omega), \; A \in \cF,
			\end{align*}
			belongs to $\fP(s,\bar{\omega})$.
		\end{enumerate}
	\end{assumption}
	
	\begin{example}\label{ex::differential_characteristics}
		This assumption is, for example, satisfied if $C_t \coloneqq t$ and $\fP(s,\omega) \equiv \fP_\Theta \subseteq \fP_\textnormal{sem}$ consists of all semi-martingale measures for which $X$ has differential characteristics $(\mathsf{b}_t,\mathsf{c}_t,\mathsf{K}_{\cdot,t})$ $($with respect to Lebesgue measure$)$ taking values in a Borel subset $\Theta \subseteq \R^d \times \S^d_\smallertext{+}\times \cL$$;$ see \textnormal{\cite[Theorem 2.1.$(i)$]{neufeld2016nonlinear}}. Further examples of such indexed families of probability measures satisfying the aforementioned assumptions can be found in {\rm\cite[Sections 3 and 4]{nutz2013constructing}}$;$ compare also with {\rm \citeauthor*{possamai2013robust} \cite{possamai2013robust}}.  
	\end{example}
	
	\begin{remark}\label{rem::measures_in_fP}
		$(i)$ By Galmarino's test $($see \textnormal{\cite[Theorem IV.100]{dellacherie1978probabilities}}$)$, the probability measure $\overline{\P}$ constructed in {\rm\Cref{ass::probabilities2}.$(iii)$} agrees with $\P$ on $\cF_{t\smallertext{-}s}$. Moreover, for $C \in \cF_{t-s}$ and $D \in \cF$,
		\[
			\overline{\P}\big[C \cap \{X_{(t-s)\smallertext{+}\smallertext{\cdot}} - X_{t-s} \in D\}\big] = \int_{C}\int_\Omega \1_D(\omega^\prime) \Q(\omega;\d\omega^\prime)\overline\P(\d\omega),
		\]
		which yields $\bar{\P}^{t\smallertext{-}s,\omega} = \Q(\omega)$ for \textnormal{$\P$--a.e.} $($and \textnormal{$\overline{\P}$--a.e.}$)$ $\omega \in \Omega$ by \textnormal{\Cref{rem::conditional_law_future_increments}} and the fact that $\cF$ is countably generated.
		
		\medskip
		$(ii)$ As already pointed out in {\rm\citeauthor*{nutz2013constructing} \cite[Remark 2.2.(b)]{nutz2013constructing}}, even if at the intuitive level one may think that \textnormal{`}$\fP(s,\omega) = \{\P^{s,\omega} : \P \in \fP_0\}$\textnormal{'}, this identity is formal since, for each $\P\in\fP_0$, the family $(\P^{s,\omega})_{\omega\in\Omega}$ is only \textnormal{$\P$--a.s.} uniquely determined.
	\end{remark}

	For $(\omega,s,K) \in \Omega \times [0,\infty)\times\cL$ and a Borel-measurable map $\cU : \R^d \longrightarrow \R$, we write
	\begin{align*}
		\|\cU(\cdot)\|^2_{\hat{\L}^\smalltext{2}_{\smalltext{\omega}\smalltext{,}\smalltext{s}}(K)} 
		& \coloneqq\int_{\R^\smalltext{d}}\bigg(\cU(x) - \int_{\R^\smalltext{d}}\cU(x)K(\d x) \Delta C_s(\omega)\bigg)^2 K(\d x) + (1-K(\R^d)\Delta C_s(\omega))^+\bigg(\int_{\R^\smalltext{d}} \cU(x) K(\d x)\bigg)^2 \Delta C_s(\omega).
	\end{align*}
	Then $\widehat\L^2_{\omega,s}(K)$ denotes the collection of $\cB(\R^d)$-measurable maps $\cU : \R^d \longrightarrow \R$ satisfying $\|\cU(\cdot)\|_{\hat{\L}^\smalltext{2}_{\smalltext{\omega}\smalltext{,}\smalltext{s}}(K)} < \infty$. This semi-normed space naturally arises from the predictable quadratic variation of stochastic integrals with respect to the compensated jump measure; see \textnormal{\Cref{sec::stochastic_integrals}} for details.
	
	\medskip
	The data $(T,\xi,f)$ satisfies the following conditions in our main results.

		\begin{assumption}\label{ass::generator2}
		$(i)$ $T$ is an $\F$--stopping time and $\xi : \Omega \longrightarrow \R$ is $\cF_T$-measurable.
		
		\medskip
		$(ii)$ The generator\footnote{The symbol $\bigsqcup$ denotes the disjoint union, and therefore each $f(t,\omega,\cdot,\cdot,\cdot,\cdot,\cdot,\cdot,K)$ is a map from $\R \times \R \times \R^d\times\R^d\times\S^d_\smallertext{+} \times{\widehat\L}^2_{\omega,t}(K)$ into $\R$.} $f:\bigsqcup_{ (\omega,t,K) \in \Omega \times [0,\infty) \times \cL} \R \times \R \times \R^d \times \R^d \times \S^d_{\smallertext{+}} \times {\widehat\L}^2_{\omega,t}(K) \longrightarrow \R$ satisfies
		\begin{align*}
			&\big|f\big(t,\tilde\omega,y,\mathrm{y},z,u(\cdot),b,a,K\big) - f\big(t,\tilde\omega,y^\prime,\mathrm{y}^\prime,z^\prime,u^{\prime}(\cdot),b,a,K\big)\big|^2 \\
			& \leq r_t(\tilde\omega) |y-y^\prime|^2 + \mathrm{r}_t(\tilde\omega) |\mathrm{y}-\mathrm{y}^\prime|^2 + \theta^X_t(\tilde\omega) (z-z^\prime)^\top a(z-z^\prime) + \theta^\mu_t(\tilde\omega)  \|u(\cdot) - u^{\prime}(\cdot)\|^2_{{\hat\L}^\smalltext{2}_{\smalltext{\tilde\omega}\smalltext{,}\smalltext{t}}(K)}, \; (\tilde\omega,t) \in \llparenthesis 0,T\rrbracket,
		\end{align*}
		for all $(y,y^\prime,\mathrm{y},\mathrm{y}^\prime,z,z^\prime,u(\cdot),u^\prime(\cdot),b,a) \in \R^2 \times \R^2 \times (\R^d)^2 \times \big(\widehat\L^2_{\tilde\omega,t}(K)\big)^2 \times \R^d \times \S^d_\smallertext{+}$, for some $\F$-predictable, $[0,\infty)$-valued processes $(r,\mathrm{r},\theta^X,\theta^\mu) = (r_t,\mathrm{r}_t,\theta^X_t,\theta^\mu_t)_{t \in [0,\infty)}$.
		
		\medskip
		$(iii)$ For any $s \in [0,\infty)$ and any $\cB(\widehat{\Omega}^C_s)\otimes\widetilde{\cP}$-measurable function
		\[
			\widehat{\Omega}^C_s \times \Omega \times [0,\infty) \times \R^d \ni (\omega,\P,\tilde{\omega},r,x) \longmapsto u^\P_r(\tilde{\omega};x) \in \R,
		\]
		with $u^\P_r(\tilde{\omega};\cdot) \in \widehat{\L}^2_{\omega\otimes_\smalltext{s}\tilde\omega,s+r}(\mathsf{K}^{s,\omega,\P}_{\tilde\omega,r})$, the function 
		\begin{equation*}
			\widehat\Omega^C_s \times \Omega \times [0,\infty) \times \R \times \R \times \R^d \times \R^d \times \S^d_\smallertext{+}\ni (\omega,\P,\tilde\omega,r,y,\mathrm{y},z,b,a) \longmapsto f\big(s+r,\omega\otimes_s\tilde\omega,y,\mathrm{y},z,u^\P_r(\tilde\omega;\cdot),b,a,\mathsf{K}^{s,\omega,\P}_{\tilde\omega,r}\big) \in \R,
		\end{equation*}
		is Borel-measurable, and for every fixed $(\omega,\P) \in \widehat\Omega^C_s$, the function
		\begin{equation*}
			\Omega \times [0,\infty) \times \R \times \R \times \R^d \times \R^d \times \S^d_\smallertext{+} \ni (\tilde\omega,r,y,\mathrm{y},z,b,a) \longmapsto f\big(s+r,\omega\otimes_s\tilde\omega,y,\mathrm{y},z,u^\P_r(\tilde\omega;\cdot),b,a,\mathsf{K}^{s,\omega,\P}_{\tilde\omega,r}\big) \in \R,
		\end{equation*}
		is $\textnormal{Prog}(\F_\smallertext{+})\otimes\cB(\R)\otimes\cB(\R)\otimes\cB(\R^d)\otimes\cB(\R^d)\otimes\cB(\S^d_\smallertext{+})$-measurable.
		
		\medskip
		$(iv)$ There exists $\hat\beta \in (0,\infty)$ such that for all $(\omega,s) \in \Omega\times[0,\infty)$,
		\begin{align}\label{eq::shifted_integrability}
			\sup_{\P\in\fP(s,\omega)}\E^\P \bigg[&\cE\big(\hat\beta A^{s,\omega}_{s\smallertext{+}\smallertext{\cdot}})_{(T\smallertext{-}s\land T)^{\smalltext{s}\smalltext{,}\smalltext{\omega}}}|\xi^{s,\omega}|^2 + \int_0^{(T\smallertext{-}s\land T)^{\smalltext{s}\smalltext{,}\smalltext{\omega}}} \cE(\hat\beta A^{s,\omega}_{s\smallertext{+}\smallertext{\cdot}})_r \frac{|f^{s,\omega,\P}_r(0,0,0,\mathbf{0})|^2}{|\alpha^{s,\omega}_r|^2} \d (C^{s,\omega}_{s\smallertext{+}\smallertext{\cdot}})_r\bigg] < \infty,
		\end{align}
		where
		\[
			f^{s,\omega,\P}_r(\tilde\omega,0,0,0,\mathbf{0}) \coloneqq f\big(s+r,\omega\otimes_s\tilde\omega,0,0,0,\mathbf{0},\mathsf{b}^{s,\omega,\P}_r(\tilde\omega),\mathsf{a}^{s,\omega}_r(\tilde\omega),\mathsf{K}^{s,\omega,\P}_{\tilde{\omega},r}(\d x)\big),
		\]
		 $\alpha^2 \coloneqq\max\{\sqrt{r},\sqrt{\mathrm{r}},\theta^X,\theta^\mu\} > 0$, the process $A = (A_t)_{t \in [0,\infty)}$ given by $A_t \coloneqq \int_0^{t \land T} \alpha^2_s\d C_s$ is assumed to be finite-valued, and
		\begin{equation}\label{eq::stieltjes_exponential2}
			\cE(\hat\beta A)_r \coloneqq 
			\begin{cases}
				1, & r = 0, \\[0.5em]
				 \mathrm{e}^{\hat\beta (A_\smalltext{r}\smallertext{-}A_\smalltext{0})} \prod_{s \in (0,r]} (1+\hat\beta\Delta A_s)\mathrm{e}^{\smallertext{-}\hat\beta\Delta A_\smalltext{s}}, & r \in (0,\infty).
			\end{cases}
		\end{equation}
		
		\medskip
		$(v)$ There exists $\Phi \in [0,1)$ such that $\Delta A \leq \Phi$ and $\widetilde M^\Phi_1(\hat\beta) <1$, where
		\begin{equation*}
			\widetilde M^\Phi_1(\beta) \coloneqq \ff^\Phi(\beta) + \frac{1}{\beta} + \max\bigg\{1,\frac{1+\beta\Phi}{\beta}\bigg\}\bigg(\frac{1}{\beta} + \beta\fg^\Phi(\beta)\bigg), \; \beta \in (0,\infty),
		\end{equation*}
		with
		\begin{align}\label{eq_frak_f_g}
			\ff^\Phi(\beta) 
			\coloneqq \frac{4(1+\beta\Phi)}{\beta^2},\;
			\fg^\Phi(\beta)
			\coloneqq \frac{4}{\beta^2} \mathbf{1}_{\{\Phi = 0\}} + \frac{\Phi^2\sqrt{1+\beta\Phi}}{\Big(1+\beta\Phi - \sqrt{1+\beta\Phi}\Big)\Big(\sqrt{1+\beta\Phi} - 1\Big)}\mathbf{1}_{\{\Phi > 0\}}.
		\end{align}
	\end{assumption}
	
	Since we are interested in the aggregation of solutions to BSDEs whose driving martingale is $X^{c,\P}$ and driving random measure is $\mu^X$, where the characteristics of $X$ are absolutely continuous relative to the fixed process $C$, we write
	\begin{equation}\label{eq::definition_f_s_P}
		f^{s,\omega,\P}_r\big(\tilde\omega,y,\mathrm{y},z,u(\cdot)\big) \coloneqq f\big(s+r,\omega\otimes_s\tilde\omega,y,\mathrm{y},z,u(\cdot),\mathsf{b}^{s,\omega,\P}_r(\tilde\omega),\mathsf{a}^{s,\omega}_r(\tilde\omega),\mathsf{K}^{s,\omega,\P}_{\tilde{\omega},r}(\d x)\big),
	\end{equation}
	for $u(\cdot) \in \widehat{\L}^2_{\omega\otimes_\smalltext{s}\tilde\omega,s+r}(\mathsf{K}^{s,\omega,\P}_{\tilde\omega,r})$ and $(\omega,\P) \in \widehat{\Omega}^C_s$. In case $s = 0$, we simply write $f^{\P}_r(\tilde\omega,y,\mathrm{y},z,u(\cdot))$.
	
	\medskip
	Some remarks on the structure and potentially awkward assumptions of our generator are appropriate here.

	\begin{remark}
		$(i)$ Note that $\cE(\hat\beta A^{s,\omega}_{s\smallertext{+}\smallertext{\cdot}}) = \cE(\hat\beta (A^{s,\omega}_{s\smallertext{+}\smallertext{\cdot}} - A_s(\omega)))$ and that, according to our conventions, $\d (C^{s,\omega}_{s\smallertext{+}\smallertext{\cdot}})$ is identical to $\d (C^{s,\omega}_{s\smallertext{+}\smallertext{\cdot}} - C_s(\omega))$, since $0$ is excluded from the domain of integration.
		
		\medskip
		$(ii)$ Although not our main focus in this work, we eventually aim to apply our results to stochastic control problems. Therefore, for each $(\omega, t)$, we need to be able to evaluate the generator along a function $u_t(\omega; \cdot) : \mathbb{R}^d \longrightarrow \mathbb{R}$, which appears in the stochastic integral of the compensated jump measure. Indeed, the generator associated to generic stochastic control problems will typically take the form
		\begin{align*}
			&f\big(t,\omega,y, \mathrm{y},z,u_t(\omega;\cdot), b,a,K_{\omega,t}(\d x)\big) \\
			&= \sup_{v  \in V} \bigg\{g_t(\omega,v) - \frac{d_t(\omega)}{1+d_t(\omega)\Delta C_t(\omega)}y +  z^\top a \beta_t(\omega,v) + \int_{\R^\smalltext{d}} u_t(\omega;x) ( \gamma_t(\omega,x,v) - 1) K_{\omega,t}(\d x)\bigg\},
		\end{align*}
		where $V$ denotes the space in which the control variables $v$ take values$;$ see also \textnormal{\citeauthor*{confortola2013backward} \cite[Equation 4.8]{confortola2013backward}} for the case where one solely controls the compensator of a marked point process. 
		This motivates the measurability condition imposed in \textnormal{\Cref{ass::generator2}.$(iii)$}.
		To clarify this, we revisit the assumptions imposed on the generator in \textnormal{\cite{possamai2024reflections}}, where the following issue arises: it is desirable to define the generator $f$ on a product space $\Omega \times [0,\infty) \times \R \times \R^d \times \fH$ such that the mapping
		\begin{equation}\label{eq::measurability_f}
			(\omega,t) \longmapsto f\big(t,\omega,Y_t(\omega),Z_t(\omega),U_t(\omega;\cdot)\big)
		\end{equation}
		is at least progressively measurable for the finite variation component in the dynamics of the \textnormal{BSDE} to be well-defined and adapted. However, since the function $U_t(\omega; \cdot) : \R^d \longrightarrow \R$ is passed to the generator, $\fH$ should consist of a collection of functions from $\R^d$ to $\R$. This raises the questions: which functions should the collection $\fH$ at least contain for our results to be applicable to control problems, and is there an appropriate $\sigma$-algebra on $\fH$ for which measurability of $f$ on its domain implies progressive measurability in \eqref{eq::measurability_f}?
		
		\medskip
		One approach is to let $\fH$ be the space of all Borel-measurable functions from $\R^d$ to $\R$, see \textnormal{\cite[Section 2.3]{confortola2014backward}}. We would then require that the map $\Omega \times [0,\infty) \ni (\omega,t) \longmapsto f\big(t,\omega,y,z,u_t(\omega;\cdot)\big) \in \R$ is progressively measurable for every $y$, $z$, and every predictable function $u : \Omega \times [0,\infty) \times \R^d \longrightarrow \R$. If the generator is additionally continuous in $y$ and $z$, then the map \eqref{eq::measurability_f} is progressively measurable.
		
		\medskip
		Alternatively, we note that the integrands $U$ passed to the generator satisfy $U_t(\omega; \cdot) \in \widehat{\L}^2(K_{\omega,t})$, where $K$ is the kernel in the disintegration of the compensator $\nu(\mathrm{d}t, \mathrm{d}x) = K_{\cdot,t}(\mathrm{d}x)\mathrm{d}C_t$. We could therefore impose that for each $(\omega, t, y, z)$, we are given a function $f(t, \omega, y, z, \cdot) : \widehat{\L}^2(K_{\omega,t}) \longrightarrow \R$ such that for every predictable function $u : \Omega \times [0,\infty) \times \R^d \longrightarrow \R$ with $u_t(\omega; \cdot) \in \widehat{\L}^2(K_{\omega,t})$, the map $(\omega,t) \longmapsto f(t,\omega,y,z,u_t(\omega; \cdot))$
		is progressively measurable. Continuity in $y$ and $z$ then ensures that \eqref{eq::measurability_f} is progressively measurable. In this case, $f$ can be seen as a function on a disjoint union of product spaces
		\begin{equation*}
			f : \bigsqcup_{(\omega,t) \in \Omega \times [0,\infty)} \R \times \R^d \times \widehat{\L}^2(K_{\omega,t}) \longrightarrow \R.
		\end{equation*}
		This approach was taken in \textnormal{\cite{confortola2013backward}} and in our previous work \textnormal{\cite{possamai2024reflections}}, and is more convenient for control applications since the norm associated with $\widehat{\L}^2(K_{\omega,t})$ is defined via the kernel in the disintegration $\nu(\omega; \mathrm{d}t, \mathrm{d}x) = K_{\omega,t}(\mathrm{d}x)\mathrm{d}C_t$, which naturally appears in control problems \textnormal{\cite[Equation 4.8]{confortola2013backward}}.
		
		\medskip
		It turns out that the measurability issue worsens in the current setting, since we want the---expectation of the---first component of the \textnormal{BSDE} solution to be measurable in the probability law $\P$. The solution to the \textnormal{BSDE} in \textnormal{\cite{possamai2024reflections}} is constructed via a fixed-point argument, making it the limit of Picard iterations. To ensure measurability in $\P$, we must assume that the process obtained by substituting the $\P$-dependent arguments into the generator $f$ is measurable in $\P$, and this measurability must be preserved at each step of the iteration. However, the spaces $\widehat{\L}^2(K_{\omega,t}^\P)$ depend on $\P$ since they are defined through the disintegration of the $\P$-compensator $\nu^\P$ relative to $C$. 
		Condition $(iii)$ ensures that each iteration in the fixed-point argument remains measurable with respect to $\P$.
		For the $z$-component in the generator---and similarly for the $(y,\mathrm{y})$-components---we do not need a condition involving $\P$, since the integrand $\cZ^\P$ takes values in $\R^d$. Therefore, the measurability of $(\P, \omega, t) \longmapsto \cZ_t^\P(\omega)$ ensures the measurability of $(\P, \omega, t) \longmapsto f(t,\omega, \ldots, \cZ_t^\P(\omega), \ldots)$ in $\P$, provided that $f$ is (Borel-)measurable in its remaining variables.
	\end{remark}
	
	We recall from \cite{possamai2024reflections} the weighted solution spaces used throughout. To define them in full generality, let $\cG$ be an arbitrary $\sigma$-algebra on $\Omega$ containing $\cF$, and let $\G = (\cG_t)_{t \in [0,\infty)}$ be an arbitrary filtration on $(\Omega,\cG)$ satisfying $\cF_t \subseteq \cG_t$ for every $t \in [0,\infty)$, and denote by $\cG_{0-}$ an additional $\sigma$-algebra contained in $\cG_0$. We use the conventions $\cG_{\infty\smallertext{+}} = \cG_\infty \coloneqq \cG_{\infty\smallertext{-}} \coloneqq \sigma (\cup_{t \in [0,\infty)} \cG_t)$. Let $\P$ be a probability measure on $(\Omega,\cG)$, and let $M$ be an $\R^d$-valued, right-continuous, $\G$-adapted, $(\G,\P)$--locally square-integrable martingale. Fix $(\omega,s) \in \Omega \times [0,\infty)$. Suppose that	$\langle M \rangle = \pi \bcdot (C^{s,\omega}_{s\smallertext{+}\smallertext{\cdot}}-C_{s}(\omega))$, $\P$--a.s., where $\pi = (\pi_t)_{t \in [0,\infty)}$ is an $\S^d_\smallertext{+}$-valued, $\G$-predictable process (see \Cref{sec::stochastic_integrals}), and that the $(\G,\P)$-compensator of $\mu^X$ is $\P$--a.s. of the form $\nu^\P(\d t,\d x) = K_{t}(\d x)\d (C^{s,\omega}_{s\smallertext{+}\smallertext{\cdot}}-C_{s}(\omega))_t$ for some kernel $K$ on $(\R^d,\cB(\R^d))$ given $(\Omega \times [0,\infty),\cP(\G))$.
	Moreover, we define $M^\P_{\mu^X}[W]\coloneqq\E^\P[W\ast\mu^X_\infty]$ for every nonnegative measurable function $W$ on $\Omega\times[0,\infty)\times\R^d$, where $W\ast \mu^X = \int_0^\cdot\int_{\R^d}W_s(x)\mu^X(\d s, \d x)$. For $\widetilde{\cP}(\G) \coloneqq \cP(\G) \otimes \cB(\R^d)$, we denote by $M^\P_{\mu^X}[W|\widetilde\cP(\G)]$ the `conditional expectation' of $W$ with respect to $\widetilde\cP(\G)$ under the measure induced by $M^\P_{\mu^X}$; see \Cref{sec::stochastic_integrals} for the precise definition and its extension to signed functions.
	
	\medskip
	For $\beta \in [0,\infty)$, we consider the following weighted spaces, in which two elements are identified whenever the seminorm of their difference is zero:

	\begin{itemize}[leftmargin=0.6cm]
		\item $\L^{2,s,\omega}_{T,\beta}(\cG,\P)$: Banach space of $\cG$-measurable random variables $\zeta : \Omega \longrightarrow \R$ satisfying 
		\[
		\|\zeta\|^{2}_{\L^{\smalltext{2}\smalltext{,}\smalltext{s}\smalltext{,}\smalltext{\omega}}_{\smalltext{T}\smalltext{,}\smalltext{\beta}}(\cG,\P)} \coloneqq \E^\P\Big[\big|\cE(\beta A^{s,\omega}_{s\smallertext{+}\smallertext{\cdot}})^{1/2}_{(T\smallertext{-}s\land T)^{\smalltext{s}\smalltext{,}\smalltext{\omega}}}\zeta\big|^2\Big] < \infty;
		\]
		\item $\cH^{2,s,\omega}_{T,\beta}(\G,\P)$: Banach space of real-valued, right-continuous, $(\G,\P)$--square-integrable martingales $L = (L_t)_{t \in [0,\infty)}$ with $L = L_{\cdot\land (T\smallertext{-}s\land T)^{s,\omega}}$ and 
		\[
		\|L\|^2_{\cH^{\smalltext{2}\smalltext{,}\smalltext{s}\smalltext{,}\smalltext{\omega}}_{\smalltext{T}\smalltext{,}\smalltext{\beta}}(\G,\P)} \coloneqq \E^\P[L^2_0] + \E^\P\bigg[\int_0^{(T\smallertext{-}s \land T)^{\smalltext{s}\smalltext{,}\smalltext{\omega}}} \cE(\beta A^{s,\omega}_{s\smallertext{+}\smallertext{\cdot}})_r \d \langle L \rangle_r\bigg] < \infty;
		\]
		\item $\cT^{2,s,\omega}_{T,\beta}(\G,\P)$: Banach space of real-valued, right-continuous, $\P$--a.s. c\`adl\`ag, $\G$-adapted processes $Y = (Y_t)_{t \in [0,\infty]}$ with $Y = Y_{\cdot \land (T\smallertext{-}s\land T)^{s,\omega}}$ , $(Y_t)_{t\in[0,\infty)}$ being $\G$-optional, and
		\begin{equation*}
			\|Y\|^2_{\cT^{\smalltext{2}\smalltext{,}\smalltext{s}\smalltext{,}\smalltext{\omega}}_{\smalltext{T}\smalltext{,}\smalltext{\beta}}(\G,\P)} 
			\coloneqq \sup_{\sigma \in \sT_{\smalltext{0}\smalltext{,}\smalltext{(}\smalltext{T}\tinytext{-}\smalltext{s}\smalltext{\land}\smalltext{T}\smalltext{)}^{\tinytext{s}\tinytext{,}\tinytext{\omega}}}(\G)}\E^\P\Big[ \big| \cE(\beta A^{s,\omega}_{s\smallertext{+}\smallertext{\cdot}})^{1/2}_\sigma Y_\sigma \big|^2\Big] < \infty,
		\end{equation*}
		where $\sT_{0,(T\smallertext{-}s\land T)^{\smalltext{s}\smalltext{,}\smalltext{\omega}}}(\G)$ denotes the collection of all $\G$--stopping times $\sigma$ satisfying $0 \leq \sigma \leq (T-s\land T)^{s,\omega}$;
		\item $\cS^{2,s,\omega}_{T,\beta}(\G,\P)$: Banach space of real-valued, right-continuous, $\P$--a.s. c\`adl\`ag, $\G$-adapted processes $Y = (Y_t)_{t \in [0,\infty]}$ with $Y = Y_{\cdot \land (T\smallertext{-}s\land T)^{\smalltext{s}\smalltext{,}\smalltext{\omega}}}$, $(Y_t)_{t \in [0,\infty)}$ being $\G$-optional, and\footnote{The measurability of the supremum inside the expectation follows from \cite[Proposition 2.21.(i)]{elkaroui2013capacities}. To be precise, the supremum is $\cG_\infty$--universally measurable, and the probability $\P$ uniquely extends to $\cG_\infty$--universally measurable sets.}
		\begin{equation*}
			\|Y\|^2_{\cS^{\smalltext{2}\smalltext{,}\smalltext{s}\smalltext{,}\smalltext{\omega}}_{\smalltext{T}\smalltext{,}\smalltext{\beta}}(\G,\P)} 
			\coloneqq \E^\P\bigg[ \sup_{r \in [0,(T\smallertext{-}s\land T)^{\smalltext{s}\smalltext{,}\smalltext{\omega}}]} \big| \cE(\beta A^{s,\omega}_{s\smallertext{+}\smallertext{\cdot}})^{1/2}_r Y_r \big|^2\bigg] < \infty;
		\end{equation*}
		\item $\H^{2,s,\omega}_{T,\beta}(\G,\P)$: Banach space of real-valued, $\G$-optional $\phi = (\phi_t)_{t \in [0,\infty]}$ with $\phi = \phi_{\cdot \land (T\smallertext{-}s\land T)^{s,\omega}}$ and
		\begin{equation*}
			\|\phi\|^2_{\H^{\smalltext{2}\smalltext{,}\smalltext{s}\smalltext{,}\smalltext{\omega}}_{\smalltext{T}\smalltext{,}\smalltext{\beta}}(\G,\P)} \coloneqq \E^\P\bigg[ \int_0^{(T\smallertext{-}s\land T)^{\smalltext{s}\smalltext{,}\smalltext{\omega}}} \cE(\beta A^{s,\omega}_{s\smallertext{+}\smallertext{\cdot}})_r |\phi_r|^2 \d (C^{s,\omega}_{s\smallertext{+}\smallertext{\cdot}})_r\bigg] < \infty;
		\end{equation*}
		\item $\H^{2,s,\omega}_{T,\beta}(M;\G,\P)$: Banach space of $\R^d$-valued, $\G$-predictable $Z = (Z_t)_{t \in [0,\infty)}$ with $Z = Z \mathbf{1}_{\llbracket 0, (T\smallertext{-}s\land T)^{\smalltext{s}\smalltext{,}\smalltext{\omega}} \rrbracket}$ and
		\begin{equation*}
			\|Z\|^2_{\H^{\smalltext{2}\smalltext{,}\smalltext{s}\smalltext{,}\smalltext{\omega}}_{\smalltext{T}\smalltext{,}\smalltext{\beta}}(M;\G,\P)} \coloneqq \E^\P\bigg[\int_0^{(T\smallertext{-}s\land T)^{s,\omega}} \cE(\beta A^{s,\omega}_{s\smallertext{+}\smallertext{\cdot}})_r Z^\top_r \pi_r Z_r \d (C^{s,\omega}_{s\smallertext{+}\smallertext{\cdot}})_r\bigg] < \infty;
		\end{equation*}
		\item $\H^{2,s,\omega}_{T,\beta}(\mu^X;\G,\P)$: Banach space of real-valued, $\widetilde{\cP}(\G)$-measurable functions $U$ with $U = U \mathbf{1}_{\llbracket 0, (T\smallertext{-}s\land T)^{\smalltext{s}\smalltext{,}\smalltext{\omega}} \rrbracket}$ and
		\begin{equation*}
			\|U\|^2_{\H^{\smalltext{2}\smalltext{,}\smalltext{s}\smalltext{,}\smalltext{\omega}}_{\smalltext{T}\smalltext{,}\smalltext{\beta}}(\mu^\smalltext{X};\G,\P)} \coloneqq \E^\P\bigg[\int_0^{(T-s\land T)^{s,\omega}} \cE(\beta A^{s,\omega}_{s\smallertext{+}\smallertext{\cdot}})_r \|U_r(\cdot)\|^2_{\hat\L^\smalltext{2}_{\smalltext{\omega}\smalltext{\otimes}_\tinytext{s}\smalltext{\cdot}\smalltext{,}\smalltext{s}\smalltext{+}\smalltext{r}}(K_\smalltext{r})} \d (C^{s,\omega}_{s\smallertext{+}\smallertext{\cdot}})_r\bigg] < \infty;
		\end{equation*}
		\item $\cH^{2,s,\omega,\perp}_{0,T,\beta}(M,\mu^X;\G,\P)$: subspace consisting of all martingales $N \in \cH^{2,s,\omega}_{T,\beta}(\G,\P)$ starting at zero with $\langle N, M\rangle^{(\P)} = 0$ and $M^\P_{\mu^\smalltext{X}}[\Delta N | \widetilde\cP(\G)] = 0$$.$
	\end{itemize}
	For $s = 0$, the spaces do not depend on $\omega$, and we will omit $(s,\omega)$ from the notation in the above spaces and norms. We will also adopt this simplification for $\beta = 0$.

\section{Main results}\label{sec::main_results}

In this section, we present our main results on the construction and measurability of a single value process obtained through the path regularisation of a pathwise-defined value function, together with the corresponding dynamic programming principle.

\medskip
For $(\omega,s) \in \Omega\times[0,\infty)$ and $\P \in \fP(s,\omega)$, we denote by $\cY^{s,\omega,\P}((T- s \land T)^{s,\omega},\xi^{s,\omega})$ the first component of the solution $(\cY,\cZ,\cU,\cN)$ to the following BSDE, with terminal time $(T-s\land T)^{s,\omega}$, terminal condition $\xi^{s,\omega}$ and generator $f^{s,\omega,\P}$ defined in \eqref{eq::definition_f_s_P},
\begin{align}\label{eq::P_BSDE2}
	\cY_t &= \xi^{s,\omega} + \int_t^{(T\smallertext{-}s\land T)^{\smalltext{s}\smalltext{,}\smalltext{\omega}}} f^{s,\omega,\P}_r\big(\cY_r,\cY_{r\smallertext{-}},\cZ_r,\cU_r(\cdot)\big)\d (C^{s,\omega}_{s\smallertext{+}})_r - \bigg(\int_t^{(T\smallertext{-}s\land T)^{\smalltext{s}\smalltext{,}\smalltext{\omega}}} \cZ_r \d X^{c,\P}_r\bigg)^{(\P)} \nonumber\\
	&\quad - \bigg(\int_t^{(T\smallertext{-}s\land T)^{\smalltext{s}\smalltext{,}\smalltext{\omega}}}\int_{\R^\smalltext{d}} \cU_r(x)\tilde\mu^{X,\P}(\d r, \d x)\bigg)^{(\P)} - \int_t^{(T\smallertext{-}s\land T)^{\smalltext{s}\smalltext{,}\smalltext{\omega}}}\d \cN_r, \; t \in [0,\infty], \; \text{$\P$--a.s.},
\end{align}
in the sense of \cite[Section 3]{possamai2024reflections}, relative to $(\F_\smallertext{+},\P)$, where $X^{c,\P}$ denotes the continuous local martingale part of $X$ relative to $(\F,\P)$ (see also \Cref{rem::equivalence_semimartingale}). The superscript $(\P)$ means that the stochastic integrals are constructed under $\P$; by \Cref{prop::good_version_stochastic_integral}, their values are independent of the choice of filtration $\G$ satisfying $\F\subseteq\G\subseteq\F^\P_+$.
For $s = 0$, the BSDE above does not depend on $\omega$; thus, we drop the dependence on $(\omega,s)$ from the notation and simply write $\cY^\P(T,\xi)$.

\medskip
Our main goal is to construct a measurable and c\`adl\`ag process $\widehat{\cY}^\smallertext{+} = \big\{\widehat{\cY}^\smallertext{+}_t(\omega)\big\}$ satisfying 
\[
	\widehat\cY^\smallertext{+}_t(T,\xi) = \underset{\bar{\P} \in \fP_\smalltext{0}(\cF_{t\smalltext{+}},\P)}{{\esssup}^\P} \cY^{\bar{\P}}_t(T,\xi), \; \textnormal{$\P$--a.s.}, \; t \in [0,\infty], \; \textnormal{$\P \in \fP_0$.}
\]
Here, $\fP_0(\cF_{t\smallertext{+}},\P)$ denotes the collection of probabilities $\overline\P$ in $\fP_0$ that coincide with $\P$ on $\cF_{t\smallertext{+}}$. 

\medskip
To summarise the main steps, let us consider the case $f \equiv 0$, in which case the previous identity reduces to
\[
	\widehat\cY^\smallertext{+}_t(T,\xi) = \underset{\bar{\P} \in \fP_\smalltext{0}(\cF_{t\smalltext{+}},\P)}{{\esssup}^\P} \E^{\bar\P} [ \xi | \cF_{t\smallertext{+}}], \; \textnormal{$\P$--a.s.}, \; t \in [0,\infty], \; \textnormal{$\P \in \fP_0$.}
\]
This case was considered by \citeauthor*{cohen2012quasi} \cite{cohen2012quasi}, \citeauthor*{nutz2012quasi} \cite{nutz2012quasi,nutz2013random,nutz2013constructing}, \citeauthor*{nutz2012superhedging} \cite{nutz2012superhedging}, \citeauthor*{soner2011quasi} \cite{soner2011quasi}, \citeauthor*{elkaroui2013capacities} \cite{elkaroui2013capacities, elkaroui2013capacities2}, and \citeauthor*{bartl2020conditional} \cite{bartl2020conditional}. The first step is to construct a single measurable process $\widehat\cY = \big\{\widehat\cY_t(\omega)\big\}$ satisfying
\begin{equation}\label{eq::aggregation_generator_zero}
	\widehat\cY_t = \underset{\bar\P \in \fP_\smalltext{0}(\cF_\smalltext{t},\P)}{{\esssup}^\P} \E^{\bar\P} [ \xi | \cF_t], \; \textnormal{$\P$--a.s.}, \; t \in [0,\infty), \; \text{for all $\P \in \fP_0$.}
\end{equation}
The discussion in \Cref{rem::measures_in_fP}.$(ii)$ suggests choosing the candidate as
\begin{equation*}
	\widehat\cY_t(\omega) 
	\coloneqq \sup_{\P \in \fP(t,\omega)}\E^{\P}[\xi^{t,\omega}] , \; (\omega,t) \in \Omega \times [0,\infty).
\end{equation*}
The assumptions imposed on the family $(\fP(t,\omega))_{(\omega,t) \in \Omega \times [0,\infty)}$ ensure that defining $\widehat\cY_t(\omega)$ exactly as above implies that \eqref{eq::aggregation_generator_zero} holds; see \cite[Theorem 2.3]{nutz2013constructing}.   The proof relies on the theory of analytic sets and the corresponding Jankov--von Neumann selection theorem. As a consequence, $\widehat{\cY}_t$ is upper semi-analytic and $\cF^\ast_t$-measurable (compare with \cite[Theorem~2.3]{nutz2013constructing}). For general $f$, we replace the conditional expectation $\E^{\bar\P}[\xi|\cF_t]$ with $\E^{\bar\P}[\cY^{\bar\P}_t(T,\xi)|\cF_t]$, where the term inside the conditional expectation is the first component of the solution to a BSDE. The correct replacement for $\E^\P[\xi^{t,\omega}]$ in defining $\widehat\cY_t$ is suggested by \Cref{lem::conditioning_bsde2} and is discussed in \Cref{sec::value_function}. 

\medskip
It follows from \eqref{eq::aggregation_generator_zero} that  
\begin{equation*}
	\E^\P[\widehat\cY_t| \cF_s] \leq \widehat\cY_s, \; 0 \leq s \leq t < \infty, \; \text{$\P$--a.s.}, \; \P \in \fP_0,
\end{equation*}
so that $\widehat\cY=(\widehat\cY_t)_{t\in[0,\infty)}$ is an $(\F^\ast,\P)$--super-martingale for every $\P\in\fP_0$, where $\F^\ast=(\cF^\ast_t)_{t\in[0,\infty]}$. It is therefore natural to wonder whether it admits a c\`adl\`ag modification. In general, it does not; see \cite[Example 4.6]{nutz2012superhedging} for a counterexample. 
Nonetheless, by following the proof of \cite[Proposition 4.5]{nutz2012superhedging}, one constructs, through path regularisation, a c\`adl\`ag process $\widehat\cY^\smallertext{+} = (\widehat\cY^\smallertext{+}_t)_{t\in[0,\infty]}$ satisfying
\begin{equation*}
	\widehat\cY^\smallertext{+}_t = \underset{\bar\P \in \fP_\smalltext{0}(\cG_{\smallertext{t}\tinytext{+}},\P)}{{\esssup}^\P} \E^{\bar\P} [ \xi | \cF_{t\smallertext{+}}], \; \textnormal{$\P$--a.s.}, \; t \in [0,\infty], \; \P \in \fP_0,
\end{equation*}
which is adapted to a filtration $\G_\smallertext{+} = (\cG_{t\smallertext{+}})$ satisfying $\F^\ast \subseteq \G\subseteq \F^\P$ (in the sense of filtrations) for every $\P\in\fP_0$. The precise filtration is given in \Cref{sec::regularisation}; the same representation holds with $\fP_0(\cF_{t\smallertext{+}},\P)$ in place of $\fP_0(\cG_{t\smallertext{+}},\P)$. When the generator is non-zero, the super-martingale property of the pathwise value function turns into a nonlinear super-martingale property. This, and the construction of the regularisation $\widehat\cY^\smallertext{+}$ are discussed in \Cref{sec::regularisation}.

\medskip
The value process $\widehat{\cY}^\smallertext{+}$ constructed in this work also constitutes the $Y$-component of the 2BSDE studied in our companion paper \cite{possamai2025mind}. There, we derive its semi-martingale decomposition and prove well-posedness of the corresponding 2BSDE system. 

\subsection{Construction and measurability of the pathwise value function}\label{sec::value_function}

The following is our first main result in this work. We defer the proof to \Cref{sec::proof_measurability}.

\begin{theorem}\label{thm::measurability2}
	Suppose that {\rm\Cref{ass::probabilities2}} and {\rm\ref{ass::generator2}} hold. For every $s \in [0,\infty)$, the function $\widehat\cY_s(T,\xi) : \Omega \longrightarrow [-\infty,\infty]$ defined by
	\begin{equation*}
		\widehat\cY_s(T,\xi)(\omega) \coloneqq
			\displaystyle \sup_{\P \in \fP(s,\omega)} \E^\P\big[\cY^{s,\omega,\P}_0((T-s\land T)^{s,\omega},\xi^{s,\omega})\big],
	\end{equation*} 
	is upper semi-analytic and $\cF_s$--universally measurable. Moreover, $\widehat\cY_s(T,\xi)(\omega) = \widehat\cY_{s\land T(\omega)}(T,\xi)(\omega)$ for all $\omega\in\Omega$, and
	\begin{equation}\label{eq::dynamic_programming_principle2}
		\widehat\cY_s(T,\xi) = \underset{\bar{\P} \in \fP_\smalltext{0}(\cF_\smallertext{s},\P)}{{\esssup}^\P} \E^{\bar{\P}} \big[ \cY^{\bar{\P}}_s(T,\xi)\big| \cF_s\big], \; \textnormal{$\P$--a.s.}, \; \P \in \fP_0,
	\end{equation}
	where $\fP_0(\cF_s,\P) \coloneqq \big \{\overline{\P} \in \fP_0 : \overline{\P} = \P \; \text{\rm on} \; \cF_s\big\}$.
\end{theorem}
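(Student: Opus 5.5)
The proof follows the template of \cite[Theorem 2.3]{nutz2013constructing} and splits into three steps. The first is a measurability statement for the map
\[
	\Lambda_s : \widehat\Omega^C_s \ni (\omega,\P) \longmapsto \E^\P\big[\cY^{s,\omega,\P}_0((T-s\land T)^{s,\omega},\xi^{s,\omega})\big].
\]
The second derives upper semi-analyticity from Jankov--von Neumann type arguments. The third is a conditioning lemma for BSDE solutions which, combined with \Cref{ass::probabilities2}.$(ii)$--$(iii)$ and measurable selection, gives \eqref{eq::dynamic_programming_principle2}. The identity $\widehat\cY_s(T,\xi)(\omega)=\widehat\cY_{s\land T(\omega)}(T,\xi)(\omega)$ is elementary: if $s\ge T(\omega)$, then $(T-s\land T)^{s,\omega}=0$ by Galmarino's test, so $\cY_0=\xi^{s,\omega}=\xi(\omega)$, since $\xi$ is $\cF_T$-measurable. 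The same value is obtained at time $T(\omega)$, and $\fP(\cdot,\omega)\neq\varnothing$ by \Cref{ass::probabilities2}.$(ii)$. Adaptedness \eqref{eq::adaptedness_probabilities} gives that $\widehat\cY_s$ depends only on $\omega_{\cdot\land s}$.

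\emph{Step 1 (main obstacle): Borel measurability of $\Lambda_s$.} I will redo the Picard iteration of \cite{possamai2024reflections} with every object parametrised measurably by $(\omega,\P)\in\widehat\Omega^C_s$, a Borel set by \Cref{lem::measurability_characteristics}. Start with $(\cY^0,\cZ^0,\cU^0)=0$. Given $\cB(\widehat\Omega^C_s)\otimes\textnormal{Prog}$-measurable iterates, set
\[
	\Xi^{n}\coloneqq\xi^{s,\omega}+\int_0^{(T-s\land T)^{s,\omega}} f^{s,\omega,\P}_r(\cY^n_r,\cY^n_{r-},\cZ^n_r,\cU^n_r)\,\d (C^{s,\omega}_{s+\cdot})_r .
\]
\Cref{ass::generator2}.$(iii)$ ensures that $\Xi^n$ is jointly measurable in $(\omega,\P,\tilde\omega)$. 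The next iterate is
\[
	\cY^{n+1}_t=\E^\P\Big[\Xi^{n}-\int_0^t f^{s,\omega,\P}_r\,\d C_r\,\Big|\,\cF_{t+}\Big].
\]
I will obtain a version of this conditional expectation jointly measurable in the parameter via measurable r.c.p.d.'s, or via a measurable version of the martingale built from dyadic approximations and right limits. The integrands $\cZ^{n+1}$ and $\cU^{n+1}$ come from the orthogonal decomposition of this martingale against $X^{c,\P}$ and $\tilde\mu^{X,\P}$. I plan to construct them explicitly and measurably in $\P$. For $\cZ$, take $\cZ=\mathsf{a}^{\oplus}\,\d\langle M,X^{c}\rangle/\d C$, where the predictable covariation is obtained as a limit of measurable sums of conditional expectations along dyadic partitions, in the spirit of \cite{neufeld2014measurability}. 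For $\cU$, take $\cU=M^\P_{\mu^X}[\Delta M\,|\,\widetilde\cP]$ plus the standard correction term, and realise this conditional expectation as a Radon--Nikodym derivative with respect to $\mathsf{K}^{s,\omega,\P}\d C$ using a measurable-in-parameter martingale convergence (Doob's theorem along a countable generating filtration on $\widetilde\Omega$). The point where the space $\widehat\L^2(\mathsf{K}^{s,\omega,\P})$ depends on $\P$ is exactly what \Cref{ass::generator2}.$(iii)$ is designed to absorb. By the contraction estimate from \Cref{ass::generator2}.$(v)$, the iterates converge in the weighted norms, and a subsequence converges $\P$--a.s. Taking a measurable $\limsup$ along a fixed fast subsequence (the contraction rate is uniform in $(\omega,\P)$) gives a jointly measurable version of $\cY$. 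Integrating with respect to $\P$ then gives Borel measurability of $\Lambda_s$, using integrability from \Cref{ass::generator2}.$(iv)$ and \cite[Proposition 7.29]{bertsekas1978stochastic}. This is where I expect most of the technical work to lie.

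\emph{Step 2: upper semi-analyticity.} The graph $\{(\omega,\P):\P\in\fP(s,\omega)\}$ is analytic by \Cref{ass::probabilities2}.$(i)$ and sits inside $\widehat\Omega^C_s$. Since $\Lambda_s$ is Borel on it, \cite[Proposition 7.47]{bertsekas1978stochastic} shows that the supremum over the sections, $\widehat\cY_s(T,\xi)$, is upper semi-analytic. Hence it is $\cF^\ast$-measurable, and by the dependence on $\omega_{\cdot\land s}$ only (Galmarino) it is $\cF_s$--universally measurable.

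\emph{Step 3: identity \eqref{eq::dynamic_programming_principle2}.} First I need a conditioning lemma: for $\overline\P\in\fP_0$,
\[
	\cY^{\bar\P}_s(T,\xi)(\omega\otimes_s\cdot)=\cY^{s,\omega,\bar\P^{s,\omega}}(\ldots)\quad\textnormal{for $\overline\P$--a.e. }\omega.
\]
I will prove it by shifting the BSDE with \Cref{lem::conditioning_martingale2}, \Cref{prop::conditioning_characteristics2} and \Cref{cor::shif_quadratic_variation_continuous_martingale_part}, and then invoking uniqueness. Taking expectations gives $\E^{\bar\P}[\cY^{\bar\P}_s|\cF_s](\omega)=\Lambda_s(\omega,\overline\P^{s,\omega})$. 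Moreover $\overline\P^{s,\omega}\in\fP(s,\omega)$ by \Cref{ass::probabilities2}.$(ii)$, and $\overline\P=\P$ on $\cF_s$, so ``$\ge$'' holds.

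For ``$\le$'', fix $\varepsilon>0$. The Jankov--von Neumann theorem gives a universally measurable $\varepsilon$-optimal selector $\omega\mapsto\Q(\omega)\in\fP(s,\omega)$, which can be made $\cF_s^\ast$-measurable and then modified on a $\P$-null set into an $\cF_s$-measurable kernel. Glue it via \Cref{ass::probabilities2}.$(iii)$ to get $\overline\P\in\fP_0(\cF_s,\P)$ with $\overline\P^{s,\omega}=\Q(\omega)$ for $\P$--a.e. $\omega$ (\Cref{rem::measures_in_fP}.$(i)$). This yields $\E^{\bar\P}[\cY^{\bar\P}_s|\cF_s]\ge(\widehat\cY_s-\varepsilon)\wedge\varepsilon^{-1}$ $\P$--a.s., which handles possibly infinite values. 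Letting $\varepsilon\downarrow0$ completes the proof.
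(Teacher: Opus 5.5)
Your plan follows the paper's proof in outline. It runs a Picard iteration with every object parametrised by $(\omega,\P)$: measurable conditional expectations with right-continuous versions, a measurable orthogonal decomposition, and \Cref{ass::generator2}.$(iii)$ to feed $\cU$ into $f$. It then obtains upper semi-analyticity by projecting the analytic graph, $\cF_s$--universal measurability from the dependence on $\omega_{\cdot\land s}$, a conditioning lemma for BSDE solutions, and an $\varepsilon$-optimal selector glued via \Cref{ass::probabilities2}.$(iii)$. Steps 2 and 3 are fine.

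\textbf{Step 1 does not go through as set up.} You run the iteration on all of $\widehat\Omega^C_s$, but your only integrability is \Cref{ass::generator2}.$(iv)$. That holds only for $\P\in\fP(s,\omega)$, i.e.\ on the analytic and in general non-Borel graph.
\begin{itemize}
\item Off the graph, $\Xi^n$ need not be integrable, so $\E^\P[\Xi^n|\cF_{t+}]$ need not be a square-integrable martingale.
\item The measurable orthogonal decomposition must be performed for every parameter simultaneously (cf.\ \Cref{lem::measurable_decomposition}), so it cannot be applied. The problem recurs at every Picard step.
\end{itemize}
The paper first reduces to bounded data: $\xi^n\coloneqq\min\{\max\{\xi,-n\},n\}$ and $f^n\coloneqq\min\{\max\{f,-n\},n\}\1_{\llbracket 0,\sigma_n\rrparenthesis}$, with $\sigma_n\coloneqq\inf\{t:C_t\geq n\}$. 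This keeps the Lipschitz coefficients and makes every Picard martingale bounded by $n+n^2$ for all $(\omega,\P)\in\widehat\Omega^C_s$. The paper then shows that $\limsup_m\E^\P[\cY^{\omega,\P,m}_0]$ is Borel and equals the value on the graph, and lets $n\to\infty$ there using \Cref{cor::stability}. Alternatively, you could set the iterates to zero outside the Borel set where the relevant weighted norms are finite; that set contains the graph. Either way you only need $\limsup_m\E^\P[\cY^m_0]$, not a jointly measurable version of the limit process.

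\textbf{Smaller remarks.}
\begin{itemize}
\item Producing $\langle M,X^{c,\P}\rangle$ as a limit of dyadic sums of conditional expectations requires $X^{c,\P}$ itself to be jointly measurable in $\P$. \Cref{lem::measurability_characteristics} does not provide this, and the paper never needs it.
\item Such approximations converge at best in $\L^1$, so you would also need a measurable choice of an a.s.\ convergent subsequence.
\item The paper avoids both issues in \Cref{lem::borel_quadratic_variation2}. It computes $[\cM,X]$ pathwise from Karandikar's integrals and subtracts $\sum\Delta\cM\Delta X$, which gives $\langle\cM,X^{c,\P}\rangle$ $\P$--a.s.\ for every $\P$. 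It then sets $\cZ=\mathsf c^{\oplus}\,\d\langle\cM,X^c\rangle/\d\mathsf A$ with $\mathsf A=\textnormal{Tr}[\mathsf C]$.
\item You must redefine $\cU$ to be zero wherever $\cU_r(\tilde\omega;\cdot)\notin\widehat\L^2_{\omega\otimes_s\tilde\omega,s+r}(\mathsf K^{s,\omega,\P}_{\tilde\omega,r})$, since \Cref{ass::generator2}.$(iii)$ requires this membership at every point.
\item \Cref{ass::probabilities2}.$(ii)$ gives $\fP(t,\omega)\neq\varnothing$ only for $\P$--a.e.\ $\omega$. It therefore does not justify the nonemptiness for every $\omega$ that you invoke for $\widehat\cY_s=\widehat\cY_{s\land T}$.
\end{itemize}
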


\begin{remark}\label{rem::measurability_value_function}
	Note that $\widehat{\cY}_s(T,\xi)(\omega) = \widehat{\cY}_s(T,\xi)(\omega_{\cdot\land s})$ by construction, since $\fP(s,\omega) = \fP(s,\omega_{\cdot\land s})$ and each $\cY^{s,\omega,\P}_0((T-s\land T)^{s,\omega},\xi^{s,\omega})$ depends on $\omega$ only through $\omega_{\cdot\land s}$. Moreover, we write $\widehat{\cY}(T,\xi)$ to denote the associated process defined on $\Omega \times [0,\infty)$, or simply $\widehat{\cY}$ if no confusion arises regarding the terminal stopping time $T$ and the terminal condition $\xi$.
\end{remark}

\subsection{Path-regularisation of the value function and (pathwise) dynamic programming}\label{sec::regularisation}

Let $\G = (\cG_t)_{t \in [0,\infty)}$ be the filtration defined by
\begin{equation}\label{eq::filtration_G}
\cG_t \coloneqq \sigma(\cF^\ast_t \cup \sN^{\fP_\smalltext{0}}),
\end{equation}
where $\cF^\ast_t$ denotes the universal completion of $\cF_t$ and $\sN^{\fP_\smalltext{0}}$ denotes the collection of all subsets $A \subseteq \Omega$ that are $(\cF,\P)$--null sets for all $\P\in\fP_0$. We then implicitly adjoin $\cG_{0\smallertext{-}} \coloneqq \{\varnothing, \Omega\}$ and $\cG_{\infty\smallertext{+}} \coloneqq \cG_{\infty} \coloneqq \cG_{\infty\smallertext{-}} \coloneqq \sigma(\cup_{t \in [0,\infty)} \cG_t)$ to $\G$ whenever necessary.

\medskip
Under additional assumptions that we introduce below, the representation \eqref{eq::dynamic_programming_principle2} allows us to construct a c\`adl\`ag $\G_\smallertext{+}$-adapted process $\widehat{\cY}^\smallertext{+}(T,\xi)$, which is the right limit of $\widehat{\cY}(T,\xi)$ along $\D_\smallertext{+}$ outside a $\fP_0$--polar set. This regularisation then satisfies
 \[
	\widehat\cY^\smallertext{+}_t(T,\xi) = \underset{\bar{\P} \in \fP_\smalltext{0}(\cG_{t\smalltext{+}},\P)}{{\esssup}^\P} \cY^{\bar{\P}}_t(T,\xi), \; \textnormal{$\P$--a.s.}, \; t \in [0,\infty], \; \textnormal{$\P \in \fP_0$},
\]
which is precisely the desired property of our single value process. 

\medskip
The overarching idea is to show, through a linearisation argument, that $\widehat\cY(T,\xi) - \E^\P[\cY^\P_\cdot(T,\xi) | \cF_\cdot]$ is a nonlinear super-martingale relative to $(\F^\P,\P)$ for every $\P\in \fP_0$, where the conditional expectation process denotes the $\F$-optional projection of $\cY^\P(T,\xi)$. We then apply an up- and down-crossing inequality to a transformation of this nonlinear super-martingale. Due to the generality of our setting, the bound on the crossings does not follow from existing results in the literature. The proof of \cite[Lemma A.1]{bouchard2016general}, which provides the most general crossing result for nonlinear super-martingales, contains a gap that we address below; see also \Cref{rem::gap_regularisation}.

\medskip
Naturally, since linearisation and changes of measure are involved, we will need our generator to satisfy additional assumptions that are closely related to sufficient conditions for the comparison of solutions to BSDEs. To state them, we first note that the Lipschitz-continuity of the generator with respect to the $(\mathrm{y},z)$-variables allows us to write
\begin{align}\label{eq::lipschitz_linearisation}
	&f(t,\tilde\omega,y,\mathrm{y},z,u(\cdot),b,a,K) - f(t,\tilde\omega,y^\prime,\mathrm{y}^\prime,z^\prime,u^\prime(\cdot),b,a,K) \nonumber\\
	& \geq f(t,\tilde\omega,y,\mathrm{y},z,u(\cdot),b,a,K) - f(t,\tilde\omega,y^\prime,\mathrm{y},z,u(\cdot),b,a,K) \nonumber\\
	&\quad + \widehat\lambda^{\mathrm{y},\mathrm{y}^\smalltext{\prime}}_t(\tilde\omega)(\mathrm{y}-\mathrm{y}^\prime)  + (\eta^{z,z^\smalltext{\prime},a}_t(\tilde\omega))^\top a (z-z^\prime) + f(t,\tilde\omega,y^\prime,\mathrm{y}^\prime,z^\prime,u(\cdot),b,a,K) - f(t,\tilde{\omega},y^\prime,\mathrm{y}^\prime,z^\prime,u^\prime(\cdot),b,a,K),
\end{align}
for $(\tilde{\omega},t) \in \llparenthesis 0, T \rrbracket$, $(b,a,K) \in \R^d \times \S^d_\smallertext{+}\times\cL$, and $(y,y^\prime,\mathrm{y},\mathrm{y}^\prime,z,z^\prime,u(\cdot),u^\prime(\cdot)) \in (\R)^4 \times (\R^d)^2 \times  (\widehat{\L}^2_{\tilde\omega,t}\big(K)\big)^2$, where
\begin{equation}\label{eq::definition_lambda_hat_lambda}
	\widehat\lambda^{\mathrm{y},\mathrm{y}^\smalltext{\prime}}_t(\tilde\omega) \coloneqq - \sqrt{\mathrm{r}_t(\tilde\omega)}\sgn(\mathrm{y}-\mathrm{y}^\prime),\; 
	\eta^{z,z^\smalltext{\prime},a}_t(\tilde\omega) \coloneqq -\sqrt{\theta^{X}_t(\tilde\omega)} \frac{(z-z^\prime)}{| (z-z^\prime)^\top a(z-z^\prime)|^{1/2}}\1_{\{(z-z^\smalltext{\prime})^\smalltext{\top} a (z-z^\smalltext{\prime}) \neq 0\}}.
\end{equation}

\medskip
The following is our main additional assumption in this section. We will comment on its necessity further below.
\begin{assumption}\label{ass::crossing}
	For every $(\omega,s) \in \Omega \times [0,\infty)$ and every $\P \in \fP(s,\omega)$, the following hold
	\begin{enumerate}
	\item[$(i)$] for every $(\cY,\cY^{\prime},\cZ,\cU) \in \big(\cS^{2,s,\omega}_{T,\hat\beta}(\F_\smallertext{+},\P)\big)^2 \times \H^{2,s,\omega}_{T,\hat\beta}(X^{c,\P};\F,\P) \times \H^{2,s,\omega}_{T,\hat\beta}(\mu^X;\F,\P)$, there exists an $\F$-progressive process $\lambda = \lambda^{s,\omega,\P}\1_{\llparenthesis 0,(T-s\land T)^{\smallertext{s}\smallertext{,}\smallertext{\omega}} \rrbracket}$ such that $\lambda\Delta C^{s,\omega}_{s\smallertext{+}\smallertext{\cdot}} > -1$ holds \textnormal{$\P$--a.s.}, $|\lambda| \leq \sqrt{r^{s,\omega}_{s\smallertext{+}\smallertext{\cdot}}}$ holds \textnormal{$\P\otimes \d C^{s,\omega}_{s\smallertext{+}\smallertext{\cdot}}$--a.e.} on $\llparenthesis 0, (T-s\land T)^{s,\omega} \rrbracket$, and
	\begin{equation*}
		f^{s,\omega,\P}\big(\cY,\cY_{\smallertext{-}},\cZ,\cU(\cdot)\big) - f^{s,\omega,\P}\big(\cY^{\prime},\cY_{\smalltext{-}},\cZ,\cU(\cdot)\big) 
		\geq \lambda (\cY - \cY^{\prime}), \; \textnormal{$\P \otimes \mathrm{d} C^{s,\omega}_{s\smallertext{+}\smallertext{\cdot}}$--a.e. on $\llparenthesis 0, (T-s\land T)^{s,\omega} \rrbracket$$;$}
	\end{equation*}
	
	\item[$(ii)$] there exists $\mathfrak{C} = \mathfrak{C}(\omega,s,\P) \in [0,\infty)$ such that
	\begin{equation}\label{eq::bounded_lipschitz_constants}
		\int_0^{(T\smallertext{-}s\land T)^{\smalltext{s}\smalltext{,}\smalltext{\omega}}}\max\Big\{\sqrt{\mathrm{r}^{s,\omega}_{s\smallertext{+}t}}, (\theta^{X})^{s,\omega}_{s\smallertext{+}t}, (\theta^\mu)^{s,\omega}_{s\smallertext{+}t}\Big\} \d (C^{s,\omega}_{s\smallertext{+}\smallertext{\cdot}})_t \leq \mathfrak{C}, \; \textnormal{$\P$--a.s.};
	\end{equation}

	\item[$(iii)$] for every $(\cY,\cZ,\cU,\cU^{\prime}) \in \cS^{2,s,\omega}_{T,\hat\beta}(\F_\smallertext{+},\P) \times \H^{2,s,\omega}_{T,\hat\beta}(X^{c,\P};\F,\P) \times \big(\H^{2,s,\omega}_{T,\hat\beta}(\mu^X;\F,\P)\big)^2$, there exists $\rho \in \H^{2,s,\omega}_{T}(\mu^X;\F^\P_\smallertext{+},\P)$ such that $\Delta (\rho \ast\tilde{\mu}^{X,\P}) > -1$, \textnormal{$\P$--a.s.}, and
	\begin{equation}\label{eq::rho_bounded_theta}
		\frac{\d\langle \rho \ast\tilde\mu^{X,\P}\rangle^{(\P)}}{\d  C^{s,\omega}_{s\smallertext{+}\smallertext{\cdot}}} \leq (\theta^\mu)^{s,\omega}_{s\smallertext{+}\smallertext{\cdot}},
	\end{equation}
	\begin{equation*}
		f^{s,\omega,\P}\big(\cY,\cY_{\smallertext{-}},\cZ,\cU(\cdot)\big) - f^{s,\omega,\P}\big(\cY,\cY_{\smallertext{-}},\cZ,\cU^{\prime}(\cdot)\big) 
		\geq \frac{\d\langle \rho \ast\tilde\mu^{X,\P},(\cU-\cU^{\prime})\ast\tilde\mu^{X,\P}\rangle^{(\P)}}{\d C^{s,\omega}_{s\smallertext{+}\smallertext{\cdot}}},
	\end{equation*}
	both hold $\P \otimes \mathrm{d}C^{s,\omega}_{s\smallertext{+}\smallertext{\cdot}}$--{\rm a.e.} on $\llparenthesis 0, (T-s\land T)^{s,\omega} \rrbracket$.
	\end{enumerate}
	
	Moreover, for every $\P\in \fP_0$, the following holds
	\begin{enumerate}
	\item[$(iv)$] there exists $\rho^\dagger \in \H^{2}_{T}(\mu^X;\F,\P)$ with $\Delta (\rho^\dagger \ast\tilde{\mu}^{X,\P}) > -1$, \textnormal{$\P$--a.s.}, such that, for every $(\cY,\cZ,\cU,\cU^{\prime}) \in \cS^{2}_{T,\hat\beta}(\F_\smallertext{+},\P) \times \H^{2}_{T,\hat\beta}(X^{c,\P};\F,\P) \times \big(\H^{2}_{T,\hat\beta}(\mu^X;\F,\P)\big)^2$,
	\begin{equation}\label{eq::rho_dagger_bounded_theta}
		\frac{\d\langle \rho^\dagger \ast\tilde\mu^{X,\P}\rangle^{(\P)}}{\d  C} \leq \theta^\mu,
	\end{equation}
	\begin{equation*}
		f^{\P}\big(\cY,\cY_{\smallertext{-}},\cZ,\cU(\cdot)\big) - f^{\P}\big(\cY,\cY_{\smallertext{-}},\cZ,\cU^{\prime}(\cdot)\big) 
		\geq \frac{\d\langle \rho^\dagger \ast\tilde\mu^{X,\P},(\cU-\cU^{\prime})\ast\tilde\mu^{X,\P}\rangle^{(\P)}}{\d C},
	\end{equation*}
	hold $\P \otimes \mathrm{d}C$--{\rm a.e.} on $\llparenthesis 0, T \rrbracket$.
	\end{enumerate}
\end{assumption}

\begin{remark}\label{rem::ass_regularisation}
	$(i)$ The Lipschitz-continuity of $f^{s,\omega,\P}$ with respect to the $y$-variable yields
	\begin{equation*}
		- \sqrt{r^{s,\omega}_{s\smallertext{+}t}} |y-y^\prime| \leq f^{s,\omega,\P}_t\big(y,\mathrm{y},z,u(\cdot)\big) - f^{s,\omega,\P}_t\big(y^\prime,\mathrm{y},z,u(\cdot)\big) \leq \sqrt{r^{s,\omega}_{s\smallertext{+}t}} |y-y^\prime|.
	\end{equation*}
	If we now let $\lambda = (\lambda_t)_{t \in [0,\infty)}$ be
	\begin{equation*}
		\lambda \coloneqq -\sqrt{r^{s,\omega}_{s\smallertext{+}\smallertext{\cdot}}} \,\textnormal{\sgn}(\cY-\cY^\prime)\1_{\llparenthesis 0,(T-s\land T)^{\smallertext{s}\smallertext{,}\smallertext{\omega}} \rrbracket},
	\end{equation*}
	for solutions $\cY$ and $\cY^\prime$ to our \textnormal{BSDEs},  
	then the conditions of \textnormal{\Cref{ass::crossing}.$(i)$} are met, except that $\lambda$ might not necessarily be $\F$-progressive due to the $\F_\smallertext{+}$-adaptedness of $\cY$ and $\cY^{\prime}$. Since the measurability requirements in \textnormal{\Cref{ass::crossing}.$(i)$} are crucial in the proof of \textnormal{\Cref{lem::solv_bsde_cond}}, we need to impose them.
	
	\medskip
	$(ii)$ \textnormal{\Cref{ass::crossing}.$(ii)$--$(iii)$} are sufficient for a comparison principle to hold for our \textnormal{BSDEs}$;$ see \textnormal{\Cref{prop::comparison}}. These conditions are weaker than \textnormal{\cite[Assumption 7.1]{possamai2024reflections}} under which we showed a comparison principle, see \textnormal{\cite[Proposition 7.3]{possamai2024reflections}}. 
	
	\medskip
	$(iii)$ The reversed order of quantifiers in \textnormal{\Cref{ass::crossing}.$(iv)$} allows us to use the same function $\rho^\dagger$ throughout the down-crossing argument, while the bound \eqref{eq::rho_dagger_bounded_theta} is used to derive \eqref{eq::bounding_supermartingale_over_dyadics}$;$ see the proof of {\rm\Cref{thm::down-crossing}}.$(i)$.

\end{remark}

The result on the path regularisation of the value function then reads as follows.
\begin{theorem}\label{thm::down-crossing}
	Suppose that {\rm\Cref{ass::probabilities2}} and {\rm\ref{ass::generator2}} hold, that {\rm\Cref{ass::crossing}} holds for $s = 0$, and that
	\begin{equation}\label{eq::constant_phi}
		\phi^{2,\hat{\beta}}_{\xi,f} 
		\coloneqq \sup_{\P \in \fP_\smalltext{0}} \E^\P\Bigg[\sup_{s \in \D_\tinytext{+}} \underset{\bar{\P} \in \fP_\smalltext{0}(\cF_{\smalltext{s}},\P)}{{\esssup}^\P}\E^{\bar{\P}}\bigg[\cE(\hat\beta A)_T|\xi|^2 + \int_s^T \cE(\hat\beta A)_r \frac{|f^{\bar\P}_r(0,0,0,\mathbf{0})|^2}{\alpha^2_r} \d C_r\bigg| \cF_{s}\bigg] \Bigg] < \infty.
	\end{equation}
	Then the following hold
	\begin{enumerate}
		\item[$(i)$] there exists a real-valued, c\`adl\`ag, $\G_\smallertext{+}$-adapted process $\widehat\cY^\smallertext{+}(T,\xi) = (\widehat\cY^\smallertext{+}_t(T,\xi))_{t \in [0,\infty]}$ that satisfies $\widehat\cY^{\smallertext{+}}(T,\xi) = \widehat\cY^{\smallertext{+}}_{\cdot \land T}(T,\xi)$ and $\widehat\cY^{\smallertext{+}}_T(T,\xi) = \xi$ identically, and
		\begin{equation}\label{eq::hat_y_limit}
			\widehat\cY^\smallertext{+}_t(T,\xi) = \lim_{\D_\tinytext{+} \ni s \downarrow\downarrow t} \widehat\cY_s(T,\xi), \; t \in [0,\infty), \; \textnormal{$\fP_0$--q.s.;}
		\end{equation}
		\item[$(ii)$] there exists a constant $\mathfrak{C} \in (0,\infty)$ depending only on $\hat\beta$ and $\Phi$ such that
		\begin{equation*}
			\sup_{\P\in\fP_\smalltext{0}}\E^\P\bigg[\sup_{s \in \D_\tinytext{+}}\big|\cE(\hat\beta A)^{1/2}_{s}\widehat\cY_{s}(T,\xi)\big|^2\bigg] + \sup_{\P\in\fP_\smalltext{0}}\E^\P\bigg[\sup_{s \in [0,T]}\big|\cE(\hat\beta A)^{1/2}_{s}\widehat\cY^\smallertext{+}_s(T,\xi)\big|^2\bigg] \leq \mathfrak{C}\phi^{2,\hat{\beta}}_{\xi,f} < \infty,
		\end{equation*}
		\begin{equation}\label{eq::aggregation}
			\widehat\cY^\smallertext{+}_t(T,\xi) = \underset{\bar{\P} \in \fP_\smalltext{0}(\cG_{t\smalltext{+}},\P)}{{\esssup}^\P} \cY^{\bar{\P}}_t(T,\xi), \; \textnormal{$\P$--a.s.}, \; t \in [0,\infty], \; \textnormal{$\P \in \fP_0$;}
		\end{equation}
		\item[$(iii)$] if, in addition, {\rm\Cref{ass::crossing}} holds, then, for all $\G_\smallertext{+}$--stopping times $\sigma$ and $\tau$,
		\begin{equation}\label{eq::nonlinear_supermartingale_property}
			\widehat\cY^\smallertext{+}_{\sigma \land \tau \land T}(T,\xi) \geq \cY^\P_{\sigma \land \tau \land T}\big(\tau\land T,\widehat\cY^\smallertext{+}_{\tau\land T}(T,\xi)\big), \; \textnormal{$\P$--a.s.}, \; \text{\rm $\P \in \fP_0$.}
		\end{equation}
	\end{enumerate}
\end{theorem}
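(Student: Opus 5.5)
The plan follows the template of \cite[Proposition 4.5]{nutz2012superhedging} and \cite[Lemma 3.2]{possamai2018stochastic}, with the linear super-martingale replaced by a nonlinear one, and proceeds in three stages.

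\emph{Stage 1: the value function as a nonlinear super-martingale over the dyadics.} Fix $\P\in\fP_0$. From \eqref{eq::dynamic_programming_principle2} of \Cref{thm::measurability2}, together with the conditioning result for BSDEs (\Cref{lem::conditioning_bsde2}) and the comparison principle (\Cref{prop::comparison}, available under \Cref{ass::crossing}.$(ii)$--$(iii)$), I would derive the dyadic nonlinear super-martingale inequality
\[
\widehat\cY_s\geq \E^\P\big[\cY^\P_s(t,\widehat\cY_t)\big|\cF_s\big],\quad s\leq t \text{ in } \D_+.
\]
The derivation goes through the following steps.
\begin{itemize}
\item[(a)] Use the upward directedness of $\{\E^{\bar\P}[\cY^{\bar\P}_t|\cF_t]\}$, which comes from the pasting property \Cref{ass::probabilities2}.$(iii)$, to write $\widehat\cY_t$ as an increasing limit.
\item[(b)] Pass this limit through the BSDE on $[s,t]$ using the stability estimates of \Cref{sec_stability}.
\item[(c)] Use the flow property $\cY^{\bar\P}_s(T,\xi)=\cY^{\bar\P}_s(t,\cY^{\bar\P}_t(T,\xi))$.
\end{itemize}

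\emph{Stage 2: reducing to a linear super-martingale and counting crossings.} Linearise the difference between $\widehat\cY$ and the $\F$-optional projection of $\cY^\P(T,\xi)$ using \eqref{eq::lipschitz_linearisation}. For the $y$-term use the progressive $\lambda$ from \Cref{ass::crossing}.$(i)$. For $\mathrm{y}$ and $z$ use $\widehat\lambda,\eta$ from \eqref{eq::definition_lambda_hat_lambda}. For the $u$-term use the single $\rho^\dagger$ from \Cref{ass::crossing}.$(iv)$. This gives a discount factor $\Gamma=\cE(\int\lambda\,\d C)$ and an equivalent measure $\Q\sim\P$ with density $\cE(\eta\bcdot X^{c,\P}+\rho^\dagger\ast\tilde\mu^{X,\P})$.

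Because $\lambda$ and $\widehat\lambda$ depend on the pair of processes being compared, I would avoid applying Doob to a non-super-martingale, which is the gap in \cite[Lemma A.1]{bouchard2016general}. Instead, for each fixed dyadic grid $\D^n_+\cap[0,N]$ I would build the product $\Gamma^n\widehat\cY$ plus a compensating finite-variation term. By construction this is a genuine $(\F,\Q^n)$--super-martingale along the grid, with bounds on $\Gamma^n$, $(\Gamma^n)^{-1}$ and on the density that are uniform in $n$ thanks to \eqref{eq::bounded_lipschitz_constants} and \eqref{eq::rho_dagger_bounded_theta}.

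Doob's down-crossing inequality under $\Q^n$, combined with these uniform bounds and the $L^2$ control provided by $\phi^{2,\hat\beta}_{\xi,f}$, then bounds the $\P$-expected number of crossings of $[a,b]$ uniformly in $n$. Letting $n\to\infty$ yields finitely many crossings along $\D_+$, $\P$--a.s. Since $\P$ is arbitrary, right and left limits exist along $\D_+$ outside a $\fP_0$--polar set.

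I expect Stage 2 to be the main obstacle: arranging that the linearisation coefficients, which are random and grid dependent, still give super-martingales with uniform control.

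\emph{Stage 3: defining $\widehat\cY^+$ and proving $(i)$--$(iii)$.} Define $\widehat\cY^+_t=\limsup_{\D_+\ni s\downarrow\downarrow t}\widehat\cY_s$ for $t<T$ and $\widehat\cY^+_t=\xi$ on $\{t\geq T\}$. This process is $\G_+$-adapted and càdlàg off a polar set; redefine it as $0$ on that set, which is allowed because $\cG_t$ contains $\sN^{\fP_0}$. This gives $(i)$.

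For $(ii)$, the uniform estimate follows from the a priori BSDE estimates in \Cref{sec_stability}, applied conditionally and bounded by the essential supremum inside $\phi^{2,\hat\beta}_{\xi,f}$, then Fatou's lemma. For \eqref{eq::aggregation}:
\begin{itemize}
\item The inequality ``$\geq$'' uses $\widehat\cY_s\geq\E^{\bar\P}[\cY^{\bar\P}_s|\cF_s]$, right-continuity of $\cY^{\bar\P}$, and convergence of conditional expectations to $\cF_{t+}$.
\item The inequality ``$\leq$'' uses the nonlinear super-martingale inequality of Stage 1, passed to right limits via stability, together with directedness.
\end{itemize}

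For $(iii)$, repeat the argument of Stage 1 under shifted measures, using the assumption for all $(s,\omega)$. Approximate the stopping times by dyadic-valued ones, pass to the limit using right-continuity and BSDE stability, and obtain \eqref{eq::nonlinear_supermartingale_property}.
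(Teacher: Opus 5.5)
Your Stages 1 and 3 broadly agree with the paper. Deriving $\widehat\cY_s\geq\E^\P[\cY^\P_s(t,\widehat\cY_t)|\cF_s]$ through directedness instead of the pathwise dynamic programming principle of \Cref{lem::stopping_value_function} is a legitimate alternative, and it uses \Cref{ass::crossing} only at $s=0$. Note, however, that step (c) really needs $\cY^{\bar\P}_s(t,\E^{\bar\P}[\cY^{\bar\P}_t|\cF_t])=\cY^{\bar\P}_s(t,\cY^{\bar\P}_t)$ on $[0,t)$, i.e.\ the second part of \Cref{lem::solv_bsde_cond}. That lemma is where the $\F$-progressive $\lambda$ of \Cref{ass::crossing}.$(i)$ actually enters, not the crossing argument.

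The genuine gap is Stage 2, which you leave at ``by construction this is a genuine super-martingale''. The crossing argument needs a discount factor that is positive, non-increasing and independent of the grid. Doob's inequality for the moving band $[a\Gamma,b\Gamma]$ rests on $S_\tau-S_\sigma\geq b\Gamma_\tau-a\Gamma_\sigma\geq(b-a)\Gamma_K$ over each downcrossing, which fails without monotonicity. Moreover, crossings counted for a grid-dependent process $\Gamma^n\widehat\cY+(\text{compensator})$ say nothing about $\widehat\cY$ along $\D_+$ as $n\to\infty$. Your ingredients do not give such a factor. The $\lambda$ of \Cref{ass::crossing}.$(i)$ and the $\widehat\lambda$ of \eqref{eq::definition_lambda_hat_lambda} have no fixed sign. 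The claimed uniform bounds on $\Gamma^n$ and $(\Gamma^n)^{-1}$ are also unavailable: \eqref{eq::bounded_lipschitz_constants} controls $\int\max\{\sqrt{\mathrm r},\theta^X,\theta^\mu\}\,\d C$ but not $\int\sqrt r\,\d C$, and two-sided bounds would not control narrow intervals $[a,b]$ anyway.

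The missing idea is to work with $V\coloneqq\widehat\cY-W\geq0$, where $W_t\coloneqq\E^\P[\cY^\P_t(T,\xi)|\cF_t]$.
\begin{itemize}
\item On each $[t^n_{i-1},t^n_i]$, set $\widetilde\cY^{n,i}\coloneqq\cY^\P(t^n_i\land T,\widehat\cY_{t^n_i})-\cY^\P(t^n_i\land T,W_{t^n_i})$. It has terminal value $V_{t^n_i}$, conditional expectation at most $V_{t^n_{i-1}}$, and a generator $F^{n,i}$ vanishing at the origin, so no compensator is needed.
\item Bound it from below via \Cref{prop::comparison} by the solution $\overline\cY^{n,i}\geq0$ of the BSDE with generator $-\sqrt r|y|-\sqrt{\mathrm r}|\mathrm y|-\sqrt{\theta^X}\|\mathsf a^{1/2}z\|+F^{n,i}(0,0,0,u)$.
\item Nonnegativity lets you linearise with the \emph{fixed} coefficients $-\sqrt r$ and $-\sqrt{\mathrm r}$, so $\cE(w)\cE(v)$ is a single positive non-increasing process.
\item Then $\cE(w)\cE(v)V$ is a $\Q^{n,K}$--super-martingale on the grid whose band crossings are exactly $D^b_a(V;\D^n_+\cap[0,K])$. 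Keeping the weight $\cE(w)_K\cE(v)_K$ inside the expectation means no bound on $\cE(w)^{-1}$ is needed.
\end{itemize}

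Second, your transfer from $\Q^n$ back to $\P$ is not justified. \eqref{eq::bounded_lipschitz_constants} and \eqref{eq::rho_dagger_bounded_theta} give $\sup_n\E^\P[(\d\Q^n/\d\P)^2]<\infty$, which bounds $\Q^n$-expectations by $\P$-expectations, not conversely. Since $\Delta(\rho^\dagger\ast\tilde\mu^{X,\P})$ is only $>-1$, $\d\P/\d\Q^n$ need not lie in any $L^p(\Q^n)$ with $p>1$. So you get no uniform bound on the $\P$-expected number of crossings.

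The paper instead uses the sublinear BSDE $\sY$ of \Cref{lem::linearising_bsde} with $\rho=\rho^\dagger$. It satisfies $\sY_0(\zeta)\leq\E^{\Q^{n,K}}[\zeta|\cF_{0+}]$ for every $n$ simultaneously, and $\sY_0(\zeta)=\E^{\mathscr Q}[\zeta|\cF_{0+}]$ for a single $\mathscr Q\sim\P$. The paper applies this to $\zeta=\cE(\hat\beta A)^{-1/2}_{K\land T}\1_{\{\cE(w)_K\cE(v)_KD^b_a(V;\D^n_+\cap[0,K])>k\}}$, uses the conditional Markov inequality under $\Q^{n,K}$, and lets $n\to\infty$ and then $k\to\infty$ by stability of $\sY$ in the terminal condition. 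This gives $\E^\P[\E^{\mathscr Q}[\xi^K|\cF_{0+}]]\leq0$, hence $\xi^K=0$ and $D^b_a(V;\D_+\cap[0,K])<\infty$, $\P$--a.s.

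An alternative closer to your outline exploits that $\rho^\dagger$ does not depend on $n$. Factor $\d\Q^{n,K}/\d\P$ as $\cE(\eta^n\bcdot X^{c,\P})\,\cE(\rho^\dagger\ast\tilde\mu^{X,\P})$ at time $K\land T$. The first factor has inverse moments bounded uniformly in $n$ under the fixed measure whose density is the second factor. There you derive probability, not expectation, bounds. Some such argument has to be supplied.
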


\begin{remark}
	Let $\F^\ast = (\cF^\ast_t)_{t \in [0,\infty)}$, that is, $\F^\ast$ is the filtration obtained by universally completing each $\sigma$-algebra in $\F$. Then the proof of \textnormal{\Cref{thm::down-crossing}.$(i)$} even shows that one could define $\widehat{\cY}^+(T,\xi)$ to be a real-valued, right-continuous, and $\F^\ast_\smallertext{+}$-optional process on $[0,\infty)$ at the expense of the paths then only being \textnormal{$\fP_0$--q.s.} c\`adl\`ag.
\end{remark}

The previous result plays a key role in establishing well-posedness of the corresponding second-order BSDE (2BSDE) in our companion paper \cite{possamai2025mind}.

\medskip
In the context of optimisation problems, we obtain the following relation between $\widehat{\cY}(T,\xi)$ and $\widehat{\cY}^\smallertext{+}(T,\xi)$.

\begin{corollary}\label{cor::optimisation}
	Suppose that {\rm\Cref{ass::probabilities2}} and {\rm\ref{ass::generator2}} hold, that {\rm\Cref{ass::crossing}} holds for $s = 0$, that $\phi^{2,\hat{\beta}}_{\xi,f} < \infty$, and that
	\begin{equation}\label{eq::implies_uniform_integrability}
		\E^\P\Bigg[\underset{\bar{\P} \in \fP_\smalltext{0}(\cF_{\smalltext{0}\tinytext{+}},\P)}{{\esssup}^\P}\E^{\bar{\P}}\bigg[\cE(\hat\beta A)_T|\xi_T|^2 + \int_0^T \cE(\hat\beta A)_r \frac{|f^{\bar\P}_r(0,0,0,\mathbf{0})|^2}{\alpha^2_r} \d C_r\bigg| \cF_{0\smallertext{+}}\bigg] \Bigg] < \infty, \; \P \in \fP_0.
	\end{equation}
	Let $\widehat{\cY}^\smallertext{+}(T,\xi)$ be the process constructed in \textnormal{\Cref{thm::down-crossing}}. Then
	\begin{equation}\label{eq::relation_to_optimisation}
		\widehat{\cY}_0(T,\xi) = \sup_{\P\in\fP_\smalltext{0}}\E^\P\big[\widehat{\cY}^\smallertext{+}_0(T,\xi)\big].
	\end{equation}
	Moreover, if $\P^\ast \in \fP_0$ satisfies $\widehat{\cY}_0(T,\xi) = \E^{\P^\smalltext{\star}}\big[\cY^{\P^\smalltext{\ast}}_0(T,\xi)\big]$, then $\widehat{\cY}^\smallertext{+}_0(T,\xi) = \cY^{\P^\smalltext{\ast}}_0(T,\xi)$,  \textnormal{$\P^\ast$--a.s.}, and
	\begin{equation}\label{eq::max_widehat_y_plus}
		 \sup_{\P\in\fP_\smalltext{0}}\E^\P\big[\widehat{\cY}^\smallertext{+}_0(T,\xi)\big] = \E^{\P^\smalltext{\ast}}\big[\widehat{\cY}^\smallertext{+}_0(T,\xi)\big].
	\end{equation}
	Conversely, if $\P^\ast\in\fP_0$ satisfies \eqref{eq::max_widehat_y_plus} and $\bar{\P}^\ast \in \fP_0(\cG_{0\smallertext{+}},\P^\ast)$ satisfies $\widehat{\cY}^\smallertext{+}_0(T,\xi) = \cY^{\bar\P^\smalltext{\ast}}_0(T,\xi)$, \textnormal{$\P^\ast$--a.s.}, then
	\[
		\widehat{\cY}_0(T,\xi) = \E^{\bar\P^\smalltext{\ast}}\big[\cY^{\bar\P^\smalltext{\ast}}_0(T,\xi)\big]
	\]
	holds.
\end{corollary}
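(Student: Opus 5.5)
The plan is to combine the two representations already available: \eqref{eq::dynamic_programming_principle2} at $s=0$, and the aggregation identity \eqref{eq::aggregation} at $t=0$. Since $\cF_0$ is trivial, \Cref{thm::measurability2} with $s=0$ gives the deterministic identity $\widehat\cY_0(T,\xi)=\sup_{\P\in\fP_0}\E^\P[\cY^\P_0(T,\xi)]$. By \Cref{thm::down-crossing}.$(ii)$ at $t=0$, $\widehat\cY^\smallertext{+}_0(T,\xi)$ is, under each $\P\in\fP_0$, the $\P$--essential supremum of $\cY^{\bar\P}_0(T,\xi)$ over $\bar\P\in\fP_0(\cG_{0\smallertext{+}},\P)$. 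Condition \eqref{eq::implies_uniform_integrability}, together with the standard a priori $L^2$-estimate for the BSDE (as in \cite{possamai2024reflections}, with constants depending only on $\hat\beta,\Phi$), makes $\widehat\cY^\smallertext{+}_0(T,\xi)$ and every $\cY^{\bar\P}_0(T,\xi)$ $\P$-integrable. All expectations below are therefore finite.

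\emph{Inequality $\leq$ in \eqref{eq::relation_to_optimisation}.} Take $\bar\P=\P$ in the essential supremum. This gives $\cY^\P_0(T,\xi)\le\widehat\cY^\smallertext{+}_0(T,\xi)$, $\P$--a.s., hence $\E^\P[\cY^\P_0]\le\E^\P[\widehat\cY^\smallertext{+}_0]$. Taking the supremum over $\P\in\fP_0$ yields the claim.

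\emph{Inequality $\geq$.} I would use the path-regularisation \eqref{eq::hat_y_limit}, namely $\widehat\cY^\smallertext{+}_0=\lim_{\D_+\ni s\downarrow\downarrow 0}\widehat\cY_s$, $\fP_0$--q.s. The uniform bound of \Cref{thm::down-crossing}.$(ii)$ makes $(\widehat\cY_s)_{s\in\D_+}$ uniformly integrable under $\P$, so $\E^\P[\widehat\cY^\smallertext{+}_0]=\lim_{s}\E^\P[\widehat\cY_s]$. For fixed dyadic $s>0$, \eqref{eq::dynamic_programming_principle2} gives $\widehat\cY_s=\operatorname{ess\,sup}^\P_{\bar\P\in\fP_0(\cF_s,\P)}\E^{\bar\P}[\cY^{\bar\P}_s|\cF_s]$. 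The family in this essential supremum is upward directed, by pasting at the deterministic time $s$ via \Cref{ass::probabilities2}.$(ii)$--$(iii)$ as in \cite{nutz2013constructing}. Monotone convergence then gives
\[
\E^\P[\widehat\cY_s]=\sup_{\bar\P\in\fP_0(\cF_s,\P)}\E^{\bar\P}[\cY^{\bar\P}_s(T,\xi)].
\]
Taking expectations in the BSDE between $0$ and $s$ gives
\[
\E^{\bar\P}[\cY^{\bar\P}_s]=\E^{\bar\P}[\cY^{\bar\P}_0]-\E^{\bar\P}\Big[\int_0^{s\land T}f^{\bar\P}_r(\cdots)\,\d C_r\Big]\le\widehat\cY_0-\E^{\bar\P}\Big[\int_0^{s\land T}f^{\bar\P}_r(\cdots)\,\d C_r\Big].
\]

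\emph{Main obstacle.} The key step is to show that the error term $\sup_{\bar\P}\E^{\bar\P}\big|\int_0^{s\land T}f^{\bar\P}_r(\cdots)\d C_r\big|$ tends to $0$ as $s\downarrow 0$. I would first bound it by Cauchy--Schwarz by
\[
\Big(\E^{\bar\P}\big[\textstyle\int_0^{s\land T}\alpha^2_r\d C_r\big]\Big)^{1/2}\Big(\E^{\bar\P}\big[\textstyle\int_0^T\cE(\hat\beta A)_r|f^{\bar\P}_r|^2\alpha_r^{-2}\d C_r\big]\Big)^{1/2}.
\]
The second factor is controlled uniformly in $\bar\P$ by the Lipschitz condition, the a priori estimates and $\phi^{2,\hat\beta}_{\xi,f}<\infty$. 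For the first factor, $A_s\to A_0=0$ pointwise by right-continuity, since integrals exclude $0$. If necessary, I would use the bounded Lipschitz constants of \Cref{ass::crossing}.$(ii)$ and uniform integrability to make this convergence uniform over $\fP_0(\cF_s,\P)$. This uniformity is the delicate point. With it in hand, $\E^\P[\widehat\cY^\smallertext{+}_0]\le\widehat\cY_0$ for every $\P$, which proves \eqref{eq::relation_to_optimisation}.

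\emph{Optimisers.} If $\widehat\cY_0=\E^{\P^\ast}[\cY^{\P^\ast}_0]$, then the chain
\[
\widehat\cY_0=\E^{\P^\ast}[\cY^{\P^\ast}_0]\le\E^{\P^\ast}[\widehat\cY^\smallertext{+}_0]\le\sup_\P\E^\P[\widehat\cY^\smallertext{+}_0]=\widehat\cY_0
\]
forces equality throughout. This gives \eqref{eq::max_widehat_y_plus}. Since $\widehat\cY^\smallertext{+}_0\ge\cY^{\P^\ast}_0$ $\P^\ast$--a.s. and both have the same $\P^\ast$-expectation, they are equal $\P^\ast$--a.s. Conversely, $\cY^{\bar\P^\ast}_0$ is $\cF_{0\smallertext{+}}$-measurable up to null sets and $\bar\P^\ast=\P^\ast$ on $\cG_{0\smallertext{+}}$. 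Hence
\[
\E^{\bar\P^\ast}[\cY^{\bar\P^\ast}_0]=\E^{\P^\ast}[\cY^{\bar\P^\ast}_0]=\E^{\P^\ast}[\widehat\cY^\smallertext{+}_0]=\sup_\P\E^\P[\widehat\cY^\smallertext{+}_0]=\widehat\cY_0,
\]
using \eqref{eq::relation_to_optimisation} in the last step.
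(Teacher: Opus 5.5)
Your argument is correct in outline. For the hard inequality $\E^\P[\widehat{\cY}^{+}_0(T,\xi)]\le\widehat{\cY}_0(T,\xi)$ it takes a genuinely different route from the paper. The other parts coincide with the paper's proof: the reverse inequality via \eqref{eq::aggregation} with $\bar\P=\P$, and both statements about optimisers.

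The paper never passes through $\widehat{\cY}_s$ or a short-time estimate. It reuses the double sequence $(\P^m_n)$ from \eqref{eq::lim_upward_directed}, which gives $\widehat{\cY}^{+}_0=\lim_n\lim_m\cY^{\P^m_n}_0$, $\P$--a.s. It dominates this family by an integrable variable using \eqref{eq::implies_uniform_integrability} and \Cref{prop::stability}, applies Fatou's lemma twice, and uses $\E^\P[\cY^{\P^m_n}_0]=\E^{\P^m_n}[\cY^{\P^m_n}_0]$ because $\P^m_n=\P$ on $\cF_{0+}$. This is shorter, since the passage $s\downarrow 0$ was already carried out inside the proof of \Cref{thm::down-crossing}. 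It is also exactly where \eqref{eq::implies_uniform_integrability} enters.

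Your route instead uses \eqref{eq::hat_y_limit}, the uniform $L^2$ bound of \Cref{thm::down-crossing}.$(ii)$, and upward directedness at a fixed dyadic time, which is proved in the paper's proof of \Cref{thm::down-crossing}.$(ii)$. Once the step you flag is closed, your argument never needs \eqref{eq::implies_uniform_integrability}: integrability of $\widehat{\cY}^{+}_0$ already follows from \Cref{thm::down-crossing}.$(ii)$. That is a modest gain in generality.

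The step you call delicate is not closed as written, but it closes easily.

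\emph{Uniformity is automatic.} $A$ is $\F$-predictable, so every functional of $A_s$ is $\cF_s$-measurable. It therefore has the same expectation under every $\bar\P\in\fP_0(\cF_s,\P)$ as under $\P$.

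\emph{Integrability is the real issue.} \Cref{ass::crossing}.$(ii)$ does not control $\sqrt{r}$, so your first Cauchy--Schwarz factor $\E^{\bar\P}[A_s]^{1/2}$ may be infinite for every $s>0$. Keep the weight in Cauchy--Schwarz instead. Since $\d\big(\cE(\hat\beta A)^{-1}\big)=-\hat\beta\,\cE(\hat\beta A)^{-1}\d A$, you get $\int_0^{s}\cE(\hat\beta A)^{-1}_r\,\d A_r=\hat\beta^{-1}\big(1-\cE(\hat\beta A)^{-1}_s\big)\le\min\{A_s,\hat\beta^{-1}\}$. Hence
\[
\E^{\bar\P}\bigg[\Big|\int_0^{s\land T}f^{\bar\P}_r(\cdots)\,\d C_r\Big|\bigg]\le\E^{\P}\big[\min\{A_s,\hat\beta^{-1}\}\big]^{1/2}\sup_{\P^\prime\in\fP_0}\E^{\P^\prime}\bigg[\int_0^T\cE(\hat\beta A)_r\frac{|f^{\P^\prime}_r(\cdots)|^2}{\alpha^2_r}\,\d C_r\bigg]^{1/2}.
\]
The supremum on the right is finite for three reasons. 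First, the Lipschitz condition applies, since $r/\alpha^2\le\alpha^2$, $\mathrm{r}/\alpha^2\le\alpha^2$, $\theta^X/\alpha^2\le 1$ and $\theta^\mu/\alpha^2\le 1$. Second, \Cref{prop::stability} gives uniform a priori bounds on the solution norms. Third, the data are uniformly bounded by \Cref{ass::generator2}.$(iv)$ at $s=0$. The first factor tends to zero by bounded convergence, because $A_s\to A_0=0$ as $s\downarrow 0$. With this estimate in place, your chain yields $\E^\P[\widehat{\cY}^{+}_0]\le\widehat{\cY}_0$ for every $\P\in\fP_0$.
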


\begin{remark}
	The optimisation problem involving $\widehat{\cY}_0(T,\xi)$ can be reduced to that involving $\widehat{\cY}_0^{\smallertext{+}}(T,\xi)$ and a subsequent conditional optimisation over $\fP_0(\cG_{0\smallertext{+}},\P^\ast)$. The latter has the advantage that $\widehat{\cY}^\smallertext{+}(T,\xi)$ is c\`adl\`ag.
\end{remark}

	\section{Proofs of the main results}\label{sec::proofs_main_results}

	In this section, we prove the main results stated in \Cref{sec::main_results}. For each main result, we introduce a series of auxiliary lemmata, whose proofs are provided in \Cref{sec::lemmas_main_results}.
	
	\subsection{Measurability of the value function}\label{sec::proof_measurability}

	In this subsection, we prove \Cref{thm::measurability2}. The proofs of all auxiliary lemmata deferred from this subsection are provided in \Cref{sec::proofs_lemmata_measurability}.
	
	\medskip
	It is useful to consider the following filtered space $(\Omega^\dagger,\cF^\dagger,\F^\dagger = (\cF^\dagger_t)_{t \in [0,\infty)})$, where
	\begin{equation*}
		\Omega^\dagger \coloneqq \Omega \times \fP(\Omega) \times \Omega, \; \cF^\dagger \coloneqq \cF \otimes \cB(\fP(\Omega)) \otimes \cF, \; \cF^\dagger_t \coloneqq \cF \otimes \cB(\fP(\Omega)) \otimes \cF_t.
	\end{equation*}
	The Borel $\sigma$-algebra on $\Omega^\dagger$ generated by the product topology coincides with $\cF^\dagger$; see \cite[Lemma 6.4.2.(i)]{bogachev2007measure}.
	We omit the proof of the following result, as it follows by a straightforward adaptation of the arguments in \cite[Section 3]{neufeld2014measurability}.
	\begin{lemma}\label{lem::measurable_martingale_modification}
$(i)$ Let $g : \Omega^\dagger \longrightarrow [-\infty,\infty]$ be Borel-measurable. Then $$\Omega \times \fP(\Omega) \ni (\omega,\P) \longmapsto \E^\P[g(\omega,\P,\cdot)] \in [-\infty,\infty]$$ is Borel-measurable. Moreover, for each $t \in [0,\infty)$, there exist versions of the conditional expectations $\E^\P[g(\omega,\P,\cdot)|\cF_t]$ and $\E^\P[g(\omega,\P,\cdot)|\cF_{t\smallertext{+}}]$ for which
			\begin{equation*}
				\Omega^\dagger \ni (\omega,\P,\tilde\omega) \longmapsto \E^\P[g(\omega,\P,\cdot)|\cF_t](\tilde\omega) \in [-\infty,\infty] \; \text{and} \; \Omega^\dagger \ni (\omega,\P,\tilde\omega) \longmapsto \E^\P[g(\omega,\P,\cdot)|\cF_{t\smallertext{+}}](\tilde\omega) \in [-\infty,\infty]
			\end{equation*}
			are measurable relative to $\cF^\dagger_t$ and $\cF^\dagger_{t\smallertext{+}}$, respectively, and, for fixed $(\omega,\P)\in\Omega\times\fP(\Omega)$,
			\begin{equation*}
				\Omega \ni \tilde\omega \longmapsto \E^\P[g(\omega,\P,\cdot)|\cF_t](\tilde\omega) \in [-\infty,\infty] 
				\; \text{and} \; \Omega \ni \tilde\omega \longmapsto \E^\P[g(\omega,\P,\cdot)|\cF_{t\smallertext{+}}](\tilde\omega) \in [-\infty,\infty]
			\end{equation*}
			are measurable relative to $\cF_t$ and $\cF_{t\smallertext{+}}$, respectively.
			
\medskip
$(ii)$ Let $g : \Omega^\dagger \times [0,\infty) \longrightarrow [-\infty,\infty]$, and suppose that $g(\cdot,\cdot,\cdot, t)$ is $\cF^\dagger_{t\smallertext{+}}$-measurable. Then $g(\omega,\P,\cdot, t)$ is $\cF_{t\smallertext{+}}$-measurable, and there exists a Borel-measurable and $\F^\dagger_{\smallertext{+}}$-optional function $\bar g : \Omega^\dagger \times [0,\infty) \longrightarrow \R$ satisfying the following properties
			\begin{enumerate}
				\item[$(a)$] $[0,\infty) \ni t \longmapsto \bar g(\omega,\P,\tilde\omega,t) \in \R$ is right-continuous for every $(\omega,\P,\tilde\omega) \in \Omega^\dagger$$;$
				\item[$(b)$] if $g(\omega,\P,\cdot,\cdot)$ for $(\omega,\P) \in \Omega \times \fP(\Omega)$ is an $\F_\smallertext{+}$-adapted, $(\F_\smallertext{+},\P)$--super-martingale such that the map $[0,\infty) \ni t \longmapsto \E^\P[g(\omega,\P,\cdot, t)] \in \R$ is right-continuous, then the process $\bar g(\omega,\P,\cdot,\cdot)$ is an $\F_\smallertext{+}$-optional $\P$-modification of $g(\omega,\P,\cdot,\cdot)$ and thus also an $(\F_\smallertext{+},\P)$--super-martingale.
			\end{enumerate}
	\end{lemma}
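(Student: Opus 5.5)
The plan follows \cite[Section 3]{neufeld2014measurability} in two stages. First comes a monotone-class step in the $\tilde\omega$-variable with $(\omega,\P)$ as Borel parameters, giving $(i)$. Then $(ii)$ is obtained by a pathwise, dyadic regularisation in the time variable. The key device throughout is that every construction is explicit and countable, so measurability in $(\omega,\P)$ is inherited automatically.

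\emph{Part $(i)$.} For $g=\1_{A\times B\times D}$ with $A\in\cF$, $B\in\cB(\fP(\Omega))$, $D\in\cF$, we have $\E^\P[g(\omega,\P,\cdot)]=\1_A(\omega)\1_B(\P)\P[D]$. This is Borel because $\P\longmapsto\P[D]$ generates $\cB(\fP(\Omega))$. A monotone class argument extends this to bounded Borel $g$. Monotone convergence then handles nonnegative $g$, and the convention $\infty-\infty=-\infty$ handles general $g$ by taking $g^+$ and $g^-$ separately.

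For the conditional expectations, I use that $\cF_t$ is countably generated and fix an increasing sequence of finite partitions $(\pi^n_t)_n$ of $\Omega$ into $\cF_t$-sets generating $\cF_t$. For bounded $g$, define
\[
\E^\P[g|\cF_t](\tilde\omega)\coloneqq\limsup_{n\to\infty}\sum_{D\in\pi^n_t}\1_D(\tilde\omega)\frac{\E^\P[g(\omega,\P,\cdot)\1_D]}{\P[D]},
\]
with the convention $0/0=0$. By the martingale convergence theorem, this is a $\P$-version of the conditional expectation. It is jointly $\cF^\dagger_t$-measurable by part one, and $\cF_t$-measurable for fixed $(\omega,\P)$. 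For unbounded $g$, truncate and take monotone limits of $g^\pm\land k$.

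For $\cF_{t\smallertext{+}}$, set $\limsup_{m\to\infty}\E^\P[g|\cF_{t+1/m}]$. This is a version of $\E^\P[g|\cF_{t\smallertext{+}}]$ by backward martingale convergence, first for bounded $g$ and then by truncation. It is $\cF^\dagger_{t+1/m}$-measurable for every $m$, hence $\cF^\dagger_{t\smallertext{+}}$-measurable, because the $\limsup$ only involves indices $m\ge m_0$.

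\emph{Part $(ii)$.} The first claim is a section argument: the sections of sets in $\cF\otimes\cB(\fP(\Omega))\otimes\cF_{t+1/m}$ lie in $\cF_{t+1/m}$ for every $m$, so the sections of $\cF^\dagger_{t\smallertext{+}}$-sets lie in $\cF_{t\smallertext{+}}$. For $\bar g$, I follow the Dellacherie--Meyer regularisation. Define $\bar g(\omega,\P,\tilde\omega,t)\coloneqq\lim_{\D_\tinytext{+}\ni s\downarrow\downarrow t} g(\omega,\P,\tilde\omega,s)$ on the set $G$ of $(\omega,\P,\tilde\omega)$ where the map $\D_\smallertext{+}\ni s\longmapsto g(\cdot,s)$ is finite on bounded intervals, has finitely many upcrossings of every rational interval on every $[0,N]$, and has right limits everywhere. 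Set $\bar g\coloneqq 0$ off $G$.

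\begin{enumerate}
\item[$(1)$] The set $G$ is described by countably many conditions on the countable family $g(\cdot,s)$, $s\in\D_\smallertext{+}$, so $G\in\cF^\dagger_\infty$, with time-$t$ truncations belonging to $\cF^\dagger_{t\smallertext{+}}$.
\item[$(2)$] One checks that $\bar g(\cdot,t)$ is $\cF^\dagger_{t\smallertext{+}}$-measurable, using that $G$ restricted to $[0,t+\varepsilon]$ is $\cF^\dagger_{t+\varepsilon}$-measurable for every $\varepsilon>0$.
\item[$(3)$] The resulting $\bar g$ is right-continuous in $t$ by construction, so it is $\F^\dagger_\smallertext{+}$-optional, and in particular Borel.
\end{enumerate}

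The main obstacle is exactly this: arranging $G$ so that $\bar g$ is right-continuous for \emph{every} point while remaining adapted to $\F^\dagger_\smallertext{+}$ rather than merely $\F^\dagger_\infty$. This is why the limits are taken in $\limsup$ form, with a cutoff of the definition at each time.

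For $(b)$, fix $(\omega,\P)$ with $g(\omega,\P,\cdot,\cdot)$ an $(\F_\smallertext{+},\P)$--super-martingale. Doob's upcrossing inequality gives $\P[G_{\omega,\P}]=1$, where $G_{\omega,\P}$ is the section of $G$. The standard argument, namely the super-martingale property combined with right-continuity of $t\longmapsto\E^\P[g(\cdot,t)]$ and $\cF_{t\smallertext{+}}$-measurability of $g(\cdot,t)$, then yields $\bar g(\cdot,t)=g(\cdot,t)$ $\P$--a.s. Since $\bar g$ is a right-continuous modification, it is again an $(\F_\smallertext{+},\P)$--super-martingale.
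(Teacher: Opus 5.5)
Your proposal is correct and is essentially the argument the paper has in mind: the paper omits the proof and defers to \cite[Section 3]{neufeld2014measurability}, whose arguments are of exactly the type you use. These are a monotone class step, measurable versions of conditional expectations built from a countable generating sequence of finite partitions together with (backward) martingale convergence, and a pathwise dyadic Dellacherie--Meyer regularisation.

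The one point to make precise is the cutoff you yourself flag as the main obstacle. As literally written, $\1_G$ for the global set $G$ is only $\cF^\dagger_\infty$-measurable, so your step (2) fails. Instead set $\bar g(\cdot,t)\coloneqq\big(\limsup_{\D_\smallertext{+}\ni s\downarrow\downarrow t}g(\cdot,s)\big)\1_{\{t<\rho\}}$. Here $\rho$ is the infimum of those $r\in\D_\smallertext{+}$ for which $(g(\cdot,s))_{s\in\D_\smallertext{+}\cap[0,r]}$ is unbounded or has infinitely many upcrossings of some rational interval, exactly as in the paper's proof of \Cref{thm::down-crossing}.$(i)$. With this choice:
\begin{itemize}
\item $\{\rho>t\}$ is a union, monotone in $r$, of $\cF^\dagger_{t+\varepsilon}$-sets for every $\varepsilon>0$, hence it lies in $\cF^\dagger_{t\smallertext{+}}$;
\item right-continuity holds at every point because $\{t:t<\rho\}=[0,\rho)$;
\item in case $(b)$ the section of $\rho$ equals $\infty$ $\P$--a.s. by Doob's inequalities.
\end{itemize}
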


	\begin{lemma}\label{lem::measurable_decomposition}
		Fix $s \in [0,\infty)$. Let $\Omega \times \fP_\textnormal{sem} \times \Omega \times [0,\infty) \ni (\omega,\P,\tilde\omega,t) \longmapsto \cM^{\omega,\P}_t(\tilde\omega) \in \R$ be Borel-measurable. Suppose that, for each $(\omega,\P) \in \Omega \times \fP_\textnormal{sem}$, the process $\cM^{\omega,\P}$ is a right-continuous, $(\F_\smallertext{+},\P)$--square-integrable martingale. Then there exists a Borel-measurable map
		\begin{equation*}\label{eq::measurable_integrands2}
			\Omega \times \fP_\textnormal{sem} \times \Omega \times [0,\infty) \ni (\omega,\P,\tilde\omega,t) \longmapsto \cZ^{\omega,\P}_t(\tilde\omega) \in \R^d
		\end{equation*}
		and an $\cF\otimes\cB(\fP_\textnormal{sem})\otimes\widetilde{\cP}$-measurable map
		\[
			\Omega \times \fP_\textnormal{sem} \times \Omega \times [0,\infty) \times \R^d \ni (\omega,\P,\tilde\omega, t, x) \longmapsto \cU^{\omega,\P}_t(\tilde\omega;x) \in \R,
		\]
		such that,
		\begin{enumerate}
			\item[$(i)$] for each $(\omega,\P)\in\Omega\times\fP_\textnormal{sem}$, $(\cZ^{\omega,\P},\cU^{\omega,\P})\in\H^2(X^{c,\P};\F,\P) \times \H^2(\mu^X;\F,\P)$ and
			\begin{equation}\label{eq::martingale_representation2}
				\cN^{\omega,\P} \coloneqq \cM^{\omega,\P} - \cM^{\omega,\P}_0 - \bigg(\int_0^\cdot \cZ^{\omega,\P}_r \d X^{c,\P}_r\bigg)^{(\P)} -  \bigg(\int_0^\cdot\int_{\R^\smalltext{d}}\cU^{\omega,\P}_r(x)\tilde\mu^{X,\P}(\d r, \d x)\bigg)^{(\P)},
			\end{equation}
			belongs to $\cH^{2,\perp}_0(X^{c,\P},\mu^X;\F_{\smallertext{+}},\P)$, and
			\item[$(ii)$] if $(\omega,\P) \in \widehat{\Omega}^C_s \subseteq \Omega \times \fP_\textnormal{sem}$, then $\cU^{\omega,\P}_r(\tilde\omega;\cdot) \in \widehat{\L}^2_{\omega\otimes_\smalltext{s}\tilde\omega,s\smallertext{+}r}(\mathsf{K}^{s,\omega,\P}_{\tilde\omega, r})$ for every $(\tilde\omega, r) \in \Omega \times [0,\infty)$.
		\end{enumerate}
	\end{lemma}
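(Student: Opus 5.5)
The plan is to build $\cZ^{\omega,\P}$ and $\cU^{\omega,\P}$ as limits of explicit, measurably parametrised approximations. Each approximation will be a ratio of predictable quadratic covariations with respect to a fixed dominating process, computed through the measurable constructions of \cite{neufeld2014measurability} and \Cref{lem::measurable_martingale_modification}. Recall that, under a fixed $\P$, $Z$ is characterised by $\d\langle \cM,X^{c,\P}\rangle = \d\langle X^{c,\P}\rangle Z$. Likewise, $U$ is given by the Jacod-type formula
\[
U_t(x)=M^\P_{\mu^X}[\Delta\cM\,|\,\widetilde\cP](t,x)+\frac{\widehat{M^\P_{\mu^X}[\Delta\cM|\widetilde\cP]}_t}{1-\nu^\P(\{t\}\times\R^d)}\1_{\{\nu^\P(\{t\}\times\R^d)<1\}},
\]
the standard choice that makes $\cM-\cM_0-Z\bcdot X^{c,\P}-U\ast\tilde\mu^{X,\P}$ orthogonal (see \Cref{app::raw_filtration_stochastic_calculus}). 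Once every ingredient is chosen Borel in $(\omega,\P,\tilde\omega,t)$, property $(i)$ follows from the single-measure theory: the ingredients then coincide $\P$--a.s. with the objects defining the orthogonal decomposition.

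For $\cZ$, the continuous martingale part $X^{c,\P}$ admits a version jointly measurable in $(\P,\tilde\omega,t)$ \cite[Section 3]{neufeld2014measurability}. The quadratic covariation $[\cM^{\omega,\P},X]$ can be realised measurably as a limit of sums over dyadic partitions, taken along a subsequence chosen measurably in $(\omega,\P)$ (for instance via a $\limsup$ construction). Its continuous part gives $\langle \cM^{\omega,\P,c},X^{c,\P}\rangle$. I then differentiate against $\langle X^c\rangle$, which does not depend on $\P$, through the measurable Radon--Nikodym derivative of \cite[Lemma 6.5.10]{weizsaecker1990stochastic}: take the derivative with respect to $\textnormal{Tr}\langle X^c\rangle$ and multiply by the pseudo-inverse $\pi^\oplus$ of the density of $\langle X^c\rangle$. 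Predictability comes from taking $\limsup$ of left-continuous difference quotients $\big(\langle\cM,X^c\rangle_t-\langle\cM,X^c\rangle_{t-1/n}\big)/\big(\textnormal{Tr}\langle X^c\rangle_t-\textnormal{Tr}\langle X^c\rangle_{t-1/n}\big)$. Truncating $\cZ$ on the set where the limit fails to exist gives a Borel map. Membership in $\H^2$ follows from the Kunita--Watanabe inequality and square integrability of $\cM$.

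For $\cU$, the key object is $M^\P_{\mu^X}[\Delta\cM|\widetilde\cP]$. It equals the Radon--Nikodym derivative, on $\widetilde\cP$, of the measure $W\mapsto\E^\P[(W\Delta\cM)\ast\mu^X_\infty]$ with respect to $M^\P_{\mu^X}$. Disintegrating, this is the density of the predictable compensator of $\Delta\cM\ast\mu^X$ (or its positive and negative parts, localised) relative to $\nu^\P$. Both compensators can be chosen measurably in $\P$ by the argument of \cite[Theorem 2.6]{neufeld2014measurability}. I then take a density of one kernel with respect to the other on $\Omega\times[0,\infty)\times\R^d$, using a measurable version of the Radon--Nikodym theorem on the countably generated $\sigma$-algebra $\widetilde\cP$; this is Doob's martingale derivative along a fixed refining sequence of finite partitions. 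The atom term $\nu^\P(\{t\}\times\R^d)$ is measurable because $\nu^\P$ is. This produces a $\cF\otimes\cB(\fP_\textnormal{sem})\otimes\widetilde\cP$-measurable $\cU$. Integrability then follows from the single-measure estimate $\|U\|_{\H^2}\le\|\cM\|_{\cH^2}$.

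The main obstacle is $(ii)$, which requires $\cU^{\omega,\P}_r(\tilde\omega;\cdot)\in\widehat\L^2(\mathsf{K}^{s,\omega,\P}_{\tilde\omega,r})$ for \emph{every} $(\tilde\omega,r)$, not merely almost everywhere. I would handle it by a final modification. The map $(\omega,\P,\tilde\omega,r)\mapsto\|\cU^{\omega,\P}_r(\tilde\omega;\cdot)\|^2_{\hat\L^2}$ is measurable, since it is built from integrals against the measurable kernel $\mathsf{K}$ of \Cref{lem::measurability_characteristics}, possibly with value $+\infty$, together with $\Delta C$. I would therefore set $\cU\coloneqq 0$ on the measurable set where this norm is infinite, and also outside $\widehat\Omega^C_s$, which is Borel by \Cref{lem::measurability_characteristics}. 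The zero function always lies in $\widehat\L^2$. This changes $\cU$ only on a $\P\otimes\d C$-null set, because $\nu^\P=\mathsf{K}\,\d(C^{s,\omega}_{s+\cdot}-C_s(\omega))$ and finiteness of the $\H^2(\mu^X)$ norm forces the pointwise norm to be finite a.e. Hence the stochastic integral, and with it $(i)$, is unaffected. The delicate point is verifying that the $\widehat\L^2$ seminorm relative to $\mathsf{K}^{s,\omega,\P}$ and the shifted $\Delta C$ agrees a.e. with the density of $\langle\cU\ast\tilde\mu^{X,\P}\rangle$; I expect this bookkeeping, more than the measurability itself, to require care.
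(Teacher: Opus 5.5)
Your construction is essentially the paper's. You take a jointly Borel version of $\langle\cM^{\omega,\P},X^{c,\P}\rangle$, differentiate it against $\textnormal{Tr}\,\mathsf{C}$ and multiply by the pseudo-inverse of the density $\mathsf{c}$. You compute $\cW^{\omega,\P}=M^\P_{\mu^X}[\Delta\cM^{\omega,\P}|\widetilde\cP]$ as Doob's derivative along finite partitions generating $\widetilde\cP$; the paper first weights by a positive $\widetilde\cP$-measurable $V$ with $V\ast\mu^X_\infty\leq1$ so the measures are finite, which is what your ``localised'' must amount to. You then apply Jacod's correction $\cW_t+\1_{\{\mathsf{a}^\P_t<1\}}(1-\mathsf{a}^\P_t)^{-1}\int_{\R^d}\cW_t(x)\,\nu^\P(\{t\}\times\d x)$, identify the result with the unique orthogonal decomposition, and finish with a zeroing step. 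The one real variation is how you obtain $[\cM,X]$: dyadic sums along a measurably selected subsequence, where the paper uses Karandikar's pathwise integral, $[\cM,X^i]=\cM X^i-\cM_0X^i_0-\int\cM_{-}\,\d X^i-\int X^i_{-}\,\d\cM$. Both work.

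One step fails as written: you also set $\cU\coloneqq0$ outside $\widehat\Omega^C_s$. Property $(i)$ is asserted for every $(\omega,\P)\in\Omega\times\fP_{\textnormal{sem}}$. With $\cU^{\omega,\P}\equiv0$ you get $\Delta\cN^{\omega,\P}=\Delta\cM^{\omega,\P}$, hence $M^\P_{\mu^X}[\Delta\cN^{\omega,\P}|\widetilde\cP]=\cW^{\omega,\P}$. This is non-zero as soon as $\cM^{\omega,\P}$ has a non-trivial component along $\tilde\mu^{X,\P}$. The zeroing must be confined to those $(\omega,\P,\tilde\omega,r)$ with $(\omega,\P)\in\widehat\Omega^C_s$ and $\|\cU^{\omega,\P}_r(\tilde\omega;\cdot)\|_{\widehat\L^2_{\omega\otimes_s\tilde\omega,s+r}(\mathsf{K}^{s,\omega,\P}_{\tilde\omega,r})}=\infty$. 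Property $(ii)$ asks nothing outside $\widehat\Omega^C_s$, and this restricted modification is exactly the paper's.

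Two measurability claims are also too quick.

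First, ``the atom term is measurable because $\nu^\P$ is'' gives only Borel measurability. For each $\P$, the compensator of \cite{neufeld2014measurability} is merely $\F^\P_{+}$-predictable. So the correction term, and with it $\cU$, need not be $\cF\otimes\cB(\fP_{\textnormal{sem}})\otimes\widetilde\cP$-measurable. The paper repairs this with a disintegration $\nu^\P=\mathsf{K}^\P\,\d\mathsf{A}^\P$ in which $\mathsf{A}^\P$ is replaced by a measurably parametrised $(\F,\P)$-predictable projection (\Cref{lem::measurable_predictable_projection} and \Cref{lem::measurable_disintegration_compensator}). It then sets $\mathsf{a}^\P_t=\mathsf{K}^\P_t(\R^d)\Delta\mathsf{A}^\P_t$.

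Second, a covariation built from $\F_{+}$-adapted sums is only $\F_{+}$-progressive and $\P$--a.s.\ continuous. Difference quotients evaluated at the current time therefore need not be $\F$-predictable on every path. Before differentiating against $\textnormal{Tr}\,\mathsf{C}$, you should left-regularise it, taking $\limsup_n$ of its values at $(t-1/n)\lor0$, as the paper does.
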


	\begin{proposition}\label{prop::Borel-measurability_value_function}
		Suppose that {\rm\Cref{ass::probabilities2}} and {\rm\ref{ass::generator2}} hold. Let $s \in [0,\infty)$. The function
		\begin{equation}\label{eq::borel_measurability_value_function}
			(\omega,\P) \longmapsto \E^\P[\cY^{s,\omega,\P}_0((T-s\land T)^{s,\omega},\xi^{s,\omega})],
		\end{equation}
		defined on the subset $\{(\omega,\P) : \omega \in \Omega,\, \P \in \fP(s,\omega)\} \subseteq \Omega \times \fP(\Omega)$ is upper semi-analytic.
	\end{proposition}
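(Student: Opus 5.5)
The plan is to show that the map in \eqref{eq::borel_measurability_value_function} is the restriction, to the analytic set $\{(\omega,\P):\P\in\fP(s,\omega)\}$ (\Cref{ass::probabilities2}.$(i)$), of a Borel-measurable function on $\widehat\Omega^C_s$. Since $\fP(s,\omega)\subseteq\{\P:(\omega,\P)\in\widehat\Omega^C_s\}$ and $\widehat\Omega^C_s$ is Borel by \Cref{lem::measurability_characteristics}, upper semi-analyticity then follows immediately. Indeed, for every $c\in\R$, the set $\{g>c\}$ intersected with an analytic set is analytic, because the intersection of a Borel set with an analytic set is analytic.

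To construct the Borel function, I will run the Picard iteration of \cite{possamai2024reflections} simultaneously for all $(\omega,\P)\in\widehat\Omega^C_s$, keeping track of joint measurability at each step. Start from $(\cY^{0},\cZ^{0},\cU^{0},\cN^{0})\equiv0$. Given Borel-measurable iterates $(\omega,\P,\tilde\omega,t)\longmapsto(\cY^{n},\cZ^{n},\cU^{n})$, where $\cU^n$ is $\cF\otimes\cB(\fP_\textnormal{sem})\otimes\widetilde\cP$-measurable with $\cU^n_r(\tilde\omega;\cdot)\in\widehat\L^2_{\omega\otimes_s\tilde\omega,s+r}(\mathsf{K}^{s,\omega,\P}_{\tilde\omega,r})$, I proceed in three steps.
\begin{enumerate}
\item[$(a)$] \Cref{ass::generator2}.$(iii)$, combined with the Borel differential characteristics of \Cref{lem::measurability_characteristics}, makes $(\omega,\P,\tilde\omega,r)\longmapsto f^{s,\omega,\P}_r(\cY^n_r,\cY^n_{r-},\cZ^n_r,\cU^n_r(\cdot))$ Borel.
\item[$(b)$] Form
\[
\cM^{n+1,\omega,\P}_t\coloneqq\E^\P\Big[\xi^{s,\omega}+\int_0^{(T-s\land T)^{s,\omega}}f^{s,\omega,\P}_r(\cdots)\,\d(C^{s,\omega}_{s+\cdot})_r\Big|\cF_{t+}\Big].
\]
A version that is jointly measurable and right-continuous follows from \Cref{lem::measurable_martingale_modification}.$(i)$--$(ii)$. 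The paths are $\P$-a.s. càdlàg, so the resulting process is a modification and hence a square-integrable martingale under each $\P$ with the integrability of \Cref{ass::generator2}.$(iv)$. Outside the integrability set I truncate or set it to zero in a Borel way.
\item[$(c)$] Apply \Cref{lem::measurable_decomposition} to $\cM^{n+1}$ to obtain jointly measurable $(\cZ^{n+1},\cU^{n+1})$ lying in the correct $\P$-dependent spaces, and set $\cY^{n+1}_t\coloneqq\cM^{n+1}_t-\int_0^{t}f(\cdots)\,\d C$, which is again Borel.
\end{enumerate}
By induction, every iterate is jointly measurable.

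By the contraction property under \Cref{ass::generator2}.$(v)$ (the weighted norm with $\hat\beta$, \cite{possamai2024reflections}), for each fixed $(\omega,\P)$ the iterates converge in $\cT^{2,s,\omega}_{T,\hat\beta}\times\cdots$ to the unique solution. Consequently $\E^\P[\cY^{n}_0]\to\E^\P[\cY^{s,\omega,\P}_0]$, since $\cE(\hat\beta A)\ge1$ and $L^2$ convergence at time $0$ implies convergence of expectations. Each map $(\omega,\P)\longmapsto\E^\P[\cY^n_0]$ is Borel by \Cref{lem::measurable_martingale_modification}.$(i)$, so the pointwise limit is Borel on $\widehat\Omega^C_s$. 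Uniqueness of BSDE solutions guarantees that this limit equals the value in \eqref{eq::borel_measurability_value_function}, independently of the chosen versions.

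I expect the main obstacle to be step $(c)$ together with step $(a)$, namely keeping $\cU^n$ in the $\P$-dependent space $\widehat\L^2(\mathsf{K}^{s,\omega,\P})$ measurably, so that the generator can be evaluated. This is exactly what \Cref{lem::measurable_decomposition}.$(ii)$ and \Cref{ass::generator2}.$(iii)$ provide. A second point needing care is that the integrals must be taken with the shifted $C$ and stopped at $(T-s\land T)^{s,\omega}$, both of which are Borel in $(\omega,\tilde\omega)$ by Galmarino's test. Finally, the Picard limit must be identified with the solution in the sense of \cite{possamai2024reflections} relative to $(\F_+,\P)$, using \Cref{prop::good_version_stochastic_integral} to guarantee that the stochastic integrals do not depend on the filtration.
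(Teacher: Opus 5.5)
Your proposal is correct and follows the paper's proof. Both run the Picard iteration jointly in $(\omega,\P)$ through \Cref{lem::measurable_martingale_modification} and \Cref{lem::measurable_decomposition}, use \Cref{ass::generator2}.$(iii)$ to absorb the $\P$-dependence of $\widehat{\L}^2$, pass to a Borel limit of $(\omega,\P)\longmapsto\E^\P[\cY^m_0]$ (take a $\limsup$, as the paper does, since the iteration need not converge everywhere), and restrict to the analytic graph of $\fP(s,\cdot)$.

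The one difference is how integrability is handled. The paper first truncates $\xi$ and $f$ and stops at $\sigma_n\coloneqq\inf\{t: C_t\geq n\}$, so that every iterate is a martingale bounded by $n+n^2$ for all $(\omega,\P)\in\widehat\Omega^C_s$; it then removes the truncation with \Cref{cor::stability}. You instead zero out off an integrability set, which works under two conditions:
\begin{itemize}
\item that set must be the Borel set where the weighted moments in \Cref{ass::generator2}.$(iv)$ are finite (Borel by \Cref{lem::measurable_martingale_modification}.$(i)$), not the merely analytic graph of $\fP(s,\cdot)$, since zeroing out off an analytic set would destroy Borel measurability;
\item $\cY^n_{r\smallertext{-}}$ must be defined measurably, e.g.\ as a truncated $\limsup$, because $\cY^n$ is only $\P$--a.s.\ c\`adl\`ag.
\end{itemize}
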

	
	\begin{proof}
		We first explain why it suffices to consider the case of a bounded terminal condition and a bounded finite variation process in the dynamics of the BSDEs. For $n \in \N^\star$, set 
		\begin{align*}
			\sigma_n\coloneqq & \inf\{t \in [0,\infty) : C_t \geq n\} = \inf\{t \in [0,\infty) \cap \Q : C_t \geq n\}.
		\end{align*}
		Then, $(\sigma_n)_{n\in\N^\smalltext{\star}}$ is a sequence of strictly positive $\F$--predictable stopping times tending to infinity (see \cite[Proposition I.2.13]{jacod2003limit}). For the moment, suppose that we have shown that the map
		\begin{equation}\label{eq::truncated_measurability}
			(\omega,\P) \longmapsto \E^\P[\cY^{s,\omega,\P,n}_0((T-s\land T)^{s,\omega},(\xi^n)^{s,\omega})],
		\end{equation}
		defined on $\{(\omega,\P) : \omega \in \Omega ,\; \P \in \fP(s,\omega)\}$ is Borel-measurable. Here, $\cY^{s,\omega,\P,n}((T - s \land T)^{s,\omega}, (\xi^n)^{s,\omega})$ denotes the solution to the BSDE with generator $f^n \coloneqq \min\{\max\{f, -n\}, n\} \1_{\llbracket 0, \sigma_\smalltext{n} \rrparenthesis}$ and terminal condition $\xi^n \coloneqq \min\{\max\{\xi, -n\}, n\}$, before shifting by $(\omega, s)$. Note that each $f^n$ satisfies the same properties as $f$, and that the finite variation component of the BSDE is bounded in total variation by $n^2$. 
		As $\sigma_n$ tends to infinity and $(\xi^n, f^n)$ tends to $(\xi, f)$ for $n \longrightarrow \infty$, we deduce from the stability of solutions to BSDEs (see \Cref{cor::stability}) that
		\begin{equation*}
			\lim_{n \rightarrow \infty}\E^\P\big[\cY^{s,\omega,\P,n}_0((T-s\land T)^{s,\omega},(\xi^n)^{s,\omega})\big] = \E^\P\big[\cY^{s,\omega,\P}_0((T-s\land T)^{s,\omega},\xi^{s,\omega})\big],
		\end{equation*}
		pointwise on $\{(\omega,\P) : \omega \in \Omega,\; \P \in \fP(s,\omega)\}$, which implies the desired Borel-measurability.
		
		\medskip
		We fix $n \in \N^\star$ and turn to the Borel-measurability of \eqref{eq::truncated_measurability}.
		In what follows, all maps are implicitly extended to $\Omega \times \fP_\textnormal{sem} \times \Omega \times [0,\infty)$ by setting them to zero outside their original domains. To simplify the notation, we will write $f$ instead of $f^n$ and $\xi$ instead of $\xi^n$. The map
		\begin{equation*}
			\widehat{\Omega}^C_s \times \Omega \ni (\omega,\P,\tilde\omega) 
			\longmapsto 
			\xi^{s,\omega}(\tilde\omega) + \int_0^{(T\smallertext{-}s\land T)^{s,\omega}(\tilde\omega)} f^{s,\omega,\P}_r(\tilde\omega,0,0,0,\mathbf{0})\d(C^{s,\omega}_{s\smallertext{+}\smallertext{\cdot}})_r(\tilde\omega) \in \R,
		\end{equation*}
		is Borel-measurable and bounded by $n+n^2$. By \Cref{lem::measurable_martingale_modification}, there exists a Borel-measurable and $\F^\dagger_\smallertext{+}$-optional map
		\begin{equation*}
			\Omega^\dagger \times [0,\infty) \ni (\omega,\P,\tilde\omega,t) \longmapsto \cM^{\omega,\P,0}_t(\tilde\omega) \in \R,
		\end{equation*}
		such that for every $(\omega,\P) \in \widehat{\Omega}^C_s$, the process $\cM^{\omega,\P,0}$ is a right-continuous, $\F_\smallertext{+}$-optional, $(\F_\smallertext{+},\P)$-martingale bounded by $n+n^2$ satisfying
		\begin{equation*}
			\cM^{\omega,\P,0}_t = \E^\P\bigg[ \xi^{s,\omega} + \int_0^{(T\smallertext{-}s \land T)^{\smalltext{s}\smalltext{,}\smalltext{\omega}}} f^{s,\omega,\P}_r(0,0,0,\mathbf{0})\d(C^{s,\omega}_{s\smallertext{+}\smallertext{\cdot}})_r \bigg|\cF_{t\smallertext{+}}\bigg], \; \textnormal{$\P$--a.s.}, \; t \in [0,\infty).
		\end{equation*}
		Outside $\widehat{\Omega}^C_s$, we take $\cM^{\omega,\P,0}\equiv0$. For $(\omega,\P)\in \widehat{\Omega}^C_s$, we then let $\cY^{\omega,\P,0} : \Omega \times [0,\infty] \longrightarrow \R$ be defined by $\cY^{\omega,\P,0}_\infty \coloneqq \xi^{s,\omega}$ and
		\begin{equation*}
			\cY^{\omega,\P,0}_t \coloneqq \cM^{\omega,\P,0}_t - \int_0^{(T\smallertext{-}s\land T)^{\smalltext{s}\smalltext{,}\smalltext{\omega}}\land t} f^{s,\omega,\P}_r(0,0,0,\mathbf{0})\d(C^{s,\omega}_{s\smallertext{+}\smallertext{\cdot}})_r, \; t \in [0,\infty).
		\end{equation*}
		Note that the integral process above is c\`adl\`ag and $\F_\smallertext{+}$-adapted.
		Then the map
		\begin{equation*}
			\widehat{\Omega}^C_s \times \Omega \times [0,\infty] \ni (\omega,\P,\tilde\omega,t) \longmapsto \cY^{\omega,\P,0}_t(\tilde\omega) \in \R,
		\end{equation*}
		is Borel-measurable and the process $\cY^{\omega,\P,0}$ is right-continuous, $\P$--a.s. c\`adl\`ag, and $\F_\smallertext{+}$-optional. By \Cref{lem::measurable_decomposition}, there exists a Borel-measurable 
		\begin{equation*}
			\widehat{\Omega}^C_s \times \Omega \times [0,\infty) \ni (\omega,\P,\tilde\omega,t) \longmapsto \cZ^{\omega,\P,0}_t(\tilde\omega) \in \R^d
		\end{equation*}
		and a $\cB(\widehat{\Omega}^C_s)\otimes\widetilde{\cP}$-measurable function
		\[
			\widehat{\Omega}^C_s \times \Omega \times [0,\infty) \times \R^d \ni (\omega,\P,\tilde\omega, t, x) \longmapsto \cU^{\omega,\P,0}_t(\tilde\omega;x) \in \R
		\]
		such that $(\cZ^{\omega,\P,0},\cU^{\omega,\P,0}) \in \H^2(X^{c,\P};\F,\P) \times \H^2(\mu^X;\F,\P)$ with $\cU^{\omega,\P,0}_r(\tilde\omega;\cdot) \in \widehat{\L}^2_{\omega\otimes_\smalltext{s}\tilde\omega,s+r}(\mathsf{K}^{s,\omega,\P}_{\tilde\omega,r})$ for every $(\tilde\omega,r) \in \Omega \times [0,\infty)$, and
		\begin{equation*}
			\cN^{\omega,\P,0} \coloneqq \cM^{\omega,\P,0} - \cM^{\omega,\P,0}_0 - \bigg(\int_0^\cdot \cZ^{\omega,\P,0}_r \d X^{c,\P}_r\bigg)^{(\P)} -  \bigg(\int_0^\cdot\int_{\R^\smalltext{d}}\cU^{\omega,\P,0}_r(x)\tilde\mu^{X,\P}(\d r, \d x)\bigg)^{(\P)},
		\end{equation*}
		belongs to $\cH^{2,\perp}_0(X^{c,\P},\mu^X;\F_{\smallertext{+}},\P)$. Since $\cM^{\omega,\P,0} = \cM^{\omega,\P,0}_{\cdot \land (T-s\land T)^{\smalltext{s}\smalltext{,}\smalltext{\omega}}}$, $\P$--a.s., we choose the integrands such that $\cZ^{\omega,\P,0} = \cZ^{\omega,\P,0}\1_{\llparenthesis 0,(T-s\land T)^{\smalltext{s}\smalltext{,}\smalltext{\omega}}\rrbracket}$ and $\cU^{\omega,\P,0} = \cU^{\omega,\P,0}\1_{\llparenthesis 0,(T-s\land T)^{\smalltext{s}\smalltext{,}\smalltext{\omega}}\rrbracket}$ hold. This then yields
		\begin{align*}
			\cY^{\omega,\P,0}_t 
			&= \xi^{s,\omega} + \int_t^{(T\smallertext{-}s\land T)^{\smalltext{s}\smalltext{,}\smalltext{\omega}}} f^{s,\omega,\P}_r(0,0,0,\mathbf{0})\d(C^{s,\omega}_{s\smallertext{+}\smallertext{\cdot}})_r - \bigg(\int_t^{(T\smallertext{-}s\land T)^{\smalltext{s}\smalltext{,}\smalltext{\omega}}} \cZ^{\omega,\P,0}_r \d X^{c,\P}_r\bigg)^{(\P)} \\
			&\quad -  \bigg(\int_t^{(T\smallertext{-}s\land T)^{\smalltext{s}\smalltext{,}\smalltext{\omega}}}\int_{\R^\smalltext{d}}\cU^{\omega,\P,0}_r(x)\tilde\mu^{X,\P}(\d r, \d x)\bigg)^{(\P)} - \int_t^{(T\smallertext{-}s\land T)^{\smalltext{s}\smalltext{,}\smalltext{\omega}}}\d\cN^{\omega,\P,0}_r, \; t \in [0,\infty], \; \textnormal{$\P$--a.s.}
		\end{align*}
		This completes the first step of the Picard iteration, and
		by \Cref{lem::measurable_martingale_modification}, the map $(\omega,\P) \longmapsto \E^\P[\cY^{\omega,\P,0}_0]$	is Borel-measurable on $\widehat{\Omega}^C_s$.

		\medskip
		Since the restriction of $\cY^{\omega,\P,0}(\tilde\omega)$ to $(0,\infty)$ is only known to be c\`adl\`ag for $\P$--a.e. $\tilde\omega \in \Omega$, we define
		\[
			\cY^{\omega,\P,0}_{t\smallertext{-}} \coloneqq \widetilde{\cY}^{\omega,\P,0}_{t\smallertext{-}} \1_{\{\tilde{\cY}^{\smalltext{\omega}\smalltext{,}\smalltext{\P}\smalltext{,}\smalltext{0}}_{\smalltext{t}\smalltext{-}}\in\R\}} \1_{\{t > 0\}}, \; \textnormal{where} \; \widetilde{\cY}^{\omega,\P,0}_{t\smallertext{-}} \coloneqq \limsup_{n \rightarrow\infty} \cY^{\omega,\P,0}_{(t-1/n)\lor 0}, \; t \in [0,\infty).
		\]
		Then
		\begin{equation*}
			\widehat{\Omega}^C_s \times \Omega \times [0,\infty) \ni (\omega,\P,\tilde\omega,t) \longmapsto \cY^{\omega,\P,0}_{t\smallertext{-}}(\tilde\omega) \in \R,
		\end{equation*}
		is Borel-measurable, and, for fixed $(\omega,\P)\in \widehat{\Omega}^C_s$, the process $\cY^{\omega,\P,0}_{\smallertext{-}}$ is $\F$-predictable (see \cite[Theorem~IV.97.(b)]{dellacherie1978probabilities}) and coincides, outside some $\P$--null set, with the pathwise left limits of $\cY^{\omega,\P,0}$ on $(0,\infty)$.

		\medskip
		For the next Picard step, we replace $f^{s,\omega,\P}_r(0,0,0,\mathbf{0})$ by $f_r^{s,\omega,\P}(\cY^{\omega,\P,0}_{r\smallertext{-}},\cY_r^{\omega,\P,0},\cZ^{\omega,\P,0}_r,\cU_r^{\omega,\P,0})$ in the arguments above. This yields $(\cY^{\omega,\P,1},\cZ^{\omega,\P,1},\cU^{\omega,\P,1})$ having the same measurability and path-regularity properties as $(\cY^{\omega,\P,0},\cZ^{\omega,\P,0},\cU^{\omega,\P,0})$.
		We then iteratively obtain, for every $m \in \N^\star$, a triple $(\cY^{\omega,\P, m},\cZ^{\omega,\P, m},\cU^{\omega,\P, m})$ 
		such that $(\omega,\P) \longmapsto \E^\P[\cY^{\omega,\P, m}_0]$	is Borel-measurable on $\widehat{\Omega}^C_s$. This yields Borel-measurability of the map $\Y : \widehat{\Omega}^C_s \longrightarrow [-\infty,\infty]$ defined by
		\begin{equation}\label{eq::picard_limit_value_map}
			\Y(\omega,\P) \coloneqq \limsup_{m \rightarrow \infty} \E^\P[\cY^{\omega,\P, m}_0].
		\end{equation}
		In particular, the restriction of $\Y$ to $\{(\omega,\P) : \omega \in \Omega, \; \P \in \fP(s,\omega)\} \subseteq \Omega \times \fP(\Omega)$ is upper semi-analytic.
		The proof of \cite[Theorem 3.7]{possamai2024reflections}, which is based on Banach's fixed-point theorem, yields
		\[
			\E^\P\big[\cY^{s,\omega,\P}_0((T-s\land T)^{s,\omega},\xi^{s,\omega})\big]
			= \lim_{m \to \infty}\E^\P[\cY^{\omega,\P,m}_0] = \Y(\omega,\P), \; \omega\in\Omega, \; \P\in\fP(s,\omega),
		\]
		which concludes the proof.
	\end{proof}
	
	We need one last fact about the conditioning of BSDE solutions before proceeding to the proof of \Cref{thm::measurability2}.

	\begin{lemma}\label{lem::conditioning_bsde2}
		Suppose that {\rm\Cref{ass::probabilities2}} and {\rm\ref{ass::generator2}} hold. Let $0 \leq s \leq t < \infty$, $\bar{\omega} \in \Omega$, and $\P \in \fP(s,\bar\omega)$. Then there exists a $\P$--null set $\sN \in \cF_{t-s}$ such that
		\begin{align}\label{eq::conditioning_solution_bsde}
			\E^\P\big[\cY^{s,\bar\omega,\P}_{t-s}((T-s \land T)^{s,\bar\omega},\xi^{s,\bar\omega}) \big| \cF_{t\smallertext{-}s}\big](\omega)
			= \E^{\P^{\smalltext{t}\smalltext{-}\smalltext{s}\smalltext{,}\smalltext{\omega}}}\big[\cY^{t,\bar\omega\otimes_\smalltext{s}\omega,\P^{\smalltext{t}\smalltext{-}\smalltext{s}\smalltext{,}\smalltext{\omega}}}_0 ((T -t\land T)^{t,\bar\omega\otimes_\smalltext{s}\omega},\xi^{t,\bar\omega\otimes_\smalltext{s}\omega})\big],\; \omega \in \Omega\setminus\sN.
		\end{align}
	\end{lemma}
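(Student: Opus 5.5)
The plan is to show that the solution of the BSDE under $\P$, restricted to $[t-s,\infty]$ and shifted along $\omega$, solves the shifted BSDE under $\P^{t-s,\omega}$ for $\P$--a.e. $\omega$. Uniqueness then identifies the two first components at time $0$, and a conditional expectation gives \eqref{eq::conditioning_solution_bsde}. Write $\tau\coloneqq t-s$ and $(\cY,\cZ,\cU,\cN)$ for the solution of \eqref{eq::P_BSDE2} under $\P$ with data $((T-s\land T)^{s,\bar\omega},\xi^{s,\bar\omega},f^{s,\bar\omega,\P})$. For $\omega\in\Omega$ we set
\[
\cY^{\omega}_r\coloneqq\cY^{\tau,\omega}_{\tau\smallertext{+}r},\quad \cZ^{\omega}_r\coloneqq\cZ^{\tau,\omega}_{\tau\smallertext{+}r},\quad \cU^\omega_r(x)\coloneqq\cU^{\tau,\omega}_{\tau\smallertext{+}r}(x),\quad \cN^\omega\coloneqq\cN^{\tau,\omega}_{\tau\smallertext{+}\smallertext{\cdot}}-\cN_\tau(\omega).
\]
We write the BSDE on $[\tau,\infty]$ and compose every term with $\omega\otimes_\tau\cdot$. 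The data shift correctly, since $(\bar\omega\otimes_s\omega)\otimes_t\tilde\omega=\bar\omega\otimes_s(\omega\otimes_\tau\tilde\omega)$ and $(T-s\land T)^{s,\bar\omega}(\omega\otimes_\tau\tilde\omega)-\tau=(T-t\land T)^{t,\bar\omega\otimes_s\omega}(\tilde\omega)$ on the relevant event, using Galmarino's test. By \Cref{prop::conditioning_characteristics2}.$(i)$, the shifted differential characteristics $(\mathsf b^{s,\bar\omega,\P},\mathsf a^{s,\bar\omega},\mathsf K^{s,\bar\omega,\P})$ are differential characteristics of $X$ under $\P^{\tau,\omega}$ relative to the shifted $C$, and \Cref{ass::probabilities2}.$(ii)$ gives $\P^{\tau,\omega}\in\fP(t,\bar\omega\otimes_s\omega)$ for $\P$--a.e. $\omega$. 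Hence the shifted generator equals $f^{t,\bar\omega\otimes_s\omega,\P^{\tau,\omega}}$, $\P^{\tau,\omega}\otimes\d C$--a.e., for $\P$--a.e. $\omega$. The $\widehat{\L}^2$-norms also coincide, because the kernels agree.

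The martingale terms are handled as follows. \Cref{cor::shif_quadratic_variation_continuous_martingale_part} identifies $(X^{c,\P})^{\tau,\omega}_{\tau\smallertext{+}\cdot}-X^{c,\P}_\tau(\omega)$ with $X^{c,\P^{\tau,\omega}}$. To treat the stochastic integrals, I would approximate them in $\cH^2$ by integrals of simple predictable integrands, for which the shift is pathwise. The $L^2$-convergence under $\P$ then passes to convergence under $\P^{\tau,\omega}$ along a subsequence for $\P$--a.e. $\omega$, via \eqref{eq_conditional_expectation_shifted_measure} applied to the squared errors. This yields
\[
\Big(\int Z\,\d X^{c,\P}\Big)^{\tau,\omega}_{\tau\smallertext{+}\cdot}-(\cdots)_\tau(\omega)=\int \cZ^\omega\,\d X^{c,\P^{\tau,\omega}},
\]
and the analogous identity for $\cU\ast\tilde\mu^{X,\P}$, where $\mu^X$ shifts pathwise and the compensator shifts by \Cref{prop::conditioning_characteristics2}. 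By \Cref{lem::conditioning_martingale2}, $\cN^\omega$ is a square-integrable $(\F_\smallertext{+},\P^{\tau,\omega})$-martingale. Its orthogonality to $X^{c,\P^{\tau,\omega}}$ and $\mu^X$ follows by applying \Cref{lem::conditioning_martingale2} to the martingales $\cN\,(Z'\cdot X^{c,\P})$ and $\cN\,(U'\ast\tilde\mu^{X,\P})$ for countably many generating simple integrands $Z',U'$. Integrability in the weighted norms holds for $\P$--a.e. $\omega$, since their $\P$-expectations are $\E^\P$ of the shifted ones, which are therefore finite a.s. Consequently $(\cY^\omega,\cZ^\omega,\cU^\omega,\cN^\omega)$ solves the shifted BSDE under $\P^{\tau,\omega}$, and the uniqueness in \cite[Theorem 3.7]{possamai2024reflections} gives $\cY^\omega_0=\cY^{t,\bar\omega\otimes_s\omega,\P^{\tau,\omega}}_0(\dots)$, $\P^{\tau,\omega}$--a.s.

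For the conclusion, note that $\cY_\tau$ is $\cF_{\tau\smallertext{+}}$-measurable. Taking expectations, \eqref{eq_conditional_expectation_shifted_measure} yields $\E^\P[\cY_\tau|\cF_\tau](\omega)=\E^{\P^{\tau,\omega}}[\cY^{\tau,\omega}_\tau]=\E^{\P^{\tau,\omega}}[\cY^\omega_0]$ for $\P$--a.e. $\omega$. The exceptional set can be enlarged to an $\cF_\tau$-measurable $\P$--null set $\sN$, because the right-hand side of \eqref{eq::conditioning_solution_bsde} is $\cF_\tau$-universally measurable, by \Cref{prop::Borel-measurability_value_function} composed with the measurable kernel $\omega\mapsto\P^{\tau,\omega}$. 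If necessary, I would instead choose a suitable version of the left-hand side.

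The main obstacle is the identification of the shifted stochastic integrals, together with the orthogonality of $\cN^\omega$, under the non-augmented filtration $\F_\smallertext{+}$. The difficulty is that the integrands are only $\F^\P_\smallertext{+}$-predictable modulo $\P$-null sets, and these may not shift nicely. I would first replace $\cZ$ and $\cU$ by $\F$-predictable versions, using the measurable choices of \Cref{lem::measurable_decomposition}, before shifting.
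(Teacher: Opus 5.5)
Your proposal is correct and follows the same route as the paper. Both shift the $\P$-solution along $\omega$, verify that it solves the shifted BSDE under $\P^{t-s,\omega}$ for $\P$--a.e.\ $\omega$, conclude by uniqueness and \eqref{eq_conditional_expectation_shifted_measure}, and move the exceptional set into $\cF_{t-s}$ using measurability of the value map composed with $\omega\longmapsto(\bar\omega\otimes_s\omega,\P^{t-s,\omega})$. The paper treats the jump part differently: it identifies the shifted $\tilde\mu^{X}$-integral as the purely discontinuous martingale with the same jumps, and it gets orthogonality of the shifted $\cN$ from pathwise quadratic covariations and the characterisation of $M^{\P}_{\mu^{X}}[\Delta\cN|\widetilde{\cP}]=0$ through bounded predictable test functions. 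You use density of simple integrands and product martingales instead, and there \Cref{lem::conditioning_martingale2} would need an $\L^1$ version, since it is stated only for square-integrable martingales. One slip to fix: $\cN$ is only $\F_{+}$-adapted, so you should subtract $\cN_\tau(\omega\otimes_\tau\cdot)$ rather than the constant $\cN_\tau(\omega)$, so that the shifted orthogonal martingale starts at zero.
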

	
	\begin{proof}[Proof of \Cref{thm::measurability2}]
		Let $\textnormal{proj}_\Omega :\Omega \times \fP(\Omega) \ni (\omega,\P) \longmapsto \omega \in \Omega$. For every $\lambda \in \R$, we have
		\begin{align*}
			\big\{\omega \in \Omega : \widehat{\cY}_s(T,\xi)(\omega) > \lambda\big\} 
			&= \textnormal{proj}_\Omega\big(\big\{(\omega,\P) : \omega \in \Omega, \, \P \in \fP(s,\omega),\, \E^\P\big[\cY^{s,\omega,\P}_0((T-s\land T)^{s,\omega},\xi^{s,\omega})\big] > \lambda\big\}\big).
		\end{align*} 
		Since projections of analytic sets are analytic (due to the continuity of the projection operator), the upper semi-analyticity of $\widehat{\cY}_s(T,\xi)$ follows from \Cref{prop::Borel-measurability_value_function}. Furthermore, it then follows from \cite[Corollary 8.4.3]{cohn2013measure} that $\widehat{\cY}_s(T,\xi)$ is $\cF$--universally measurable. Consider the map $\iota_s : \Omega \ni \omega \longmapsto X_{\cdot\land s}(\omega) \in \Omega$, which satisfies $\iota_s(\omega) = X_{\cdot\land s}(\omega) = \omega_{\cdot \land s}$ and is $(\cF_s,\cF)$-measurable by \cite[Proposition 2.3.10]{weizsaecker1990stochastic}. Then, $\iota_s$ is also $(\cF_s,\cF)$--universally measurable by \cite[Lemma 8.4.6]{cohn2013measure}. For $\eta \coloneqq \widehat{\cY}_s(T,\xi)$, we have $\eta(\iota_s(\omega)) = \eta(\omega)$ for all $\omega \in \Omega$ by definition. Therefore, we conclude that $\widehat{\cY}_s(T,\xi)$ is $\cF_s$--universally measurable. To see that $\widehat{\cY}_{s \land T(\omega)}(T,\xi)(\omega) = \widehat{\cY}_{s}(T,\xi)(\omega)$, note that for $T(\omega) \leq s \in [0,\infty)$, we have $T(\omega\otimes_s\cdot) \equiv T(\omega)$ and $\xi(\omega\otimes_s\cdot) \equiv \xi(\omega)$ by Galmarino's test since $\xi$ is $\cF_T$-measurable (see \cite[Theorem IV.100.(a)]{dellacherie1978probabilities}), and therefore $(T - s \land T)^{s,\omega} \equiv 0$, which yields $\widehat{\cY}_{s \land T(\omega)}(T,\xi)(\omega) = \xi(\omega) = \widehat{\cY}_{s}(T,\xi)(\omega)$.
		
		\medskip
		We turn to \eqref{eq::dynamic_programming_principle2}, and follow closely the proof of \cite[Theorem 2.3]{nutz2013constructing}. Fix $\P \in \fP_0$ and $\varepsilon \in (0,\infty)$. By \cite[Proposition 7.50]{bertsekas1978stochastic}, there exists an analytically measurable\footnote{The analytic $\sigma$-algebra is generated by all analytic sets and is therefore contained in the universal completion.} map $\Q^\prime : \Omega \longrightarrow \fP(\Omega)$ such that 
		\begin{equation}\label{eq::analytic_selection2}
			\Q^\prime(\omega) \in \fP(s,\omega),
			\;
			\E^{\Q^\smalltext{\prime}(\omega)} \big[\cY^{s,\omega,{\Q^\smalltext{\prime}(\omega)}}_0((T-s\land T)^{s,\omega},\xi^{s,\omega})\big] 
			\geq \big(\widehat\cY_s(T,\xi)(\omega) - \varepsilon\big) \1_{\{\hat\cY_\smalltext{s}(T,\xi) < \infty\}}(\omega) + \frac{1}{\varepsilon}\1_{\{\hat\cY_\smalltext{s}(T,\xi) = \infty\}}(\omega),
		\end{equation}
		for each $\omega \in \Omega$ for which $\fP(s,\omega) \neq \varnothing$. The map $\widetilde\Q : \Omega \longrightarrow \fP(\Omega)$ given by $\widetilde\Q(\omega) \coloneqq \Q^\prime({\omega_{\cdot \land s}})$ is $\cF^\ast_s$-measurable and also satisfies \eqref{eq::analytic_selection2} for each $\omega \in \Omega$ with $\fP(s,\omega)\neq \varnothing$. We now choose an $\cF_s$-measurable map $\Q : \Omega \longrightarrow \fP(\Omega)$ satisfying $\Q(\cdot) = \widetilde\Q(\cdot)$, $\P$--a.s., see \cite[Lemma 1.27]{kallenberg2021foundations}.\footnote{Polish spaces are Borel spaces by \cite[Theorem 1.8]{kallenberg2021foundations}.} Since $\fP(s,\omega) \neq \varnothing$ for $\P$--a.e. $\omega \in\Omega$ by \Cref{ass::probabilities2}.$(ii)$, we have that $\Q(\omega)$ satisfies \eqref{eq::analytic_selection2} for $\P$--a.e. $\omega \in \Omega$. Let
		\begin{equation*}
			\overline{\P}[A] \coloneqq  \iint \big(\1_A\big)^{s,\omega}(\omega^\prime)\Q(\omega;\d\omega^\prime)\P(\d\omega), \; A \in \cF.
		\end{equation*}
		Then, $\overline\P \in \fP_0$ by \Cref{ass::probabilities2}.$(iii)$, $\overline{\P} = \P$ on $\cF_s$ and $\Q(\omega) = \bar\P^{s,\omega}$ for $\P$--a.e. $\omega \in \Omega$ (see \Cref{rem::measures_in_fP}). Using \Cref{lem::conditioning_bsde2}, we obtain
		\begin{equation*}
			\E^{\bar\P}\big[\cY^{\bar\P}_s(T,\xi)\big|\cF_s\big](\omega) = \E^{\Q(\omega)}\big[\cY^{s,\omega,\Q(\omega)}_0((T-s\land T)^{s,\omega},\xi^{s,\omega})\big] 
			\geq 
			\big(\widehat\cY_s(T,\xi)(\omega) - \varepsilon\big) \1_{\{\hat\cY_\smalltext{s}(T,\xi) < \infty\}}(\omega) + \frac{1}{\varepsilon}\1_{\{\hat\cY_\smalltext{s}(T,\xi) = \infty\}}(\omega),
		\end{equation*}
		for $\P$--a.e. $\omega \in \Omega$. This then yields
		\begin{equation*}
			\underset{\P^\prime \in \fP_\smalltext{0}(\cF_{s},\P)}{{\esssup}^\P} \E^{\P^\smalltext{\prime}} \big[ \cY^{\P^\smalltext{\prime}}_s(T,\xi)\big| \cF_s\big] 
			\geq 
			\big(\widehat\cY_s(T,\xi) - \varepsilon\big) \1_{\{\hat\cY_\smalltext{s}(T,\xi) < \infty\}} + \frac{1}{\varepsilon}\1_{\{\hat\cY_\smalltext{s}(T,\xi) = \infty\}}, \; \text{$\P$--a.s.},
		\end{equation*}
		and since $\varepsilon\in(0,\infty)$ is arbitrary, this yields one inequality.
		
		\medskip
		The converse inequality can be established as follows. Fix $\P \in \fP_0$, and let $\tilde{\P} \in \fP_0(\cF_s, \P)$. From \Cref{ass::probabilities2}.$(ii)$ and \Cref{lem::conditioning_bsde2}, we obtain
		\begin{equation*}
			\E^{\tilde\P}\big[\cY^{\tilde\P}_s(T,\xi)\big| \cF_s \big](\omega) = \E^{\tilde\P^{\smalltext{s}\smalltext{,}\smalltext{\omega}}}\big[\cY^{s,\omega,\tilde\P^{\smalltext{s}\smalltext{,}\smalltext{\omega}}}_0((T-s\land T)^{s,\omega},\xi^{s,\omega})\big] \leq \widehat\cY_{s}(T,\xi)(\omega), \; \text{for $\tilde\P$--a.e. $\omega \in \Omega$.}
		\end{equation*}
		Note that the left-hand side is $\cF_s$-measurable, while the right-hand side is $\cF_s$--universally measurable. Thus, since $\P = \tilde{\P}$ on $\cF_s$, the inequality above also holds for $\P$--a.e. $\omega \in \Omega$. It remains to take the essential supremum over $\tilde\P \in \fP_0(\cF_s,\P)$ under $\P$ on the left-hand side, which completes the proof.
	\end{proof}

	\subsection{Path-regularisation of the value function}\label{sec::proof_regularisation}
	
	In this subsection, we prove \Cref{thm::down-crossing}. The proofs of all auxiliary lemmata deferred from this subsection are provided in \Cref{sec::proofs_lemmata_regularisation}.
		
	\begin{lemma}\label{lem::solv_bsde_cond}
		Suppose that {\rm Assumptions \ref{ass::probabilities2}} and {\rm \ref{ass::generator2}} hold. Let $s \in [0,\infty)$, $\omega \in \Omega$, $\P\in\fP(s,\omega)$, and $\tau$ be an $\F$--stopping time satisfying $\tau \geq s$. Then
		\begin{align*}
			&\cY^{s,\omega,\P}_{\cdot\land (\tau \land T\smallertext{-}s \land T)^{\smalltext{s}\smalltext{,}\smalltext{\omega}}}((T-s\land T)^{s,\omega},\xi^{s,\omega})
			= \cY^{s,\omega,\P}\big((\tau \land T-s \land T)^{s,\omega},\cY^{s,\omega,\P}_{(\tau\land T\smallertext{-}s\land T)^{s,\omega}}((T-s\land T)^{s,\omega},\xi^{s,\omega})\big), \; \textnormal{$\P$--a.s.}
		\end{align*}
		If, in addition, {\rm\Cref{ass::crossing}}.$(i)$--$(iii)$ hold, then
		\begin{align*}
			&\cY^{s,\omega,\P}\big((\tau \land T-s \land T)^{s,\omega},\cY^{s,\omega,\P}_{(\tau\land T\smallertext{-}s\land T)^{s,\omega}}((T-s\land T)^{s,\omega},\xi^{s,\omega})\big) \\
			&= \cY^{s,\omega,\P}\Big((\tau \land T-s \land T)^{s,\omega},\E^\P\big[\cY^{s,\omega,\P}_{(\tau\land T\smallertext{-}s\land T)^{s,\omega}}((T-s\land T)^{s,\omega},\xi^{s,\omega})\big|\cF_{(\tau\land T\smallertext{-}s\land T)^{s,\omega}}\big]\Big)
		\end{align*}
		on $[0,(\tau\land T-s\land T)^{s,\omega})$ outside some \textnormal{$\P$--null set}.
		Furthermore, for $s = 0$, the same assertion holds if \textnormal{\Cref{ass::crossing}}.$(i)$--$(iii)$ is satisfied only at $s = 0$.
	\end{lemma}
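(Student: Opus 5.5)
The first identity is a flow (restriction) property, and I would derive it from uniqueness. Write $\sigma\coloneqq(\tau\land T-s\land T)^{s,\omega}$ and $S\coloneqq(T-s\land T)^{s,\omega}$. Since $\tau\geq s$, Galmarino's test shows that $\sigma$ is an $\F$--stopping time with $\sigma\leq S$.

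Let $(\cY,\cZ,\cU,\cN)$ be the solution of \eqref{eq::P_BSDE2} on $[0,S]$ given by \cite[Theorem 3.7]{possamai2024reflections}. I would check directly from the dynamics that the stopped quadruple
\[
(\cY_{\cdot\land\sigma},\ \cZ\1_{\llparenthesis 0,\sigma\rrbracket},\ \cU\1_{\llparenthesis 0,\sigma\rrbracket},\ \cN_{\cdot\land\sigma})
\]
solves the BSDE with terminal time $\sigma$, terminal value $\cY_\sigma$ and the same generator $f^{s,\omega,\P}$. Subtracting the equation at time $t\land\sigma$ from the one at time $\sigma$ gives exactly this. Two membership facts are needed:
\begin{itemize}
\item $\cY_\sigma\in\L^{2,s,\omega}_{\sigma,\hat\beta}(\cF_{\sigma\smallertext{+}},\P)$, which follows from $\cY\in\cS^{2,s,\omega}_{T,\hat\beta}$;
\item the stopped $\cN$ remains orthogonal to $X^{c,\P}$ and $\mu^X$, which holds because stopping preserves $\langle\cdot,X^{c,\P}\rangle=0$ and $M^\P_{\mu^X}[\Delta\cN\,|\,\widetilde\cP]=0$.
\end{itemize}
Uniqueness in \cite[Theorem 3.7]{possamai2024reflections}, applied on the horizon $\sigma$ relative to $(\F_\smallertext{+},\P)$, then gives the first assertion.

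For the second assertion, set $\zeta\coloneqq\cY_\sigma$, which is only $\cF_{\sigma\smallertext{+}}$-measurable, and $\zeta'\coloneqq\E^\P[\zeta|\cF_\sigma]$. Let $\cY,\cY'$ be the solutions on $[0,\sigma]$ with terminal values $\zeta,\zeta'$, and set $\delta\cY\coloneqq\cY-\cY'$, $\delta\zeta\coloneqq\zeta-\zeta'$. I would linearise the generator difference along the two solutions in three pieces:
\begin{itemize}
\item for the $y$-variable, use the $\F$-progressive $\lambda$ of \Cref{ass::crossing}.$(i)$;
\item for the $(\mathrm{y},z)$-variables, use $\widehat\lambda$ and $\eta$ from \eqref{eq::definition_lambda_hat_lambda}, as in \eqref{eq::lipschitz_linearisation};
\item for the $u$-variable, use $\rho$ from \Cref{ass::crossing}.$(iii)$.
\end{itemize}
As in the comparison principle (\Cref{prop::comparison}, under \Cref{ass::crossing}.$(ii)$--$(iii)$), this should give an adjoint process $\Gamma$ and a density process
\[
L\coloneqq\cE\Big(\int\eta\,\d X^{c,\P}+\rho\ast\tilde\mu^{X,\P}\Big),
\]
which is a true positive martingale thanks to \eqref{eq::bounded_lipschitz_constants} and \eqref{eq::rho_bounded_theta}. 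The comparison argument yields the inequality; here I need the corresponding equality/upper bound for $\delta\cY$. I would therefore treat $\delta\cY$ and $-\delta\cY$ separately, using one-sided linearisations with the inequalities of \Cref{ass::crossing}. The aim is a bound of the form
\[
\Gamma_tL_t\,\delta\cY_t\;\leq\;\E^\P\big[\Gamma_\sigma L_\sigma\,\delta\zeta\,\big|\,\cF_{t\smallertext{+}}\big],\qquad t<\sigma,
\]
and the analogous bound for $-\delta\cY$. The $\mathrm{y}$-dependence enters through $\cY_{r\smallertext{-}}$, together with the weights $(1+\lambda\Delta C)$, which are positive because $\lambda\Delta C>-1$.

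The key observation is that $\Gamma_\sigma L_\sigma$ is $\cF^\P_\sigma$-measurable. Indeed, $\lambda$ is $\F$-progressive and $C$ is predictable, so $\lambda_\sigma\Delta C_\sigma$ is $\cF_\sigma$-measurable. The jump of $L$ at $\sigma$ equals $\rho_\sigma(\Delta X_\sigma)$ up to a predictable compensator term, and is therefore $\cF^\P_\sigma$-measurable. Conditioning on $\cF_\sigma$ inside the expectation then gives
\[
\E^\P\big[\Gamma_\sigma L_\sigma\,\E^\P[\delta\zeta|\cF_\sigma]\,\big|\,\cF_{t\smallertext{+}}\big]=0 ,
\]
using that $\cF^\P_\sigma$ and $\cF_\sigma$ conditional expectations agree $\P$--a.s. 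Hence $\delta\cY=0$ on $[0,\sigma)$, and right-continuity upgrades this from fixed times to a single $\P$--null set. This is precisely where $\F$-progressivity of $\lambda$ is needed, as noted in \Cref{rem::ass_regularisation}.$(i)$. For $s=0$, the argument only invokes \Cref{ass::crossing} at $s=0$.

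The main obstacle is the rigorous linearisation identity. The generator depends on both $\cY_r$ and $\cY_{r\smallertext{-}}$, $C$ can jump, and the weight at $\sigma$ must not carry the $\cF_{\sigma\smallertext{+}}$-measurable randomness. I would first obtain the representation on $[0,\sigma)$ through the product $\Gamma\,\delta\cY$ via integration by parts for semimartingales in the raw filtration (\Cref{app::raw_filtration_stochastic_calculus}). The jump at $\sigma$ would then be handled by hand to isolate $\delta\zeta$ multiplied by an $\cF^\P_\sigma$-measurable factor.
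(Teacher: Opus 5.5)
Your proposal follows the paper's route. The first identity comes from stopping the solution and invoking uniqueness. The second comes from linearising with the $\F$-progressive $\lambda$ of \Cref{ass::crossing}.$(i)$, with $\widehat\lambda$, $\eta$, and with $\rho$ from \Cref{ass::crossing}.$(iii)$, then changing measure with $\cE(L)$, and finally using that the weight at the terminal time is $\cF_\sigma$-measurable (up to $\P$-null sets) while $\E^\P[\delta\zeta|\cF_\sigma]=0$.

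One step does not hold as you state it. ``Conditioning on $\cF_\sigma$ inside the expectation'' is not a tower property, because $\cF_{t+}\not\subseteq\cF_\sigma$ in general. Read literally, it would also give $\E^\P[\Gamma_\sigma L_\sigma\delta\zeta|\cF_{t+}]=0$ on $\{\sigma\leq t\}$. There, however, this conditional expectation equals $\Gamma_\sigma L_\sigma\delta\zeta$, which is in general nonzero. The missing observation is that, for $A\in\cF_{t+}$,
\[
A\cap\{t<\sigma\}=\bigcup_{n\geq1}\big(A\cap\{t+1/n<\sigma\}\big)\in\cF_{\sigma-}\subseteq\cF_\sigma,
\]
so that $\1_{\{t<\sigma\}}\E^\P[\Gamma_\sigma L_\sigma\delta\zeta|\cF_{t+}]=0$. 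The paper records exactly this at the start of its proof, as $\E^\P[\zeta|\cF_{S+}\cap\cF_{T-}]\1_{\{S<T\}}=\E^\P[\zeta|\cF_{S+}]\1_{\{S<T\}}$, before conditioning on $\cF_{t+}\cap\cF_{\theta-}\subseteq\cF_\theta$. It is also the real reason the conclusion only holds on $[0,\sigma)$.

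There are two smaller differences.
\begin{itemize}
\item The paper does not isolate the jump at the terminal time. It takes an $\F$-adapted right-continuous version of $\cE(L)$ (\Cref{prop::good_version_stochastic_integral}) and notes that $\cE(w)$ and $\cE(v)$ are $\F$-adapted. The weights stopped at $t'\land\theta$ are then directly $\cF_\theta$-measurable.
\item Since $\theta=\tau\land T$ may be infinite, the paper derives the linearised inequality only between finite times $t\leq t'\in\D_+$ (following \cite[Equation 7.7]{possamai2024reflections}), and then lets $t'\to\infty$. It writes $\delta\cY_{t'\land\theta}=(\delta\cY_{t'\land\theta}-\E^\P[\delta\xi_\theta|\cF_{t'+}])+\E^\P[\delta\xi_\theta|\cF_{t'+}]$. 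The first part vanishes in $\L^1(\overline\P)$ as $t'\to\infty$, by the BSDE dynamics and the integrability of the solutions. The second part is annihilated on $\{t<\theta\}$ by the argument above. Your bound, written directly at $\sigma$, needs this limiting step when $\sigma=\infty$.
\end{itemize}
Finally, \Cref{ass::crossing} is formulated on the horizon $T$. To apply it, the paper also extends $\cY'$ beyond $\theta$ by $\cY'_\theta\big(\cE(\hat\beta A)_\theta/\cE(\hat\beta A)_t\big)^{1/2}$, so that it lies in the required $\cS^2$ space.
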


	\begin{lemma}\label{lem::stopping_value_function}
		Suppose that {\rm Assumptions \ref{ass::probabilities2}}, {\rm\ref{ass::generator2}}, and {\rm\ref{ass::crossing}} hold. Let $0 \leq s \leq t < \infty$, and let $\bar\omega \in \Omega$. Then $ \widehat\cY_{t}(T,\xi)^{s,\bar\omega} = \widehat\cY_{t \land T^{\smalltext{s}\smalltext{,}\smalltext{\bar\omega}}}(T,\xi)^{s,\bar\omega}$ is $\cF^\ast_{(t \land T\smallertext{-}s\land T)^{s,\bar\omega}}$-measurable,
		\begin{gather}\label{eq::integrability_y_hat_tau}
			\sup_{\P \in \fP(s,\bar\omega)}\E^\P\Big[\big|\cE\big(\hat\beta A^{s,\bar\omega}_{s\smallertext{+}\smallertext{\cdot}})\big)^{1/2}_{(t \land T\smallertext{-}s\land T)^{s,\bar\omega}} \widehat\cY_{t \land T^{\smalltext{s}\smalltext{,}\smalltext{\bar\omega}}}(T,\xi)^{s,\bar\omega}\big|^2\Big] < \infty,\\
	\label{eq::dpp_y_hat_sup}
			\widehat\cY_s(T,\xi)(\bar\omega) =  \sup_{\P\in\fP(s,\bar\omega)}\E^\P\Big[\cY^{s,\bar\omega,\P}_0\big((t\land T-s\land T)^{s,\bar\omega},\widehat{\cY}_{t \land T^{\smalltext{s}\smalltext{,}\smalltext{\bar\omega}}} (T,\xi)^{s,\bar\omega}\big)\Big].
		\end{gather}
		\end{lemma}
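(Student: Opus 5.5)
The proof splits into the three assertions of \Cref{lem::stopping_value_function}, handled in order: measurability and the stopping identity, the integrability bound \eqref{eq::integrability_y_hat_tau}, and the dynamic programming identity \eqref{eq::dpp_y_hat_sup}.

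\textbf{Measurability and stopping identity.} By \Cref{thm::measurability2}, $\widehat\cY_t(T,\xi)(\omega) = \widehat\cY_{t\land T(\omega)}(T,\xi)(\omega)$ for every $\omega$, and $\widehat\cY_t$ depends on $\omega$ only through $\omega_{\cdot\land t}$ (\Cref{rem::measurability_value_function}). Composing with $\tilde\omega\longmapsto\bar\omega\otimes_s\tilde\omega$ gives $\widehat\cY_t(T,\xi)^{s,\bar\omega} = \widehat\cY_{t\land T^{s,\bar\omega}}(T,\xi)^{s,\bar\omega}$. By Galmarino's test, $(t\land T-s\land T)^{s,\bar\omega}$ is an $\F$--stopping time. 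The shifted function depends on $\tilde\omega$ only through $\tilde\omega_{\cdot\land (t\land T - s\land T)^{s,\bar\omega}(\tilde\omega)}$. Combined with the universal measurability of $\widehat\cY_t$ and the measurability of the stopping map $\tilde\omega\longmapsto \tilde\omega_{\cdot\land\rho(\tilde\omega)}$, this yields $\cF^\ast_{\rho}$-measurability, with $\rho\coloneqq(t\land T-s\land T)^{s,\bar\omega}$. The argument is the same universal-measurability pullback used at the end of the first part of the proof of \Cref{thm::measurability2}.

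\textbf{Integrability \eqref{eq::integrability_y_hat_tau}.} Fix $\P\in\fP(s,\bar\omega)$. Apply the shifted analogue of \eqref{eq::dynamic_programming_principle2}, which is the same proof as \Cref{thm::measurability2} with $\fP(s,\bar\omega)$ in place of $\fP_0$, using \Cref{ass::probabilities2}.$(ii)$--$(iii)$ and \Cref{lem::conditioning_bsde2}. Under $\P$, this identifies $\widehat\cY_{t}(T,\xi)^{s,\bar\omega}$ with an essential supremum of $\E^{\bar\P}[\cY^{s,\bar\omega,\bar\P}_{t-s}\,|\,\cF_{t-s}]$ over $\bar\P$ agreeing with $\P$ on $\cF_{t-s}$. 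The a priori estimate for each $\cY^{s,\bar\omega,\bar\P}$ from \cite{possamai2024reflections} (equivalently \Cref{cor::stability} with zero comparison data) bounds $|\cE(\hat\beta A)^{1/2}\cY^{\bar\P}|^2$ by a constant times the conditional expectation of the data in \eqref{eq::shifted_integrability}. Taking the essential supremum and then the expectation under $\P$ uses the upward-directedness of the family, obtained from pasting (\Cref{ass::probabilities2}.$(iii)$). This turns the essential supremum of conditional expectations into a supremum of expectations, bounded by $\sup_{\P}$ in \eqref{eq::shifted_integrability}. The bound is uniform in $\P$.

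\textbf{Dynamic programming \eqref{eq::dpp_y_hat_sup}.} I would prove the two inequalities separately.

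For ``$\leq$'', fix $\P\in\fP(s,\bar\omega)$. By \Cref{lem::solv_bsde_cond}, the solution on $[0,\rho]$ equals $\cY^{s,\bar\omega,\P}(\rho,\E^\P[\cY^{s,\bar\omega,\P}_\rho\,|\,\cF_\rho])$; this is where \Cref{ass::crossing}.$(i)$--$(iii)$ is used. By \Cref{lem::conditioning_bsde2} and \Cref{ass::probabilities2}.$(ii)$, $\E^\P[\cY^{s,\bar\omega,\P}_\rho\,|\,\cF_\rho]\leq \widehat\cY_{t\land T^{s,\bar\omega}}(T,\xi)^{s,\bar\omega}$, $\P$--a.s. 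Here I would treat $\rho$ as deterministic on $\{\rho = t-s\}$ and note the identity with terminal value $\xi$ on the complement. The comparison principle \Cref{prop::comparison} then gives the inequality at time $0$; take expectations and the supremum.

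For ``$\geq$'', fix $\P$ and $\varepsilon>0$. Use the measurable selection \cite[Proposition 7.50]{bertsekas1978stochastic}, as in the proof of \Cref{thm::measurability2}, to obtain an $\cF_{t-s}$-measurable kernel $\Q(\omega)\in\fP(t,\bar\omega\otimes_s\omega)$ that is $\varepsilon$-optimal. Paste via \Cref{ass::probabilities2}.$(iii)$ to get $\bar\P$, which agrees with $\P$ on $\cF_{t-s}$. Then $\E^{\bar\P}[\cY^{s,\bar\omega,\bar\P}_\rho\,|\,\cF_\rho]\geq \widehat\cY^{s,\bar\omega}-\varepsilon$. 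Apply \Cref{lem::solv_bsde_cond} and comparison again, together with the stability estimate \Cref{cor::stability}, which controls the $\varepsilon$-shift in the terminal value with a constant independent of $\varepsilon$. This gives $\E^{\bar\P}[\cY^{s,\bar\omega,\bar\P}_0(\rho,\widehat\cY^{s,\bar\omega})]\leq \widehat\cY_s(\bar\omega)+C\varepsilon$. The generator is $\bar\P$-dependent, but $\bar\P=\P$ on $\cF_{t-s}$, so the BSDE on $[0,\rho]$ has the same law; I would justify this via the measurable choice of characteristics in \Cref{lem::measurability_characteristics}. The case where the value $\widehat\cY_t$ is infinite is excluded by \eqref{eq::integrability_y_hat_tau}.

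\textbf{Main obstacle.} The hardest step is the pasting in ``$\geq$''. The BSDE on $[0,\rho]$ under $\bar\P$ must coincide with the one under $\P$. This requires the differential characteristics, and hence $f^{s,\bar\omega,\bar\P}$, to agree with those under $\P$ up to $\rho$ outside a null set, and the terminal variable to be only universally measurable. I would first replace $\widehat\cY^{s,\bar\omega}$ by an $\cF_\rho$-measurable $\P$-version, using \cite[Lemma 1.27]{kallenberg2021foundations}.
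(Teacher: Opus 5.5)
Your measurability argument and your ``$\leq$'' half of \eqref{eq::dpp_y_hat_sup} match the paper. That half consists of \Cref{lem::solv_bsde_cond}, then $\E^\P[\cY^{s,\bar\omega,\P}_\rho|\cF_\rho]\le\widehat\cY_t(T,\xi)^{s,\bar\omega}$ from \Cref{lem::conditioning_bsde2} and \Cref{ass::probabilities2}.$(ii)$, then \Cref{prop::comparison}. The gap is in your ``$\geq$'' half.

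You choose $\Q$ to be $\varepsilon$-optimal in the unweighted sense, so after pasting $0\le\widehat\cY_t(T,\xi)^{s,\bar\omega}-\E^{\bar\P}[\cY^{s,\bar\omega,\bar\P}_\rho|\cF_\rho]\le\varepsilon$. You then invoke \Cref{cor::stability}. That corollary bounds $\E^{\bar\P}[|\delta\cY_0|^2]$ by $\mathfrak{C}'\E^{\bar\P}[\cE(\hat\beta A^{s,\bar\omega}_{s+\cdot})_\rho|\delta\xi|^2]$, which here is only $\mathfrak{C}'\varepsilon^2\E^{\P}[\cE(\hat\beta A^{s,\bar\omega}_{s+\cdot})_\rho]$. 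Nothing makes this expectation finite. $\alpha^2$ contains $\sqrt{r}$, which \Cref{ass::crossing}.$(ii)$ does not control, and \Cref{ass::generator2}.$(iv)$ only gives integrability of $\cE(\hat\beta A)$ multiplied by the data. So your ``constant independent of $\varepsilon$'' may be $+\infty$. Applying comparison first does not help, because the $\varepsilon$-shift must still be measured in the weighted norm.

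The paper's remedy is to apply \cite[Proposition 7.50]{bertsekas1978stochastic} to the weighted value $\omega\longmapsto\cE(\hat\beta A_{s+\cdot})^{1/2}_{t\land T-s\land T}(\omega)\,\widehat\cY_t(T,\xi)(\omega)$. This is still a supremum over $\fP(t,\omega)$, since the weight is a finite positive function of $\omega$ alone. After composing with $\bar\omega\otimes_s\cdot$ and pasting, this yields
\[
\big|\widehat\cY_t(T,\xi)^{s,\bar\omega}-\E^{\bar\P}\big[\cY^{s,\bar\omega,\bar\P}_\rho\big|\cF_\rho\big]\big|\le\varepsilon\,\cE(\hat\beta A^{s,\bar\omega}_{s+\cdot})^{-1/2}_\rho,
\]
$\P$-- and $\bar\P$--a.s. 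The stability bound then becomes $\mathfrak{C}\varepsilon^2$, and the rest of your argument goes through.

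Two smaller points.

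First, for \eqref{eq::integrability_y_hat_tau} you use the shifted essential-supremum representation, the a priori estimate under each $\bar\P$, and upward-directedness to exchange $\E^\P$ with the essential supremum. This can be made to work. However, the family you must show is directed is $\{\E^{\bar\P}[D^{\bar\P}|\cF_{t-s}]\}$, where $D^{\bar\P}$ is the data in \eqref{eq::shifted_integrability}. Its generator part depends on the characteristics of the pasted measure after $t-s$, which requires \Cref{prop::conditioning_characteristics2}. The paper avoids this by working with a single $\varepsilon$-optimal pasted $\bar\P\in\fP(s,\bar\omega)$, for which
$|\widehat\cY_t(T,\xi)(\bar\omega\otimes_s\cdot)\land\varepsilon^{-1}|\le\E^{\bar\P}[|\cY^{s,\bar\omega,\bar\P}_{t-s}|\,|\,\cF_{t-s}]+\varepsilon$, $\P$--a.s. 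It then squares with the truncated weight $\cE(\hat\beta A^{s,\bar\omega}_{s+\cdot})_\rho\land\varepsilon^{-1}$, applies \Cref{prop::stability} under $\bar\P$, and uses Fatou as $\varepsilon\downarrow0$. Unweighted selection is harmless there precisely because the weight is truncated.

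Second, to identify the BSDEs on $\llbracket0,\rho\rrbracket$ under $\P$ and $\bar\P$, \Cref{lem::measurability_characteristics} is not the relevant fact. What you need is that the characteristics of $X$ stopped at $t-s$ are determined by the restriction of the measure to $\cF_{t-s}$. This covers $f^{s,\bar\omega,\cdot}$, $X^{c,\cdot}$ and $\nu^{\cdot}$ on $\llbracket0,t-s\rrbracket$. You also need that, with $\rho\le t-s$ and an $\cF_\rho$-measurable version of the terminal value, only conditional expectations given $\cF_{u+}\subseteq\cF_{t-s}$, $u<t-s$, enter.
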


	\begin{lemma}\label{lem::linearising_bsde}
		Suppose that  {\rm\Cref{ass::generator2}} holds, and that {\rm\Cref{ass::crossing}}.$(ii)$ holds for $s = 0$. Let $\P \in \fP_0$, and let $\eta \in \H^2_T(X^{c,\P};\F,\P)$ and $\rho \in \H^2_T(\mu^X;\F,\P)$ with $\Delta(\rho\ast\tilde\mu^{X,\P}) > - 1$, \textnormal{$\P$--a.s.}, be such that
		\begin{equation}\label{eq::integrability_lipschitz_eta_rho}
			\frac{\d\langle\eta\bcdot X^{c,\P}\rangle^{(\P)}}{\d C} \leq \theta^{X}, \; \text{\rm and} \; \frac{\d\langle\rho\ast\tilde\mu^{X,\P} \rangle^{(\P)}}{\d C} \leq \theta^{\mu}, \; \textnormal{$\P \otimes \mathrm{d}C$--a.e.}
		\end{equation}
		Let $\zeta \in \L^2(\cF_T,\P)$, and let $(\cY,\cZ,\cU,\cN)$ and $(\sY,\sZ,\sU,\sN)$ be the solutions to the {\rm BSDEs}
		\begin{align}
			\cY_t = \zeta &+ \int_t^T \bigg( \frac{\d\langle \rho \ast\tilde\mu^{X,\P}, \cU \ast\tilde\mu^{X,\P}\rangle^{(\P)}_s}{\d C_s}  
			+ \eta^\top_s \mathsf{a}_s \cZ_s\bigg) \d C_s \nonumber\\
			&\quad - \bigg(\int_t^T \cZ_s \d X^{c,\P}_s\bigg)^{(\P)} - \bigg(\int_t^T\int_{\R^\smalltext{d}} \cU_s(x)\tilde\mu^{X,\P}(\d s, \d x)\bigg)^{(\P)} - \int_t^T \d \cN_s, \; t \in [0,\infty], \; \textnormal{$\P$--a.s.}, \label{eq::linear_bsde} \\
			\sY_t = \zeta &+ \int_t^T \bigg( \frac{\d\langle \rho \ast\tilde\mu^{X,\P}, \sU \ast\tilde\mu^{X,\P}\rangle^{(\P)}_s}{\d C_s} - \sqrt{\theta^{X}_s}\|\mathsf{a}_s^{1/2} \sZ_s\|   \bigg)  \d C_s \nonumber\\
			&\quad - \bigg(\int_t^T \sZ_s \d X^{c,\P}_s\bigg)^{(\P)} - \bigg(\int_t^T\int_{\R^\smalltext{d}} \sU_s(x)\tilde\mu^{X,\P}(\d s, \d x)\bigg)^{(\P)} - \int_t^T \d \sN_s, \; t \in [0,\infty], \; \textnormal{$\P$--a.s.}, \label{eq::lipschitz_linear_bsde}
		\end{align}
		respectively, in the sense of {\rm\cite[Theorem 3.7]{possamai2024reflections}}. Let\footnote{Recall that $A^\oplus$ denotes the Moore--Penrose pseudo-inverse of a matrix $A$.}
		\begin{equation*}
			\eta^\prime_s \coloneqq -\sqrt{\theta^{X}_s} (\mathsf{a}^{1/2}_s)^{\oplus}  
			\frac{\mathsf{a}^{1/2}_s \sZ_s}{\big\|\mathsf{a}^{1/2}_s \sZ_s\big\|}\1_{\R^\smalltext{d}\setminus\{0\}}(\mathsf{a}^{1/2}_s\sZ_s), \; s \in [0,\infty).
		\end{equation*}
		Then $\eta^\prime \in \H^2_T(X^{c,\P};\F,\P)$, $\langle\eta^\prime\bcdot X^{c,\P}\rangle^{(\P)}$ is $\P$--essentially bounded, the random variables
		\begin{equation*}
			\frac{\d\mathscr{Q}}{\d\P} \coloneqq \cE\Big( \eta^\prime \bcdot X^{c,\P} + \rho\ast\tilde\mu^{X,\P}\Big)_T,
			\;
			\frac{\d\mathcal{Q}}{\d\P} \coloneqq \cE\Big( \eta \bcdot X^{c,\P} + \rho\ast\tilde\mu^{X,\P}\Big)_T,
		\end{equation*}
		are both well-defined, strictly positive, and $\P$--square-integrable probability densities relative to $\P$, and
		\begin{equation*}
			\E^{\mathscr{Q}}[\zeta|\cF_{t\smallertext{+}}] = \sY_t \leq \cY_t = \E^{\mathcal{Q}}[\zeta | \cF_{t\smallertext{+}}], \; \textnormal{$\P$--a.s.}, \; t \in [0,\infty].
		\end{equation*}
	\end{lemma}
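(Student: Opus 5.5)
The plan is to realise both linear BSDEs as conditional expectations under the two changes of measure, via the usual product argument. I treat the linear one \eqref{eq::linear_bsde} first.

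\textbf{Densities.} I first check that $\eta^\prime$ is admissible. By construction, $\eta^{\prime\top}\mathsf{a}\,\eta^\prime = \theta^X \1_{\{\mathsf{a}^{1/2}\sZ\neq0\}}\le\theta^X$, because $\mathsf{a}^{1/2}(\mathsf{a}^{1/2})^{\oplus}$ is the orthogonal projection onto the range of $\mathsf{a}^{1/2}$, and $\mathsf{a}^{1/2}\sZ$ lies in that range. Thus $\d\langle\eta^\prime\bcdot X^{c,\P}\rangle/\d C\le\theta^X$. Together with \eqref{eq::integrability_lipschitz_eta_rho} and \Cref{ass::crossing}.$(ii)$ at $s=0$, the predictable quadratic variations of
\[
L\coloneqq\eta\bcdot X^{c,\P}+\rho\ast\tilde\mu^{X,\P}\quad\text{and}\quad L^\prime\coloneqq\eta^\prime\bcdot X^{c,\P}+\rho\ast\tilde\mu^{X,\P}
\]
on $\llbracket0,T\rrbracket$ are bounded by $2\mathfrak{C}$, $\P$--a.s. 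Progressive measurability of $\eta^\prime$ follows from measurable selection of $\sZ$ and $\mathsf{a}$, so $\eta^\prime\in\H^2_T$.

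Since $\Delta L=\Delta(\rho\ast\tilde\mu^{X,\P})>-1$, the exponentials $\cE(L)$ and $\cE(L^\prime)$ are strictly positive local martingales. With a bounded $\langle L\rangle$ (no jumps of $L$ of size $\le -1$), a Lépingle--Mémin type criterion gives a true martingale that is bounded in $L^2$. To see this, write $\cE(L)^2=\cE(2L+[L])$ and compensate $[L]$ by $\langle L\rangle$. This yields $\E[\cE(L)_T^2]\le \mathrm{e}^{c\,\mathfrak{C}}$ once jumps are controlled. Here I expect a subtle point: bounded $\langle L\rangle$ alone does not give a uniform estimate when $\Delta\langle L\rangle$ is large. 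I would use the standard inequality $\E[\cE(L)^2_\tau]\le \E[\exp(\langle L\rangle_\tau)\cdot\ldots]$ via Gronwall on $\E[\cE(L)^2_{\tau}]=1+\E[\int\cE(L)^2_{-}\d\langle L\rangle]$, which holds for the predictable quadratic variation even with jumps. Localising and applying the Stieltjes--Gronwall lemma gives $\sup_\tau\E[\cE(L)_\tau^2]\le\cE(\langle L\rangle)$, which is bounded by $\mathrm{e}^{2\mathfrak{C}}$ because $\langle L\rangle$ is bounded. This is the main technical step.

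\textbf{Identification.} I apply the product rule to $\cE(L)\cY$. The $\d C$-drift of $\cY$ is $-(\d\langle\rho\ast\tilde\mu,\cU\ast\tilde\mu\rangle/\d C+\eta^\top\mathsf{a}\cZ)$. This exactly cancels the predictable compensator of $[L,\cY]$: the orthogonality of $\cN$ kills $\langle L,\cN\rangle$, and $\langle\eta\bcdot X^c,\cZ\bcdot X^c\rangle=\int\eta^\top\mathsf{a}\cZ\,\d C$. Hence $\cE(L)\cY$ is a local martingale. It is of class (D) because $\sup|\cY|\in L^2(\P)$ (the solution lies in $\cS^2$) and $\sup\cE(L)\in L^2$ by Doob, so the product is integrable by Cauchy--Schwarz. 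Therefore $\cE(L)_t\cY_t=\E^\P[\cE(L)_T\zeta|\cF_{t+}]$, that is, $\cY_t=\E^{\mathcal Q}[\zeta|\cF_{t+}]$. A version issue between the raw $\F_+$ and $\F^\P_+$ arises here, and I would handle it with \Cref{app::raw_filtration_stochastic_calculus}.

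\textbf{Second BSDE and comparison.} For $\sY$, the choice of $\eta^\prime$ gives $\eta^{\prime\top}\mathsf{a}\sZ=-\sqrt{\theta^X}\|\mathsf{a}^{1/2}\sZ\|$. So \eqref{eq::lipschitz_linear_bsde} is the linear BSDE \eqref{eq::linear_bsde} with $\eta$ replaced by $\eta^\prime$, and the same argument yields $\sY_t=\E^{\mathscr Q}[\zeta|\cF_{t+}]$.

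For the inequality, I use Cauchy--Schwarz together with $\eta^\top\mathsf{a}\eta\le\theta^X$:
\[
\eta^\top\mathsf{a}\sZ\ge-\sqrt{\theta^X}\|\mathsf{a}^{1/2}\sZ\|.
\]
Hence $\sY$ is the solution of the linear BSDE driven by $\eta$ with the additional nonnegative-sign driver $-(\eta^\top\mathsf{a}\sZ+\sqrt{\theta^X}\|\mathsf{a}^{1/2}\sZ\|)\le0$. Applying the $\mathcal Q$-representation from the identification step to $\sY$ with this extra driver gives
\[
\sY_t=\E^{\mathcal Q}\Big[\zeta-\int_t^T\big(\eta^\top\mathsf{a}\sZ+\sqrt{\theta^X}\|\mathsf{a}^{1/2}\sZ\|\big)\d C\,\Big|\,\cF_{t+}\Big]\le\cY_t.
\]
Integrability of the extra term under $\mathcal Q$ follows from the $L^2$ densities and $\sZ\in\H^2$.
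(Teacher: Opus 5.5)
Your proposal is correct and is essentially the paper's argument. Bounded $\langle L\rangle$ and $\langle L^\prime\rangle$ give square-integrable densities (the paper simply cites \cite[Lemma 7.4]{possamai2024reflections}), and the product-rule/Girsanov argument from the proof of \Cref{prop::comparison} gives both representations. The inequality $\eta^\top\mathsf{a}z\geq-\sqrt{\theta^X}\|\mathsf{a}^{1/2}z\|$ then yields $\sY\leq\cY$: the paper concludes via the comparison principle, you via a $\mathcal{Q}$-representation with an extra nonpositive driver, which is the same computation.

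The one step to tighten is your $L^2$ bound. A Gronwall argument at the level of expectations does not close, because the integrator $\langle L\rangle$ is random. Instead use Yor's formula to write $\cE(L)^2=\cE(2L+[L])=\cE(\widetilde N)\,\cE(\langle L\rangle)$, with $\widetilde N\coloneqq\int(1+\Delta\langle L\rangle)^{-1}\,\d(2L+[L]-\langle L\rangle)$ a local martingale. Then $\cE(\widetilde N)=\cE(L)^2/\cE(\langle L\rangle)\geq0$ is a supermartingale, so $\E^\P[\cE(L)^2_\tau]\leq\mathrm{e}^{2\mathfrak{C}}$ for every stopping time $\tau$. This holds regardless of the size of $\Delta\langle L\rangle$, so your worry about large jumps of $\langle L\rangle$ is unfounded.
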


\begin{remark}
Since both generators in \textnormal{\Cref{lem::linearising_bsde}} are independent of the $(y,\mathrm{y})$-arguments, we may take $\alpha^2\coloneqq\max\{\theta^X,\theta^\mu\}$. Consequently, \eqref{eq::integrability_lipschitz_eta_rho} and \textnormal{\Cref{ass::crossing}.$(ii)$} imply that $A_T$ is $\P$--essentially bounded. Hence the corresponding weighted and unweighted norms are equivalent. This implies that every $\zeta\in\L^2(\cF_T,\P)$ yields unique solutions to \eqref{eq::linear_bsde} and \eqref{eq::lipschitz_linear_bsde}$;$ see \textnormal{\cite[Section 2.4]{possamai2024reflections}}.
\end{remark}
	
\begin{proof}[Proof of \Cref{thm::down-crossing}.$(i)$]
	In this part, we prove the existence of $\widehat\cY^\smallertext{+}(T,\xi)$ satisfying the claimed properties, except for its representation \eqref{eq::aggregation}, which will be obtained after \Cref{rem::gap_regularisation}. We show the following: the set
	\begin{equation}\label{eq::set_of_crossings}
			\Omega_0 \coloneqq \bigg\{\sup_{t \in \D_\tinytext{+} \cap [0,K]}|\widehat{\cY}_t(T,\xi)| < \infty \; \textnormal{and} \; D_a^b(\widehat{\cY}(T,\xi); \D_\smallertext{+}\cap[0,K]) < \infty \; \textnormal{for all $(a,b) \in \D^2$ with $a < b$, and $K \in \N$}\bigg\},
	\end{equation}
	which belongs to $\sigma(\cup_{0\leq t < \infty} \cF^\ast_t)$, satisfies $\P[\Omega_0] = 1$ for all $\P\in\fP_0$. Here, $\D$ denotes all dyadic numbers $k2^{-n}$ for integer $k$ and nonnegative integer $n$, $\D_\smallertext{+} = \D \cap [0,\infty)$, and $D_a^b(\widehat{\cY}(T,\xi); \D_\smallertext{+}\cap[0,K])$ denotes the number of down-crossings of $[a,b]$ by $\widehat{\cY}(T,\xi)$ on $\D_\smallertext{+}\cap[0,K]$. Note that the number of down-crossings and up-crossings can differ by at most one; therefore, they are either both finite or both infinite. We then let
	\[
		\textstyle \tau^{b}_{a} \coloneqq \inf\big\{r \in \D_\smallertext{+} : D^b_a(\widehat{\cY}(T,\xi); \D_\smallertext{+} \cap [0,r]) = \infty \big\}, 
		\; 
		\sigma \coloneqq \inf\big\{r \in \D_\smallertext{+} : \sup_{\{s\in\D_{\tinytext{+}}: s \in[0, r]\}} |\widehat{\cY}_s(T,\xi)| = \infty\big\},
	\]
	and then
	\[
		\rho \coloneqq \sigma \land \inf_{\{(a,b) \in \D^\smalltext{2} :a < b\}} \tau^b_a,
	\]
	which is an $\F^\ast_\smallertext{+}$--stopping time.
	Since $\P[\Omega_0] = 1$ for all $\P\in\fP_0$, $\rho = \infty$ holds $\fP_0$--quasi-surely. Therefore, defining $\widehat{\cY}^\smallertext{+}(T,\xi) = (\widehat{\cY}^\smallertext{+}_t(T,\xi))_{t \in [0,\infty]}$ as
	\[
		\widehat{\cY}^\smallertext{+}_t(T,\xi) \coloneqq \bigg(\limsup_{\D_\tinytext{+} \ni s \downarrow\downarrow t} \widehat{\cY}_s(T,\xi)\bigg)\1_{\{t < \rho\}} \1_{\{t < T\}} + \xi \1_{\{T \leq t\}} = \lim_{\D_\tinytext{+} \ni s \downarrow\downarrow t} \widehat{\cY}_s(T,\xi)\1_{\{t < \rho\}} \1_{\{t < T\}} + \xi \1_{\{T \leq t\}},
	\]
	yields a real-valued, right-continuous and $\fP_0$--q.s. c\`adl\`ag, $\F^\ast_\smallertext{+}$-optional process on $[0,\infty)$ (see \cite[Lemma 3.16]{legall2016brownian} and \cite[Remark VI.5.(a), pages 70--71]{dellacherie1982probabilities}); here $\F^\ast = (\cF^\ast_t)_{t \in [0,\infty)}$. We can thus now redefine $\widehat{\cY}^\smallertext{+}_t(T,\xi)$ to be zero on $(\Omega\setminus\Omega_0)\cap\{t < T\}$ for every $t \in [0,\infty]$, which then yields a c\`adl\`ag and $\G_\smallertext{+}$-adapted process on $[0,\infty]$ meeting the requirements described in $(i)$.
	
	\medskip
	We now prove that $\P[\Omega_0] = 1$ for all $\P\in\fP_0$, where we adapt the proof of \cite[Lemma 4.8]{soner2013dual} (and \cite[Theorem 6]{chen2000general}). Fix $\P \in\fP_0$ and decompose 
	\[
		\widehat\cY_t(T,\xi) \eqqcolon V_t + W_t, \; \textnormal{where} \; W_t \coloneqq \E^\P[\cY^\P_t(T,\xi)|\cF_t], \; t \in [0,\infty).
	\]
	We show that $V = (V_t)_{t \in [0,\infty)}$ satisfies $\P[\Omega_1] = 1$,  where $\Omega_1 \in \sigma(\cup_{0\leq t < \infty} \cF^\ast_t)$ is defined analogously to \eqref{eq::set_of_crossings} but for $V$ instead of $\widehat{\cY}$. Since the process $W = (W_t)_{t \in [0,\infty)}$ is merely a difference of two nonnegative $(\F,\P)$--super-martingales, it follows that $\Omega_2 \in \sigma(\cup_{0\leq t < \infty} \cF_t)$, defined analogously to \eqref{eq::set_of_crossings} but for $W$, satisfies $\P[\Omega_2] = 1$; see, for example, the proof of \cite[Theorem 3.17]{legall2016brownian} or \cite[Theorem VI.2]{dellacherie1982probabilities}. Then, it is straightforward to check that $\Omega_1\cap\Omega_2 \subseteq \Omega_0$, which then yields $\P[\Omega_0] = 1$. 
	It therefore remains to prove that $\P[\Omega_1] = 1$. 
	
	\medskip
	To simplify the notation, we omit the reference to $(T,\xi)$ and $\P$ whenever it does not cause confusion. We start with the boundedness of the paths. By \Cref{thm::measurability2} and \Cref{prop::stability}, there exists a constant $\mathfrak{C}^\prime \in (0,\infty)$ depending only on $\hat\beta$ and $\Phi$ such that for every $s \in [0,\infty)$,
	\begin{align*}
		|V_s|^2 \leq 2 |\widehat\cY_s|^2 + 2 \E^\P[|\cY^\P_s|^2|\cF_s]
		&\leq 4 \underset{\bar{\P} \in \fP_\smalltext{0}(\cF_{s},\P)}{{\esssup}^\P}\E^{\bar{\P}}\big[|\cY^{\bar{\P}}_s|^2\big|\cF_s\big] \\
		&\leq \mathfrak{C}^\prime \underset{\bar{\P} \in \fP_\smalltext{0}(\cF_{s},\P)}{{\esssup}^\P}\E^{\bar{\P}}\bigg[ \frac{\cE(\hat\beta A)_T}{\cE(\hat\beta A)_{s\land T}} |\xi|^2 +\int_s^T \frac{\cE(\hat\beta A)_r}{\cE(\hat\beta A)_s} \frac{|f^{\bar\P}_r(0,0,0,\mathbf{0})|^2}{\alpha^2_r} \d C_r \bigg| \cF_{s}\bigg]	, \; \text{$\P$--a.s.}
	\end{align*}
	We then obtain from \eqref{eq::constant_phi} that
	\[
		\bigg(\sup_{s \in \D_\tinytext{+}} |V_s|\bigg)^2 = \sup_{s \in \D_\tinytext{+}} |V_s|^2 
		\leq \mathfrak{C}^\prime \sup_{s \in \D_\tinytext{+}} \underset{\bar{\P} \in \fP_\smalltext{0}(\cF_{s},\P)}{{\esssup}^\P}\E^{\bar{\P}}\bigg[ \frac{\cE(\hat\beta A)_T}{\cE(\hat\beta A)_{s\land T}} |\xi|^2 +\int_s^T \frac{\cE(\hat\beta A)_r}{\cE(\hat\beta A)_s} \frac{|f^{\bar\P}_r(0,0,0,\mathbf{0})|^2}{\alpha^2_r} \d C_r \bigg| \cF_{s}\bigg]  < \infty, \; \textnormal{$\P$--a.s.}
	\]
	
	\medskip
	We now turn to the down-crossings. We choose, for each $t \in \D_\smallertext{+}$, a $\P$-modification of $V_t$ that is real-valued, nonnegative, and $\cF_{t\land T}$-measurable. This is possible since $\widehat{\cY}_t = \widehat{\cY}_{t\land T}$ is $\cF^\ast_{t\land T}$-measurable (see the arguments in the proof of \cite[Lemma~2.5]{nutz2013constructing}) and $\cY^\P_{t\land T}$ is $\cF_T$-measurable (see \cite[Theorem~IV.56.(c)]{dellacherie1978probabilities}). 
	For each $(n,i) \in \N^2$, let $t^n_i = i 2^{-n}$, and let $(\cY^{n,i},\cZ^{n,i},\cU^{n,i},\cN^{n,i})$ be the solution to the following well-posed BSDE relative to $\P$ and with time horizon $t^n_i \land T$:
	\begin{align*}
		\cY^{n,i}_t 
		= \widehat\cY_{t^\smalltext{n}_\smalltext{i}} 
		&\quad + \int_t^{t^\smalltext{n}_\smalltext{i} \land T} f^{\P}_s\big(\cY^{n,i}_s,\cY^{n,i}_{s\smallertext{-}}, \cZ^{n,i}_s, \cU^{n,i}_s(\cdot)\big) \d C_s  \\
		&- \int_t^{t^\smalltext{n}_\smalltext{i} \land T} \cZ^{n,i}_s\d X^{c}_s 
		- \int_t^{t^\smalltext{n}_\smalltext{i} \land T}\int_{\R^\smalltext{d}} \cU^{n,i}_s(x)\tilde\mu^{X}(\d s, \d x) - \int_t^{t^\smalltext{n}_\smalltext{i} \land T}\d \cN^{n,i}_s, \; t \in [0,\infty].
	\end{align*}
	Recall that $\widehat\cY_{t^\smalltext{n}_\smalltext{i}} = \widehat\cY_{t^\smalltext{n}_\smalltext{i} \land T}$ (identically) satisfies the integrability condition necessary for the well-posedness of this BSDE (see \Cref{lem::stopping_value_function}).
	We then define $(\widetilde\cY^{n,i}, \widetilde\cZ^{n,i}, \widetilde\cU^{n,i}, \widetilde\cN^{n,i})$ by
	\begin{gather*}
		\widetilde\cY^{n,i} \coloneqq \cY^{n,i} - \cY^\P(t^n_i \land T,\E^\P[\cY^\P_{t^\smalltext{n}_\smalltext{i} \land T}|\cF_{t^\smalltext{n}_\smalltext{i} \land T}]), 
		\;
		\widetilde\cZ^{n,i} \coloneqq \cZ^{n,i} - \cZ^\P(t^n_i \land T,\E^\P[\cY^\P_{t^\smalltext{n}_\smalltext{i} \land T}|\cF_{t^\smalltext{n}_\smalltext{i} \land T}]),
		\\
		\widetilde\cU^{n,i} \coloneqq \cU^{n,i} - \cU^\P(t^n_i \land T,\E^\P[\cY^\P_{t^\smalltext{n}_\smalltext{i} \land T}|\cF_{t^\smalltext{n}_\smalltext{i} \land T}]),
		\;
		\widetilde\cN^{n,i} \coloneqq \cN^{n,i} - \cN^\P(t^n_i \land T,\E^\P[\cY^\P_{t^\smalltext{n}_\smalltext{i} \land T}|\cF_{t^\smalltext{n}_\smalltext{i} \land T}]).
	\end{gather*}
	Then, since $\cY^\P_{t^\smalltext{n}_\smalltext{i}} = \cY^\P_{t^\smalltext{n}_\smalltext{i} \land T}$ is $\cF_T$-measurable (see \cite[Lemma 2.2.4]{weizsaecker1990stochastic}), we find
	\[
		\E^\P[\cY^\P_{t^\smalltext{n}_\smalltext{i} \land T}|\cF_{t^\smalltext{n}_\smalltext{i}\land T}] = \E^\P\big[ \E^\P[ \cY^\P_{t^\smalltext{n}_\smalltext{i} \land T} \big| \cF_T] \big| \cF_{t^\smalltext{n}_\smalltext{i}} \big] = \E^\P\big[ \cY^\P_{t^\smalltext{n}_\smalltext{i} \land T} | \cF_{t^\smalltext{n}_\smalltext{i}}] = \E^\P\big[ \cY^\P_{t^\smalltext{n}_\smalltext{i}} | \cF_{t^\smalltext{n}_\smalltext{i}}], \; \textnormal{$\P$--a.s.},
	\]
	and thus obtain
	\begin{align*}
		\widetilde\cY^{n,i}_t = V_{t^\smalltext{n}_\smalltext{i}} 
		&+ \int_t^{t^\smalltext{n}_\smalltext{i} \land T} F^{n,i}_s\big(\widetilde\cY^{n,i}_s,\widetilde\cY^{n,i}_{s\smallertext{-}}, \widetilde\cZ^{n,i}_s, \widetilde\cU^{n,i}_s(\cdot)\big) \d C_s \\
		&\quad - \int_t^{t^\smalltext{n}_\smalltext{i} \land T} \widetilde\cZ^{n,i}_s\d X^{c}_s- \int_t^{t^\smalltext{n}_\smalltext{i} \land T}\int_{\R^\smalltext{d}} \widetilde\cU^{n,i}_s(x)\tilde\mu^{X}(\d s, \d x) - \int_t^{t^\smalltext{n}_\smalltext{i} \land T}\d \widetilde\cN^{n,i}_s, \; t \in [0,\infty], \; \textnormal{$\P$--a.s.,}
	\end{align*}
	where the generator
	\begin{align*}
		&F^{n,i}_s\big(\omega, y,\mathrm{y}, z, u_s(\omega;\cdot)\big) \\
		&\coloneqq f^{\P}_s\big(y + \cY^\P_s(t^n_i \land T,\E[\cY^\P_{t^\smalltext{n}_\smalltext{i} \land T}|\cF_{t^\smalltext{n}_\smalltext{i} \land T}])(\omega),\mathrm{y} + \cY^\P_{s\smallertext{-}}(t^n_i \land T,\E[\cY^\P_{t^\smalltext{n}_\smalltext{i} \land T}|\cF_{t^\smalltext{n}_\smalltext{i} \land T}])(\omega), z + \cZ^\P_s(t^n_i \land T,\E[\cY^\P_{t^\smalltext{n}_\smalltext{i} \land T}|\cF_{t^\smalltext{n}_\smalltext{i} \land T}])(\omega), \\
		&\qquad\quad u_s(\omega;\cdot) + \cU^\P_s(t^n_i \land T,\E[\cY^\P_{t^\smalltext{n}_\smalltext{i} \land T}|\cF_{t^\smalltext{n}_\smalltext{i} \land T}])(\omega;\cdot)\big) \\
		&\quad 
		- f^{\P}_s\big(\cY^\P_s(t^n_i \land T,\E[\cY^\P_{t^\smalltext{n}_\smalltext{i} \land T}|\cF_{t^\smalltext{n}_\smalltext{i} \land T}])(\omega),\cY^\P_{s\smallertext{-}}(t^n_i \land T,\E[\cY^\P_{t^\smalltext{n}_\smalltext{i} \land T}|\cF_{t^\smalltext{n}_\smalltext{i} \land T}])(\omega),\cZ^\P_s(t^n_i \land T,\E[\cY^\P_{t^\smalltext{n}_\smalltext{i} \land T}|\cF_{t^n_i \land T}])(\omega), \\
		&\qquad\quad\cU^\P_s(t^n_i \land T,\E[\cY^\P_{t^\smalltext{n}_\smalltext{i} \land T}|\cF_{t^\smalltext{n}_\smalltext{i} \land T}])(\omega;\cdot)\big),
	\end{align*}
	satisfies $F^{n,i}_s(0,0,0,\mathbf{0}) = 0$.
	From \Cref{lem::solv_bsde_cond}, it follows that, for $i \in \N^\ast$,
	\[
		\cY^\P_{t^\smalltext{n}_{\smalltext{i}\smalltext{-}\smalltext{1}}}(t^n_i \land T,\E^\P[\cY^\P_{t^\smalltext{n}_\smalltext{i} \land T}|\cF_{t^\smalltext{n}_\smalltext{i} \land T}])
		= \cY^\P_{t^\smalltext{n}_{\smalltext{i}\smalltext{-}\smalltext{1}}}, \; \textnormal{$\P$--a.s.}
	\]

	Therefore
	\begin{align}\label{eq::inequality_tilde_y_vP}
		\E^{\P}[\widetilde\cY^{n,i}_{t^\smalltext{n}_{\smalltext{i}\smalltext{-}\smalltext{1}}}|\cF_{t^\smalltext{n}_{\smalltext{i}\smalltext{-}\smalltext{1}}}]
		&=  \E^{\P}[\cY^{n,i}_{t^\smalltext{n}_{\smalltext{i}\smalltext{-}\smalltext{1}}}|\cF_{t^\smalltext{n}_{\smalltext{i}\smalltext{-}\smalltext{1}}}] - \E^{\P}\big[\cY^\P_{t^\smalltext{n}_{\smalltext{i}\smalltext{-}\smalltext{1}}}\big(t^n_i \land T,\E[\cY^\P_{t^\smalltext{n}_\smalltext{i} \land T}|\cF_{t^\smalltext{n}_\smalltext{i} \land T}]\big)\big|\cF_{t^\smalltext{n}_{\smalltext{i}\smalltext{-}\smalltext{1}}}\big] \nonumber\\
		&= \E^{\P}[\cY^\P_{t^\smalltext{n}_{\smalltext{i}\smalltext{-}\smalltext{1}}}(t^n_i \land T,\widehat\cY_{t^\smalltext{n}_\smalltext{i} \land T})|\cF_{t^\smalltext{n}_{\smalltext{i}\smalltext{-}\smalltext{1}}}] - \E^{\P}[\cY^\P_{t^\smalltext{n}_{\smalltext{i}\smalltext{-}\smalltext{1}}}|\cF_{t^\smalltext{n}_{\smalltext{i}\smalltext{-}\smalltext{1}}}] 
		\leq \widehat \cY_{t^\smalltext{n}_{\smalltext{i}\smalltext{-}\smalltext{1}}} - \E^\P[\cY^\P_{t^\smalltext{n}_{\smalltext{i}\smalltext{-}\smalltext{1}}}|\cF_{t^\smalltext{n}_{\smalltext{i}\smalltext{-}\smalltext{1}}}] 
		= V_{t^\smalltext{n}_{\smalltext{i}\smalltext{-}\smalltext{1}}}, \; \text{$\P$--a.s.},
	\end{align} 
	where the inequality follows from \Cref{ass::probabilities2}.$(ii)$, \Cref{lem::conditioning_bsde2}, and \Cref{lem::stopping_value_function}.
	
	\medskip
	We denote by $(\overline\cY^{n,i}, \overline\cZ^{n,i}, \overline\cU^{n,i}, \overline\cN^{n,i})$ the solution to the following well-posed BSDE relative to $\P$ and with time horizon $t^n_i \land T$:
	\begin{align*}
		\overline\cY^{n,i}_t 
		=
		V_{t^\smalltext{n}_\smalltext{i}} 
		&+ 
		\int_t^{t^\smalltext{n}_\smalltext{i} \land T} \Big( -\sqrt{r_s}\big|\overline\cY^{n,i}_s\big| - \sqrt{\mathrm{r}_s}\big|\overline\cY^{n,i}_{s\smallertext{-}}\big| - \sqrt{\theta^{X}_s} \big\| \mathsf{a}_s^{1/2} \overline\cZ^{n,i}_s \big\| + F^{n,i}_s\big(0,0, 0, \overline\cU^{n,i}_s(\cdot) \big) \Big)  \d C_s \\
		&\quad - 
		\int_t^{t^\smalltext{n}_\smalltext{i} \land T} \overline\cZ^{n,i}_s\d X_s^{c}
		- \int_t^{t^\smalltext{n}_\smalltext{i} \land T} \int_{\R^\smalltext{d}} \overline\cU^{n,i}_s(x)\tilde\mu^{X}(\d s, \d x) - \int_t^{t^\smalltext{n}_\smalltext{i} \land T}\d \overline\cN^{n,i}_s, \; t \in [0,\infty].
	\end{align*}
	The comparison principle in the form of \Cref{prop::comparison} yields $\overline\cY^{n,i} \leq \widetilde\cY^{n,i}$, $\P$--a.s., and $\overline\cY^{n,i} \geq 0$, $\P$--a.s., since $V_{t^\smalltext{n}_\smalltext{i}} \geq 0$, $\P$--a.s., and $F^{n,i}_s(0,0,0,\mathbf{0}) = 0$. We thus write
	\begin{align*}
		\overline\cY^{n,i}_t 
		=
		V_{t^\smalltext{n}_\smalltext{i}} 
		&+ 
		\int_t^{t^\smalltext{n}_\smalltext{i} \land T} \Big( \lambda_s\overline\cY^{n,i}_s 
		+ \widehat\lambda_s\overline\cY^{n,i}_{s\smallertext{-}} 
		+ (\eta^{n,i}_s)^\top \mathsf{a}_s \overline\cZ^{n,i}_s  
		+ F^{n,i}_s\big(0,0, 0, \overline\cU^{n,i}_s(\cdot) \big) \Big)  \d C_s \\
		&\quad - 
		\int_t^{t^\smalltext{n}_\smalltext{i} \land T} \overline\cZ^{n,i}_s\d X_s^{c}
		- \int_t^{t^\smalltext{n}_\smalltext{i} \land T}\int_{\R^\smalltext{d}} \overline\cU^{n,i}_s(x)\tilde\mu^{X}(\d s, \d x) - \int_t^{t^\smalltext{n}_\smalltext{i} \land T}\d \overline\cN^{n,i}_s, \; t \in [0,\infty], \; \text{$\P$--a.s.},
	\end{align*}
	where
	\begin{equation*}
		\lambda \coloneqq -\sqrt{r}, 
		\; 
		\widehat\lambda \coloneqq - \sqrt{\mathrm{r}}, 
		\; 
		\textnormal{and}
		\;
		\eta^{n,i} \coloneqq -\sqrt{\theta^{X}} (\mathsf{a}^{1/2})^\oplus  
		\frac{\mathsf{a}^{1/2} \overline\cZ^{n,i}}{\big\|\mathsf{a}^{1/2} \overline\cZ^{n,i}\big\|}\1_{\R^\smalltext{d}\setminus\{0\}}(\mathsf{a}^{1/2}\overline{\cZ}^{n,i}) \1_{\llparenthesis 0, t^\smalltext{n}_\smalltext{i} \land T \rrbracket}.
	\end{equation*}
	
	Note that $\eta^{n,i} \in \H^2_{t^\smalltext{n}_\smalltext{i} \land T}(X^{c};\F,\P)$ and $\langle \eta^{n,i} \bcdot X^{c} \rangle \leq \int_{0}^{\cdot\land t^\smalltext{n}_\smalltext{i} \land T} \theta^{X}_s \d C_s$. By \Cref{ass::crossing}.$(iv)$, there exists $\rho^\dagger \in \H^2_{T}(\mu^X;\F,\P)$ satisfying $\Delta (\rho^\dagger \ast\tilde\mu^X) > -1$, $\P$--a.s., and
	\begin{equation*}
		F^{n,i}_s\big(0,0, 0, \overline\cU^{n,i}_s(\cdot) \big) 
		\geq 
		\frac{\d\langle \rho^\dagger \ast\tilde\mu^{X} , \overline\cU^{n,i} \ast\tilde\mu^{X}\rangle_s}{\d C_s}
		, \; \text{$\P \otimes \mathrm{d}C$--a.e. on $\llparenthesis 0, t^n_i \land T \rrbracket$, for all $(n,i)\in\N^2$}.
	\end{equation*}
	Hence
	\begin{align*}
		\overline\cY^{n,i}_t 
		\geq V_{t^\smalltext{n}_\smalltext{i}} 
		&+ 
		\int_t^{t^\smalltext{n}_\smalltext{i} \land T} \bigg( \lambda_s\overline\cY^{n,i}_s + \widehat\lambda_s\overline\cY^{n,i}_{s\smallertext{-}} + (\eta^{n,i}_s)^\top \mathsf{a}_s \overline\cZ^{n,i}_s 
		+ \frac{\d\langle \rho^\dagger \ast\tilde\mu^{X} , \overline\cU^{n,i} \ast\tilde\mu^{X}\rangle_s}{\d C_s} \bigg) \d C_s \\
		& \quad - 
		\int_t^{t^\smalltext{n}_\smalltext{i} \land T} \overline\cZ^{n,i}_s\d X_s^{c,\P}
		- \int_t^{t^\smalltext{n}_\smalltext{i} \land T}\int_{\R^\smalltext{d}} \overline\cU^{n,i}_s(x) \tilde\mu^{X}(\d s, \d x)
		- \int_t^{t^\smalltext{n}_\smalltext{i} \land T}\d \overline\cN^{n,i}_s, \; t \in [0,\infty], \; \text{$\P$--a.s.}
	\end{align*}
	By following the arguments that lead to \cite[Equation~7.7]{possamai2024reflections}, we obtain
	\begin{equation}\label{eq::ineq_conditional_exp_stoch_exp}
		\cE(w)_{t^\smalltext{n}_{\smalltext{i}\smalltext{-}\smalltext{1}}}\cE(v)_{t^\smalltext{n}_{\smalltext{i}\smalltext{-}\smalltext{1}}}\overline\cY^{n,i}_{t^\smalltext{n}_{\smalltext{i}\smalltext{-}\smalltext{1}}} \geq \E^{\tilde\Q^{\smalltext{n}\smalltext{,}\smalltext{i}}}\big[ \cE(w)_{t^\smalltext{n}_\smalltext{i}}\cE(v)_{t^\smalltext{n}_\smalltext{i}} V_{t^\smalltext{n}_\smalltext{i}}   \big| \cF_{t^\smalltext{n}_{\smalltext{i}\smalltext{-}\smalltext{1}}\smallertext{+}}\big], \; \text{$\P$--a.s.}, \; i \in \N^\ast,
	\end{equation}
	where $\d\widetilde\Q^{n,i} \coloneqq \cE(\widetilde{L}^{n,i})_{t^\smalltext{n}_\smalltext{i}}\d\P$ on $(\Omega,\cF_{t^\smalltext{n}_\smalltext{i}})$ with 
	\begin{equation*}
		\widetilde{L}^{n,i} \coloneqq \int_{0}^{\cdot \land t^\smalltext{n}_\smalltext{i} \land T} \eta^{n,i}_s \d X^{c}_s + \int_{0}^{\cdot \land t^\smalltext{n}_\smalltext{i} \land T}\d (\rho^\dagger \ast\tilde\mu^{X})_s,\;
		w \coloneqq \int_{0}^{\cdot \land T} \lambda_s \d C_s,
		\; 
		\text{and} 
		\; 
		v \coloneqq \int_{0}^{\cdot \land T} \frac{\widehat\lambda_s}{1-\widehat\lambda_s\Delta C_s} \d C_s.
	\end{equation*}
	By \Cref{ass::crossing}.$(ii)$,$(iv)$ and \cite[Lemma 7.4]{possamai2024reflections}, $\cE(\widetilde L^{n,i})_{t^\smalltext{n}_\smalltext{i}}$ is strictly positive and in $\L^2(\P)$ with $\P$-expectation one, and thus defines a probability density.
	Moreover, both $w$ and $v$ are non-increasing, and $\Phi < 1$ implies $\Delta w > -1$ and $\Delta v > -1$. Therefore $\cE(w)$ and $\cE(v)$ are positive and non-increasing. Rearranging the terms in \eqref{eq::ineq_conditional_exp_stoch_exp}, and then applying Bayes's formula for conditional expectation yields
	\begin{equation}\label{eq::ineq_conditional_exp_stoch_exp_2}
		\overline\cY^{n,i}_{t^\smalltext{n}_{\smalltext{i}\smalltext{-}\smalltext{1}}} \geq \E^\P\big[ \cE(L^{n,i})_{t^\smalltext{n}_\smalltext{i}} \cE(w^{n,i})_{t^\smalltext{n}_\smalltext{i}}\cE(v^{n,i})_{t^\smalltext{n}_\smalltext{i}} V_{t^\smalltext{n}_\smalltext{i}}   \big| \cF_{t^\smalltext{n}_{\smalltext{i}\smalltext{-}\smalltext{1}}\smallertext{+}}\big], \; \text{$\P$--a.s.},
	\end{equation}
	where $\d\Q^{n,i} \coloneqq \cE(L^{n,i})_{t^\smalltext{n}_\smalltext{i}}\d\P = (\cE(\widetilde L^{n,i})_{t^\smalltext{n}_\smalltext{i}}/\cE(\widetilde L^{n,i})_{t^\smalltext{n}_{\smalltext{i}\smalltext{-}\smalltext{1}}})\d\P$ on $(\Omega,\cF_{t^\smalltext{n}_\smalltext{i}})$ with
	\begin{gather}\label{eq::formulas_tilde_v_w}
		L^{n,i} \coloneqq \widetilde L^{n,i} - \widetilde L^{n,i}_{\cdot \land t^\smalltext{n}_{\smalltext{i}\smalltext{-}\smalltext{1}} \land T} = \int_{t^\smalltext{n}_{\smalltext{i}\smalltext{-}\smalltext{1}} \land T}^{\cdot \land t^\smalltext{n}_\smalltext{i} \land T} \eta^{n,i}_s \d X^{c,\P}_s + \int_{t^\smalltext{n}_{\smalltext{i}\smalltext{-}\smalltext{1}} \land T}^{\cdot \land t^\smalltext{n}_\smalltext{i} \land T}\d (\rho^\dagger \ast\tilde\mu^{X})_s, \nonumber\\
		w^{n,i} \coloneqq \int_{t^\smalltext{n}_{\smalltext{i}\smalltext{-}\smalltext{1}} \land T}^{\cdot \land t^\smalltext{n}_\smalltext{i} \land T} \lambda_s \d C_s, \; v^{n,i} \coloneqq \int_{t^\smalltext{n}_{\smalltext{i}\smalltext{-}\smalltext{1}}\land T}^{\cdot \land t^\smalltext{n}_\smalltext{i} \land T} \frac{\widehat\lambda_s}{1-\widehat\lambda_s\Delta C_s} \d C_s.
	\end{gather}
	Taking conditional expectation with respect to $\cF_{t^\smalltext{n}_{\smalltext{i}\smalltext{-}\smalltext{1}}}$ in \eqref{eq::ineq_conditional_exp_stoch_exp_2} implies
	\begin{equation*}
		\E^{\P} 
		\big[\overline\cY^{n,i}_{t^\smalltext{n}_{\smalltext{i}\smalltext{-}\smalltext{1}}} \big|\cF_{t^\smalltext{n}_{\smalltext{i}\smalltext{-}\smalltext{1}}} \big] 
		\geq 
		\E^{\P} 
		\big[ \cE(L^{n,i})_{t^\smalltext{n}_\smalltext{i}} \cE(w^{n,i})_{t^\smalltext{n}_\smalltext{i}}\cE(v^{n,i})_{t^\smalltext{n}_\smalltext{i}} V_{t^\smalltext{n}_\smalltext{i}}   \big| \cF_{t^\smalltext{n}_{\smalltext{i}\smalltext{-}\smalltext{1}}}\big], \; \text{$\P$--a.s.}
	\end{equation*}
	By \eqref{eq::inequality_tilde_y_vP} and the fact that $\overline\cY^{n,i} \leq \widetilde\cY^{n,i}$, $\P$--a.s., we then obtain
	\begin{equation*}
		V_{t^\smalltext{n}_{\smalltext{i}\smalltext{-}\smalltext{1}}} 
		\geq \E^{\P}\big[\widetilde\cY^{n,i}_{t^\smalltext{n}_{\smalltext{i}\smalltext{-}\smalltext{1}}} \big|\cF_{t^\smalltext{n}_{\smalltext{i}\smalltext{-}\smalltext{1}}} \big] 
		\geq \E^{\P}\big[\overline\cY^{n,i}_{t^\smalltext{n}_{\smalltext{i}\smalltext{-}\smalltext{1}}} \big|\cF_{t^\smalltext{n}_{\smalltext{i}\smalltext{-}\smalltext{1}}} \big] 
		\geq 
		\E^{\P} 
		\big[ \cE(L^{n,i})_{t^\smalltext{n}_\smalltext{i}} \cE(w^{n,i})_{t^\smalltext{n}_\smalltext{i}}\cE(v^{n,i})_{t^\smalltext{n}_\smalltext{i}} V_{t^\smalltext{n}_\smalltext{i}}   \big| \cF_{t^\smalltext{n}_{\smalltext{i}\smalltext{-}\smalltext{1}}}\big], \; \text{$\P$--a.s.}
	\end{equation*} 
	Fix $K \in \N^\star$, and define $\d\Q^{n,K} \coloneqq \cE(L^{n,K})_{K}\d\P$, where
	\begin{equation*}
		L^{n,K} \coloneqq \sum_{i = 1}^{K 2^{\smalltext{n}}}\bigg(\int_{t^\smalltext{n}_{\smalltext{i}\smalltext{-}\smalltext{1}} \land T}^{\cdot \land t^\smalltext{n}_\smalltext{i} \land T} \eta^{n,i}_s \d X^{c}_s + \int_{t^\smalltext{n}_{\smalltext{i}\smalltext{-}\smalltext{1}} \land T}^{\cdot \land t^\smalltext{n}_\smalltext{i} \land T}\d (\rho^\dagger \ast\tilde\mu^{X})_s\bigg).
	\end{equation*}
	Then $\displaystyle\cE(L^{n,i})_{t^\smalltext{n}_\smalltext{i}} = \frac{\cE(L^{n,K})_{t^\smalltext{n}_\smalltext{i}}}{\cE(L^{n,K})_{t^\smalltext{n}_{\smalltext{i}\smalltext{-}\smalltext{1}}}}$ for $i \in\{1,\dots, K 2^n\}$, and therefore
	\begin{equation}\label{eq::ineq_conditional_exp_stoch_exp_3}
		V_{t^\smalltext{n}_{\smalltext{i}\smalltext{-}\smalltext{1}}} 
		\geq 
		\E^{\P} \bigg[ \frac{\cE(L^{n,K})_{t^\smalltext{n}_\smalltext{i}}}{\cE(L^{n,K})_{t^\smalltext{n}_{\smalltext{i}\smalltext{-}\smalltext{1}}}} \cE(w^{n,i})_{t^\smalltext{n}_\smalltext{i}}\cE(v^{n,i})_{t^\smalltext{n}_\smalltext{i}} V_{t^\smalltext{n}_\smalltext{i}} \bigg| \cF_{t^\smalltext{n}_{\smalltext{i}\smalltext{-}\smalltext{1}}}\bigg], \; \text{$\P$--a.s.}, \; i \in\{1,\dots, K 2^n\}.
	\end{equation}
	By choosing a $\P$-version of $\cE(L^{n,K})$ which is $\F$-adapted (see \Cref{prop::good_version_stochastic_integral}) and by \eqref{eq::formulas_tilde_v_w}, we then obtain
	\begin{equation*}
		\cE(L^{n,K})_{t^\smalltext{n}_{\smalltext{i}\smalltext{-}\smalltext{1}}} \cE(w)_{t^\smalltext{n}_{\smalltext{i}\smalltext{-}\smalltext{1}}}\cE(v)_{t^\smalltext{n}_{\smalltext{i}\smalltext{-}\smalltext{1}}} V_{t^\smalltext{n}_{\smalltext{i}\smalltext{-}\smalltext{1}}} 
		\geq \E^{\P} \big[ \cE(L^{n,K})_{t^\smalltext{n}_\smalltext{i}} \cE(w)_{t^\smalltext{n}_\smalltext{i}}\cE(v)_{t^\smalltext{n}_\smalltext{i}} V_{t^\smalltext{n}_\smalltext{i}}   \big| \cF_{t^\smalltext{n}_{\smalltext{i}\smalltext{-}\smalltext{1}}}\big], \; \text{$\P$--a.s.}, \; i \in\{1,\dots, K 2^n\}.
	\end{equation*}
	
	This implies that the nonnegative, discrete-time process $S^{n,K} = (S^{n,K}_{t^\smalltext{n}_\smalltext{i}})_{i\in\{0,\dots, K2^\smalltext{n}\}}$, defined by 
	\[
	S^{n,K}_{t^\smalltext{n}_\smalltext{i}} \coloneqq \cE(w)_{t^\smalltext{n}_\smalltext{i}}\cE(v)_{t^\smalltext{n}_\smalltext{i}} V_{t^\smalltext{n}_\smalltext{i}},
	\] 
	satisfies the $\Q^{n,K}$--super-martingale property relative to the discrete-time filtration $(\cF_{t^\smalltext{n}_\smalltext{i}})_{i\in\{0,\dots, K2^\smalltext{n}\}}$. Since $V_0 \in \L^2(\P)$ and $\d\Q^{n,K}/\d\P \in \L^2(\P)$ (\cite[Lemma 7.4]{possamai2024reflections}), we obtain
	\[
	\E^{\Q^{\smalltext{n}\smalltext{,}\smalltext{K}}} [S^{n,K}_{t^\smalltext{n}_\smalltext{i}} | \cF_{t^\smalltext{n}_\smalltext{0}}] \leq S^{n,K}_0 = V_0 \in \L^1(\Q^{n,K}),
	\]
	which then implies the required integrability for $S^{n,K}$ to be a (true) $\Q^{n,K}$--super-martingale.

	\medskip
	Let $0 \leq a < b < \infty$. Define the processes $(\ell_{t^\smalltext{n}_\smalltext{i}})_{ i\in\{0,1,\dots, K2^\smalltext{n}\}}$ and $(u_{t^\smalltext{n}_\smalltext{i}})_{i\in\{0,1,\dots,K2^\smalltext{n}\}}$ by $\ell_{t^\smalltext{n}_\smalltext{i}} \coloneqq a \cE(w)_{t^\smalltext{n}_\smalltext{i}}\cE(v)_{t^\smalltext{n}_\smalltext{i}}$ and $u_{t^\smalltext{n}_\smalltext{i}} \coloneqq b\cE(w)_{t^\smalltext{n}_\smalltext{i}}\cE(v)_{t^\smalltext{n}_\smalltext{i}}$. By definition of $w$ and $v$, both $\ell$ and $u$ are non-increasing processes. Let $\D^n_\smallertext{+} \coloneqq \{j 2^{-n} : j \in \N\}$. We denote by $D^u_\ell(S^{n,K};\D^n_\smallertext{+} \cap [0,K])$ the number of down-crossings of $[\ell, u]$ by the process $S^{n,K}$ on $\D^n_\smallertext{+} \cap [0,K]$. Since $\langle L^{n,K} \rangle^{(\P)}$ is $\P$--essentially bounded by a constant $\mathfrak{C} \in (0,\infty)$ independent of $n$ and $K$ (see \Cref{ass::crossing}.$(ii)$,$(iv)$), the arguments in the proof of \cite[Lemma 7.4]{possamai2024reflections} imply that
	\begin{equation*}
		\E^\P\Bigg[\bigg(\frac{\d\Q^{n,K}}{\d\P}\bigg)^2\Bigg] 
		= \E^\P\Big[\big(\cE(L^{n,K})_{K\land T}\big)^2 \Big] 
		\leq 4\mathrm{e}^{\mathfrak{C}}.
	\end{equation*}
	Since, with analogous notation, we have $D^u_{\ell} (S^{n,K};\D^n_\smallertext{+} \cap [0,K]) = D^b_a(V;\D^n_\smallertext{+} \cap [0,K])$, it follows from Doob's down-crossing inequality\footnote{Like most inequalities in martingale theory, this also holds when conditioning at time zero.} \cite[Equation 12.3, page 446]{doob1984classical} that
	\begin{align}\label{eq::bounding_supermartingale_over_dyadics}
		\E^{\Q^{n,K}}[\cE(w)_{K}\cE(v)_{K}D^b_a(V;\D^n_\smallertext{+}\cap[0,K])] 
		= \E^{\Q^{\smalltext{n}\smalltext{,}\smalltext{K}}}[\cE(w)_{K}\cE(v)_{K}D^u_\ell(S^{n,K};\D^n_\smallertext{+}\cap[0,K])] 
		\leq \frac{b}{b-a}.
	\end{align}
	Bayes's formula for conditional expectation then yields
	\begin{align}\label{eq::bayes_conditioned_F0}
		\E^{\P}\Big[\E^{\Q^{\smalltext{n}\smalltext{,}\smalltext{K}}}\big[\cE(w)_{K}\cE(v)_{K}D^b_a(V;\D^n_\smallertext{+}\cap[0,K])\big| \cF_{0\smallertext{+}}\big]\Big]
		&=\E^{\P}\Bigg[\E^\P\bigg[\frac{\d\Q^{n,K}}{\d\P}\cE(w)_{K}\cE(v)_{K}D^b_a(V;\D^n_\smallertext{+}\cap[0,K])\bigg| \cF_{0\smallertext{+}}\bigg]\Bigg] \nonumber \\
		&= \E^{\Q^{n,K}}\big[\cE(w)_{K}\cE(v)_{K}D^b_a(V;\D^n_\smallertext{+}\cap[0,K])\big] \leq \frac{b}{b-a}.
	\end{align}
	For $k \in \N^\ast$, let $\xi^{n,K,k} \coloneqq \cE(\hat\beta A)^{-1/2}_{K \land T}\mathbf{1}_{\{\cE(w)_{\smalltext{K}}\cE(v)_{\smalltext{K}}D^\smalltext{b}_\smalltext{a}(V;\D^\smalltext{n}_\smalltext{+} \cap [0,K]) > k\}} \in \L^2_{\hat\beta}(\cF_{K \land T}).$ Let $\sY(K \land T,\xi^{n,K,k})$ be the first component of the solution to \eqref{eq::lipschitz_linear_bsde} with terminal time $K \land T$ and terminal condition $\xi^{n,K,k}$ and where $\rho \coloneqq \rho^\dagger$. Then
	\begin{align*}
		\sY_0(K \land T,\xi^{n,K,k}) 
		\leq \E^{\Q^{\smalltext{n}\smalltext{,}\smalltext{K}}}[\xi^{n,K,k}|\cF_{0\smallertext{+}}] 
		&\leq \E^{\Q^{\smalltext{n}\smalltext{,}\smalltext{K}}}[\mathbf{1}_{\{\cE(w)_{\smalltext{K}}\cE(v)_{\smalltext{K}}D^\smalltext{b}_\smalltext{a}(V;\D^\smalltext{n}_\smalltext{+}\cap[0,K]) > k\}}|\cF_{0\smallertext{+}}] \\
		&\leq \frac{1}{k} \E^{\Q^{\smalltext{n}\smalltext{,}\smalltext{K}}}[\cE(w)_{K}\cE(v)_{K}D^b_a(V;\D^n_\smallertext{+}\cap[0,K])|\cF_{0\smallertext{+}}], \; \text{$\P$--a.s.},
	\end{align*}
	where we use \Cref{lem::linearising_bsde} in the first inequality and a conditional version of Markov's inequality in the last line. Taking expectation under $\P$ together with \eqref{eq::bayes_conditioned_F0} yields
	\begin{equation}\label{eq::cond_ineq_script_y_1}
		\E^\P\big[\sY_0(K \land T,\xi^{n,K,k})\big] \leq  \frac{b}{k(b-a)}.
	\end{equation}
	Let $\xi^{K,k} \coloneqq \lim_{n \rightarrow \infty} \xi^{n,K,k} = \cE(\hat\beta A)^{-1/2}_{K \land T}\mathbf{1}_{\{\cE(w)_{\smalltext{K}}\cE(v)_{\smalltext{K}}D^\smalltext{b}_\smalltext{a}(V;\D_\tinytext{+} \cap[0,K]) > k\}}$; the equality holds since the dyadic grids are increasing and every finite sequence of dyadic down-crossings of $[a,b]$ is contained in some $\D^n_\smallertext{+}$ for $n$ large enough. 
	\Cref{prop::stability} implies that there exists a constant $\mathfrak{C}$ only depending on $\hat{\beta}$ and $\Phi$ such that
	\[
		|\sY_0(K \land T,\xi^{n,K,k})| \leq \mathfrak{C}, \; \textnormal{$\P$--a.s.},
	\]
	and
	\[
		\lim_{n \rightarrow \infty} \sY_0(K \land T,\xi^{n,K,k}) = \sY_0(K \land T,\xi^{K,k}), \; \textnormal{$\P$--a.s.}
	\]
	We then obtain by dominated convergence
	\begin{equation}\label{eq::cond_ineq_script_y_2}
		\E^\P\big[\sY_0(K \land T,\xi^{K,k})\big] = \lim_{n \rightarrow \infty} \E^\P\big[\sY_0(K \land T,\xi^{n,K,k})\big] \leq  \frac{b}{k(b-a)}.
	\end{equation}
	Since $\xi^K \coloneqq \lim_{k \rightarrow \infty} \xi^{K,k} = \cE(\hat\beta A)^{-1/2}_{K \land T}\mathbf{1}_{\{\cE(w)_{\smalltext{K}}\cE(v)_{\smalltext{K}}D^\smalltext{b}_\smalltext{a}(V;\D_\tinytext{+}\cap[0,K]) = \infty\}},$
	an analogous argument then yields
	\begin{equation*}
		\E^\P\big[\sY_0(K \land T,\xi^K)\big] = \lim_{k \rightarrow\infty} \E^\P\big[\sY_0(K \land T,\xi^{K,k})\big] \leq 0.
	\end{equation*}
	By \Cref{lem::linearising_bsde}, we know that $\sY_0(K \land T,\xi^K) = \E^{\mathscr{Q}}[\xi^K|\cF_{0\smallertext{+}}]$, $\P$--a.s., for some probability $\mathscr{Q}$ equivalent to $\P$. Hence
	\begin{equation*}
		\E^\P\big[\E^{\mathscr{Q}}[\xi^K|\cF_{0\smallertext{+}}]\big] = \E^\P\big[\sY_0(K \land T,\xi^K)\big] \leq 0.
	\end{equation*}
	Since $\xi^K \geq 0$, and thus $\E^{\mathscr{Q}}[\xi^K|\cF_{0\smallertext{+}}] \geq 0$, $\P$--a.s., we have $\E^{\mathscr{Q}}[\xi^K|\cF_{0\smallertext{+}}] = 0$, $\P$--a.s. and $\mathscr{Q}$--a.s. This in turn implies that $\xi^K = 0$, $\mathscr{Q}$--a.s. and $\P$--a.s. Since $\cE(\hat\beta A)_{K \land T}$, $\cE(w)_{K}$ and $\cE(v)_{K}$ are $(0,\infty)$-valued, we obtain
	\begin{equation*}
		\P\big[D^b_a(V;\D_\smallertext{+} \cap [0,K]) < \infty\big] = 1,
	\end{equation*}
	and then
	\begin{equation*}
		\P\Bigg[ \bigcap_{K \in \N^\smalltext{\star}} \bigcap_{\{(a,b) \in \D^2_\tinytext{+}: a < b\}} \big\{D^b_a(V;\D_\smallertext{+} \cap [0,K]) < \infty\big\}\Bigg] = 1.
	\end{equation*}
	Since $V_t \geq 0$, $\P$--a.s. for every $t \in \D_\smallertext{+}$, this yields $\P[\Omega_1] = 1$ and completes the proof of assertion $(i)$.
\end{proof}

\begin{remark}\label{rem::gap_regularisation}
	The path-regularisation of the value function has previously appeared in \textnormal{\cite[Lemma 3.2]{possamai2018stochastic}} in the context of \textnormal{2BSDEs} driven by continuous processes. In that proof, following a transformation of $\widehat{\cY}$ into a super-martingale under an equivalent measure $($see \textnormal{\cite[page 575]{possamai2018stochastic}}$)$, the proof was completed by invoking the arguments used in the proof of \textnormal{\cite[Lemma A.1]{bouchard2016general}}. However, the process $u$ appearing in the latter argument is not necessarily a super-martingale, as required by the cited down-crossing inequality \textnormal{\cite[page 446]{doob1984classical}}.
	Nevertheless, our transformations outlined in the preceding proof avoid this issue.
	
	\medskip
	The preceding proof is related to the argument used in the proof of \textnormal{\cite[Lemma 4.8]{soner2013dual}}, which invokes \textnormal{\cite[Theorem 6]{chen2000general}}. However, the proof of \textnormal{\cite[Theorem 6]{chen2000general}} asserts, in the notation there, that $y^{(j)}_t \geq 0$, which does not appear to follow without assuming that $g_s(0,0) \equiv 0$. This does not affect the application of \textnormal{\cite[Theorem 6]{chen2000general}} in the proof of \textnormal{\cite[Lemma 4.8]{soner2013dual}}, as the generator $f^\P$ considered in \textnormal{\cite[Lemma 4.8]{soner2013dual}} satisfies $f^\P_s(0,0) \equiv 0$ and the relevant process is nonnegative.
\end{remark}

\begin{proof}
	[Proof of \Cref{thm::down-crossing}.$(ii)$]
	We start with the stated integrability. By \Cref{thm::measurability2} and \Cref{prop::stability}, there exists a constant $\mathfrak{C}^\prime \in (0,\infty)$, depending only on $\hat\beta$ and on $\Phi$, such that for every $\P \in \fP_0$ and $s \in [0,\infty)$,
	\begin{align}\label{eq_inequality_y_hat}
		|\widehat\cY_s|^2 
		&\leq \underset{\bar{\P} \in \fP_\smalltext{0}(\cF_{s},\P)}{{\esssup}^\P}\E^{\bar{\P}}\big[|\cY^{\bar{\P}}_s(T,\xi)|^2\big|\cF_s\big] \nonumber\\
		&\leq \mathfrak{C}^\prime \underset{\bar{\P} \in \fP_\smalltext{0}(\cF_{s},\P)}{{\esssup}^\P}\E^{\bar{\P}}\bigg[ \frac{\cE(\hat\beta A)_T}{\cE(\hat\beta A)_{s}} |\xi|^2 +\int_s^T \frac{\cE(\hat\beta A)_r}{\cE(\hat\beta A)_s} \frac{|f^{\bar\P}_r(0,0,0,\mathbf{0})|^2}{\alpha^2_r} \d C_r \bigg| \cF_{s}\bigg]	, \; \text{$\P$--a.s.}
	\end{align}
	The $\F$-adaptedness of $\cE(\hat\beta A) = \cE(\hat\beta A)_{\cdot \land T}$ together with $\phi^{2,\hat{\beta}}_{\xi, f} < \infty$, then immediately yields
	\begin{equation*}
		\sup_{\P \in \fP_\smalltext{0}}\E^\P\bigg[\sup_{s \in \D_\tinytext{+}}\big|\cE(\hat\beta A)^{1/2}_{s}\widehat\cY_{s}(T,\xi)\big|^2\bigg] 
		\leq \mathfrak{C}^\prime \phi^{2,\hat{\beta}}_{\xi, f} < \infty,
	\end{equation*}
	and then
	\begin{equation*}
		\sup_{\P \in \fP_\smalltext{0}}\E^\P\bigg[\sup_{s \in [0,T]}\big|\cE(\hat\beta A)^{1/2}_s\widehat\cY^\smallertext{+}_s(T,\xi)\big|^2\bigg] \leq \sup_{\P \in \fP_\smalltext{0}}\E^\P\bigg[\sup_{s \in \D_\tinytext{+}}\big|\cE(\hat\beta A)^{1/2}_{s \land T}\widehat\cY_{s \land T}(T,\xi)\big|^2 + \cE(\hat\beta A)_T|\xi|^2\bigg] \leq (\mathfrak{C}^{\prime}+1) \phi^{2,\hat{\beta}}_{\xi, f} < \infty.
	\end{equation*}
	We then let $\mathfrak{C} \coloneqq 2\mathfrak{C}^\prime + 1$.
	
	\medskip
	We turn to the representation \eqref{eq::aggregation}, and thus fix $\P \in \fP_0$ and $t \in [0,\infty)$; for $t = \infty$, the stated representation holds immediately. We write $\widehat{\cY}^\smallertext{+}$ instead of $\widehat{\cY}^\smallertext{+}(T,\xi)$ for simplicity. We first prove the following auxiliary result: for any sequence $(t_n)_{n \in \N}$ of dyadic numbers converging to $t$ from above, there exists a subsequence $(t_{n_\smalltext{k}})_{k \in \N}$ such that
	\begin{equation}\label{eq::stability_terminal_condition}
		\lim_{k \rightarrow \infty} \sup_{s \in [0,t]}|\cY^\P_s(t_{n_\smalltext{k}} \land T,\widehat\cY_{t_{\smalltext{n}_\tinytext{k}} \land T}) - \cY^\P_s(t \land T,\widehat\cY^\smallertext{+}_{t \land T})| = 0, \; \textnormal{$\P$--a.s.}
	\end{equation}
	This can be argued as follows. We write
	\begin{align}\label{eq::convergence_t_n}
		&\cY^\P_s(t_n \land T,\widehat\cY_{t_\smalltext{n} \land T}) 
		- \cY^\P_s(t \land T,\widehat\cY^\smallertext{+}_{t \land T}) \nonumber\\
		&= \cY^\P_s(t_n \land T,\widehat\cY_{t_\smalltext{n} \land T})  - \widetilde\cY^\P_s\big(t_n \land T,\widehat\cY^\smallertext{+}_{t \land T}\cE(\hat\beta A)^{1/2}_{t \land T}/\cE(\hat\beta A)^{1/2}_{t_\smalltext{n} \land T}\big) \nonumber\\
		&\quad + \widetilde\cY^{\P}_s\big(t \land T,\widetilde\cY^\P_{t \land T}(t_n \land T,\widehat\cY^\smallertext{+}_{t \land T}\cE(\hat\beta A)^{1/2}_{t \land T}/\cE(\hat\beta A)^{1/2}_{t_\smalltext{n} \land T})\big) - \cY^\P_s(t \land T,\widehat\cY^\smallertext{+}_{t \land T}), \; s \in [0,t],\; \textnormal{$\P$--a.s.},
	\end{align}
	where the generic notation $\widetilde\cY^{\P}(S, \zeta)$ denotes the first component of the solution $(\widetilde\cY^\P(S,\zeta),\widetilde\cZ^\P(S,\zeta),\widetilde\cU^\P(S,\zeta),\widetilde\cN^\P(S,\zeta))$ to the BSDE with generator $f^\P \1_{\llparenthesis 0,t\rrbracket}$, terminal time $S$ and terminal condition $\zeta$, if well-posed according to \cite[Section 3.2]{possamai2024reflections}. The second difference after the equality in \eqref{eq::convergence_t_n} converges $\P$--a.s.\ (along a subsequence if necessary) uniformly in $s \in [0,t]$ to zero by \Cref{cor::stability}; hence, we focus on the first difference after the equality. To ease the notation, we write in the following lines $(\widetilde\cY^{\P,n},\widetilde\cZ^{\P,n},\widetilde\cU^{\P,n},\widetilde\cN^{\P,n})$ for the solution of the BSDE with generator $f^\P\1_{\llparenthesis 0,t\rrbracket}$, terminal time $t_n \land T$ and terminal condition $\zeta^{n} \coloneqq \widehat\cY^\smallertext{+}_{t \land T}\cE(\hat\beta A)^{1/2}_{t \land T}/\cE(\hat\beta A)^{1/2}_{t_\smalltext{n} \land T}$. By \Cref{cor::stability}, there exists a constant $\mathfrak{C}^\prime \in (0,\infty)$, only depending on $\Phi$ and on $\hat\beta$, such that
	\begin{align*}
		&\E^\P\bigg[\sup_{s \in [0,\infty]}\big|\cE(\hat\beta A)^{1/2}_{s \land t \land T} \big(\cY^\P_{s \land t \land T}(t_n \land T, \widehat{\cY}_{t_\smalltext{n}\land T}) - \widetilde{\cY}^\P_{s \land t \land T}(t_n \land T, \zeta^{n})  \big)\big|^2\bigg] \\
		&= \E^\P\bigg[\sup_{s \in [0,\infty]}\Big|\cE(\hat\beta A)^{1/2}_{s \land t \land T} \Big(\cY^\P_{s \land t \land T}\big(t \land T,\cY^\P_{t \land T}(t_n\land T, \widehat{\cY}_{t_\smalltext{n}\land T})\big) - \widetilde{\cY}^\P_{s \land t \land T}\big(t \land T, \widetilde{\cY}^\P_{t \land T}(t_n\land T, \zeta^{n})\big)\Big)\Big|^2\bigg] \\
		&\leq \mathfrak{C}^\prime \E^\P\Big[ \cE(\hat\beta A)_{t \land T} \big|\cY^\P_{t \land T}(t_n \land T,\widehat\cY_{t_\smalltext{n} \land T})  - \widetilde\cY^\P_{t \land T}(t_n \land T,\zeta^{n})\big|^2 \Big].
	\end{align*}
	It therefore suffices to show that the last term converges to zero as $n$ tends to infinity to deduce \eqref{eq::stability_terminal_condition}.
	
	\medskip
	By the stability result in \Cref{prop::stability}, there exists a constant $\mathfrak{C}$ only depending on $\hat\beta$ and on $\Phi$ such that
	\begin{align}\label{eq::convergence_terminal_time_t}
		& \E^\P\Big[\cE(\hat\beta A)_{t \land T}\big|\cY^\P_t(t_n \land T,\widehat\cY_{t_\smalltext{n} \land T})  - \widetilde\cY^\P_t(t_n \land T,\zeta^{n})\big|^2\Big] \nonumber\\
		&\leq \mathfrak{C} \Bigg( \E^\P\bigg[\cE(\hat\beta A)_{t_\smalltext{n} \land T} \big| \widehat\cY_{t_\smalltext{n} \land T} - \zeta^{n} \big|^2 + \int_t^{t_n \land T} \cE(\hat\beta A)_{r} \frac{|f^\P\big(\widetilde\cY^{\P,n}_r,\widetilde\cY^{\P,n}_{r\smallertext{-}},\widetilde\cZ^{\P,n}_r,\widetilde\cU^{\P,n}_r(\cdot)\big)|^2}{\alpha^2_r} \d C_r\bigg] \Bigg) \nonumber\\
		&\leq 2 \mathfrak{C} \E^\P\bigg[\cE(\hat\beta A)_{t_\smalltext{n} \land T} \big| \widehat\cY_{t_\smalltext{n} \land T} - \zeta^{n} \big|^2 + \int_t^{t_n \land T}\cE(\hat\beta A)_r \big|\alpha_r\widetilde\cY^{\P,n}_r\big|^2 \d C_r \nonumber\\
		&\quad + \int_t^{t_n \land T} \cE(\hat\beta A)_r \big|\alpha_r\widetilde\cY^{\P,n}_{r\smallertext{-}}\big|^2 \d C_r   
		+ \int_t^{t_n \land T} \cE(\hat\beta A)_r \d\langle\widetilde\eta^{\P,n}\rangle_r + \int_t^{t_n \land T} \cE(\hat\beta A)_r \frac{|f^\P_r(0,0,0,\mathbf{0})|^2}{\alpha^2_r} \d C_r \bigg],
	\end{align}
		where
	\begin{equation*}
		\widetilde\eta^{\P,n} \coloneqq \int_{t}^{\cdot \land t_\smalltext{n} \land T} \widetilde\cZ^{\P,n}_r\d X^{c,\P}_r + \int_{t}^{\cdot \land t_\smalltext{n} \land T} \d (\widetilde\cU^{\P,n}\ast\tilde{\mu}^{X,\P})_r + \int_t^{\cdot\land t_n \land T} \d\widetilde{\cN}^{\P,n}_r.
	\end{equation*}
	We show that the expectation after the last inequality in \eqref{eq::convergence_terminal_time_t}  converges to zero as $n$ tends to infinity.
	Note that
	\begin{equation*}
		\widetilde{\cY}^{\P,n}_u = \E^\P\big[\zeta^{n}\big|\cF_{u\smallertext{+}}\big] \; \textnormal{and} \; \big(\widetilde\cY^{\P,n}_{u} - \zeta^{n}\big)^2 = \bigg(\int_{u}^{t_\smalltext{n} \land T} \d\widetilde\eta^{\P,n}_r\bigg)^2, \; \textnormal{$\P$--a.s.}, \; u \in [t,\infty],
	\end{equation*}
	implies
	\begin{align*}
		& \E^\P\bigg[\int_u^{t_\smalltext{n}\land T}\d\langle\widetilde\eta^{\P,n}\rangle^{(\F_\smalltext{+},\P)}_r \bigg| \cF_{u\smallertext{+}}\bigg]
		= \E^\P\bigg[\bigg(\int_{u}^{t_\smalltext{n} \land T} \d\widetilde\eta^{\P,n}_r\bigg)^2 \bigg| \cF_{u\smallertext{+}}\bigg] 
		= \E^\P\big[	\big(\widetilde\cY^{\P,n}_{u} - \zeta^{n}\big)^2 \big| \cF_{u\smallertext{+}}\big] \\
		&= \big(\widetilde\cY^{\P,n}_{u}\big)^2 - 2 \widetilde\cY^{\P,n}_{u} \E^\P[\zeta^{n}|\cF_{u\smallertext{+}}] + \E^\P\big[\big(\zeta^{n}\big)^2\big|\cF_{u\smallertext{+}}\big] = \E^\P\big[\big(\zeta^{n}\big)^2 - \big(\widetilde\cY^{\P,n}_{u}\big)^2\big|\cF_{u\smallertext{+}}\big], \; \textnormal{$\P$--a.s.}, \; u \in [t,\infty],
	\end{align*}
	and (see \cite[Lemma C.5]{possamai2024reflections})
	\begin{align*}
		\E^\P\bigg[\int_{u\smallertext{-}}^{t_\smalltext{n} \land T} \d\langle\widetilde\eta^{\P,n}\rangle^{(\F_\smalltext{+},\P)}_r \bigg| \cF_{u\smallertext{-}}\bigg]
		= \E^\P\bigg[\bigg(\int_{u\smallertext{-}}^{t_\smalltext{n} \land T} \d\widetilde\eta^{\P,n}_r\bigg)^2 \bigg| \cF_{u\smallertext{-}}\bigg] 
		&= \big(\widetilde\cY^{\P,n}_{u\smallertext{-}}\big)^2 - 2 \widetilde\cY^{\P,n}_{u\smallertext{-}} \E^\P[\zeta^{n}|\cF_{u\smallertext{-}}] + \E^\P\big[\big(\zeta^{n}\big)^2\big|\cF_{u\smallertext{-}}\big] \\
		&= \E^\P\big[\big(\zeta^{n}\big)^2 - \big(\widetilde\cY^{\P,n}_{u\smallertext{-}}\big)^2\big|\cF_{u\smallertext{-}}\big], \; \text{$\P$--a.s.}, \; u \in (t,\infty].
	\end{align*}	
	Using $\cE(\hat\beta A)_r = \cE(\hat\beta A)_{t} + \int_t^r \cE(\hat\beta A)_{u-}\hat\beta \d A_u$, $r \in [t,\infty)$, the predictable projection (see \cite[Theorem VI.57]{dellacherie1982probabilities}), and Tonelli's theorem, we obtain
	\begin{align*}
		\E^\P\bigg[ \int_t^{t_n \land T} \cE(\hat\beta A)_r \d\langle\widetilde\eta^{\P,n}\rangle^{(\F_\smalltext{+},\P)}_r \bigg]
		&= \E^\P\big[\cE(\hat\beta A)_{t \land T}\langle\widetilde\eta^{\P,n}\rangle^{(\F_\smalltext{+},\P)}_{t_\smalltext{n} \land T} \big] + \hat\beta \E^\P\bigg[\int_t^{t_\smalltext{n} \land T} \int_t^r\cE(\hat\beta A)_{u\smallertext{-}}\d A_u \d\langle\widetilde\eta^{\P,n}\rangle^{(\F_\smalltext{+},\P)}_r \bigg] \nonumber\\
		&= \E^\P\big[\cE(\hat\beta A)_{t \land T}\langle\widetilde\eta^{\P,n}\rangle^{(\F_\smalltext{+},\P)}_{t_\smalltext{n} \land T}\big] + \hat\beta \E^\P\bigg[\int_{t}^{t_\smalltext{n}\land T} \cE(\hat\beta A)_{u\smallertext{-}} \int_{u\smallertext{-}}^{t_\smalltext{n} \land T} \d\langle\widetilde\eta^{\P,n}\rangle^{(\F_\smalltext{+},\P)}_r \d A_u \bigg] \nonumber\\
		&= \E^\P\big[\cE(\hat\beta A)_{t \land T}\langle\widetilde\eta^{\P,n}\rangle^{(\F_\smalltext{+},\P)}_{t_\smalltext{n} \land T} \big] \\
		&\quad+ \hat\beta \E^\P\bigg[\int_{t}^{t_\smalltext{n}\land T} \cE(\hat\beta A)_{u\smallertext{-}} \E^\P\bigg[\int_{u\smallertext{-}}^{t_\smalltext{n} \land T} \d\langle\widetilde\eta^{\P,n}\rangle^{(\F_\smalltext{+},\P)}_r \bigg| \cF_{u\smallertext{-}} \bigg] \d A_u \bigg] \nonumber\\
		&= \E^\P\bigg[\cE(\hat\beta A)_{t \land T} \big(\widetilde\cY^{\P,n}_{t \land T} - \zeta^{n}\big)^2 + \hat\beta \int_{t}^{t_\smalltext{n}\land T} \cE(\hat\beta A)_{u\smallertext{-}} \E^\P\big[\big(\zeta^{n}\big)^2 - \big(\widetilde\cY^{\P,n}_{u\smallertext{-}}\big)^2\big|\cF_{u\smallertext{-}}\big]\d A_u \bigg] \nonumber\\
		&= \E^\P\bigg[ \cE(\hat\beta A)_{t \land T} \big(\widetilde\cY^{\P,n}_{t \land T} - \zeta^{n}\big)^2 + \hat\beta \int_{t}^{t_\smalltext{n}\land T} \cE(\hat\beta A)_{u\smallertext{-}} \big(\big(\zeta^{n}\big)^2 - \big(\widetilde\cY^{\P,n}_{u\smallertext{-}}\big)^2\big)\d A_u \bigg].
	\end{align*}
	We rearrange the terms, use $\cE(\hat\beta A) = \cE(\hat\beta A)_{\smallertext{-}}(1+\hat\beta \Delta A) \leq \cE(\hat\beta A)_{\smallertext{-}}(1+\hat\beta\Phi)$, and find
	\begin{align}\label{eq::convergence_terminal_eta}
		\E^\P\bigg[ \int_t^{t_n \land T} \cE(\hat\beta A)_r \d\langle\widetilde\eta^{\P,n}\rangle^{(\F_\smalltext{+},\P)}_r \bigg] 
		+ & \frac{\hat\beta}{(1+\hat\beta \Phi)} \E^\P\bigg[\int_{t}^{t_\smalltext{n}\land T} \cE(\hat\beta A)_{u} \big(\widetilde\cY^{\P,n}_{u\smallertext{-}}\big)^2\d A_u \bigg] \nonumber\\
		& \leq \E^\P\bigg[ \int_t^{t_n \land T} \cE(\hat\beta A)_r \d\langle\widetilde\eta^{\P,n}\rangle^{(\F_\smalltext{+},\P)}_r 
		+ \hat\beta \int_{t}^{t_\smalltext{n}\land T} \cE(\hat\beta A)_{u\smallertext{-}} \big(\widetilde\cY^{\P,n}_{u\smallertext{-}}\big)^2\d A_u \bigg] \nonumber\\
		& = \E^\P\bigg[\cE(\hat\beta A)_{t \land T} \big(\widetilde\cY^{\P,n}_{t \land T} - \zeta^{n}\big)^2 + \hat\beta \int_{t}^{t_\smalltext{n}\land T} \cE(\hat\beta A)_{u\smallertext{-}} \big(\zeta^{n}\big)^2\d A_u \bigg] \nonumber\\
		&= \E^\P\bigg[\cE(\hat\beta A)_{t \land T} \big(\widetilde\cY^{\P,n}_{t \land T} - \zeta^{n}\big)^2 + \big(\zeta^{n}\big)^2 \big(\cE(\hat\beta A)_{t_\smalltext{n} \land T} - \cE(\hat\beta A)_{t \land T}\big)  \bigg] \nonumber\\
		&= \E^\P\Bigg[\cE(\hat\beta A)_{t \land T}(\widehat\cY^\smallertext{+}_{t \land T})^2\Bigg(\E^\P\bigg[\frac{\cE(\hat\beta A)^{1/2}_{t \land T}}{\cE(\hat\beta A)^{1/2}_{t_\smalltext{n} \land T}}\bigg|\cF_{t+}\bigg] - \frac{\cE(\hat\beta A)^{1/2}_{t \land T}}{\cE(\hat\beta A)^{1/2}_{t_\smalltext{n} \land T}}\Bigg)^2  \nonumber\\
		&\quad+ \cE(\hat\beta A)_{t \land T}(\widehat\cY^\smallertext{+}_{t \land T})^2 \bigg(1 - \frac{\cE(\hat\beta A)_{t \land T}}{\cE(\hat\beta A)_{t_\smalltext{n} \land T}}\bigg) \Bigg].
	\end{align}

	Similarly
	\begin{align}\label{eq::convergence_terminal_y}
		\E^\P\bigg[ \int_t^{t_n \land T} \cE(\hat\beta A)_r \big|\alpha_r\widetilde\cY^{\P,n}_r\big|^2 \d C_r \bigg]
		&\leq \E^\P\bigg[\int_{t}^{t_\smalltext{n} \land T} \cE(\hat\beta A)_r (\zeta^{n})^2 \d A_r \bigg] \nonumber\\
		&= \E^\P\bigg[(\zeta^{n})^2 \big(\cE(\hat\beta A)_{t_\smalltext{n} \land T} - \cE(\hat\beta A)_{t \land T}\big) \bigg] \nonumber\\
		&= \E^\P\bigg[ \cE(\hat\beta A)_{t \land T} (\widehat\cY^\smallertext{+}_{t \land T})^2 \bigg(1 - \frac{\cE(\hat\beta A)_{t \land T}}{\cE(\hat\beta A)_{t_\smalltext{n} \land T}}\bigg) \bigg],
	\end{align}
	and
	\begin{align}\label{eq::cond_equality_terminal}
			\E^\P\Big[\cE(\hat\beta A)_{t_\smalltext{n} \land T}  \big| \widehat\cY_{t_\smalltext{n} \land T} - \zeta^{n} \big|^2 \Big] 
			= \E^\P\bigg[\Big( \cE(\hat\beta A)^{1/2}_{t_\smalltext{n} \land T} \widehat\cY_{t_\smalltext{n} \land T} - \cE(\hat\beta A)_{t \land T}^{1/2}\widehat{\cY}^\smallertext{+}_{t\land T} \Big)^2 \bigg].
	\end{align}
	Now we substitute \eqref{eq::convergence_terminal_eta}--\eqref{eq::cond_equality_terminal} into \eqref{eq::convergence_terminal_time_t}, and end up with
	\begin{align}\label{eq::convergence_terminal_final}
		\lim_{n \rightarrow \infty}\E^\P\Big[ \cE(\hat\beta A)_{t \land T} \big|\cY^\P_t(t_n \land T,\widehat\cY_{t_\smalltext{n} \land T})  - \widetilde\cY^\P_t(t_n \land T,\zeta^{n})\big|^2 \Big] = 0.
	\end{align}
	Here, we use the integrability established at the beginning together with dominated convergence. This yields \eqref{eq::stability_terminal_condition} along a suitable subsequence.

	\medskip
	We now go back to proving the representation \eqref{eq::aggregation}. Let $\overline{\P} \in \fP_0(\cG_{t\smallertext{+}},\P)$, and let $(t_n)_{n \in \N}$ be a sequence of dyadic numbers converging to $t$ from above. We write $\cY^{\bar{\P}}$ for $\cY^{\bar{\P}}(T,\xi)$. Since
	\begin{equation*}
		\E^{\bar{\P}}\big[ \cY^{\bar{\P}}_{t_\smalltext{n}} \big|\cF_{t_\smalltext{n}}\big] \leq \widehat\cY_{t_\smalltext{n}}, \; \text{$\overline{\P}$--a.s.}, \; n \in \N,
	\end{equation*}
	and
	\begin{equation*}
		\E^{\bar{\P}}\big[|\E^{\bar{\P}}[\cY^{\bar{\P}}_{t_\smalltext{n}}|\cF_{t_\smalltext{n}}] -\cY^{\bar{\P}}_{t}| \big] \leq  \E^{\bar{\P}}\big[|\cY^{\bar{\P}}_{t_\smalltext{n}} - \cY^{\bar{\P}}_{t}| \big] \xrightarrow{n\to\infty} 0,
	\end{equation*}
	we obtain (up to choosing a subsequence of $(t_n)_{n \in \N}$ if necessary) that
	\begin{equation*}
		\cY^{\bar{\P}}_{t} \leq \widehat\cY^\smallertext{+}_{t}, \; \text{${\overline{\P}}$--a.s.}
	\end{equation*}
	Since both sides are $\cG_{t\smallertext{+}}$--measurable, the inequality also holds $\P$--a.s., and since $\bar{\P}$ was arbitrary, we obtain
	\begin{equation*}
		\underset{\P^\smalltext{\prime} \in \fP_\smalltext{0}(\cG_{\smalltext{t}\tinytext{+}},\P)}{{\esssup}^\P} \cY^{\P^\smalltext{\prime}}_t \leq \widehat\cY^\smallertext{+}_t, \; \textnormal{$\P$--a.s.}
	\end{equation*}

	We turn to the reverse inequality. 
	Recall from \eqref{eq::dynamic_programming_principle2} that
	\begin{equation*}
		\widehat\cY_{t_\smalltext{n}} = \underset{\bar{\P} \in \fP_\smalltext{0}(\cF_{t_\tinytext{n}},\P)}{{\esssup}^\P} \E^{\bar{\P}}\big[\cY^{\overline{\P}}_{t_\smalltext{n}}\big|\cF_{t_\smalltext{n}}\big], \; \textnormal{$\P$--a.s.}, \; n \in \N.
	\end{equation*}
	Since $\cY^\P_{t_\smalltext{n}} = \cY^\P_{t_\smalltext{n}\land T}$ is $\cF_T$-measurable, which can be deduced from \cite[Theorem~IV.56]{dellacherie1978probabilities},
	we obtain with \cite[Remark (c), p. 119]{dellacherie1978probabilities} that
	\begin{equation}\label{eq::conditiong_on_F_T}
		\E^{\P}\big[\cY^{\P}_{t_\smalltext{n}}\big|\cF_{t_\smalltext{n}}\big] = \E^{\P}\Big[\E^\P\big[\cY^{\P}_{t_\smalltext{n}}\big|\cF_{T}\big]\Big|\cF_{t_\smalltext{n}}\Big] = \E^{\P}\big[\cY^{\P}_{t_\smalltext{n}}\big|\cF_{t_\smalltext{n} \land T}\big] = \E^{\P}\big[\cY^{\P}_{t_\smalltext{n} \land T}\big|\cF_{t_\smalltext{n} \land T}\big], \; \text{$\P$--a.s.},
	\end{equation}
	and, together with the fact that $\widehat\cY^\smallertext{+} = \widehat\cY^\smallertext{+}_{\cdot \land T}$, then also
	\begin{align*}
		\widehat\cY_{t_\smalltext{n} \land T} 
		= \widehat\cY_{t_\smalltext{n}} 
		= \underset{\bar{\P} \in \fP_\smalltext{0}(\cF_{t_\tinytext{n}},\P)}{{\esssup}^\P} \E^{\bar{\P}}\big[\cY^{\bar{\P}}_{t_\smalltext{n}}\big|\cF_{t_\smalltext{n}}\big]
		= \underset{\bar{\P} \in \fP_\smalltext{0}(\cF_{t_\tinytext{n}},\P)}{{\esssup}^\P} \E^{\bar{\P}}\big[\cY^{\bar{\P}}_{t_\smalltext{n} \land T}\big|\cF_{t_\smalltext{n} \land T}\big], \; \textnormal{$\P$--a.s.}
	\end{align*}
	
	\medskip
	Suppose, for the moment, that the collection
	\begin{equation}\label{eq::upward_directed}
		\Big\{\E^{\bar{\P}}\big[\cY^{\bar{\P}}_{t_\smalltext{n}}\big|\cF_{t_\smalltext{n}}\big] : \overline{\P} \in \fP_0(\cF_{t_\smalltext{n}},\P) \Big\},
	\end{equation}
	is $\P$--upward directed for each $n \in \N$. We can then choose sequences $(\P^m_n)_{m \in \N} \subseteq \fP_0(\cF_{t_\smalltext{n}},\P)$ (see \cite[Proposition VI-1-1, page 121]{neveu1975discrete}) such that 
	\begin{equation*}
		\E^\P\big[\cY^\P_{t_\smalltext{n}}\big|\cF_{t_\smalltext{n}}\big] \leq \E^{\P^\smalltext{m}_\smalltext{n}}\big[\cY^{\P^\smalltext{m}_\smalltext{n}}_{t_\smalltext{n}}\big|\cF_{t_\smalltext{n}}\big] \leq \E^{\P^{\smalltext{m}\smalltext{+}\smalltext{1}}_\smalltext{n}}\big[\cY^{\P^{\smalltext{m}\smalltext{+}\smalltext{1}}_\smalltext{n}}_{t_\smalltext{n}}\big|\cF_{t_\smalltext{n}}\big] \xrightarrow{m\rightarrow\infty}\widehat\cY_{t_\smalltext{n}}, \; \text{$\P$--a.s.}, \; n \in \N
	\end{equation*}
	Since
	\begin{equation*}
		\E^{\P}\big[\cY^{\P}_{t_\smalltext{n}}\big|\cF_{t_\smalltext{n}}\big] \leq \E^{\P^\smalltext{m}_\smalltext{n}}\big[\cY^{\P^\smalltext{m}_\smalltext{n}}_{t_\smalltext{n}}\big|\cF_{t_\smalltext{n}}\big] = \E^{\P^\smalltext{m}_\smalltext{n}}\big[\cY^{\P^\smalltext{m}_\smalltext{n}}_{t_\smalltext{n} \land T}\big|\cF_{t_\smalltext{n} \land T}\big] \leq \widehat\cY_{t_\smalltext{n}}, \; \text{$\P$--a.s.},
	\end{equation*}
	it follows from the integrability established at the beginning of this proof, by dominated convergence, that
	\begin{equation}\label{eq::m_n_for_stability}
		\lim_{m \rightarrow \infty}\E^\P\Big[\cE(\hat\beta A)_{t_\smalltext{n}\land T}\big|\E^{\P^m_n}\big[\cY^{\P^m_n}_{t_n \land T}\big|\cF_{t_n \land T}\big] - \widehat\cY_{t_n \land T}\big|^2\Big] = 0.
	\end{equation}
	Then, since $\P^m_n = \P$ on $\cF_{t_\smalltext{n}}$, and thus also $\cG_{t\smallertext{+}}$, we find by choosing a suitable subsequence of $(t_n)_{n \in \N}$ and then of each family $(\P^m_n)_{m \in \N}$ if necessary, that
	\begin{align}\label{eq::lim_upward_directed}
		\widehat\cY^\smallertext{+}_{t} = \widehat\cY^\smallertext{+}_{t \land T} = \lim_{n \rightarrow\infty} \cY^\P_{t \land T}(t_n \land T,\widehat\cY_{t_\smalltext{n} \land T}) 
		&= \lim_{n \rightarrow\infty} \cY^\P_{t \land T}\Big(t_n \land T,\lim_{m \rightarrow \infty}\E^{\P^\smalltext{m}_\smalltext{n}}\big[\cY^{\P^\smalltext{m}_\smalltext{n}}_{t_\smalltext{n} \land T}\big|\cF_{t_\smalltext{n} \land T}\big]\Big) \nonumber\\
		&= \lim_{n \rightarrow\infty} \lim_{m \rightarrow \infty} \cY^\P_{t \land T}\Big(t_n \land T,\E^{\P^\smalltext{m}_\smalltext{n}}\big[\cY^{\P^\smalltext{m}_\smalltext{n}}_{t_\smalltext{n} \land T}\big|\cF_{t_\smalltext{n} \land T}\big]\Big) \nonumber\\
		&= \lim_{n \rightarrow\infty} \lim_{m \rightarrow \infty} \cY^{\P^\smalltext{m}_\smalltext{n}}_{t \land T}\Big(t_n \land T,\E^{\P^\smalltext{m}_\smalltext{n}}\big[\cY^{\P^\smalltext{m}_\smalltext{n}}_{t_\smalltext{n} \land T}\big|\cF_{t_\smalltext{n} \land T}\big]\Big) \nonumber\\
		&= \lim_{n \rightarrow\infty} \lim_{m \rightarrow \infty} \cY^{\P^\smalltext{m}_\smalltext{n}}_{t \land T}\big(t_n \land T,\cY^{\P^\smalltext{m}_\smalltext{n}}_{t_\smalltext{n} \land T}\big) \nonumber\\
		&= \lim_{n \rightarrow\infty} \lim_{m \rightarrow \infty} \cY^{\P^\smalltext{m}_\smalltext{n}}_t 
		\leq \underset{\bar\P \in \fP_\smalltext{0}(\cG_{\smalltext{t}\tinytext{+}},\P)}{{\esssup}^\P} \cY^{\bar\P}_t, \; \text{$\P$--a.s.}
	\end{align}
	Here, the first equality follows from \eqref{eq::stability_terminal_condition}, the third equality follows from the stability result of BSDEs in \Cref{cor::stability} together with \eqref{eq::m_n_for_stability}, the fourth equality follows from the fact that $\P^m_n = \P$ on $\cF_{t_\smalltext{n}}$, and the fifth and sixth equalities follow from \Cref{lem::solv_bsde_cond}.

	\medskip
	It remains to show that the family \eqref{eq::upward_directed} is $\P$--upward directed. Let $(\P_1,\P_2) \in (\fP_0(\cF_{t_\smalltext{n}},\P))^2$, and define
	\begin{equation*}
		\Q(\omega;A) \coloneqq\P^{t_\smalltext{n},\omega}_1[A]\1_{B}(\omega) + \P^{t_\smalltext{n},\omega}_2[A]\1_{B^\smalltext{c}}(\omega), \; \omega \in \Omega,
	\end{equation*}
	where
	\begin{equation*}
		B \coloneqq \big\{\E^{\P_\smalltext{1}}[\cY^{\P_\smalltext{1}}_{t_\smalltext{n}}|\cF_{t_\smalltext{n}}] > \E^{\P_\smalltext{2}}[\cY^{\P_\smalltext{2}}_{t_\smalltext{n}}|\cF_{t_\smalltext{n}}]\big\} \in \cF_{t_\smalltext{n}}.
	\end{equation*}
	Then $\Q(\omega;\d\omega^\prime)$ is a kernel on $(\Omega,\cF)$ given $(\Omega,\cF_{t_\smalltext{n}})$ satisfying $\Q(\omega;\d\omega^\prime) \in \fP(t_n,\omega)$ for $\P$--a.e. $\omega \in \Omega$ by \Cref{ass::probabilities2}.$(ii)$.
	The probability measure
	\begin{align*}
		\overline{\P}[A] \coloneqq & \iint \big(\1_A\big)^{t_\smalltext{n},\omega}(\omega^\prime)\Q(\omega;\d\omega^\prime)\P(\d\omega)
				= \E^\P[\P_1[A|\cF_{t_\smalltext{n}}]\1_B + \P_2[A|\cF_{t_\smalltext{n}}]\1_{B^\smalltext{c}}] = \P_1[A \cap B] + \P_2[A \cap B^c], \; A \in \cF,
	\end{align*}
	is then an element of $\fP_0$ by \Cref{ass::probabilities2}.$(iii)$. Since $\overline{\P}$ agrees with $\P$ on $\cF_{t_\smalltext{n}}$ and $\bar{\P}^{t_\smalltext{n},\omega}(\d\omega^\prime) = \Q(\omega,\d\omega^\prime)$ for $\P$--a.e. $\omega \in \Omega$, we obtain
	\begin{align*}
		\E^{\bar{\P}}\big[\cY^{\bar{\P}}_{t_n}\big|\cF_{t_\smalltext{n}}\big](\omega)
		&= \E^{\Q(\omega)}[\cY^{t_n,\omega,\Q(\omega)}_{0}((T-t_n\land T)^{t_\smalltext{n},\omega},\xi^{t_\smalltext{n},\omega})] \\
		&= \E^{\Q(\omega)}[\cY^{t_n,\omega,\Q(\omega)}_{0}((T-t_n\land T)^{t_\smalltext{n},\omega},\xi^{t_\smalltext{n},\omega})]\1_B(\omega) + \E^{\Q(\omega)}[\cY^{t_n,\omega,\Q(\omega)}_{0}((T-t_n\land T)^{t_\smalltext{n},\omega},\xi^{t_\smalltext{n},\omega})]\1_{B^\smalltext{c}}(\omega) \\
		&= \E^{\P^{{\smalltext{t}_\tinytext{n}}\smalltext{,}\smalltext{\omega}}_\smalltext{1}}[\cY^{t_n,\omega,\P^{{\smalltext{t}_\tinytext{n}}\smalltext{,}\smalltext{\omega}}_\smalltext{1}}_{0}((T-t_n\land T)^{t_\smalltext{n},\omega},\xi^{t_\smalltext{n},\omega})]\1_B(\omega) + \E^{\P^{{\smalltext{t}_\tinytext{n}}\smalltext{,}\smalltext{\omega}}_\smalltext{2}}[\cY^{t_n,\omega,\P^{{\smalltext{t}_\tinytext{n}}\smalltext{,}\smalltext{\omega}}_\smalltext{2}}_{0}((T-t_n\land T)^{t_\smalltext{n},\omega},\xi^{t_\smalltext{n},\omega})]\1_{B^\smalltext{c}}(\omega) \\
		&= \E^{\P_\smalltext{1}}[\cY^{\P_\smalltext{1}}_{t_\smalltext{n}}|\cF_{t_\smalltext{n}}](\omega)\1_B(\omega) + \E^{\P_\smalltext{2}}[\cY^{\P_\smalltext{2}}_{t_\smalltext{n}}|\cF_{t_\smalltext{n}}](\omega)\1_{B^\smalltext{c}}(\omega) \\
		&\geq \max\big\{\E^{\P_\smalltext{1}}[\cY^{\P_\smalltext{1}}_{t_\smalltext{n}}|\cF_{t_\smalltext{n}}](\omega),\E^{\P_\smalltext{2}}[\cY^{\P_\smalltext{2}}_{t_\smalltext{n}}|\cF_{t_\smalltext{n}}](\omega)\big\}, \; \text{$\P$--a.e. $\omega \in \Omega$.}
	\end{align*}
	Here, we used \Cref{lem::conditioning_bsde2} in the first and fourth equalities.
	Since $\E^{\bar{\P}}\big[\cY^{\bar{\P}}_{t_\smalltext{n}}\big|\cF_{t_\smalltext{n}}\big]$ is an element of the family \eqref{eq::upward_directed}, this completes the proof.
\end{proof}

	We turn to the proof of the last assertion.

	\begin{proof}[Proof of \Cref{thm::down-crossing}.$(iii)$]
		For simplicity, we write $\widehat{\cY}$ and $\widehat{\cY}^\smallertext{+}$ instead of $\widehat{\cY}(T,\xi)$ and $\widehat{\cY}^\smallertext{+}(T,\xi)$, respectively. We fix $ \P \in \fP_0 $ and first prove the result for deterministic times. By \Cref{lem::conditioning_bsde2} and \Cref{lem::stopping_value_function}, we obtain
		\begin{equation*}
			\E^\P\big[ \cY^\P_s(t \land T,\widehat\cY_{t \land T}) \big|\cF_s\big](\bar{\omega}) 
			= \E^{\P^{\smalltext{s}\smalltext{,}\smalltext{\bar{\omega}}}} [\cY^{s,\bar{\omega},\P^{\smalltext{s}\smalltext{,}\smalltext{\bar{\omega}}}}_0((t \land T - s \land T)^{s,\bar{\omega}},\widehat\cY^{s,\bar{\omega}}_{t \land T^{\smalltext{s}\smalltext{,}\smalltext{\bar{\omega}}}})] \leq \widehat\cY_{s}(\bar{\omega}), \; \text{$\P$--a.e. $\bar{\omega} \in \Omega$}, \; 0 \leq s \leq t < \infty,
		\end{equation*}
		since $\P^{s,\bar{\omega}} \in \fP(s,\bar{\omega})$ for $\P$--a.e. $\bar{\omega} \in \Omega$ by \Cref{ass::probabilities2}.$(ii)$.
		Now let us fix $0 \leq s < t < \infty$, and let $(s_m)_{m \in \N}$ and $(t_n)_{n \in \N}$ be sequences of dyadic numbers converging to $s$ and $t$ from above, respectively, with $s_m < t$ for all $m$. Since $\cY^\P(t_n \land T,\widehat\cY_{t_\smalltext{n} \land T}) \in \cS^2_{t_\smalltext{n} \land T,\hat\beta}(\F_\smallertext{+},\P)$ by construction, we find by dominated convergence that
		\begin{equation*}
			\E^\P\big[|\E^\P[\cY^\P_{s_\smalltext{m}}(t_n \land T,\widehat\cY_{t_\smalltext{n} \land T})|\cF_{s_\smalltext{m}}] -\cY^\P_{s}(t_n \land T,\widehat\cY_{t_\smalltext{n} \land T})|^2 \big] \leq  \E^\P\big[|\cY^\P_{s_\smalltext{m}}(t_n \land T,\widehat\cY_{t_\smalltext{n} \land T}) - \cY^\P_{s}(t_n \land T,\widehat\cY_{t_\smalltext{n} \land T})|^2 \big] \xrightarrow{m\rightarrow\infty} 0.
		\end{equation*}
		By considering a $\P$--a.s. convergent subsequence if necessary, we find, together with $\E^\P\big[ \cY^\P_{s_\smalltext{m}}(t_n \land T,\widehat\cY_{t_\smalltext{n} \land T}) \big|\cF_{s_\smalltext{m}}\big] \leq \widehat\cY_{s_\smalltext{m}}$, $\P$--a.s., for all $(m, n) \in \N^2$, that
		\begin{equation*}
			\cY^\P_{s}(t_n \land T,\widehat\cY_{t_\smalltext{n} \land T}) \leq \widehat\cY^\smallertext{+}_{s}, \; \text{$n \in \N$}, \; \text{$\P$--a.s.}
		\end{equation*}
		By \eqref{eq::stability_terminal_condition}, and after passing to a subsequence which we still denote by $(t_n)_{n \in \N}$, it follows that
		\begin{align*}
			\cY^\P_s(t \land T,\widehat\cY^\smallertext{+}_{t \land T}) = \lim_{n \rightarrow \infty}\cY^\P_s(t_n \land T,\widehat\cY_{t_\smalltext{n} \land T}) \leq \widehat\cY^\smallertext{+}_s, \; \text{$\P$--a.s.}
		\end{align*}
		The inequality above holds immediately for $t = \infty$ by assertion $(ii)$ since $\widehat{\cY}^\smallertext{+}_T = \xi$, and it follows automatically for $s = t$ as well since $\widehat{\cY}^\smallertext{+}_{t \land T} = \widehat{\cY}^\smallertext{+}_t$. Thus, we have established that
		\begin{align*}
			\cY^\P_{s \land t \land T}(t \land T,\widehat\cY^\smallertext{+}_{t \land T}) = \cY^\P_s(t \land T,\widehat\cY^\smallertext{+}_{t \land T}) \leq \widehat\cY^\smallertext{+}_s = \widehat\cY^\smallertext{+}_{s \land T}, \; \text{$\P$--a.s.}, \; 0 \leq s \leq t \leq \infty.
		\end{align*}

		We now show that the above inequality remains valid when $t$ is replaced by an $\F_\smallertext{+}$--stopping time $\tau$ and $s$ is replaced by $\tau \land s$. We proceed similarly to the proof of \cite[Lemma 2.1]{chen2001continuous}. We begin by fixing $s \in [0,\infty]$ and initially assume that $\tau$ takes at most finitely many values $0 = t_0 < t_1 < \cdots < t_{n-1} < t_n \leq \infty$. Suppose $t_{n-1} \leq s$. In this case, $\tau = \tau\1_{\{\tau\leq t_{\smalltext{n}\smalltext{-}\smalltext{1}}\}} + t_n\1_{\{\tau = t_\smalltext{n}\}}$, where both $\1_{\{\tau\leq t_{\smalltext{n}\smalltext{-}\smalltext{1}}\}}$ and $\1_{\{\tau = t_\smalltext{n}\}} = \1_{\Omega\setminus\{\tau \leq t_{\smalltext{n}\smalltext{-}\smalltext{1}}\}}$ are $\cF_{s\smallertext{+}}$-measurable. 
		It follows from \Cref{prop::stability} that
		\begin{equation}\label{eq::locality_bsde_tau_tn}
			\cY^\P_s(\tau\land T, \widehat{\cY}^\smallertext{+}_{\tau\land T}) = \cY^\P_s(t_n\land T, \widehat{\cY}^\smallertext{+}_{t_\smalltext{n}\land T}), \; \textnormal{$\P$--a.s. on $\{\tau = t_n\}$}.
		\end{equation}
		Indeed, the flow property of solutions to BSDEs yields, for $T^\prime \coloneqq \tau \land t_n \land T$ satisfying $s\land T^\prime \leq T^\prime \leq (\tau \land T) \land (t_n \land T)$,
		\begin{gather*}
			\cY^\P_{s\land \tau\land t_n \land T}(\tau\land T, \widehat{\cY}^\smallertext{+}_{\tau\land T}) 
			= \cY^\P_{s\land T^\smalltext{\prime}}(T^\prime, \cY^\P_{T^\prime}(\tau\land T,\widehat{\cY}^\smallertext{+}_{\tau\land T})), \\	
			\cY^\P_{s\land \tau\land t_n \land T}(t_n\land T, \widehat{\cY}^\smallertext{+}_{t_n\land T}) 
			= \cY^\P_{s\land T^\smalltext{\prime}}(T^\prime, \cY^\P_{T^\prime}(t_n\land T,\widehat{\cY}^\smallertext{+}_{t_n\land T})), \; \textnormal{$\P$--a.s.}
		\end{gather*}
		By \Cref{prop::stability}, there exists $\mathfrak{C} \in (0,\infty)$ such that
		\begin{align*}
			\cE(\hat{\beta}A)_{s\land T^\smalltext{\prime}} & \big|\cY^\P_{s\land T^\smalltext{\prime}}(T^\prime, \cY^\P_{T^\prime}(\tau\land T,\widehat{\cY}^\smallertext{+}_{\tau\land T})) - \cY^\P_{s\land T^\smalltext{\prime}}(T^\prime, \cY^\P_{T^\prime}(t_n\land T,\widehat{\cY}^\smallertext{+}_{t_n\land T}))\big|^2 \\
			& \leq \mathfrak{C}\E^\P\bigg[\cE(\hat{\beta}A)_{T^\smalltext{\prime}} \big| \cY^\P_{T^\prime}(\tau\land T,\widehat{\cY}^\smallertext{+}_{\tau\land T})- \cY^\P_{T^\prime}(t_n\land T,\widehat{\cY}^\smallertext{+}_{t_n\land T})\big|^2 \bigg|\cF_{s\smallertext{+}}\bigg], \; \textnormal{$\P$--a.s.},
		\end{align*}
		and multiplying both sides by $\1_{\{\tau = t_n\}}$, which is in $\cF_{s\smallertext{+}}$, yields \eqref{eq::locality_bsde_tau_tn}. We then obtain
		\begin{align}
			\cY^\P_{s \land \tau \land T}\big(\tau \land T,\widehat\cY^\smallertext{+}_{\tau \land T}\big) = \cY^\P_{s}\big(\tau \land T,\widehat\cY^\smallertext{+}_{\tau \land T}\big) 
			&= \cY^\P_{s}\big(\tau \land T,\widehat\cY^\smallertext{+}_{\tau \land T}\big)\1_{\{\tau\leq t_{\smalltext{n}\smalltext{-}\smalltext{1}}\}} + \cY^\P_{s}\big(\tau \land T,\widehat\cY^\smallertext{+}_{\tau \land T}\big)\1_{\{\tau = t_{\smalltext{n}}\}} \nonumber\\
			&= \cY^\P_{s}\big(\tau \land T,\widehat\cY^\smallertext{+}_{\tau \land T}\big)\1_{\{\tau\leq t_{\smalltext{n}\smalltext{-}\smalltext{1}}\}} + \cY^\P_{s}\big(t_n \land T,\widehat\cY^\smallertext{+}_{t_\smalltext{n} \land T}\big)\1_{\{\tau = t_{\smalltext{n}}\}} \nonumber\\
			&= \widehat\cY^\smallertext{+}_{\tau \land T}\1_{\{\tau\leq t_{\smalltext{n}\smalltext{-}\smalltext{1}}\}} + \cY^\P_{s}\big(t_n \land T,\widehat\cY^\smallertext{+}_{t_\smalltext{n} \land T}\big)\1_{\{\tau = t_\smalltext{n}\}}, \nonumber\\
			&\leq \widehat\cY^\smallertext{+}_{\tau \land T}\1_{\{\tau\leq t_{\smalltext{n}\smalltext{-}\smalltext{1}}\}} + \widehat\cY^\smallertext{+}_{s \land t_\smalltext{n} \land T}\1_{\{\tau = t_\smalltext{n}\}} = \widehat\cY^\smallertext{+}_{s \land \tau \land T}, \; \text{$\P$--a.s.}
		\end{align}
		
		Suppose now that $t_{k-1}\leq s <t_{k}$ for some $k \in \{1,\ldots,n-1\}$, and note that each $t_j \land \tau$ for $j\in\{0,\dots,n\}$, is an $\F_\smallertext{+}$--stopping time with values in $\{t_0,t_1,\ldots, t_j\}$. We repeatedly use the comparison principle and time-consistency of the BSDEs (see \Cref{lem::solv_bsde_cond}) together with the above arguments and find
		\begin{align*}
			\cY^\P_s\big(\tau \land T,\widehat\cY^\smallertext{+}_{\tau \land T}\big)
			= \cY^\P_{s \land \tau \land T}\big(\tau \land T,\widehat\cY^\smallertext{+}_{\tau \land T}\big) 
			&= \cY^\P_{s \land \tau \land T}\big(t_{n-1} \land \tau\land T,\cY^\P_{t_{\smalltext{n}\smalltext{-}\smalltext{1}} \land \tau\land T}(\tau \land T,\widehat\cY^\smallertext{+}_{\tau \land T})\big) \\
			&\leq \cY^\P_{s \land \tau \land T}\big(t_{n-1} \land \tau \land T,\widehat\cY^+_{t_{n-1} \land \tau \land T}\big) \\
			&= \cY^\P_{s \land \tau \land T}\big(t_{n-2} \land \tau \land T,\cY^\P_{t_{n-2} \land \tau \land T}(t_{n-1} \land \tau \land T,\widehat\cY^\smallertext{+}_{t_{\smalltext{n}\smalltext{-}\smalltext{1}} \land \tau\land T})\big) \\
			&\leq \cY^\P_{s \land \tau \land T}\big(t_{n-2} \land \tau \land T,\widehat\cY^{\smallertext{+}}_{t_{\smalltext{n}\smalltext{-}\smalltext{2}} \land \tau \land T}\big) \\
			& \leq \cdots \\
			&\leq \cY^\P_{s \land \tau \land T}\big(t_{k} \land \tau \land T,\widehat\cY^\smallertext{+}_{t_{\smalltext{k}} \land \tau \land T}\big) \leq \widehat\cY^\smallertext{+}_{s \land \tau \land T}, \; \text{$\P$--a.s.}
		\end{align*}
		We therefore obtain
		\begin{equation*}
			\cY^\P_s\big(\tau \land T,\widehat\cY^\smallertext{+}_{\tau \land T}\big) 
			= \cY^\P_{s \land \tau \land T}\big(\tau \land T,\widehat\cY^\smallertext{+}_{\tau \land T}\big) 
			\leq \widehat\cY^\smallertext{+}_{s \land \tau \land T} = \widehat\cY^\smallertext{+}_{s \land \tau}, \; s \in [0,\infty], \; \text{$\P$--a.s.}
		\end{equation*}
		Now, suppose that $\tau$ is a general $\F_\smallertext{+}$--stopping time. \textcolor{black}{Let $\D^n_\smallertext{+} \coloneqq \{k2^{-n} : k \in\{0,1,\ldots,2^{2n}\}\}$},	 and define
		\begin{equation*}
			\tau^n \coloneqq \inf\big\{t \in \D^n_\smallertext{+}: t > \tau + 1/2^n\big\} = \sum_{k = 1}^{2^{2n}} k 2^{-n} \1_{\{(k-1)2^{-n} \leq \tau + 1/2^n < k2^{-n}\}} + \infty \1_{\{\tau + 1/2^n \geq 2^n\}}.
		\end{equation*} 
		It follows that $\tau^n$ is an $\F$--predictable stopping time (see \cite[Theorem IV.57.(a), Theorem IV.71.(a), and Comment IV.72]{dellacherie1978probabilities}) that converges decreasingly to $\tau$. Furthermore, each $\tau^n$ takes at most finitely many values in $\D^n_\smallertext{+} \cup \{\infty\}$. From the preceding considerations, we deduce that
		\begin{equation*}
			\cY^\P_{s}\big(\tau^n \land T,\widehat\cY^\smallertext{+}_{\tau^\smalltext{n} \land T}\big) = \cY^\P_{s \land \tau^\smalltext{n} \land T}\big(\tau^n \land T,\widehat\cY^\smallertext{+}_{\tau^\smalltext{n} \land T}\big) \leq \widehat\cY^\smallertext{+}_{s \land \tau^\smalltext{n} \land T} = \widehat\cY^\smallertext{+}_{s \land \tau^\smalltext{n}}, \; s \in [0,\infty], \; n \in \N, \; \text{$\P$--a.s.}
		\end{equation*}
		Next, let us fix $s \in [0,\infty)$. We express
		\begin{align}\label{eq::convergence_tau_n}
			&\cY^\P_{s \land \tau}\big(\tau^n \land T,\widehat\cY^\smallertext{+}_{\tau^\smalltext{n} \land T}\big) - \cY^\P_{s \land \tau}\big(\tau \land T,\widehat\cY^\smallertext{+}_{\tau \land T}\big) \nonumber\\
			&= \cY^\P_{s \land \tau}\big(\tau^n \land T,\widehat\cY^\smallertext{+}_{\tau^\smalltext{n} \land T}\big) - \widetilde\cY^\P_{s \land \tau}\big(\tau^n \land T,\zeta^n\big)  + \widetilde\cY^{\P}_{s \land \tau}\big(\tau \land T,\widetilde\cY^\P_{\tau \land T}(\tau^n \land T,\zeta^n)\big) - \cY^\P_{s \land \tau}\big(\tau \land T,\widehat\cY^\smallertext{+}_{\tau \land T}\big),
		\end{align}
		where
		\[
			\zeta^n \coloneqq \widehat\cY^\smallertext{+}_{\tau \land T}\frac{\cE(\hat\beta A)^{1/2}_{\tau \land T}}{\cE(\hat\beta A)^{1/2}_{\tau^\smalltext{n} \land T}} \1_{\{\cE(\hat{\beta}A)_{\smalltext{\tau}^\tinytext{n}\smalltext{\land}\smalltext{T}} < \infty\}}
		\]
		and $\widetilde{\cY}^\P(\cdot,\cdot)$ now denotes the solution of the BSDE whose generator is $f^\P \1_{\llparenthesis 0, \tau \rrbracket}$. By arguing as in the proof of \Cref{thm::down-crossing}.$(ii)$, more specifically proceeding analogously to the arguments following \eqref{eq::convergence_t_n}, we deduce that both differences on the right-hand side of \eqref{eq::convergence_tau_n} converge $\P$--a.s. to zero along a subsequence $(\tau^{n_\smalltext{k}})_{k \in \N}$ of $(\tau^n)_{n \in \N}$. 
		This yields
		\begin{equation*}
			\cY^\P_{s}\big(\tau \land T,\widehat\cY^\smallertext{+}_{\tau \land T}\big) 
			= \cY^\P_{s \land \tau}\big(\tau \land T,\widehat\cY^\smallertext{+}_{\tau \land T}\big)  
			= \lim_{k \rightarrow \infty}\cY^\P_{s \land \tau}\big(\tau^{n_\smalltext{k}} \land T,\widehat\cY^\smallertext{+}_{\tau^{\smalltext{n}_\tinytext{k}} \land T}\big)
			\leq \lim_{k \rightarrow \infty} \widehat\cY^\smallertext{+}_{s \land \tau \land \tau^{\smalltext{n}_\tinytext{k}}} = \widehat\cY^\smallertext{+}_{s\land \tau}, \; \text{$\P$--a.s.}
		\end{equation*}
		As the above equality holds trivially for $s = \infty$, the right-continuity of $\cY^\P(\tau \land T,\widehat{\cY}^\smallertext{+}_{\tau \land T})$ and $\widehat{\cY}^\smallertext{+}$ yield \eqref{eq::nonlinear_supermartingale_property} for arbitrary $\F_\smallertext{+}$--stopping times $\sigma$ and $\tau$. Since $\G_\smallertext{+}\subseteq\F^\P_\smallertext{+}$, any $\G_\smallertext{+}$--stopping times $\sigma^\prime$ and $\tau^\prime$ are $\P$--a.s. equal to  $\F_\smallertext{+}$--stopping times $\sigma$ and $\tau$, respectively; see \cite[Theorem IV.59]{dellacherie1978probabilities}. Hence, the stated result follows, which concludes the proof.
	\end{proof}

\begin{proof}[Proof of \Cref{cor::optimisation}]
		We begin with \eqref{eq::relation_to_optimisation} and, for simplicity, omit $(T,\xi)$ from the notation. For fixed $\P \in \fP_0$ and $t = 0$, we denote by $(\cY^{\P^\smalltext{m}_\smalltext{n}}_0)_{(m, n) \in \N^2}$ the family used in \eqref{eq::lim_upward_directed}, which is bounded by a $\P$-integrable random variable; this follows from \eqref{eq::implies_uniform_integrability} together with \Cref{prop::stability}. We then apply Fatou's lemma twice together with \eqref{eq::lim_upward_directed} and find 
		\begin{align*}
		\E^\P\big[\widehat{\cY}^\smallertext{+}_0\big]
		= \E^\P\bigg[\lim_{n\rightarrow\infty} \lim_{m \rightarrow \infty}\cY^{\P^\smalltext{m}_\smalltext{n}}_0\bigg]
		&\leq \liminf_{n\rightarrow\infty} \E^\P\bigg[\lim_{m \rightarrow \infty}\cY^{\P^\smalltext{m}_\smalltext{n}}_0\bigg]\\
		&\leq \liminf_{n\rightarrow\infty}\liminf_{m \rightarrow \infty} \E^\P\Big[\cY^{\P^\smalltext{m}_\smalltext{n}}_0\Big]
		= \liminf_{n\rightarrow\infty}\liminf_{m \rightarrow \infty} \E^{\P^\smalltext{m}_\smalltext{n}}\Big[\cY^{\P^\smalltext{m}_\smalltext{n}}_0\Big]
		\leq \sup_{\P\in\fP_0}\E^\P\big[\cY^\P_0\big] = \widehat{\cY}_0.
		\end{align*}
		It remains to take the supremum over $\fP_0$ on the left-hand side. The converse inequality follows immediately from the representation \eqref{eq::aggregation}, since for any $\P \in \fP_0$, we have $\cY^\P_0 \leq \widehat{\cY}^\smallertext{+}_0$, $\P$--a.s., which then yields
		\[
		\widehat{\cY}_0 = \sup_{\P\in\fP_0} \E^\P\big[ \cY^\P_0 \big] \leq \sup_{\P\in\fP_0} \E^\P\big[ \widehat{\cY}^\smallertext{+}_0 \big].
		\]
	This yields \eqref{eq::relation_to_optimisation}.
		
	\medskip
	Suppose now that $\P^\ast \in \fP_0$ satisfies $\widehat{\cY}_0 = \E^{\P^\smalltext{\ast}}\big[\cY^{\P^\smalltext{\ast}}_0\big]$. Since $\cY^{\P^\smalltext{\ast}}_0 \leq \widehat{\cY}^\smallertext{+}_0$, $\P^\ast$--a.s., we have
	\[
		\widehat{\cY}_0 = \E^{\P^\smalltext{\ast}}\big[\cY^{\P^\smalltext{\ast}}_0\big] \leq \E^{\P^\smalltext{\ast}}\big[\widehat{\cY}^\smallertext{+}_0\big] \leq \widehat{\cY}_0,
	\]
	from which $\cY^{\P^\smalltext{\ast}}_0 = \widehat{\cY}^\smallertext{+}_0$, $\P^\ast$--a.s., and \eqref{eq::max_widehat_y_plus} follow.
	
	\medskip
	Lastly, if $\P^\ast\in\fP_0$ satisfies \eqref{eq::max_widehat_y_plus} and $\bar{\P}^\ast \in \fP_0(\cG_{0\smallertext{+}},\P^\ast)$ satisfies $\widehat{\cY}^\smallertext{+}_0 = \cY^{\bar\P^\smalltext{\ast}}_0$, \textnormal{$\P^\ast$--a.s.}, then
	\[
		\widehat{\cY}_0 
		= \sup_{\P\in\fP_\smalltext{0}}\E^\P\big[\widehat{\cY}^\smallertext{+}_0\big] 
		= \E^{\P^\smalltext{\ast}}\big[\widehat{\cY}^\smallertext{+}_0\big] 
		= \E^{\P^\smalltext{\ast}}\big[\cY^{\bar\P^\smalltext{\ast}}_0\big] 
		= \E^{\bar\P^\smalltext{\ast}}\big[\cY^{\bar\P^\smalltext{\ast}}_0\big],
	\]
	where the first equality follows from \eqref{eq::relation_to_optimisation}, and the last one from $\P^\ast = \bar{\P}^\ast$ on $\cG_{0\smallertext{+}}$. This concludes the proof.
\end{proof}

\appendix
\crefalias{section}{appendix}

\section{Raw-filtration stochastic calculus}\label{app::raw_filtration_stochastic_calculus}

In the first part of this section, let
$(\Omega,\cG,\G = (\cG_t)_{t \in [0,\infty)},\P)$
be an arbitrary filtered probability space. We assume that we are given an additional $\sigma$-algebra $\cG_{0\smallertext{-}}$ contained in $\cG_0$ and included in $\G$ whenever necessary. We define $\cG_{\infty\smallertext{+}} \coloneqq \cG_{\infty} \coloneqq \cG_{\infty\smallertext{-}} \coloneqq \sigma\big(\cup_{t \in [0,\infty)}\cG_t\big)$. The right-continuous version $\G_\smallertext{+} = (\cG_{t\smallertext{+}})_{t \in [0,\infty)}$ of $\G$ is defined as $\cG_{t\smallertext{+}} \coloneqq \cap_{s \in (t,\infty)} \cG_s$, with $\cG_{(0\smallertext{+})\smallertext{-}}= \cG_{0\smallertext{-}}$ also added to $\G_\smallertext{+}$ when necessary. Similarly, the left-continuous version $\G_\smallertext{-} \coloneqq (\cG_{t\smallertext{-}})_{t \in [0,\infty)}$ is defined as $\cG_{t\smallertext{-}} \coloneqq \sigma(\cup_{s\in[0,t)}\cG_s)$ for $t \in (0,\infty)$. Then $\cG_\infty = \sigma\left(\cup_{t \in [0,\infty)}\cG_{t\smallertext{+}}\right) =  \sigma\left(\cup_{t \in [0,\infty)}\cG_{t\smallertext{-}}\right)$. We denote by $\cP(\G)$ the $\G$-predictable $\sigma$-algebra on $\Omega \times [0,\infty)$, generated by all real-valued, $\G_\smallertext{-}$-adapted processes that are left-continuous on $(0,\infty)$. Note that then $\cP(\G) = \cP(\G_\smallertext{+})$. 
	
\medskip
For $t \in \{0{-}\}\cup[0,\infty)$, we denote by $\cG^\P_t$ the $\sigma$-algebra generated by $\cG_t$ and the $(\cG,\P)$--null sets, and we then write $\G^\P = (\cG^\P_t)_{t \in [0,\infty)}$, adding $\cG^\P_{0\smallertext{-}}$ to it whenever necessary. We then write $\cP(\G)^\P \coloneqq \cP(\G^\P)$. The $\G$-optional $\sigma$-algebra $\cO(\G)$ on $\Omega\times[0,\infty)$ is generated by all $\G$-adapted, real-valued processes which are right-continuous at zero and c\`adl\`ag on $(0,\infty)$. We denote by $\textnormal{Prog}(\G)$ the progressive $\sigma$-algebra on $\Omega \times [0,\infty)$ consisting of those subsets $A \subseteq \Omega \times [0,\infty)$ such that $\1_A$ is $\G$-progressive, that is, $\Omega \times [0,t] \ni (\omega,s) \longmapsto \1_A(\omega,s) \in \R$ is $\cG_t \otimes \cB([0,t])$-measurable for every $t \in [0,\infty)$.
	
\medskip
For two maps $S: \Omega \longrightarrow [0,\infty]$ and $T : \Omega \longrightarrow [0,\infty]$, we denote by $\llparenthesis S,T\rrbracket$ the stochastic interval $\{(\omega,t) \in \Omega \times [0,\infty) \,|\,  S(\omega) < t \leq T(\omega)\}$. The stochastic intervals $\llbracket S,T\rrparenthesis$, $\llbracket S,T\rrbracket$ and $\llparenthesis S,T\rrparenthesis$ are defined analogously. We denote by $\cG_{S\smallertext{-}}$ the $\sigma$-algebra generated by $\cG_{0\smallertext{-}}$ and all sets of the form $A \cap \{t < S\}$, where $A \in \cG_t$ and $t \in [0,\infty)$. If we define $\H = (\cH_t)_{t \in [0,\infty)}$ by $\cH_t \coloneqq \cG_{t\smallertext{+}}$ and $\cH_{0\smallertext{-}} \coloneqq \cG_{0\smallertext{-}}$, then $\cH_{S\smallertext{-}} = \cG_{S\smallertext{-}}$. If $\llbracket S, \infty \rrparenthesis$ belongs to $\cP(\G)$, then $S$ is referred to as a $\G$-predictable stopping time. 
	
\medskip
In this work, $\G$--stopping time means that $\{S \leq t\} \in \cG_t$ for every $t \in [0,\infty)$. In \cite{weizsaecker1990stochastic}, this is referred to as a `strict stopping time', while in \cite{dellacherie1978probabilities}, it is additionally referred to as an optional time. Note that a $\G$--predictable stopping time is a $\G$--stopping time. A $\G$--stopping time is said to be finite(-valued) if it never attains the value $\infty$. For a $\G$--stopping time $S$, we denote by $\cG_S$ the $\sigma$-algebra consisting of all $A \in \cG_\infty$ for which $A \cap \{S \leq t\} \in \cG_t$ holds for all $t \in [0,\infty)$. If $S$ is an $\H$--stopping time, then $\cH_{S}$ will be denoted by $\cG_{S\smallertext{+}}$.

\medskip
Lastly, a sequence $(\tau_n)_{n \in \N}$ of $\G$--stopping times is a $(\G,\P)$--localising sequence if $\P[\tau_n \uparrow \infty] = 1$, that is, $(\tau_n(\omega))_{n \in \N}$ is a non-decreasing sequence of numbers in $[0,\infty]$ that converges to $\infty$ for $\P$--a.e. $\omega \in \Omega$.

\subsection{Semi-martingales and their characteristics}\label{sec::semimartingales}

We recall facts on semimartingales, characteristics, and stochastic integrals
needed later for integration with respect to the raw filtration. Let $M = (M_t)_{t \in [0,\infty)}$ be a real-valued, right-continuous and $\G$-adapted process. Then $M$ is a $(\G,\P)$--square-integrable martingale if $M$ is a $(\G,\P)$-martingale such that $\sup_{t \in [0,\infty)}|M_{t}|$ is $\P$--square-integrable.
We refer to $M$ as a $(\G,\P)$--locally square-integrable (resp. $(\G,\P)$--local) martingale, if there exists $(\G,\P)$--localising sequence $(\tau_n)_{n \in \N}$ such that, for each $n \in \N$, the stopped process $M_{\cdot\land\tau_n}$ is a $(\G,\P)$--square-integrable (resp. $(\G,\P)$--uniformly integrable) martingale. 

\medskip
In case $M$ is a $(\G,\P)$--locally square-integrable martingale, we denote by $\langle M \rangle = (\langle M \rangle_t)_{t \in [0,\infty)}$ the predictable quadratic variation of $M - M_0$ relative to $(\G,\P)$ (see \cite[Corollary 6.6.3]{weizsaecker1990stochastic}), that is, $\langle M \rangle$ is the, up to $\P$-indistinguishability, unique right-continuous, $\G$-predictable, $\P$--a.s. non-decreasing process such that $M^2-\langle M \rangle$ is a $(\G,\P)$--local martingale. If the filtration and probability measure need to be emphasised, we also write $\langle M\rangle^{(\G,\P)}$; see, however, \Cref{lem::equivalence_loc_square_integrable_martingale}.

\begin{remark}\label{rem::quadratic_variation}\label{rem::G_local_martingale}
	\textnormal{A priori}, \textnormal{\cite[Corollary 6.6.3]{weizsaecker1990stochastic}} gives the existence of a $(\G_\smallertext{+},\P)$--localising sequence $(\tau_n)_{n\in\N}$ such that $M^2_{\cdot\land \tau_n} - \langle M \rangle_{\cdot\land \tau_n}$ is a $(\G,\P)$--uniformly integrable martingale. However, one can find a corresponding $(\G,\P)$--localising sequence of strictly positive $\G$-predictable stopping times$;$ see \textnormal{\Cref{lem::predictable_localisation}}.
\end{remark}

\begin{lemma}\label{lem::predictable_localisation}
	Let $A = (A_t)_{t \in [0,\infty)}$ be a real-valued, right-continuous and non-decreasing, $\G^\P_\smallertext{+}$-predictable process starting at zero. Set $A_\infty \coloneqq \lim_{t \uparrow\uparrow \infty} A_t$. There exists a $(\G,\P)$--localising sequence of strictly positive $\G$--predictable stopping times $(\sigma_n)_{n \in \N}$ such that $A_{\sigma_\smalltext{n}}$ is $\P$--essentially bounded.
\end{lemma}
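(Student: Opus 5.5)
The natural approach is to localise $A$ in the classical way, via first-passage times, and then repair the two defects that arise without the usual conditions: measurability relative to the raw filtration $\G$ rather than $\G^\P_\smallertext{+}$, and predictability of the stopping times. First, since $A$ is $\cP(\G^\P_\smallertext{+}) = \cP(\G^\P)$-measurable, I would invoke the standard result that every $\cP(\G^\P)$-measurable process is $\P$-indistinguishable from a $\cP(\G)$-measurable one (e.g.\ \cite[Theorem IV.78 / Lemma 7 in Appendix]{dellacherie1978probabilities}, or \cite{weizsaecker1990stochastic}). This gives a $\G$-predictable $A'$ with $A' = A$ outside a $\P$--null set. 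Replacing $A'$ by $\sup_{q \in \Q_+, q\le t}A'_q$ is risky, since that would only be $\G_\smallertext{+}$-adapted. Instead, I would work with $A'$ directly and set $\sigma_n \coloneqq \inf\{t \in [0,\infty) : A'_t \geq n\}$.

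Second, I would show that $\sigma_n$ is a $\G$-predictable stopping time. The set $\{(\omega,t) : A'_t(\omega)\ge n\}$ is $\cP(\G)$-measurable, so $\sigma_n$ is its début. To obtain predictability without completion, I would argue as in the proof of \Cref{prop::Borel-measurability_value_function}, where $\sigma_n$ for the predictable, right-continuous, non-decreasing process $C$ was identified as a strictly positive predictable time via \cite[Proposition I.2.13]{jacod2003limit}. This applies pathwise wherever $A'$ is right-continuous and non-decreasing. On the exceptional null set, I would modify $A'$ to vanish; this is legitimate only if that set is $\cG_{0\smallertext{-}}$-measurable or otherwise handled. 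The cleaner route is to define $A''_t \coloneqq A'_t \1_{\{A'\text{ is non-decreasing and right-continuous on }[0,t]\}}$, which is again $\G$-predictable, and to use $\llbracket\sigma_n,\infty\rrparenthesis = \{A''\ge n\}$. That identity shows directly that the stochastic interval is predictable, so $\sigma_n$ is a $\G$-predictable time. Strict positivity follows from $A''_0 = 0 < n$ together with right-continuity.

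Third, boundedness. A predictable process can jump at $\sigma_n$, so $A_{\sigma_n}$ need not be bounded by $n$. This is the main obstacle. To handle it, I would use that $\sigma_n$ is predictable and announced by $\sigma_{n,k} \coloneqq \inf\{t : A''_t \ge n - 1/k\}\wedge k$, or more simply take a $\G$-announcing sequence for the predictable time. Before $\sigma_n$ one has $A < n$, so I would replace $\sigma_n$ by predictable times strictly before it. Concretely, I would set $\rho_n \coloneqq \inf\{t : A''_t \geq n\}\wedge n$ and use the predictable time $\rho_n$ together with an announcing sequence $(\rho_{n,k})_k$; these announcing times are themselves chosen predictable, for instance $\rho_{n,k}\coloneqq(\rho_n - 1/k)^+$-type constructions or those given by \cite[Theorem IV.77]{dellacherie1978probabilities}. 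They satisfy $A_{\rho_{n,k}} \le n$. The final step is a diagonal selection, choosing $\sigma_n \coloneqq \max_{j\le n}\rho_{j,k_j}$ with $\P[\rho_{j,k_j} < \rho_j - 2^{-j}] \le 2^{-j}$. This yields a non-decreasing sequence of strictly positive $\G$-predictable times (the maximum of predictable times is predictable) tending to $\infty$ $\P$--a.s. by Borel--Cantelli, with $A_{\sigma_n}\le n$ $\P$--a.s. The delicate points are ensuring that the announcing times remain $\G$- rather than $\G^\P$-measurable and that they are strictly positive. If needed, I would take $\rho_{n,k}\vee(1/k\wedge\rho_n)$ and rely on $A_0=0$ together with right-continuity.
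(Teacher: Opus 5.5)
Your overall architecture is the right one and matches the paper's. You use first-passage times, an announcing sequence to stay strictly before the possible jump of $A$ at the passage time, and a running maximum to get a non-decreasing sequence tending to infinity. However, the steps meant to deliver $\G$-predictable (rather than $\G^\P$-predictable) times do not work.

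\emph{Step~2.} Passing to $A'$ destroys the pathwise monotonicity and right-continuity that make first-passage times tractable, and $A''$ does not restore them. You do not show that the indicator of ``$A'$ is non-decreasing and right-continuous on $[0,t]$'' is $\G$-predictable; as written, it even involves right-continuity at $t$, which looks into the future. Moreover, on a path where this indicator switches off, $A''$ falls back to $0$, so $\{A''\ge n\}$ is not of the form $\llbracket\sigma_n,\infty\rrparenthesis$. Without the usual conditions, the d\'ebut of a $\G$-predictable set need not be a $\G$-stopping time, so $\G$-predictability of $\sigma_n$ is not established.

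\emph{Step~3.} $(\rho_n-1/k)^+$ is not a stopping time in general. Invoking an announcing theorem gives you announcing times for the augmented filtration $\G^\P$, and only $\P$--a.s., not $\G$-predictable ones. So you never actually produce $\G$-predictable times strictly below $\rho_n$. You flag this as the delicate point but leave it open, and it is the crux of the lemma.

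\emph{Positivity.} Your repair also fails. Since $\rho_{n,k}\vee(1/k\wedge\rho_n)=\rho_n$ on $\{\rho_n\le 1/k\}$, on that set you evaluate $A$ after its possible jump at $\rho_n$. That is exactly what destroys essential boundedness.

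The missing ingredient is the fact you already invoke for processes in Step~1, applied to stopping times: every $\G^\P$-predictable time is $\P$--a.s.\ equal to a $\G$-predictable time (see \cite[Theorem~6.4.5]{weizsaecker1990stochastic}). Since the conclusion only involves $\P$--a.s.\ statements, you can do all the work in $\G^\P$ and pass to versions at the very end, with no need for $A'$ or $A''$. This is what the paper does:
\begin{enumerate}
\item Every path of $A$ is right-continuous and non-decreasing with $A_0=0$. Hence $\tau_n\coloneqq\inf\{t:A_t\ge n\}$ is strictly positive and satisfies $\llbracket\tau_n,\infty\rrparenthesis=\{A\ge n\}\in\cP(\G^\P_+)=\cP(\G^\P)$.
\item So $\tau_n$ is a $\G^\P$-predictable time and admits $\G^\P$-predictable announcing times $\tau^m_n$ with $\tau^m_n<\tau_n$ and $\tau^m_n\uparrow\tau_n$, $\P$--a.s.
\item One takes strictly positive $\G$-predictable versions of all the $\tau^m_n$, enumerates them as $(\kappa_j)_{j\in\N}$, and sets $\sigma_n\coloneqq\max_{j\le n}\kappa_j$.
\item Then $A_{\sigma_n}\le\max_{j\le n}A_{\kappa_j}$ is essentially bounded. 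Also $\sigma_n\uparrow\sup_{m,n}\tau^m_n=\sup_n\tau_n=\infty$, $\P$--a.s., because $A$ is finite-valued.
\end{enumerate}
Positivity has to be arranged without ever reaching $\tau_n$. One way is to switch, on the set where an announcing time vanishes, to a later member of the announcing sequence. The enumeration also makes your Borel--Cantelli diagonal selection unnecessary, though that would work too once legitimate $\G$-predictable announcing times are available.
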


\begin{proof}
Let $\tau_n \coloneqq \inf\{t \in [0,\infty) : A_t \geq n\}$ for $n \in \N^\ast$. Then $\tau_n > 0$, by right-continuity of $A$, and $\llparenthesis 0, \tau_n \rrparenthesis = \{A < n\}$. Hence, every $\tau_n$ is a $\G^\P$--predictable stopping time. For every $n \in \N^\ast$, consider a sequence $(\tau^m_n)_{m \in \N^\ast}$ of $\G^\P$--predictable stopping times such that, $\P$--a.s., $\tau^m_n < \tau_n$ and $\tau^m_n \uparrow \tau_n$; see \cite[Theorem~6.4.5.(c) and the subsequent Remark]{weizsaecker1990stochastic}. Choose $\G$-predictable and strictly positive versions of the stopping times $\{\tau^m_n : (m,n) \in (\N^\ast)^2\}$ (see \cite[Theorem~6.4.5]{weizsaecker1990stochastic}) and enumerate them as $(\kappa_j)_{j \in \N}$. Let $\sigma_n \coloneqq \max_{1 \leq j \leq n} \kappa_j$ for $n \in \N$. Then $(\sigma_n)_{n \in \N}$ is a non-decreasing sequence of $\G$-predictable stopping times that converges $\P$--a.s. to infinity. Moreover, $A_{\sigma_n} \leq \max\{A_{\kappa_1},\ldots,A_{\kappa_n}\}$ is $\P$--essentially bounded. This completes the proof.
\end{proof}

\begin{lemma}\label{lem::equivalence_loc_square_integrable_martingale}
	Let $M = (M_t)_{t \in [0,\infty)}$ be a real-valued, right-continuous, $\G$-adapted process. The following conditions are equivalent$:$
	\begin{enumerate}
		\item[$(i)$] there exists a $(\G,\P)$--localising sequence of strictly positive $\G$--predictable stopping times $(\tau_n)_{n \in \N}$ such that each $M_{\cdot \land \tau_n}$ is a $(\G,\P)$--square-integrable $($resp. $(\G,\P)$--uniformly integrable$)$ martingale$;$
		\item[$(ii)$] $M$ is a $(\G,\P)$--locally square-integrable martingale $($resp. $(\G,\P)$--local martingale$);$
		\item[$(iii)$] $M$ is a $(\G_\smallertext{+},\P)$--locally square-integrable martingale $($resp. $(\G_\smallertext{+},\P)$--local martingale$);$
		\item[$(iv)$] $M$ is a $(\G^\P_\smallertext{+},\P)$--locally square-integrable martingale $($resp. $(\G^\P_\smallertext{+},\P)$--local martingale$)$.
	\end{enumerate}
	Moreover, in the locally square-integrable case, the predictable quadratic variations relative to the above filtrations coincide up to $\P$-evanescence.
\end{lemma}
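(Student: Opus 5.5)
The plan is to prove the chain $(i)\Rightarrow(ii)\Rightarrow(iii)\Rightarrow(iv)\Rightarrow(i)$ in both the locally square-integrable and the local martingale cases, and then to identify the predictable quadratic variations. The implications $(i)\Rightarrow(ii)$ and $(ii)\Rightarrow(iii)$ are essentially immediate. A $\G$--predictable stopping time is a $\G$--stopping time, hence also a $\G_\smallertext{+}$--stopping time. Moreover, a right-continuous $(\G,\P)$--martingale is a $(\G_\smallertext{+},\P)$--martingale: for $A\in\cG_{s\smallertext{+}}$ and $s<t$, write $A\in\cG_{s+\varepsilon}$ for $s+\varepsilon<t$, use $\E^\P[M_t\1_A]=\E^\P[M_{s+\varepsilon}\1_A]$, and let $\varepsilon\downarrow0$. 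Right-continuity gives convergence of $M_{s+\varepsilon}$. In the uniformly integrable case this convergence is in $\L^1$; in the square-integrable case it is dominated by $\sup_t|M_t|$. The same argument applies to the stopped processes. The step $(iii)\Rightarrow(iv)$ follows because adding $\P$--null sets to the filtration does not affect conditional expectations, and a $\G_\smallertext{+}$--stopping time is a $\G^\P_\smallertext{+}$--stopping time.

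The main obstacle is $(iv)\Rightarrow(i)$: we must produce a localising sequence consisting of strictly positive $\G$--predictable stopping times, starting only from localisation in the augmented, right-continuous filtration.

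In the square-integrable case I would proceed as follows. First take the predictable quadratic variation $\langle M\rangle$ relative to $(\G^\P_\smallertext{+},\P)$; this is a $\G^\P_\smallertext{+}$--predictable, non-decreasing, right-continuous process. Apply \Cref{lem::predictable_localisation} to $A\coloneqq\langle M\rangle$, which yields strictly positive $\G$--predictable $\sigma_n\uparrow\infty$ with $\langle M\rangle_{\sigma_n}$ essentially bounded. Then $M_{\cdot\land\sigma_n}-M_0$ is a $(\G^\P_\smallertext{+},\P)$--square-integrable martingale, by Doob's inequality and localisation. For the uniformly integrable/local case, and to control $M_0$, I would instead apply the lemma to $A\coloneqq\sup_{s\le\cdot}|M_s-M_0|$ predictably approximated, or use its left-limit version $\sup_{s<\cdot}|M_s|$, which is left-continuous and adapted, hence predictable. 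Its right-continuous modification is handled via the same $\tau_n^m\uparrow\tau_n$ construction used in the proof of \Cref{lem::predictable_localisation}. Stopping at a predictable time strictly before the first time this quantity reaches $n$ bounds the pre-jump size, and the jump at $\sigma_n$ is integrable because $\E^\P[|\Delta M_{\sigma}|]$ is finite along a prior localisation. To deal with $M_0$, multiply by $\1_{\{|M_0|\le n\}}$; this is $\cG_0$-measurable, which is admissible since $\cG_0\subseteq\cG_{0+}$. Equivalently, one can combine with the given localising times through $\sigma_n\land\tau'_n$, after choosing predictable approximations of the $\tau'_n$.

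Finally, the martingale property must be transferred back from $\G^\P_\smallertext{+}$ to $\G$. If $N$ is $\G$-adapted and a $(\G^\P_\smallertext{+},\P)$--martingale, then for $s<t$ and $A\in\cG_s\subseteq\cG^\P_{s\smallertext{+}}$ we have $\E^\P[N_t\1_A]=\E^\P[N_s\1_A]$. The stopped process $M_{\cdot\land\sigma_n}$ is $\G$-adapted because $\sigma_n$ is a $\G$--stopping time and $M$ is right-continuous and $\G$-adapted, hence $\G$-progressive. This gives $(i)$. For the quadratic variations, $\langle M\rangle^{(\G^\P_\smallertext{+})}$ admits a $\G$-predictable version, since $\cP(\G^\P_\smallertext{+})$-measurable processes are indistinguishable from $\cP(\G)$-measurable ones. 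That version makes $M^2-\langle M\rangle$ a local martingale in each filtration by the equivalence just proved, applied to the local-martingale case. Uniqueness of the predictable compensator in the largest filtration $\G^\P_\smallertext{+}$ then yields coincidence up to evanescence.
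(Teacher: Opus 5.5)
Your implications $(i)\Rightarrow(ii)\Rightarrow(iii)\Rightarrow(iv)$ coincide with the paper's argument. So does the square-integrable case of $(iv)\Rightarrow(i)$: apply \Cref{lem::predictable_localisation} to $\langle M\rangle^{(\G^\P_+,\P)}$, bound $\sup_t|M_{t\land\sigma_n}|$ in $\L^2$, and pass back to $\G$ by adaptedness.

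The genuine gap is the local-martingale case of $(iv)\Rightarrow(i)$. \Cref{lem::predictable_localisation} needs a right-continuous, non-decreasing, \emph{predictable} process. Right-continuity is what gives $\{A\ge n\}=\llbracket\tau_n,\infty\rrparenthesis$, hence predictability of $\tau_n$ and an announcing sequence $\tau^m_n\uparrow\tau_n$. Neither of your candidates qualifies:
\begin{itemize}
\item $\sup_{s\le\cdot}|M_s-M_0|$ is right-continuous but only optional.
\item $\sup_{s<\cdot}|M_s-M_0|$ is predictable, but its level sets can be of the form $\llparenthesis\tau,\infty\rrparenthesis$ when the level is crossed by a jump, so their debuts need not be predictable.
\end{itemize}
For $M_t=N_t-t$ with $N$ Poisson, the first time $|M|$ reaches $1/2$ equals the totally inaccessible first jump time on $\{T_1<1/2\}$. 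Hence no stopping times strictly below it increase to it, and ``stopping at a predictable time strictly before the first time this quantity reaches $n$'' is unavailable.

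The fallback about the jump at $\sigma_n$ does not repair this. Taking minima with the original $\G^\P_+$-localising times destroys predictability, since a non-predictable stopping time has no announcing sequence to approximate it from below. At a predictable time alone, $\Delta M_\sigma$ need not be integrable: take $M=Y\varepsilon\1_{\llbracket 1,\infty\rrparenthesis}$ with $Y$ non-integrable and $\cG_{1/2}$-measurable, and $\varepsilon$ an independent sign revealed at time $1$.

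The missing idea is to compensate before localising. Take a locally integrable, non-decreasing process $B$ controlling $M$; the paper uses $([M]^{(\G^\P_+,\P)})^{1/2}$, but $B\coloneqq\sup_{s\le\cdot}|M_s-M_0|$ also works. Its $(\G^\P_+,\P)$-predictable compensator $B^p$ is right-continuous and predictable. So \Cref{lem::predictable_localisation} gives strictly positive $\G$-predictable $\sigma_n\uparrow\infty$ with $B^p_{\sigma_n}$ essentially bounded, and then $\E^\P[B_{\sigma_n}]=\E^\P[B^p_{\sigma_n}]<\infty$. For the paper's choice, the Burkholder--Davis--Gundy inequality with $p=1$ then shows that $M_{\cdot\land\sigma_n}$ is dominated by an integrable variable; for the running supremum this is immediate. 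Hence $M_{\cdot\land\sigma_n}$ is a uniformly integrable $(\G^\P_+,\P)$-martingale, and by adaptedness a $(\G,\P)$-martingale.

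Your identification of the predictable quadratic variations invokes the local case, so it inherits this gap. Alternatively, localise $M^2-\langle M\rangle$ directly with the $\sigma_n$ from the square-integrable case; there it is dominated by $\sup_t|M_{t\land\sigma_n}|^2$ plus a constant.

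Finally, $M_0$ needs no treatment. Under $(iv)$ the given localising sequence already forces $M_0=M_{0\land\tau'_n}$ to be square-integrable (resp.\ integrable). Multiplying by $\1_{\{|M_0|\le n\}}$ would not be admissible anyway: $(i)$ concerns $M_{\cdot\land\tau_n}$ itself with $\tau_n>0$.
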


\begin{proof}
	That $(i)$ implies $(ii)$, $(ii)$ implies $(iii)$, and $(iii)$ implies $(iv)$ is immediate. We show that $(iv)$ implies $(i)$. In the locally square-integrable case, denote by $\langle M \rangle^{(\G^\smalltext{\P}_\smalltext{+},\P)}$ the $(\G^\P_\smallertext{+},\P)$--predictable quadratic variation of $M$. By \Cref{lem::predictable_localisation}, there exists a sequence $(\sigma_n)_{n \in \N}$ of strictly positive, $\G$--predictable stopping times such that $\langle M \rangle^{(\G^\smalltext{\P}_\smalltext{+},\P)}_{\cdot\land\sigma_n}$ is $\P$--essentially bounded for every $n \in \N$. By the Burkholder--Davis--Gundy-inequality for exponent $p = 2$, $\sup_{t \in [0,\infty)}|M_{t\land\sigma_n}|$ is $\P$--square-integrable, and thus $M_{\cdot\land\sigma_n} = (M_{t\land\sigma_n})_{t\in[0,\infty)}$ is a $(\G^\P_\smallertext{+},\P)$--square-integrable martingale. Since $M_{\cdot\land\sigma_n}$ is $\G$-adapted, it is also a $(\G,\P)$--square-integrable martingale, which yields $(i)$. The fact that the predictable quadratic variations coincide follows from the characterising property, together with \Cref{rem::quadratic_variation}.
	
	\medskip
	In the local martingale case, we consider the $(\G^\P_\smallertext{+},\P)$--optional quadratic variation $[M]^{(\G^\smalltext{\P}_\smalltext{+},\P)}$ instead and note that $([M]^{(\G^\smalltext{\P}_\smalltext{+},\P)})^{1/2}$ admits a $(\G^\P_\smallertext{+},\P)$--predictable compensator, say $A^p = (A^p_t)_{t \in [0,\infty)}$, by \cite[Theorem I.3.17 and Corollary I.4.55.a)]{jacod2003limit}. By \Cref{lem::predictable_localisation}, there exists a sequence, denoted again by $(\sigma_n)_{n \in \N}$, of strictly positive, $\G$--predictable stopping times such that $A^p_{\cdot\land\sigma_\smalltext{n}}$ is $\P$--essentially bounded. The Burkholder--Davis--Gundy-inequality for exponent $p = 1$ implies that $\sup_{t \in [0,\infty)}|M_{t\land\sigma_\smalltext{n}}|$ is $\P$--integrable, and therefore $M_{\cdot\land \sigma_\smalltext{n}}$ is a $(\G^\P_\smallertext{+},\P)$--uniformly integrable martingale. Since $M_{\cdot\land \sigma_\smalltext{n}}$ is $\G$-adapted, it is also a $(\G,\P)$--uniformly integrable martingale. This completes the proof.
\end{proof}

\medskip
If $M$ and $N$ are both $(\G,\P)$--locally square-integrable martingales, we define $\langle M, N \rangle$ as usual through polarisation. In case $M$ is multidimensional, $\langle M \rangle$ denotes the matrix-valued process whose $(i,j)$-th entry is $\langle M^i, M^j\rangle$, for $i$ and $j$ ranging through the appropriate set of integers.

\medskip
We turn to semi-martingales. We fix an $\R^d$-valued, c\`adl\`ag, $\G$-adapted process $X = (X_t)_{t \in [0,\infty)}$ for the time being. We suppose that $X$ is a $(\G,\P)$--semi-martingale in the following sense: there exists an $\R^d$-valued, right-continuous, $\G$-adapted, $(\G,\P)$--local martingale $M = (M_t)_{t \in [0,\infty)}$ and an $\R^d$-valued, right-continuous, $\G$-adapted process $A = (A_t)_{t \in [0,\infty)}$ whose paths are $\P$--a.s. of locally finite variation with $M_0 = A_0 = 0$ and such that
\begin{equation*}
	X = X_0 + M + A, \; \text{$\P$--a.s.}
\end{equation*}
We fix a measurable and bounded map $h : \R^d \longrightarrow \R^d$, known as a truncation map, satisfying $h(x) = x$ in an open neighbourhood of the origin. Since all paths of $X$ are c\`adl\`ag, the process
	\begin{equation*}
		X^\prime \coloneqq X - X_0 - \sum_{s \in (0,\cdot]} \big(\Delta X_s - h(\Delta X_s)\big),
	\end{equation*}
	is well-defined. The semi-martingale $X^\prime$ has bounded jumps and is therefore a special $(\G,\P)$--semi-martingale (see \cite[Corollary 7.2.8]{weizsaecker1990stochastic}). It thus admits a unique decomposition
	\begin{equation*}
		X^\prime = M^\prime + B^\prime, \; \text{$\P$--a.s.},
	\end{equation*}
	where $M^\prime = (M^\prime_t)_{t \in [0,\infty)}$ is an $\R^d$-valued, right-continuous, $\G$-adapted, $(\G,\P)$--local martingale starting at zero, and $B^\prime = (B^\prime_t)_{t \in [0,\infty)}$ is an $\R^d$-valued, right-continuous, $\G$-predictable process starting at zero, whose paths are $\P$--a.s. of locally finite variation (see \cite[Theorem 7.2.6]{weizsaecker1990stochastic} and \Cref{lem::equivalence_loc_square_integrable_martingale}). The process $B^\prime$ forms the first component of the characteristic triplet of $X$. To describe the second and third components, we need the following two results.

	\begin{lemma}\label{lem::martingale_decomposition}
		Let $M = (M_t)_{t \in [0,\infty)}$ be a real-valued, right-continuous, $\G$-adapted, $(\G,\P)$--local martingale. There exists a, up to $\P$--indistinguishability, unique pair $(M^c,M^d) = (M^c_t,M^d_t)_{t \in [0,\infty)}$ consisting of two real-valued, right-continuous, $\G$-adapted, $(\G,\P)$--local martingales starting at zero such that  
		\begin{equation*}
			M = M_0 + M^c + M^d, \; \text{{\rm$\P$--a.s.}},
		\end{equation*}
		$M^c$ has {\rm$\P$--a.s.} continuous paths, and $M^d$ is a purely discontinuous local martingale in the sense that $M^d N$ is a $(\G,\P)$--local martingale for each real-valued, right-continuous, $\G$-adapted, $(\G,\P)$--local martingale $N = (N_t)_{t \in [0,\infty)}$ with {\rm$\P$--a.s.} continuous paths. Furthermore, for a $(\G,\P)$--semi-martingale $X = (X_t)_{t \in [0,\infty)}$, there exists a, up to $\P$-indistinguishability, unique real-valued, right-continuous, $\G$-adapted, $(\G,\P)$--local martingale $X^c = (X^c_t)_{t \in [0,\infty)}$ with {\rm$\P$--a.s.} continuous paths such that any semi-martingale decomposition $X = X_0 + M + A$ satisfies $X^c = M^c$, up to $\P$-indistinguishability.
	\end{lemma}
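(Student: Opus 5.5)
The decomposition of a local martingale into continuous and purely discontinuous parts will be obtained by passing to the augmented right-continuous filtration $\G^\P_\smallertext{+}$, where the classical theory applies, and then transporting the result back to $\G$. By \Cref{lem::equivalence_loc_square_integrable_martingale}, $M$ is a $(\G^\P_\smallertext{+},\P)$--local martingale. Under the usual conditions, \cite[Theorem I.4.18]{jacod2003limit} (or the corresponding statement in \cite{weizsaecker1990stochastic}) gives a unique decomposition $M = M_0 + \widetilde M^c + \widetilde M^d$. Here $\widetilde M^c$ is a continuous $(\G^\P_\smallertext{+},\P)$--local martingale and $\widetilde M^d$ is purely discontinuous with respect to $\G^\P_\smallertext{+}$.

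The first key step is to produce $\G$-adapted, right-continuous versions. I would choose a $\G$-adapted, right-continuous modification of $\widetilde M^c$ that is $\P$--indistinguishable from it, using \cite[Theorem IV.59]{dellacherie1978probabilities}. The optional and predictable processes of $\G^\P_\smallertext{+}$ admit $\G_\smallertext{+}$-optional, respectively $\G$-predictable, versions. Since $\widetilde M^c$ is continuous it is $\G^\P_\smallertext{+}$-predictable, so it has a $\G$-predictable version $M^c$. Modifying $M^c$ on an evanescent set, for instance by setting it to zero after the first time it fails to be continuous along rationals, I would make it right-continuous and $\G$-adapted with $\P$--a.s. continuous paths. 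I then set $M^d \coloneqq M - M_0 - M^c$, which is right-continuous and $\G$-adapted. The converse direction of \Cref{lem::equivalence_loc_square_integrable_martingale}, namely $(iv)\Rightarrow(ii)$, shows that both are $(\G,\P)$--local martingales.

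The second key step is to check that $M^d$ is purely discontinuous in the $\G$ sense, and that the decomposition is unique. Let $N$ be a continuous $(\G,\P)$--local martingale. It is also a $(\G^\P_\smallertext{+},\P)$--local martingale, so $M^dN$ is a $(\G^\P_\smallertext{+},\P)$--local martingale. Since $M^dN$ is $\G$-adapted and right-continuous, \Cref{lem::equivalence_loc_square_integrable_martingale} again gives that it is a $(\G,\P)$--local martingale. For uniqueness, any other $\G$-decomposition satisfying the stated properties is also such a decomposition relative to $\G^\P_\smallertext{+}$: the orthogonality property transfers in the other direction because continuous $\G^\P_\smallertext{+}$-local martingales have continuous $\G$-versions, by the previous step. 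Uniqueness under the usual conditions then gives indistinguishability.

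For the semimartingale statement, take two decompositions $X = X_0 + M + A = X_0 + M' + A'$. Then $M - M' = A' - A$ is a local martingale of finite variation. Under $\G^\P_\smallertext{+}$, such a process has null continuous part, by \cite[Lemma I.4.14]{jacod2003limit}, so $M^c = M'^c$ up to indistinguishability, and $X^c$ is well defined.

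The main obstacle is the version-selection step: obtaining a $\G$-adapted, right-continuous, $\P$--a.s. continuous modification of $\widetilde M^c$, and ensuring that the localising sequences can be taken in $\G$. Both rely on \Cref{lem::predictable_localisation} and on careful use of the predictable section and version theorems.
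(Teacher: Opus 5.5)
Your argument is correct in outline. It shares the paper's architecture: a classical decomposition in a larger filtration, then $\G$-adapted versions, then transfer back via \Cref{lem::equivalence_loc_square_integrable_martingale}. The version step, however, is done differently.

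The paper starts from a right-continuous, $\P$--a.s. continuous, $\G_\smallertext{+}$-adapted $\bar M^c$, obtained from \cite[Lemma I.4.18]{jacod2003limit} and \cite[Lemma 4.3.5]{weizsaecker1990stochastic}. It then gets $\cG_t$-measurability by the explicit left-limit construction $\limsup_{n}\bar M^c_{(t-1/n)\lor 0}$. This is $\cG_t$-measurable because $\cG_{s\smallertext{+}}\subseteq\cG_t$ for $s<t$, and it is indistinguishable from $\bar M^c$ by a.s. continuity. Uniqueness and the semimartingale statement are deferred to \cite[Lemma I.4.18 and Proposition I.4.27]{jacod2003limit}.

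You instead work under the usual conditions in $\G^\P_\smallertext{+}$ and return to $\G$ through the predictable version theorem. This is heavier, but it lets you quote the classical results verbatim. It also makes the uniqueness transfer explicit: your remark that orthogonality transfers back because continuous $\G^\P_\smallertext{+}$-local martingales admit $\G$-versions is exactly what is needed. The version theorem for processes is \cite[Remark IV.74 and Theorem IV.78]{dellacherie1978probabilities}; Theorem IV.59 concerns stopping times.

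One detail of your version step does not work as written.
\begin{itemize}
\item The first time $\tau$ at which the rational-restricted path of $M^c$ stops being continuous is in general only a $\G_\smallertext{+}$--stopping time. A failure from the right at $\tau$ is not visible at time $\tau$.
\item Zeroing on $\llbracket\tau,\infty\rrparenthesis$ therefore destroys $\G$-adaptedness.
\item Zeroing on $\llparenthesis\tau,\infty\rrparenthesis$ keeps adaptedness, but breaks right-continuity at $\tau$ whenever $M^c_\tau\neq0$.
\item Before $\tau$, the values of $M^c$ at irrational times need not agree with the continuous extension of the rational path, so right-continuity can fail there too.
\end{itemize}
The paper handles this step by citing \cite[Lemma 4.3.5]{weizsaecker1990stochastic}, and you should do the same. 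Alternatively, make the construction more careful. First pass to left limits along rationals, which are $\cG_t$-measurable. After the failure time, freeze the path at its left limit when that limit exists, rather than zeroing it.
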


\begin{proof}[Proof of \Cref{lem::martingale_decomposition}]
	By \cite[Lemma I.4.18]{jacod2003limit} and \cite[Lemma 4.3.5]{weizsaecker1990stochastic}, there exists a real-valued, right-continuous, $\P$--a.s. continuous, $(\G_\smallertext{+},\P)$--local martingale $\bar{M}^c$ starting at zero such that $M - M_0 - \bar{M}^c$ is a $(\G_\smallertext{+},\P)$--purely discontinuous local martingale. Let $M^c = (M^c_t)_{t \in [0,\infty)}$ be defined by
	\[
		M^c_t \coloneqq \tilde{M}^c_t\1_{\{\tilde{M}^c_t \in \R\}}, \; \textnormal{where} \; \tilde{M}^c_t \coloneqq \limsup_{n\rightarrow\infty}\bar{M}^c_{(t-1/n)\lor 0}, \; t \in [0,\infty).
	\]
	Then $M^c_t$ is $\cG_t$-measurable for every $t \in [0,\infty)$ and $\tilde{M}^c = \bar{M}^c$ up to $\P$--indistinguishability by the $\P$--a.s. continuity of $\bar{M}^c$. By \cite[Lemma 4.3.5]{weizsaecker1990stochastic}, we can modify $M^c$ in such a way that all its paths become additionally right-continuous. Then $M^c$ is real-valued, right-continuous, $\G$-adapted, and a $(\G_\smallertext{+},\P)$--local martingale, and thus a $(\G,\P)$--local martingale (see \Cref{lem::equivalence_loc_square_integrable_martingale}) which is $\P$-indistinguishable from $\bar{M}^c$. Hence $M^d \coloneqq M - M_0 - M^c$ is a $(\G_\smallertext{+},\P)$--purely discontinuous local martingale. That $M^d$ is also a $(\G,\P)$--purely discontinuous local martingale follows by applying \Cref{lem::equivalence_loc_square_integrable_martingale} twice.
	Uniqueness can be argued as in the proof of \cite[Lemma I.4.18]{jacod2003limit}. For the remaining claim, we refer to \cite[Proposition I.4.27]{jacod2003limit} for details.
\end{proof}

	The process $X^c$ in the previous result is the $(\G,\P)$--continuous local martingale part of the $(\G,\P)$--semi-martingale $X$. When the underlying probability measure matters, we write $X^{c,\P}$.
	
	\medskip
	Let $\mu^X$ be the jump measure on $[0,\infty) \times \R^d$ of $X$ defined through
	\begin{equation*}
		\mu^X(\omega; \d t, \d x) \coloneqq \sum_{s \in (0,\infty)} \1_{\{\Delta X_\smalltext{s}(\omega) \neq 0\}} \boldsymbol{\delta}_{(s,\Delta X_\smalltext{s}(\omega))}(\d t, \d x).
	\end{equation*}
	
	\begin{lemma}\label{lem::existence_predictable_compensator_mu}
		Let $X = (X_t)_{t \in [0,\infty)}$ be an $\R^d$-valued, c\`adl\`ag, $\G$-adapted process. There exists a random measure $\nu(\omega;\d t, \d x)$ on $[0,\infty)\times\R^d$ such that, for every $\cP(\G)\otimes\cB(\R^d)$-measurable $W : \Omega \times [0,\infty) \times \R^d \longrightarrow [0,\infty]$,
		\begin{enumerate}
			\item[$(i)$] the process $\displaystyle W\ast\nu \coloneqq \int_{(0,\cdot]\times\R^\smalltext{d}}W_s(x)\nu(\d s,\d x)$ is $\G$-predictable$,$ and
			\item[$(ii)$] $\E^\P[W\ast\mu^X_\infty] = \E^\P[W\ast\nu_\infty]$.
		\end{enumerate}
		Moreover, $(i)$ and $(ii)$ uniquely characterise the random measure $\nu$ up to a $\P$--null set, and $\nu$ can be constructed to be of the form
		\begin{equation}\label{eq::representation_nu}
			\nu(\omega;\d t, \d x) = K_{\omega,t}(\d x)\sum_{\ell = 1}^\infty \d A^\ell_t(\omega), \; \omega \in \Omega,
		\end{equation}
		where $K$ is a kernel on $(\R^d,\cB(\R^d))$ given $(\Omega \times [0,\infty),\cP(\G))$, and each $A^\ell = (A^\ell_t)_{t \in [0,\infty)}$, $\ell\in\N^\star$, is a $[0,\infty)$-valued, right-continuous and non-decreasing, $\G$-predictable process starting at zero. Furthermore, $\sum_{\ell = 1}^\infty A^\ell$ is $\P$-indistinguishable from a real-valued, right-continuous, \textnormal{$\P$--a.s.} non-decreasing, $\G$-predictable process $A = (A_t)_{t \in [0,\infty)}$ starting at zero. There also exists a {\rm`}good version{\rm'} of $\nu$ that additionally satisfies $\nu(\omega;\{t\}\times \R^d) \leq 1$ identically and such that  
		\begin{equation}\label{eq::definition_J}
			J \coloneqq \big\{(\omega,t) \in \Omega \times [0,\infty) : \nu(\omega ; \{t\} \times \R^d) > 0 \big\} = \bigcup_{n = 1}^\infty \llbracket \tau_n \rrbracket,
		\end{equation}
		identically, where $(\tau_n)_{n \in \N}$ are $\G$--predictable stopping times with disjoint graphs.
	\end{lemma}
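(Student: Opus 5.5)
The natural route is the classical one in the style of Jacod--Shiryaev, adapted to the raw filtration $\G$ and to a measure $\P$ that is not assumed complete. The idea is to build $\nu$ first on the completed right-continuous filtration $\G^\P_\smallertext{+}$, where the usual conditions hold, and then take $\G$-predictable versions of everything.

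\textbf{Step 1: construction of a candidate.} Fix a strictly positive, $\cB(\R^d)$-measurable function $\psi$ with $\psi(x)\le |x|^2\wedge 1$ near the origin, chosen so that $\psi\ast\mu^X$ is $\P$--a.s. locally integrable after localisation. For instance, use $\psi(x)=|x|^2\wedge1$ and localise by $\inf\{t:\psi\ast\mu^X_t\ge n\}$; these jumps are bounded by $1$, so the stopped process is bounded by $n+1$. The random measure $\psi\cdot\mu^X$ is then $\widetilde\cP(\G^\P_\smallertext{+})$--$\sigma$-finite.

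Consider the Doléans measure $M^\P_{\mu^X}$ on $\Omega\times[0,\infty)\times\R^d$, restricted to $\widetilde\cP(\G)=\cP(\G)\otimes\cB(\R^d)$. By \cite[Theorem II.1.8]{jacod2003limit}, applied under $\G^\P_\smallertext{+}$, there exist:
\begin{itemize}
\item[] an increasing $\G^\P_\smallertext{+}$-predictable process $\bar A$, namely the predictable compensator of $\psi\ast\mu^X$, localised;
\item[] a kernel $\bar K$ from $(\Omega\times[0,\infty),\cP(\G^\P_\smallertext{+}))$ to $\R^d$,
\end{itemize}
such that $\bar\nu(\d t,\d x)=\psi(x)^{-1}\bar K_t(\d x)\,\d\bar A_t$ satisfies $(ii)$ for $\cP(\G^\P_\smallertext{+})\otimes\cB(\R^d)$-measurable $W$. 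Since $\cP(\G^\P)=\cP(\G^\P_\smallertext{+})$ and $\cP(\G)=\cP(\G_\smallertext{+})$, \cite[Theorem 6.4.5]{weizsaecker1990stochastic} or \cite[Lemma 6.5.10]{weizsaecker1990stochastic} lets us replace $\bar K$ by a $\cP(\G)$-kernel $K$. The replacement agrees with $\bar K$ outside a $\P$-evanescent set, obtained by choosing $\G$-predictable versions of the countably many predictable processes $\bar K(B)$, $B$ in a countable generating algebra, and then applying a monotone class argument.

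\textbf{Step 2: splitting $\bar A$ into $\G$-predictable pieces.} To handle $\bar A$, localise with \Cref{lem::predictable_localisation} by strictly positive $\G$-predictable stopping times $\sigma_\ell\uparrow\infty$. Take a $\G$-predictable, right-continuous, non-decreasing version $A^\ell$ of the increment $\bar A_{\cdot\wedge\sigma_\ell}-\bar A_{\cdot\wedge\sigma_{\ell-1}}$. Such a version exists because a predictable finite-variation process has a $\G$-predictable version; non-decrease is restored by taking running suprema over rationals and right limits, which stays predictable.

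The identity $\d\bar A=\sum_\ell \d A^\ell$ holds outside a $\P$-null set. Redefining $K$ to absorb $\psi^{-1}$ gives the representation \eqref{eq::representation_nu}, with $\nu$ defined identically for every $\omega$. Property $(i)$ then follows because $W\ast\nu=\sum_\ell\int W_s(x)K_s(\d x)\,\d A^\ell_s$ is a sum of $\G$-predictable processes, using the measurability of kernel integrals and the fact that Stieltjes integrals of predictable integrands against predictable increasing processes are predictable. Property $(ii)$ transfers from $\bar\nu$, since the two coincide $\P$--a.s. The process $A$ is obtained as a $\G$-predictable version of $\sum_\ell A^\ell$.

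\textbf{Step 3: uniqueness.} Take two measures satisfying $(i)$--$(ii)$ and apply them to $W=\1_{D}\psi$ with $D\in\widetilde\cP(\G)$. This gives $\E^\P[\1_D\psi\ast\nu_\infty]=\E^\P[\1_D\psi\ast\nu'_\infty]$. The usual predictable-section argument \cite[Lemma II.1.8]{jacod2003limit}, carried out in $\G^\P_\smallertext{+}$, shows the two coincide $\P$--a.s. The argument uses a countable generating family of $\cB(\R^d)$ and the uniqueness of predictable projections/compensators.

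\textbf{Step 4: the good version.} On $\bar J=\{\bar\nu(\{t\}\times\R^d)>0\}$ one has $\bar\nu(\{t\}\times\R^d)=\P[\Delta X\neq0\mid\cG^\P_{t-}]\le1$ on predictable times, which gives the bound $\le1$ $\P$--a.s. We obtain identical validity by setting $\nu(\omega;\{t\}\times\cdot)=0$ on the $\G$-predictable set $\{\nu(\{t\}\times\R^d)>1\}$. This set is $\P$-evanescent, and removing it preserves $(i)$ and $(ii)$.

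Next, $J$ is $\G$-predictable, because $\nu(\{t\}\times\R^d)=\sum_\ell K_t(\R^d)\Delta A^\ell_t$ and each term is predictable. Its sections are countable, since the set of jump times of the $A^\ell$ is countable. By \cite[Theorem IV.88/IV.89]{dellacherie1978probabilities}, a predictable set with countable sections is exhausted by graphs of predictable stopping times, which can be taken disjoint.

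The main obstacle is this last step combined with Step 2: keeping every object $\G$-predictable \emph{identically} rather than up to $\P$-null sets, without the usual conditions. Section theorems normally require completed filtrations, so the plan is to apply them to the raw predictable $\sigma$-algebra $\cP(\G)$ directly. This is legitimate because $J$ has been made $\cP(\G)$-measurable identically, and \cite{dellacherie1978probabilities} works with raw predictable $\sigma$-algebras.
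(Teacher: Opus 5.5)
There are two genuine gaps, one in Step 1 and one in Step 2.

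\textbf{Step 1: the weight.} Your choice of $\psi$ fails in the stated generality. The lemma concerns an arbitrary c\`adl\`ag $\G$-adapted $X$, not a semi-martingale, and then $\psi\ast\mu^X$ with $\psi(x)=|x|^2\wedge1$ can explode in finite time. Take $X$ deterministic, with jumps $(-1)^n n^{-1/2}$ at the times $1-1/n$, $n\geq2$, and constant from time $1$ on. The left limit at $1$ exists by the alternating series test, but $\psi\ast\mu^X_1=\sum_{n\geq2}1/n=\infty$. Hence $\inf\{t:\psi\ast\mu^X_t\geq n\}<1$ for every $n$, and your localisation never reaches infinity. As written, neither the $\sigma$-finiteness you feed into \cite[Theorem II.1.8]{jacod2003limit} nor the uniqueness argument of Step 3 follows. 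The remedy is to let the weight depend on $(\omega,t)$ as well. Take a $\widetilde{\cP}(\G)$-measurable $V$, strictly positive on $\{x\neq0\}$, with $V\ast\mu^X_\infty\leq1$ identically. An example is $V=\sum_{n,p\geq1}2^{-n-p}n^{-1}\1_{\llbracket 0,S_{n,p}\rrbracket}\1_{\{|x|>1/p\}}$, where $S_{n,p}$ is the $n$-th time at which $|\Delta X|>1/p$; these times tend to infinity in $n$ for every path. This is the function of \cite[Lemma 6.5]{neufeld2014measurability} used in the paper, and it makes localisation superfluous since $V\ast\mu^X$ is bounded.

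\textbf{Step 2: the regularisation.} The more serious gap is here, exactly at the point you flag as the main obstacle. Taking running suprema over rationals and then right limits does not preserve $\cP(\G)$-measurability. The right limit at $t$ is only $\cG_{t+}$-measurable, so the result is a right-continuous $\G_{+}$-adapted, hence only $\G_{+}$-optional, process. For instance, the right-continuous regularisation of the predictable process $\1_{\llparenthesis S,\infty\rrparenthesis}$ is $\1_{\llbracket S,\infty\rrparenthesis}$. That process is predictable only when $S$ is a $\G$-predictable stopping time, which fails for instance for the first jump of a Poisson process. Being indistinguishable from a $\G$-predictable process does not make a process $\cP(\G)$-measurable.

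So you do not obtain processes $A^\ell$ that are right-continuous, non-decreasing and $\G$-predictable for every $\omega$. Everything in Steps 2 and 4 that must hold identically then loses its justification: that $\nu$ is a measure for every $\omega$ with $W\ast\nu$ being $\cP(\G)$-measurable, and that $J$ is a $\cP(\G)$-measurable set with countable sections to which \cite[Theorem IV.88]{dellacherie1978probabilities} applies.

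The missing ingredient is a compensator theorem valid without the usual conditions. The paper never passes to $\G^\P_{+}$. It runs the proof of \cite[Theorem II.1.8]{jacod2003limit} directly in $\G$, with $V$ as above, and takes $(V\ast\mu^X)^p$ to be the dual predictable projection of \cite[Appendix I, Theorem 12]{dellacherie1982probabilities}. That result delivers the compensator directly in the raw filtration with the required path regularity. The good version then follows \cite[Proposition II.1.17]{jacod2003limit} together with \cite[Theorem IV.88]{dellacherie1978probabilities}, essentially as in your Step 4. With that tool, your detour through the completed filtration and the subsequent choice of versions becomes unnecessary.
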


	\begin{proof}[Proof of \Cref{lem::existence_predictable_compensator_mu}]
		Instead of repeating the whole argument in the proof of \cite[Theorem II.1.8]{jacod2003limit}, we merely want to point out that one just has to replace in that proof the process $V$ by the one constructed in \cite[Lemma 6.5]{neufeld2014measurability} and then the process $(V\ast\mu)^p$ by the dual predictable projection (predictable compensator) constructed in \cite[Appendix I, Theorem 12, page 405]{dellacherie1982probabilities}.
		
		\medskip
		As in the proof of \cite[Proposition II.1.17]{jacod2003limit}, we construct a `good' version of $\nu$ as follows. First, the set $D \coloneqq \{\Delta X \neq 0\}$ is $\G$-optional and has countable $\omega$-sections. By \cite[Theorem B, page xiii, and Remark E, page xvii]{dellacherie1982probabilities} and then \cite[Theorem IV.88.(a), page 139]{dellacherie1978probabilities}, the $\G$-optional set $D$ is the countable union of disjoint graphs of $\G$--stopping times $(\tau_n)_{n \in \N}$. We now follow exactly the steps in the proof of \cite[Proposition II.1.17.b)]{jacod2003limit} together with an application of \cite[Theorem 88.(b), page 139]{dellacherie1978probabilities} to find a `good' version of $\nu$ that additionally satisfies $\nu(\omega;\{t\}\times \R^d) \leq 1$ identically and such that the $\G$-predictable set  $J \coloneqq \big\{(\omega,t) \in \Omega \times [0,\infty) : \nu(\omega ; \{t\} \times \R^d) > 0 \}$ is equal to a countable union of disjoint graphs of predictable $\G$--stopping times and is the $\G$-predictable support of $D$.
	\end{proof}

	The random measure $\nu$ constructed in the preceding lemma is the $(\G,\P)$--predictable compensator of $\mu^X$. When it is necessary to emphasise the probability measure, we write $\nu^\P$.
	
	\medskip
	A triple $(\sfB, \sfC, \nu)$, consisting of an $\R^d$-valued process $\sfB = (\sfB_t)_{t \in [0,\infty)}$, an $\R^{d \times d}$-valued process $\sfC = (\sfC_t)_{t \in [0,\infty)}$, and a random measure $\nu$ on $[0,\infty) \times \R^d$, is referred to as the $(\G,\P)$–semi-martingale characteristics of $X$ (relative to $h$) if $(\sfB, \sfC)$ is $\P$-indistinguishable from $(B^\prime, \langle X^c \rangle)$, and $\nu$ coincides $\P$--a.s. with the $(\G,\P)$--predictable compensator of $\mu^X$.
	
	\begin{remark}\label{rem::equivalence_semimartingale}
		Although the notion of a semi-martingale and its characteristics depend on the filtration, for processes adapted to $\G$, the various notions and their characteristics relative to $\G$, $\G_\smallertext{+}$, or $\G^\P_\smallertext{+}$ coincide$;$ see {\rm\cite[Proposition 2.2]{neufeld2014measurability}}. This actually extends to any filtration $\H = (\cH_t)_{t \in [0,\infty)}$ satisfying $\cG_t \subseteq \cH_t \subseteq \cG^\P_{t\smallertext{+}}$, as follows immediately from a close examination of the proof of the aforementioned result. 
		Moreover, by {\rm\cite[Theorem I.4.47.a)]{jacod2003limit}} and {\rm\cite[Theorem 4.3.3]{weizsaecker1990stochastic}}, the optional quadratic variation may and will be chosen to be right-continuous and $\G$-adapted.
		Furthermore, the continuous local martingale parts of $X$ relative to $\G$, $\G_\smallertext{+}$, and $\G^\P_\smallertext{+}$ are $\P$-indistinguishable.
	\end{remark}
	
	Lastly, the following result provides equivalent descriptions of the disintegration of the compensator $\nu$ relative to an auxiliary process, helping to simplify the formulation of the semi-martingale laws in \eqref{eq::semi_martingale_laws}.
	\begin{lemma}\label{lem::absolute_continuity_nu}
		Let $C = (C_t)_{t \in [0,\infty)}$ be a real-valued, right-continuous and $\P$--{\rm a.s.} non-decreasing, $\G$-predictable process starting at zero. Suppose that $X$ is an $\R^d$-valued, c\`adl\`ag, $(\G,\P)$--semi-martingale, and let $\nu$ be the $(\G,\P)$--predictable compensator of its jump measure $\mu^X$. There exists a $(\G,\P)$--localising sequence $(\tau_n)_{n \in \N}$ of $\G$--predictable stopping times satisfying $\E^\P[(|x|^2 \land 1)\ast\nu_{\tau_\smalltext{n}}] < \infty$ for each $n \in \N$. Moreover, the following are equivalent$:$
		
		\begin{enumerate}
		\item[$(i)$] $\displaystyle (|x|^2 \land 1) \ast\nu = \int_0^\cdot\int_{\R^\smalltext{d}}(|x|^2 \land 1)\nu(\d s, \d x) \ll C$, \textnormal{$\P$--a.s.}$;$
		
		\item[$(ii)$] $\nu(\d t, \d x) = \mathsf{K}_{t}(\d x) \d C_t$, $\P$--{\rm a.s.}, for a kernel $\mathsf{K}$ on $(\R^d,\cB(\R^d))$ given $(\Omega\times[0,\infty),\cP(\G))$$;$
		
		\item[$(iii)$] if $\nu(\d t, \d x) = K_{t}(\d x)\d A_t$, \textnormal{$\P$--a.s.}, for a kernel $K$ on $(\R^d,\cB(\R^d))$ given $(\Omega\times[0,\infty),\cP(\G))$, and a real-valued, right-continuous and \textnormal{$\P$--a.s.} non-decreasing, $\G$-predictable process $A = (A_t)_{t \in [0,\infty)}$ starting at zero, then the compensator satisfies $\nu(\d t, \d x) = K_{t}(\d x)(\d A^\textnormal{ac}/\d C)_t\d C_t$, $\P$--{\rm a.s.}, where $\d A^\textnormal{ac}/\d C$ denotes the, \textnormal{$\P$--a.s.} defined, Radon--Nikod\'ym derivative of the absolutely continuous component of $A$ relative to $C$.
		\end{enumerate}
	\end{lemma}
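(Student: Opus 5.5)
The plan is to follow the classical construction of \cite[Theorem II.1.8]{jacod2003limit} for the localisation statement, and then to prove the equivalences by the chain $(i)\Rightarrow(iii)\Rightarrow(ii)\Rightarrow(i)$.

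For the localising sequence, the process $G\coloneqq(|x|^2\land 1)\ast\nu$ is $\G$-predictable by \Cref{lem::existence_predictable_compensator_mu}.$(i)$. It is $\P$--a.s. finite-valued: $(|x|^2\land1)\ast\mu^X$ is finite because $X$ is a semi-martingale (finitely many big jumps and $\sum|\Delta X|^2<\infty$ on compacts), and it is locally integrable since its jumps are bounded by $1$. Hence its compensator $G$ is a.s. finite and right-continuous, and, up to indistinguishability, $\P$--a.s. non-decreasing and starting at zero. Applying \Cref{lem::predictable_localisation} to $G$ gives strictly positive $\G$-predictable stopping times $\sigma_n\uparrow\infty$ with $G_{\sigma_n}$ essentially bounded. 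In particular $\E^\P[G_{\sigma_n}]<\infty$.

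$(i)\Rightarrow(iii)$. Take any disintegration $\nu=K\,\d A$ (such a disintegration exists by \eqref{eq::representation_nu}). By Lebesgue decomposition relative to $C$, performed predictably pathwise via a predictable Radon--Nikodým derivative (as in \cite[Lemma 6.5.10]{weizsaecker1990stochastic} or \cite[Proposition I.3.13]{jacod2003limit}), write $A=A^{\rm ac}+A^{\rm s}$, with $A^{\rm ac}=(\d A^{\rm ac}/\d C)\bcdot C$ and $A^{\rm s}\perp C$, both predictable. Let $\Gamma$ be a predictable set carrying $\d A^{\rm s}$ with $\1_\Gamma\bcdot C=0$. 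Then
\[
\1_\Gamma(|x|^2\land1)\ast\nu=\int\1_\Gamma K(|x|^2\land1)\,\d A^{\rm s}.
\]
By $(i)$ this is absolutely continuous with respect to $C$, yet it is carried by $\Gamma$, so it vanishes $\P$--a.s. Hence $K_t(|x|^2\land1)=0$, i.e.\ $K_t=0$ on $\R^d\setminus\{0\}$, for $\d A^{\rm s}$-a.e.\ $t$. Since $\nu$ does not charge $\{0\}$ (we may set $K(\{0\})=0$), we get $\nu=K\,\d A^{\rm ac}=K\,(\d A^{\rm ac}/\d C)\,\d C$.

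$(iii)\Rightarrow(ii)$. Set $\mathsf{K}\coloneqq K\,(\d A^{\rm ac}/\d C)$. This is a kernel given $(\Omega\times[0,\infty),\cP(\G))$, since the derivative is predictable.

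$(ii)\Rightarrow(i)$. This is immediate, since $(|x|^2\land1)\ast\nu=\int\mathsf{K}_t(|x|^2\land1)\,\d C_t$.

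The main obstacle is measurability. The Radon--Nikodým derivative and the singular support $\Gamma$ must be chosen $\cP(\G)$-measurable without the usual conditions, because $C$ is only $\P$--a.s. non-decreasing. I would first replace $C$ and $A$ by $\G$-predictable versions that are non-decreasing everywhere, using \cite[Theorem~6.4.5]{weizsaecker1990stochastic}, and then use the predictable derivative lemma for raw filtrations. A second, minor point is that $\nu$ may be $\sigma$-finite rather than finite, so the derivative $\d A^{\rm ac}/\d C$ has to be allowed to take values in $[0,\infty)$ only after localising with the $\sigma_n$ constructed above.
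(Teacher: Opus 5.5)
Your proposal is correct and follows essentially the paper's proof. It uses the same three ingredients:
\begin{itemize}
\item local integrability of $(|x|^2\land1)\ast\nu$ combined with \Cref{lem::predictable_localisation};
\item a Lebesgue decomposition $\d A=\d A^{\rm ac}+\d A^{\rm si}$ whose singular part is killed because $|x|^2\land1>0$ off the origin while $K\,\d A=\nu$ does not charge $\{0\}$;
\item a predictable version of $\d A^{\rm ac}/\d C$ for $(iii)\Rightarrow(ii)$.
\end{itemize}
One small correction: because $(iii)$ concerns a given $K$, you cannot reset $K(\{0\})=0$. Instead, you should note that $K_t(\{0\})=0$ for $\d A$--a.e.\ $t$.

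The measurability obstacle you anticipate does not arise in the paper's version. The decomposition can be done $\omega$ by $\omega$, so no predictable singular support $\Gamma$ is needed. Moreover, $\limsup_{n}(A_t-A_{(t-1/n)\lor0})/(C_t-C_{(t-1/n)\lor0})$, truncated to $[0,\infty)$, is directly $\cP(\G)$-measurable and equals $\d A^{\rm ac}/\d C$, $\d C$--a.e., $\P$--a.s. Hence neither modifying $C$ and $A$ nor localising is required; localisation would be pointless anyway, since $A$ is real-valued.
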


	\begin{proof}[Proof of \Cref{lem::absolute_continuity_nu}]
		The existence of the $(\G_\smallertext{+},\P)$--localising sequence $(\tau_n)_{n \in \N}$ satisfying $\E^\P[(|x|^2 \land 1)\ast\nu_{\tau_\smalltext{n}}] < \infty$ for each $n \in \N$ follows from \cite[II.2.13, page 77]{jacod2003limit}. 
		We then choose corresponding $\G$--predictable stopping times by applying \Cref{lem::predictable_localisation}.
		Note that this implies that the Lebesgue--Stieltjes measure induced by $(|x|^2 \land 1)\ast\nu$ on $\cB([0,\infty))$ is $\sigma$-finite up to a $\P$--null set. 
		
		\medskip
		Next, that $(ii)$ implies $(i)$ is clear. We thus suppose that $(i)$ holds, and with \eqref{eq::representation_nu}, we write
		\begin{equation*}
			\nu(\d s, \d x) = K_s(\d x)\d A_s, \; \text{$\P$--a.s.},
		\end{equation*}
		and then
		\begin{equation*}
			(|x|^2 \land 1)\ast\nu = \int_0^\cdot\int_{\R^\smalltext{d}}(|x|^2 \land 1)\nu(\d s,\d x) = \int_0^\cdot\int_{\R^\smalltext{d}} (|x|^2 \land 1) K_s(\d x)\d A_s, \; \text{$\P$--a.s.}
		\end{equation*}
		Let us denote by $\d A^\textnormal{ac}$ and $\d A^\textnormal{si}$ the absolutely continuous and singular components, respectively, of the measure $\d A$ relative to $\d C$ which are defined for $\P$--a.e. $\omega \in \Omega$. We have
		\begin{equation*}
			\nu(\d s, \d x) = K_s(\d x) \frac{\d A^\textnormal{ac}_s}{\d C_s} \d C_s + K_s(\d x) \d A^\textnormal{si}_s, \; \textnormal{$\P$--a.s.},
		\end{equation*}
		and then
		\begin{equation*}
			(|x|^2 \land 1)\ast\nu = \int_0^\cdot\int_{\R^\smalltext{d}}(|x|^2 \land 1)K_s(\d x) \frac{\d A^\textnormal{ac}_s}{\d C_s} \d C_s + \int_0^\cdot\int_{\R^\smalltext{d}}(|x|^2 \land 1)K_s(\d x)  \d A^\textnormal{si}_s, \; \text{$\P$--a.s.}
		\end{equation*}
		Since $(i)$ holds, 
		the measure induced by the second term on the right-hand side is $\P$--a.s. absolutely continuous and singular to $\d C$, thus
		\begin{equation}\label{eq::integral_decomposition_singular}
			\int_0^\infty\int_{\R^\smalltext{d}}(|x|^2 \land 1)K_s(\d x)  \d A^\textnormal{si}_s = 0, \; \text{$\P$--a.s.}
		\end{equation}
		From
		\begin{equation*}
			\int_0^\infty\int_{\R^\smalltext{d}} \1_{\{(|x|^2 \land 1) = 0\}}K_s(\d x)\d A^\textnormal{si}_s \leq \int_0^\infty\int_{\R^\smalltext{d}} \1_{\{(|x|^2 \land 1) = 0\}}\nu(\d s,\d x) = \int_0^\infty\int_{\R^\smalltext{d}} \1_{\{x = 0\}}\nu(\d s,\d x), \; \text{$\P$--a.s.},
		\end{equation*}
		and $\E^\P[\1_{\{x = 0\}}\ast\nu_\infty] = \E^\P[\1_{\{x = 0\}}\ast\mu^X_\infty] = 0$, we deduce that
		\begin{equation*}
			(|x|^2 \land 1) > 0, \; \text{$K_s(\d x)\d A^\textnormal{si}_s$--a.e.}, \; \text{$\P$--a.s.}
		\end{equation*}
		This, however, together with \eqref{eq::integral_decomposition_singular}, implies $K_s(\d x) \d A^\textnormal{si}_s = 0$, $\P$--a.s., and we therefore must have
		\begin{equation*}
			\nu(\d s, \d x) = K_s(\d x) \frac{\d A^\textnormal{ac}_s}{\d C_s} \d C_s, \; \text{$\P$--a.s.}
		\end{equation*}
		Hence $(iii)$ holds. That $(iii)$ implies $(i)$ is immediate. Lastly, that $(iii)$ implies $(ii)$ follows from defining
		\begin{equation*}
				\mathsf{K}_{\omega,t}(\d x) \coloneqq K_{\omega,t}(\d x) \mathsf{a}_t(\omega),
			\end{equation*}
		where
		\begin{equation*}
				\mathsf{a}_t \coloneqq \hat{\mathsf{a}}_t \1_{[0,\infty)}(\hat{\mathsf{a}}_t), \; \hat{\mathsf{a}}_t \coloneqq \limsup_{n \rightarrow \infty} \frac{A_t - A_{(t-1/n)\lor 0}}{C_t - C_{(t-1/n)\lor 0}}
			\end{equation*}
		is $\cP(\G)$-predictable; we use the convention $\lambda/0=0$ for any $\lambda \in \R$. Then $\mathsf{a} = \d A^\textnormal{ac}/\d C$, $\d C$--a.e., $\P$--a.s., by \cite[Section X.4, page 159]{doob1994measure}, which yields $(ii)$ and completes the proof.
	\end{proof}

	Uniqueness of the kernel $\mathsf{K}$ in \Cref{lem::absolute_continuity_nu}.$(ii)$ is ensured by the following result. The assumptions are satisfied since there exists a $\cP(\G)\otimes\cB(\R^d)$--predictable function $V > 0$ satisfying $\E^\P[V\ast\nu_\infty] = \E^\P[V\ast\mu^X_\infty] \leq 1$ (see \cite[Lemma 6.5]{neufeld2014measurability}).
	\begin{lemma}\label{lem::uniqueness_of_kernel}
		Let $\mu$ be a $\sigma$-finite measure on a measurable space $(\Omega,\cF)$ and $(K,K^\prime)$ be kernels on a measurable space $(\overline{\Omega},\overline{\cF})$ given $(\Omega,\cF)$ that are $\mu$--{\rm a.e.} $\sigma$-finite. Suppose that $\overline{\cF}$ is countably generated. If $\mu\otimes K = \mu\otimes K^\prime$, then $K = K^\prime$ outside some $\mu$--null set.
	\end{lemma}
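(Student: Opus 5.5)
The plan is the classical Radon--Nikod\'ym/disintegration argument. Since $\mu$ is $\sigma$-finite, fix a strictly positive measurable $g$ with $\int g\,\d\mu<\infty$. Replacing $\mu$ by the finite measure $g\,\d\mu$ changes neither the hypothesis $\mu\otimes K=\mu\otimes K^\prime$ nor the $\mu$--null sets. We may therefore assume that $\mu$ is finite.

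The second reduction concerns the kernels. Because $K$ and $K^\prime$ are only $\mu$--a.e. $\sigma$-finite, and we need a single exhausting sequence, I would use $\overline{\cF}$ countably generated. Let $\cA$ be a countable algebra generating $\overline{\cF}$; it is closed under finite intersections. For $\mu$--a.e. $\omega$, the measure $K_\omega+K^\prime_\omega$ is $\sigma$-finite.

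The cleanest route avoids choosing $\omega$-dependent exhaustions. Take the measurable function $h(\omega,\bar\omega)$ on $\Omega\times\overline\Omega$, strictly positive, with $\int h(\omega,\cdot)\,\d(K_\omega+K^\prime_\omega)<\infty$ for $\mu$--a.e. $\omega$. Its existence is the step I expect to be the main obstacle. I would try to build $h$ from $\mu\otimes(K+K^\prime)$, which is $\sigma$-finite on $\Omega\times\overline\Omega$ because it is $\sigma$-finite on a countable union of measurable rectangles. Concretely, I would first show that $\mu\otimes K$ is $\sigma$-finite. Then pick disjoint sets $E_n\in\cF\otimes\overline\cF$ with finite $\mu\otimes(K+K^\prime)$-measure, and put $h=\sum_n 2^{-n}(1+(\mu\otimes(K+K^\prime))(E_n))^{-1}\1_{E_n}$. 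If $\sigma$-finiteness of $\mu\otimes K$ is not automatic from the hypotheses, I would instead enumerate the sets $A\in\cA$ and use that, for a.e. $\omega$, $\overline\Omega$ is covered by sets in $\cA$ of finite $K_\omega+K^\prime_\omega$ mass. The sets $\{\omega:K_\omega(A)+K^\prime_\omega(A)\le m\}\times A$ are measurable rectangles, and countably many of them cover the complement of a $\mu\otimes(K+K^\prime)$-null set.

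Given $h$, define the finite kernels $\tilde K_\omega(\d\bar\omega)\coloneqq h(\omega,\bar\omega)K_\omega(\d\bar\omega)$ and $\tilde K^\prime_\omega$ analogously. The identity $\mu\otimes\tilde K=\mu\otimes\tilde K^\prime$ follows from the hypothesis by integrating $h$ against both sides. For each $A\in\cA$ and each $B\in\cF$, test this identity on $B\times A$. This gives $\int_B\tilde K_\omega(A)\,\mu(\d\omega)=\int_B\tilde K^\prime_\omega(A)\,\mu(\d\omega)$, and both integrands are integrable. Hence $\tilde K_\omega(A)=\tilde K^\prime_\omega(A)$ outside a $\mu$--null set $N_A$. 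Let $N\coloneqq\bigcup_{A\in\cA}N_A$, enlarged by the null set where finiteness fails. For $\omega\notin N$, the two finite measures $\tilde K_\omega$ and $\tilde K^\prime_\omega$ agree on the $\pi$-system $\cA$, which contains $\overline\Omega$. By Dynkin's $\pi$--$\lambda$ theorem they agree on $\overline\cF$. Since $h>0$, dividing by $h$ yields $K_\omega=K^\prime_\omega$ for $\omega\notin N$, which completes the argument.
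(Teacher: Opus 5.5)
\paragraph{Where the proposal breaks down.}
Everything after the construction of $h$ is sound. Once you have a jointly measurable $h>0$ on $\Omega\times\overline{\Omega}$ with $\int h(\omega,\cdot)\,\d(K_\omega+K^\prime_\omega)<\infty$ for $\mu$--a.e.\ $\omega$, the finite kernels $hK$ and $hK^\prime$, the countable $\pi$-system $\cA$, Dynkin's theorem and division by $h$ give the claim.

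The gap is that neither of your two routes produces $h$. Pointwise $\sigma$-finiteness only gives, for each $\omega$ separately, some finite-mass cover of $\overline{\Omega}$, with no control on how it depends on $\omega$.

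Your first route takes the $\sigma$-finiteness of $\mu\otimes(K+K^\prime)$ for granted. Being ``$\sigma$-finite on a countable union of measurable rectangles'' is exactly what must be shown. The only concrete rectangles you propose, $\{\omega:(K_\omega+K^\prime_\omega)(A)\leq m\}\times A$ with $A\in\cA$, are those of the fallback.

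The fallback rests on a false claim: $\sigma$-finiteness cannot in general be witnessed by sets of a fixed countable generating algebra. Take $\overline{\Omega}=\R$, let $\cA$ be the algebra generated by the intervals $(a,b]$ with $a<b$ in $\Q$, and let $K_\omega=\sum_{q\in\Q}\boldsymbol{\delta}_q$ for every $\omega$. Each $K_\omega$ is $\sigma$-finite. Yet every non-empty set in $\cA$ contains a non-degenerate interval and so has infinite mass, which makes all of your rectangles empty.

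Comparing $K_\omega$ with $2\sum_{q\in\Q}\boldsymbol{\delta}_q$ also shows that two $\sigma$-finite measures agreeing on a countable generating $\pi$-system need not coincide. So a measurably chosen (or $\omega$-independent) witness of $\sigma$-finiteness is a genuine ingredient, not a technicality.

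\paragraph{Comparison with the paper, and how to close the gap.}
The paper argues differently. It obtains $K(\cdot,B)=K^\prime(\cdot,B)$ $\mu$--a.e.\ for each $B\in\overline{\cF}$ directly from uniqueness of $[0,\infty]$-valued densities with respect to the $\sigma$-finite $\mu$, so it needs neither your reduction to finite $\mu$ nor finite kernels. It then invokes the uniqueness theorem for $\sigma$-finite measures agreeing on a countable generating $\pi$-system.

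That last step tacitly requires, for $\mu$--a.e.\ $\omega$, finite-mass exhausting sets belonging to a countable family fixed in advance. The example above shows this may be unavailable, so the paper's argument does not fill your gap either; your version at least makes the missing ingredient explicit.

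To complete your proof you need an extra input producing $h$. One option: suppose $K=\sum_nK^n$ is s-finite, which is the usual framework in which $\mu\otimes K$ is a measure on $\cF\otimes\overline{\cF}$. Then:
\begin{itemize}
\item the finite kernel $\lambda_\omega\coloneqq\sum_n2^{-n}K^n_\omega/(1+K^n_\omega(\overline{\Omega}))$ is equivalent to $K_\omega$ for every $\omega$;
\item countable generation yields a jointly measurable density $F$ of $K$ with respect to $\lambda$;
\item $\sigma$-finiteness of $K_\omega$ forces $F(\omega,\cdot)<\infty$ $\lambda_\omega$--a.e.;
\item $h\coloneqq1/(1+F)$ then works.
\end{itemize}
You only need $h$ for $K$, since $\int h\,\d(\mu\otimes K^\prime)=\int h\,\d(\mu\otimes K)$.

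Where the paper actually applies the lemma (disintegrations of the compensator $\nu$), the strictly positive predictable $V$ with $\E^\P[V\ast\nu_\infty]\leq1$, quoted just before the lemma, is exactly such an $h$.
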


	\begin{proof}[Proof of \Cref{lem::uniqueness_of_kernel}]
		Let $(A,B) \in \cF \times \overline{\cF}$. Then
		\[
		\int_A \mu(\d \omega) K(\omega,B) = \mu\otimes K (A\times B) = \mu\otimes K^\prime (A\times B) = \int_A \mu(\d\omega) K^\prime(\omega,B).
		\]
		So that $\sigma$-finiteness of $\mu$ implies $K(\cdot,B) = K^\prime(\cdot,B)$, $\mu$--a.e.; alternatively, this follows from the uniqueness in the Radon--Nikod\'ym theorem or from \cite[Satz IV.4.5]{elstrodt2018mass}. Then \cite[Corollary 1.6.4]{cohn2013measure} (or \cite[Korollar II.5.7]{elstrodt2018mass}) and the $\mu$--a.e. $\sigma$-finiteness of the kernels, together with the separability of $\overline{\cF}$, then yields
		\[
		K(\omega,B) = K^\prime(\omega,B), \; B \in \overline{\cF}, \; \textnormal{$\mu$--a.e. $\omega \in \Omega$},
		\]
		which completes the proof.
	\end{proof}

\subsection{Stochastic integrals without the usual conditions}\label{sec::stochastic_integrals}
	
	Let $M = (M_t)_{t \in [0,\infty)}$ be an $\R^d$-valued, right-continuous, $\G$-adapted, $(\G,\P)$--locally square-integrable martingale starting at zero. Let $C = (C_t)_{t \in [0,\infty)}$ be a right-continuous and $\P$--a.s. non-decreasing, $\G$-predictable process starting at zero satisfying $\langle M \rangle \ll C$, $\P$--a.s., component-wise. We then choose a factorisation $\langle M \rangle = \pi \bcdot C$, $\P$--a.s., where $\pi = (\pi_t)_{t \in [0,\infty)}$ is an $\S^d_\smallertext{+}$-valued, $\G$-predictable process (see \cite[Section 3.1]{shiryaev2002vector} or the proof of \cite[Proposition II.2.9]{jacod2003limit}). We denote by $\H^2_{\rm loc}(M;\G,\P)$ the space of $\R^d$-valued, $\G$-predictable processes $Z = (Z_t)_{t \in [0,\infty)}$ for which there exists a $(\G,\P)$--localising sequence $(\tau_n)_{n \in \N}$ such that
	\begin{equation*}
		\E^\P\bigg[\int_0^{\tau_\smalltext{n}} Z^\top_r \pi_r Z_r \d C_r \bigg] < \infty, \; n \in \N,
	\end{equation*}
	and then denote by $Z \bcdot M$ or $\int_0^\cdot Z_s \d M_s$ the vector stochastic integral of $Z$ with respect to $M$ in the sense of \cite[Theorem III.6.4]{jacod2003limit}. 
	We note here that this space could equivalently be defined by $(\G_\smallertext{+},\P)$--localising sequences 
	or even by $(\G,\P)$--localising sequences of $\G$--predictable stopping times 
	(see \Cref{lem::predictable_localisation}). 
	
	\medskip
	The stochastic integral $Z\bcdot M$ is \emph{a priori} a real-valued, $\G_\smallertext{+}$-adapted, $(\G_\smallertext{+},\P)$--locally square-integrable martingale with $\P$--a.s. c\`adl\`ag paths starting at zero. The properties that the vector stochastic integral ought to satisfy describe it uniquely up to $\mathbb{P}$-indistinguishability; therefore, we can always implicitly choose one representative---also referred to as $\P$-version in what follows. 
	Because $M$ is right-continuous and $\G$-adapted, we will see in \Cref{prop::good_version_stochastic_integral} that there exists a `good' representative of the vector stochastic integral that is $\G$-adapted, with all of its paths being right-continuous. 
	
	\medskip
	We further denote by $\H^2(M;\G,\P)$ the subset of $\H^2_\text{loc}(M;\G,\P)$ consisting of those processes $Z$ satisfying
	\begin{equation*}
		\|Z\|^2_{\H^{\smalltext{2}}(M;\G,\P)} \coloneqq \E^\P\bigg[\int_0^\infty Z^\top_r \pi_r Z_r \d C_r \bigg] < \infty,
	\end{equation*}
	which then yields a true $(\G,\P)$--square-integrable martingale $Z \bcdot M$. Moreover, the space $\H^2(M;\G,\P)$ together with $\|\cdot\|_{\H^{\smalltext{2}}(M;\G,\P)}$ forms a complete semi-normed space such that $\|Z - Z^\prime\|_{\H^{\smalltext{2}}(M;\G,\P)} = 0$ implies $Z \bcdot M = Z^\prime \bcdot M$ up to $\P$-indistinguishability. For additional background, we refer to \cite[Section III.6]{jacod2003limit}.

	\medskip
	We turn to the construction of the stochastic integral relative to the compensated jump measure $\mu^X - \nu$ of an $\R^d$-valued, càdlàg, $\G$-adapted process $X$. First, we note that by \cite[Theorem B, page xiii, and Remark E, page xvii]{dellacherie1982probabilities}, and subsequently \cite[Theorem IV.88.(a), page 139]{dellacherie1978probabilities}, the optional set $D \coloneqq \{\Delta X \neq 0\}$ is the union of a sequence of disjoint graphs of $\G$--stopping times. Let $C$ be a right-continuous and $\P$--a.s. non-decreasing, $\G$-predictable process starting at zero\footnote{The same process $C$ can be chosen to satisfy the required properties for both the vector stochastic integral and the stochastic integral with respect to a compensated random measure.} satisfying $\nu(\d t, \d x) = K_{t}(\d x)\d C_t$, $\P$--a.s., for some transition kernel $K$ on $(\R^d, \cB(\R^d))$ given $(\Omega \times [0,\infty), \cP(\G))$; compare with \eqref{eq::representation_nu}. We denote by $\nu$ a `good version' of the $(\G,\P)$--predictable compensator as described in \Cref{lem::existence_predictable_compensator_mu}. For an $\cF\otimes\cB([0,\infty))\otimes\cB(\R^d)$-measurable, $[-\infty,\infty]$-valued function $U$, we define
	\begin{equation*}
		\widehat U_t(\omega) \coloneqq \int_{\{t\} \times \R^\smalltext{d}} U_t(\omega;x) \nu(\omega; \d t, \d x), \; (\omega,t) \in \Omega \times [0,\infty),
	\end{equation*}
	and then $\widetilde U_t(\omega) \coloneqq U_t(\omega;\Delta X_t(\omega))\1_{\{\Delta X_\smalltext{t}(\omega) \neq 0\}} - \widehat U_t(\omega)$. Recall that we are using the convention $\infty-\infty = -\infty$ throughout. Note that $\widetilde U$ is a $\G$-optional process with\footnote{See \eqref{eq::definition_J} for the definition of $J$.} $\{\widetilde U \neq 0\} \subseteq D \cup J$, and therefore $\{\widetilde U \neq 0\}$
	is the countable union of disjoint graphs of $\G$--stopping times by \cite[Theorem IV.88.(a), page 139]{dellacherie1978probabilities}. Thus, $\sum_{s \in (0,\infty)} |\widetilde U_s|^2$ is well-defined and $\cG_{\infty\smallertext{-}}$-measurable. 
	
	\medskip
	We denote by $\H^2(\mu^X;\G,\P)$ the linear space of real-valued and $\cP(\G)\otimes\cB(\R^d)$-measurable functions $U$ satisfying 
	\[
		\E^\P \Bigg[ \sum_{s \in (0,\infty)} |\widetilde U_s|^2 \Bigg] < \infty.
	\]
	For each $U \in \H^2(\mu^X;\G,\P)$, we denote by $U \ast\tilde\mu^X$ the stochastic integral of $U$ with respect to the compensated random measure $\mu^X - \nu$ (see \cite[Definition II.1.27]{jacod2003limit}), that is, $U \ast\tilde\mu^X$ is the, up to $\P$-indistinguishability, unique real-valued, right-continuous, $\G_\smallertext{+}$-adapted, $(\G_\smallertext{+},\P)$--purely discontinuous local martingale with $\Delta (U \ast\tilde\mu^X) = \widetilde U$ up to $\P$-evanescence. 
	Since $[U\ast\tilde\mu^X] = \sum_{s \in (0,\cdot]}(\widetilde U_s)^2$ is $\P$-integrable, $U\ast\tilde\mu^X$ is even a $(\G_\smallertext{+},\P)$--square-integrable martingale. Moreover
	\[
	(U + U^\prime) \ast\tilde\mu^X = U \ast\tilde\mu^X + U^\prime \ast\tilde\mu^X,\; \P\text{\rm--a.s.}, \; \text{for}\; \text{any}\; (U,U^\prime) \in \big(\H^2(\mu^X;\G,\P)\big)^2. 
	\]
	
	By \cite[Theorem II.1.33.a)]{jacod2003limit}, the predictable quadratic variation relative to $(\G_\smallertext{+},\P)$ of the process $U \ast\tilde\mu^X$ is given by
	\begin{align*}
		\langle U \ast\tilde\mu^{X} \rangle^{(\G_\tinytext{+},\P)}_t(\omega) 
		&= \int_{(0,t]} \Bigg( \int_{\R^\smalltext{d}}\bigg(U_s(\omega; x) - \int_{\R^\smalltext{d}} U_s(\omega;x)K_{\omega,s}(\d x)\Delta C_s(\omega)\bigg)^2 K_{\omega,s}(\d x)  \\
		&\quad+ \big(1 - K_{\omega,s}(\R^d)\Delta C_s(\omega)\big) \bigg( \int_{\R^\smalltext{d}} U_s(\omega;x) K_{\omega,s}(\d x) \bigg)^2 \Delta C_s(\omega) \Bigg) \d C_s(\omega), \; t \in [0,\infty), \; \text{for $\P$--a.e. $\omega \in \Omega$}.
	\end{align*}
	Since $C$ is only $\P$--a.s. non-decreasing, we denote by $\Delta C = (\Delta C_t)_{t \in [0,\infty)}$ the $\G$-predictable process
	\begin{equation*}
		\Delta C_t \coloneqq 
		\begin{cases}
			\displaystyle\limsup_{n \rightarrow \infty}\big\{C_t - C_{(t\smallertext{-}1/n)\lor 0}\big\}, \;\textnormal{if this is finite}, \\
			0, \;\textnormal{otherwise.}
		\end{cases}
	\end{equation*}
	For $(\omega,s) \in \Omega \times [0,\infty)$, a (nonnegative) measure $F$ on $(\R^d,\cB(\R^d))$, and a $\cB(\R^d)$-measurable map $\cU : \R^d \longrightarrow \R$, we write
	\begin{align*}
		\|\cU(\cdot)\|^2_{\hat{\L}^\smalltext{2}_{\smalltext{\omega}\smalltext{,}\smalltext{s}}(F)} \coloneqq\int_{\R^\smalltext{d}}\bigg(\cU(x) - \int_{\R^\smalltext{d}}\cU(x)F(\d x) \Delta C_s(\omega)\bigg)^2 F(\d x) + (1-F(\R^d)\Delta C_s(\omega))^+\bigg(\int_{\R^\smalltext{d}} \cU(x) F(\d x)\bigg)^2 \Delta C_s(\omega).
	\end{align*}
	Then $\widehat\L^2_{\omega,s}(F)$ denotes the collection of $\cB(\R^d)$-measurable maps $\cU : \R^d \longrightarrow \R$ satisfying $\|\cU(\cdot)\|_{\hat{\L}^\smalltext{2}_{\smalltext{\omega}\smalltext{,}\smalltext{s}}(F)} < \infty$. Note that on $\widehat\L^2_{\omega,s}(F)$, we have $\|\cU(\cdot)\|^2_{\hat{\L}^\smalltext{2}_{\smalltext{\omega}\smalltext{,}\smalltext{s}}(F)} = \langle \cU(\cdot),\cU(\cdot)\rangle_{\hat{\L}^\smalltext{2}_{\smalltext{\omega}\smalltext{,}\smalltext{s}}(F)}$, where
	\begin{align*}
		&\langle \cU(\cdot),\cV(\cdot)\rangle_{\hat{\L}^\smalltext{2}_{\smalltext{\omega}\smalltext{,}\smalltext{s}}(F)} \\
		&\coloneqq 
		\begin{cases}
			\displaystyle \int_{\R^d} \cU(x)\cV(x)F(\d x) - \Delta C_s(\omega) \bigg(\int_{\R^\smalltext{d}}\cU(x)F(\d x)\bigg)\bigg(\int_{\R^\smalltext{d}}\cV(x)F(\d x)\bigg), & \textnormal{if $F(\R^d)\Delta C_s(\omega) \leq 1$,} \\[1em]
			\displaystyle \int_{\R^\smalltext{d}}\bigg(\cU(x) - \int_{\R^\smalltext{d}}\cU(x)F(\d x) \Delta C_s(\omega)\bigg)\bigg(\cV(x) - \int_{\R^\smalltext{d}}\cV(x)F(\d x) \Delta C_s(\omega)\bigg) F(\d x), & \textnormal{if $F(\R^d)\Delta C_s(\omega) > 1$,}
		\end{cases}
	\end{align*}
	is a positive-semidefinite, symmetric bilinear form. Hence, we have that $\|\,\cdot\,\|_{\hat{\L}^\smalltext{2}_{\smalltext{\omega}\smalltext{,}\smalltext{s}}(F)}$ is a semi-norm on $\widehat\L^2_{\omega,s}(F)$.	Since $K_{\omega,s}(\R^d)\Delta C_s(\omega) = \nu(\omega;\{s\}\times\R^d) \leq 1$, $s \in [0,\infty)$, $\P$--a.s., we have
	\begin{equation*}
		\langle U \ast\tilde\mu^{X} \rangle^{(\G_\tinytext{+},\P)}(\omega) = \int_0^\cdot \|U_s(\omega;\cdot)\|^2_{\hat\L^\smalltext{2}_{\smalltext{\omega}\smalltext{,}\smalltext{s}}(K_{\smalltext{\omega}\smalltext{,}\smalltext{s}})} \d C_s(\omega), \; \text{for $\P$--a.e. $\omega \in \Omega$},
	\end{equation*}
	which implies that $U_s(\omega;\cdot) \in \widehat\L^2_{\omega,s}(K_{\omega,s})$ for $\P\otimes \mathrm{d}C$--a.e. $(\omega,s) \in \Omega \times [0,\infty)$, whenever $U \in \H^2(\mu^X;\G,\P)$. Conversely, if we are given a $\cP(\G)\otimes\cB(\R^d)$-measurable function $U$, the map $(\omega, s) \longmapsto \|U_s(\omega; \cdot)\|^2_{\hat\L^2_{\omega, s}(K_{\omega, s})}$ is $\G$-predictable. Furthermore, if 
	\begin{equation*}
		\|U\|^2_{\H^\smalltext{2}(\mu^\smalltext{X}; \G, \P)} \coloneqq \E^\P \bigg[ \int_{(0,\infty)} \|U_s(\cdot)\|^2_{\hat\L^\smalltext{2}_\smalltext{s}(K_\smalltext{s})}  \d C_s \bigg] < \infty,
	\end{equation*}
	then $U \in \H^2(\mu^X; \G, \P)$ (see \cite[Theorem II.1.33.a)]{jacod2003limit}).
	The space $\H^2(\mu^X; \G, \P)$, together with the semi-norm $\|\cdot\|_{\H^\smalltext{2}(\mu^\smalltext{X}; \G, \P)}$, forms a complete semi-normed space, such that $\|U - U^\prime\|^2_{\H^\smalltext{2}(\mu^\smalltext{X}; \G, \P)} = 0$ implies $U \ast \tilde{\mu}^X = U^\prime \ast \tilde{\mu}^X$ up to $\P$-indistinguishability. For $U \in \H^2(\mu^X;\G,\P)$, define $U^\prime_s(\omega;x) \coloneqq U_s(\omega;x)\1_{\Omega\times[0,\infty)\setminus N}(\omega,s)$, where
	\begin{equation*}
		N \coloneqq \big\{(\omega,s) \in \Omega \times [0,\infty) : \|U_s(\omega;\cdot)\|_{\hat{\L}^\smalltext{2}_{\smalltext{\omega}\smalltext{,}\smalltext{s}}(K_{\smalltext{\omega}\smalltext{,}\smalltext{s}})} = \infty\big\} \in \cP(\G).
	\end{equation*}
	Then $\|U - U^\prime\|^2_{\H^2(\mu^X;\G,\P)} = 0$ and $U^\prime_s(\omega;\cdot) \in \widehat\L^2_{\omega,s}(K_{\omega,s})$ for each $(\omega,s) \in \Omega \times [0,\infty)$. Therefore, we will implicitly assume that any $U \in \H^2(\mu^X;\G,\P)$ satisfies $U_s(\omega;\cdot) \in \widehat\L^2_{\omega,s}(K_{\omega,s})$ for every $(\omega,s) \in \Omega \times [0,\infty)$.

	\begin{proposition}\label{prop::good_version_stochastic_integral}
		Let $M = (M_t)_{t \in [0,\infty)}$ be an $\R^d$-valued, right-continuous, $\G$-adapted, $(\G, \P)$--locally square-integrable martingale, and let $\mu^X$ be the random jump measure on $[0,\infty) \times \R^d$ of an $\R^d$-valued, c\`adl\`ag, $\G$-adapted process $X = (X_t)_{t \in [0,\infty)}$. For $Z \in \H^2_\textnormal{loc}(M; \G^\P_\smallertext{+}, \P)$ and $U \in \H^2(\mu^X; \G^\P_\smallertext{+}, \P)$, there exist real-valued, $\G$-adapted and right-continuous $\P$-modifications of $(Z \bcdot M)^{(\P)}$ and $(U \ast \tilde{\mu}^X)^{(\P)}$.
	\end{proposition}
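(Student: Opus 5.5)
The plan is to treat both integrals the same way, in three steps. First, reduce to integrands that are predictable for the raw filtration $\G$. Second, approximate by elementary integrands whose integrals are explicit and $\G$-adapted. Third, pass to the limit along a subsequence and regularise the limit pathwise in a way that preserves $\G$-adaptedness.

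\emph{Step 1: replacing the integrands.} Since $\cP(\G^\P_\smallertext{+}) = \cP(\G^\P)$, and every $\G^\P$-predictable process is $\P$-indistinguishable from a $\G$-predictable one (\cite[Theorem IV.78]{dellacherie1978probabilities}, or \cite[Theorem~6.4.5]{weizsaecker1990stochastic}), we may replace $Z$ by a $\G$-predictable $Z'$ with $Z = Z'$ up to $\P$-evanescence. Similarly, $U$ may be replaced by a $\cP(\G)\otimes\cB(\R^d)$-measurable $U'$ equal to $U$ outside a $\P$-evanescent set; this follows by a monotone class argument on products $H\1_B$. These replacements do not change the integrals up to indistinguishability. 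The semimartingale notions also agree across filtrations by \Cref{lem::equivalence_loc_square_integrable_martingale}. For $Z$, we localise using \Cref{lem::predictable_localisation} with $\G$-predictable stopping times $\tau_n$, so that $Z\1_{\llbracket 0,\tau_n\rrbracket} \in \H^2(M;\G,\P)$. A right-continuous $\G$-adapted version of each stopped integral then patches together into one for $Z\bcdot M$, by setting it equal to $(Z\1_{\llbracket 0,\tau_n\rrbracket}\bcdot M)$ on $\llbracket 0,\tau_n\rrbracket$.

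\emph{Step 2: approximation in the square-integrable case.} The class of $\G$-predictable simple integrands of the form $\sum_k H_k\1_{\llparenthesis s_k,t_k\rrbracket}$, with $H_k$ bounded and $\cG_{s_k}$-measurable, is dense in $\H^2(M;\G,\P)$. For these, $Z^k\bcdot M = \sum_k H_k(M_{t_k\wedge\cdot}-M_{s_k\wedge\cdot})$ is $\G$-adapted and right-continuous for every path, because $M$ has these properties. Choose $Z^k \to Z$ fast enough that Doob's inequality gives $\sum_k \E^\P[\sup_t|Z^k\bcdot M_t - Z\bcdot M_t|^2]^{1/2}<\infty$. Then $\sup_t|Z^k\bcdot M - Z\bcdot M| \to 0$ $\P$--a.s. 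Define $I_t \coloneqq \limsup_k (Z^k\bcdot M)_t$ on the set where this limsup exists as a finite limit uniformly. Concretely, set $I_t \coloneqq \limsup_k (Z^k\bcdot M)_t\,\1_{\{\text{finite}\}}$; this is $\cG_t$-measurable and equals $Z\bcdot M$ up to indistinguishability.

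\emph{Step 3: right-continuity, which I expect to be the main obstacle.} The difficulty is obtaining right-continuity on every path rather than merely $\P$--a.s., without using $\G_\smallertext{+}$-measurable null sets. The convergence set is uniform-in-$t$, hence defined through the suprema of countably many processes, and is therefore only $\cG_\infty$-measurable. I would instead use the device already employed in the paper for $M^c$: invoke \cite[Lemma 4.3.5]{weizsaecker1990stochastic}, which modifies a $\G$-adapted, $\P$--a.s. right-continuous process into one with all paths right-continuous while keeping it $\G$-adapted. Alternatively, one can define $\tilde I_t \coloneqq \limsup_{\Q\ni s\downarrow\downarrow t}$ of the $\G$-adapted limsup along dyadics together with $I_t$ itself, and argue as in the construction of $M^c$ in the proof of \Cref{lem::martingale_decomposition}.

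For $U\ast\tilde\mu^X$, I would follow the same steps. The elementary integrands are $U = H\1_{\llparenthesis s,t\rrbracket}g(x)$ with $g$ bounded, vanishing near $0$, and $H$ bounded $\cG_s$-measurable. Their integrals are $H\big(\sum_{r\in(s\wedge\cdot,t\wedge\cdot]}g(\Delta X_r) - (g\ast\nu)_{t\wedge\cdot} + (g\ast\nu)_{s\wedge\cdot}\big)$, using a good version of $\nu$ from \Cref{lem::existence_predictable_compensator_mu} whose integrals are $\G$-predictable. This makes the elementary integrals $\G$-adapted and right-continuous, after the same finiteness truncation. Density of the linear span in $\H^2(\mu^X;\G,\P)$ follows from \cite[Theorem II.1.33]{jacod2003limit} and a monotone class argument. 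The conclusion is then reached exactly as in Steps 2 and 3.
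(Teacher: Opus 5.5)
For $(Z\bcdot M)^{(\P)}$ your plan is essentially the paper's. You pass to a $\G$-predictable version of $Z$, localise, approximate by elementary integrands whose integrals are right-continuous and $\G$-adapted because $M$ is, and regularise the limit with the von Weizs\"acker--Winkler results. The paper applies \cite[Theorem 4.3.3]{weizsaecker1990stochastic} directly to the ucp-convergent approximations, rather than taking a pointwise $\limsup$ and then invoking \cite[Lemma 4.3.5]{weizsaecker1990stochastic}. Drop your fallback: right limits along rationals are only $\cG_{t+}$-measurable, and the left-limit device used for $M^c$ relies on continuity.

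One detail needs fixing. You must stop $M$ itself, not only $Z$, and integrate against $M^{\tau_n}$. For a merely locally square-integrable $M$, bounded simple integrands need not lie in $\H^2(M;\G,\P)$, so your density claim would be empty. The paper does exactly this, noting that $M^{\tau_n}$ remains $\G$-adapted. The paper reduces to one-dimensional $M$ via \cite[Theorem III.6.4.a)]{jacod2003limit}; your direct density argument for vector integrands also works, since $Z^\top\pi Z\leq|Z|^2\,\textnormal{Tr}[\pi]$.

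For $(U\ast\tilde\mu^X)^{(\P)}$ your route genuinely differs. The paper never approximates $U$. Following the proof of \cite[Theorem I.4.56.a)]{jacod2003limit}, it splits $\{\widetilde U\neq0\}$ into graphs of predictable $\G$--stopping times $T_n$ (its predictable support) and totally inaccessible $\G$--stopping times $S_n$. It then sums the martingales $\widetilde U_{T_n}\1_{\llbracket T_n,\infty\rrparenthesis}$ and the single jumps $\widetilde U_{S_n}\1_{\llbracket S_n,\infty\rrparenthesis}$, the latter compensated by right-continuous $\G$-predictable compensators. These partial sums are right-continuous, $\G$-adapted, and converge in $\cH^2$ using only $\E^\P[\sum_s\widetilde U_s^2]<\infty$, so no density result is needed.

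Your approach handles both integrals with one template, but its density step is asserted too quickly. A monotone class argument gives density among $\widetilde\cP(\G)$-measurable $U$ with $\E^\P[U^2\ast\nu_\infty]<\infty$, a norm that dominates $\|U\|_{\H^2(\mu^X;\G,\P)}$ by \cite[Theorem II.1.33]{jacod2003limit}. But $\H^2(\mu^X;\G,\P)$ is strictly larger once $\nu$ charges predictable times. On an atom with $a_s\coloneqq\nu(\{s\}\times\R^d)\in(0,1)$, the choice $U_s\equiv c$ contributes $a_s(1-a_s)c^2$ to $\|U\|^2_{\H^2(\mu^X;\G,\P)}$ but $a_sc^2$ to $\E^\P[U^2\ast\nu_\infty]$. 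Along infinitely many atoms with $a_s\uparrow1$, the first can stay finite while the second diverges, and modifying $U$ on an $\H^2$-null set does not help.

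You therefore need an extra reduction. One option is to replace $U$ by $U\1_{\{a\leq1-\delta\}}+(U-\widehat U)\1_{\{a=1\}}$. This integrand is square-integrable against $\nu$ and has the same jumps as $U$ off $\{1-\delta<a<1\}$, because $X$ jumps almost surely at predictable times where $a=1$. The error is $\E^\P[\sum_s\widetilde U_s^2\1_{\{1-\delta<a_s<1\}}]$, which tends to $0$. An orthogonality argument in $\cH^2$ would also work.

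Likewise, $H\1_{\llparenthesis s,t\rrbracket}g$ lies in $\H^2(\mu^X;\G,\P)$ only after stopping at a $\G$--stopping time that makes $\1_{\{|x|>\varepsilon\}}\ast\nu$ integrable. With these repairs your route works. The paper's construction buys freedom from approximating integrands altogether; yours buys a single argument covering both integrals.
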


	\begin{proof}[Proof of \Cref{prop::good_version_stochastic_integral}]
		If $Z = (Z_t)_{t \in [0,\infty)}$ is $\G^\P_\smallertext{+}$-predictable, there exists a $\G$-predictable process $Z^\prime =(Z^\prime_t)_{t \in [0,\infty)}$ such that $Z^\prime$ and $Z$ are $\P$-indistinguishable. This follows by applying \cite[Remark IV.74, Theorem IV.78]{dellacherie1978probabilities} together with a monotone class argument. An analogous reasoning also yields a $\widetilde{\cP}(\G)$-measurable $U^\prime$ such that $U_t(\omega;x) = U^\prime_t(\omega;x)$ for all $(t,x) \in [0,\infty) \times \R^d$, for $\P$--a.e. $\omega \in \Omega$. It then follows from the characterising properties of the stochastic integrals that $(Z\bcdot M)^{(\G^\P_\smallertext{+},\P)} = (Z^\prime\bcdot M)^{(\G_\smallertext{+},\P)}$ and $(U\ast\tilde\mu^X)^{(\G^\P_\smalltext{+},\P)} = (U^\prime\ast\tilde\mu^X)^{(\G_\smalltext{+},\P)}$ up to $\P$-indistinguishability. We thus suppose, without loss of generality, that $Z$ is $\G$-predictable and $U$ is $\widetilde{\cP}(\G)$-measurable.

		\medskip
		We turn to the stochastic integral of $M$. By \cite[Theorem III.6.4.a)]{jacod2003limit} and \cite[Theorem 4.3.3]{weizsaecker1990stochastic}, we can suppose without loss of generality that $M$ is one-dimensional and $Z \in \H^2_\text{loc}(M;\G,\P)$. Let $(\tau_n)_{n \in \N}$ be a localising sequence of $\G_\smallertext{+}$--stopping times such that $M^{\tau_\smalltext{n}}$ is a square-integrable $(\G_\smallertext{+},\P)$-martingale and $Z \in \H^2(M^{\tau_\smalltext{n}};\G,\P)$ for each $n \in \N$. Then $(Z\bcdot M)^{\tau_\smalltext{n}} = Z\bcdot (M^{\tau_\smalltext{n}})$, up to $\P$-indistinguishability, and thus the sequence $Z \bcdot (M^{\tau_\smalltext{n}})$ converges uniformly on compacts in $\P$-probability to $Z \bcdot M$. Since $M^{\tau_\smalltext{n}}$ is $\G$-adapted by \cite[Proposition 2.3.11.(b)]{weizsaecker1990stochastic}, another application of \cite[Theorem 4.3.3]{weizsaecker1990stochastic} thus implies that it is enough to consider the case where $M$ is $\G$-adapted and a square-integrable $(\G_\smallertext{+},\P)$-martingale and $Z \in \H^2(M;\G,\P)$. In this case, we can find by the proof of \cite[Theorem I.4.40]{jacod2003limit}, a sequence of elementary $\G_\smallertext{+}$-predictable processes $(Z^n)_{n \in \N}$ in the sense of \cite[Definition 4.4.1, Proposition 4.4.2.(b)]{weizsaecker1990stochastic} such that
		\begin{equation*}
			\E^\P\bigg[\sup_{t \in [0,\infty)}|Z^n\bcdot M_t - Z\bcdot M_t|^2\bigg]
			\leq 4 \E^\P\bigg[\int_0^\infty (Z^n_u-Z_u)^2\d \langle M \rangle_u\bigg] \xrightarrow{n\rightarrow\infty} 0, 
		\end{equation*}
		as $n$ tends to infinity. Here the inequality follows from Doob's martingale inequality. Since $Z^n\bcdot M$ is right-continuous and $\G$-adapted, it follows from Markov's inequality and from \cite[Theorem 4.3.3]{weizsaecker1990stochastic}, that there is a version of $Z\bcdot M$ that is right-continuous and $\G$-adapted.
		
		\medskip
		We now turn to the construction of $U \ast\tilde\mu^X$. We denote by $\nu$ the `good version' of the compensator constructed in \Cref{lem::existence_predictable_compensator_mu}, and we let $D \coloneqq \{\Delta X \neq 0\}$ and $J \coloneqq \big\{(\omega,t) \in \Omega \times [0,\infty) : \nu(\omega ; \{t\} \times \R^d) > 0 \}$. As noted in the proof of \Cref{lem::existence_predictable_compensator_mu}, we can write $D$ (resp. $J$) as the countable union of disjoint graphs of $\G$--stopping times (resp. predictable $\G$--stopping times.)  Let $\widetilde U_t(\omega) \coloneqq U(\omega,t,\Delta X_t(\omega))\1_D(\omega,t) - \int_{\R^\smalltext{d}}U(\omega,t,x)\nu(\omega; \{t\} \times \d x)$.	 Then $\widetilde U$ is $\G$-optional and satisfies $D^\prime \coloneqq \{\widetilde U\neq0\} \subseteq D \cup J$. Hence, by \cite[Theorem 88.(a), page 139]{dellacherie1978probabilities}, the set $D^\prime$ is the countable union of disjoint graphs of $\G$--stopping times. Let 
		\begin{equation*}
			S(\widetilde U^2) \coloneqq \sum_{0 < s \leq \cdot} (\widetilde U_s)^2.
		\end{equation*}
		Since $U \in \H^2(\mu^X;\G,\P)$, we have $\E^\P[S(\widetilde U^2)_\infty] < \infty$ (see the proof of \cite[Theorem II.1.33.a)]{jacod2003limit}). We now follow the proof of \cite[Theorem I.4.56.a)]{jacod2003limit} rather closely to construct a purely discontinuous, square-integrable $(\G_\smallertext{+},\P)$-martingale $M$ satisfying $\Delta M = \widetilde U$ up to $\P$-indistinguishability. First, let us denote by $J^\prime$ the predictable support of $D^\prime$. By the proof of \cite[Proposition I.2.34]{jacod2003limit} and an application of \cite[Theorem 88.(b), page 139]{dellacherie1978probabilities}, we can choose a $\P$-version of $J^\prime$ such that it is exactly the countable union of disjoint graphs of predictable $\G$--stopping times $(T_n)_{n \in \N}$. Then by another application of \cite[Theorem 88.(a), page 139]{dellacherie1978probabilities}, the set $D^\prime \setminus J^\prime = D^\prime \setminus (\cup_{n\in\N} \llbracket T_n\rrbracket)$ is itself equal to a countable union of disjoint graphs of $\G$--stopping times $(S_n)_{n \in \N}$. We thus have that $D^\prime \subseteq (\bigcup_{n \in \N} \llbracket S_n\rrbracket)\cup(\bigcup_{n \in \N} \llbracket T_n\rrbracket)$ and the graphs of $\{S_n: n \in \N\}\cup\{T_n : n \in \N\}$ are pairwise disjoint. Since $J^\prime$ is the $\G$-predictable support of $D^\prime$ it follows that the $\G$-predictable support of $D^\prime \setminus J^\prime$ is $\P$-evanescent. This in turn implies that each $S_n$ is totally $\P$-inaccessible (see \cite[Remark I.2.33]{jacod2003limit}). Now let $A^n \coloneqq \widetilde U_{S_\smalltext{n}}\1_{\llbracket S_\smalltext{n},\infty \rrparenthesis}$. Since $A^n$ is right-continuous, $\G$-adapted, and of integrable variation due to $\E^\P[S(\widetilde U^2)_\infty] < \infty$, the compensator $A^{n, p}$ of $A^n$ can be chosen to be right-continuous and $\G$-predictable by \cite[Theorem 6.6.1]{weizsaecker1990stochastic}. Let $M^n \coloneqq A^n - A^{n, p}$, $N^n \coloneqq \widetilde U_{T_\smalltext{n}}\1_{\llbracket T_\smalltext{n},\infty\rrparenthesis}$ and $Y^n \coloneqq \sum_{m=1}^n(M^m + N^m)$. The proof of \cite[Lemma I.4.51]{jacod2003limit} then shows that $(Y^n)_{n \in \N}$ converges in the space of square-integrable $(\G_\smallertext{+},\P)$-martingales to some purely discontinuous, square-integrable $(\G_\smallertext{+},\P)$-martingale $Y$ that satisfies $\Delta Y = \widetilde U$ up to $\P$-evanescence. We then let  $U \ast\tilde\mu \coloneqq Y$. Since
		\begin{equation*}
			\E^\P\bigg[\sup_{t \in [0,\infty)} |Y^n_t - Y_t|^2\bigg]\xrightarrow{n\rightarrow\infty} 0,
		\end{equation*}
		and since each $Y^n$ is right-continuous and $\G$-adapted, an application of Markov's inequality together with \cite[Theorem 4.3.3]{weizsaecker1990stochastic} implies that the limit $Y = U \ast\tilde\mu$ can be chosen to be right-continuous and $\G$-adapted. This completes the proof.
	\end{proof}

	We will always choose right-continuous and $\G$-adapted $\P$-modifications for both types of stochastic integrals. However, when it is necessary to emphasise both the filtration and probability measure, we will write $(Z \bcdot M)^{(\G, \P)}$ and $(U \ast \tilde{\mu}^X)^{(\G, \P)}$. If only the probability measure needs to be highlighted and the underlying filtration is clear, we will use the simplified notation $(Z \bcdot M)^{(\P)}$ and $(U \ast \tilde{\mu}^X)^{(\P)}$.

	\medskip
	Lastly, we turn to orthogonal decompositions of martingales. We borrow the notation from \cite[Chapters II and III]{jacod2003limit}. Let $\widetilde\cP(\G) \coloneqq \cP(\G)\otimes\cB(\R^d)$, and let $M^\P_{\mu^\smalltext{X}}$ be the measure on $\cF\otimes\cB([0,\infty))\otimes\cB(\R^d)$ defined by
	\begin{equation*}
		M^\P_{\mu^\smalltext{X}}[A] \coloneqq \E^\P[\1_A\ast\mu^X],\; A\in \cF\otimes\cB([0,\infty))\otimes\cB(\R^d).
	\end{equation*}
	We also denote by $M^\P_{\mu^\smalltext{X}}[W]$ the integral with respect to $M^\P_{\mu^\smalltext{X}}$ of an $\cF\otimes\cB([0,\infty))\otimes\cB(\R^d)$-measurable, nonnegative function $W$. Then $M^\P_{\mu^\smalltext{X}}[W|\widetilde\cP(\G)]$ denotes the, up to a $M^\P_{\mu^\smalltext{X}}$--null set, unique $\widetilde\cP(\G)$-measurable function $W^\prime$ satisfying
	\begin{equation*}
		M^\P_{\mu^\smalltext{X}}[W U] = M^\P_{\mu^\smalltext{X}}[W^\prime U],\; \text{for every bounded, nonnegative and $\widetilde\cP(\G)$-measurable function $U$.}
	\end{equation*}
	For a $[-\infty,\infty]$-valued, $\cF\otimes\cB([0,\infty))\otimes\cB(\R^d)$-measurable function $W$, we let $M^\P_{\mu^\smalltext{X}}[W|\widetilde\cP(\G)] \coloneqq M^\P_{\mu^\smalltext{X}}[W^\smallertext{+}|\widetilde\cP(\G)] - M^\P_{\mu^\smalltext{X}}[W^\smallertext{-}|\widetilde\cP(\G)]$, where we again used the convention $\infty - \infty = -\infty$. The following result appears as Proposition 2.6 in \citeauthor*{possamai2024reflections} \cite{possamai2024reflections}.
	\begin{proposition}
		Let $m$ be a positive integer, let $M = (M_t)_{t \in [0,\infty)}$ be an $\R^m$-valued, right-continuous, $\G$-adapted, $(\G,\P)$--locally square-integrable martingale, and let $X = (X_t)_{t \in [0,\infty)}$ be an $\R^d$-valued, c\`adl\`ag, $\G$-adapted process. Suppose that $M^\P_{\mu^\smalltext{X}}[\Delta M^i|\widetilde\cP(\G)] = 0$ for $i \in \{1,\ldots,m\}$. For every real-valued, right-continuous, $\G_\smallertext{+}$-adapted, $(\G_\smallertext{+},\P)$--square-integrable martingale $L$, there exists a unique pair $(Z,U) \in \H^2(M;\G,\P)\times\H^2(\mu^X;\G,\P)$ such that
		\begin{equation*}
			N \coloneqq L - L_0 - Z\bcdot M - U\ast\tilde\mu^X,
		\end{equation*}
		satisfies $\langle N, M^i\rangle^{(\G_\tinytext{+},\P)} = 0$, for each $i \in \{1,\ldots,m\}$, as well as $M^\P_{\mu^\smalltext{X}}[\Delta N|\widetilde\cP(\G)] = 0$.
	\end{proposition}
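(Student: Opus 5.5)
The statement is the orthogonal decomposition proposition: we need to find $(Z,U)$ with $N$ orthogonal to each $M^i$ and to $\mu^X$ in the sense $M^\P_{\mu^X}[\Delta N|\widetilde\cP(\G)]=0$. The plan is to reduce to the usual-conditions version and then return to raw filtrations.

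\textbf{Step 1: work under usual conditions.} Pass to the filtration $\G^\P_\smallertext{+}$. By \Cref{lem::equivalence_loc_square_integrable_martingale}, $M$ is a $(\G^\P_\smallertext{+},\P)$--locally square-integrable martingale with the same predictable quadratic variation. $L$ is a $(\G^\P_\smallertext{+},\P)$--square-integrable martingale, since adding null sets does not affect the martingale property. There the classical theory applies (Jacod--Shiryaev, Theorem III.4.20 / Lemma III.4.24). Project $L-L_0$ orthogonally in the Hilbert space of square-integrable martingales onto the stable subspace generated by $M$ and by the integrals $U\ast\tilde\mu^X$. This yields $Z\in\H^2(M;\G^\P_\smallertext{+},\P)$ and $U\in\H^2(\mu^X;\G^\P_\smallertext{+},\P)$. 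The remainder $N$ is then strongly orthogonal to every $Z'\bcdot M$ and every $U'\ast\tilde\mu^X$.

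\textbf{Step 2: translate orthogonality.} Strong orthogonality to $M^i$ gives $\langle N,M^i\rangle=0$. Strong orthogonality to all $U'\ast\tilde\mu^X$ with $U'$ bounded and predictable gives $M^\P_{\mu^X}[\Delta N\,U']=0$, hence $M^\P_{\mu^X}[\Delta N|\widetilde\cP]=0$ (Jacod--Shiryaev, Lemma III.4.24).

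\textbf{Step 3: where the hypothesis enters.} The hypothesis $M^\P_{\mu^X}[\Delta M^i|\widetilde\cP(\G)]=0$ makes $M$ orthogonal to the jump-measure integrals. It is used for uniqueness: the stable subspaces generated by $M$ and by $\mu^X$ are orthogonal, so $(Z,U)$ are determined up to null seminorm. Uniqueness itself follows by taking two decompositions, differencing, and computing $\langle\cdot\rangle$ of $(Z-Z')\bcdot M+(U-U')\ast\tilde\mu^X$, which is orthogonal to itself.

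\textbf{Step 4: return to $\G$.} Replace $Z$ and $U$ by $\G$-predictable, respectively $\widetilde\cP(\G)$-measurable, $\P$-indistinguishable versions, following the first paragraph of the proof of \Cref{prop::good_version_stochastic_integral}. The integrals then agree up to indistinguishability. $\langle\cdot\rangle$ and $M^\P_{\mu^X}[\cdot|\widetilde\cP]$ are insensitive to augmentation, using \Cref{lem::equivalence_loc_square_integrable_martingale} and the fact that $\widetilde\cP(\G^\P_\smallertext{+})$-functions have $\widetilde\cP(\G)$ versions. So the orthogonality conditions hold relative to $\G$ and $\G_\smallertext{+}$.

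\textbf{Main obstacle.} I expect Step 4 to be the hardest: checking that conditional expectations with respect to $\widetilde\cP(\G)$ and $\widetilde\cP(\G^\P_\smallertext{+})$ coincide $M^\P_{\mu^X}$-a.e. I would argue that every bounded $\widetilde\cP(\G^\P_\smallertext{+})$-measurable function is indistinguishable from a $\widetilde\cP(\G)$-measurable one. Consequently, testing against either class gives the same null condition.
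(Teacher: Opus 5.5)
Your proposal is correct. There is no in-paper proof to compare it with: the paper does not prove this statement, but quotes it from \cite[Proposition~2.6]{possamai2024reflections}. Your route is the natural one. You pass to $\G^\P_\smallertext{+}$ via \Cref{lem::equivalence_loc_square_integrable_martingale}, decompose there by orthogonal projection among square-integrable martingales, and return to $\G$ with the predictable-version argument from the first paragraph of the proof of \Cref{prop::good_version_stochastic_integral}. These are exactly the raw-filtration tools the paper collects in \Cref{app::raw_filtration_stochastic_calculus}.

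One correction to your Step~3. The hypothesis $M^\P_{\mu^X}[\Delta M^i|\widetilde\cP(\G)]=0$ is not only a uniqueness device; Step~1 already depends on it. The projection onto the closed stable subspace generated by $M$ and the integrals $U\ast\tilde\mu^X$ equals some $Z\bcdot M+U\ast\tilde\mu^X$ only if $\{Z\bcdot M : Z\in\H^2(M)\}+\{U\ast\tilde\mu^X : U\in\H^2(\mu^X)\}$ is closed. Each summand is closed, being the isometric image of a complete space. The hypothesis makes the two summands orthogonal, and that orthogonality is what makes the sum closed. Say this explicitly in Step~1.

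Two smaller points. In Step~2, a bounded predictable $U^\prime$ need not belong to $\H^2(\mu^X)$, so localise, or multiply by a strictly positive predictable $V$ with $M^\P_{\mu^X}[V]<\infty$, before concluding $M^\P_{\mu^X}[\Delta N\,U^\prime]=0$. In Step~4, the versions argument is needed only to transfer the hypothesis from $\widetilde\cP(\G)$ to $\widetilde\cP(\G^\P_\smallertext{+})$. The conclusion transfers back by the tower property alone, since $\widetilde\cP(\G)\subseteq\widetilde\cP(\G^\P_\smallertext{+})$.
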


	\section{Proofs of Section \ref{sec::preliminaries}}\label{sec::proofs_preliminaries}
	
	In this part, we provide proofs of the results mentioned in \Cref{sec::preliminaries}.

		\begin{proof}[Proof of \Cref{lem::conditioning_martingale2}]
			We follow the arguments in the proof of \cite[Lemma 3.3]{neufeld2016nonlinear}. It follows from Galmarino's test (see \cite[Theorem IV.101.(b)]{dellacherie1978probabilities}) that $M^{\tau,\omega}_{\tau\smallertext{+}\smallertext{\cdot}}$ is $\F_\smallertext{+}$-adapted for $\omega \in \Omega$. The square-integrability follows from
			\begin{equation*}
				\E^{\P^{\smalltext{\tau}\smalltext{,}\smalltext{\omega}}}\bigg[\sup_{t \in [0,\infty)} |M^{\tau,\omega}_{\tau\smallertext{+}t}|^2 \bigg]
				= \E^{\P^{\smalltext{\tau}\smalltext{,}\smalltext{\omega}}}\bigg[\bigg(\sup_{t \in [0,\infty)} |M_{\tau\smallertext{+}t}|^2 \bigg)^{\tau,\omega}\bigg]
				= \E^{\P}\bigg[\sup_{t \in [0,\infty)} |M_{\tau\smallertext{+}t}|^2\bigg|\cF_\tau\bigg](\omega) < \infty, \; \textnormal{$\P$--a.e. $\omega \in \Omega$.}
			\end{equation*}
			We turn to the martingale property. Fix $0 \leq s < s +\varepsilon < t < \infty$, a bounded, $\cF_{s\smallertext{+}\varepsilon}$-measurable function $g$, and define $\tilde{g}(\omega)\coloneqq g(\omega_{\tau(\omega)\smallertext{+}\smallertext{\cdot}}-\omega_{\tau(\omega)})$. Then $\tilde{g}$ is $\cF_{\tau\smallertext{+}s\smallertext{+}\varepsilon}$-measurable by Galmarino's test, and $\tilde{g}^{\tau,\omega} = g$. By the optional sampling theorem under $\P$ (see \cite[Corollary 3.2.8]{weizsaecker1990stochastic}), we have
			\begin{equation*}
				\E^{\P^{\smalltext{\tau}\smalltext{,}\smalltext{\omega}}}\big[ \big(M^{\tau,\omega}_{\tau\smallertext{+}t}-M^{\tau,\omega}_{\tau\smallertext{+}s\smallertext{+}\varepsilon} \big) g\big] 
				= \E^{\P}\big[ \big(M_{\tau\smallertext{+}t}-M_{\tau\smallertext{+}s\smallertext{+}\varepsilon} \big) \tilde{g} \big| \cF_{\tau}\big](\omega)
				= \E^{\P}\Big[ \E^\P\big[ \big(M_{\tau\smallertext{+}t}-M_{\tau\smallertext{+}s\smallertext{+}\varepsilon} \big) \big| \cF_{(\tau\smallertext{+}s\smalltext{+}\varepsilon)\smallertext{+}} \big] \tilde{g} \Big| \cF_{\tau}\Big](\omega) = 0,
			\end{equation*}
			for $\P$--a.e. $\omega \in \Omega$. A functional monotone class argument, together with the fact that $\cF_{s\smallertext{+}\varepsilon}$ is countably generated, then implies that
			\begin{equation*}
				\E^{\P^{\smalltext{\tau}\smalltext{,}\smalltext{\omega}}}\big[ \big(M^{\tau,\omega}_{\tau\smallertext{+}t}-M^{\tau,\omega}_{\tau\smallertext{+}s\smallertext{+}\varepsilon} \big) g\big]=0,
			\end{equation*}
			holds simultaneously for all $\cF_{s\smallertext{+}\varepsilon}$-measurable and bounded functions $g$, for every $\omega \in \Omega\setminus N$ with $N$ being an $(\cF,\P)$--null set. This implies that
			\begin{equation*}
				\E^{\P^{\smalltext{\tau}\smalltext{,}\smalltext{\omega}}}\big[ M^{\tau,\omega}_{\tau\smallertext{+}t}\big|\cF_{s\smallertext{+}\varepsilon}\big] = \E^{\P^{\smalltext{\tau}\smalltext{,}\smalltext{\omega}}}\big[M^{\tau,\omega}_{\tau\smallertext{+}s\smallertext{+}\varepsilon}\big|\cF_{s\smallertext{+}\varepsilon}\big], \; \textnormal{$\P^{\tau,\omega}$--a.s.}, \; \textnormal{$\P$--a.e. $\omega \in \Omega$}.
			\end{equation*}
			Conditioning with respect to $\cF_{s\smallertext{+}}$, and by right-continuity and $\F_\smallertext{+}$-adaptedness of $M^{\tau,\omega}_{\tau\smallertext{+}\smallertext{\cdot}}$, we find by letting $\varepsilon$ tend to zero along a fixed rational sequence, that
			\begin{equation*}
				\E^{\P^{\smalltext{\tau}\smalltext{,}\smalltext{\omega}}}\big[ M^{\tau,\omega}_{\tau\smallertext{+}t}\big|\cF_{s\smallertext{+}}\big] = \E^{\P^{\smalltext{\tau}\smalltext{,}\smalltext{\omega}}}\big[ M^{\tau,\omega}_{\tau\smallertext{+}s\smallertext{+}\varepsilon}\big|\cF_{s\smallertext{+}}\big] 
				\xrightarrow{\varepsilon\downarrow\downarrow 0} \E^{\P^{\smalltext{\tau}\smalltext{,}\smalltext{\omega}}}\big[M^{\tau,\omega}_{\tau\smallertext{+}s}\big|\cF_{s\smallertext{+}}\big] = M^{\tau,\omega}_{\tau\smallertext{+}s}, \; \textnormal{$\P^{\tau,\omega}$--a.s.}, \; \textnormal{$\P$--a.e. $\omega \in \Omega$.}
			\end{equation*}
			Thus 
			\begin{equation*}
				\E^{\P^{\smalltext{\tau}\smalltext{,}\smalltext{\omega}}}\big[ M^{\tau,\omega}_{\tau\smallertext{+}t}\big|\cF_{s\smallertext{+}}\big] = M^{\tau,\omega}_{\tau\smallertext{+}s}, \; \textnormal{$\P^{\tau,\omega}$--a.s.,}\; \textnormal{$\P$--a.e. $\omega \in \Omega$.}
			\end{equation*}
			The above martingale property can be extended to all rational times $0 \leq s < t < \infty$ on the complement of a $\P$--null set, and by an application of the backward martingale convergence theorem \cite[Theorem V.33]{dellacherie1982probabilities} together with the right-continuity of $M^{\tau,\omega}_{\tau\smallertext{+}\smallertext{\cdot}}$ the martingale property then also extends to  $[0,\infty)$, for $\P$--a.e. $\omega \in \Omega$.
			
			\medskip
			The result for the filtration $\F$ is now immediate: using the backward martingale convergence theorem, it follows that $M$ is an $(\F_\smallertext{+}, \P)$--square-integrable martingale, and Galmarino's test (see \cite[Theorem IV.100(b)]{dellacherie1978probabilities}) implies that $M^{\tau,\omega}_{\tau\smallertext{+}\smallertext{\cdot}}$ is adapted to both $\F$ and $\F_\smallertext{+}$ for $\omega \in \Omega$ (see \cite[Theorems IV.100(b) and IV.101(b)]{dellacherie1978probabilities}). The preceding considerations then imply that $M^{\tau,\omega}_{\tau\smallertext{+}\smallertext{\cdot}}$ is an $(\F_\smallertext{+}, \P^{\tau,\omega})$--square-integrable martingale for $\P$--a.e. $\omega \in \Omega$. The $\F$--adaptedness of $M^{\tau,\omega}_{\tau\smallertext{+}\cdot}$ then implies that this also holds with respect to $\F$. This completes the proof.
		\end{proof}
	
\begin{proof}[Proof of \Cref{lem::measurability_characteristics}]
We closely follow the arguments in the proof of \cite[Lemma 7.1]{neufeld2014measurability}, with only minor adjustments at specific points. Note that $\Omega\times\Omega\times[0,\infty) \ni (\omega,\tilde{\omega},t) \longmapsto (\omega\otimes_\tau\tilde{\omega},\tau(\omega)+t) \in \Omega \times [0,\infty)$ is $\cF\otimes\cP$--$\cP$-measurable by \cite[Theorem IV.97.(a)]{dellacherie1978probabilities}.
Hence
\[
\Omega \times \Omega \times [0,\infty) \ni (\omega,\tilde\omega,t) \longmapsto C^{\tau,\omega}_{\tau+t}(\tilde\omega) = C_{\tau(\omega)+t}(\omega\otimes_\tau\tilde\omega) \in \R
\]
is $\cF\otimes\cP$-measurable and thus also Borel.

\medskip
We turn to the Borel-measurability of $\widehat{\Omega}^C_\tau \subseteq \Omega\times\fP_\textnormal{sem}$. Let $(\mathsf{B}^\P,\mathsf{C},\nu^\P)$ be the versions of the $(\F,\P)$-characteristics of $X$ for $\P \in \fP_\textnormal{sem}$ which are measurable in the probability parameter $\P$ and constructed in \cite[Theorem 2.5]{neufeld2014measurability}.	Let
\[
R^\P \coloneqq \sum_{i = 1}^d \textnormal{Var}(\mathsf{B}^{\P,i}) + \sum_{i,j = 1}^d \textnormal{Var}(\mathsf{C}^{i,j}) + (|x|^2\land1)\ast\nu^\P,
\]
where
\[
\textnormal{Var}(f)_t \coloneqq \lim_{n \rightarrow \infty}\sum_{k = 1}^{2^n} |f_{kt/2^n} - f_{(k-1)t/2^n}|
\]
for any $f: [0,\infty) \longrightarrow \R$; in case $f$ is right-continuous, $\textnormal{Var}(f)_t$ exactly corresponds to the total variation of $f$ on $[0,t]$. It follows from \cite[Theorem 2.5]{neufeld2014measurability} that $\fP_\textnormal{sem}\times\Omega\times[0,\infty)\ni (\P,\omega,t) \longmapsto R^\P_t(\omega) \in [0,\infty]$ is Borel-measurable and that $R^\P$ is $\P$--a.s. finite-valued and right-continuous. Moreover, $\mathsf{B}^\P$, $\mathsf{C}$ and $(|x|^2\land 1)\ast \nu^\P$ are $\P$--a.s. absolutely continuous (component-wise) relative to $R^\P$. Define (with as usual for us $0/0 = 0$ and $\infty - \infty = -\infty$)
\[
\varphi^{\omega,\P,n}_t(\tilde\omega) \coloneqq \sum_{k = 0}^\infty \frac{\big(R^\P_{(k+1)2^{\smalltext{-}\smalltext{n}}}(\tilde\omega) - R^\P_{k2^{\smalltext{-}\smalltext{n}}}(\tilde\omega)\big)}{\big(C^{\tau,\omega}_{\tau\smallertext{+}(k\smallertext{+}1)2^{\smalltext{-}\smalltext{n}}}(\tilde\omega) - C^{\tau,\omega}_{\tau\smallertext{+}k2^{\smalltext{-}\smalltext{n}}}(\tilde\omega)\big)}\mathbf{1}_{(k2^{\smalltext{-}\smalltext{n}},(k+1)2^{\smalltext{-}\smalltext{n}}]}(t), \; (\omega,\P,\tilde\omega,t) \in \Omega\times\fP_\textnormal{sem}\times\Omega\times[0,\infty),
\]
and then
\begin{equation*}
	\varphi^{\omega,\P}_t(\tilde\omega) \coloneqq \limsup_{n \rightarrow \infty} \varphi^{\omega,\P,n}_t(\tilde\omega), \; (\omega,\P,\tilde\omega,t) \in \Omega\times\fP_\textnormal{sem}\times\Omega\times[0,\infty).
\end{equation*}
Then $\Omega\times\fP_\textnormal{sem}\times\Omega\times[0,\infty) \ni (\omega,\P,\tilde\omega,t) \longmapsto \varphi^{\omega,\P}_t(\tilde\omega) \in [-\infty,\infty]$ is Borel-measurable and for each $(\omega,\P) \in \Omega\times\fP_\textnormal{sem}$, the function $\varphi^{\omega,\P}$ is, $\P$--a.s., the Radon--Nikod\'ym derivative with respect to $\d(C^{\tau,\omega}_{\tau\smallertext{+}\smallertext{\cdot}}-C^{\tau,\omega}_{\tau(\omega)}(\omega))$ of the absolutely continuous component of $\d R^\P$ relative to $\d(C^{\tau,\omega}_{\tau\smallertext{+}\smallertext{\cdot}}-C_{\tau(\omega)}(\omega))$ (see \cite[Section XI.17, pages 199--201]{doob1994measure} or \cite[Theorem V.58]{dellacherie1982probabilities}, as well as the remark afterwards). It follows that
\begin{equation}\label{eq::borel_measurability_hat_omega}
	\widehat{\Omega}^C_\tau = \big\{(\omega,\P) \in \Omega\times\fP_\textnormal{sem} : \E^\P\big[\1_{G}(\omega,\P,\cdot)\big] = 1 \big\},
\end{equation}
where
\[
G = \bigg\{(\omega,\P,\tilde\omega) \in \Omega\times\fP_\textnormal{sem}\times\Omega :R^\P_t(\tilde\omega) = \int_0^t \varphi^{\omega,\P}_s(\tilde\omega)\d(C^{\tau,\omega}_{\tau\smallertext{+}\smallertext{\cdot}}-C_{\tau(\omega)}(\omega))_s(\tilde{\omega}), \, \textnormal{$t \in \Q\cap[0,\infty)$}\bigg\}.
\]
The Borel measurability of the set $G$ follows from Fubini's theorem for kernels (see \cite[Proposition 6.9, page 40]{cinlar2011probability}). Moreover, for any bounded and Borel-measurable function $h$ defined on $\Omega\times\fP_\textnormal{sem}\times\Omega$, the map
\[
\Omega\times\fP_\textnormal{sem} \ni (\omega,\P) \longmapsto \E^\P[h(\omega,\P,\cdot)] \in \R,
\]
is Borel-measurable; this follows from a functional monotone class argument by considering $h(\omega,\P,\tilde\omega) = f(\omega)g(\P,\tilde\omega)$ for bounded $f$ and $g$, noting that $\cB(\Omega\times\fP_\textnormal{sem}) = \cB(\Omega)\otimes\cB(\fP_\textnormal{sem})$ (see \cite[Lemma 6.4.2.(i)]{bogachev2007measure}), and the fact that $\fP_\textnormal{sem}\ni \P \longmapsto \E^\P[g(\P,\cdot)]$ is Borel-measurable (see \cite[Lemma 3.1]{neufeld2014measurability}). Thus \eqref{eq::borel_measurability_hat_omega} implies that  $\widehat{\Omega}^C_\tau \in \cB(\Omega\times \fP_\textnormal{sem}) \subseteq \cB(\Omega\times\fP(\Omega)) = \cB(\Omega)\otimes\cB(\fP(\Omega))$.

\medskip
We turn to the differential characteristics. For $(i)$ and $(ii)$, we define component-wise
\begin{gather*}
	\mathsf{b}^{\tau,\omega,\P}_t \coloneqq \tilde{\mathsf{b}}^{\tau,\omega,\P}_t \1_{\{\tilde{\mathsf{b}}^{\smalltext{\tau}\smalltext{,}\smalltext{\omega}\smalltext{,}\smalltext{\P}}_\smalltext{t} \in \R^\smalltext{d}\}}, \; \text{where} \; \tilde{\mathsf{b}}^{\tau,\omega,\P}_t \coloneqq \limsup_{n\rightarrow\infty}\frac{\mathsf{B}^{\P}_t - \mathsf{B}^{\P}_{(t-1/n)\lor 0}}{C^{\tau,\omega}_{\tau+t} - C^{\tau,\omega}_{\tau+(t-1/n) \lor 0}}, \; (\omega,\P,t) \in \Omega\times\fP_\textnormal{sem}\times[0,\infty),\\
	\mathsf{a}^{\tau,\omega}_t \coloneqq \tilde{\mathsf{a}}^{\tau,\omega}_t \1_{\{\tilde{\mathsf{a}}^{\smalltext{\tau}\smalltext{,}\smalltext{\omega}}_\smalltext{t}\in\S^\smalltext{d}_\tinytext{+}\}}, \; \text{where} \; \tilde{\mathsf{a}}^{\tau,\omega}_t \coloneqq \limsup_{n\rightarrow\infty}\frac{\mathsf{C}_t - \mathsf{C}_{(t-1/n)\lor 0}}{C^{\tau,\omega}_{\tau+t} - C^{\tau,\omega}_{\tau+(t-1/n) \lor 0}}, \; (\omega,t) \in \Omega\times[0,\infty),
\end{gather*}
which then satisfy the desired properties in the statement.
		
\medskip
We turn to the third characteristic and apply the following two results.

\begin{lemma}\label{lem::measurable_predictable_projection}
Let $\fP(\Omega)\times\Omega\times[0,\infty)\ni (\P,\omega,t) \longmapsto V^\P_t(\omega) \in \R$ be Borel-measurable and either bounded or nonnegative. There exists a $\cB(\fP(\Omega))\otimes\cP$-measurable, and thus Borel, map $\fP(\Omega)\times\Omega\times[0,\infty)\ni (\P,\omega,t) \longmapsto \prescript{p}{}{V}^\P_t(\omega) \in [-\infty,\infty]$ such that, for each $\P\in\fP(\Omega)$, the process $\prescript{p}{}{V}^\P$ is the $(\F,\P)$--predictable projection of $V^\P$, that is, for every $\P\in\fP(\Omega)$, $\prescript{p}{}{V}^\P$ is $\F$-predictable and, for every $\F$--predictable stopping time $\tau$,
\[
	\E^\P\big[V^\P_\tau \1_{\{\tau < \infty\}}\big|\cF_{\tau\smallertext{-}}\big] = \prescript{p}{}{V}^\P_\tau \1_{\{\tau < \infty\}}, \; \textnormal{$\P$--a.s.}
\]
\end{lemma}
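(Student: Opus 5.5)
We prove \Cref{lem::measurable_predictable_projection} by the standard route for measurable predictable projections depending on a parameter: first elementary integrands, then a functional monotone class argument, and finally a canonical pathwise regularisation that does not depend on $\P$. The key observation is that for the raw filtration $\F$ the $\sigma$-algebra $\cP$ is generated by processes of the form $\1_{A}\1_{(s,t]}$ with $A\in\cF_s$ (together with $\1_{A}\1_{\{0\}}$, $A \in \cF_{0-}$). For a product $V^\P_t(\omega)=g(\P)h(\omega)\1_{(a,b]}(t)$ with $g,h$ bounded Borel, a candidate projection is $g(\P)\,\E^\P[h|\cF_{t\smallertext{-}}](\omega)\1_{(a,b]}(t)$. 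This requires a version of $\E^\P[h|\cF_{t\smallertext{-}}]$ that is jointly measurable in $(\P,\omega,t)$ and $\F$-predictable for each $\P$.

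To obtain such a version, I take the right-continuous, $\F_\smallertext{+}$-optional martingale $\bar g(\P,\omega,t)$ given by \Cref{lem::measurable_martingale_modification}.$(ii)$, applied to the martingale $\E^\P[h|\cF_{t\smallertext{+}}]$; its $t\mapsto$ right-continuity holds for every $(\P,\omega)$. Then I define the left-limit process pathwise as $\limsup_{n}\bar g(\P,\omega,(t-1/n)\lor 0)$, truncated to be real-valued exactly as in the proof of \Cref{prop::Borel-measurability_value_function}. This is $\cB(\fP(\Omega))\otimes\cP$-measurable, by \cite[Theorem IV.97]{dellacherie1978probabilities} applied on the product space with filtration $\cB(\fP(\Omega))\otimes\cF_t$. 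For each $\P$ it equals, up to evanescence, the left limit of a càdlàg martingale, so by the predictable stopping theorem it is the $(\F,\P)$--predictable projection of $h\1$; here $\cF_{\tau\smallertext{-}}$ for the raw and right-continuous versions coincide, as recalled in the appendix. Multiplying by $g(\P)\1_{(a,b]}(t)$ preserves everything.

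The monotone class step goes as follows. Let $\mathcal H$ be the set of bounded Borel $V$ admitting a $\cB(\fP(\Omega))\otimes\cP$-measurable version of the projection. I verify that $\mathcal H$ is a vector space containing constants and closed under bounded monotone limits. For the limit, I define $\prescript{p}{}{V}$ as the $\limsup$ of the projections; the conditional monotone convergence theorem at each predictable $\tau$ gives the characterising identity, and predictability is preserved by the pointwise $\limsup$. Since products $g(\P)h(\omega)k(t)$, with $k$ an indicator of an interval, form a multiplicative class generating $\cB(\fP(\Omega))\otimes\cF\otimes\cB([0,\infty))$, the bounded case follows. For nonnegative $V$, I apply the result to $V\land n$ and take the increasing limit, which yields values in $[0,\infty]$.

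I expect the main obstacle to be the first step. The difficulty is making sure that the left-limit construction yields, for \emph{every} $\P$ simultaneously, an $\F$-predictable (not merely $\F^\P$-predictable) process, and that the identity $\E^\P[V_\tau\1_{\{\tau<\infty\}}|\cF_{\tau-}]=\prescript{p}{}V_\tau\1_{\{\tau<\infty\}}$ holds without completing the filtration. For this I would rely on the fact that $\bar g$ is everywhere right-continuous, and I would carry out the optional sampling argument for the raw filtration via \cite[Corollary 3.2.8]{weizsaecker1990stochastic}.
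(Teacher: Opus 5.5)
Your proposal is correct and takes the same route as the paper. The paper proves this lemma in one line by citing \cite[Lemma 3.5]{neufeld2014measurability}, which gives a $\P$-measurable choice of right-continuous martingale versions (your use of \Cref{lem::measurable_martingale_modification}.$(ii)$ for $\E^\P[h|\cF_{t\smallertext{+}}]$ is a valid substitute), together with the proof of \cite[Theorem VI.43]{dellacherie1982probabilities}, which takes left limits of these martingales for elementary processes and then extends by a monotone class argument, exactly as you describe.
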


\begin{proof}
This follows from \cite[Lemma 3.5]{neufeld2014measurability} and the proof of \cite[Theorem VI.43]{dellacherie1982probabilities}.
\end{proof}

\begin{lemma}\label{lem::measurable_disintegration_compensator}
For every $\P\in\fP_\textnormal{sem}$, there exists a decomposition
\[
	\nu^\P(\d t, \d x) = \mathsf{K}^\P_t (\d x) \d \mathsf{A}^\P_t, \; \textnormal{$\P$--a.s.},
\]
where $(\P,\omega,t) \longmapsto \mathsf{K}^\P_{\omega,t}(\d x)$ is a kernel on $(\R^d,\cB(\R^d))$ given $(\fP_\textnormal{sem}\times\Omega\times[0,\infty), \cB(\fP_\textnormal{sem})\otimes\cP)$, and $(\P,\omega,t) \longmapsto \mathsf{A}^\P_t(\omega)$ is $[0,\infty)$-valued and $\cB(\fP_\textnormal{sem})\otimes\cP$-measurable $($thus Borel on $\fP_\textnormal{sem}\times\Omega\times[0,\infty)$$)$ and for every $\P\in\fP_\textnormal{sem}$, the process $\mathsf{A}^\P$ is an $\F$-predictable process starting at zero with \textnormal{$\P$--a.s.} right-continuous and non-decreasing paths, and $\P$-integrable $($$\E^\P[\mathsf{A}^\P_\infty] < \infty$$)$.
\end{lemma}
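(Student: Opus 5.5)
The plan is to construct the disintegration of $\nu^\P$ in the same way as \cite[Theorem II.1.8]{jacod2003limit} builds a compensator, but with every step chosen measurably in $\P$, in the spirit of \cite[Lemma 6.5 and Theorem 2.5]{neufeld2014measurability}.

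\emph{Step 1: a strictly positive weight.} Following \cite[Lemma 6.5]{neufeld2014measurability}, I would fix a $\widetilde{\cP}$-measurable function $V>0$ on $\Omega\times[0,\infty)\times\R^d$ with $\E^\P[V\ast\mu^X_\infty]\le 1$. I would like $V$ to be independent of $\P$, or at least $\cB(\fP_\textnormal{sem})\otimes\widetilde{\cP}$-measurable. A candidate is $V_t(x)=(|x|^2\wedge 1)\,2^{-t}\,g_t$, where $g$ is a predictable normalisation. If a single choice does not work for all $\P$, I would take $\mathsf{A}^\P \coloneqq \big((|x|^2\wedge1)\ast\nu^\P\big)$ localised and normalised as below. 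The actual choice will be
\[
\mathsf{A}^\P_t \coloneqq \int_0^t\int_{\R^d} V_s(x)\,\nu^\P(\d s,\d x),
\]
where $\nu^\P$ is the measurable-in-$\P$ version of the compensator from \cite[Theorem 2.5]{neufeld2014measurability}. Then $\E^\P[\mathsf{A}^\P_\infty]=\E^\P[V\ast\mu^X_\infty]\le1$. For each $\P$ this process is predictable, and it is $\P$-a.s. right-continuous and non-decreasing. To make it $\cB(\fP_\textnormal{sem})\otimes\cP$-measurable, I would define it through dyadic limits of the predictable dual projection obtained via \Cref{lem::measurable_predictable_projection}, and set it to zero wherever these limits fail to exist or are infinite.

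\emph{Step 2: the kernel.} For a countable $\pi$-system $\{B_k\}$ generating $\cB(\R^d)$, I would define the signed-free ratios $\mathsf{K}^\P_t(B_k)$ as Radon--Nikod\'ym derivatives of $(V^{-1}\1_{B_k}V)\ast\nu^\P$ with respect to $\mathsf{A}^\P$. Each ratio is obtained as a $\limsup$ of difference quotients along dyadic partitions, exactly as $\varphi^{\omega,\P}$ is obtained in the proof of \Cref{lem::measurability_characteristics}. This gives $\cB(\fP_\textnormal{sem})\otimes\cP$-measurability directly, by \cite[Theorem V.58]{dellacherie1982probabilities}.

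\emph{Step 3: extension to a kernel (main obstacle).} Here the set functions are only $\P\otimes\d\mathsf{A}^\P$-a.e. countably additive, so they must be upgraded to a genuine kernel measurably in $(\P,\omega,t)$. I would proceed as in \cite[Theorem II.1.8]{jacod2003limit}. First, identify $\R^d$ Borel-isomorphically with a subset of $[0,1]$ and work with the distribution functions $F^\P_t(q)$, $q\in\Q$. Next, define the set $\Lambda$ of $(\P,\omega,t)$ at which $q\mapsto F^\P_t(q)$ is monotone, right-continuous and has the correct total mass. The set $\Lambda$ is $\cB(\fP_\textnormal{sem})\otimes\cP$-measurable, since it is defined by countably many conditions, and its complement is $\P\otimes\d\mathsf{A}^\P$-null for every $\P$. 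On $\Lambda$ I would take the Stieltjes measure (divided by $V$), and on its complement the zero measure. The identity $\nu^\P(\d t,\d x)=\mathsf{K}^\P_t(\d x)\,\d\mathsf{A}^\P_t$ then holds $\P$-a.s., because both sides agree on the generating sets. Finally, \Cref{lem::uniqueness_of_kernel} together with a monotone class argument extends the agreement from the generating sets to all of $\cB(\R^d)$.
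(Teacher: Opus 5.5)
Your plan can be made to work, but it is much heavier than the paper's argument, and two of its steps are not right as written. The paper rebuilds nothing. The proof of \cite[Proposition~6.4]{neufeld2014measurability} already supplies a kernel $\mathsf{K}^\P$ that is $\cB(\fP_\textnormal{sem})\otimes\cP$-measurable, together with a Borel, $\P$-integrable, right-continuous, $\P$--a.s.\ non-decreasing $A^\P$ with $\nu^\P=\mathsf{K}^\P\,\d A^\P$; its only defect is that $A^\P$ is merely $\F^\P_+$-predictable for each $\P$. The paper sets $\mathsf{A}^\P\coloneqq\prescript{p}{}{A}^\P$ using \Cref{lem::measurable_predictable_projection}, and then identifies it. Since $A^\P$ is $\P$-indistinguishable from an $\F$-predictable process \cite[Appendix~I, Lemma~7]{dellacherie1982probabilities}, uniqueness of predictable projections gives $\mathsf{A}^\P=A^\P$ up to $\P$-evanescence. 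Integrability, path properties and the disintegration therefore transfer, and one finally truncates infinite values and sets $\mathsf{A}^\P_0\coloneqq0$. Your Step~1 reaches for the same lemma, but ``dyadic limits of the predictable dual projection'' misdescribes it: no limits and no dual projection are needed. More importantly, the identification argument is missing, and it is exactly what guarantees that the repaired process still has $\P$--a.s.\ right-continuous non-decreasing paths and $\E^\P[\mathsf{A}^\P_\infty]\le1$. (Discrete Doob--Meyer approximations would not do this job, since in general they converge only weakly in $\L^1$, not almost surely.) Your hesitation about $V$ is also unnecessary: the $V$ of \cite[Lemma~6.5]{neufeld2014measurability} is $\widetilde\cP$-measurable, independent of $\P$, and satisfies $V\ast\mu^X_\infty\le1$ identically.

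Steps~2--3 can be dropped. With the kernel $K^\P$ and process $A^\P$ from \cite{neufeld2014measurability} you have $V\ast\nu^\P=\big(\int_{\R^d}V_t(y)K^\P_t(\d y)\big)\bcdot A^\P$. Hence $K^\P_t(\d x)\big/\int_{\R^d}V_t(y)K^\P_t(\d y)$, set to zero where the denominator is $0$ or $\infty$, is already a $\cB(\fP_\textnormal{sem})\otimes\cP$-kernel matching your $\mathsf{A}^\P$.

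If you keep the reconstruction, Step~2 fails as stated. The quotients defining $\varphi^{\omega,\P}$ in the proof of \Cref{lem::measurability_characteristics} use forward dyadic intervals: on $(k2^{-n},(k+1)2^{-n}]$ they involve the value at $(k+1)2^{-n}$. They are therefore Borel but not predictable, and the paper uses $\varphi$ only to define the set $G$. You need backward quotients $\limsup_n(N_t-N_{(t-1/n)\lor0})/(\mathsf{A}_t-\mathsf{A}_{(t-1/n)\lor0})$, as for $\mathsf{b}$, $\mathsf{a}$ and $\alpha$. Alternatively, run the martingale density construction from the proof of \Cref{lem::measurability_cond_M_tilde_P2} on $(\Omega\times[0,\infty),\cP)$.

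The numerators also need care. Take $N=(\1_{B_k}V)\ast\nu^\P$, not $(V^{-1}\1_{B_k}V)\ast\nu^\P=\1_{B_k}\ast\nu^\P$, which can be infinite. These $N$ need the same projection repair before they are $\cB(\fP_\textnormal{sem})\otimes\cP$-measurable. Finally, \Cref{lem::uniqueness_of_kernel} is a uniqueness statement; extending agreement from the generating sets is a plain $\pi$--$\lambda$ argument.
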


\begin{proof}
	By the proof of \cite[Proposition~6.4]{neufeld2014measurability}, there exist a kernel $(\P,\omega,t) \longmapsto \mathsf{K}^\P_{\omega,t}(\d x)$ on $(\R^d,\cB(\R^d))$ given $(\fP_\textnormal{sem}\times\Omega\times[0,\infty),\cB(\fP_\textnormal{sem})\otimes\cP)$ and a real-valued, Borel-measurable map $(\P,\omega,t) \longmapsto A^\P_t(\omega)$ on $\fP_\textnormal{sem}\times\Omega\times[0,\infty)$ such that $A^\P$ is $\P$-integrable, $\F^\P_\smallertext{+}$-predictable, $\F_\smallertext{+}$-adapted, right-continuous and $\P$--a.s. non-decreasing, $A^\P_0 = 0$, and
  	\begin{equation*}
		\nu^\P(\,\cdot\,;\d t, \d x) = \mathsf{K}^\P_{\cdot,t}(\d x)\d A^\P_t, \; \textnormal{$\P$--a.s.}, \; \P\in\fP_\textnormal{sem}.
	\end{equation*}
	Replacing $A^\P$ by $(A^\P)^+$, which does not change it up to $\P$--indistinguishability, we may assume that $A^\P$ is $[0,\infty)$-valued. It follows from \Cref{lem::measurable_predictable_projection} that there exists a $\cB(\fP_\textnormal{sem})\otimes\cP$-measurable map $\fP_\textnormal{sem} \times \Omega \times [0,\infty) \ni (\P,\omega,t) \longmapsto \mathsf{A}^\P_t(\omega) \in [0,\infty]$ such that $\mathsf{A}^\P$ is the $(\F,\P)$--predictable projection of $A^\P$. Since $A^\P$ is $\F^\P_\smallertext{+}$-predictable, it is $\P$--indistinguishable from an $\F$-predictable process (see \cite[Appendix I, Lemma 7]{dellacherie1982probabilities}), and by uniqueness of the predictable projection, we obtain that $\mathsf{A}^\P = A^\P$ up to $\P$--indistinguishability. This implies that $\mathsf{A}^\P$ inherits the $\P$-integrability and path properties (outside some $\P$--negligible set) of $A^\P$. Replacing $\mathsf{A}^\P$ by $\mathsf{A}^\P\1_{\{\mathsf{A}^\P<\infty\}}$ and then setting $\mathsf{A}^\P_0\coloneqq0$ concludes the proof.
\end{proof}

For each $\P\in\fP_\textnormal{sem}$, let $(\mathsf{K}^{\P},\mathsf{A}^{\P})$ denote the disintegration from \Cref{lem::measurable_disintegration_compensator}. Let
\begin{equation*}
	\alpha^{\tau,\omega,\P}_t \coloneqq \tilde{\alpha}^{\tau,\omega,\P}_t\1_{\{\tilde{\alpha}^{\smalltext{\tau}\smalltext{,}\smalltext{\omega}\smalltext{,}\smalltext{\P}}_\smalltext{t} \in [0,\infty)\}}, \; 
	\text{where} \;
	\tilde{\alpha}^{\tau,\omega,\P}_t \coloneqq \limsup_{n \rightarrow \infty}\frac{\mathsf{A}^{\P}_t - \mathsf{A}^{\P}_{(t-1/n)\lor 0}}{C^{\tau,\omega}_{\tau+t} - C^{\tau,\omega}_{\tau+(t-1/n)\lor 0}}, \; (\omega,\P,t) \in \Omega\times\fP_\textnormal{sem}\times[0,\infty),
\end{equation*}
and then
\begin{equation*}
	D \coloneqq \bigg\{(\omega,\P,\tilde\omega,t) \in \Omega \times \fP_\textnormal{sem} \times \Omega \times [0,\infty): \int_{\R^d}(|x|^2 \land 1) \alpha^{\tau,\omega,\P}_t(\tilde\omega)\mathsf{K}^{\P}_{\tilde\omega,t}(\d x) < \infty \; \text{and} \; \alpha^{\tau,\omega,\P}_t(\tilde\omega)\mathsf{K}^{\P}_{\tilde{\omega},t}(\{0\}) = 0\bigg\}.
\end{equation*}
The set $D$ is in $\cF\otimes\cB(\fP_\textnormal{sem})\otimes\cP$ and thus Borel-measurable.
Define
\begin{equation*}
	\mathsf{K}^{\tau,\omega,\P}_{\tilde\omega,t}(\d x) \coloneqq \1_{D}(\omega,\P,\tilde\omega,t) \alpha^{\tau,\omega,\P}_t(\tilde\omega) \mathsf{K}^{\P}_{\tilde\omega,t}(\d x)  , \; (\omega,\P,\tilde\omega,t) \in \Omega\times\fP_\textnormal{sem}\times\Omega\times[0,\infty),
\end{equation*}
which satisfies $\mathsf{K}^{\tau,\omega,\P}_{\tilde\omega,t}(\d x) \in \cL$ by construction. Note also that $(\omega,\P,\tilde\omega,t)\longmapsto \mathsf{K}^{\tau,\omega,\P}_{\tilde\omega,t}(\d x)$ is a kernel on $(\R^d,\cB(\R^d))$ given $(\Omega \times \fP_\textnormal{sem} \times \Omega \times [0,\infty), \cF\otimes\cB(\fP_\textnormal{sem}) \otimes \cP)$.
The $\cF\otimes\cB(\fP_\textnormal{sem})\otimes\cP$-measurability, and thus Borel-measurability, of 
\begin{equation*}
	\Omega \times \fP_\textnormal{sem} \times \Omega \times [0,\infty) \ni (\omega,\P,\tilde\omega,t) \longmapsto \mathsf{K}^{\tau,\omega,\P}_{\tilde\omega,t}(\d x) \in \cL,
\end{equation*}
follows from an application of \cite[Lemma 2.4]{neufeld2014measurability}. That this indeed is the correct third differential characteristic can be argued as follows. For fixed $(\omega,\P) \in \widehat{\Omega}^C_\tau$, it follows from \Cref{lem::absolute_continuity_nu} that
\[
	\nu^\P(\d t, \d x) = \alpha^{\tau,\omega,\P}_t \mathsf{K}^{\P}_t(\d x)   \d (C^{\tau,\omega}_{\tau\smallertext{+}\smallertext{\cdot}}-C_{\tau(\omega)}(\omega))_t, \; \text{$\P$--a.s.},
\]
and therefore also $\1_D(\omega,\P,\cdot,\cdot) = 1$ up to a $\P\otimes\d(C^{\tau,\omega}_{\tau\smallertext{+}\smallertext{\cdot}}-C_{\tau(\omega)}(\omega))$--null set. Thus
\[
	\nu^\P(\d t, \d x) = \alpha^{\tau,\omega,\P}_t \mathsf{K}^{\P}_t(\d x)   \d (C^{\tau,\omega}_{\tau\smallertext{+}\smallertext{\cdot}}-C_{\tau(\omega)}(\omega))_t = \mathsf{K}^{\tau,\omega,\P}_{t}(\d x) \d (C^{\tau,\omega}_{\tau\smallertext{+}\smallertext{\cdot}}-C_{\tau(\omega)}(\omega))_t, \; \textnormal{$\P$--a.s.}, \; (\omega,\P) \in \widehat{\Omega}^C_\tau.
\]
This completes the proof.
\end{proof}
	
\begin{proof}[Proof of \Cref{cor::shif_quadratic_variation_continuous_martingale_part}]
We only prove the first assertion, as the second one follows immediately from \Cref{prop::conditioning_characteristics2}.$(i)$ or \cite[Theorem 3.1]{neufeld2016nonlinear}. We also suppose, without loss of generality, that $X$ is one-dimensional; otherwise, we argue component-wise. First, the process $(X^{c,\P})^{\tau,\omega}_{\tau\smallertext{+}\smallertext{\cdot}} - X^{c,\P}_{\tau(\omega)}(\omega)$ is a right-continuous, $(\F,\P^{\tau,\omega})$--local martingale with $\P^{\tau,\omega}$--a.s. continuous paths for $\P$--a.e. $\omega \in \Omega$ by \cite[Lemma 3.4]{neufeld2016nonlinear}. Since $X^{c,\P}$ is the continuous local martingale part of $X$ relative to $(\F,\P)$, we have
\[
[X-X^{c,\P}]^{(\F,\P)} = \sum_{0 < s \leq \cdot} (\Delta X_s)^2, \; \textnormal{$\P$--a.s.},
\]
which then yields, using $X^{\tau,\omega}_{\tau\smallertext{+}\smallertext{\cdot}} - X_{\tau(\omega)}(\omega) = X$,
\begin{align*}
	\big[X - \big((X^{c,\P})^{\tau,\omega}_{\tau\smallertext{+}\smalltext{\cdot}} - X^{c,\P}_{\tau(\omega)}(\omega)\big) \big]^{(\F,\P^{\smalltext{\tau}\smalltext{,}\smalltext{\omega}})} 
	&= \big[X^{\tau,\omega}_{\tau\smallertext{+}\smalltext{\cdot}} - X_{\tau(\omega)}(\omega) - \big((X^{c,\P})^{\tau,\omega}_{\tau\smallertext{+}\smalltext{\cdot}} - X^{c,\P}_{\tau(\omega)}(\omega)\big) \big]^{(\F,\P^{\smalltext{\tau}\smalltext{,}\smalltext{\omega}})} \\
	&= [X - X^{c,\P}]^{(\F,\P)}_{\tau\smallertext{+}\smalltext{\cdot}}(\omega\otimes_\tau\cdot) - [X-X^{c,\P}]^{(\F,\P)}_{\tau(\omega)}(\omega) \\
	&= \sum_{\tau(\omega) < s \leq \tau(\omega)\smallertext{+}\cdot} (\Delta X_s(\omega\otimes_\tau\cdot))^2 \\
	&= \sum_{0 < s \leq \cdot} (\Delta X_{\tau\smallertext{+}s}(\omega\otimes_\tau\cdot))^2 = \sum_{0 < s \leq \cdot} (\Delta X_{s})^2, \; \textnormal{$\P^{\tau,\omega}$--a.s.},
\end{align*}
for $\P$--a.e. $\omega \in \Omega$. Here we used \cite[Theorem I.4.47.a)]{jacod2003limit} to justify the second equality. Therefore, by \cite[Theorem I.4.52]{jacod2003limit}, we have
$\big\langle X^{c,\P^{\smalltext{\tau}\smalltext{,}\smalltext{\omega}}} - \big((X^{c,\P})^{\tau,\omega}_{\tau\smallertext{+}\smallertext{\cdot}} - X^{c,\P}_{\tau(\omega)}(\omega)\big)\big\rangle^{(\F,\P^{\smalltext{\tau}\smalltext{,}\smalltext{\omega}})} = 0$, $\P^{\tau,\omega}$--a.s., for $\P$--a.e. $\omega \in \Omega$; the continuous local martingale part of the sum of two semi-martingales is the sum of the respective continuous local martingale parts. Since both local martingales start at zero, this completes the proof.
\end{proof}
	
	\section{Proofs of auxiliary lemmata from Section \ref{sec::proofs_main_results}}\label{sec::lemmas_main_results}
	
	This part is devoted to the proofs of the lemmata introduced in \Cref{sec::proofs_main_results}. In \Cref{sec::proofs_lemmata_measurability} and \ref{sec::proofs_lemmata_regularisation}, we provide the proofs to the lemmata of \Cref{sec::proof_measurability} and \ref{sec::proof_regularisation}, respectively. 
	
	\subsection{Auxiliary lemmata from Section \ref{sec::proof_measurability}}\label{sec::proofs_lemmata_measurability}
	Before proving \Cref{lem::measurable_decomposition}, we need to establish the following two results.
	
	\begin{lemma}\label{lem::measurability_cond_M_tilde_P2}
		Suppose that $\Omega \times \fP(\Omega) \times \Omega \times [0,\infty) \times \R^d \ni (\omega,\P,\tilde\omega,t,x) \longmapsto \cW^{\omega,\P}_t(\tilde\omega; x) \in [-\infty,\infty]$ is Borel-measurable. For each $(\omega,\P) \in \Omega\times\fP(\Omega)$, there exists a version of $M^\P_{\mu^\smalltext{X}}\big[\cW^{\omega,\P}\big|\widetilde\cP\big]$ such that
		\begin{equation*}
			\Omega \times \fP(\Omega) \times \Omega \times [0,\infty) \times \R^d \ni (\omega,\P,\tilde\omega,t,x) \longmapsto M^\P_{\mu^\smalltext{X}}\big[\cW^{\omega,\P}\big|\widetilde\cP\big](\tilde\omega,t,x) \in [-\infty,\infty],
		\end{equation*}
		is $\cB(\Omega)\otimes\cB(\fP(\Omega))\otimes\widetilde\cP$-measurable.	
	\end{lemma}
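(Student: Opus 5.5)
We need a version of $M^\P_{\mu^X}[\cW^{\omega,\P}|\widetilde\cP]$, the conditional expectation with respect to the Doléans measure of $\mu^X$ under $\P$, that is jointly measurable in $(\omega,\P,\tilde\omega,t,x)$. The plan is to reduce to nonnegative $\cW$ and then to an explicit Radon--Nikodým density of two $\sigma$-finite measures on $\widetilde\Omega$. Both measures depend measurably on the parameter $(\omega,\P)$, so we can apply a measurable Radon--Nikodým theorem on a countably generated $\sigma$-algebra. Since $M^\P_{\mu^X}[W|\widetilde\cP]\coloneqq M^\P_{\mu^X}[W^+|\widetilde\cP]-M^\P_{\mu^X}[W^-|\widetilde\cP]$, and the positive and negative parts of a Borel map are Borel, it suffices to treat $\cW\geq0$. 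By monotone convergence for conditional expectations, we may replace $\cW$ by $\cW\land n$ and take a $\limsup$ at the end, which preserves measurability; so we also assume $\cW$ bounded.

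\textbf{Step 1: finite measures.} The measure $M^\P_{\mu^X}$ is only $\sigma$-finite. We fix the strictly positive $\widetilde\cP$-measurable function $V$ from \cite[Lemma 6.5]{neufeld2014measurability}. It can be chosen Borel in $\P$ as well, and satisfies $\E^\P[V\ast\mu^X_\infty]\leq1$. For every bounded nonnegative $\widetilde\cP$-measurable $U$, we then consider the finite measures
\[
m^{\omega,\P}(U)\coloneqq\E^\P\big[(V\cW^{\omega,\P}U)\ast\mu^X_\infty\big],\qquad n^{\P}(U)\coloneqq\E^\P\big[(VU)\ast\mu^X_\infty\big]
\]
on $(\widetilde\Omega,\widetilde\cP)$. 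We have $m^{\omega,\P}\ll n^\P$, and dividing by $V$ shows that the density $\d m^{\omega,\P}/\d n^{\P}$ is a version of $M^\P_{\mu^X}[\cW^{\omega,\P}|\widetilde\cP]$. For each $A\in\widetilde\cP$, the maps $(\omega,\P)\longmapsto m^{\omega,\P}(A)$ and $\P\longmapsto n^\P(A)$ are Borel. Indeed, $(\omega,\P,\tilde\omega)\longmapsto(V\cW^{\omega,\P}\1_A)\ast\mu^X_\infty(\tilde\omega)$ is Borel, since $\mu^X$ is built from the jumps of the canonical process. We then apply \Cref{lem::measurable_martingale_modification}.$(i)$.

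\textbf{Step 2: measurable density.} The $\sigma$-algebra $\widetilde\cP=\cP\otimes\cB(\R^d)$ is countably generated, because $\cP$ is generated by countably many sets of the form $A\times(s,t]$ with $A$ in a countable generator of $\cF_s$, $s$ rational. Fix an increasing sequence of finite partitions $\pi_k$ generating $\widetilde\cP$ and set
\[
h^{\omega,\P}_k\coloneqq\sum_{B\in\pi_k}\frac{m^{\omega,\P}(B)}{n^\P(B)}\1_B,\qquad h^{\omega,\P}\coloneqq\limsup_{k\to\infty}h^{\omega,\P}_k\1_{\{\limsup_k h_k<\infty\}},
\]
with $0/0=0$. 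Each $h_k$ is $\cB(\Omega)\otimes\cB(\fP(\Omega))\otimes\widetilde\cP$-measurable, and hence so is $h$. For fixed $(\omega,\P)$, the sequence $(h_k)$ is a nonnegative $n^\P$-martingale with respect to the filtration $(\sigma(\pi_k))_k$. By the martingale convergence theorem it converges $n^\P$-a.e. to the density $\d m^{\omega,\P}/\d n^\P$; this is the Doob martingale proof of the Radon--Nikodým theorem, cf.\ \cite[Theorem V.58]{dellacherie1982probabilities}. Hence $h^{\omega,\P}$ is the required version on the set $\{V>0\}$, which is everything. Finally, undoing the truncation and the splitting into positive and negative parts keeps joint measurability.

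\textbf{Main obstacle.} The delicate point is Step 1: checking that $(\omega,\P)\longmapsto\E^\P[(V\cW^{\omega,\P}\1_A)\ast\mu^X_\infty]$ is Borel. This requires $V$ to be chosen jointly measurable in $\P$ while staying $\P$-integrable for every $\P$. If the construction of \cite{neufeld2014measurability} does not directly give a $V$ independent of $\P$, I would instead use a $\cB(\fP(\Omega))\otimes\widetilde\cP$-measurable $V^\P$. The martingale argument in Step 2 goes through unchanged with $V^\P$.
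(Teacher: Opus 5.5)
Your proposal is correct and follows essentially the paper's route. You reduce to bounded nonnegative $\cW$, use the strictly positive $\widetilde\cP$-measurable $V$ of \cite[Lemma 6.5]{neufeld2014measurability} to pass to finite measures, and write the Radon--Nikod\'ym density as a $\limsup$ of ratios over finite partitions generating the countably generated $\widetilde\cP$, with Borel dependence on $(\omega,\P)$ from \Cref{lem::measurable_martingale_modification}.$(i)$. The only difference is minor: you take the density of $V\cW^{\omega,\P}\cdot M^\P_{\mu^X}$ with respect to $V\cdot M^\P_{\mu^X}$ directly, which stays bounded, whereas the paper forms $V\,M^\P_{\mu^V}[\cW^{\omega,\P}/V|\widetilde\cP]$ and so must pass through the unbounded integrand $\cW/V$. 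Also, that $V$ satisfies $V\ast\mu^X_\infty\leq1$ identically and does not depend on $\P$, so your fallback with a $\P$-dependent $V^\P$ is not needed.
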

	
	\begin{proof}
		We closely follow the proof of \cite[Lemma 3.1]{neufeld2014measurability} and make appropriate modifications. By \cite[Lemma 6.5]{neufeld2014measurability}, there exists a positive, $\widetilde\cP$-measurable function $V$ satisfying $0\leq V\ast\mu^X_\infty \leq 1$ identically. Let $\mu^V(\omega;\d s, \d x) \coloneqq V_s(\omega,x)\mu^X(\omega;\d s, \d x)$, and suppose, without loss of generality, that the map $(\omega,\P,\tilde\omega,t,x) \longmapsto \cW^{\omega,\P}_t(\tilde\omega;x)$ is nonnegative and bounded; the nonnegative case follows by monotone convergence. 
		Let $(A_n)_{n\in \N}$ be a sequence of sets generating $\widetilde{\cP}$; the $\sigma$-algebra $\cP$ is countably generated by \cite[Lemma 6.3]{neufeld2014measurability}, thus so is $\widetilde{\cP}$. 
		For any $n\in\N$, let $(A^m_n)_{m \in\{0,\dots,k_\smalltext{n}\}}$ be a finite partition generating $\cA_n \coloneqq \sigma(A_1,\ldots,A_n)$.
		Then
		\begin{align*}
			M^{\P}_{\mu^\smalltext{V}}\big[\cW^{\omega,\P}\big|\widetilde\cP\big] \coloneqq
			\frac{1}{M^\P_{\mu^\smalltext{V}}[\mathbf{1}_{\tilde\Omega}]} \limsup_{n \rightarrow\infty}\sum_{m = 0}^{k_\smalltext{n}}\frac{M^\P_{\mu^\smalltext{V}}\big[\cW^{\omega,\P}\mathbf{1}_{A^\smalltext{m}_\smalltext{n}}\big]}{M^\P_{\mu^\smalltext{V}}[\mathbf{1}_{A^\smalltext{m}_\smalltext{n}}]/M^\P_{\mu^\smalltext{V}}[\mathbf{1}_{\tilde\Omega}]}\mathbf{1}_{A^\smalltext{m}_\smalltext{n}}, \; (\omega,\P) \in \Omega \times \fP(\Omega),
		\end{align*} 
		with convention $\lambda / 0 \coloneqq 0$ for any $\lambda \in \R$, is a version of the Radon--Nikodým derivative of the finite measure $A \longmapsto M^{\P}_{\mu^\smalltext{V}}[\1_A \cW^{\omega,\P}]$ with respect to the finite measure $A \longmapsto M^{\P}_{\mu^\smalltext{V}}[\1_A]$ on $(\widetilde{\Omega},\widetilde\cP)$; see \cite[V.56, pages 50--51]{dellacherie1982probabilities} or \cite[XI.17, pages 199--201]{doob1994measure}. Note that $(\omega,\P) \longmapsto M^{\P}_{\mu^\smalltext{V}}[\cW^{\omega,\P}] = \E^\P[\cW^{\omega,\P}\ast\mu^V]$ is measurable by \Cref{lem::measurable_martingale_modification}.$(i)$. We then extend this to the case of unbounded, nonnegative $(\omega,\P,\tilde\omega,t,x) \longmapsto \cW^{\omega,\P}_t(\tilde\omega;x)$ by setting
		\begin{gather*}
			M^\P_{\mu^\smalltext{V}}\big[\cW^{\omega,\P}\big|\widetilde\cP\big] \coloneqq \limsup_{n \rightarrow \infty} M^\P_{\mu^\smalltext{V}}\big[\cW^{\omega,\P} \land n\big|\widetilde\cP\big], \; (\omega,\P) \in \Omega \times \fP(\Omega),\\
			M^\P_{\mu^\smalltext{X}}\big[\cW^{\omega,\P}\big|\widetilde\cP\big] \coloneqq V M^\P_{\mu^\smalltext{V}}\big[\cW^{\omega,\P}/V\big|\widetilde\cP\big], \; (\omega,\P) \in \Omega \times \fP(\Omega).
		\end{gather*}
		It is then straightforward to verify that
		\begin{equation*}
			M^\P_{\mu^\smalltext{X}}\big[\cU M^\P_{\mu^\smalltext{X}}[\cW^{\omega,\P}|\widetilde\cP] \big] 
			= M^\P_{\mu^\smalltext{V}}\big[\cU M^\P_{\mu^\smalltext{V}}[\cW^{\omega,\P}/V|\widetilde\cP]\big] = M^\P_{\mu^\smalltext{V}}\big[\cU \cdot \cW^{\omega,\P}/V\big] = M^\P_{\mu^\smalltext{X}}\big[\cU \cdot \cW^{\omega,\P}\big], \; (\omega,\P) \in \Omega \times \fP(\Omega),
		\end{equation*}
		holds for each bounded, nonnegative, $\widetilde\cP$-measurable function $\cU$. 
		
		\medskip
		The map $(\omega,\P,\tilde\omega,t,x) \longmapsto M^\P_{\mu^\smalltext{X}}\big[\cW^{\omega,\P}|\widetilde\cP\big](\tilde\omega,t,x)$ is clearly $\cB(\Omega)\otimes\cB(\fP(\Omega))\otimes\widetilde{\cP}$-measurable. For a general Borel-measurable map $(\omega,\P,\tilde\omega,t,x) \longmapsto \cW^{\omega,\P}_t(\tilde\omega;x)$, we define 
		\begin{equation*}
			M^\P_{\mu^\smalltext{X}}\big[\cW^{\omega,\P}\big|\widetilde\cP\big] 
			\coloneqq M^\P_{\mu^\smalltext{X}}\big[(\cW^{\omega,\P})^{+}\big|\widetilde\cP\big] - M^\P_{\mu^\smalltext{X}}\big[(\cW^{\omega,\P})^{-}\big|\widetilde\cP\big], \; (\omega,\P) \in \Omega \times \fP(\Omega),
		\end{equation*}
		where we use our usual convention $\infty - \infty = -\infty$. This completes the proof.
	\end{proof}

	\begin{lemma}\label{lem::borel_quadratic_variation2}
		Let $\Omega \times \fP_\textnormal{sem} \times \Omega \times [0,\infty) \ni (\omega,\P,\tilde\omega,t) \longmapsto \cM^{\omega,\P}_t(\tilde\omega) \in \R$ be Borel-measurable. Suppose that, for every $(\omega,\P) \in \Omega \times \fP_\textnormal{sem}$, the process $\cM^{\omega,\P}$ is a right-continuous, $(\F_\smallertext{+},\P)$--square-integrable martingale. Then there exists a Borel-measurable function $\Omega \times \fP_\textnormal{sem} \times \Omega \times [0,\infty) \ni (\omega,\P,\tilde\omega,t) \longmapsto \langle \cM,X^c\rangle^{\omega,\P}_t(\tilde\omega) \in \R^d$ such that, for every $(\omega,\P) \in \Omega \times \fP_\textnormal{sem}$, the process $\langle \cM, X^c\rangle^{\omega,\P}$ is $\F$-predictable and $\langle \cM,X^c\rangle^{\omega,\P} = \langle \cM^{\omega,\P},X^{c,\P}\rangle^{(\F_\tinytext{+},\P)}$, $\P\text{\rm--a.s.}$
	\end{lemma}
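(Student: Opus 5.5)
The plan is to construct the measurable version of $\langle \cM^{\omega,\P},X^{c,\P}\rangle$ by reducing it to objects already known to be measurable in $(\omega,\P)$, namely the optional quadratic covariation, which is defined pathwise, and the measurable predictable projection of \Cref{lem::measurable_predictable_projection}. Since $X^{c,\P}$ is continuous, we have $\langle \cM,X^{c,\P}\rangle = [\cM,X^{c,\P}]$ up to indistinguishability, and $[\cM,X^{c,\P}] = [\cM^c,X^{c,\P}]$. It is therefore enough to produce a Borel version of the pathwise bracket $[\cM^{\omega,\P},X^{c,\P}]$ and then make it $\F$-predictable.

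The first step is to obtain a jointly Borel version of $X^{c,\P}$. Following \cite{neufeld2014measurability}, I would use the measurable first characteristic $\mathsf{B}^\P$ of \Cref{lem::measurability_characteristics} and the measurable compensator. Then $X^{c,\P}$ is the $\P$-limit of $X - \mathsf{B}^\P - (\text{compensated small jumps}) - (\text{big jumps})$, where the compensated-jump integrals are approximated by truncations $\1_{\{|x|>1/k\}}$. Each of these truncated terms is a finite-variation pathwise expression, hence Borel in $(\P,\tilde\omega,t)$. A $\limsup$ along a $\P$-a.s. convergent subsequence chosen measurably (via the standard trick of defining the subsequence through Borel functionals of $\P$, as in \cite[Lemma 3.5 and Section 3]{neufeld2014measurability}) gives a Borel version.

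Next, I would define the covariation pathwise as
\[
[\cM,X^{c}]^{\omega,\P}_t \coloneqq \limsup_{n\to\infty}\sum_{k}\big(\cM^{\omega,\P}_{t\land (k+1)2^{-n}}-\cM^{\omega,\P}_{t\land k2^{-n}}\big)\big((X^{c,\P})_{t\land (k+1)2^{-n}}-(X^{c,\P})_{t\land k2^{-n}}\big).
\]
This requires, again, a subsequence $n_j(\P)$ chosen measurably along which the dyadic sums converge $\P$-a.s. uniformly on compacts. The sums converge in probability, so this choice is possible by computing Borel functionals such as $\P$-probabilities of deviations, which are measurable by \Cref{lem::measurable_martingale_modification}.$(i)$. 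The resulting map is Borel and, for each $(\omega,\P)$, a version of $[\cM^{\omega,\P},X^{c,\P}]$; it is continuous $\P$-a.s. and hence equals $\langle\cM^{\omega,\P},X^{c,\P}\rangle$ up to evanescence.

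Finally, to get $\F$-predictability, I would split the bracket into positive and negative parts by polarisation and, after localisation or truncation to make the parts bounded, apply \Cref{lem::measurable_predictable_projection}. This yields a $\cB(\Omega)\otimes\cB(\fP_\textnormal{sem})\otimes\cP$-measurable map. Because the bracket is already $\F^\P_+$-predictable (it is continuous and adapted), its predictable projection coincides with it $\P$-a.s., exactly as in the proof of \Cref{lem::measurable_disintegration_compensator}. Taking a $\limsup$ over the truncation level and replacing infinite values by zero then gives the claim.

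The main obstacle is the measurable selection of the subsequences, which depend on $(\omega,\P)$. The approach is to define $n_j(\omega,\P)$ as the smallest $n$ for which a Borel quantity, such as $\E^\P[\sup_{s\le j}|S^n_s-S^{m}_s|\wedge1]<2^{-j}$ for all $m\ge n$, holds. This can be replaced by a countable condition, so the selected index is Borel.
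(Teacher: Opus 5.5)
Your proposal is correct in outline, but it takes a genuinely different and heavier route than the paper.

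\textbf{The paper's route.} The paper never constructs $X^{c,\P}$. It works with the canonical process $X$ itself and proceeds as follows. First, it builds a Borel, $\F_\smallertext{+}$-progressive version $I(Z,Y)^{\omega,\P}$ of $\int_0^\cdot Z_{r\smallertext{-}}\,\d Y_r$ using Karandikar's pathwise scheme \cite{karandikar1995pathwise}, with oscillation stopping times $\tau^{\omega,\P,n}_\ell$. Second, it defines $[\cM,X^i]^{\omega,\P}$ by integration by parts from $I(\cM,X^i)$ and $I(X^i,\cM)$. Third, it subtracts the jump sum $\sum\Delta\cM^{\omega,\P}\Delta X^i$, computed along a fixed, $\P$-independent sequence of $\F$--stopping times exhausting the jumps of $X^i$; the remainder is $\langle\cM^{\omega,\P},(X^{c,\P})^i\rangle$. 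Finally, it obtains $\F$-predictability by taking $\limsup_n$ of the values at $(t-1/n)\lor 0$ and invoking \cite[Theorem IV.97.(b)]{dellacherie1978probabilities}. Karandikar's approximations are pathwise and converge $\P$--a.s.\ along the full sequence. Borel measurability in $(\omega,\P)$ is therefore inherited directly, with no $\P$-dependent subsequence selection, no characteristics, and no predictable projection.

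\textbf{Your route.} You need three extra ingredients. The first is a jointly Borel version of $X^{c,\P}$, built from the integrated characteristics $\mathsf{B}^\P,\nu^\P$ of \cite[Theorem 2.5]{neufeld2014measurability}, which exist for every $\P\in\fP_{\textnormal{sem}}$; you should cite that result rather than \Cref{lem::measurability_characteristics}, which only covers $\widehat\Omega^C_\tau$. The second is two measurable subsequence selections. The third is an extension of \Cref{lem::measurable_predictable_projection} to the extra parameter $\omega$. All of this goes through, and your Borel rule for $n_j(\omega,\P)$ is sound. What it buys is a reusable device for choosing ucp limits measurably and, as a by-product, a Borel version of $X^{c,\P}$ itself.

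\textbf{Two small points on the last step.} You need neither polarisation nor truncation there: \Cref{lem::measurable_predictable_projection} already covers nonnegative processes, so projecting the positive and negative parts separately suffices. You could also drop the projection entirely in favour of the paper's left-$\limsup$ device, since your bracket is $\F_\smallertext{+}$-progressive, $\P$--a.s.\ continuous, and zero at time $0$.
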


	\begin{proof}
		As we cannot directly apply \cite[Proposition 6.6]{neufeld2014measurability}, we will adapt the arguments in its proof. Moreover, we suppose without loss of generality that $X$ is one-dimensional; otherwise we argue component-wise. Suppose we are given a Borel-measurable map
		\begin{equation*}
			\Omega \times \fP_\textnormal{sem} \times \Omega \times [0,\infty) \ni (\omega,\P,\tilde\omega,t) \longmapsto (X^{\omega,\P}_t(\tilde\omega),Z^{\omega,\P}_t(\tilde\omega)) \in \R \times \R,
		\end{equation*}
		such that every $Z^{\omega,\P}$ is right-continuous, $\P$--a.s. c\`adl\`ag, and $\F_\smallertext{+}$-adapted, and every $X^{\omega,\P}$ is a right-continuous, $\F_\smallertext{+}$-adapted, $(\F_\smallertext{+},\P)$--semi-martingale, that is, there exist right-continuous, $\F_\smallertext{+}$-adapted processes $M^{\omega,\P}$ and $A^{\omega,\P}$ with $M^{\omega,\P}_0 = A^{\omega,\P}_0 = 0$ such that $M^{\omega,\P}$ is an $(\F_\smallertext{+},\P)$--local martingale, $\P$--almost all paths of $A^{\omega,\P}$ are of locally finite variation, and $$X^{\omega,\P} = X^{\omega,\P}_0 + M^{\omega,\P} + A^{\omega,\P}, \; \textnormal{$\P$--a.s.}$$ We will, in a first step, show that there exists a measurable map
		\begin{equation*}
			\Omega \times \fP_\textnormal{sem} \times \Omega \times [0,\infty) \ni (\omega,\P,\tilde\omega,t) \longmapsto I(Z,X)^{\omega,\P}_t(\tilde\omega) \in [-\infty,\infty],
		\end{equation*}
		such that every $I(Z,X)^{\omega,\P}$ is $\F_\smallertext{+}$-progressive and satisfies
		\begin{equation*}
			I(Z,X)^{\omega,\P}_t = \bigg(\int_0^t Z^{\omega,\P}_{r\smallertext{-}} \d X^{\omega,\P}_r\bigg)^{(\F_\tinytext{+},\P)}, \; t \in [0,\infty), \; \text{$\P$--a.s.}
		\end{equation*}
		For $n \in \N$ and $(\omega,\P) \in \Omega \times \fP_\textnormal{sem}$, we let $\tau^{\omega,\P,n}_0 = 0$ and then inductively define, for $\ell \in \N$,
		\begin{align*}
			\tau^{\omega,\P,n}_{\ell + 1} 
			\coloneqq &\, \inf\big\{t > \tau^{\omega,\P,n}_\ell : |Z^{\omega,\P}_t - Z^{\omega,\P}_{\tau^{\smalltext{\omega}\smalltext{,}\smalltext{\P}\smalltext{,}\smalltext{n}}_\smalltext{\ell}}| \geq 2^{-n} \; \text{or} \; {\limsup}_{\D_\tinytext{+} \ni s \uparrow\uparrow t}|Z^{\omega,\P}_{s} - Z^{\omega,\P}_{\tau^{\smalltext{\omega}\smalltext{,}\smalltext{\P}\smalltext{,}\smalltext{n}}_\smalltext{\ell}}| \geq 2^{-n}\big\} \\
			= &\, \inf\big\{t > \tau^{\omega,\P,n}_\ell : |Z^{\omega,\P}_t - Z^{\omega,\P}_{\tau^{\smalltext{\omega}\smalltext{,}\smalltext{\P}\smalltext{,}\smalltext{n}}_\smalltext{\ell}}| \geq 2^{-n} \; \text{or} \; |Z^{\omega,\P}_{t\smallertext{-}} - Z^{\omega,\P}_{\tau^{\smalltext{\omega}\smalltext{,}\smalltext{\P}\smalltext{,}\smalltext{n}}_\smalltext{\ell}}| \geq 2^{-n}\big\}, \; \textnormal{$\P$--a.s.}
		\end{align*} 
		By noting as in the proof of \cite[Theorem IV.64, page 123]{dellacherie1978probabilities}, that
		\begin{align*}
			\big\{\tau^{\omega,\P,n}_1 \leq t\big\}
			&= \bigcap_{k \in \N^\star} \bigcup_{r \in \{t\} \cup((0,t) \cap \D_\tinytext{+})} \big\{|Z^{\omega,\P}_{r} - Z^{\omega,\P}_0| > 2^{-n} -1/k \big\} \in \cF_{t\smallertext{+}}, \; t \in [0,\infty),
		\end{align*}
		it follows that $\tau^{\omega,\P,n}_1$ is an $\F_\smallertext{+}$--stopping time and similarly that the map $\Omega \times \fP_\textnormal{sem} \times \Omega \ni (\omega,\P,\tilde\omega) \longmapsto \tau^{\omega,\P,n}_1(\tilde\omega) \in [0,\infty]$ is Borel-measurable. By induction, this then also holds for every $\tau^{\omega,\P,n}_\ell$. We note that since $Z^{\omega,\P}$ is $\P$--a.s. c\`adl\`ag, and in particular, $\P$--a.s. left-limited, we obtain $\lim_{\ell \rightarrow\infty}\tau^{\omega,\P,n}_\ell(\tilde\omega) = \infty$ for $\P$--a.e. $\tilde\omega \in \Omega$. We then define
		\begin{equation*}
			I^{\omega,\P,n}_t \coloneqq \limsup_{\ell \rightarrow \infty} \sum_{k = 0}^{\ell} Z^{\omega,\P}_{\tau^{\smalltext{\omega}\smalltext{,}\smalltext{\P}\smalltext{,}\smalltext{n}}_k} (X^{\omega,\P}_{\tau^{\smalltext{\omega}\smalltext{,}\smalltext{\P}\smalltext{,}\smalltext{n}}_{\smalltext{k}\smalltext{+}\smalltext{1}}\land t} - X^{\omega,\P}_{\tau^{\smalltext{\omega}\smalltext{,}\smalltext{\P}\smalltext{,}\smalltext{n}}_{\smalltext{k}}\land t}),
		\end{equation*}
		which is $\F_\smallertext{+}$-progressive as it is the limsup of right-continuous and $\F_\smallertext{+}$-adapted processes. Hence,
		\begin{equation*}
			I(Z,X)^{\omega,\P}_t(\tilde{\omega}) \coloneqq \limsup_{n \rightarrow \infty}I^{\omega,\P,n}_t(\tilde{\omega}),
		\end{equation*}
		is $\F_\smallertext{+}$-progressive. Note that
		\begin{equation*}
			\Omega \times \fP_\textnormal{sem} \times \Omega \times [0,\infty) \ni (\omega,\P,\tilde\omega,t) \longmapsto I(Z,X)^{\omega,\P}_t(\tilde\omega) \in [-\infty,\infty],
		\end{equation*}
		is Borel-measurable. By the proof of \cite[Theorem 2]{karandikar1995pathwise}, we obtain
		\begin{equation*}
			I(Z,X)^{\omega,\P}_t = \bigg(\int_0^t Z^{\omega,\P}_{r\smallertext{-}}\d X^{\omega,\P}_r\bigg)^{(\F_\tinytext{+},\P)}, \; t \in [0,\infty), \; \text{$\P$--a.s.},
		\end{equation*}
		where
		\[
			Z^{\omega,\P}_{t\smallertext{-}} \coloneqq \widetilde{Z}^{\omega,\P}_{t\smallertext{-}} \1_{\{\tilde{Z}^{\smalltext{\omega}\smalltext{,}\smalltext{\P}}_{\smalltext{t}\smalltext{-}}\in\R\}} \1_{\{t > 0\}}, \; \textnormal{where} \; \widetilde{Z}^{\omega,\P}_{t\smallertext{-}} \coloneqq \limsup_{n \rightarrow\infty} Z^{\omega,\P}_{(t-1/n)\lor 0}, \; t \in [0,\infty).
		\]
		is $\F$-predictable by \cite[Theorem IV.97.(b)]{dellacherie1978probabilities} and satisfies 
		\[
			Z^{\omega,\P}_{t\smallertext{-}} = \lim_{s\uparrow\uparrow t} Z^{\omega,\P}_s, \; t \in (0,\infty), \; \textnormal{$\P$--a.s.}
		\]
		We now use the map $I(Z,X)$ to construct a jointly Borel-measurable version of $\langle\cM^{\omega,\P},X^{c,\P}\rangle^{(\F_\smallertext{+},\P)}$. Denote by $X^i$ the $i$th component of the canonical process $X$. Let
		\begin{align*}
			[\cM,X^i]^{\omega,\P} 
			&\coloneqq \cM^{\omega,\P} X^i - \cM^{\omega,\P}_0 X^i_0 - I(\cM,X^i)^{\omega,\P} - I(X^i,\cM)^{\omega,\P} \\
			&= \cM^{\omega,\P} X^i - \cM^{\omega,\P}_0 X^i_0 - \bigg(\int_0^\cdot \cM^{\omega,\P}_{r\smallertext{-}}\d X^i_r\bigg)^{(\F,\P)} - \bigg(\int_0^\cdot X^i_{r\smallertext{-}}\d \cM^{\omega,\P}_r\bigg)^{(\F_\tinytext{+},\P)} = [\cM^{\omega,\P},X^i]^{(\F_\tinytext{+},\P)}, \; \text{$\P$--a.s.}
		\end{align*}
		Define $\Delta\cM^{\omega,\P}$ as $\Delta\cM^{\omega,\P}_t \coloneqq \limsup_{n\rightarrow \infty}(\cM^{\omega,\P}_t - \cM^{\omega,\P}_{(t-1/n)\lor 0})$ since every $\cM^{\omega,\P}$ is only $\P$--a.s. c\`adl\`ag. The $\F$-optional set $\{\Delta X^i \neq 0\}$ is the countable union of disjoint graphs of $\F$--stopping times $(\sigma^i_n)_{n \in \N}$ by \cite[Theorem B, page xiii, and Remark E, page xvii]{dellacherie1982probabilities} and then \cite[Theorem IV.88.(a), page 139]{dellacherie1978probabilities}. We then define the $\F_\smallertext{+}$-optional process
		\begin{equation*}
			S(\cM,X^i)^{\omega,\P} \coloneqq \limsup_{k \rightarrow \infty} \sum_{n = 1}^k \Delta \cM^{\omega,\P}_{\sigma^\smalltext{i}_\smalltext{n}} \Delta X^i_{\sigma^\smalltext{i}_\smalltext{n}} \1_{\llbracket \sigma^\smalltext{i}_\smalltext{n},\infty \rrparenthesis} = \sum_{s \in (0,\cdot]}\Delta\cM^{\omega,\P}_s\Delta X^i_s, \; \textnormal{$\P$--a.s.},
		\end{equation*}
		see also \cite[Definition 7.39]{he1992semimartingale}, and then let
		\begin{equation*}
			Q(\cM,X^i)^{\omega,\P} \coloneqq [\cM,X^i]^{\omega,\P} - S(\cM,X^i)^{\omega,\P} = [\cM^{\omega,\P},X^i]^{(\F_\tinytext{+},\P)} - \sum_{s \in (0,\cdot]}\Delta\cM^{\omega,\P}_s\Delta X^i_s = \langle \cM^{\omega,\P},(X^{c,\P})^i\rangle^{(\F_\tinytext{+},\P)}, \; \text{$\P$--a.s.},
		\end{equation*}
		and $Q(\cM,X^i)^{\omega,\P}$ is thus $\F_\smalltext{+}$-progressive and $\P$--a.s. continuous. We then define component-wise
		\begin{equation*}
			\langle \cM,X^{c} \rangle^{\omega,\P} \coloneqq \widetilde{Q}(\cM,X)^{\omega,\P} \1_{\R^\smalltext{d}}\big(\widetilde{Q}(\cM,X)^{\omega,\P}\big), \; \text{where} \; \widetilde{Q}(\cM,X)^{\omega,\P}_t \coloneqq \limsup_{n \rightarrow \infty} Q(\cM,X)^{\omega,\P}_{(t - 1/n) \lor 0}.
		\end{equation*}
		Then $\langle \cM,X^{c} \rangle^{\omega,\P}$ is $\F$--predictable by \cite[Theorem IV.97.(b)]{dellacherie1978probabilities}, coincides with the $(\F_\smallertext{+},\P)$-predictable quadratic co-variation of $\cM^{\omega,\P}$ with $X^{c,\P}$ and is thus in particular also $\P$--a.s. continuous for each $(\omega,\P) \in \Omega \times \fP_\textnormal{sem}$. Moreover, the Borel-measurability of
		\begin{equation*}
			\Omega \times \fP_\textnormal{sem} \times \Omega \times [0,\infty) \ni (\omega,\P,\tilde\omega,t) \longmapsto \langle \cM, X^c\rangle^{\omega,\P}_t(\tilde\omega) \in \R^d,
		\end{equation*}
		is preserved along the way, which completes the proof.
	\end{proof}	

	\begin{proof}[Proof of \Cref{lem::measurable_decomposition}]
		For each $(\omega,\P) \in \Omega \times \fP_\textnormal{sem}$, we find by \cite[Proposition 2.6]{possamai2024reflections}  a unique triple $(\overline\cZ^{\omega,\P},\overline\cU^{\omega,\P},\overline\cN^{\omega,\P})$ in $\H^2(X^{c,\P};\F,\P) \times \H^2(\mu^X;\F,\P) \times \cH^{2,\perp}_0(X^{c,\P},\mu^X,\F_\smallertext{+},\P)$ for which \eqref{eq::martingale_representation2} holds. We show that $\overline{\cZ}^{\omega,\P}$ and $\overline{\cU}^{\omega,\P}$ admit versions $\cZ^{\omega,\P}$ and $\cU^{\omega,\P}$ measurable with respect to the stated $\sigma$-algebras.
		
		\medskip
		We start with $(\overline\cZ^{\omega,\P})_{(\omega,\P) \in \Omega\times\fP_\smalltext{\textnormal{sem}}}$. By \cite[Proposition 6.6]{neufeld2014measurability}, there exists an $\F$-predictable, $\S^d_\smallertext{+}$-valued process $\mathsf{C}$ which coincides with the second characteristic of $X$, up to $\P$-evanescence for each $\P\in\fP_\textnormal{sem}$. Let $\mathsf{A} \coloneqq \textnormal{Tr}[\mathsf{C}]$ be the trace process of $\mathsf{C}$. Then $\d\mathsf{C} = \mathsf{c}\d\mathsf{A}$, $\fP_\textnormal{sem}$--q.s., where
			\[
				\mathsf{c}_t \coloneqq \mathsf{c}^\prime_t \1_{\{\mathsf{c}^\prime_t \in \S^d_\smallertext{+}\}}, \; \textnormal{with} \; \mathsf{c}^\prime_t \coloneqq \limsup_{n \rightarrow \infty}\frac{\mathsf{C}_t - \mathsf{C}_{(t-1/n)\lor 0}}{\mathsf{A}_t - \mathsf{A}_{(t-1/n)\lor 0}}, \; t \in [0,\infty).
			\]
			Here the limit on the right-hand side is taken component-wise, and we use the convention $0/0 = 0$. For each $(\omega,\P) \in \Omega\times\fP_\textnormal{sem}$, the process $\overline\cZ^{\omega,\P}$ satisfies (see \cite[Theorem III.6.4.b)]{jacod2003limit})
			\begin{equation*}
				\langle \cM^{\omega,\P},(X^{c,\P})^j\rangle^{(\F_\tinytext{+},\P)} = \bigg(\sum_{i = 1}^d(\overline\cZ^{\omega,\P})^i  \mathsf{c}^{i,j}\bigg) \bcdot \mathsf{A} , \; j \in \{1,\ldots,d\}, \; \text{$\P$--a.s.},
			\end{equation*}
			or, equivalently component-wise
			\begin{equation*}
				\frac{\d\langle \cM^{\omega,\P},X^{c,\P}\rangle^{(\F_\tinytext{+},\P)}}{\d \mathsf{A}} = \mathsf{c} \overline\cZ^{\omega,\P}, \; \text{$\d \mathsf{A}$--a.e.}, \; \text{$\P$--a.s.}
			\end{equation*}
			By \Cref{lem::borel_quadratic_variation2}, there exists a Borel-measurable map $\Omega \times \fP_\textnormal{sem} \times \Omega \times [0,\infty) \ni (\omega,\P,\tilde\omega,t) \longmapsto \langle \cM,X^{c}\rangle^{\omega,\P}_t(\tilde\omega) \in \R^d$ such that, for each $(\omega,\P) \in \Omega\times\fP_\textnormal{sem}$, the process $\langle \cM, X^{c}\rangle^{\omega,\P}$ is $\F$-predictable and
			\begin{equation*}
				\langle \cM,X^{c}\rangle^{\omega,\P} = \langle \cM^{\omega,\P},X^{c,\P}\rangle^{(\F_\tinytext{+},\P)}, \; \text{$\P$--a.s.}
			\end{equation*}	
			We now define the $\F$-predictable process $\cZ^{\omega,\P}$ by
			\begin{equation*}
				\cZ^{\omega,\P}_t \coloneqq \mathsf{c}_t^\oplus \mathsf{Z}^{\omega,\P}_t\1_{\{\mathsf{Z}^{\smalltext{\omega}\smalltext{,}\smalltext{\P}}_\smalltext{t} \in \R^\smalltext{d}\}}, 
				\; 
				\text{where} 
				\; 
				\mathsf{Z}^{\omega,\P}_t \coloneqq \limsup_{n \rightarrow \infty} \frac{\langle\cM,X^{c}\rangle^{\omega,\P}_t - \langle \cM,X^{c}\rangle^{\omega,\P}_{(t-1/n)\lor 0}}{\mathsf{A}_{t} - \mathsf{A}_{(t-1/n) \lor 0}},\; t \in [0,\infty),
			\end{equation*}
			where the limit on the right-hand side is taken component-wise, and where $\mathsf{c}_t^\oplus$ denotes the Moore--Penrose pseudo-inverse of $\mathsf{c}_t$. Then $\cZ^{\omega,\P} = \mathsf{c}^\oplus \mathsf{c} \overline\cZ^{\omega,\P}$, $\d \mathsf{A}$--a.e., $\P$--a.s., which then implies, together with $\mathsf{c}\mathsf{c}^\oplus\mathsf{c} = \mathsf{c}$ and $(\mathsf{c}^\oplus)^\top = (\mathsf{c}^\top)^\oplus = \mathsf{c}^\oplus$, that
			\begin{equation*}
				(\cZ^{\omega,\P} - \overline\cZ^{\omega,\P})^\top \mathsf{c} (\cZ^{\omega,\P} - \overline\cZ^{\omega,\P}) = 0, \; \text{$\d \mathsf{A}$--a.e.}, \; \text{$\P$--a.s.}
			\end{equation*}
			Therefore $\cZ^{\omega,\P} = \overline\cZ^{\omega,\P}$ in $\H^2(X^{c,\P};\F,\P)$ and thus $(\cZ^{\omega,\P}\bcdot X^{c,\P})^{(\P)} = (\overline\cZ^{\omega,\P}\bcdot X^{c,\P})^{(\P)}$, $\P$--a.s., by \cite[Theorem III.6.4.c)]{jacod2003limit}, for each $(\omega,\P) \in \Omega \times \fP_\textnormal{sem}$. Moreover, this construction of the family $(\cZ^{\omega,\P})_{(\omega,\P) \in \Omega \times \fP_{\smalltext{\textnormal{sem}}}}$ immediately implies that
			\begin{equation*}
				\Omega \times \fP_\textnormal{sem} \times \Omega \times [0,\infty) \ni (\omega,\P,\tilde\omega,t) \longmapsto \cZ^{\omega,\P}_t(\tilde\omega) \in \R^d,
			\end{equation*}
			is Borel-measurable.
		
		\medskip
		We turn to $(\overline\cU^{\omega,\P})_{(\omega,\P) \in \Omega\times\fP_{\smalltext{\textnormal{sem}}}}$. We denote by $\nu^\P(\d s, \d x) = \mathsf{K}^\P_t (\d x)\d \mathsf{A}^\P_t$ the disintegration obtained in \Cref{lem::measurable_disintegration_compensator}.
		Let $(\P,\omega,t)\longmapsto \mathsf{a}^\P_t(\omega)$ be the $\cB(\fP_\textnormal{sem})\otimes\cP$-measurable map defined by $\mathsf{a}^{\P}_t \coloneqq \mathsf{K}^{\P}_{t}(\R^d)\Delta \mathsf{A}^\P_t$, where
		\[
			\Delta \mathsf{A}^\P_t \coloneqq \mathsf{A}^{\P,\Delta}_t \1_{\{\mathsf{A}^{\smalltext{\P}\smalltext{,}\smalltext{\Delta}}_{\smalltext{t}} \in [0,\infty)\}}, \; \textnormal{for} \; \mathsf{A}^{\P,\Delta}_t \coloneqq \limsup_{n \rightarrow \infty} \big(\mathsf{A}^\P_t - \mathsf{A}^\P_{(t-1/n)\lor 0}\big).
		\]
		For $(\omega,\P) \in \Omega\times\fP_\textnormal{sem}$, we let $\cU^{\omega,\P} \coloneqq \mathsf{U}^{\omega,\P} \mathbf{1}_{\{\mathsf{U}^{\smalltext{\omega}\smalltext{,}\smalltext{\P}} \in \R\}}$, where
		\begin{equation*}
			\mathsf{U}^{\omega,\P}_t(\tilde\omega;x) \coloneqq \cW^{\omega,\P}_t(\tilde\omega;x) + \frac{1}{1-\mathsf{a}^{\P}_t(\tilde\omega)} \mathbf{1}_{\{\mathsf{a}^{\smalltext{\P}}_\smalltext{t}(\tilde\omega) < 1\}}\int_{\R^\smalltext{d}}\cW^{\omega,\P}_t(\tilde\omega;x)\mathsf{K}^{\P}_{\tilde\omega,t}(\d x)\Delta \mathsf{A}^\P_t(\tilde\omega), \; (\tilde\omega,t,x) \in \Omega\times[0,\infty) \times \R^d,
		\end{equation*}
		and $\cW^{\omega,\P} \coloneqq M^\P_{\mu^\smalltext{X}}\big[(\Delta\cM^{\omega,\P})\big| \widetilde\cP\big] = M^\P_{\mu^\smalltext{X}}\big[\Delta(\cM^{\omega,\P}-(\cZ^{\omega,\P}\bcdot X^{c,\P})^{(\F_\tinytext{+},\P)})\big|\widetilde\cP\big]$, $M^\P_{\mu^\smalltext{X}}$--a.e., is defined using \Cref{lem::measurability_cond_M_tilde_P2} with
		\begin{equation*}
			(\Delta\cM^\P)^{\omega,\P}_t \coloneqq \cM^{\omega,\P,\Delta}_t \mathbf{1}_{\{\cM^{\smalltext{\omega}\smalltext{,}\smalltext{\P}\smalltext{,}\smalltext{\Delta}}_\smalltext{t} \in \R\}}, \; \text{where} \; \cM^{\omega,\P,\Delta}_t \coloneqq \limsup_{n \rightarrow \infty}\big(\cM^{\omega,\P}_t - \cM^{\omega,\P}_{(t-1/n) \lor 0}\big).
		\end{equation*}
		Then $\cU^{\omega,\P} \in \H^2(\mu^X;\F,\P)$ (see the proof of \cite[Theorem III.4.20]{jacod2003limit}) and
		\begin{equation*}
			\cN^{\omega,\P} \coloneqq \cM^{\omega,\P} - \cM^{\omega,\P}_0 - (\cZ^{\omega,\P}\bcdot X^{c,\P})^{(\P)}- (\cU^{\omega,\P}\ast\tilde\mu^{X,\P})^{(\P)},
		\end{equation*}
		satisfies $M^{\P}_{\mu^\smalltext{X}}\big[\Delta\cN^{\omega,\P}\big|\widetilde\cP\big] = M^\P_{\mu^\smalltext{X}}\big[\Delta(\cN^{\omega,\P}+(\cZ^{\omega,\P}\bcdot X^{c,\P})^{(\F,\P)})\big|\widetilde\cP\big]=0,$ and
		\begin{align*}
			\langle \cN^{\omega,\P},X^{c,\P}\rangle^{(\F_\tinytext{+},\P)} &= \langle \cN^{\omega,\P} + (\cU^{\omega,\P}\ast\tilde\mu^{X,\P})^{(\P)},X^{c,\P}\rangle^{(\F_\tinytext{+},\P)} = \langle \cM^{\omega,\P} - (\cZ^{\omega,\P}\bcdot X^{c,\P})^{(\P)}, X^{c,\P} \rangle^{(\F_\tinytext{+},\P)} \\
			&= \big\langle \cM^{\omega,\P} - ({\overline\cZ}^{\omega,\P}\bcdot X^{c,\P})^{(\P)}, X^{c,\P} \big\rangle^{(\F_\tinytext{+},\P)} = \big\langle ({\overline\cU}^{\omega,\P}\ast\tilde\mu^{X,\P})^{(\P)} + {\overline\cN}^{\omega,\P}, X^{c,\P} \big\rangle^{(\F_\tinytext{+},\P)} = 0.
		\end{align*}
		Here, we used the $(\F_\smallertext{+},\P)$-orthogonality of $\overline\cN^{\omega,\P}$ with respect to $X^{c,\P}$ and $\mu^X$ in the last equality.
		This yields the $(\F_\smallertext{+},\P)$-orthogonality of $\cN^{\omega,\P}$ and $X^{c,\P}$. It follows by uniqueness of the decomposition of $\cM^{\omega,\P}$, that $\cU^{\omega,\P} = \overline\cU^{\omega,\P}$ in $\H^2(\mu^X;\F_\smallertext{+},\P)$ and thus $\cN^{\omega,\P} = \overline\cN^{\omega,\P}$ in $\cH^{2,\perp}(X^{c,\P},\mu^X;\F_\smallertext{+},\P)$. This shows that $(i)$ is satisfied
		
		\medskip
		For $(ii)$ to hold, we need to slightly modify the integrand $\cU^{\omega,\P}$. Since $(\omega,\P,\tilde\omega,r) \longmapsto \|\cU^{\omega,\P}_r(\tilde\omega;\cdot)\|_{\hat\L^\smalltext{2}_{\smalltext{\omega}\smalltext{\otimes}_\tinytext{s}\smalltext{\tilde\omega}\smalltext{,}\smalltext{s}\smalltext{+}\smalltext{r}}(\mathsf{K}^{\smalltext{s}\smalltext{,}\smalltext{\omega}\smalltext{,}\smalltext{\P}}_{\smallertext{\omega}\smalltext{\otimes}_\tinytext{s}\smalltext{\tilde\omega}\smalltext{,}\smalltext{r}})}$ is $\cB(\widehat{\Omega}^C_s)\otimes\cP$-measurable on $\Omega^C_s \times \Omega \times [0,\infty)$, 
		 and $\|\cU^{\omega,\P}_r(\tilde\omega;\cdot)\|_{\hat\L^\smalltext{2}_{\smalltext{\omega}\smalltext{\otimes}_\tinytext{s}\smalltext{\tilde\omega}\smalltext{,}\smalltext{s}\smalltext{+}\smalltext{r}}(\mathsf{K}^{\smalltext{s}\smalltext{,}\smalltext{\omega}\smalltext{,}\smalltext{\P}}_{\smallertext{\omega}\smalltext{\otimes}_\tinytext{s}\smalltext{\tilde\omega}\smalltext{,}\smalltext{r}})} < \infty$, $\P\otimes\d (C^{s,\omega}_{s\smallertext{+}\smallertext{\cdot}} - C_s(\omega))$--a.e., we can redefine the $\cF\otimes\cB(\fP_\textnormal{sem})\otimes\cP$-measurable map
				\begin{equation*}
					\Omega \times \fP_\textnormal{sem} \times \Omega \times [0,\infty) \times \R^d \ni (\omega,\P,\tilde\omega,t,x) \longmapsto \cU^{\omega,\P}_t(\tilde\omega;x) \in \R,
				\end{equation*}
				to be zero on the $\cF\otimes\cB(\fP_\textnormal{sem})\otimes\cP$-measurable subset
				\[
					\big\{ (\omega,\P,\tilde\omega,t) \in \Omega^C_s \times \Omega \times [0,\infty) : \|\cU^{\omega,\P}_r(\tilde\omega;\cdot)\|_{\hat\L^\smalltext{2}_{\smalltext{\omega}\smalltext{\otimes}_\tinytext{s}\smalltext{\tilde\omega}\smalltext{,}\smalltext{s}\smalltext{+}\smalltext{r}}(\mathsf{K}^{\smalltext{s}\smalltext{,}\smalltext{\omega}\smalltext{,}\smalltext{\P}}_{\smallertext{\omega}\smalltext{\otimes}_\tinytext{s}\smalltext{\tilde\omega}\smalltext{,}\smalltext{r}})} = \infty \big\} \subseteq \Omega \times \fP_\textnormal{sem} \times \Omega \times [0,\infty).
				\]
				This yields the desired property stated in $(ii)$, which completes the proof.
	\end{proof}

\begin{proof}[Proof of \Cref{lem::conditioning_bsde2}]
For simplicity, we assume $s = 0$, so $\P \in \fP_0$; the general case follows by analogous arguments. Throughout the proof, we denote by $\sN$ a $\P$--null set that may be enlarged from line to line by at most countably many $\P$--null sets. We also refrain from explicitly writing the terminal time $T$ and terminal condition $\xi$ when referring to the BSDE solutions. Finally, we observe that whenever $\P[A] = 1$ for $A \in \cF$, we can find a $\P$--null set $\sN$ (in $\cF_t$) such that $\E^{\P^{t,\omega}}[\1_{A}(\omega\otimes_t\cdot)] = 1$ for each $\omega \in \Omega \setminus \sN$.

\medskip
Since 
\begin{equation*}
	\E^\P[\cY^\P_t|\cF_t](\omega) = \E^{\P^{\smalltext{t}\smalltext{,}\smalltext{\omega}}}[\cY^\P_t(\omega\otimes_t\cdot)], \; \omega \in \Omega\setminus\sN,
\end{equation*}
it is enough to show that $\cY^\P_t(\omega\otimes_t\cdot) = \cY^{t,\omega,\P^{\smalltext{t}\smalltext{,}\smalltext{\omega}}}_0((T-t\land T)^{t,\omega},\xi^{t,\omega})$, $\P^{t,\omega}$--a.s., for $\P$--a.e. $\omega \in \Omega$. We will do this by first arguing that the $\P^{t,\omega}$--BSDE with terminal time $(T - t \land T)^{t,\omega}$, terminal condition $\xi^{t,\omega}$ and generator $f^{t,\omega,\P^{\smalltext{t}\smalltext{,}\smalltext{\omega}}}$ is well-posed for $\P$--a.s. $\omega \in \Omega$, and then that $$(\cY^{t,\omega}_{t\smallertext{+}\smallertext{\cdot}},\cZ^{t,\omega}_{t\smallertext{+}\smallertext{\cdot}},\cU^{t,\omega}_{t\smallertext{+}\smallertext{\cdot}},\cN^{t,\omega}_{t\smallertext{+}\smallertext{\cdot}}-\cN^{t,\omega}_t) \coloneqq (\cY^\P_{t\smallertext{+}\smallertext{\cdot}}(\omega\otimes_t\cdot), \cZ^{\P}_{t\smallertext{+}\smallertext{\cdot}}(\omega\otimes_t\cdot), \cU^{\P}_{t\smallertext{+}\smallertext{\cdot}}(\omega\otimes_t\cdot),\cN^\P_{t\smallertext{+}\smallertext{\cdot}}(\omega\otimes_t\cdot) -\cN^\P_{t}(\omega\otimes_t\cdot))$$ corresponds to its unique solution for $\P$--a.e. $\omega \in \Omega$. For simplicity, we drop the dependence on $\P$ in the notation of the $\P$--BSDE solution, and simply write $(\cY,\Z,\cU,\cN)$ instead of $(\cY^\P,\cZ^\P,\cU^\P,\cN^\P)$.

\medskip
The required measurability of the data $((T - t \land T)^{t,\omega},\xi^{t,\omega},f^{t,\omega,\P^{\smalltext{t}\smalltext{,}\smalltext{\omega}}})$ is immediate for $\omega \in \Omega\setminus\sN$. Integrability follows from
\begin{equation*}
	\E^{\P}[\cE(\hat\beta A)_T\xi^2] + \E^{\P}\bigg[\int_{0}^T \cE(\hat\beta A)_r \frac{|f^{\P}_r(0,0,0,\mathbf{0})|^2}{\alpha^2_r}\d C_r\bigg] < \infty, 
\end{equation*}
which implies
\begin{align*}
	\E^{\P^{\smalltext{t}\smalltext{,}\smalltext{\omega}}}\big[\cE(\hat\beta A^{t,\omega}_{t\smallertext{+}\smallertext{\cdot}})_{(T - t\land T)^{\smalltext{t}\smalltext{,}\smalltext{\omega}}} |\xi^{t,\omega}|^2\big]  + \E^{\P^{\smalltext{t}\smalltext{,}\smalltext{\omega}}}\bigg[\int_0^{(T-t\land T)^{t,\omega}} \cE(\hat\beta A^{t,\omega}_{t\smallertext{+}\smallertext{\cdot}})_r \frac{|f^{t,\omega,\P^{\smalltext{t}\smalltext{,}\smalltext{\omega}}}_r(0,0,0,\mathbf{0})|^2}{|\alpha^{t,\omega}_{t\smallertext{+}r}|^2}\d (C^{t,\omega}_{t\smallertext{+}\smallertext{\cdot}})_r\bigg] < \infty, \; \omega \in \Omega \setminus \sN.
\end{align*}
Consequently, the conditioned BSDE admits a unique solution in the sense of \cite[Section 3]{possamai2024reflections} for every $\omega \in \Omega \setminus\sN$.

\medskip
Next,
\begin{align*}
	\cY_{t\smallertext{+}v} 
	&= \cY_{t} + (\cY_{t\smallertext{+}v} - \cY_{t}) \\
	&= \cY_{t}-\int_{t \land T}^{(t\smallertext{+}v)\land T} f^{\P}_r\big(\cY_r,\cY_{r\smallertext{-}}, \cZ_r,\cU_r(\cdot)\big)\d C_r + \bigg(\int_{t\land T}^{(t\smallertext{+}v)\land T}\cZ_r\d X^{c,\P}_r\bigg)^{(\P)} + \bigg(\int_{t\land T}^{(t\smallertext{+}v)\land T}\d(\cU\ast\tilde{\mu}^{X,\P})_r\bigg)^{(\P)} \\
	&\quad + \int_{t\land T}^{(t\smallertext{+}v)\land T}\d\cN_{r} \\
	&= \cY_{t}-\int_0^{v \land (T-t\land T)} f^{\P}_{t\smallertext{+}r}\big(\cY_{t\smallertext{+}r},\cY_{(t\smallertext{+}r)\smallertext{-}}, \cZ_{t\smallertext{+}r},\cU_{t\smallertext{+}r}(\cdot)\big)\d (C_{t\smallertext{+}\smallertext{\cdot}})_r + \bigg(\int_{t\land T}^{(t\smallertext{+}v)\land T}\cZ_r\d X^{c,\P}_r\bigg)^{(\P)} \\
	&\quad + \bigg(\int_{t\land T}^{(t\smallertext{+}v)\land T}\d(\cU\ast\tilde{\mu}^{X,\P})_r\bigg)^{(\P)} + \int_{t\land T}^{(t\smallertext{+}v)\land T}\d\cN_{r}, \; v \in [0,\infty), \; \text{$\P$--a.s.,}
\end{align*}
yields
\[
\begin{cases}
\begin{aligned}
	&\cY^{t,\omega}_{t\smallertext{+}(T-t\land T)^{t,\omega}} 
	= \xi^{t,\omega}, \; \textnormal{$\P^{t,\omega}$--a.s.}, \; \omega \in \Omega \setminus \sN, \\
	&\cY^{t,\omega}_{v} 
	= \cY^{t,\omega}_{t}-\int_0^{v \land (T-t\land T)^{t,\omega}} f^{t,\omega,\P^{\smalltext{t}\smalltext{,}\smalltext{\omega}}}_{r}\big(\cY^{t,\omega}_{t\smallertext{+}r},\cY^{t,\omega}_{(t\smallertext{+}r)\smallertext{-}}, \cZ^{t,\omega}_{t\smallertext{+}r},\cU^{t,\omega}_{t\smallertext{+}r}(\cdot)\big)\d (C^{t,\omega}_{t\smallertext{+}\smallertext{\cdot}})_r + \bigg(\int_{t\land T}^{(t\smallertext{+}v)\land T}\cZ_r\d X^{c,\P}_r\bigg)^{(\P)}(\omega\otimes_t\cdot) \\
	&\quad + \bigg(\int_{t\land T}^{(t\smallertext{+}v)\land T}\d(\cU\ast\tilde{\mu}^{X,\P})_r\bigg)^{(\P)}(\omega\otimes_t\cdot) + \int_0^{v \land (T-t\land T)^{t,\omega}}\d(\cN^{t,\omega}_{t\smallertext{+}\smallertext{\cdot}})_{r}, \; v \in [0,\infty), \; \text{$\P^{t,\omega}$--a.s.,} \; \omega \in \Omega\setminus\sN.
\end{aligned}
\end{cases}
\]
We now show that $(\cY^{t,\omega}_{t\smallertext{+}\smallertext{\cdot}},\alpha^{t,\omega}_{t\smallertext{+}\smallertext{\cdot}}\cY^{t,\omega}_{t\smallertext{+}\smallertext{\cdot}},\alpha^{t,\omega}_{t\smallertext{+}\smallertext{\cdot}}\cY^{t,\omega}_{(t\smallertext{+}\smallertext{\cdot})\smallertext{-}}) \in \cS^{2,t,\omega}_{T}(\F_\smallertext{+},\P^{t,\omega}) \times \big(\H^{2,t,\omega}_{T,\hat{\beta}}(\F_\smallertext{+},\P^{t,\omega})\big)^2$,  $\cZ^{t,\omega}_{t\smallertext{+}\smallertext{\cdot}} \in \H^{2,t,\omega}_{T,\hat{\beta}}(X^{c,\P^{\smalltext{t}\smalltext{,}\smalltext{\omega}}};\F,\P^{t,\omega})$, $\cU^{t,\omega}_{t\smallertext{+}\smallertext{\cdot}} \in \H^{2,t,\omega}_{T,\hat{\beta}}(\mu^X;\F,\P^{t,\omega})$ and $\cN^{t,\omega}_{t\smallertext{+}\smallertext{\cdot}}-\cN^{t,\omega}_t\in \cH^{2,t,\omega,\perp}_{0,T,\hat{\beta}}(X^{c,\P^{\smalltext{t}\smalltext{,}\smalltext{\omega}}},\mu^X;\F_\smallertext{+},\P^{t,\omega})$, and that
\begin{gather*}
	\bigg(\int_{t\land T}^{(t\smallertext{+}v)\land T}\cZ^\P_r\d X^{c,\P}_r\bigg)^{(\P)}(\omega\otimes_t\cdot) = \bigg(\int_0^{v \land (T-t\land T)^{t,\omega}}\cZ^{\omega}_r\d X^{c,\P^{\smalltext{t}\smalltext{,}\smalltext{\omega}}}_r\bigg)^{(\P^{\smalltext{t}\smalltext{,}\smalltext{\omega}})}, \, v \in [0,\infty),  \; \textnormal{$\P^{t,\omega}$--a.s.},\\
	\bigg(\int_{t\land T}^{(t\smallertext{+}v)\land T}\d(\cU^\P\ast\tilde{\mu}^{X,\P})_r\bigg)^{(\P)}(\omega\otimes_t\cdot) = \cU^{\omega}\ast\tilde\mu^{X,\P^{\smalltext{t}\smalltext{,}\smalltext{\omega}}}_{v \land (T-t\land T)^{t,\omega}}, \; v \in [0,\infty), \; \textnormal{$\P^{t,\omega}$--a.s.},
\end{gather*}
for $\P$--a.e. $\omega \in \Omega$.

\medskip
Since $\cN$ is $\F_\smallertext{+}$-adapted, $\cN^{t,\omega}_{t\smallertext{+}\smallertext{\cdot}}-\cN^{t,\omega}_t$ is $\F_\smallertext{+}$-adapted by Galmarino's test; see \cite[Theorem IV.101.(b)]{dellacherie1978probabilities}. 
Note that $\cY^{t,\omega}$ and $\cN^{t,\omega}_{t\smallertext{+}\smallertext{\cdot}}-\cN^{t,\omega}_t$ are right-continuous, and the $\P$--a.s. c\`adl\`ag property of $\cY$ is then transferred to a $\P^{t,\omega}$--a.s. c\`adl\`ag property of $\cY^{t,\omega}_{t\smallertext{+}\smallertext{\cdot}}$ for $\omega \in \Omega\setminus \sN$. Moreover, a functional monotone class argument together with \cite[Theorem IV.101.(b), page 150]{dellacherie1978probabilities} implies that shifting the $\F_\smallertext{+}$-optional process $\cY$ to $\cY^{t,\omega}_{t\smallertext{+}\smallertext{\cdot}}$ preserves $\F_\smallertext{+}$-optionality.
 The $\F$-predictability of $\cZ^{t,\omega}_{t\smallertext{+}\smallertext{\cdot}}$ follows from \cite[Theorem IV.97.(b) and Theorem IV.99.(b), page 147]{dellacherie1978probabilities}, and the $\widetilde{\cP}$-measurability of $\cU^{t,\omega}_{t\smallertext{+}\smallertext{\cdot}}$ follows from the same result together with a functional monotone class argument.

\medskip
We turn to the required integrability of $\cY^{t,\omega}$. Since $(\cY,\alpha\cY,\alpha\cY_\smallertext{-}) \in \cS^2_T(\F_\smallertext{+},\P) \times \big(\H^{2}_{T,\hat\beta}(\F_\smallertext{+},\P)\big)^2$, we obtain
\begin{align*}
	&\|\cY^{t,\omega}_{t\smallertext{+}\smallertext{\cdot}}\|^2_{\cS^{\smalltext{2}\smalltext{,}\smalltext{t}\smalltext{,}\smalltext{\omega}}_{\smalltext{T}}(\F_\tinytext{+},\P^{\smalltext{t}\smalltext{,}\smalltext{\omega}})} 
	+ \|\alpha^{t,\omega}_{t\smallertext{+}\smallertext{\cdot}}\cY^{t,\omega}_{t\smallertext{+}\smallertext{\cdot}}\|^2_{\H^{\smalltext{2}\smalltext{,}\smalltext{t}\smalltext{,}\smalltext{\omega}}_{\smalltext{T}\smalltext{,}\smalltext{\hat\beta}}(\F_\tinytext{+},\P^{\smalltext{t}\smalltext{,}\smalltext{\omega}})}
	+ \|\alpha^{t,\omega}_{t\smallertext{+}\smallertext{\cdot}}\cY^{t,\omega}_{(t\smallertext{+}\smallertext{\cdot})\smallertext{-}}\|^2_{\H^{\smalltext{2}\smalltext{,}\smalltext{t}\smalltext{,}\smalltext{\omega}}_{\smalltext{T}\smalltext{,}\smalltext{\hat\beta}}(\F_\tinytext{+},\P^{\smalltext{t}\smalltext{,}\smalltext{\omega}})} \\
	&=\E^{\P^{\smalltext{t}\smalltext{,}\smalltext{\omega}}}\bigg[\sup_{r \in [0,(T-t\land T)^{\smalltext{t}\smalltext{,}\smalltext{\omega}}]}|\cY^{\P}_{t\smallertext{+}r}(\omega\otimes_t\cdot)|^2+\int_0^{(T-t\land T)^{t,\omega}}\cE(\hat\beta A^{t,\omega}_{t\smallertext{+}\smallertext{\cdot}})_r |\cY_{t\smallertext{+}r}(\omega\otimes_t\cdot)|^2\d (A^{t,\omega}_{t\smallertext{+}\smallertext{\cdot}})_r\bigg] \\
	&\quad + \E^{\P^{\smalltext{t}\smalltext{,}\smalltext{\omega}}}\bigg[\int_0^{(T-t\land T)^{t,\omega}}\cE(\hat\beta A^{t,\omega}_{t\smallertext{+}\smallertext{\cdot}})_r |\cY_{(t\smallertext{+}r)\smallertext{-}}(\omega\otimes_t\cdot)|^2\d (A^{t,\omega}_{t\smallertext{+}\smallertext{\cdot}})_r\bigg] \\
	&= \E^{\P}\bigg[\sup_{r \in [t\land T,T]}|\cY_r|^2 + \int_{t}^{T}\frac{\cE(\hat\beta A)_{r}}{\cE(\hat{\beta}A)_t} |\cY_{r}|^2\d A_r + \int_{t}^{T}\frac{\cE(\hat\beta A)_{r}}{\cE(\hat{\beta}A)_t} |\cY_{r-}|^2\d A_r \bigg| \cF_t \bigg](\omega) < \infty, \; \text{for $\omega \in \Omega\setminus\sN$.}
\end{align*}

\medskip
The following are the remaining conditions that need to be verified:
\begin{enumerate}
	\item[$(i)$] $\cZ^{t,\omega}_{t\smallertext{+}\smallertext{\cdot}} \in \H^{2,t,\omega}_{T,\hat\beta}(X^{c,\P^{\smalltext{t}\smalltext{,}\smalltext{\omega}}};\F,\P^{t,\omega})$ and $\displaystyle \bigg(\int_{t\land T}^{(t\smallertext{+}\smallertext{\cdot})\land T}\cZ_r\d X^{c,\P}_r\bigg)^{(\P)}(\omega\otimes_t\cdot) = \bigg(\int_0^\cdot\cZ^{t,\omega}_{t\smallertext{+}r}\d X^{c,\P^{\smalltext{t}\smalltext{,}\smalltext{\omega}}}_r\bigg)^{(\P^{\smalltext{t}\smalltext{,}\smalltext{\omega}})}$, $\P^{t,\omega}$--a.s., 
	\item[$(ii)$] $\cU^{t,\omega}_{t\smallertext{+}\smallertext{\cdot}} \in \H^{2,t,\omega}_{T,\hat\beta}(\mu^{X};\F,\P^{t,\omega})$ and $\displaystyle \bigg(\int_{t\land T}^{(t\smallertext{+}\smallertext{\cdot})\land T}\d(\cU\ast\tilde{\mu}^{X,\P})_r\bigg)^{(\P)}(\omega\otimes_t\cdot) = \big(\cU^{t,\omega}_{t\smallertext{+}\smallertext{\cdot}}\ast\tilde\mu^{X,\P^{\smalltext{t}\smalltext{,}\smalltext{\omega}}}\big)^{(\F,\P^{\smalltext{t}\smalltext{,}\smalltext{\omega}})}$, $\P^{t,\omega}$--a.s., 
	\item[$(iii)$] $\cN^{t,\omega}_{t\smallertext{+}\smallertext{\cdot}}-\cN^{t,\omega}_t \in \cH^{2,t,\omega,\perp}_{0,T,\hat\beta}(X^{c,\P^{\smalltext{t}\smalltext{,}\smalltext{\omega}}},\mu^{X};\F_\smallertext{+},\P^{t,\omega})$,
\end{enumerate}
for $\omega \in \Omega\setminus\sN$.

\medskip
We start with $(iii)$. By \Cref{lem::conditioning_martingale2}, the process $\cN^{t,\omega}_{t\smallertext{+}\smallertext{\cdot}}-\cN^{t,\omega}_t = \cN_{t\smallertext{+}\smallertext{\cdot}}(\omega\otimes_t\cdot) -\cN_{t}(\omega\otimes_t\cdot)$ is a square-integrable $(\F_\smallertext{+},\P^{t,\omega})$-martingale for $\omega \in \Omega\setminus\sN$. We also have
\begin{equation*}
	[\cN^{t,\omega}_{t\smallertext{+}\smallertext{\cdot}}-\cN^{t,\omega}_t]^{(\F_\smalltext{+},\P^{\smalltext{t}\smalltext{,}\smalltext{\omega}})} = [\cN - \cN_{\cdot\land t}]^{(\F_\smalltext{+},\P)}_{t\smallertext{+}\smallertext{\cdot}}(\omega\otimes_t\cdot), \; \textnormal{for $\omega \in \Omega\setminus\sN$,}
\end{equation*}
by \cite[Theorem I.4.47.a)]{jacod2003limit}. Then
\begin{align*}
		\int\|\cN^{t,\omega}_{t\smallertext{+}\smallertext{\cdot}}-\cN^{t,\omega}_t\|^2_{\cH^{\smalltext{2}\smalltext{,}\smalltext{t}\smalltext{,}\smalltext{\omega}}_{\smalltext{T}\smalltext{,}\smalltext{\hat\beta}}(\F_\smalltext{+},\P^{\smalltext{t}\smalltext{,}\smalltext{\omega}})}\P(\d\omega) 
		&= \int\E^{\P^{\smalltext{t}\smalltext{,}\smalltext{\omega}}}\bigg[\int_0^{(T-t\land T)^{\smalltext{t}\smalltext{,}\smalltext{\omega}}}\cE(\hat\beta A^{t,\omega}_{t\smallertext{+}\smallertext{\cdot}})_{r}\d[\cN^{t,\omega}_{t\smallertext{+}\smallertext{\cdot}}-\cN^{t,\omega}_t]^{(\F_\smalltext{+},\P^{\smalltext{t}\smalltext{,}\smalltext{\omega}})}_r\bigg]\P(\d\omega) \\
	&= \int\E^{\P^{\smalltext{t}\smalltext{,}\smalltext{\omega}}}\bigg[\int_0^{(T-t\land T)^{\smalltext{t}\smalltext{,}\smalltext{\omega}}}\cE(\hat\beta A^{t,\omega}_{t\smallertext{+}\smallertext{\cdot}})_{r}\d[\cN - \cN_{\cdot\land t}]^{(\F_\smalltext{+},\P)}_{t\smallertext{+}r}(\omega\otimes_t\cdot)\bigg]\P(\d\omega) \\
	&= \int\E^{\P^{\smalltext{t}\smalltext{,}\smalltext{\omega}}}\Bigg[\bigg(\int_0^{(T-t\land T)}\cE(\hat\beta A_{t\smallertext{+}\smallertext{\cdot}})_{r}\d[\cN - \cN_{\cdot\land t}]^{(\F_\smalltext{+},\P)}_{t\smallertext{+}r}\bigg)^{t,\omega}\Bigg]\P(\d\omega) \\
	&= \int\E^{\P}\bigg[\int_{t}^{T}\frac{\cE(\hat\beta A)_{r}}{\cE(\hat\beta A)_t}\d[\cN]^{(\F_\smalltext{+},\P)}_{r}\bigg|\cF_t\bigg](\omega)\P(\d\omega) 
	\leq \|\cN\|^2_{\cH^{\smalltext{2}}_{\smalltext{T}\smalltext{,}\smalltext{\hat\beta}}} < \infty,
\end{align*}
where the first equality follows by dual predictable projection (see \cite[Proposition 6.6.5]{weizsaecker1990stochastic}). This yields
\begin{equation*}
	\|\cN^{t,\omega}_{t\smallertext{+}\smallertext{\cdot}}-\cN^{t,\omega}_t\|^2_{\cH^{\smalltext{2}\smalltext{,}\smalltext{t}\smalltext{,}\smalltext{\omega}}_{\smalltext{T}\smalltext{,}\smalltext{\hat\beta}}(\F_\smalltext{+},\P^{\smalltext{t}\smalltext{,}\smalltext{\omega}})} < \infty, \; \textnormal{$\omega \in \Omega\setminus\sN$.}
\end{equation*}
From
\begin{equation*}
	[\cN - \cN_{\cdot\land t}, X^{c,\P} - X^{c,\P}_{\cdot\land t} ]^{(\F_\tinytext{+},\P)} = \langle \cN - \cN_{\cdot\land t}, X^{c,\P} - X^{c,\P}_{\cdot\land t} \rangle^{(\F_\tinytext{+},\P)} = 0, \; \text{$\P$--a.s.},
\end{equation*}
and then
\begin{equation*}
	[\cN - \cN_{\cdot\land t}, X^{c,\P} - X^{c,\P}_{\cdot\land t} ]^{(\F_\tinytext{+},\P)}(\omega\otimes_t\cdot) = \langle \cN - \cN_{\cdot\land t}, X^{c,\P} - X^{c,\P}_{\cdot\land t} \rangle^{(\F_\tinytext{+},\P)}(\omega\otimes_t\cdot) = 0, \; \text{$\P^{t,\omega}$--a.s.}, \; \omega \in \Omega\setminus\sN,
\end{equation*}
we deduce together with \cite[Theorem I.4.47.a)]{jacod2003limit} and since $X^{c,\P}_{t\smallertext{+}\smallertext{\cdot}}(\omega\otimes_t\cdot) - X^{c,\P}_{t}(\omega) = X^{c,\P^{\smalltext{t}\smalltext{,}\smalltext{\omega}}}$, $\P^{t,\omega}$--a.s., for $\omega \in \Omega\setminus\sN$ (see \Cref{cor::shif_quadratic_variation_continuous_martingale_part}) that
\begin{align*}
	\langle\cN^{t,\omega}_{t\smallertext{+}\smallertext{\cdot}}-\cN^{t,\omega}_t, X^{c,\P^{\smalltext{t}\smalltext{,}\smalltext{\omega}}} \rangle^{(\F_\smalltext{+},\P^{\smalltext{t}\smalltext{,}\smalltext{\omega}})} 
	&= \big[\cN_{t\smallertext{+}\smallertext{\cdot}}(\omega\otimes_t\cdot)-\cN_{t}(\omega\otimes_t\cdot), X^{c,\P}_{t\smallertext{+}\smallertext{\cdot}}(\omega\otimes_t\cdot) - X^{c,\P}_{t}(\omega) \big]^{(\F_\smalltext{+},\P^{\smalltext{t}\smalltext{,}\smalltext{\omega}})} \\
	&= \big[\cN - \cN_{\cdot\land t}, X^{c,\P} - X^{c,\P}_{\cdot \land t} \big]^{(\F_\smalltext{+},\P)}_{t\smallertext{+}\smallertext{\cdot}}(\omega\otimes_t\cdot) = 0, \; \text{$\P^{t,\omega}$--a.s., $\omega \in \Omega\setminus\sN$.}
\end{align*}
This proves the orthogonality with respect to the continuous local martingale part of $X$ for $\omega \in \Omega\setminus\sN$.

\medskip
We turn to the $\P^{t,\omega}$-orthogonality of $\cN^{t,\omega}_{t\smallertext{+}\smallertext{\cdot}}-\cN^{t,\omega}_t$ and the jump measure $\mu^X$ for $\omega \in \Omega\setminus\sN$. Let $0 < V = V \land 1 \leq 1$ be a $\widetilde\cP$-measurable function satisfying $0 \leq V \ast\mu^X \leq 1$; see \cite[Lemma 6.5]{neufeld2014measurability}. We write $\mu(\omega;\d t, \d x) \coloneqq V_t(\omega;x)\mu^X(\omega;\d t, \d x)$. The property $M^\P_{\mu^\smalltext{X}}[\Delta\cN|\widetilde\cP] = 0$, $M^\P_{\mu^\smalltext{X}}$--a.e., is equivalent to $M^\P_{\mu}[\Delta\cN|\widetilde\cP] = 0$, $M^\P_\mu$--a.e., which in turn is equivalent to
\begin{align*}
	\E^\P\big[(W\Delta\cN) \ast\mu_\infty\big] = 0,
\end{align*}
for each $\widetilde\cP$-measurable and bounded function $W$. This follows from $A \longmapsto M^\P_{\mu}[\1_A\Delta\cN]$ being a real-valued, signed measure on $(\widetilde\Omega,\widetilde{\cP})$; we have
\begin{align*}
	M^\P_{\mu}\big[|\Delta\cN|\big]^2 
	\leq M^\P_{\mu^\smalltext{X}}[V^2]M^\P_{\mu^\smalltext{X}}\big[|\Delta\cN|^2\big]
	\leq  \E^\P[( \Delta\cN)^2\ast\mu^X_\infty] \leq \E^\P\Bigg[\sum_{s \in (0,\infty)} (\Delta\cN_s)^2\Bigg] 
	\leq \E^\P\big[[\cN]^{(\F_\tinytext{+},\P)}_\infty\big] < \infty.
\end{align*}
Fix a $\widetilde\cP$-measurable and bounded function $W$ and an $\cF_t$-measurable and bounded random variable $\eta$. Then
\begin{align*}
	\E^{\P}\bigg[\eta \E^{\P} \Big[ (W\1_{\llparenthesis t,\infty\rrparenthesis}\Delta\cN)\ast\mu_\infty  \Big|\cF_t\Big] \bigg] 
	= \E^{\P}\bigg[\E^{\P} \Big[ (W \eta\1_{\llparenthesis t,\infty\rrparenthesis}\Delta\cN)\ast\mu_\infty  \Big|\cF_t\Big] \bigg] 
	= \E^{\P}\Big[ (W \eta\1_{\llparenthesis t,\infty\rrparenthesis}\Delta\cN)\ast\mu_\infty\Big] = 0,
\end{align*}
since $W\eta\1_{\llparenthesis t, \infty \rrparenthesis}$ is $\widetilde\cP$-measurable and bounded. Therefore, 
\begin{equation*}
	\E^{\P^{\smalltext{t}\smalltext{,}\smalltext{\omega}}} \Big[ \big((W\1_{\llparenthesis t,\infty\rrparenthesis}\Delta\cN)\ast\mu_\infty\big)(\omega\otimes_t\cdot)  \Big]
	=\E^{\P} \Big[ (W\1_{\llparenthesis t,\infty\rrparenthesis}\Delta\cN)\ast\mu_\infty  \Big|\cF_t\Big] = 0, \textnormal{for $\P$--a.e. $\omega \in \Omega$.}
\end{equation*}
Since $\widetilde{\cP}$ is separable (see \cite[Lemma 6.3]{neufeld2014measurability}), a monotone class argument then implies that
\begin{equation}\label{eq::rcpd_stochastic_integral_compensated_zero}
	\E^{\P^{\smalltext{t}\smalltext{,}\smalltext{\omega}}} \Big[ (W\1_{\llparenthesis t,\infty\rrparenthesis}\Delta\cN^\P)\ast\mu_\infty(\omega\otimes_t\cdot)  \Big] = 0,
\end{equation}
holds for all $\widetilde{\cP}$-measurable and bounded functions $W$ outside some $\P$--null set, and thus for every $\omega \in \Omega\setminus\sN$. We now fix $\omega \in \Omega\setminus\sN$, a $\widetilde{\cP}$-measurable and bounded function $W$, and another $\widetilde{\cP}$-measurable and bounded function $\widetilde{W}$ such that $\widetilde{W}_{t+r}(\omega\otimes_t\tilde\omega,x) = W(\tilde\omega,r,x)$ for all $(\tilde\omega,r,x) \in \Omega \times (0,\infty)\times \R^d$, see \cite[Lemma 3.6]{neufeld2016nonlinear}. Since $X^{t,\omega}_{t\smallertext{+}\smallertext{\cdot}}-X_t(\omega) = X$ and thus $\Delta(X^{t,\omega}_{t\smallertext{+}\smallertext{\cdot}}) = \Delta X$, it follows that
\begin{align*}
	\int_{(0,\infty)\times\R^\smalltext{d}} V_{t\smallertext{+}r}(\omega\otimes_t\tilde\omega;x)\mu^X(\tilde\omega;\d r, \d x) 
	&= \int_{(0,\infty)\times\R^\smalltext{d}} V_{t\smallertext{+}r}(\omega\otimes_t\tilde\omega;x)\mu^{X^{\smalltext{t}\smalltext{,}\smalltext{\omega}}_{\smalltext{t}\smalltext{+}\smalltext{\cdot}}}(\tilde\omega;\d r, \d x) \\
	&= \sum_{r \in (0,\infty)} V_{t\smallertext{+}r}(\omega\otimes_t\tilde\omega; \Delta(X^{t,\omega}_{t\smallertext{+}\smallertext{\cdot}})_r(\tilde\omega))\1_{\{\Delta(X^{t,\omega}_{t\smallertext{+}\smallertext{\cdot}})_r(\tilde\omega) \neq 0\}} \\
	&= \sum_{r \in (0,\infty)} V_{t\smallertext{+}r}(\omega\otimes_t\tilde\omega; \Delta X_{t+r}(\omega\otimes_t\tilde\omega))\1_{\{(\Delta X)_{t+r}(\omega\otimes_t\tilde\omega) \neq 0\}} \\
	&= \sum_{r \in (t,\infty)} V_{r}(\omega\otimes_t\tilde\omega; \Delta X_{r}(\omega\otimes_t\tilde\omega))\1_{\{\Delta X_{r}(\omega\otimes_t\tilde\omega) \neq 0\}} \\
	&= \bigg(\int_{(t,\infty)\times\R^\smalltext{d}} V_{r}(x) \mu^X(\d r, \d x)\bigg)(\omega\otimes_t\tilde\omega) \leq 1,
\end{align*}
and therefore $M^{\P^{\smalltext{t}\smalltext{,}\smalltext{\omega}}}_{\mu^\smalltext{X}}[\Delta\cN^{t,\omega}_{t\smallertext{+}\smallertext{\cdot}}|\widetilde{\cP}] = 0$, $M^{\P^{\smalltext{t}\smalltext{,}\smalltext{\omega}}}_{\mu^\smalltext{X}}$--a.e., is equivalent to $M^{\P^{\smalltext{t}\smalltext{,}\smalltext{\omega}}}_{\mu^\smalltext{X}}[V^{t,\omega}_{t\smallertext{+}\smallertext{\cdot}}\Delta\cN^{t,\omega}_{t\smallertext{+}\smallertext{\cdot}}|\widetilde{\cP}] = 0$, $M^{\P^{\smalltext{t}\smalltext{,}\smalltext{\omega}}}_{\mu^\smalltext{X}}$--a.e., similar to before. The latter follows immediately from
\begin{align*}
	& \E^{\P^{\smalltext{t}\smalltext{,}\smalltext{\omega}}}\bigg[\int_{(0,\infty)\times\R^\smalltext{d}} W_r(x)V_{t\smallertext{+}r}(\omega\otimes_t\cdot\,; x) \Delta\cN^{t,\omega}_{t\smallertext{+}r}\mu^X(\d r, \d x)\bigg] \\
	&\quad= \E^{\P^{\smalltext{t}\smalltext{,}\smalltext{\omega}}}\bigg[\int_{(0,\infty)\times\R^\smalltext{d}} \widetilde{W}_{t\smallertext{+}r}(\omega\otimes_t\cdot\,; x)V_{t\smallertext{+}r}(\omega\otimes_t\cdot\,; x) \Delta\cN^{t,\omega}_{t\smallertext{+}r}\mu^X(\d r, \d x)\bigg] \\
	&\quad = \E^{\P^{\smalltext{t}\smalltext{,}\smalltext{\omega}}}\bigg[\int_{(0,\infty)\times\R^\smalltext{d}} \widetilde{W}_{t\smallertext{+}r}(\omega\otimes_t\cdot\,; x)V_{t\smallertext{+}r}(\omega\otimes_t\cdot\,; x) \Delta\cN^{t,\omega}_{t\smallertext{+}r}\mu^{X^{\smalltext{t}\smalltext{,}\smalltext{\omega}}_{\smalltext{t}\smalltext{+}\smalltext{\cdot}}}(\d r, \d x)\bigg] \\
	&\quad = \E^{\P^{\smalltext{t}\smalltext{,}\smalltext{\omega}}}\bigg[\int_{(0,\infty)\times\R^\smalltext{d}} \widetilde{W}_{t\smallertext{+}r}(\omega\otimes_t\cdot\,; x)V_{t\smallertext{+}r}(\omega\otimes_t\cdot\,; x) \Delta\cN_{t\smallertext{+}r}(\omega\otimes_t\cdot)\mu^{X_{\smalltext{t}\smalltext{+}\smalltext{\cdot}}}(\omega\otimes_t\cdot\,; \d r, \d x)\bigg] \\
	&\quad = \E^{\P^{\smalltext{t}\smalltext{,}\smalltext{\omega}}}\bigg[\int_{(t,\infty)\times\R^\smalltext{d}} \widetilde{W}_{r}(\omega\otimes_t\cdot\,; x)V_{r}(\omega\otimes_t\cdot\,;x) \Delta\cN_{r}(\omega\otimes_t\cdot)\mu^{X}(\omega\otimes_t\cdot\,; \d r, \d x)\bigg] \\
	&\quad = \E^{\P^{\smalltext{t}\smalltext{,}\smalltext{\omega}}}\bigg[\bigg(\int_{(t,\infty)\times\R^\smalltext{d}} \widetilde{W}_{r}(x)V_{r}(x) \Delta\cN_{r}\mu^{X}(\d r, \d x)\bigg)(\omega\otimes_t\cdot)\bigg] 
	= \E^{\P^{\smalltext{t}\smalltext{,}\smalltext{\omega}}} \Big[ (\widetilde{W}\1_{\llparenthesis t,\infty\rrparenthesis}\Delta\cN)\ast\mu_\infty(\omega\otimes_t\cdot)  \Big]
	= 0,
\end{align*}
where in the second equality we used $X = X^{t,\omega}_{t\smallertext{+}\smallertext{\cdot}} - X_{t}(\omega)$ again and in the last equality \eqref{eq::rcpd_stochastic_integral_compensated_zero}. This proves the required orthogonality for every $\omega \in \Omega\setminus\sN$.

\medskip
We turn to $(ii)$ and start with the first assertion. Since $\mathsf{K}^{t,\omega,\P^{\smalltext{t}\smalltext{,}\smalltext{\omega}}}_{r} = \mathsf{K}^{\P}_{\omega\otimes_t\cdot,t+r}$, $\d(C^{t,\omega}_{t\smallertext{+}\smallertext{\cdot}}-C_t(\omega))_r$--a.e., for $\omega \in \Omega\setminus\sN$ (see \Cref{prop::conditioning_characteristics2} and \Cref{lem::uniqueness_of_kernel}), we find
\begin{align*}
		\int\|\cU^{t,\omega}_{t\smallertext{+}\smallertext{\cdot}}\|^2_{\H^{\smalltext{2}\smalltext{,}\smalltext{t}\smalltext{,}\smalltext{\omega}}_{\smalltext{T}\smalltext{,}\smalltext{\hat\beta}}(\mu^X;\F,\P^{\smalltext{t}\smalltext{,}\smalltext{\omega}})}\P(\d\omega) 
	&= \int\E^{\P^{\smalltext{t}\smalltext{,}\smalltext{\omega}}}\bigg[\int_0^{(T-t\land T)^{\smalltext{t}\smalltext{,}\smalltext{\omega}}}\cE(\hat\beta A^{t,\omega}_{t\smallertext{+}\smallertext{\cdot}})_{r}\|\cU^{t,\omega}_{t\smallertext{+}r}(\cdot)\|^2_{\hat\L^\smalltext{2}_{\smalltext{\omega}\smalltext{\otimes}_\tinytext{t}\smalltext{\cdot}\smalltext{,}\smalltext{t}\smalltext{+}\smalltext{r}}(\mathsf{K}^{\P}_{\smalltext{\omega}\smalltext{\otimes}_\tinytext{t}\smalltext{\cdot}\smalltext{,}\smalltext{t}\smalltext{+}\smalltext{r}})}\d(C^{t,\omega}_{t\smallertext{+}\smallertext{\cdot}})_r\bigg]\P(\d\omega) \\
	&= \int\E^{\P^{\smalltext{t}\smalltext{,}\smalltext{\omega}}}\bigg[\int_0^{(T-t\land T)^{\smalltext{t}\smalltext{,}\smalltext{\omega}}}\cE(\hat\beta A_{t\smallertext{+}\smallertext{\cdot}})^{t,\omega}_{r}\|\cU_{t\smallertext{+}r}(\omega\otimes_t\cdot)\|^2_{\hat\L^\smalltext{2}_{\smalltext{\omega}\smalltext{\otimes}_\tinytext{t}\smalltext{\cdot}\smalltext{,}\smalltext{t}\smalltext{+}\smalltext{r}}(\mathsf{K}^{\P}_{\smalltext{\omega}\smalltext{\otimes}_\tinytext{t}\smalltext{\cdot}\smalltext{,}\smalltext{t}\smalltext{+}\smalltext{r}})}\d(C_{t\smallertext{+}\smallertext{\cdot}})^{t,\omega}_r\bigg]\P(\d\omega) \\
	&= \int\E^{\P^{\smalltext{t}\smalltext{,}\smalltext{\omega}}}\Bigg[\bigg(\int_0^{(T-t\land T)}\cE(\hat\beta A_{t\smallertext{+}\smallertext{\cdot}})_{r}\|\cU_{t\smallertext{+}r}(\cdot)\|^2_{\hat\L^\smalltext{2}_{\smalltext{t}\smalltext{+}\smalltext{r}}(\mathsf{K}^{\P}_{\smalltext{t}\smalltext{+}\smalltext{r}})}\d(C_{t\smallertext{+}\smallertext{\cdot}})_r\bigg)(\omega\otimes_t\cdot)\Bigg]\P(\d\omega) \\
	&= \int\E^{\P}\bigg[\int_{t}^{T}\frac{\cE(\hat\beta A)_{r}}{\cE(\hat\beta A)_t}\|\cU^{\P}_{r}(\cdot)\|^2_{\hat\L^\smalltext{2}_{\smalltext{r}}(\mathsf{K}^{\P}_{\smalltext{r}})}\d C_r \bigg| \cF_t \bigg](\omega)\P(\d\omega) \\
	&\leq \E^{\P}\bigg[\int_{t}^{T}\cE(\hat\beta A)_{r}\|\cU_{r}(\cdot)\|^2_{\hat\L^\smalltext{2}_{\smalltext{r}}(\mathsf{K}^{\P}_{\smalltext{r}})}\d C_r \bigg] \leq \|\cU\|^2_{\H^\smalltext{2}_{\smalltext{T}\smalltext{,}\smalltext{\hat\beta}}(\mu^X;\F,\P)}  < \infty,
\end{align*}
which implies
\begin{equation*}
	\|\cU^{t,\omega}_{t\smallertext{+}\smallertext{\cdot}}\|^2_{\H^{\smalltext{2}\smalltext{,}\smalltext{t}\smalltext{,}\smalltext{\omega}}_{\smalltext{T}\smalltext{,}\smalltext{\hat\beta}}(\mu^X;\F,\P^{\smalltext{t}\smalltext{,}\smalltext{\omega}})} < \infty, \; \omega \in \Omega\setminus\sN.
\end{equation*}
Therefore $\cU^{t,\omega}_{t\smallertext{+}\smallertext{\cdot}} \in \H^{2,t,\omega}_{T,\hat\beta}\big(\mu^X; \F,\P^{t,\omega}\big)$ for $\omega \in \Omega\setminus\sN$, since $\cU^{t,\omega}_{t\smallertext{+}\smallertext{\cdot}} = \cU^{t,\omega}_{t\smallertext{+}\smallertext{\cdot}}\1_{\llbracket 0,(T-t\land T)^{\smalltext{t}\smalltext{,}\smalltext{\omega}}\rrbracket}$.
We turn to the second assertion. By \Cref{lem::conditioning_martingale2}, the process 
\begin{equation*}
	\bigg(\int_{t\land T}^{(t\smallertext{+}\smallertext{\cdot})\land T}\d(\cU\ast\tilde{\mu}^{X,\P})_r\bigg)^{(\P)}(\omega\otimes_t\cdot)  =(\cU\ast\tilde{\mu}^{X,\P})^{t,\omega}_{t\smallertext{+}\smallertext{\cdot}} - (\cU\ast\tilde{\mu}^{X,\P})^{t,\omega}_{t},
\end{equation*}
is a square-integrable martingale relative to $(\F,\P^{t,\omega})$ for $\omega \in \Omega\setminus\sN$.
Since
\begin{equation*}
	\Delta(\cU\ast\tilde{\mu}^{X,\P})_r 
	= \cU_r(\Delta X_r)\1_{\{\Delta X_{\smalltext{r}}\neq 0\}} - \int_{\R^\smalltext{d}}\cU_r(x)\nu^{\P}(\{r\}\times \d x), \; r \in [0,\infty), \; \textnormal{$\P$--a.s.},
\end{equation*}
we have
\begin{align*}
	&\Delta(\cU\ast\tilde{\mu}^{X,\P})_r(\omega\otimes_t\cdot) \\
	&= \cU_r(\omega\otimes_t\cdot\,;\Delta X_r(\omega\otimes_t\cdot))\1_{\{\Delta X_{\smalltext{r}}(\omega\otimes_\smalltext{t}\cdot)\neq 0\}} - \int_{\R^\smalltext{d}}\cU_r(\omega\otimes_t\cdot\,;x)\nu^{\P}(\omega\otimes_t\cdot\,;\{r\}\times \d x) \\
	&= \cU_r(\omega\otimes_t\cdot;\Delta X_r(\omega\otimes_t\cdot))\1_{\{\Delta X_{\smalltext{r}}(\omega\otimes_\smalltext{t}\cdot)\neq 0\}} - \int_{\R^\smalltext{d}}\cU_r(\omega\otimes_t\cdot\,;x)\mathsf{K}^{\P}_{\omega\otimes_\smalltext{t}\cdot, r}(\d x) \Delta C_r(\omega\otimes_t\cdot), \; r \in [0,\infty), \; \textnormal{$\P^{t,\omega}$--a.s.},
\end{align*}
for $\omega \in \Omega\setminus\sN$. This then implies
\begin{align*}
	&\Delta((\cU\ast\tilde{\mu}^{X,\P})^{t,\omega}_{t\smallertext{+}\smallertext{\cdot}} - (\cU\ast\tilde{\mu}^{X,\P})^{t,\omega}_{t})_r = \Delta(\cU^\P\ast\tilde{\mu}^{X,\P})_{t\smallertext{+}r}(\omega\otimes_t\cdot)  \\
	&= \cU_{t\smallertext{+}r}(\omega\otimes_t\cdot;\Delta X_{t\smallertext{+}r}(\omega\otimes_t\cdot))\1_{\{\Delta X_{\smalltext{t}\smalltext{+}\smalltext{r}}(\omega\otimes_\smalltext{t}\cdot)\neq 0\}} - \int_{\R^\smalltext{d}}\cU_{t\smallertext{+}r}(\omega\otimes_t\cdot; x)\mathsf{K}^{\P}_{\omega\otimes_\smalltext{t}\cdot,{t\smallertext{+}r}}(\d x) \Delta C_{t\smallertext{+}r}(\omega\otimes_t\cdot) \\
	&= \cU^{t,\omega}_{t\smallertext{+}r}(\Delta X_r)\1_{\{\Delta X_{\smalltext{r}}\neq 0\}} - \int_{\R^\smalltext{d}}\cU^{t,\omega}_{t\smallertext{+}r}(x)\mathsf{K}^{\P}_{\omega\otimes_\smalltext{t}\cdot,{t\smallertext{+}r}}(\d x) \Delta (C^{t,\omega}_{t\smallertext{+}\smallertext{\cdot}}-C_{t}(\omega))_{r} \\
	&= \cU^{t,\omega}_{t\smallertext{+}r}(\Delta X_r)\1_{\{\Delta X_{\smalltext{r}}\neq 0\}} - \int_{\R^\smalltext{d}}\cU^{t,\omega}_{t\smallertext{+}r}(x)\nu^{\P^{\smalltext{t}\smalltext{,}\smalltext{\omega}}}(\{r\}\times \d x) \\
	&= \Delta(\cU^{t,\omega}_{t\smallertext{+}r}\ast\tilde\mu^{X,\P^{\smalltext{t}\smalltext{,}\smalltext{\omega}}})_r, \; r \in (0,\infty), \; \textnormal{$\P^{t,\omega}$--a.s.}, \; \omega \in \Omega \setminus\sN.
\end{align*}
Moreover
\begin{equation*}
	[\cU\ast\tilde{\mu}^{X,\P}]^{(\F,\P)} = \sum_{r \in (0,\cdot]} (\Delta (\cU\ast\tilde{\mu}^{X,\P})_r)^2, \; \textnormal{$\P$--a.s.},
\end{equation*}
implies
\begin{align*}
	[(\cU\ast\tilde{\mu}^{X,\P})^{t,\omega}_{t\smallertext{+}\smallertext{\cdot}} - (\cU\ast\tilde{\mu}^{X,\P})^{t,\omega}_{t}]^{(\F,\P^{\smalltext{t}\smalltext{,}\smalltext{\omega}})}
	&= [\cU\ast\tilde{\mu}^{X,\P} - \cU\ast\tilde{\mu}^{X,\P}_{\cdot\land t}]^{(\F,\P)}_{t\smallertext{+}\smallertext{\cdot}}(\omega\otimes_t\cdot) \\
	&= \sum_{r \in (0,\cdot]} (\Delta((\cU\ast\tilde{\mu}^{X,\P})^{t,\omega}_{t\smallertext{+}\smallertext{\cdot}} - (\cU\ast\tilde{\mu}^{X,\P})^{t,\omega}_{t})_r)^2, \; \textnormal{$\P^{t,\omega}$--a.s.}, \; \omega \in \Omega\setminus\sN.
\end{align*}
This implies that $(\cU\ast\tilde{\mu}^{X,\P})^{t,\omega}_{t\smallertext{+}\smallertext{\cdot}} - (\cU\ast\tilde{\mu}^{X,\P})^{t,\omega}_{t}$ is a purely discontinuous, $(\F,\P^{t,\omega})$--square-integrable martingale, see \cite[Theorem I.4.52]{jacod2003limit}. Since its jumps agree with the jumps of $\cU^{t,\omega}_{t\smallertext{+}\smallertext{\cdot}}\ast\tilde\mu^{X,\P^{\smalltext{t}\smalltext{,}\smalltext{\omega}}}$, up to $\P^{t,\omega}$-evanescence, for each $\omega\in\Omega\setminus\sN$, this therefore implies
\begin{equation*}
	(\cU\ast\tilde{\mu}^{X,\P})^{t,\omega}_{t\smallertext{+}\smallertext{\cdot}} - (\cU\ast\tilde{\mu}^{X,\P})^{t,\omega}_{t} = \cU^{t,\omega}_{t\smallertext{+}\smallertext{\cdot}}\ast\tilde\mu^{X,\P^{\smalltext{t}\smalltext{,}\smalltext{\omega}}}, \; \textnormal{$\P^{t,\omega}$--a.s.,} \; \omega \in \Omega\setminus\sN
\end{equation*}
by \cite[Corollary I.4.19]{jacod2003limit}.

\medskip
We turn to $(i)$ and start with the first assertion again. As before, we have 
\begin{align*}
	\int\|\cZ^{t,\omega}_{t\smallertext{+}\smallertext{\cdot}}\|^2_{\H^{\smalltext{2}\smalltext{,}\smalltext{t}\smalltext{,}\smalltext{\omega}}_{\smalltext{T}\smalltext{,}\smalltext{\hat\beta}}(X^{\smalltext{c}\smalltext{,}\smalltext{\P}^{\tinytext{t}\tinytext{,}\tinytext{\omega}}};\F,\P^{\smalltext{t}\smalltext{,}\smalltext{\omega}})}\P(\d\omega)
	&= \int\E^{\P^{\smalltext{t}\smalltext{,}\smalltext{\omega}}}\bigg[\int_0^{(T-t\land T)^{\smalltext{t}\smalltext{,}\smalltext{\omega}}}\cE(\hat\beta A^{t,\omega}_{t\smallertext{+}\smallertext{\cdot}})_{r}\big(\cZ^{t,\omega}_{t\smallertext{+}r}\big)^\top \hat{\mathsf{a}}^{t,\omega}_r \cZ^{t,\omega}_{t\smallertext{+}r}\d(C^{t,\omega}_{t\smallertext{+}\smallertext{\cdot}})_r\bigg]\P(\d\omega) \\
	&= \int\E^{\P^{\smalltext{t}\smalltext{,}\smalltext{\omega}}}\Bigg[\bigg(\int_0^{(T-t\land T)}\cE(\hat\beta A_{t\smallertext{+}\smallertext{\cdot}})_{r}\big(\cZ_{t\smallertext{+}r}\big)^\top \hat{\mathsf{a}}_{t\smallertext{+}r} \cZ_{t\smallertext{+}r}\d (C_{t\smallertext{+}\cdot})_r\bigg)^{t,\omega}\Bigg]\P(\d\omega) \\
	&= \int\E^{\P}\bigg[\int_t^{T}\frac{\cE(\hat\beta A)_{r}}{\cE(\hat\beta A)_t}\big(\cZ_{r}\big)^\top \hat{\mathsf{a}}_{r} \cZ_{r}\d C_r \bigg| \cF_t\bigg](\omega)\P(\d\omega) \\
	&\leq \E^{\P}\bigg[\int_t^{T}\cE(\hat\beta A)_{r}\big(\cZ_{r}\big)^\top \hat{\mathsf{a}}_{r} \cZ_{r}\d C_r \bigg] \leq \|\cZ\|^2_{\H^\smalltext{2}_{\smalltext{T}\smalltext{,}\smalltext{\hat\beta}}(X^{\smalltext{c}\smalltext{,}\smalltext{\P}};\F,\P)} < \infty,
\end{align*}
which implies 
\begin{equation*}
	\|\cZ^{t,\omega}_{t\smallertext{+}\smallertext{\cdot}}\|^2_{\H^{\smalltext{2}\smalltext{,}\smalltext{t}\smalltext{,}\smalltext{\omega}}_{\smalltext{T}\smalltext{,}\smalltext{\hat\beta}}(X^{\smalltext{c}\smalltext{,}\smalltext{\P}^{\tinytext{t}\tinytext{,}\tinytext{\omega}}};\F,\P^{\smalltext{t}\smalltext{,}\smalltext{\omega}})} < \infty, \; \omega \in \Omega\setminus\sN.
\end{equation*}
Therefore $\cZ^{t,\omega}_{t\smallertext{+}\smallertext{\cdot}} \in \H^{2,t,\omega}_{T,\hat\beta}\big(X^{c,\P^{\smalltext{t}\smalltext{,}\smalltext{\omega}}}; \F,\P^{t,\omega}\big)$ for $\omega \in \Omega\setminus\sN$ since $\cZ^{t,\omega}_{t\smallertext{+}\smallertext{\cdot}} = \cZ^{t,\omega}_{t\smallertext{+}\smallertext{\cdot}}\1_{\llbracket 0,(T-t\land T)^{\smalltext{t}\smalltext{,}\smalltext{\omega}}\rrbracket}$.

\medskip
We turn to the equality of the stochastic integrals. By \cite[Theorem III.6.4.a)]{jacod2003limit}, the integral $(\cZ\bcdot X^{c,\P})^{(\P)}$ is the limit, as $n$ tends to infinity, in the sense of uniform convergence on compacts in $\P$-measure, of the component-wise stochastic integrals
\begin{equation*}
	(\cZ(n) \bcdot X^{c,\P})^{(\P)} 
	\coloneqq \sum_{i = 1}^d (\cZ^{i}(n) \bcdot X^{c,\P,i})^{(\P)}, \; n \in \N,
\end{equation*}
where $\cZ(n) \coloneqq \cZ\1_{\{|\cZ|\leq n\}}$, and $\cZ^{i}(n)$ and $X^{c,\P,i}$ denote the $i$-th components of $\cZ(n)$ and $X^{c,\P}$, respectively. Note that $\cZ^i(n) \in \H^2_\textnormal{loc}(X^{c,\P,i};\F,\P)$. Up to extracting a subsequence if necessary, we have that $(\cZ(n) \bcdot X^{c,\P})^{(\P)}(\omega\otimes_t\cdot)$ converges uniformly on compacts in $\P^{t,\omega}$-measure to $(\cZ\bcdot X^{c,\P})^{(\P)}(\omega\otimes_t\cdot)$ for every $\omega\in\Omega\setminus\sN$, and since $(\cZ(n))^{t,\omega}_{t\smallertext{+}\smallertext{\cdot}} = \cZ(n)_{t\smallertext{+}\smallertext{\cdot}}(\omega\otimes_t\cdot) = \cZ^{t,\omega}_{t\smallertext{+}\smallertext{\cdot}} \1_{\{|\cZ^{\smalltext{t}\smalltext{,}\smalltext{\omega}}_{\smalltext{t}\smalltext{+}\smalltext{\cdot}}| \leq n\}}$ for $\omega \in \Omega$, we obtain
\[
	\lim_{n \rightarrow \infty}\sum_{i = 1}^d\big((\cZ^i(n))^{t,\omega}_{t\smallertext{+}\smallertext{\cdot}} \bcdot X^{c,\P^{\smalltext{t}\smalltext{,}\smalltext{\omega}},i}\big)^{(\P^{\smalltext{t}\smalltext{,}\smalltext{\omega}})}
	= (\cZ^{t,\omega}_{t\smallertext{+}\smallertext{\cdot}} \bcdot X^{c,\P^{\smalltext{t}\smalltext{,}\smalltext{\omega}}})^{(\P^{\smalltext{t}\smalltext{,}\smalltext{\omega}})},
\]
and
\begin{equation*}
	\lim_{n \rightarrow \infty}\sum_{i = 1}^d \Big((\cZ^i(n)\bcdot X^{c,\P,i})^{(\P)}_{t\smallertext{+}\smallertext{\cdot}}(\omega\otimes_t\cdot) - (\cZ^i(n)\bcdot X^{c,\P,i})^{(\P)}_{t}(\omega) \Big)
	= \bigg(\int_{t}^{t\smallertext{+}\smallertext{\cdot}}\cZ_r \d X^{c,\P}_r\bigg)^{(\P)}(\omega\otimes_t\cdot),
\end{equation*}
uniformly on compacts in $\P^{t,\omega}$-measure for every $\omega \in \Omega\setminus\sN$.
It thus suffices to show, for $i\in\{1,\dots,d\}$, that
\begin{equation*}
	(\cZ^i(n)\bcdot X^{c,\P,i})^{(\P)}_{t\smallertext{+}\smallertext{\cdot}}(\omega\otimes_t\cdot) - (\cZ^i(n)\bcdot X^{c,\P,i})^{(\P)}_{t}(\omega) 
	= \big((\cZ^i(n))^{t,\omega}_{t\smallertext{+}\smallertext{\cdot}} \bcdot X^{c,\P^{\smalltext{t}\smalltext{,}\smalltext{\omega}},i}\big)^{(\P^{\smalltext{t}\smalltext{,}\smalltext{\omega}})}, \; \text{$\P^{t,\omega}$--a.s.}, \; \omega \in \Omega\setminus\sN.
\end{equation*}
We can therefore suppose, without loss of generality, that $X$ is one-dimensional and that $\cZ \in \H^2_\textnormal{loc}(X^{c,\P};\F,\P)$. By \cite[Theorem 6.2.2, Lemma 6.2.9]{weizsaecker1990stochastic} and \Cref{lem::predictable_localisation},
 there exists an $(\F,\P)$--localising sequence $(\tau_k)_{k \in \N}$ of $\F$-predictable stopping times and a sequence $(\cZ^\ell)_{\ell \in \N}$ of elementary predictable processes in the sense of \cite[Definition 4.4.1, Proposition 4.4.2.(b)]{weizsaecker1990stochastic} such that, for each $k \in \N$,
\begin{enumerate}
	\item[$(a)$] $\displaystyle \E^\P\bigg[\int_0^{\tau_\smalltext{k}}| \cZ_r|^2 \d\langle X^{c,\P}\rangle^{(\P)}_r\bigg] < \infty$ and $\displaystyle \lim_{\ell \rightarrow \infty}\E^\P\bigg[\int_0^{\tau_\smalltext{k}}\big|\cZ^\ell_r - \cZ_r\big|^2 \d\langle X^{c,\P}\rangle^{(\P)}_r\bigg] = 0$;
	\item[$(b)$] $X^{c,\P}_{\cdot \land \tau_\smalltext{k}}$ is an $(\F,\P)$--square-integrable martingale;
	\item[$(c)$] $\displaystyle \lim_{\ell \rightarrow \infty} \cZ^\ell \bcdot X^{c,\P} = (\cZ \bcdot X^{c,\P})^{(\P)}$ uniformly on compacts in $\P$-measure.
\end{enumerate}
The third property $(c)$ implies that
\begin{equation}\label{eq::convergence_ucp}
	\lim_{\ell \rightarrow \infty} (\cZ^\ell \bcdot X^{c,\P})(\omega\otimes_t\cdot) = (\cZ \bcdot X^{c,\P})^{(\P)}(\omega\otimes_t\cdot),
\end{equation}
uniformly on compacts in $\P^{t,\omega}$-measure for $\omega \in \Omega\setminus\sN$. The first property described in $(a)$ together with \Cref{cor::shif_quadratic_variation_continuous_martingale_part} implies, using previous arguments, that for $\omega \in \Omega\setminus\sN$ and every $k \in \N$
\begin{align*}
	\E^{\P^{\smalltext{t}\smalltext{,}\smalltext{\omega}}}\bigg[\int_0^{(\tau_\smalltext{k}-t\land\tau_\smalltext{k})^{\tinytext{t}\tinytext{,}\tinytext{\omega}}}| \cZ^{t,\omega}_{t\smallertext{+}r}|^2 \d\langle X^{c}\rangle_{r}\bigg]
	&= \E^{\P^{\smalltext{t}\smalltext{,}\smalltext{\omega}}}\bigg[\int_0^{(\tau_\smalltext{k}-t\land\tau_\smalltext{k})^{t,\omega}}| \cZ_{t\smallertext{+}r}(\omega\otimes_t\cdot)|^2 \d\langle X^{c}\rangle^{t,\omega}_{t\smallertext{+}r}\bigg] \\
	&= \E^{\P}\bigg[\int_{t}^{\tau_\smalltext{k}}| \cZ_{r}|^2 \d\langle X^{c}\rangle_{r}\bigg|\cF_t\bigg](\omega) < \infty,
\end{align*}
and the second one property in $(a)$, for $k = 0$, yields
\begin{align*}
	& \lim_{\ell\rightarrow\infty}\int \E^{\P^{\smalltext{t}\smalltext{,}\smalltext{\omega}}}\bigg[\int_0^{(\tau_\smalltext{0}-t\land\tau_\smalltext{0})^{\smalltext{t}\smalltext{,}\smalltext{\omega}}}\big((\cZ^\ell)^{t,\omega}_{t\smallertext{+}r} - \cZ^{t,\omega}_{t\smallertext{+}r}\big)^2 \d\langle X^{c}\rangle_r\bigg]\P(\d\omega) \\
	&= \lim_{\ell\rightarrow\infty}\int \E^{\P^{\smalltext{t}\smalltext{,}\smalltext{\omega}}}\bigg[\int_0^{(\tau_\smalltext{0}-t\land\tau_\smalltext{0})^{\smalltext{t}\smalltext{,}\smalltext{\omega}}}\big((\cZ^\ell)^{t,\omega}_{t\smallertext{+}r} - \cZ^{t,\omega}_{t\smallertext{+}r}\big)^2 \d\langle X^{c}\rangle^{t,\omega}_{t\smallertext{+}r}\bigg] \P(\d\omega) \\
	&= \lim_{\ell\rightarrow\infty}\int \E^{\P}\bigg[\int_{t}^{\tau_\smalltext{0}}\big(\cZ^\ell_{r} - \cZ_{r}\big)^2 \d\langle X^{c}\rangle_r\bigg|\cF_t\bigg](\omega) \P(\d\omega) \leq \lim_{\ell\rightarrow\infty} \E^{\P}\bigg[\int_{0}^{\tau_\smalltext{0}}\big(\cZ^\ell_{r} - \cZ_{r}\big)^2 \d\langle X^{c}\rangle_r\bigg] = 0.
\end{align*}
Thus there exists a subsequence $(\cZ^{\ell^\smalltext{0}_\smalltext{m}})_{m \in \N}$ for which
\begin{equation*}
	\lim_{m \rightarrow \infty}\E^{\P^{\smalltext{t}\smalltext{,}\smalltext{\omega}}}\bigg[\int_0^{(\tau_\smalltext{0}-t\land\tau_\smalltext{0})^{\smalltext{t}\smalltext{,}\smalltext{\omega}}}\big((\cZ^{\ell^\smalltext{0}_\smalltext{m}})^{t,\omega}_{t\smallertext{+}r} - \cZ^{t,\omega}_{t\smallertext{+}r}\big)^2 \d\langle X^{c}\rangle_r\bigg] = 0, \; \text{for $\P$--a.e. $\omega \in \Omega$},
\end{equation*}
holds. However, $(a)$ is still satisfied for each $k \in \N$ when replacing $(\cZ^\ell)_{\ell \in \N}$ by $(\cZ^{\ell^\smalltext{0}_\smalltext{m}})_{m \in \N}$ since the latter is a subsequence of the former. We can therefore find a further subsequence of $(\cZ^{\ell^{\smalltext{1}}_m})_{m \in \N}$ of $(\cZ^{\ell^\smalltext{0}_\smalltext{m}})_{m \in \N}$ for which 
\begin{equation*}
	\lim_{m \rightarrow \infty}\E^{\P^{\smalltext{t}\smalltext{,}\smalltext{\omega}}}\bigg[\int_0^{(\tau_\smalltext{1}-t\land\tau_\smalltext{1})^{\smalltext{t}\smalltext{,}\smalltext{\omega}}}\big((\cZ^{\ell^\smalltext{1}_\smalltext{m}})^{t,\omega}_{t\smallertext{+}r} - \cZ^{t,\omega}_{t\smallertext{+}r}\big)^2 \d\langle X^{c}\rangle_r\bigg] = 0, \; \text{for $\P$--a.e. $\omega \in \Omega$,}
\end{equation*}
holds. Since $(a)$ again holds, we can inductively build subsequences $(\cZ^{\ell^\smalltext{j}_m})_{m \in \N}$ such that $(\cZ^{\ell^{\smalltext{j}\smalltext{+}\smalltext{1}}_m})_{m \in \N}$ is a subsequence of $(\cZ^{\ell^\smalltext{j}_m})_{m \in \N}$ for each $j \in \N$ and
\begin{equation*}
	\lim_{m \rightarrow \infty}\E^{\P^{\smalltext{t}\smalltext{,}\smalltext{\omega}}}\bigg[\int_0^{(\tau_\smalltext{j}-t\land\tau_\smalltext{j})^{\smalltext{t}\smalltext{,}\smalltext{\omega}}}\big((\cZ^{\ell^\smalltext{j}_\smalltext{m}})^{t,\omega}_{t\smallertext{+}r} - \cZ^{t,\omega}_{t\smallertext{+}r}\big)^2 \d\langle X^{c}\rangle_r\bigg] = 0, \; \text{for $\P$--a.e. $\omega \in \Omega$,}
\end{equation*}
Using a diagonal argument, we therefore have that, for every $k \in \N$, the tail $(\cZ^{\ell^\smalltext{m}_\smalltext{m}})_{k \leq m \in \N}$ is a subsequence of $(\cZ^{\ell^\smalltext{k}_\smalltext{m}})_{m \in \N}$, and therefore
\begin{equation*}
	\lim_{m \rightarrow \infty}\E^{\P^{\smalltext{t}\smalltext{,}\smalltext{\omega}}}\bigg[\int_0^{(\tau_\smalltext{k}-t\land\tau_\smalltext{k})^{\smalltext{t}\smalltext{,}\smalltext{\omega}}}\big((\cZ^{\ell^\smalltext{m}_\smalltext{m}})^{t,\omega}_{t\smallertext{+}r} - \cZ^{t,\omega}_{t\smallertext{+}r}\big)^2 \d\langle X^{c}\rangle_r\bigg] = 0, \; k \in \N, \; \text{for $\omega \in \Omega\setminus\sN$.}
\end{equation*}
Note that $((\tau_k-t\land\tau_k)^{t,\omega})_{k \in \N}$ is  an $(\F,\P^{t,\omega})$--localising sequence for $\omega \in \Omega\setminus\sN$. To summarise, we found that
\begin{enumerate}
	\item[$(i^\prime)$] $\displaystyle \E^{\P^{\smalltext{t}\smalltext{,}\smalltext{\omega}}}\bigg[\int_0^{(\tau_\smalltext{k}-t\land\tau_\smalltext{k})^{\smalltext{t}\smalltext{,}\smalltext{\omega}}}| \cZ^{t,\omega}_{t\smallertext{+}r}|^2 \d\langle X^{c}\rangle_r\bigg] < \infty$, $k \in \N$, and $\displaystyle \lim_{m \rightarrow \infty}\E^{\P^{\smalltext{t}\smalltext{,}\smalltext{\omega}}}\bigg[\int_0^{(\tau_\smalltext{k}-t\land\tau_\smalltext{k})^{\smalltext{t}\smalltext{,}\smalltext{\omega}}}\big((\cZ^{\ell_\smalltext{m}})^{t,\omega}_{t\smallertext{+}r} - \cZ^{t,\omega}_{t\smallertext{+}r}\big)^2 \d\langle X^{c}\rangle_r\bigg] = 0$, $\omega \in \Omega\setminus\sN$;
	\item[$(ii^\prime)$] $X^{c,\P^{\smalltext{t}\smalltext{,}\smalltext{\omega}}}_{\cdot\land(\tau_\smalltext{k}-t\land\tau_\smalltext{k})^{\smalltext{t}\smalltext{,}\smalltext{\omega}}}$ is an $(\F,\P^{t,\omega})$--square-integrable martingale for $k \in \N$ and $\omega \in \Omega\setminus\sN$.
\end{enumerate}
Since $X^{c,\P}_{t\smallertext{+}\smallertext{\cdot}}(\omega\otimes_t\cdot) - X^{c,\P}_{t}(\omega) = X^{c,\P^{\smalltext{t}\smalltext{,}\smalltext{\omega}}}$, $\P^{t,\omega}$--a.s., for $\omega \in \Omega\setminus\sN$ (see \Cref{cor::shif_quadratic_variation_continuous_martingale_part}) implies 
\begin{equation*}
	\big(\cZ^{\ell^\smalltext{m}_\smalltext{m}}_{t\smallertext{+}\smallertext{\cdot}} \bcdot (X^{c,\P}_{t\smallertext{+}\smallertext{\cdot}} - X^{c,\P}_{t}(\omega))\big)(\omega\otimes_t\cdot) = (\cZ^{\ell^\smalltext{m}_\smalltext{m}})^{t,\omega}_{t\smallertext{+}\smallertext{\cdot}} \bcdot X^{c,\P^{\smalltext{t}\smalltext{,}\smalltext{\omega}}}, \; \textnormal{$\P^{t,\omega}$--a.s.}, \; \omega \in \Omega\setminus\sN,
\end{equation*}
it follows from \eqref{eq::convergence_ucp} and \cite[Theorem 6.2.2.(b)]{weizsaecker1990stochastic} that
\begin{equation*}
	\bigg(\int_{t}^{t\smallertext{+}\smallertext{\cdot}}\cZ_r \d X^{c,\P}_r\bigg)^{(\P)}(\omega\otimes_t\cdot) 
	\underset{\infty \leftarrow m}{\longleftarrow}
	\big(\cZ^{\ell^\smalltext{m}_\smalltext{m}}_{t\smallertext{+}\smallertext{\cdot}} \bcdot (X^{c,\P}_{t\smallertext{+}\smallertext{\cdot}} - X^{c,\P}_{t}(\omega))\big)(\omega\otimes_t\cdot) 
	=(\cZ^{\ell^\smalltext{m}_\smalltext{m}})^{t,\omega}_{t\smallertext{+}\smallertext{\cdot}} \bcdot X^{c,\P^{\smalltext{t}\smalltext{,}\smalltext{\omega}}} 
	\underset{m\to\infty}{\longrightarrow} \bigg(\int_0^\cdot\cZ^{t,\omega}_{t\smallertext{+}r} \d X^{c,\P^{\smalltext{t}\smalltext{,}\smalltext{\omega}}}_r\bigg)^{(\P^{\smalltext{t}\smalltext{,}\smalltext{\omega}})},
\end{equation*}
uniformly on compacts in $\P^{t,\omega}$-measure for each $\omega\in\Omega\setminus\sN$. We therefore find the desired equality
\begin{equation*}
	\bigg(\int_{t}^{t\smallertext{+}\smallertext{\cdot}}\cZ_r \d X^{c,\P}_r\bigg)^{(\P)}(\omega\otimes_t\cdot) 
	= \bigg(\int_0^\cdot\cZ^{t,\omega}_{t\smallertext{+}r} \d X^{c,\P^{\smalltext{t}\smalltext{,}\smalltext{\omega}}}_r\bigg)^{(\P^{\smalltext{t}\smalltext{,}\smalltext{\omega}})}, \; \text{$\P^{t,\omega}$--a.s.}, \; \omega \in \Omega\setminus\sN.
\end{equation*}

\medskip
Lastly, we show that the exceptional set outside of which \eqref{eq::conditioning_solution_bsde} holds can actually be chosen to belong to $\cF_{t-s}$. Denote by $\mathsf{g} : \Omega \times \fP(\Omega) \longrightarrow \R$ an arbitrary Borel-measurable extension of the Borel-measurable function
		\[
		\{(\omega^\prime,\P^\prime) : \omega^\prime \in \Omega, \; \P^\prime\in\fP(t,\omega^\prime)\} \ni (\omega^\prime,\P^\prime) \longmapsto \E^{\P^\smalltext{\prime}}[\cY^{t,\omega^\smalltext{\prime},\P^\smalltext{\prime}}_0((T-t\land T)^{t,\omega^\smalltext{\prime}},\xi^{t,\omega^\smalltext{\prime}})] \in \R.
		\]
		Here, Borel-measurability means with respect to the trace $\sigma$-algebra
		\[
			\cB(\Omega\times\fP(\Omega)) \cap \{(\omega^\prime,\P^\prime) : \omega^\prime \in \Omega, \; \P^\prime\in\fP(t,\omega^\prime)\};
		\]
		see the argument surrounding  \eqref{eq::picard_limit_value_map}. The map
		\[
		\mathsf{h} : \Omega \ni \omega \longmapsto (\bar{\omega}\otimes_s\omega_{\cdot\land(t-s)},\P^{t-s,\omega_{\smalltext{\cdot}\smalltext{\land}\smalltext{(}\smalltext{t}\smalltext{-}\smalltext{s}\smalltext{)}}}) \in \Omega \times \fP(\Omega),
		\]
		is Borel-measurable and, by Galmarino's test, even $\cF_{t-s}$-measurable. Note that we have $\P^{t-s,\omega_{\cdot \land (t-s)}} = \P^{t-s,\omega}$ identically, again by Galmarino's test. By {\rm\Cref{ass::probabilities2}}.$(ii)$, the map $\mathsf{h}$ takes values $\P$--a.s. in the analytic set $\{(\omega^\prime,\P^\prime) : \omega^\prime \in \Omega, \; \P^\prime \in \fP(t,\omega^\prime)\}$. Since analytic sets are universally measurable, the pre-image of $\{(\omega^\prime,\P^\prime) : \omega^\prime \in \Omega, \; \P^\prime \in \fP(t,\omega^\prime)\}$ under $\mathsf{h}$ is therefore $\cF_{t-s}$-universally measurable. Consequently, there exists an $\P$--null set $\sN_1 \in \cF_{t-s}$ outside of which $\mathsf{h}$ takes values in $\{(\omega^\prime,\P^\prime) : \omega^\prime \in \Omega, \; \P^\prime \in \fP(t,\omega)\}$. In particular, this yields
		\[
		(\mathsf{g} \circ \mathsf{h})(\omega)
		= \E^{\P^{\smalltext{t}\smalltext{-}\smalltext{s}\smalltext{,}\smalltext{\omega}}}\big[\cY^{t,\bar\omega\otimes_\smalltext{s}\omega,\P^{\smalltext{t}\smalltext{-}\smalltext{s}\smalltext{,}\smalltext{\omega}}}_0 ((T -t\land T)^{t,\bar\omega\otimes_\smalltext{s}\omega},\xi^{t,\bar\omega\otimes_\smalltext{s}\omega})\big], \omega \in \Omega\setminus \sN_1,
		\]
		and then
		\[
		\E^\P\big[\cY^{s,\bar\omega,\P}_{t-s}((T-s \land T)^{s,\bar\omega},\xi^{s,\bar\omega}) \big| \cF_{t\smallertext{-}s}\big]
		= (\mathsf{g} \circ \mathsf{h}), \; \textnormal{$\P$--a.s.}
		\]
		Since both sides are $\cF_{t-s}$-measurable, there exists a further $\P$--null set $\sN_2 \in \cF_{t-s}$ outside of which equality holds above. For $\omega \in \Omega\setminus(\sN_1 \cup \sN_2)$, we then have
		\[
		\E^\P\big[\cY^{s,\bar\omega,\P}_{t-s}((T-s \land T)^{s,\bar\omega},\xi^{s,\bar\omega}) \big| \cF_{t\smallertext{-}s}\big](\omega)
		= (\mathsf{g}\circ\mathsf{h})(\omega) 
		= \E^{\P^{\smalltext{t}\smalltext{-}\smalltext{s}\smalltext{,}\smalltext{\omega}}}\big[\cY^{t,\bar\omega\otimes_\smalltext{s}\omega,\P^{\smalltext{t}\smalltext{-}\smalltext{s}\smalltext{,}\smalltext{\omega}}}_0 ((T -t\land T)^{t,\bar\omega\otimes_\smalltext{s}\omega},\xi^{t,\bar\omega\otimes_\smalltext{s}\omega})\big].		
		\]
		Thus, the set $\sN \coloneqq \sN_1 \cup \sN_2 \in \cF_{t-s}$ satisfies the desired properties, which completes the proof.
\end{proof}

	\subsection{Auxiliary lemmata from Section \ref{sec::proof_regularisation}}\label{sec::proofs_lemmata_regularisation}

\begin{proof}[Proof of \Cref{lem::solv_bsde_cond}]
The first equality is simply the well-known flow property of solutions to (classical) BSDEs. A proof in our generality is straightforward; see, for example, the arguments in the proof of \cite[Proposition 2.5]{elkaroui1997backward}. We therefore immediately turn to the second equality. The $\F_\smallertext{+}$-optionality of the finite-variation process appearing in the BSDE prevents us from following the arguments in the proof of \cite[Lemma 2.7]{possamai2018stochastic}. 
More precisely, we cannot simply take the $\F$-optional projection of the BSDE and identify the orthogonal martingale parts, since in our case the finite variation process is not necessarily $\F$-adapted. 
We thus argue differently. But before that, let us note that for two arbitrary $\F_\smallertext{+}$--stopping times $S$ and $T$ and an integrable random variable $\zeta$, we have
\begin{equation*}
	\E^\P[\zeta| \cF_{S\smallertext{+}} \cap \cF_{T\smallertext{-}}] \1_{\{S < T\}} = \E^\P[\zeta|\cF_{S\smallertext{+}}] \1_{\{S < T\}}, \; \textnormal{$\P$--a.s.}
\end{equation*}
This follows from the $\cF_{S\smallertext{+}}\cap\cF_{T\smallertext{-}}$--measurability of $\{S<T\}$ and $\E^\P[\zeta|\cF_{S\smallertext{+}}] \1_{\{S < T\}}$ (see \cite[Theorem IV.56]{dellacherie1978probabilities}). Indeed, by the tower property
\begin{align*}
	\E^\P[\zeta| \cF_{S\smallertext{+}} \cap \cF_{T\smallertext{-}}] \1_{\{S < T\}} 
	&= \E^\P\big[\E^\P[\zeta | \cF_{S\smallertext{+}}] \big| \cF_{S\smallertext{+}} \cap \cF_{T\smallertext{-}}] \1_{\{S < T\}} \\
	&= \E^\P\big[\E^\P[\zeta | \cF_{S\smallertext{+}}] \1_{\{S < T\}} \big| \cF_{S\smallertext{+}} \cap \cF_{T\smallertext{-}}] 
	= \E^\P[\zeta | \cF_{S\smallertext{+}}] \1_{\{S < T\}}, \; \textnormal{$\P$--a.s.}
\end{align*}

We turn to the proof of the claim. For notational simplicity, we prove the result for $s = 0$, and thus $\P \in \fP_0$; however, analogous arguments apply to the general case. Let $\theta \coloneqq \tau \land T$. We write $(\cY, \cZ, \cU, \cN)$ and $(\cY^{\prime},\cZ^{\prime},\cU^{\prime},\cN^{\prime})$ for the solution of the BSDE with terminal condition $\xi_{\theta} \coloneqq \cY^\P_{\theta}(T,\xi)$ and $\xi^{\prime}_{\theta} \coloneqq \E^\P[\cY^\P_{\theta}(T,\xi)|\cF_{\theta}]$, respectively, at terminal time $\theta$. Let $\delta \cY \coloneqq \cY - \cY^{\prime}$ and $\delta\xi_{\theta} = \xi_{\theta} - \xi^{\prime}_{\theta}$, $\delta f^{\P} \coloneqq f^{\P}(\cY,\cY_{-},\cZ,\cU(\cdot)) - f^{\P}(\cY^\prime,\cY_{-}^\prime,\cZ^\prime,\cU^\prime(\cdot))$, and then
\begin{equation*}
	w \coloneqq \int_0^{\cdot \land \theta} \lambda_s \d C_s \; 
	\text{and} \; v \coloneqq \int_0^{\cdot \land \theta} \frac{\widehat\lambda^{\cY_{\smalltext{s}\tinytext{-}}, \cY^{\smalltext{\prime}}_{\smalltext{s}\tinytext{-}}}_s}{1-\widehat\lambda^{\cY_{\smalltext{s}\tinytext{-}}, \cY^{\smalltext{\prime}}_{\smalltext{s}\tinytext{-}}}_s \Delta C_s} \d C_s.
\end{equation*}
Here $\lambda$ is the process from \Cref{ass::crossing}.$(i)$ and $\widehat{\lambda}$ is defined through \eqref{eq::definition_lambda_hat_lambda}. 
Redefine $\cY^\prime$ beyond $\theta$ by
\[
	\cY^\prime_t \coloneqq \cY^\prime_t \1_{\{t \leq \theta\}} + \cY^\prime_\theta \bigg( \frac{\cE(\hat\beta A)_\theta} {\cE(\hat\beta A)_t} \bigg)^{1/2} \1_{\{t > \theta\}}, \; t \in [0,\infty].
\]
Then, recalling \eqref{eq::lipschitz_linearisation}, we apply \Cref{ass::crossing}.$(iii)$ to $(\cY^\prime,\cZ^\prime,\cU,\cU^\prime)$ and obtain
\begin{align*}
    \delta f^\P_s 
	\geq \lambda_s\delta\cY_s + \widehat\lambda^{\cY_{s\smallertext{-}},\cY^\prime_{s\smallertext{-}}}_s \delta\cY_{s\smallertext{-}} + \big(\eta^{\cZ_s,\cZ^\prime_s}_s\big)^\top \mathsf a_s\delta\cZ_s + \frac{\d\langle\rho\ast\tilde{\mu}^{X,\P},(\cU-\cU^\prime)\ast\tilde{\mu}^{X,\P}\rangle^{(\P)}_s}{\d C_s}, \; \textnormal{$\P\otimes\d C_s$--a.e. on $\llparenthesis0,\theta\rrbracket$.}
\end{align*}
Let
\[
	L \coloneqq \int_0^{\cdot \land \theta}\eta^{\cZ_\smalltext{s},\cZ^{\smalltext{\prime}}_\smalltext{s}}_s\d X^{c,\P}_s + \rho \ast\tilde\mu^{X,\P}_{\cdot \land \theta}.
\]
By \Cref{ass::crossing}.$(ii)$--$(iii)$, $\langle L\rangle^{(\F,\P)}_\theta$ is $\P$--essentially bounded and $\Delta L>-1$ outside some $\P$--null set. Hence 
\begin{equation}\label{eq::stoch_exp_measure_change}
	\frac{\d\overline{\P}}{\d\P} \coloneqq \cE(L)_{\theta} \coloneqq \cE\bigg(\int_0^{\cdot \land \theta}\eta^{\cZ_\smalltext{s},\cZ^{\smalltext{\prime}}_\smalltext{s}}_s\d X^{c,\P}_s + \rho \ast\tilde\mu^{X,\P}_{\cdot \land \theta}\bigg)_{\theta},
\end{equation}
defines a strictly positive and $\P$--square-integrable probability density (see \cite[Lemma 7.4]{possamai2024reflections}).
Given that
\begin{equation*}
	\cE(L) = 1 + \int_0^\cdot \cE(L)_{s\smallertext{-}} \d L_s, \; \text{$\P$--a.s.}, 
\end{equation*}
we choose with \Cref{prop::good_version_stochastic_integral} an $\F$-adapted version of $\cE(L)$. 
Note that $\cE(v)$ is nonnegative and $\P$--essentially bounded by \Cref{ass::crossing}.$(ii)$ since $\cE(v) \leq e^{v}$ (see \cite[Lemma 4.1]{cohen2012existence}) and $(1-\widehat\lambda^{\cY_{\smalltext{s}\tinytext{-}}, \cY^{\smalltext{\prime}}_{\smalltext{s}\tinytext{-}}}_s \Delta C_s)^{-1} \leq (1-\Phi)^{-1}$. 
Since $|\cE(w)|^2 \leq \cE(\hat\beta A)$ (see the proof of \Cref{prop::comparison}), we find
\[
	\E^{\P}\bigg[\sup_{t \in [0,\infty)}|\cE(w)_{t\land\theta}\delta\cY_{t\land\theta}|^2\bigg] 
	\leq \E^{\P}\bigg[\sup_{t \in [0,\theta]}|\cE(\hat\beta A)^{1/2}_{t}\delta\cY_{t}|^2\bigg] < \infty,
\]
and, by Doob's martingale inequality,
\[
	\E^{\P}\bigg[\sup_{t \in \D_\smalltext{+}}\big|\cE(w)_{t\land\theta}\E^\P[\delta\xi_{\theta}|\cF_{t\smallertext{+}}]\big|^2\bigg] 
	\leq \E^{\P}\bigg[\sup_{t \in \D_\smalltext{+}}\Big|\E^\P\big[|\cE(\hat\beta A)^{1/2}_{\theta}\delta\xi_{\theta}|\big|\cF_{t\smallertext{+}}\big]\Big|^2\bigg] 
	\leq 4 \E^\P\Big[ \big|\cE(\hat\beta A)^{1/2}_{\theta}\delta\xi_{\theta}\big|^2 \Big] < \infty.
\]
Following the arguments that lead to \cite[Equation 7.7]{possamai2024reflections}, we obtain, for $t\in[0,\infty)$ and $t^\prime \in \D_\smallertext{+}$ with $t\leq t^\prime$, $\P$--a.s.,
\begin{align}\label{eq::flow_property_conditional_expectation}
	&\cE(w)_{t \land \theta}\cE(v)_{t \land \theta}\delta \cY_{t \land \theta} \nonumber\\
	&\geq \E^{\bar{\P}}\big[ \cE(w)_{t^\smalltext{\prime} \land \theta}\cE(v)_{t^\smalltext{\prime} \land \theta}\delta\cY_{t^\smalltext{\prime} \land \theta} \big| \cF_{t\smallertext{+}}\big] \nonumber\\
	&=  \underbrace{\E^{\bar{\P}}\big[ \cE(w)_{t^\smalltext{\prime} \land \theta}\cE(v)_{t^\smalltext{\prime} \land \theta}\big(\delta \cY_{t^\smalltext{\prime} \land \theta} - \E^\P\big[\delta\xi_{\theta} \big| \cF_{t^\smalltext{\prime}\smallertext{+}}\big]\big) \big| \cF_{t\smallertext{+}} \big]}_{I^\smalltext{1}_{t,t^\smalltext{\prime}}\coloneqq} + \underbrace{\E^{\bar{\P}}\big[ \cE(w)_{t^\smalltext{\prime} \land \theta}\cE(v)_{t^\smalltext{\prime} \land \theta} \E^\P\big[\delta\xi_{\theta}\big|\cF_{t^\smalltext{\prime}\smallertext{+}}\big] \big| \cF_{t\smallertext{+}} \big]}_{I^\smalltext{2}_{t,t^\smalltext{^\prime}}\coloneqq }. \nonumber
\end{align}
We will now show that $\lim_{\D_\smallertext{+}\ni t^\prime\rightarrow\infty}I^1_{t,t^\prime} = 0$ in $\L^1(\overline{\P})$, and that $I^2_{t,t^\prime}\1_{\{t<\theta\}} = 0$ outside some $\P$--null set.
By following the arguments that lead to \cite[Equation 5.22]{possamai2024reflections}, we obtain
\begin{align*}
	\Big| \cE(w)_{t^\smalltext{\prime} \land \theta}\big(\delta \cY_{t^\smalltext{\prime} \land \theta} - \E^\P\big[\delta\xi_{\theta} \big| \cF_{t^\smalltext{\prime}\smallertext{+}}\big]\big) \Big| 
	&= \bigg|\cE(w)_{t^\smalltext{\prime} \land \theta} \E^\P\bigg[\int_{t^\smalltext{\prime}}^{\theta} \delta f^\P_r \d C_r \bigg| \cF_{t^\smalltext{\prime}\smallertext{+}} \bigg]\bigg| \\
	&\leq \bigg|\cE(\hat\beta A)^{1/2}_{t^\smalltext{\prime} \land \theta} \E^\P\bigg[\int_{t^\smalltext{\prime}}^{\theta} \delta f^\P_r \d C_r \bigg| \cF_{t^\smalltext{\prime}\smallertext{+}} \bigg]\bigg| \\
	&\leq \frac{1}{\hat\beta^{1/2}}\E^\P\bigg[\bigg(\int_{t^\smalltext{\prime}}^{\theta} \cE(\hat\beta A)_{r} \frac{|\delta f^\P_r|^2}{\alpha^2_r} \d C_r \bigg)^{1/2} \bigg| \cF_{t^\smalltext{\prime}\smallertext{+}} \bigg], \; \textnormal{$\P$--a.s.}, \; t^\prime \in [0,\infty).
\end{align*}
which then yields
\begin{align*}
	\E^{\bar{\P}}\bigg[ \Big| \cE(w)_{t^\smalltext{\prime} \land \theta}\big(\delta \cY_{t^\smalltext{\prime} \land \theta} - \E^\P\big[\delta\xi_{\theta} \big| \cF_{t^\smalltext{\prime}\smallertext{+}}\big]\big) \Big| \bigg] 
	\leq \frac{1}{\hat\beta^{1/2}} \E^\P\bigg[\bigg(\frac{\d\overline{\P}}{\d\P}\bigg)^2\bigg]^{1/2} \E^\P\bigg[\int_{t^\smalltext{\prime}}^{\theta} \cE(\hat\beta A)_{r} \frac{|\delta f^\P_r|^2}{\alpha^2_r} \d C_r \bigg]^{1/2} \xrightarrow{t^\prime \rightarrow \infty} 0,
\end{align*}
by the Lipschitz-continuity property of the generator and the integrability of the solutions, from which we deduce, together with the $\P$--essential boundedness of $\cE(v)$, that 
\[
	\lim_{\D_\smallertext{+}\ni t^\prime \rightarrow\infty}\E^{\overline{\P}}\big[|I^{1}_{t,t^\prime}|\big] = 0, \; t \in [0,\infty).
\]
For the other term $I^2_{t,t^\smalltext{\prime}}$, we have, $\P$--a.s.,
\begin{align*}
	I^2_{t,t^\smalltext{\prime}}\1_{\{t<\theta\}}
	&= \E^{\bar{\P}}\big[ \cE(w)_{t^\smalltext{\prime} \land \theta}\cE(v)_{t^\smalltext{\prime} \land \theta} \E^\P\big[\delta\xi_{\theta}\big|\cF_{t^\smalltext{\prime}\smallertext{+}}\big] \big| \cF_{t\smallertext{+}} \big] \1_{\{t<\theta\}}\\
	&= \frac{1}{\cE(L)_{t\land\theta}}\E^\P\big[ \cE(L)_{t^\smalltext{\prime}\land\theta} \cE(w)_{t^\smalltext{\prime}\land\theta}\cE(v)_{t^\smalltext{\prime}\land \theta}\E^\P\big[\delta\xi_{\theta}\big|\cF_{t^\smalltext{\prime}\smallertext{+}}\big] \big| \cF_{t\smallertext{+}}\big] \1_{\{t<\theta\}} \\
	&= \frac{1}{\cE(L)_{t\land\theta}}\E^\P\big[ \cE(L)_{t^\smalltext{\prime}\land\theta} \cE(w)_{t^\smalltext{\prime}\land\theta}\cE(v)_{t^\smalltext{\prime}\land \theta}\delta\xi_{\theta} \big| \cF_{t\smallertext{+}}\big] \1_{\{t<\theta\}} \\
	&= \frac{1}{\cE(L)_{t\land\theta}}\E^\P\big[ \cE(L)_{t^\smalltext{\prime}\land\theta} \cE(w)_{t^\smalltext{\prime} \land \theta}\cE(v)_{t^\smalltext{\prime} \land \theta}\delta\xi_{\theta} \big| \cF_{t\smallertext{+}} \cap \cF_{\theta\smallertext{-}}\big] \1_{\{t<\theta\}} \\
	&= \frac{1}{\cE(L)_{t\land\theta}}\E^\P\Big[ \E^\P\big[ \cE(L)_{t^\smalltext{\prime}\land\theta} \cE(w)_{t^\smalltext{\prime} \land \theta}\cE(v)_{t^\smalltext{\prime} \land \theta}\delta\xi_{\theta} \big|\cF_{\theta}\big] \Big| \cF_{t\smallertext{+}} \cap \cF_{\theta\smallertext{-}}\Big] \1_{\{t<\theta\}} \\
	&= \frac{1}{\cE(L)_{t\land\theta}}\E^\P\Big[ \cE(L)_{t^\smalltext{\prime}\land\theta} \cE(w)_{t^\smalltext{\prime} \land \theta}\cE(v)_{t^\smalltext{\prime} \land \theta} \E^\P\big[\delta\xi_{\theta} \big|\cF_{\theta}\big] \Big| \cF_{t\smallertext{+}} \cap \cF_{\theta\smallertext{-}}\Big] \1_{\{t<\theta\}} = 0.
\end{align*}
Here we used the fact $\theta$ is an $\F$--stopping time, that $\cF_{\theta\smallertext{-}} \subseteq \cF_{\theta}$, that $\cE(w)$ and $\cE(v)$ are $\F$-adapted by \eqref{eq::definition_lambda_hat_lambda} and \Cref{ass::crossing}.$(i)$, and that $\E^\P[\delta\xi_{\theta}|\cF_{\theta}] = 0$. Since both $\cE(w)$ and $\cE(v)$ are strictly positive by the fact that $\Phi < 1$ and \Cref{ass::crossing}.$(i)$, we conclude that $\delta\cY_{t \land \theta}\1_{\{t < \theta\}} \geq 0$, $\P$--a.s., for $t \in [0,\infty)$. A symmetric argument then also yields $\delta\cY_{t \land \theta}\1_{\{t < \theta\}} \leq 0$, $\P$--a.s., for $t \in [0,\infty)$. Right-continuity of the solution component then yields the desired result. This completes the proof.
\end{proof}

	\begin{proof}[Proof of \Cref{lem::stopping_value_function}]
		We follow the proofs of \cite[Theorem 2.4, pages 570--571]{possamai2018stochastic} and \cite[Theorem 2.3]{nutz2013constructing}. We will also repeatedly use the identity $(t-s) \land (T - s \land T)^{s,\bar{\omega}} = (t \land T - s \land T)^{s,\bar{\omega}}$ throughout this proof. From \Cref{thm::measurability2}, we immediately obtain
		\[
			\widehat\cY_{t}(T,\xi)^{s,\bar\omega}(\omega) = \widehat\cY_{t}(T,\xi)(\bar{\omega}\otimes_s\omega) = \widehat\cY_{t \land T(\bar{\omega}\otimes_\smalltext{s}\omega)}(T,\xi)(\bar{\omega}\otimes_s\omega) = \widehat\cY_{t \land T^{\smalltext{s}\smalltext{,}\smalltext{\bar\omega}}(\omega)}(T,\xi)^{s,\bar\omega}(\omega), \; \omega \in \Omega.
		\]
		The stated measurability can be argued as follows. Let $\theta \coloneqq (t\land T - s\land T)^{s,\bar\omega} = (t-s)\land(T-s\land T)^{s,\bar\omega}$, and let $\omega \in \Omega$ be arbitrary. We have
		\[
			\tilde{\omega} \coloneqq \bar{\omega}\otimes_s\omega_{\cdot\land\theta(\omega)} = \bar{\omega}\otimes_s\omega \; \textnormal{on $[0,t\land T(\bar{\omega}\otimes_s\omega)]$}.
		\]
		Thus $t \land T(\tilde{\omega}) = t \land T(\bar{\omega}\otimes_s\omega)$ by Galmarino's test (see \cite[Theorem IV.100(a)]{dellacherie1978probabilities}), and we then obtain
		\begin{align*}
			\widehat{\cY}_t(T,\xi)^{s,\bar\omega}(\omega_{\cdot\land\theta(\omega)}) 
			&= \widehat{\cY}_t(T,\xi)(\tilde{\omega})
			= \widehat{\cY}_{t \land T(\tilde{\omega})}(T,\xi)(\tilde{\omega}) 
			= \widehat{\cY}_{t \land T(\tilde{\omega})}(T,\xi)(\tilde{\omega}_{\cdot \land t\land T(\tilde{\omega})}) 
			= \widehat{\cY}_{t \land T(\tilde{\omega})}(T,\xi)((\bar{\omega}\otimes_s\omega)_{\cdot \land t\land T(\tilde{\omega})}) \\
			&= \widehat{\cY}_{t \land T(\tilde{\omega})}(T,\xi)(\bar{\omega}\otimes_s\omega)
			= \widehat{\cY}_{t \land T(\bar{\omega}\otimes_\smalltext{s}\omega)}(T,\xi)(\bar{\omega}\otimes_s\omega)
			= \widehat{\cY}_t(T,\xi)(\bar{\omega}\otimes_s\omega) = \widehat{\cY}_t(T,\xi)^{s,\bar{\omega}}(\omega).
		\end{align*}
		The stated measurability then follows from the arguments in the proof of \cite[Lemma 2.5]{nutz2013constructing}.
		
		\medskip
		We turn to the integrability \eqref{eq::integrability_y_hat_tau}. Fix some $\varepsilon \in (0,\infty)$. By \cite[Proposition 7.50]{bertsekas1978stochastic}, there exists a $\cF^\ast$-measurable map $\Q^\prime : \Omega \longrightarrow \fP(\Omega)$ such that 
		\begin{equation*}
			\Q^\prime(\omega) \in \fP(t,\omega)
			\;
			\text{and}
			\;
			\E^{\Q^\smalltext{\prime}(\omega)} \big[\cY^{t,\omega,{\Q^\smalltext{\prime}(\omega)}}_0((T-t\land T)^{t,\omega},\xi^{t,\omega})\big] 
			\geq \big(\widehat\cY_t(T,\xi)(\omega) - \varepsilon\big) \1_{\{\hat\cY_\smalltext{t}(T,\xi) < \infty\}}(\omega) + \frac{1}{\varepsilon}\1_{\{\hat\cY_\smalltext{t}(T,\xi) = \infty\}}(\omega),
		\end{equation*}
		for every $\omega \in \Omega$ with $\fP(t,\omega) \neq \varnothing$. We now fix $\bar\omega \in \Omega$ and $\P \in \fP(s,\bar\omega)$. The map $\widetilde\Q : \Omega \longrightarrow \fP(\Omega)$ given by $\widetilde\Q(\omega) \coloneqq \Q^\prime(\bar\omega\otimes_{s}{\omega_{\cdot \land (t-s)}})$ is $\cF^\ast_{t-s}$-measurable; see \cite[Proposition 7.44]{bertsekas1978stochastic} and the proof of \cite[Lemma 2.5]{nutz2013constructing} for the arguments. We now choose an $\cF_{t-s}$-measurable map $\Q : \Omega \longrightarrow \fP(\Omega)$ satisfying $\Q(\omega) = \widetilde\Q(\omega)$ for $\P$--a.e. $\omega \in \Omega$, see \cite[Lemma 1.27]{kallenberg2021foundations}. Since $\P^{t-s,\omega} \in \fP(t,\bar\omega\otimes_s\omega) \neq \varnothing$ for $\P$--a.e. $\omega \in \Omega$, this implies that $\Q(\omega) \in \fP(t,\bar\omega\otimes_s\omega)$ and
		\begin{equation*}
			\E^{\Q(\omega)} \big[\cY^{t,\bar\omega\otimes_s\omega,{\Q(\omega)}}_0((T-t\land T)^{t,\bar\omega\otimes_s\omega},\xi^{t,\bar\omega\otimes_s\omega})\big] 
			\geq \big(\widehat\cY_t(T,\xi)(\bar\omega\otimes_s\omega) - \varepsilon\big) \1_{\{\hat\cY_\smalltext{t}(T,\xi) < \infty\}}(\bar\omega\otimes_s\omega) + \frac{1}{\varepsilon}\1_{\{\hat\cY_\smalltext{t}(T,\xi) = \infty\}}(\bar\omega\otimes_s\omega),
		\end{equation*}
		for $\P$--a.e. $\omega \in \Omega$. Here we used $(\bar\omega\otimes_s\omega)\otimes_t\tilde\omega = (\bar\omega\otimes_s\omega_{\cdot\land(t-s)})\otimes_t\tilde\omega$ for every $\tilde\omega\in\Omega$. We now define
		\begin{equation*}
			\overline{\P}[A] \coloneqq  \iint \big(\1_A\big)^{t-s,\omega}(\omega^\prime)\Q(\omega;\d\omega^\prime)\P(\d\omega), \; A \in \cF.
		\end{equation*}
		Then $\overline\P \in \fP(s,\bar\omega)$ by \Cref{ass::probabilities2}.$(iii)$, $\overline\P = \P$ on $\cF_{t-s}$, and $\Q(\omega) = \bar\P^{t-s,\omega}$ for $\P$--a.e. $\omega \in \Omega$, which, together with \Cref{lem::conditioning_bsde2}, yields
		\begin{align*}
			\E^{\bar\P}\big[\cY^{s,\bar\omega,\bar\P}_{t-s}((T-s\land T)^{s,\bar\omega},\xi^{s,\bar\omega})\big|\cF_{t-s}\big](\omega)
			&= \E^{\Q(\omega)}\big[\cY^{t,\bar\omega\otimes_s\omega,\Q(\omega)}_0((T-t\land T)^{t,\bar\omega\otimes_s\omega},\xi^{t,\bar\omega\otimes_s\omega})\big] \\
			&\geq 
			\big(\widehat\cY_t(T,\xi)(\bar\omega\otimes_s\omega) - \varepsilon\big) \1_{\{\hat\cY_\smalltext{t}(T,\xi) < \infty\}}(\bar\omega\otimes_s\omega) + \frac{1}{\varepsilon}\1_{\{\hat\cY_\smalltext{t}(T,\xi) = \infty\}}(\bar\omega\otimes_s\omega),
		\end{align*}
		for $\P$--a.e. $\omega \in \Omega$. This implies
		\begin{equation*}
			\E^{\bar\P}\big[\cY^{s,\bar\omega,\bar\P}_{t-s}((T-s\land T)^{s,\bar\omega},\xi^{s,\bar\omega})\big|\cF_{t-s}\big] \land \varepsilon^{-1} \leq \widehat\cY_t(T,\xi)(\bar\omega\otimes_s\cdot) \land \varepsilon^{-1} \leq \E^{\bar\P}\big[\cY^{s,\bar\omega,\bar\P}_{t-s}((T-s\land T)^{s,\bar\omega},\xi^{s,\bar\omega})\big|\cF_{t-s}\big] + \varepsilon, \; \textnormal{$\P$--a.s.}
		\end{equation*}
		and then
		\begin{equation*}
			\big|\widehat\cY_t(T,\xi)(\bar\omega\otimes_s\cdot) \land \varepsilon^{-1}\big| \leq \E^{\bar\P}\Big[\big|\cY^{s,\bar\omega,\bar\P}_{t-s}((T-s\land T)^{s,\bar\omega},\xi^{s,\bar\omega})\big|\Big|\cF_{t-s}\Big] + \varepsilon, \; \textnormal{$\P$--a.s.}
		\end{equation*}
		Since $\cE(\hat\beta A^{s,\bar\omega}_{s\smallertext{+}\smallertext{\cdot}})_{(t-s) \land (T-s\land T)^{s,\bar\omega}} < \infty$, we have, using $(a+b)^2 \leq 2a^2 + 2b^2$ for real numbers $a$ and $b$, that
		\begin{align*}
			&\frac{1}{2}\big(\cE(\hat\beta A^{s,\bar\omega}_{s\smallertext{+}\smallertext{\cdot}})_{(t-s) \land (T-s\land T)^{s,\bar\omega}} \land \varepsilon^{-1}\big)\big|\widehat\cY_t(T,\xi)(\bar\omega\otimes_s\cdot) \land \varepsilon^{-1}\big|^2 \\
			& \leq \cE(\hat\beta A^{s,\bar\omega}_{s\smallertext{+}\smallertext{\cdot}})_{(t-s) \land (T-s\land T)^{s,\bar\omega}}\E^{\bar\P}\Big[\big|\cY^{s,\bar\omega,\bar\P}_{t-s}((T-s\land T)^{s,\bar\omega},\xi^{s,\bar\omega})\big|^2\Big|\cF_{t-s}\Big] + \varepsilon, \; \textnormal{$\P$--a.s.}
		\end{align*}
		and then
		\begin{align*}
			& \frac{1}{2}\E^\P\Big[ \big(\cE(\hat\beta A^{s,\bar\omega}_{s\smallertext{+}\smallertext{\cdot}})_{(t-s) \land (T-s\land T)^{s,\bar\omega}} \land \varepsilon^{-1}\big)\big|\widehat\cY_t(T,\xi)(\bar\omega\otimes_s\cdot) \land \varepsilon^{-1}\big|^2 \Big] \\
			& \leq \E^{\bar\P}\Big[ \cE(\hat\beta A^{s,\bar\omega}_{s\smallertext{+}\smallertext{\cdot}})_{(t-s) \land (T-s\land T)^{s,\bar\omega}}\big|\cY^{s,\bar\omega,\bar\P}_{t-s}((T-s\land T)^{s,\bar\omega},\xi^{s,\bar\omega})\big|^2\Big] + \varepsilon \\
			& \leq \sup_{\P^\smalltext{\prime}\in \fP(s,\bar\omega)}\E^{\P^\smalltext{\prime}}\Big[ \cE(\hat\beta A^{s,\bar\omega}_{s\smallertext{+}\smallertext{\cdot}})_{(t-s) \land (T-s\land T)^{s,\bar\omega}}\big|\cY^{s,\bar\omega,\P^\smalltext{\prime}}_{t-s}((T-s\land T)^{s,\bar\omega},\xi^{s,\bar\omega})\big|^2\Big] + \varepsilon \\
			& \leq \mathfrak{C} \sup_{\P^\smalltext{\prime} \in \fP(s,\bar{\omega})} \E^{\P^\smalltext{\prime}} \bigg[\cE(\hat\beta  A^{s,\bar{\omega}}_{s\smallertext{+}\smallertext{\cdot}})_{(T-s\land T)^{\smalltext{s}\smalltext{,}\smalltext{\bar{\omega}}}}|\xi^{s,\bar{\omega}}|^2 + \int_{t-s}^{(T-s\land T)^{\smalltext{s}\smalltext{,}\smalltext{\bar{\omega}}}} \cE(\hat\beta  A^{s,\bar{\omega}}_{s\smallertext{+}\smallertext{\cdot}} )_r \frac{|f^{s,\bar{\omega},\P^\smalltext{\prime}}_r(0,0,0,\mathbf{0})|^2}{|\alpha^{s,\bar{\omega}}_r|^2} \d (C^{s,\bar{\omega}}_{s\smallertext{+}\smallertext{\cdot}})_r\bigg] + \varepsilon,
		\end{align*}
		where $\mathfrak{C}$ is the constant depending only on $\hat{\beta}$ and $\Phi$ from \Cref{prop::stability}. Since the right-hand side is finite by \Cref{ass::generator2}, it follows from $\widehat{\cY}_{t\land T(\cdot)}(T,\xi)(\cdot) = \widehat{\cY}_{t}(T,\xi)(\cdot)$ and an application of Fatou's lemma for $\varepsilon = 1/n$ where $n \in \N^\star$ that
		\begin{align*}
			& \frac{1}{2}\E^\P\Big[ \cE(\hat\beta A^{s,\bar\omega}_{s\smallertext{+}\smallertext{\cdot}})_{(t-s) \land (T-s\land T)^{s,\bar\omega}}\big|\widehat\cY_{t\land T(\bar\omega\otimes_\smalltext{s}\cdot)}(T,\xi)(\bar\omega\otimes_s\cdot)\big|^2 \Big] \\
			& \leq \liminf_{n \rightarrow \infty} \frac{1}{2} \E^\P\Big[ \big(\cE(\hat\beta A^{s,\bar\omega}_{s\smallertext{+}\smallertext{\cdot}})_{(t-s) \land (T-s\land T)^{s,\bar\omega}} \land n\big)\big|\widehat\cY_t(T,\xi)(\bar\omega\otimes_s\cdot) \land n\big|^2 \Big] \\
			& \leq \mathfrak{C} \sup_{\P^\smalltext{\prime} \in \fP(s,\bar{\omega})} \E^{\P^\smalltext{\prime}} \bigg[\cE(\hat\beta  A^{s,\bar{\omega}}_{s\smallertext{+}\smallertext{\cdot}})_{(T-s\land T)^{\smalltext{s}\smalltext{,}\smalltext{\bar{\omega}}}}|\xi^{s,\bar{\omega}}|^2 + \int_{t-s}^{(T-s\land T)^{\smalltext{s}\smalltext{,}\smalltext{\bar{\omega}}}} \cE(\hat\beta  A^{s,\bar{\omega}}_{s\smallertext{+}\smallertext{\cdot}})_r \frac{|f^{s,\bar{\omega},\P^\smalltext{\prime}}_r(0,0,0,\mathbf{0})|^2}{|\alpha^{s,\bar{\omega}}_r|^2} \d (C^{s,\bar{\omega}}_{s\smallertext{+}\smallertext{\cdot}})_r\bigg] < \infty,
		\end{align*}
		which implies \eqref{eq::integrability_y_hat_tau} by \Cref{ass::generator2}.\ref{eq::shifted_integrability}.
		
		\medskip
		We turn to \eqref{eq::dpp_y_hat_sup}. The assertion is immediate if $s=t$ or $T(\bar{\omega})\leq s$. We therefore assume that $s<t$ and $T(\bar{\omega})>s$. We again fix $\varepsilon \in (0,\infty)$. Since
		\begin{equation*}
			\cE(\hat\beta A_{s\smallertext{+}\smallertext{\cdot}})^{1/2}_{t\land T - s\land T}(\omega) \widehat{\cY}_t(T,\xi)(\omega) 
			= \sup_{\P \in \fP(t,\omega)} 
			\cE(\hat\beta A_{s\smallertext{+}\smallertext{\cdot}})^{1/2}_{t\land T - s\land T}(\omega) \E^\P\big[\cY^{t,\omega,\P}_0((T-t\land T)^{t,\omega},\xi^{t,\omega})\big], \;  \omega \in \Omega,
		\end{equation*}
		we employ \cite[Proposition 7.50]{bertsekas1978stochastic} similar to before to find an $\cF^\ast$-measurable map $\Q^\prime:\Omega \longrightarrow \fP(\Omega)$ such that for every $\omega \in \Omega$ with $\fP(t,\omega) \neq 0$, we have $\Q^\prime(\omega) \in \fP(t,\omega)$ and
		\begin{align*}
			&\cE(\hat\beta A_{s\smallertext{+}\smallertext{\cdot}})^{1/2}_{t\land T - s\land T}(\omega) \E^{\Q^\smalltext{\prime}(\omega)}\big[\cY^{t,\omega,\Q^\smalltext{\prime}(\omega)}_0((T-t\land T)^{t,\omega},\xi^{t,\omega})\big] 
			\geq 
			\begin{cases}
				\cE(\hat\beta A_{s\smallertext{+}\smallertext{\cdot}})^{1/2}_{t\land T - s\land T}(\omega) \widehat{\cY}_t(T,\xi)(\omega) - \varepsilon,\; \textnormal{if this is finite,} \\[0.5em]
				\displaystyle \frac{1}{\varepsilon}, \; \textnormal{otherwise}.
			\end{cases}
		\end{align*}
		Fix $\bar\omega \in \Omega$ and $\P \in \fP(s,\bar\omega)$. As before, we choose an $\cF_{t-s}$-measurable map $\Q : \Omega \longrightarrow \fP(\Omega)$ satisfying $\Q(\omega) = \Q^\prime(\bar\omega\otimes_s\omega_{\cdot\land(t-s)})$ for $\P$--a.e. $\omega \in \Omega$. Since $\P^{t-s,\omega} \in \fP(t,\bar\omega\otimes_s\omega) = \fP(t,\bar\omega\otimes_s\omega_{\cdot\land(t-s)}) \neq \varnothing$ for $\P$--a.e. $\omega \in \Omega$, this implies that $\Q(\omega) \in \fP(t,\bar\omega\otimes_s\omega)$ and 
		\begin{align*}
			\cE(\hat\beta A^{s,\bar\omega}_{s\smallertext{+}\smallertext{\cdot}})^{1/2}_{(t\land T - s\land T)^{\smalltext{s}\smalltext{,}\smalltext{\bar\omega}}}(\omega)  \E^{\Q(\omega)} & \big[\cY^{t,\bar\omega\otimes_s\omega,{\Q(\omega)}}_0((T-t\land T)^{t,\bar\omega\otimes_s\omega},\xi^{t,\bar\omega\otimes_s\omega})\big] \\
			&\geq 
			\begin{cases}
				\cE(\hat\beta A^{s,\bar\omega}_{s\smallertext{+}\smallertext{\cdot}})^{1/2}_{(t\land T - s\land T)^{\smalltext{s}\smalltext{,}\smalltext{\bar\omega}}}(\omega) \widehat{\cY}_t(T,\xi)(\bar\omega\otimes_s\omega) - \varepsilon, \; \textnormal{if this is finite,} \\[0.5em]
				\displaystyle \frac{1}{\varepsilon}, \; \textnormal{otherwise},
			\end{cases}
		\end{align*}
		for $\P$--a.e. $\omega \in \Omega$. Here we used the fact that $\cE(\hat\beta A^{s,\bar\omega}_{s\smallertext{+}\smallertext{\cdot}})_{(t\land T - s\land T)^{\smalltext{s}\smalltext{,}\smalltext{\bar\omega}}}$ is $\cF_{t-s}$-measurable 
		and that $(\bar\omega\otimes_s\omega_{\cdot\land (t-s)})\otimes_t\tilde\omega = (\bar\omega\otimes_s\omega)\otimes_t\tilde\omega$ for every $\tilde\omega\in\Omega$.  
		By \eqref{eq::integrability_y_hat_tau} we know that we must be $\P$--a.s. in the first case. We now define
		\begin{equation*}
			\overline{\P}[A] \coloneqq  \iint \big(\1_A\big)^{t-s,\omega}(\omega^\prime)\Q(\omega;\d\omega^\prime)\P(\d\omega), \; A \in \cF.
		\end{equation*}
		As before, $\overline\P \in \fP(s,\bar\omega)$ by \Cref{ass::probabilities2}.$(iii)$, $\overline\P = \P$ on $\cF_{t-s}$, and $\bar\P^{t-s,\omega} = \Q(\omega)$ for $\P$--a.e. $\omega \in \Omega$, which, together with \Cref{lem::conditioning_bsde2}, yields
		\begin{align*}
			\E^{\bar\P}\big[\cY^{s,\bar\omega,\bar\P}_{t-s}((T-s\land T)^{s,\bar\omega},\xi^{s,\bar\omega})\big|\cF_{t-s}\big]
			&\geq 
			\widehat{\cY}_t(T,\xi)(\bar\omega\otimes_s\cdot) - \frac{\varepsilon}{\cE(\hat\beta A^{s,\bar\omega}_{s\smallertext{+}\smallertext{\cdot}})^{1/2}_{(t\land T - s\land T)^{\smalltext{s}\smalltext{,}\smalltext{\bar\omega}}}} \\
			&\geq 
			\E^{\bar\P}\big[\cY^{s,\bar\omega,\bar\P}_{t-s}((T-s\land T)^{s,\bar\omega},\xi^{s,\bar\omega})\big|\cF_{t-s}\big] - \frac{\varepsilon}{\cE(\hat\beta A^{s,\bar\omega}_{s\smallertext{+}\smallertext{\cdot}})^{1/2}_{(t\land T - s\land T)^{\smalltext{s}\smalltext{,}\smalltext{\bar\omega}}}}, \; \textnormal{$\P$--a.s.},
		\end{align*}
		and also $\overline{\P}$--almost surely. Therefore
		\begin{align*}
			\big|\widehat{\cY}_t(T,\xi)(\bar\omega\otimes_s\cdot) - \E^{\bar\P}\big[\cY^{s,\bar\omega,\bar\P}_{t-s}((T-s\land T)^{s,\bar\omega},\xi^{s,\bar\omega})\big|\cF_{t-s}\big]\big| 
			&=  \widehat{\cY}_t(T,\xi)(\bar\omega\otimes_s\cdot) - \E^{\bar\P}\big[\cY^{s,\bar\omega,\bar\P}_{t-s}((T-s\land T)^{s,\bar\omega},\xi^{s,\bar\omega})\big|\cF_{t-s}\big] \\
			&\leq \frac{\varepsilon}{\cE(\hat\beta A^{s,\bar\omega}_{s\smallertext{+}\smallertext{\cdot}})^{1/2}_{(t\land T - s\land T)^{\smalltext{s}\smalltext{,}\smalltext{\bar\omega}}}}, \; \textnormal{$\P$--a.s.},
		\end{align*}
		and also $\overline{\P}$--almost surely. Note that
		\begin{align*}
			\E^{\bar\P}\big[\cY^{s,\bar\omega,\bar\P}_{t-s}((T-s\land T)^{s,\bar\omega},\xi^{s,\bar\omega})\big|\cF_{t-s}\big]
			&= \E^{\bar\P}\big[\cY^{s,\bar\omega,\bar\P}_{(t\land T-s\land T)^{\smalltext{s}\smalltext{,}\smalltext{\bar\omega}}}((T-s\land T)^{s,\bar\omega},\xi^{s,\bar\omega})\big|\cF_{t-s}\big] \\
			&= \E^{\bar\P}\Big[ \E^{\bar\P}\big[ \cY^{s,\bar\omega,\bar\P}_{(t\land T-s\land T)^{\smalltext{s}\smalltext{,}\smalltext{\bar\omega}}}((T-s\land T)^{s,\bar\omega},\xi^{s,\bar\omega})\big| \cF_{(T-s\land T)^{\smalltext{s}\smalltext{,}\smalltext{\bar\omega}}}\big]\Big|\cF_{t-s}\Big] \\
			&= \E^{\bar\P}\big[ \cY^{s,\bar\omega,\bar\P}_{(t\land T-s\land T)^{\smalltext{s}\smalltext{,}\smalltext{\bar\omega}}}((T-s\land T)^{s,\bar\omega},\xi^{s,\bar\omega})\big| \cF_{(t \land T-s\land T)^{\smalltext{s}\smalltext{,}\smalltext{\bar\omega}}}\big], \; \textnormal{$\overline{\P}$--a.s.},
		\end{align*}
		holds by the $\cF_{(T-s\land T)^{\smalltext{s}\smalltext{,}\smalltext{\bar\omega}}}$-measurability of the solution of the BSDE, that is, $ \cY^{s,\bar\omega,\bar\P}_\sigma((T-s\land T)^{s,\bar\omega},\xi^{s,\bar\omega})$ is $\cF_{(T-s\land T)^{s,\bar\omega}}$-measurable for any $\F$--stopping time $\sigma$ (using \cite[Theorem~IV.56]{dellacherie1978probabilities}), and since, as before, $(t-s) \land (T-s\land T)^{s,\bar\omega} = (t\land T - s\land T)^{s,\bar\omega}$. The stability of solutions to BSDEs from \Cref{cor::stability} then implies that there exists a constant $\mathfrak{C}\in (0,\infty)$ which only depends on $\Phi$ and $\hat\beta$ such that
		\begin{align*}
			\E^{\bar\P}\Big[\big|\cY^{s,\bar\omega,\bar\P}_0 & ((t\land T - s\land T)^{s,\bar\omega},\widehat{\cY}_{t}(T,\xi)(\bar\omega\otimes_s\cdot)) \\
			&- \cY_0^{s,\bar\omega,\bar\P}\big((t\land T-s\land T)^{s,\bar\omega},\E^{\bar\P}\big[ \cY^{s,\bar\omega,\bar\P}_{(t\land T-s\land T)^{\smalltext{s}\smalltext{,}\smalltext{\bar\omega}}}((T-s\land T)^{s,\bar\omega},\xi^{s,\bar\omega})\big| \cF_{(t \land T-s\land T)^{\smalltext{s}\smalltext{,}\smalltext{\bar\omega}}}\big]\big)\big|^2\Big] \leq \varepsilon^2\mathfrak{C},
		\end{align*}
		which then yields		
		\begin{align*}
			\E^\P\big[&\cY^{s,\bar\omega,\P}_0((t\land T - s\land T)^{s,\bar\omega},\widehat{\cY}_{t}(T,\xi)(\bar\omega\otimes_s\cdot))\big] = \E^{\bar\P}\big[\cY^{s,\bar\omega,\bar\P}_0((t\land T - s\land T)^{s,\bar\omega},\widehat{\cY}_{t}(T,\xi)(\bar\omega\otimes_s\cdot))\big]\\ 
			&\leq \varepsilon \mathfrak{C}^{1/2} + \E^{\bar\P}\Big[ \cY_0^{s,\bar\omega,\bar\P}\big((t\land T-s\land T)^{s,\bar\omega},\E^{\bar\P}\big[ \cY^{s,\bar\omega,\bar\P}_{(t\land T-s\land T)^{\smalltext{s}\smalltext{,}\smalltext{\bar\omega}}}((T-s\land T)^{s,\bar\omega},\xi^{s,\bar\omega})\big| \cF_{(t \land T-s\land T)^{\smalltext{s}\smalltext{,}\smalltext{\bar\omega}}}\big]\big)\Big] \\
			&= \varepsilon \mathfrak{C}^{1/2} + \E^{\bar\P}\Big[ \cY_0^{s,\bar\omega,\bar\P}\big((t\land T-s\land T)^{s,\bar\omega}, \cY^{s,\bar\omega,\bar\P}_{(t\land T-s\land T)^{\smalltext{s}\smalltext{,}\smalltext{\bar\omega}}}((T-s\land T)^{s,\bar\omega},\xi^{s,\bar\omega})\big)\Big] \\
			&= \varepsilon \mathfrak{C}^{1/2} + \E^{\bar\P}\Big[\cY_0^{s,\bar\omega,\bar\P}\big((T-s\land T)^{s,\bar\omega},\xi^{s,\bar\omega})\Big] 
			\leq \varepsilon \mathfrak{C}^{1/2} + \widehat{\cY}_s(T,\xi)(\bar\omega).
		\end{align*}
		Here, we used \Cref{lem::solv_bsde_cond} in the second-to-last equality and the fact that $\P = \overline{\P}$ on $\cF_{t-s}$ in the first equality, ensuring that the solutions of the BSDEs with horizon $\leq t-s$ relative to $\P$ and $\overline{\P}$ must agree. Since $\P \in \fP(s,\bar\omega)$ and $\varepsilon \in (0,\infty)$ were arbitrary, we find
		\begin{equation}\label{eq::dpp_geq}
			\sup_{\P^\smalltext{\prime}\in\fP(s,\bar\omega)}\E^{\P^\smalltext{\prime}}\big[\cY^{s,\bar\omega,\P^\smalltext{\prime}}_0((t\land T - s\land T)^{s,\bar\omega},\widehat{\cY}_{t}(T,\xi)(\bar\omega\otimes_s\cdot))\big] 
			\leq \widehat{\cY}_s(T,\xi)(\bar\omega).
		\end{equation}
		Conversely, \Cref{lem::conditioning_bsde2} implies for any $\P\in\fP(s,\bar{\omega})$ that
		\begin{align*}
			\E^{\P}\big[ \cY^{s,\bar\omega,\P}_{(t\land T-s\land T)^{\smalltext{s}\smalltext{,}\smalltext{\bar\omega}}}((T-s\land T)^{s,\bar\omega},\xi^{s,\bar\omega})\big| \cF_{(t \land T-s\land T)^{\smalltext{s}\smalltext{,}\smalltext{\bar\omega}}}\big] 
			&= \E^\P\big[\cY^{s,\bar\omega,\P}_{t-s}((T-s \land T)^{s,\bar\omega},\xi^{s,\bar\omega}) \big| \cF_{t-s}\big] \\
			&= \E^{\P^{\smalltext{t}\smalltext{-}\smalltext{s}\smalltext{,}\smalltext{\cdot}}}\big[\cY^{t,\bar\omega\otimes_\smalltext{s}\cdot,\P^{\smalltext{t}\smalltext{-}\smalltext{s}\smalltext{,}\smalltext{\cdot}}}_0 ((T -t\land T)^{t,\bar\omega\otimes_\smalltext{s}\cdot},\xi^{t,\bar\omega\otimes_\smalltext{s}\cdot})\big] \\
			&\leq \widehat{\cY}_t(T,\xi)(\bar\omega\otimes_s\cdot), 
			\; \textnormal{$\P$--a.s.,}
		\end{align*}
		we find using \Cref{lem::solv_bsde_cond} again and the comparison principle for BSDEs from \Cref{prop::comparison} that
		\begin{align*}
			&\E^{\P}\Big[\cY_0^{s,\bar\omega,\P}\big((T-s\land T)^{s,\bar\omega},\xi^{s,\bar\omega})\Big] \\
			&= \E^\P\Big[\cY_0^{s,\bar\omega,\P}\big((t\land T-s\land T)^{s,\bar\omega},\E^{\P}\big[ \cY^{s,\bar\omega,\P}_{(t\land T\smallertext{-}s\land T)^{\smalltext{s}\smalltext{,}\smalltext{\bar\omega}}}((T-s\land T)^{s,\bar\omega},\xi^{s,\bar\omega})\big| \cF_{(t \land T\smallertext{-}s\land T)^{\smalltext{s}\smalltext{,}\smalltext{\bar\omega}}}\big]\big)\Big] \\
			&\leq \E^\P\big[\cY^{s,\bar\omega,\P}_0((t\land T - s\land T)^{s,\bar\omega},\,\widehat{\cY}_{t}(T,\xi)(\bar\omega\otimes_s\cdot))\big] \leq \sup_{\P^\smalltext{\prime} \in \fP(s,\bar\omega)} \E^{\P^\smalltext{\prime}}\big[\cY^{s,\bar\omega,\P^\smalltext{\prime}}_0((t\land T - s\land T)^{s,\bar\omega},\,\widehat{\cY}_{t}(T,\xi)(\bar\omega\otimes_s\cdot))\big].
		\end{align*}
		Since $\P \in \fP(s,\bar\omega)$ was arbitrary, this yields
		\begin{equation*}
			\widehat{\cY}_s(T,\xi)(\bar\omega) 
			\leq \sup_{\P^\smalltext{\prime}\in\fP(s,\bar\omega)}\E^{\P^\smalltext{\prime}}\big[\cY^{s,\bar\omega,\P^\smalltext{\prime}}_0((t\land T - s\land T)^{s,\bar\omega},\widehat{\cY}_{t}(T,\xi)(\bar\omega\otimes_s\cdot))\big],
		\end{equation*}
		which, together with \eqref{eq::dpp_geq}, yields the desired equality. This completes the proof.
	\end{proof}
	
\begin{proof}[Proof of \Cref{lem::linearising_bsde}]
	We note first that \Cref{ass::crossing}.$(ii)$ implies that $\langle\eta\bcdot X^{c,\P}\rangle^{(\P)}$, $\langle\eta^\prime\bcdot X^{c,\P}\rangle^{(\P)}$ and $\langle \rho\ast\tilde\mu^{X,\P}\rangle^{(\P)}$ are $\P$--essentially bounded. Moreover, since $\Delta(\rho\ast\tilde\mu^{X,\P}) > -1$, $\P$--a.s., the random variables $\d\mathscr{Q}/\d\P$ and $\d\mathcal{Q}/\d\P$ are well-defined and in $\L^2(\cF_T,\P)$ by \cite[Lemma~7.4]{possamai2024reflections}. In particular, they are strictly positive probability densities with respect to $\P$.
	By repeating the Girsanov and localisation arguments from the proof of \Cref{prop::comparison} with $\eta$ and then with $\eta^\prime$, we obtain
	\[
		\cY_t = \E^{\mathcal{Q}}[ \zeta | \cF_{t\smallertext{+}}]
		\; \textnormal{and} \; 
		\sY_t = \E^{\mathscr{Q}}[ \zeta | \cF_{t\smallertext{+}}], \; \textnormal{$\P$--a.s.}, \; t \in [0,\infty].
	\]
	Moreover, since $\eta^\top\mathsf{a} z\geq -\sqrt{\theta^X}\|\mathsf{a}^{1/2}z\|$ the same comparison argument gives $\sY\leq\cY$ outside some $\P$--null set, which completes the proof.
\end{proof}

	\section{Stability and comparison of solutions to BSDEs}\label{sec_stability}
	
	In this section, we discuss the stability and comparison of solutions to our BSDEs. To ensure that this section is self-contained, we depart from the assumptions made in the rest of the manuscript and adopt the framework in \cite{possamai2024reflections}. Consider an arbitrary probability space $(\Omega, \cG, \P)$. Throughout this section, we fix once and for all the data $(X,\mu,\G,T,\xi,f,C)$, where
	\begin{enumerate}[{\bf(D1)}, leftmargin=1cm]
		\item \label{data::filtration} $\G = (\cG_t)_{t \in [0,\infty)}$ is a filtration on $(\Omega,\cG,\P)$ with the convention $\cG_{0\smallertext{-}} \coloneqq \{\varnothing,\Omega\}$;
		\item \label{data::martingale} $X = (X_t)_{t \in [0,\infty)}$ is an $\R^d$-valued process whose components are right-continuous, $\G_\smallertext{+}$-adapted, $\G_\smallertext{+}$--locally square-integrable martingales starting at zero, $\mu$ is a $\G_\smallertext{+}$--integer-valued random measure on $[0,\infty) \times E$, where $(E,\cE)$ is some Blackwell space,\footnote{See \cite[Definition III.24]{dellacherie1978probabilities}. However, for simplicity, one can think of $E$ as a Polish space with its Borel $\sigma$-algebra $\cE = \cB(E)$.} and $M_\mu[\Delta X^i | \widetilde\cP(\G)] = 0$ for each $i \in \{1,\ldots,d\}$;
		\item $C = (C_t)_{t \in [0,\infty)}$ is a real-valued, right-continuous and $\P$--a.s. non-decreasing,\footnote{See also \cite[Lemma 6.5.10]{weizsaecker1990stochastic}.}  $\G$-predictable process starting at zero satisfying 
		\begin{equation*}
			\d\langle X\rangle_s = \pi_s\d C_s, \; \text{and} \; \nu(\,\cdot\,;\d s, \d x) = K_{\cdot,s}(\d x)\d C_s, \; \text{$\P$--a.s.},
		\end{equation*}
		where $\pi=(\pi_t)_{t \in [0,\infty)}$ is a $\G$-predictable process with values in $\S^d_\smallertext{+}$, $\nu$ is the $(\G,\P)$--predictable compensator of $\mu$, and $K$ is a transition kernel on $(E,\cE)$ given $(\Omega \times [0,\infty),\cP(\G))$;
		\item $T$ is a $\G_\smallertext{+}$--stopping time;
		\item \label{data::expectation_sup}$\xi$ is a real-valued, $\cG_{T\smallertext{+}}$-measurable random variable satisfying $\E[|\xi|^2] < \infty;$
		\item \label{data::generator}$f : \bigsqcup_{(\omega,t) \in \Omega \times [0,\infty)} \big(\R \times \R \times \R^d \times \widehat{\L}^2(K_{\omega,t})\big) \longrightarrow \R$ is such that for each $(y,\mathrm{y},z) \in \R \times \R \times \R^d$ and $u \in \H^2_T(\mu)$, the map		
		\begin{equation*}
			\Omega \times [0,\infty) \ni (\omega,t) \longmapsto f_t(\omega, y,\mathrm{y},z, u_t(\omega; \cdot) ) \in \R,
		\end{equation*}
		is $\G_\smallertext{+}$-progressive\footnote{Although we assumed optional measurability in \cite[Assumption \textbf{(D6)}]{possamai2024reflections}, we could have instead assumed progressive measurability.} and $f$ is $(r,\mathrm{r},\theta^X,\theta^\mu)$--Lipschitz-continuous on $\llparenthesis 0, T \rrbracket \coloneqq \{(\omega,t) \in \Omega \times (0,\infty) : t \leq T(\omega) \}$ in the sense that
		\begin{align*}
			& |f_t(\omega, y,\mathrm{y},z, u_t(\omega; \cdot)) - f_t(\omega, y^\prime,\mathrm{y}^\prime,z^\prime,u_t^\prime(\omega; \cdot))\big|^2 \\
			&\leq r_t(\omega) |y-y^\prime|^2 + \mathrm{r}_t(\omega) |\mathrm{y}-\mathrm{y}^\prime|^2 + \theta^X_t(\omega) (z-z^\prime)^\top \pi_t(\omega)(z-z^\prime) 
			+ \theta^\mu_t(\omega) \|u_t(\omega;\cdot) - u^\prime_t(\omega;\cdot)\|^2_{\hat{\L}^\smalltext{2}_{\smalltext{\omega}\smalltext{,}\smalltext{t}}(K_{\smalltext{\omega}\smalltext{,}\smalltext{t}})} ,
		\end{align*}
		for $\P \otimes \mathrm{d}C$--a.e. $(\omega,t) \in \llparenthesis 0, T \rrbracket$, where $r = (r_t)_{t \in [0,\infty)}$, $\mathrm{r} = (\mathrm{r}_t)_{t \in [0,\infty)}$, $\theta^X = (\theta^X_t)_{t \in [0,\infty)}$ and $\theta^\mu =(\theta^\mu_t)_{t \in [0,\infty)}$ are $[0,\infty)$-valued, $\G$-predictable processes;
		\item \label{data::integ_f0} the process $f_\cdot(0,0,0,\mathbf{0})$ satisfies
		\begin{equation*}
			\E^\P \Bigg[\bigg(\int_0^T |f_s(0,0,0,\mathbf{0})| \d C_s \bigg)^2 \Bigg] < \infty;
		\end{equation*}
		\item \label{data::process_A} the nonnegative, $\G$-predictable process $\alpha = (\alpha_t)_{t \in [0,\infty)}$ defined through $\alpha^2_t = \max\{\sqrt{r_t},\sqrt{\mathrm{r}_t},\theta^X_t,\theta^\mu_t\}$ satisfies $\alpha_t(\omega) > 0$ for $\P \otimes \mathrm{d}C$--a.e. $(\omega,t) \in \llparenthesis 0,T\rrbracket$, and the $\G$-predictable process $A = (A_t)_{t \in [0,\infty)}$ defined by $A_t \coloneqq \int_0^{t \land T} \alpha^2_s \d C_s$ is real-valued and satisfies $\Delta A \leq \Phi$, up to $\P$-evanescence, for some $\Phi \in [0,\infty)$.
	\end{enumerate}
	
	A solution $(\cY,\cZ,\cU,\cN)$ to the BSDE with generator $f$ (even without the Lipschitz-continuity assumption) and terminal condition $\xi$ is always understood to satisfy the conditions:
	\begin{enumerate}[{\bf(B1)}, leftmargin=1cm]
		\item $(\cZ,\cU,\cN) \in \H^2_T(X;\G,\P) \times \H^2_T(\mu;\G,\P) \times \cH^{2,\perp}_T(X,\mu;\G_\smallertext{+},\P)$$;$
		\item $\cY = (\cY_t)_{t \in [0,\infty]}$ is a real-valued, right-continuous and $\P$--a.s. c\`adl\`ag, $\G_\smallertext{+}$-optional process satisfying
		\begin{equation*}
			\E^\P\bigg[\int_0^T |f_s\big(\cY_s,\cY_{s\smallertext{-}},\cZ_s,\cU_s(\cdot)\big)\big|\d C_s\bigg] < \infty,
		\end{equation*}
		and
		\begin{equation*}
			\cY_t = \xi + \int_t^T f_s\big(\cY_s,\cY_{s\smallertext{-}},\cZ_s,\cU_s(\cdot)\big)\d C_s - \int_t^T \cZ_s \d X_s - \int_t^T \cU_s(x)\tilde\mu(\d s, \d x) - \int_t^T \d \cN_s, \; t \in [0,\infty], \; \text{$\P$--a.s.}
		\end{equation*}
	\end{enumerate}
	
	\medskip
	To state the stability and comparison result, we define, for $\beta \in (0,\infty)$, the constants
	\begin{align*}\label{eq::tilde_pi_psi_1} 
		\widetilde M_1^\Phi(\beta) \coloneqq \ff^\Phi(\beta) + \frac{1}{\beta} + \max\bigg\{1,\frac{(1+\beta\Phi)}\beta\bigg\}\bigg(\frac{1}{\beta} + \beta \fg^\Phi(\beta)\bigg),\;	\widetilde M_2^\Phi(\beta) \coloneqq \ff^\Phi(\beta) + \bigg(\frac{1}{\beta} + \beta \fg^\Phi(\beta)\bigg),
	\end{align*}
	\begin{equation*}
		\widetilde M_3^\Phi(\beta) \coloneqq \frac{1}{\beta} + \max\bigg\{1,\frac{(1+\beta\Phi)}\beta\bigg\}\bigg(\frac{1}{\beta} + \beta \fg^\Phi(\beta)\bigg),
	\end{equation*}
	where $\ff^\Phi$ and $\fg^\Phi$ are defined in \eqref{eq_frak_f_g}.
	
	\begin{definition}
		The pair $(\xi, f)$ is standard data for $\hat\beta \in (0,\infty)$ if
		\begin{equation*}
			\E^\P\big[|\cE(\hat\beta A)^{1/2}_T\xi|^2\big] + \E^\P\bigg[\int_0^T \cE(\hat\beta A)_s \frac{|f_s(0,0,0,\mathbf{0})|^2}{\alpha^2_s}\d C_s\bigg] < \infty.
		\end{equation*}
	\end{definition}
	
	By \cite[Theorem 3.7 and Remark 3.8]{possamai2024reflections}, the assumption that $\widetilde{M}_1^\Phi(\hat\beta) < 1$ for $\hat\beta \in (0,\infty)$ ensures that the BSDE with standard data $(\xi,f)$ for $\hat\beta$ is well-posed. Similarly, if $f$ does not depend on the $\mathrm{y}$-variable, then $\widetilde{M}_2^\Phi(\hat\beta) < 1$ ensures well-posedness, and if $f$ does not depend on the $y$-variable, then $\widetilde{M}_3^\Phi(\hat\beta) < 1$ ensures well-posedness.
		
	\subsection{Stability}
	
	Suppose now that we are given another terminal condition $\xi^\prime$ satisfying \ref{data::expectation_sup} and another generator $f^\prime$ satisfying \ref{data::generator}--\ref{data::integ_f0}, with the same Lipschitz-continuity coefficients $(r,\mathrm{r},\theta^X,\theta^\mu)$ as $f$. We denote by $(\cY,\cZ,\cU,\cN)$ the solution to the BSDE with generator $f$ and terminal condition $\xi$, and by $(\cY^\prime,\cZ^\prime,\cU^\prime,\cN^\prime)$ the solution to the BSDE with generator $f^\prime$ and terminal condition $\xi^\prime$, in case the BSDEs are well-posed in the sense of the results established in \cite[Section 3.2]{possamai2024reflections}. Let
	\begin{equation*}
		\delta\cY \coloneqq \cY - \cY^\prime, \; \delta\cZ \coloneqq \cZ - \cZ^\prime, \; \delta\cU \coloneqq \cU - \cU^\prime, \; \delta\cN \coloneqq \cN - \cN^\prime,
	\end{equation*}
	\begin{equation*}
		\delta\xi \coloneqq \xi - \xi^\prime, \; \delta_1 f \coloneqq f\big(\cY,\cY_\smallertext{-},\cZ,\cU(\cdot)\big) - f\big(\cY^\prime,\cY^\prime_\smallertext{-},\cZ^\prime,\cU^\prime(\cdot)\big), \; \delta_2 f \coloneqq f\big(\cY^\prime,\cY^\prime_\smallertext{-},\cZ^\prime,\cU^\prime(\cdot)\big) - f^\prime\big(\cY^\prime,\cY^\prime_\smallertext{-},\cZ^\prime,\cU^\prime(\cdot)\big).
	\end{equation*}
	
	We obtain the following stability result.
	
	\begin{proposition}\label{prop::stability}
		Suppose that both $(\xi, f)$ and $(\xi^\prime, f^\prime)$ are standard data for $\beta \in (0,\infty)$, and let $\sigma$ be a $\G_\smallertext{+}$--stopping time.

\medskip
$(i)$ If $\widetilde{M}^\Phi_1(\beta) < 1$, then there exists a constant $\mathfrak{C} \in (0,\infty)$ that depends only on $\beta$ and on $\Phi$ such that
			\begin{align*}
				&\cE(\beta A)_{\sigma\land T}|\delta\cY_\sigma|^2 + \E^\P\bigg[\int_\sigma^T\cE(\beta A)_s|\delta\cY_s|^2\d A_s +\int_\sigma^T \cE(\beta A)_s|\delta \cY_{s\smallertext{-}}|^2\d A_s \bigg|\cG_{\sigma\smallertext{+}}\bigg]  \\
				&\quad + \E^\P\bigg[ \int_\sigma^T \cE(\beta A)_s (\delta\cZ_s)^\top\pi_s\delta\cZ_s \d C_s
				+ \int_\sigma^T \cE(\beta A)_s \|\delta\cU_s(\cdot)\|^2_{\hat{\L}^\smalltext{2}_{\smalltext{\cdot}\smalltext{,}\smalltext{s}}(K_{\smalltext{\cdot}\smalltext{,}\smalltext{s}})} \d C_s 
				+ \int_\sigma^T \cE(\beta A)_s \d \langle\delta\cN\rangle_s\bigg|\cG_{\sigma\smallertext{+}}\bigg]\\
				& \leq \mathfrak{C}\Bigg(\E^\P\bigg[\cE(\beta A)_T|\delta\xi|^2 + \int_\sigma^T \cE(\beta A)_s \frac{|\delta_2 f_s|^2}{\alpha^2_s}\d C_s \bigg| \cG_{\sigma\smallertext{+}}\bigg] \Bigg), \; \textnormal{$\P$--a.s.}
			\end{align*}
			
$(ii)$ If $\widetilde{M}^\Phi_2(\beta) < 1$ and both $f$ and $f^\prime$ do not depend on the $\mathrm{y}$-variable, then there exists a constant $\mathfrak{C} \in (0,\infty)$ that depends only on $\beta$ and on $\Phi$ such that
			\begin{align*}
				&\E^\P\bigg[\int_\sigma^T\cE(\beta A)_s|\delta\cY_s|^2\d A_s+\int_\sigma^T \cE(\beta A)_s (\delta\cZ_s)^\top\pi_s\delta\cZ_s \d C_s +\int_\sigma^T \cE(\beta A)_s \|\delta\cU_s(\cdot)\|^2_{\hat{\L}^\smalltext{2}_{\smalltext{\cdot}\smalltext{,}\smalltext{s}}(K_{\smalltext{\cdot}\smalltext{,}\smalltext{s}})} \d C_s +\int_\sigma^T \cE(\beta A)_s \d \langle\delta\cN\rangle_s\bigg|\cG_{\sigma\smallertext{+}}\bigg]\\
				&\quad \leq \mathfrak{C}\Bigg(\E^\P\bigg[\cE(\beta A)_T|\delta\xi|^2 +\int_\sigma^T \cE(\beta A)_s \frac{|\delta_2 f_s|^2}{\alpha^2_s}\d C_s \bigg| \cG_{\sigma\smallertext{+}}\bigg] \Bigg), \; \textnormal{$\P$--a.s.}
			\end{align*}
			
$(iii)$ If $\widetilde{M}^\Phi_3(\beta) < 1$ and both $f$ and $f^\prime$ do not depend on the $y$-variable, then there exists a constant $\mathfrak{C} \in (0,\infty)$ that depends only on $\beta$ and on $\Phi$ such that
			\begin{align*}
				&\cE(\beta A)_{\sigma\land T}|\delta\cY_\sigma|^2 + \E^\P\bigg[\int_\sigma^T\cE(\beta A)_s|\delta\cY_{s\smallertext{-}}|^2\d A_s + \int_\sigma^T \cE(\beta A)_s (\delta\cZ_s)^\top\pi_s\delta\cZ_s \d C_s \bigg|\cG_{\sigma\smallertext{+}}\bigg] \\
				&\quad+ \E^\P\bigg[\int_\sigma^T \cE(\beta A)_s \|\delta\cU_s(\cdot)\|^2_{\hat{\L}^\smalltext{2}_{\smalltext{\cdot}\smalltext{,}\smalltext{s}}(K_{\smalltext{\cdot}\smalltext{,}\smalltext{s}})} \d C_s + \int_\sigma^T \cE(\beta A)_s \d \langle\delta\cN\rangle_s\bigg|\cG_{\sigma\smallertext{+}}\bigg]\\
				& \leq \mathfrak{C}\Bigg(\E^\P\bigg[\cE(\beta A)_T|\delta\xi|^2 +\int_\sigma^T \cE(\beta A)_s \frac{|\delta_2 f_s|^2}{\alpha^2_s}\d C_s \bigg| \cG_{\sigma\smallertext{+}}\bigg] \Bigg), \; \textnormal{$\P$--a.s.}
			\end{align*}
	\end{proposition}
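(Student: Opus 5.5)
The plan is to treat $(\delta\cY,\delta\cZ,\delta\cU,\delta\cN)$ as the solution of a BSDE with terminal value $\delta\xi$ and driver $\delta_1 f+\delta_2 f$. I will then run the a priori estimates of \cite[Section 3]{possamai2024reflections}, which do not use It\^o's formula, conditionally on $\cG_{\sigma\smallertext{+}}$ instead of in expectation. Concretely, for $t\ge\sigma$, set $\delta\eta\coloneqq\delta\cZ\bcdot X+\delta\cU\ast\tilde\mu+\delta\cN$. Subtracting the two equations gives
\[
\delta\cY_t=\E^\P\bigg[\delta\xi+\int_t^T(\delta_1f_s+\delta_2f_s)\d C_s\bigg|\cG_{t\smallertext{+}}\bigg].
\]
The martingale part satisfies $(\delta\cY_t-\delta\xi-\int_t^T(\delta_1 f+\delta_2f)\d C)^2=(\int_t^T\d\delta\eta)^2$. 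By orthogonality of the three martingale components, conditional expectations of $\int\cE(\beta A)\d\langle\delta\eta\rangle$ split into the $\delta\cZ$, $\delta\cU$ and $\delta\cN$ terms on the left-hand side.

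First, I bound the weighted process. Using Cauchy--Schwarz in the form
\[
\Big|\int_t^T g\,\d C\Big|^2\le\int_t^T\frac{\d A}{\cE(\beta A)}\int_t^T\cE(\beta A)\frac{|g|^2}{\alpha^2}\d C
\]
and the identity $\int_t^T\cE(\beta A)_{s\smallertext{-}}^{-1}\cdots$ (i.e. $\int_t^T \d A/\cE(\beta A)\le \cE(\beta A)_t^{-1}(1+\beta\Phi)/\beta$ up to the constants $\ff^\Phi$), I get
\[
\cE(\beta A)_t|\delta\cY_t|^2\lesssim \E\Big[\cE(\beta A)_T|\delta\xi|^2+\tfrac{c}{\beta}\int_t^T\cE(\beta A)\tfrac{|\delta_1f|^2+|\delta_2f|^2}{\alpha^2}\d C\,\Big|\,\cG_{t\smallertext{+}}\Big],
\]
with $\epsilon$-splitting between the $\delta_1$ and $\delta_2$ parts. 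I then integrate this against $\d A$ on $(\sigma,T]$ and apply the Tonelli/predictable-projection argument already used in the proof of \Cref{thm::down-crossing}.$(ii)$. That argument runs through the formula $\cE(\beta A)_r=\cE(\beta A)_\sigma+\int_\sigma^r\cE(\beta A)_{u\smallertext{-}}\beta\d A_u$, and here I apply it conditionally on $\cG_{\sigma\smallertext{+}}$. The result controls the $\d A$-integrals of $|\delta\cY_s|^2$ and of $|\delta\cY_{s\smallertext{-}}|^2$; for the latter I use $\cE(\beta A)\le(1+\beta\Phi)\cE(\beta A)_{\smallertext{-}}$ and the $\cF_{u\smallertext{-}}$ version of the conditional identity, exactly as with $\fg^\Phi$. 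The same computation controls $\int\cE(\beta A)\d\langle\delta\eta\rangle$.

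Next, the Lipschitz condition bounds $|\delta_1 f|^2/\alpha^2$ by $\alpha^2(|\delta\cY|^2+|\delta\cY_{\smallertext{-}}|^2)+\delta\cZ^\top\pi\delta\cZ+\|\delta\cU\|^2$. Since $r\le\alpha^4$, $\mathrm r\le \alpha^4$ and $\theta\le\alpha^2$, this becomes $|\delta\cY|^2\d A+|\delta\cY_{\smallertext{-}}|^2\d A+\d\langle\delta\eta\rangle$-type terms. Collecting everything, the left-hand side quantity $\Lambda$ satisfies $\Lambda\le\widetilde M^\Phi_1(\beta)\Lambda+\mathfrak C(\text{data})$, where the coefficients are exactly those in the well-posedness contraction of \cite[Theorem 3.7]{possamai2024reflections}. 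Since $\widetilde M^\Phi_1(\beta)<1$, rearranging gives $(i)$.

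The main obstacle is that this absorption needs $\Lambda<\infty$ a.s. I handle it by first proving everything in unconditional expectation, where finiteness follows from both solutions lying in the weighted spaces. For the conditional version, I multiply by $\1_G$ for $G\in\cG_{\sigma\smallertext{+}}$, which reduces it to the unconditional one; this is legitimate because $\sigma$ enters only as a starting time. For $(ii)$ and $(iii)$, the $\mathrm y$-term (respectively the $y$-term) is absent, so the corresponding contribution drops out and the constant reduces to $\widetilde M^\Phi_2$ (respectively $\widetilde M^\Phi_3$). In case $(ii)$, the pointwise bound on $\cE(\beta A)_\sigma|\delta\cY_\sigma|^2$ is not claimed, matching the fact that $\widetilde M^\Phi_2$ omits the corresponding $1/\beta$ term.
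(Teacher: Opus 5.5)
Your overall architecture is the paper's: the conditional square identity for $\delta\cY_S$, Tonelli via $\cE(\beta A)=1+\int_0^\cdot\beta\cE(\beta A)_{s\smallertext{-}}\d A_s$, the $\cG_{S\smallertext{-}}$ version at predictable times to extract the $|\delta\cY_{t\smallertext{-}}|^2$ term, the Lipschitz bound on $\delta_1 f$, and absorption via $\widetilde M^\Phi_1(\beta)<1$. The gap is in how you generate the $\d A$-integrals. You apply Cauchy--Schwarz with the full weight $\cE(\beta A)$ pointwise in $t$, and only afterwards integrate in $\d A_t$ over $(\sigma,T]$. By Tonelli and projection ($A$ is predictable), the generator contribution then becomes
\[
\frac{c}{\beta}\,\E^\P\bigg[\int_\sigma^T (A_{u\smallertext{-}}-A_\sigma)\,\cE(\beta A)_u\frac{|\delta f_u|^2}{\alpha^2_u}\,\d C_u\bigg|\cG_{\sigma\smallertext{+}}\bigg].
\]
The terminal contribution, with $\cE(\beta A)_T$ kept inside as you write it, becomes $\E^\P[(A_T-A_\sigma)\cE(\beta A)_T|\delta\xi|^2|\cG_{\sigma\smallertext{+}}]$. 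Only $\Delta A\leq\Phi$ is assumed, so $A_{u\smallertext{-}}-A_\sigma$ is unbounded (already for $C_t=t$, $\alpha\equiv1$ and large deterministic $T$). The constant you obtain therefore depends on the size of $A$, not only on $(\beta,\Phi)$. Your pointwise step is in fact exact, $\int_{(t,T]}\d A_s/\cE(\beta A)_s=\beta^{-1}\big(\cE(\beta A)_t^{-1}-\cE(\beta A)_T^{-1}\big)$, so $\ff^\Phi$ cannot enter there.

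The missing idea is what the paper imports through \cite[Equation (5.23)]{possamai2024reflections}. Keep $\cE(\beta A)_s\big(\int_s^T|\delta f|\,\d C\big)^2$ (respectively $\cE(\beta A)_{s\smallertext{-}}\big(\int_{s\smallertext{-}}^T|\delta f|\,\d C\big)^2$) inside the $\d A_s$-integral. Only then apply Cauchy--Schwarz with a smaller exponent $\gamma\in(0,\beta)$, namely $\big(\int_s^T|g|\,\d C\big)^2\leq(\gamma\,\cE(\gamma A)_s)^{-1}\int_s^T\cE(\gamma A)_u|g_u|^2\alpha_u^{-2}\,\d C_u$. After Tonelli, the ratio $R\coloneqq\cE(\beta A)/\cE(\gamma A)$ satisfies $\d R=(\beta-\gamma)R_{\smallertext{-}}(1+\gamma\Delta A)^{-1}\d A$, hence $\int_{(\sigma,u]}R_{s\smallertext{-}}\d A_s\leq\frac{1+\gamma\Phi}{\beta-\gamma}R_u$. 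This produces the paper's factor $\frac{1+\gamma\Phi}{\gamma(\beta-\gamma)}$, whose minimum over $\gamma$ is exactly $\fg^\Phi(\beta)$; the analogous computation without left limits at $\gamma=\beta/2$ gives $\ff^\Phi(\beta)$. The terminal term must likewise be kept as $\cE(\beta A)_s\E^\P[|\delta\xi|^2|\cG_{s\smallertext{+}}]$ and controlled by $\int_\sigma^T\cE(\beta A)_s\d A_s\leq\frac{1+\beta\Phi}{\beta}\cE(\beta A)_T$.

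Two smaller points. First, the split between $\delta\xi$ and $\int\delta f\,\d C$ must use weights $1+\varpi$ and $1+1/\varpi$, not a generic $\lesssim$. Absorption needs the coefficient of $\Lambda$ to be $(1+1/\varpi)(1+1/\kappa)\widetilde M^\Phi_1(\beta)<1$. Second, the finiteness of $\Lambda$ is not an obstacle: each conditional term is a.s.\ finite because its expectation is. You can therefore work conditionally throughout, as the paper does, with no reduction through $\1_G$.
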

	
	\begin{proof}
		We only prove $(i)$, as an analogous argument yields the bounds in $(ii)$ and $(iii)$. Moreover, we suppose, without loss of generality, that $\sigma = \sigma \land T$. Since all random variables considered below are $\mathcal{G}_{T\smallertext{+}}$-measurable, this additional assumption is harmless. Let
		\begin{equation*}
			\delta\eta \coloneqq (\cZ-\cZ^\prime) \bcdot X +  \big(\cU-\cU^\prime\big)\ast\tilde{\mu} + (\cN-\cN^\prime).
		\end{equation*}
		We fix a $\G_\smallertext{+}$--stopping time $S$ and define $\delta f \coloneqq f\big(\cY,\cY_\smallertext{-},\cZ,\cU(\cdot)\big) - f^\prime\big(\cY^\prime,\cY^\prime_\smallertext{-},\cZ^\prime,\cU^\prime(\cdot)\big) = \delta_1 f + \delta_2 f$. Note that
		\begin{equation}\label{eq::representation_delta_y}
			\delta\cY_S = \delta\xi + \int_S^T \delta f_s \d C_s - \int_S^T \d\delta\eta_s \; \text{and} \;
			\delta\cY_S = \E^\P\bigg[\delta\xi + \int_S^T \delta f_s \d C_s \bigg| \cG_{S\smallertext{+}}\bigg], \; \text{$\P$--a.s.},
		\end{equation}
		yields
		\begin{equation*}
			(\delta\cY_S)^2 + 2(\delta\cY_S)\int_S^T\d\delta\eta_s + \bigg(\int_S^T\d\delta\eta_s\bigg)^2 = \bigg(\delta\cY_S + \int_S^T \d \delta\eta_s\bigg)^2 = \bigg(\delta\xi + \int_S^T \delta f_s \d C_s\bigg)^2, \; \text{$\P$--a.s.}
		\end{equation*}
		By taking conditional expectation with respect to $\cG_{S\smallertext{+}}$, we obtain
		\begin{equation*}
			(\delta\cY_S)^2 + \E^\P\bigg[\int_S^T\d\langle\delta\eta\rangle_s\bigg|\cG_{S\smallertext{+}}\bigg] = \E^\P\bigg[\bigg(\delta\xi + \int_S^T \delta f_s \d C_s\bigg)^2\bigg|\cG_{S\smallertext{+}}\bigg], \; \text{$\P$--a.s.}
		\end{equation*}
		Let $\varpi \in (0,\infty)$ be arbitrary. Using $(a+b)^2 = a^2 + 2ab + b^2 \leq a^2 + \varpi a^2 + b^2/\varpi + b^2 = (1+\varpi)a^2 + (1+1/\varpi)b^2$ for real numbers $a$ and $b$, we obtain
		\begin{equation}\label{eq::stability_conditional_expectation}
			(\delta\cY_S)^2 + \E^\P\bigg[\int_S^T\d\langle\delta\eta\rangle_s\bigg|\cG_{S\smallertext{+}}\bigg] 
			\leq (1+\varpi)\E^\P\big[|\delta\xi|^2 \big| \cG_{S\smallertext{+}}\big] + \bigg(1+\frac{1}{\varpi}\bigg)\E^\P\bigg[\bigg(\int_S^T \delta f_s \d C_s\bigg)^2\bigg|\cG_{S\smallertext{+}}\bigg], \; \text{$\P$--a.s.},
		\end{equation}
		and in case $S$ is a $\G$-predictable stopping time, we obtain
		\begin{equation}\label{eq::stability_conditional_expectation2}
			(\delta\cY_{S\smallertext{-}})^2 + \E^\P\bigg[\int_{S\smallertext{-}}^T\d\langle\delta\eta\rangle_s\bigg|\cG_{S\smallertext{-}}\bigg] \leq (1+\varpi)\E^\P\big[|\delta\xi|^2 \big|\cG_{S\smallertext{-}}\big] + \bigg(1+\frac{1}{\varpi}\bigg)\E^\P\bigg[\bigg(\int_{S\smallertext{-}}^T \delta f_s \d C_s\bigg)^2\bigg|\cG_{S\smallertext{-}}\bigg], \; \text{$\P$--a.s.}
		\end{equation}
		We can now proceed as in the proof of \cite[Proposition 5.4]{possamai2024reflections}. First, using
		\[
			\cE(\beta A) = 1+ \int_0^\cdot \beta\cE(\beta A)_{s-}\d A_s
		\]
		and Tonelli's theorem, we obtain
		\begin{align*}
			\E^\P\bigg[\int_\sigma^T \cE(\beta A)_s \d \langle\delta\eta\rangle_s\bigg|\cG_{\sigma\smallertext{+}}\bigg]
			&= \cE(\beta A)_{\sigma \land T} \E^\P[\langle\delta\eta\rangle_T - \langle\delta\eta\rangle_{\sigma}|\cG_{\sigma\smallertext{+}}] 
			+ \beta \E^\P\bigg[\int_\sigma^T \cE(\beta A)_{t\smallertext{-}}\int_{t\smallertext{-}}^T \d\langle\delta\eta\rangle_s\d A_t\bigg| \cG_{\sigma\smallertext{+}}\bigg], \; \textnormal{$\P$--a.s.}
		\end{align*}
		We then find for an arbitrary $\gamma \in (0,\beta)$
		\begin{align*}
			&\E^\P\bigg[\int_\sigma^T \cE(\beta A)_{t\smallertext{-}}\int_{t\smallertext{-}}^T \d\langle\delta\eta\rangle_s\d A_t\bigg| \cG_{\sigma\smallertext{+}}\bigg] \\
			&\leq (1+\varpi)\E^\P\bigg[\int_\sigma^T \cE(\beta A)_{t\smallertext{-}}|\delta\xi|^2\d A_t \bigg| \cG_{\sigma\smallertext{+}}\bigg] 
			+ \bigg(1+\frac{1}{\varpi}\bigg) \E^\P\bigg[\int_\sigma^T \cE(\beta A)_{t\smallertext{-}}\bigg(\int_{t\smallertext{-}}^T |\delta f_s|\d C_s\bigg)^2\d A_t \bigg| \cG_{\sigma\smallertext{+}} \bigg] \\
			&\quad - \E^\P\bigg[\int_\sigma^T \cE(\beta A)_{t\smallertext{-}}|\delta \cY_{t\smallertext{-}}|^2\d A_t \bigg| \cG_{\sigma\smallertext{+}} \bigg] \\
			&\leq \frac{(1+\varpi)}{\beta} \E^\P\big[ \cE(\beta A)_{T}|\delta\xi|^2 \big| \cG_{\sigma\smallertext{+}}\big] 
			+ \bigg(1+\frac{1}{\varpi}\bigg) \frac{(1+\gamma\Phi)}{\gamma(\beta-\gamma)} \E^\P\bigg[ \int_\sigma^T \cE(\beta A)_s\frac{|\delta f_s|^2}{\alpha^2_s}\d C_s\bigg| \cG_{\sigma\smallertext{+}}\bigg] \\
			&\quad - \E^\P\bigg[\int_\sigma^T \cE(\beta A)_{t\smallertext{-}}|\delta \cY_{t\smallertext{-}}|^2\d A_t \bigg| \cG_{\sigma\smallertext{+}} \bigg], \; \textnormal{$\P$--a.s.}
		\end{align*}
		The first inequality follows from \eqref{eq::stability_conditional_expectation2} and from the predictable projection theorem \cite[Remark VI.58.(b), page 123]{dellacherie1982probabilities}, and the second inequality follows from the arguments used to deduce \cite[Equation (5.23)]{possamai2024reflections}. Similarly, using \cite[Equation (5.21)]{possamai2024reflections},
		\begin{align*}
			\E^\P[\langle\delta\eta\rangle_T - \langle\delta\eta\rangle_{\sigma}|\cG_{\sigma\smallertext{+}}] 
			&\leq (1+\varpi)\E^\P\big[|\delta\xi|^2\big|\cG_{\sigma\smallertext{+}}\big] + \bigg(1+\frac{1}{\varpi}\bigg)\E^\P\bigg[\bigg(\int_\sigma^T \delta f_s \d C_s\bigg)^2\bigg| \cG_{\sigma\smallertext{+}}\bigg] \\
			&\leq (1+\varpi)\E^\P\big[|\delta\xi|^2\big|\cG_{\sigma\smallertext{+}}\big] + \bigg(1+\frac{1}{\varpi}\bigg) \frac{1}{\beta} \frac{1}{\cE(\beta A)_{\sigma}} \E^\P\bigg[ \int_\sigma^T \cE(\beta A)_s\frac{|\delta f_s|^2}{\alpha^2_s}\d C_s\bigg| \cG_{\sigma\smallertext{+}}\bigg], \; \textnormal{$\P$--a.s.}
		\end{align*}
		This then yields, after a rearrangement of the terms, that
		\begin{align}\label{eq::stability_delta_eta_norm}
			&\beta \E^\P\bigg[\int_\sigma^T \cE(\beta A)_{t\smallertext{-}}|\delta \cY_{t\smallertext{-}}|^2\d A_t \bigg| \cG_{\sigma\smallertext{+}} \bigg] 
			+ \E^\P\bigg[\int_\sigma^T \cE(\beta A)_s \d \langle\delta\eta\rangle_s\bigg|\cG_{\sigma\smallertext{+}}\bigg] \nonumber\\
			& \leq (1+\varpi)\E^\P\big[\cE(\beta A)_{\sigma}|\delta\xi|^2\big|\cG_{\sigma\smallertext{+}}\big] + (1+\varpi)\E^\P\big[\cE(\beta A)_T|\delta\xi|^2\big| \cG_{\sigma\smallertext{+}}\big] \nonumber\\
			& \quad + \bigg(1+\frac{1}{\varpi}\bigg)\bigg(\frac{1}{\beta} + \beta\fg^\Phi(\beta)\bigg) \E^\P\bigg[ \int_\sigma^T \cE(\beta A)_s\frac{|\delta f_s|^2}{\alpha^2_s}\d C_s\bigg| \cG_{\sigma\smallertext{+}}\bigg], \; \textnormal{$\P$--a.s.}
		\end{align}
		Since
		\begin{align*}
			\frac{\beta}{(1+\beta\Phi)}\E^\P\bigg[\int_\sigma^T \cE(\beta A)_{t}|\delta \cY_{t\smallertext{-}}|^2\d A_t \bigg| \cG_{\sigma\smallertext{+}} \bigg] 
			&= \frac{\beta}{(1+\beta\Phi)} \E^\P\bigg[\int_\sigma^T \cE(\beta A)_{t\smallertext{-}}(1+\beta\Delta A_t)|\delta\cY_{t\smallertext{-}}|^2\d A_t \bigg| \cG_{\sigma\smallertext{+}} \bigg] \\
			&\leq \beta \E^\P\bigg[\int_\sigma^T \cE(\beta A)_{t\smallertext{-}}|\delta\cY_{t\smallertext{-}}|^2\d A_t \bigg| \cG_{\sigma\smallertext{+}} \bigg], \; \textnormal{$\P$--a.s.},
		\end{align*}
		we have
		\begin{align*}
			&\min\bigg\{1,\frac{\beta}{(1+\beta\Phi)}\bigg\} \bigg( \E^\P\bigg[\int_\sigma^T \cE(\beta A)_{t}|\delta \cY_{t\smallertext{-}}|^2\d A_t \bigg| \cG_{\sigma\smallertext{+}} \bigg] 
			+ \E^\P\bigg[\int_\sigma^T \cE(\beta A)_s \d \langle\delta\eta\rangle_s\bigg|\cG_{\sigma\smallertext{+}}\bigg] \bigg) \\
			&\leq \frac{\beta}{(1+\beta\Phi)}\E^\P\bigg[\int_\sigma^T \cE(\beta A)_{t}|\delta \cY_{t\smallertext{-}}|^2\d A_t \bigg| \cG_{\sigma\smallertext{+}} \bigg] 
			+ \E^\P\bigg[\int_\sigma^T \cE(\beta A)_s \d \langle\delta\eta\rangle_s\bigg|\cG_{\sigma\smallertext{+}}\bigg] \\
			&\leq 2(1+\varpi)\E^\P\big[\cE(\beta A)_T|\delta\xi|^2\big|\cG_{\sigma\smallertext{+}}\big] + \bigg(1+\frac{1}{\varpi}\bigg)\bigg(\frac{1}{\beta} + \beta\fg^\Phi(\beta)\bigg) \E^\P\bigg[ \int_\sigma^T \cE(\beta A)_s\frac{|\delta f_s|^2}{\alpha^2_s}\d C_s\bigg| \cG_{\sigma\smallertext{+}}\bigg], \; \textnormal{$\P$--a.s.},
		\end{align*}
		which we can rewrite to
		\begin{align}\label{eq::stability_conditional_y_minus}
			&\E^\P\bigg[\int_\sigma^T \cE(\beta A)_{t}|\delta \cY_{t\smallertext{-}}|^2\d A_t \bigg| \cG_{\sigma\smallertext{+}} \bigg] 
			+ \E^\P\bigg[\int_\sigma^T \cE(\beta A)_s \d \langle\delta\eta\rangle_s\bigg|\cG_{\sigma\smallertext{+}}\bigg] \nonumber\\
			& \leq 2(1+\varpi)\max\bigg\{1,\frac{(1+\beta\Phi)}{\beta}\bigg\}\E^\P\big[\cE(\beta A)_T|\delta\xi|^2\big|\cG_{\sigma\smallertext{+}}\big] 
			\nonumber\\
			&\quad + \bigg(1+\frac{1}{\varpi}\bigg)\max\bigg\{1,\frac{(1+\beta\Phi)}{\beta}\bigg\} \bigg(\frac{1}{\beta} + \beta\fg^\Phi(\beta)\bigg) \E^\P\bigg[ \int_\sigma^T \cE(\beta A)_s\frac{|\delta f_s|^2}{\alpha^2_s}\d C_s\bigg| \cG_{\sigma\smallertext{+}}\bigg], \; \textnormal{$\P$--a.s.}
		\end{align}
		Next, it follows from \eqref{eq::stability_conditional_expectation} and from the arguments used to derive \cite[Equation~(5.23)]{possamai2024reflections} that
		\begin{align}\label{eq::stability_delta_y}
			&\E^\P\bigg[\int_\sigma^T\cE(\beta A)_s|\delta\cY_s|^2\d A_s\bigg|\cG_{\sigma\smallertext{+}}\bigg] \nonumber\\
			&\leq (1+\varpi)\E^\P\bigg[\int_\sigma^T\cE(\beta A)_s|\delta\xi|^2\d A_s \bigg|\cG_{\sigma\smallertext{+}}\bigg] + \bigg(1+\frac{1}{\varpi}\bigg)\E^\P\bigg[\int_\sigma^T\cE(\beta A)_s\bigg(\int_s^T |\delta f_u| \d C_u\bigg)^2\d A_s \bigg| \cG_{\sigma\smallertext{+}} \bigg] \nonumber\\
			&\leq (1+\varpi)\frac{(1+\beta\Phi)}{\beta}\E^\P\bigg[|\delta\xi|^2 \int_\sigma^T \cE(\beta A)_{s\smallertext{-}}\d (\beta A)_s \bigg|\cG_{\sigma\smallertext{+}} \bigg] + \bigg(1+\frac{1}{\varpi}\bigg)\ff^\Phi(\beta) \E^\P\bigg[\int_\sigma^T\cE(\beta A)_s\frac{|\delta f_s|^2}{\alpha^2_s} \d C_s \bigg| \cG_{\sigma\smallertext{+}} \bigg]  \nonumber\\
			&\leq (1+\varpi)\frac{(1+\beta\Phi)}{\beta}\E^\P\big[\cE(\beta A)_T|\delta\xi|^2\big|\cG_{\sigma\smallertext{+}} \big] + \bigg(1+\frac{1}{\varpi}\bigg)\ff^\Phi(\beta) \E^\P\bigg[\int_\sigma^T\cE(\beta A)_s\frac{|\delta f_s|^2}{\alpha^2_s} \d C_s \bigg| \cG_{\sigma\smallertext{+}} \bigg], \; \textnormal{$\P$--a.s.}
		\end{align}
		Lastly, \eqref{eq::stability_conditional_expectation}, together with the arguments that lead to \cite[Equation~(5.21) and (5.25)]{possamai2024reflections}, yields
		\begin{align*}
			\cE(\beta A)_{\sigma}|\delta\cY_\sigma|^2 
			&\leq (1+\varpi)\cE(\beta A)_{\sigma}\E^\P\big[|\delta\xi|^2 \big| \cG_{\sigma\smallertext{+}}\big] + \bigg(1+\frac{1}{\varpi}\bigg)\cE(\beta A)_{\sigma}\E^\P\bigg[\bigg(\int_\sigma^T \delta f_s \d C_s\bigg)^2\bigg|\cG_{\sigma\smallertext{+}}\bigg] \\
			&\leq (1+\varpi)\E^\P\big[\cE(\beta A)_{T}|\delta\xi|^2 \big| \cG_{\sigma\smallertext{+}}\big] + \bigg(1+\frac{1}{\varpi}\bigg)\frac{1}{\beta}\E^\P\bigg[\int_\sigma^T \cE(\beta A)_s\frac{|\delta f_s|^2}{\alpha^2_s}\d C_s\bigg|\cG_{\sigma\smallertext{+}}\bigg], \; \textnormal{$\P$--a.s.}
		\end{align*}
		Combining this with \eqref{eq::stability_delta_y} and \eqref{eq::stability_conditional_y_minus}, we obtain, $\P$--a.s.,
		\begin{align}\label{eq::stability_M_1_delta_f}
			&\cE(\beta A)_{\sigma}|\delta\cY_\sigma|^2 + \E^\P\bigg[\int_\sigma^T\cE(\beta A)_s|\delta\cY_s|^2\d A_s + \int_\sigma^T \cE(\beta A)_{s}|\delta \cY_{s\smallertext{-}}|^2\d A_s + \int_\sigma^T \cE(\beta A)_s \d \langle\delta\eta\rangle_s\bigg|\cG_{\sigma\smallertext{+}}\bigg] \nonumber\\
			&\leq (1+\varpi)\bigg( 1 + \frac{(1+\beta\Phi)}{\beta} + 2\max\bigg\{1,\frac{(1+\beta\Phi)}{\beta}\bigg\}\bigg)\E^\P\big[\cE(\beta A)_T|\delta\xi|^2\big| \cG_{\sigma\smallertext{+}}\big] \nonumber\\
			&\quad + \bigg(1+\frac{1}{\varpi}\bigg)\widetilde M^\Phi_1(\beta)\E^\P\bigg[\int_\sigma^T \cE(\beta A)_s\frac{|\delta f_s|^2}{\alpha^2_s}\d C_s\bigg|\cG_{\sigma\smallertext{+}}\bigg].
		\end{align}
		We now bound the last term in the above inequality. Letting $\kappa \in (0,\infty)$ be arbitrary, and recalling $\delta f = \delta_1 f + \delta_2 f$, we find using $2ab \leq a^2/\kappa + \kappa b^2$ and the Lipschitz-continuity property of $f$ that
		\begin{align}\label{eq::stability_decomposition_delta_f}
			&\E^\P\bigg[\int_\sigma^T \cE(\beta A)_s\frac{|\delta f_s|^2}{\alpha^2_s}\d C_s\bigg|\cG_{\sigma\smallertext{+}}\bigg] \nonumber\\
			&= \E^\P\bigg[\int_\sigma^T \cE(\beta A)_s \frac{|\delta_1 f_s + \delta_2 f_s|^2}{\alpha^2_s}\d C_s \bigg| \cG_{\sigma\smallertext{+}}\bigg] \nonumber\\
			&=\E^\P\bigg[\int_\sigma^T \cE(\beta A)_s \frac{|\delta_1 f_s|^2  + 2 (\delta_1 f_s)(\delta_2 f_s) + |\delta_2 f_s|^2}{\alpha^2_s}\d C_s \bigg| \cG_{\sigma\smallertext{+}}\bigg] \nonumber\\
			&\leq \bigg(1+\frac{1}{\kappa}\bigg)\E^\P\bigg[\int_\sigma^T \cE(\beta A)_s \frac{|\delta_1 f_s|^2}{\alpha^2_s}\d C_s \bigg| \cG_{\sigma\smallertext{+}}\bigg] + (1+\kappa)\E^\P\bigg[\int_\sigma^T \cE(\beta A)_s \frac{|\delta_2 f_s|^2}{\alpha^2_s}\d C_s \bigg| \cG_{\sigma\smallertext{+}}\bigg] \nonumber\\
			&\leq
			\bigg(1+\frac{1}{\kappa}\bigg)\Bigg(\E^\P\bigg[\int_\sigma^T\cE(\beta A)_s|\delta\cY_s|^2\d A_s + \int_\sigma^T \cE(\beta A)_{s}|\delta \cY_{s\smallertext{-}}|^2\d A_s + \int_\sigma^T \cE(\beta A)_s \d \langle\delta\eta\rangle_s\bigg|\cG_{\sigma\smallertext{+}}\bigg]\Bigg) \nonumber\\
			&\quad + (1+\kappa)\E^\P\bigg[\int_\sigma^T \cE(\beta A)_s \frac{|\delta_2 f_s|^2}{\alpha^2_s}\d C_s \bigg| \cG_{\sigma\smallertext{+}}\bigg], \; \textnormal{$\P$--a.s.}
		\end{align}
		Since $\widetilde M^\Phi_1(\beta) < 1$, we choose $(\varpi,\kappa) \in (0,\infty)^2$ large enough so that $(1+1/\varpi)(1+1/\kappa)\widetilde M^\Phi_1(\beta) < 1$. With \eqref{eq::stability_decomposition_delta_f}, we rearrange the terms in \eqref{eq::stability_M_1_delta_f} and obtain
		\begin{align*}
			&\cE(\beta A)_{\sigma}|\delta\cY_\sigma|^2 + \E^\P\bigg[\int_\sigma^T\cE(\beta A)_s|\delta\cY_s|^2\d A_s + \int_\sigma^T \cE(\beta A)_s|\delta \cY_{s\smallertext{-}}|^2\d A_s \bigg| \cG_{\sigma\smallertext{+}} \bigg]  \\
			&\quad+ \E^\P\bigg[\int_\sigma^T \cE(\beta A)_s (\delta\cZ_s)^\top\pi_s\delta\cZ_s \d C_s  + \int_\sigma^T \cE(\beta A)_s \|\delta\cU_s(\cdot)\|^2_{\hat{\L}^\smalltext{2}_{\smalltext{\cdot}\smalltext{,}\smalltext{s}}(K_{\smalltext{\cdot}\smalltext{,}\smalltext{s}})} \d C_s + \int_\sigma^T \cE(\beta A)_s \d \langle\delta\cN\rangle_s\bigg|\cG_{\sigma\smallertext{+}}\bigg]\\
			& \leq \frac{1}{1- \big(1+\frac{1}{\varpi}\big)\big(1+\frac{1}{\kappa}\big)\widetilde M^\Phi_1(\beta)} \Bigg( (1+\varpi)\bigg( 1 + \frac{(1+\beta\Phi)}{\beta} + 2\max\bigg\{1,\frac{(1+\beta\Phi)}{\beta}\bigg\}\bigg)\E^\P\big[\cE(\beta A)_T|\delta\xi|^2\big| \cG_{\sigma\smallertext{+}}\big] \\
			&\quad + \bigg(1+\frac{1}{\varpi}\bigg)(1+\kappa) \widetilde M^\Phi_1(\beta)\E^\P\bigg[\int_\sigma^T \cE(\beta A)_s \frac{|\delta_2 f_s|^2}{\alpha^2_s}\d C_s \bigg| \cG_{\sigma\smallertext{+}}\bigg] \Bigg), \; \textnormal{$\P$--a.s.}
		\end{align*}
		This immediately implies $(i)$, and thus concludes the proof.
	\end{proof}
	
	\begin{corollary}\label{cor::stability}
		Suppose that both $(\xi, f)$ and $(\xi^\prime, f^\prime)$ are standard data for $\beta \in (0,\infty)$. If
		\begin{enumerate}
			\item[$(i)$] $\widetilde M^\Phi_1(\beta) < 1$, or
			\item[$(ii)$] $\widetilde M^\Phi_2(\beta) < 1$ and both $f$ and $f^\prime$ do not depend on the $\mathrm{y}$-variable, or
			\item[$(iii)$] $\widetilde M^\Phi_3(\beta) < 1$ and both $f$ and $f^\prime$ do not depend on the $y$-variable,
		\end{enumerate}
		then there exists a constant $\mathfrak{C}^\prime \in (0,\infty)$ that depends only on $\beta$ and on $\Phi$ such that
		\begin{equation*}
			\E^\P\bigg[\sup_{s \in [0,T]} \big|\cE(\beta A)^{1/2}_{s}\delta\cY_{s}\big|^2\bigg] 
			\leq \mathfrak{C}^\prime \bigg( \E^\P\bigg[ \cE(\beta A)_{T} |\delta\xi|^2 + \int_0^T \cE(\beta A)_s\frac{|\delta_2 f_s|^2}{\alpha^2_s} \d C_s\bigg]\bigg).
		\end{equation*}
	\end{corollary}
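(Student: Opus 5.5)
The plan is to upgrade the conditional, pointwise-in-time bound of \Cref{prop::stability} to a bound on the supremum. This is done through a Doob-type maximal inequality applied to a suitable dominating martingale. Throughout, the three cases $(i)$–$(iii)$ are handled identically, using the corresponding part of \Cref{prop::stability}. I therefore only describe case $(i)$.

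\emph{First step.} Fix $t\in[0,\infty)$ and apply \Cref{prop::stability} with the deterministic (hence $\G_\smallertext{+}$-) stopping time $\sigma=t$. Discarding all nonnegative terms on the left except $\cE(\beta A)_{t\land T}|\delta\cY_t|^2$, I obtain
\[
\cE(\beta A)_{t\land T}|\delta\cY_{t}|^2\leq \mathfrak{C}\,\E^\P\big[\Xi\big|\cG_{t\smallertext{+}}\big],\; \textnormal{$\P$--a.s.},\quad \Xi\coloneqq \cE(\beta A)_T|\delta\xi|^2+\int_0^T\cE(\beta A)_s\frac{|\delta_2 f_s|^2}{\alpha_s^2}\d C_s.
\]
Here I enlarged $\int_t^T$ to $\int_0^T$, which only increases the right-hand side since the integrand is nonnegative. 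The random variable $\Xi$ is nonnegative and integrable because both data are standard for $\beta$.

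\emph{Second step.} Let $m$ be a right-continuous (c\`adl\`ag) version of the $(\G_\smallertext{+},\P)$-martingale $\E^\P[\Xi^{1/2}|\cG_{t\smallertext{+}}]$. I would work with $\Xi^{1/2}$ rather than $\Xi$, so that Doob's $L^2$ inequality is available. By conditional Jensen, $\E^\P[\Xi|\cG_{t+}]$ is not directly dominated by $m^2$, so the first-step bound should be rerun one level down. Taking square roots in \eqref{eq::stability_conditional_expectation}-type identities, namely $\cE(\beta A)^{1/2}_{t}|\delta\cY_t|\le \E^\P[\,\cdot\,|\cG_{t+}]$ of a square-root quantity, gets awkward. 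Instead I plan the standard route. Let $N_t\coloneqq\E^\P[\Xi|\cG_{t\smallertext{+}}]$, which is c\`adl\`ag, and note that the first step holds simultaneously for all rational $t$ outside a null set. By right-continuity of $\delta\cY$, of $\cE(\beta A)$ and of $N$, it then holds for all $t\in[0,T]$ simultaneously. This gives
\[
\sup_{s\in[0,T]}\big|\cE(\beta A)^{1/2}_s\delta\cY_s\big|^2\le \mathfrak{C}\sup_{s}N_s.
\]

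\emph{Third step (main obstacle).} Doob's inequality controls $\E[\sup N]$ only with $L\log L$ data, not with $\E[\Xi]$. To close the argument I would avoid the $L^1$ maximal inequality in one of two ways. The first option is to revisit the proof of \Cref{prop::stability} and obtain the pathwise bound $\cE(\beta A)^{1/2}_t|\delta\cY_t|\le \mathfrak C\,\E^\P[\Lambda|\cG_{t+}]$ with $\Lambda\in L^2$. This is possible from the representation \eqref{eq::representation_delta_y} together with Cauchy–Schwarz, as in the estimate leading to \cite[Equation 5.22]{possamai2024reflections}:
\[
\cE(\beta A)^{1/2}_{t}|\delta\cY_t|\le \E^\P\Big[\cE(\beta A)^{1/2}_T|\delta\xi|+\beta^{-1/2}\Big(\int_t^T\cE(\beta A)_s\frac{|\delta f_s|^2}{\alpha^2_s}\d C_s\Big)^{1/2}\Big|\cG_{t+}\Big].
\]
Enlarging the integral to $[0,T]$ gives $\Lambda=\cE(\beta A)^{1/2}_T|\delta\xi|+\beta^{-1/2}(\int_0^T\cE(\beta A)_s|\delta f_s|^2\alpha_s^{-2}\d C_s)^{1/2}$. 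Doob's $L^2$ inequality then yields
\[
\E[\sup_s|\cE(\beta A)^{1/2}_s\delta\cY_s|^2]\le 4\E[\Lambda^2].
\]
It remains to bound $\E[\Lambda^2]$. Writing $\delta f=\delta_1f+\delta_2f$ and using the Lipschitz property, the $\delta_1 f$ part is bounded by the $\d A$- and $\langle\delta\eta\rangle$-weighted norms of $(\delta\cY,\delta\cY_-,\delta\cZ,\delta\cU)$. These are exactly the quantities controlled, in unconditional expectation with $\sigma=0$, by \Cref{prop::stability}$(i)$. This gives the claim with $\mathfrak C'$ depending only on $\beta$ and $\Phi$.
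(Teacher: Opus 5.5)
Your third step is essentially the paper's proof. You bound $\cE(\beta A)^{1/2}_{t}|\delta\cY_t|$ by the conditional expectation of an $\L^2$ variable, using the conditional-expectation representation of $\delta\cY$ and the $\beta^{-1/2}$ Cauchy--Schwarz estimate, then apply Doob's $\L^2$ inequality, and finally control $\E^\P[\int_0^T\cE(\beta A)_s|\delta f_s|^2\alpha^{-2}_s\d C_s]$ by splitting $\delta f=\delta_1 f+\delta_2 f$ and invoking \Cref{prop::stability} at $\sigma=0$. The paper passes to the supremum via stopping times and \cite[Proposition C.3]{possamai2024reflections} rather than your rational-times and right-continuity argument, which works equally well; your first two steps are an abandoned detour and can simply be dropped.
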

	
	\begin{proof}
		We only prove the statement for $(i)$, as an analogous argument yields the bounds in the other cases. Let $S$ be a $\G_\smallertext{+}$--stopping time. The representation \eqref{eq::representation_delta_y} and \cite[Equation 5.21]{possamai2024reflections}, together with $|a+b| \leq \sqrt{2}\sqrt{a^2 + b^2}$, yield
		\begin{align*}\label{eq::inequality_stability_y}
			\cE(\beta A)^{1/2}_{S\land T} |\delta\cY_S| 
			&\leq \E^\P\Bigg[\bigg|\cE(\beta A)^{1/2}_{S\land T} \delta\xi + \cE(\beta A)^{1/2}_{S\land T} \bigg(\int_S^T \delta f_s \d C_s\bigg) \bigg| \Bigg|\cG_{S\smallertext{+}}\Bigg] \nonumber\\
			&\leq \sqrt{2} \E^\P\Bigg[ \bigg(\cE(\beta A)_{S\land T} \big|\delta\xi\big|^2 + \cE(\beta A)_{S\land T} \bigg(\int_S^T \delta f_s \d C_s\bigg)^2 \bigg)^{1/2} \Bigg|\cG_{S\smallertext{+}}\Bigg] \nonumber\\
			&\leq \sqrt{2} \E^\P\Bigg[ \bigg(\cE(\beta A)_{S\land T} \big|\delta\xi\big|^2 + \frac{1}{\beta}\int_S^T \cE(\beta A)_s \frac{|\delta f_s|^2}{\alpha^2_s} \d C_s  \bigg)^{1/2} \Bigg|\cG_{S\smallertext{+}}\Bigg] \nonumber \\
			&\leq \sqrt{2} \E^\P\Bigg[ \bigg(\cE(\beta A)_{T} \big|\delta\xi\big|^2 + \frac{1}{\beta}\int_0^T \cE(\beta A)_s \frac{|\delta f_s|^2}{\alpha^2_s} \d C_s  \bigg)^{1/2} \Bigg|\cG_{S\smallertext{+}}\Bigg], \; \textnormal{$\P$--a.s.}
		\end{align*}

		The random variable inside the last conditional expectation is square-integrable, and thus, by Doob's $\L^2$-inequality for martingales (see \cite[page 444]{doob1984classical}) and by \cite[Proposition C.3]{possamai2024reflections}, we obtain
		\begin{equation}\label{eq::bound_sup_delta_y_sigma}
			\E^\P\bigg[\sup_{s \in [0,T]}\big|\cE(\beta A)^{1/2}_{s}\delta\cY_{s}\big|^2\bigg] \leq 8 \E^\P\Bigg[\cE(\beta A)_T \big|\delta\xi\big|^2 + \frac{1}{\beta}\int_0^T \cE(\beta A)_s \frac{|\delta f_s|^2}{\alpha^2_s} \d C_s\Bigg].
		\end{equation}
		We now use the Lipschitz-continuity property of $f$ and then apply \Cref{prop::stability} to deduce 
		\begin{align*}
			&\E^\P\Bigg[\int_0^T \cE(\beta A)_s \frac{|\delta f_s|^2}{\alpha^2_s} \d C_s\Bigg] \\
			& = \E^\P\Bigg[\int_0^T \cE(\beta A)_s \frac{|\delta_1 f_s + \delta_2 f_s|^2}{\alpha^2_s} \d C_s\Bigg] \\
			&\leq 2\Bigg(\E^\P\bigg[\int_0^T\cE(\beta A)_s|\delta\cY_s|^2\d A_s +\int_0^T \cE(\beta A)_s|\delta \cY_{s\smallertext{-}}|^2\d A_s + \int_0^T \cE(\beta A)_s (\delta\cZ_s)^\top\pi_s\delta\cZ_s \d C_s   \\
			&\quad + \int_0^T \cE(\beta A)_s \|\delta\cU_s(\cdot)\|^2_{\hat{\L}^\smalltext{2}_{\smalltext{\cdot}\smalltext{,}\smalltext{s}}(K_{\smalltext{\cdot}\smalltext{,}\smalltext{s}})} \d C_s  + \int_0^T \cE(\beta A)_s \frac{|\delta_2 f_s|^2}{\alpha^2_s}\d C_s \bigg]\Bigg) \\
			&\leq \widetilde{\mathfrak{C}} \bigg( \E^\P\bigg[\cE(\beta A)_T|\delta\xi|^2 + \int_0^T \cE(\beta A)_s \frac{|\delta_2 f_s|^2}{\alpha^2_s}\d C_s \bigg] \bigg),
		\end{align*}
		for some $\widetilde{\mathfrak{C}} \in (0,\infty)$ depending only on $\beta$ and on $\Phi$. This, together with \eqref{eq::bound_sup_delta_y_sigma}, then yields the stated result, which concludes the proof.
	\end{proof}
	
	\subsection{Comparison}
	
	A comparison principle for our BSDEs already appeared in \cite[Proposition 7.3]{possamai2024reflections}. However, \cite[Assumption 7.1]{possamai2024reflections} can be weakened. Since the comparison principle is crucial in this work, we include a weaker assumption and a generalised statement for completeness.
	
	\medskip
	Suppose we are given another terminal condition $\xi^\prime$ satisfying \ref{data::expectation_sup} and another generator $f^\prime$ as in \ref{data::generator}. The Lipschitz-continuity property of $f$ then allows us to write
	\begin{align*}
		f_s\big(\omega,y,\mathrm{y},z,u_s(\omega;\cdot)\big) - f^\prime_s\big(\omega,y^\prime,\mathrm{y}^\prime,z^\prime,u^\prime_s(\omega;\cdot)\big) 
		&\geq \lambda^{y,y^\smalltext{\prime}}_s(\omega)(y-y^\prime) + \widehat\lambda^{\mathrm{y},\mathrm{y}^\smalltext{\prime}}_s(\omega)(\mathrm{y}-\mathrm{y}^\prime)  + (\eta^{z,z^\smalltext{\prime}}_s(\omega))^\top \pi_s(\omega)(z-z^\prime) \\
		&\quad+ f_s\big(\omega,y^\prime,\mathrm{y}^\prime,z^\prime,u_s(\omega;\cdot)\big) - f_s\big(\omega,y^\prime,\mathrm{y}^\prime,z^\prime,u^\prime_s(\omega;\cdot)\big) \\
		&\quad + f_s\big(\omega,y^\prime,\mathrm{y}^\prime,z^\prime,u^\prime_s(\omega;\cdot)\big) - f^\prime_s\big(\omega,y^\prime,\mathrm{y}^\prime,z^\prime,u^\prime_s(\omega;\cdot)\big),
	\end{align*}
	where
	\begin{equation*}
		\lambda^{y,y^\smalltext{\prime}}_s(\omega) \coloneqq -\sqrt{r_s(\omega)}\sgn(y-y^\prime), \; \widehat\lambda^{\mathrm{y},\mathrm{y}^\smalltext{\prime}}_s(\omega) \coloneqq - \sqrt{\mathrm{r}_s(\omega)}\sgn(\mathrm{y}-\mathrm{y}^\prime),
	\end{equation*}
	\begin{equation*}
		\eta^{z,z^\smalltext{\prime}}_s(\omega) \coloneqq -\sqrt{\theta^X_s(\omega)} \frac{(z-z^\prime)}{|(z-z^\prime)^\top\pi_s(\omega)(z-z^\prime)|^{1/2}}\mathbf{1}_{\{(z-z^\smalltext{\prime})^\smalltext{\top}\pi_\smalltext{s}(z-z^\smalltext{\prime})\neq 0\}}(\omega).
	\end{equation*}
	
	\medskip
	The structural assumptions on the generator $f$ and its Lipschitz-continuity coefficients that ensure the comparison of solutions are as follows.
	
	\begin{assumption}\label{ass::comparison}
		Given $\hat{\beta} \in (0,\infty)$, the following conditions hold
		\begin{enumerate}[leftmargin=0.8cm]
			\item[$(i)$] $\int_0^T \sqrt{\mathrm{r}_t}\d C_t$ and $\int_0^T\theta^{X}_t \d C_t$ are both $\P$--essentially bounded$;$
			
			\item[$(ii)$]  For each $\P$--{\rm a.s.} c\`adl\`ag process $\cY^\prime \in \cS^{2}_{T,\hat\beta}(\G_\smallertext{+},\P)$,  and $(\cZ,\cZ^\prime,\cU,\cU^{\prime}) \in \big(\H^{2}_{T,\hat\beta}(X;\G,\P)\big)^2 \times \big(\H^{2}_{T,\hat\beta}(\mu;\G,\P)\big)^2$, there exists $\rho  = \rho^{\cY^\smalltext{^\prime},\cZ,\cZ^\smalltext{\prime},\cU,\cU^{\smalltext\prime}} \in \H^2_T(\mu;\G_\smallertext{+},\P)$ such that $\eta^{\cZ,\cZ^\smalltext{\prime}}\Delta X +\Delta (\rho \ast \tilde\mu) > -1$, \textnormal{$\P$--a.s.}, $\langle \rho \ast \tilde\mu\rangle_T$ is $\P$--essentially bounded, and
			\begin{equation*}
				f\big(\cY^\prime,\cY^\prime_{\smallertext{-}},\cZ^\prime,\cU(\cdot)\big) - f\big(\cY^\prime,\cY^\prime_{\smallertext{-}},\cZ^\prime,\cU^{\prime}(\cdot)\big) 
				\geq \frac{\d\langle \rho \ast\tilde\mu,(\cU-\cU^{\prime})\ast\tilde\mu\rangle}{\d C}, \; \text{$\P \otimes \mathrm{d}C$--{\rm a.e.} on $\llparenthesis 0, T \rrbracket$}.
			\end{equation*}
		\end{enumerate}
	\end{assumption}
	
\begin{proposition}\label{prop::comparison}
Suppose that $\xi^\prime$ satisfies \ref{data::expectation_sup} and that $f^\prime$ satisfies \ref{data::generator}, with the same Lipschitz coefficients $(r,\mathrm r,\theta^X,\theta^\mu)$ as $f$. Suppose further that both $(\xi,f)$ and $(\xi^\prime,f^\prime)$ are standard data for some $\hat\beta\in[2+\Phi,\infty)$, where $\Phi<1$, and that \textnormal{\Cref{ass::comparison}} holds for the same $\hat\beta$. The restrictions $\hat\beta\in[2+\Phi,\infty)$ and $\Phi < 1$ are immediately satisfied if
\[
	\min_{i\in\{1,2,3\}}\widetilde M_i^\Phi(\hat\beta)<1.
\]
Let $(\cY,\cZ,\cU,\cN)$ and $(\cY^\prime,\cZ^\prime,\cU^\prime,\cN^\prime)$ be solutions in
\[
	\cS^2_{T,\hat\beta}(\G_\smallertext{+},\P) \times\H^2_{T,\hat\beta}(X;\G,\P) \times\H^2_{T,\hat\beta}(\mu;\G,\P) \times\cH^{2,\perp}_{T,\hat\beta}(X,\mu;\G_\smallertext{+},\P)
\]
to the \textnormal{BSDEs} with data $(\xi,f)$ and $(\xi^\prime,f^\prime)$, respectively, and terminal time $T$.
If $\xi^\prime \leq \xi$, \textnormal{$\P$--a.s.}, and
\begin{equation*}
	f^\prime\big(\cY^\prime,\cY^\prime_{\smallertext{-}},\cZ^\prime,\cU^\prime(\cdot)\big) 
	\leq f\big(\cY^\prime,\cY^\prime_{\smallertext{-}},\cZ^\prime,\cU^\prime(\cdot)\big), \; \textnormal{$\P\otimes \mathrm{d}C$--a.e.},
\end{equation*}
then $\cY^\prime \leq \cY$ up to $\P$-indistinguishability.
\end{proposition}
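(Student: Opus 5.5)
We will prove the comparison principle by linearisation and a change of measure, following the scheme that leads to \cite[Equation 7.7]{possamai2024reflections}.

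\textbf{Step 1: the constraints on $\hat\beta$ and $\Phi$.} First we check that $\min_i\widetilde M_i^\Phi(\hat\beta)<1$ forces $\Phi<1$ and $\hat\beta\geq 2+\Phi$. Each $\widetilde M_i^\Phi$ contains the term $\frac1\beta+\beta\fg^\Phi(\beta)$, or $\ff^\Phi(\beta)=4(1+\beta\Phi)/\beta^2$. Elementary bounds on these terms give $\hat\beta>2+\Phi$ whenever the minimum is below one, and $\Delta A\leq\Phi$ with $\beta\Phi$ small enough then yields $\Phi<1$. This step is routine.

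\textbf{Step 2: the linearised BSDE for $\delta\cY\coloneqq\cY-\cY^\prime$.} We set $\delta\cZ$, $\delta\cU$, $\delta\cN$ analogously. Using the decomposition displayed before \Cref{ass::comparison}, we write
\[
\delta f\geq\lambda\,\delta\cY+\widehat\lambda\,\delta\cY_{-}+\eta^\top\pi\,\delta\cZ+\frac{\d\langle\rho\ast\tilde\mu,\delta\cU\ast\tilde\mu\rangle}{\d C}+\big(f-f^\prime\big)(\cY^\prime,\cY^\prime_-,\cZ^\prime,\cU^\prime).
\]
Here $\lambda=\lambda^{\cY,\cY^\prime}$, $\widehat\lambda=\widehat\lambda^{\cY_-,\cY^\prime_-}$, $\eta=\eta^{\cZ,\cZ^\prime}$, and $\rho$ is supplied by \Cref{ass::comparison}.$(ii)$, applied to $(\cY^\prime,\cZ,\cZ^\prime,\cU,\cU^\prime)$. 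The last term is nonnegative by hypothesis.

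We then introduce $w\coloneqq\int_0^{\cdot\land T}\lambda\,\d C$ and $v\coloneqq\int_0^{\cdot\land T}\widehat\lambda/(1-\widehat\lambda\Delta C)\,\d C$. The inequality $\Delta A\leq\Phi<1$ gives $|\lambda|\Delta C\leq\sqrt r\,\Delta C\leq\Phi<1$, and similarly for $\widehat\lambda$. Hence $\cE(w)$ and $\cE(v)$ are strictly positive. Moreover $\cE(v)\leq e^{v}$ is bounded by \Cref{ass::comparison}.$(i)$.

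For the measure change we take $L\coloneqq\eta\bcdot X+\rho\ast\tilde\mu$. Its predictable quadratic variation $\langle L\rangle_T$ is $\P$--essentially bounded by \Cref{ass::comparison}.$(i)$--$(ii)$, and $\Delta L>-1$. By \cite[Lemma 7.4]{possamai2024reflections}, $\d\Q\coloneqq\cE(L)_T\,\d\P$ is then a strictly positive, square-integrable density.

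\textbf{Step 3: integration by parts and the supermartingale property.} Integration by parts applied to $\Gamma\coloneqq\cE(w)\cE(v)\,\delta\cY$ shows that the drift of $\Gamma$ under $\Q$ is nonpositive. The role of $v$ is to absorb the $\delta\cY_{-}$ term: the factor $1/(1-\widehat\lambda\Delta C)$ in $v$ is what makes the $\cY_{-}$ contributions cancel. The Girsanov shift turns the $\eta^\top\pi\,\delta\cZ$ and $\rho$ terms into $\Q$-local martingales. Consequently $\Gamma$ is a $\Q$-local supermartingale, and a localisation argument with stopping times $\tau_n$ gives
\[
\Gamma_{t\land\tau_n}\geq\E^\Q[\Gamma_{\tau_n}|\cG_{t+}].
\]

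\textbf{Step 4: passing to the limit, the main obstacle.} The hardest step is letting $n\to\infty$, which requires uniform integrability of $\{\Gamma_{\tau}\}$ under $\Q$. Here we use $|\cE(w)|^2\leq\cE(\hat\beta A)$. This holds because $(1-\sqrt r\Delta C)^2\leq1+\hat\beta\alpha^2\Delta C$ and $2\sqrt r\leq\hat\beta\alpha^2$, which is where $\hat\beta\geq2+\Phi$ enters. It gives
\[
\E^\P\Big[\sup_t|\cE(w)_t\delta\cY_t|^2\Big]\leq\|\delta\cY\|^2_{\cS^2_{T,\hat\beta}}<\infty.
\]
Combining this with the square integrability of $\cE(L)_T$ and the boundedness of $\cE(v)$, the Cauchy--Schwarz inequality yields domination of $\sup_t|\Gamma_t|$ under $\Q$.

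We then obtain $\Gamma_t\geq\E^\Q[\cE(w)_T\cE(v)_T\,\delta\xi\,|\,\cG_{t+}]\geq0$. Since $\cE(w)$ and $\cE(v)$ are strictly positive, $\delta\cY_t\geq0$, $\P$--a.s., for each $t$. Right-continuity of $\cY$ and $\cY^\prime$ then gives $\cY^\prime\leq\cY$ up to $\P$-indistinguishability.
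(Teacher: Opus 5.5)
Your argument is correct in substance and uses the same ingredients as the paper. These are the linearisation through $\lambda$, $\widehat\lambda$, $\eta$ and the $\rho$ of \Cref{ass::comparison}.$(ii)$; the weights $\cE(w)$ and $\cE(v)$, with $v$ absorbing the $\delta\cY_{-}$ term; the density $\cE(L)_T$ from \cite[Lemma 7.4]{possamai2024reflections}; the bound $|\cE(w)|^2\le\cE((2+\Phi)A)\le\cE(\hat\beta A)$; and boundedness of $\cE(v)$ via \Cref{ass::comparison}.$(i)$. The paper imports $\Gamma_t\ge\E^\Q[\Gamma_{t'}\mid\cG_{t+}]$, $t\le t'<\infty$, from \cite[Equation (7.7)]{possamai2024reflections}, whereas you rederive it by integration by parts and Girsanov.

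The real difference is the passage $t'\to\infty$. The paper never evaluates $\Gamma$ at infinity. For each finite $t'$ it splits $\delta\cY_{t'\land T}=\E^\P[\delta\xi\mid\cG_{t'+}]+\E^\P[\int_{t'}^T\delta f_s\,\d C_s\mid\cG_{t'+}]$ and keeps the nonnegative first piece. It then shows that the second piece, weighted by $\cE(w)\cE(v)$, vanishes in $\L^1(\Q)$, using Cauchy--Schwarz with $\cE(L)_T\in\L^2(\P)$ and the weighted tail bound \cite[Equation (5.21)]{possamai2024reflections}. You instead dominate $\sup_t|\Gamma_t|$ in $\L^1(\Q)$ through the $\cS^2_{T,\hat\beta}$ membership of $\cY$ and $\cY^\prime$, which makes $\Gamma$ a class (D) $\Q$--supermartingale. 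This handles localisation and the limit in one stroke and needs only the sup-norm bound on $\delta\cY$. The paper's route needs weighted integrability of $\delta f$, but it never has to identify a terminal value.

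That identification is where your write-up needs one more step. Class (D) gives $\Gamma_t\ge\E^\Q[\Gamma_{\infty-}\mid\cG_{t+}]$ with $\Gamma_{\infty-}\coloneqq\lim_{t\to\infty}\Gamma_t$. This limit equals $\cE(w)_T\cE(v)_T\delta\xi\ge0$ on $\{T<\infty\}$, and also on $\{T=\infty,\,A_\infty<\infty\}$, where $\cE(w)_\infty\in(0,\infty)$ exists.

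However, $T$ may be infinite. On $\{T=\infty,\,A_\infty=\infty\}$ the standard-data condition forces $\xi=\xi^\prime=0$, yet $\cE(w)_t$ can blow up while $\delta\cY_t\to0$. There your expression $\cE(w)_T\cE(v)_T\delta\xi$ is undefined and the sign of $\Gamma_{\infty-}$ is not automatic. There are two ways to close this:
\begin{itemize}
\item stop at finite $t'$ and use the paper's splitting, where your domination estimate controls the remainder; or
\item if $\hat\beta>2+\Phi$ strictly, use $\cE((2+\Phi)A)_t/\cE(\hat\beta A)_t\le\exp\bigl(-(\hat\beta-2-\Phi)A_t/(1+\hat\beta\Phi)\bigr)$, which together with the $\cS^2_{T,\hat\beta}$ bound forces $\Gamma_t\to0$ on that set.
\end{itemize}

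Two smaller points. In Step 1 the clean reason is $\beta\fg^\Phi(\beta)=(1+\sqrt{1+\beta\Phi})^2/\beta\ge\Phi+4/\beta$. Hence each $\widetilde M^\Phi_i(\beta)\ge\Phi+5/\beta$, which gives $\Phi<1$ and $\hat\beta>5$. In Step 4 the case that matters for $|\cE(w)|^2$ is $\lambda=+\sqrt r$, where $(1+\sqrt r\,\Delta C)^2\le1+(2+\Phi)\Delta A$ because $\sqrt r\,\Delta C\le\Delta A\le\Phi$.
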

	
	\begin{proof}
		The arguments are analogous to those in the proof of \cite[Proposition 7.3]{possamai2024reflections}. Thus, we mention only the necessary changes and omit the details. We write 
		\begin{equation*}
			\delta \cY \coloneqq \cY-\cY^\prime, \; \delta \cZ \coloneqq \cZ-\cZ^\prime, \; \delta \cU \coloneqq \cU - \cU^\prime, \; \delta \cN \coloneqq \cN - \cN^\prime, \; \delta \xi \coloneqq \xi - \xi^\prime,
		\end{equation*}
		\begin{equation*}
			\delta f \coloneqq f\big(\cY,\cY_\smallertext{-},\cZ,\cU(\cdot)) - f^\prime\big(\cY^\prime,\cY^\prime_\smallertext{-},\cZ^\prime,\cU^\prime(\cdot)\big),
		\end{equation*}
		and
		\begin{equation*}
			\lambda_s(\omega) \coloneqq \lambda^{\cY_\smalltext{s}(\omega),\cY^\smalltext{\prime}_\smalltext{s}(\omega)}_s(\omega), 
			\; \widehat\lambda_s(\omega) \coloneqq \widehat\lambda^{\cY_{\smalltext{s}\tinytext{-}}(\omega),\cY^\smalltext{\prime}_{\smalltext{s}\tinytext{-}}(\omega)}_s(\omega), 
			\; \eta_s(\omega) \coloneqq \eta^{\cZ_\smalltext{s}(\omega),\cZ^\smalltext{\prime}_\smalltext{s}(\omega)}_s(\omega),
			\; \rho_s(\omega;x) = \rho^{\cY^\smalltext{\prime},\cZ,\cZ^\smalltext{\prime},\cU,\cU^\smalltext{\prime}}_s(\omega;x),
		\end{equation*}
		for simplicity. Let $v \coloneqq \int_0^{\cdot \land T} \gamma_s \d C_s,$ where  $\gamma \coloneqq \frac{\widehat\lambda}{1-\widehat\lambda\Delta C}$, let $w \coloneqq \int_0^{\cdot \land T} \lambda_s \d C_s$, and lastly, let
		\begin{equation}\label{eq::stoch_exp_measure_change_2}
			\frac{\d\Q}{\d\P} \coloneqq \cE(L)_T \coloneqq \cE\big(\eta \bcdot X + \rho \ast\tilde\mu\big)_T.
		\end{equation}
		By following the arguments in the proof of \cite[Proposition 7.3]{possamai2024reflections}, we will eventually arrive at the inequality
		\begin{equation}\label{eq::comp_limit_t2}
			\cE(w)_{t \land T}\cE(v)_{t \land T}\delta \cY_{t \land T} 
			\geq \E^\Q\bigg[ \cE(w)_{t^\smalltext{\prime} \land T}\cE(v)_{t^\smalltext{\prime} \land T}\delta \cY_{t^\smalltext{\prime} \land T} \bigg| \cG_{t\smallertext{+}}\bigg], \; 0 \leq t \leq t^\prime < \infty, \; \text{$\P$--a.s.}
		\end{equation}
		This is \cite[Equation (7.7)]{possamai2024reflections}. Since $\int_0^T \sqrt{\mathrm{r}_u} \d C_u$ is $\P$--essentially bounded, the process $\cE(v)$ is so too, say by $\mathfrak{C} \in (0,\infty)$. Note also that
		\begin{equation*}
			|\cE(w)|^2 = \cE(2w + [w]) = \cE\big(2 w + (\Delta w)^2\big) \leq \cE(2 A + \Phi A) = \cE\big((2+\Phi)A\big) \leq \cE(\hat\beta A),
		\end{equation*}
		where the last inequality holds by assumption.
		 This then implies that
		\begin{align*}
			& \bigg| \E^\Q\bigg[ \cE(w)_{t^\smalltext{\prime} \land T}\cE(v)_{t^\smalltext{\prime} \land T}\delta \cY_{t^\smalltext{\prime} \land T} \bigg| \cG_{t\smallertext{+}}\bigg] - \E^\Q\bigg[ \cE(w)_{t^\smalltext{\prime} \land T}\cE(v)_{t^\smalltext{\prime} \land T}\delta \xi \bigg| \cG_{t\smallertext{+}}\bigg] \bigg| \\
			&\quad \leq \frac{1}{\cE(L)_t} \bigg| \E^\P\bigg[ \cE(L)_{t^\smalltext{\prime}} \cE(w)_{t^\smalltext{\prime} \land T}\cE(v)_{t^\smalltext{\prime} \land T}\delta \cY_{t^\smalltext{\prime} \land T} \bigg| \cG_{t\smallertext{+}}\bigg] - \E^\P\bigg[ \cE(L)_{t^\smalltext{\prime}} \cE(w)_{t^\smalltext{\prime} \land T}\cE(v)_{t^\smalltext{\prime} \land T}\delta \xi \bigg| \cG_{t\smallertext{+}}\bigg] \bigg| \\
			&\quad \leq \frac{1}{\cE(L)_t} \bigg| \E^\P\bigg[ \cE(L)_{t^\smalltext{\prime}} \cE(w)_{t^\smalltext{\prime} \land T}\cE(v)_{t^\smalltext{\prime} \land T}\big(\delta \cY_{t^\smalltext{\prime} \land T} -  \E^\P\big[\delta\xi|\cG_{t^\smalltext{\prime}\smallertext{+}}\big] \big) \bigg| \cG_{t\smallertext{+}}\bigg] \bigg| \\
			&\quad \leq \mathfrak{C}^2 \frac{1}{\cE(L)_t} \E^\P\big[\cE(L)^2_{t^\smalltext{\prime}}\big|\cG_{t\smallertext{+}}\big]^{1/2} \E^\P\bigg[ \cE(\hat\beta A)_{t^\smalltext{\prime}\land T}\big(\delta \cY_{t^\smalltext{\prime} \land T} -  \E^\P\big[\delta\xi|\cG_{t^\smalltext{\prime}\smallertext{+}}\big] \big)^2 \bigg| \cG_{t\smallertext{+}}\bigg]^{1/2} \\
			&\quad \leq \mathfrak{C}^2 \frac{1}{\cE(L)_t} \E^\P\big[\cE(L)^2_{t^\smalltext{\prime}}\big|\cG_{t\smallertext{+}}\big]^{1/2} \E^\P\bigg[ \cE(\hat\beta A)_{t^\smalltext{\prime}\land T}\E^\P\bigg[\int_{t^\smalltext{\prime}}^T \delta f_s \d C_s\bigg|\cG_{t^\smalltext{\prime}\smallertext{+}}\bigg]^2 \bigg| \cG_{t\smallertext{+}}\bigg]^{1/2} \\
			&\quad \leq \mathfrak{C}^2 \frac{1}{\cE(L)_t} \E^\P\big[\cE(L)^2_{t^\smalltext{\prime}}\big|\cG_{t\smallertext{+}}\big]^{1/2} \E^\P\bigg[ \cE(\hat\beta A)_{t^\smalltext{\prime}\land T}\bigg(\int_{t^\smalltext{\prime}}^T \delta f_s \d C_s\bigg)^2 \bigg| \cG_{t\smallertext{+}}\bigg]^{1/2} \\
			&\quad \leq \mathfrak{C}^2 \frac{1}{\cE(L)_t} \E^\P\big[\cE(L)^2_{t^\smalltext{\prime}}\big|\cG_{t\smallertext{+}}\big]^{1/2} \E^\P\bigg[ \int_{t^\smalltext{\prime}}^T \cE(\hat\beta A)_s \frac{|\delta f_s|^2}{\alpha^2_s} \d C_s \bigg| \cG_{t\smallertext{+}}\bigg]^{1/2}, \; \textnormal{$\P$--a.s.}
		\end{align*}
		By \cite[Theorem 2, page 259]{shiryaev2016probability}, we deduce that, $\P$--a.s. along a sequence $(t^\prime_n)_{n \in \N}$ tending to infinity, the first conditional expectation term in the last line converges to $\E^\P[\cE(L)^2_{T}|\cG_{t\smallertext{+}}]$ and the second term converges to zero. Since $\delta \xi \geq 0$, $\P$--a.s., this and \eqref{eq::comp_limit_t2} yield
		\begin{equation*}
			\cE(w)_{t \land T}\cE(v)_{t \land T}\delta \cY_{t \land T} 
			\geq 0, \; \text{$\P$--a.s.}, \; t \in [0,\infty).
		\end{equation*}
		Since $\cE(w)$ and $\cE(v)$ are strictly positive because $\Phi < 1$, we obtain $\delta\cY_t \geq 0$, $\P$--a.s., for every $t\in[0,\infty)$. Right-continuity yields the desired result. This concludes the proof.
	\end{proof}

	{\footnotesize
		\bibliography{bibliographyDylan}}

\end{document}